\documentclass[12pt]{amsart}
\usepackage{amssymb}
\usepackage{enumitem}
\usepackage{graphicx}
\usepackage[cmtip,all]{xy}
\usepackage{bbm}
\usepackage{comment}
\usepackage{mathrsfs}
\usepackage{mathtools}
\usepackage{xcolor}
\usepackage[dvipdfmx]{hyperref}
\usepackage{autobreak}
\hypersetup{
 colorlinks,
 linkcolor={blue!50!black},
 citecolor={blue!50!black},
 urlcolor={blue!80!black}
}

\everymath{\displaystyle}

\calclayout

\def\DefineSymbol#1#2{\newcommand{#1}{{\mathrm {#2}}}}
\def\DefineCategory#1#2{\newcommand{#1}{{\mathrm {#2}}}}

\theoremstyle{plain}
	\newtheorem{theorem}{Theorem}[section]
	\newtheorem{lemma}[theorem]{Lemma}
	\newtheorem{proposition}[theorem]{Proposition}
	\newtheorem{corollary}[theorem]{Corollary}
	\newtheorem{conjecture}[theorem]{Conjecture}
	
\theoremstyle{definition}
	\newtheorem{definition}[theorem]{Definition}
	\newtheorem{lemma and definition}[theorem]{Lemma and Definition}
	\newtheorem{example}[theorem]{Example}

	\newtheorem{construction}[theorem]{Construction}

\theoremstyle{remark}
	\newtheorem{remark}[theorem]{Remark}
	\numberwithin{equation}{section}

\DefineSymbol{\pr}{pr}
\DefineSymbol{\perf}{perf}
\DefineSymbol{\id}{id}
\DefineSymbol{\const}{const}
\DefineSymbol{\lc}{lc}
\DefineSymbol{\op}{op}
\DefineSymbol{\diag}{diag}
\DefineSymbol{\proet}{pro\acute{e}t}
\DefineSymbol{\cond}{cond}
\DefineSymbol{\cont}{cont}
\DefineSymbol{\conti}{conti}
\DefineSymbol{\Cond}{Cond}
\DefineSymbol{\dCond}{dCond}
\DefineSymbol{\disc}{disc}
\DefineSymbol{\Tot}{Tot}
\DefineSymbol{\triv}{triv}
\DefineSymbol{\Bun}{Bun}
\DefineSymbol{\Pic}{Pic}
\DefineSymbol{\LS}{LS}
\DefineSymbol{\Quot}{Quot}
\DefineSymbol{\WT}{WT}
\DefineSymbol{\FF}{FF}
\DefineSymbol{\adic}{adic}
\DefineSymbol{\rig}{rig}
\DefineSymbol{\nc}{nc}
\DefineSymbol{\nuc}{nuc}
\DefineSymbol{\cpt}{cpt}
\DefineSymbol{\an}{an}
\DefineSymbol{\alg}{alg}
\DefineSymbol{\la}{la}
\DefineSymbol{\rla}{rla}
\DefineSymbol{\dep}{dep}
\DefineSymbol{\red}{red}
\DefineSymbol{\ex}{ex}
\DefineSymbol{\et}{et}
\DefineSymbol{\fet}{fet}
\DefineSymbol{\wft}{wft}
\DefineSymbol{\cl}{cl}
\DefineSymbol{\Cl}{\mathbf{Cl}}
\DefineSymbol{\rCl}{\mathbf{rCl}}
\DefineSymbol{\Nil}{\mathbf{Nil}}
\DefineSymbol{\rNil}{\mathbf{rNil}}
\DefineSymbol{\Fin}{\mathbf{Fin}}
\DefineSymbol{\irreg}{irreg}

\DeclareMathOperator{\rank}{rank}
\DeclareMathOperator{\Hom}{Hom}
\DeclareMathOperator{\Isom}{Isom}
\DeclareMathOperator{\Surj}{Surj}

\DeclareMathOperator{\Ext}{Ext}
\DeclareMathOperator{\Exan}{Exan}
\DeclareMathOperator{\Exanbb}{\mathbf{Exan}}
\DeclareMathOperator{\Def}{Def}
\DeclareMathOperator{\Defbb}{\mathbf{Def}}
\DeclareMathOperator{\Der}{Der}
\DeclareMathOperator{\Map}{Map}

\DeclareMathOperator{\Ker}{Ker}
\DeclareMathOperator{\Coker}{Coker}
\let\Im\relax
\DeclareMathOperator{\Im}{Im}

\DeclareMathOperator{\Aut}{Aut}
\DeclareMathOperator{\Gal}{Gal}

\DeclareMathOperator{\height}{ht}

\DeclareMathOperator{\Nm}{Nm}
\DeclareMathOperator{\Tr}{Tr}

\DeclareMathOperator{\Spec}{Spec}

\DeclareMathOperator{\Spa}{Spa}

\DeclareMathOperator{\Spf}{Spf}

\DeclareMathOperator{\End}{End}

\DeclareMathOperator{\supp}{supp}
\DeclareMathOperator{\Fit}{Fit}

\DeclareMathOperator{\GL}{GL}
\DeclareMathOperator{\intHom}{\underline{Hom}}

\newcommand{\hotimes}{\mathbin{\hat{\otimes}}}

\newcommand{\triplerightarrow}{%
  \mathrel{\vcenter{\baselineskip=0.6ex \lineskiplimit=0pt
    \hbox{$\rightarrow$}\hbox{$\rightarrow$}\hbox{$\rightarrow$}}}%
}

\DefineCategory{\Set}{Set}
\DefineCategory{\Ab}{Ab}
\DefineCategory{\Ring}{Ring}
\DefineCategory{\Mod}{Mod}
\DefineCategory{\LMod}{LMod}
\DefineCategory{\Rep}{Rep}
\DefineCategory{\coMod}{coMod}
\DefineCategory{\Alg}{Alg}
\DefineCategory{\Ch}{Ch}
\DefineCategory{\Mon}{Mon}
\DefineCategory{\CMon}{CMon}
\DefineCategory{\Grpd}{Grpd}
\DefineCategory{\PCoh}{PCoh}
\DefineCategory{\Perf}{Perf}
\DefineCategory{\Vect}{Vect}
\DefineCategory{\Coh}{Coh}
\DefineCategory{\FP}{FP}
\DefineCategory{\cAff}{cAff}
\DefineCategory{\Aff}{Aff}
\DefineCategory{\Rig}{Rig}
\DefineCategory{\AffRing}{AffRing}
\DefineCategory{\AnRing}{AnRing}
\DefineCategory{\AnAdic}{AnAdic}
\DefineCategory{\AffAnAdic}{AffAnAdic}
\DefineCategory{\Ani}{Ani}
\DefineCategory{\CAlg}{CAlg}
\DefineCategory{\Corr}{Corr}
\DefineCategory{\Comm}{Comm}
\DefineCategory{\PShv}{PShv}

\newcommand{\Abb}{\mathbb{A}}
\renewcommand{\Bbb}{\mathbb{B}}
\newcommand{\Cbb}{\mathbb{C}}

\newcommand{\Gbb}{\mathbb{G}}

\newcommand{\Lbb}{\mathbb{L}}

\newcommand{\Pbb}{\mathbb{P}}
\newcommand{\Qbb}{\mathbb{Q}}

\newcommand{\Vbb}{\mathbb{V}}

\newcommand{\Zbb}{\mathbb{Z}}

\newcommand{\Acal}{\mathcal{A}}

\newcommand{\Ccal}{\mathcal{C}}
\newcommand{\Dcal}{\mathcal{D}}
\newcommand{\Ecal}{\mathcal{E}}
\newcommand{\Fcal}{\mathcal{F}}
\newcommand{\Gcal}{\mathcal{G}}

\newcommand{\Ical}{\mathcal{I}}

\newcommand{\Ocal}{\mathcal{O}}
\newcommand{\Pcal}{\mathcal{P}}

\newcommand{\Rcal}{\mathcal{R}}

\newcommand{\Xcal}{\mathcal{X}}
\newcommand{\Ycal}{\mathcal{Y}}
\newcommand{\Zcal}{\mathcal{Z}}

\newcommand{\Afrak}{\mathfrak{A}}

\newcommand{\Xfrak}{\mathfrak{X}}

\newcommand{\mfrak}{\mathfrak{m}}
\newcommand{\nfrak}{\mathfrak{n}}

\newcommand{\pfrak}{\mathfrak{p}}

\newcommand{\kbf}{\mathbf{k}}

\newcommand{\tbf}{\mathbf{t}}

\renewcommand{\tilde}{\widetilde}
\renewcommand{\hat}{\widehat}
\renewcommand{\bar}{\overline}

\begin{document}

\title{Geometric properties of the analytic Emerton--Gee stack}%[論文の上に表示されるタイトル]{正式なタイトル}
\author{Yutaro Mikami}
\date{\today}
\address{RIKEN, Center for Advanced Intelligence Project AIP, Mathematical Science Team, 1-4-1 Nihonbashi, Chuo-ku, Tokyo, 103-0027, Japan}
\email{yutaro.mikami@riken.jp}
\subjclass{}%14G22 リジッド幾何学 13D09 可換環と導来圏 11F80 Galois representations 11S25 Galois cohomology
\begin{abstract}
The analytic Emerton--Gee stack is a rigid analytic moduli stack of $(\varphi,\Gamma)$-modules over the Robba ring, and is expected to play a fundamental role in the categorical $p$-adic local Langlands correspondence for locally analytic representations.
In this paper, we establish the fundamental geometric properties of the analytic Emerton--Gee stack: it is a rigid analytic Artin stack, normal, a local complete intersection, and equidimensional of the expected dimension.
As an application, our normality result removes the reducedness hypothesis in \cite[Theorem 5.1.3]{LSW26}, thereby yielding an unconditional proof of the conjecture on the global sections of the analytic Emerton--Gee stack formulated in \cite[Conjecture 5.1.36]{EGH23}.
\end{abstract}
\maketitle

\setcounter{tocdepth}{1}

\tableofcontents

\section*{Introduction}
\subsection{Background}
Let $K$ be a finite extension of $\Qbb_p$.
A \textit{$(\varphi,\Gamma_K)$-module over the Robba ring} (or equivalently, a \textit{$\Gal(\bar{K}/K)$-equivariant vector bundle} on the Fargues-Fontaine curve $X_{\Cbb_p}$) can be regarded as a generalization of a rational $p$-adic representation of $\Gal(\overline{K}/K)$.
Set $G_K=\Gal(\bar{K}/K)$.
The theory of $p$-adic families of $G_K$-equivariant vector bundles on $X_{\Cbb_p}$ is closely related to the theory of $p$-adic families of automorphic forms and eigenvarieties.
Recently, in \cite[Definition 5.1.15]{EGH23}, Emerton, Gee, and Hellmann introduced a rigid analytic moduli stack $\LS_{\GL_n(K),\FF}$ parametrizing such families of rank $n$, called the \textit{analytic Emerton-Gee stack}.
They also proposed, in \cite{EGH23}, a \textit{categorical $p$-adic local
Langlands correspondence} using this stack.

\begin{conjecture}[{\cite[Conjecture 6.2.4, Remark 6.2.9 (d)]{EGH23}}]
Let $n$ be a positive integer.
Let $\Rep_{\square}^{\la}(\GL_n(K))$ denote the stable $\infty$-category of locally analytic representations of $\GL_n(K)$ defined in \cite[Definition 3.2.3]{RJRC23}, and let $\Dcal(\LS_{\GL_n(K),\FF})$ denote the stable $\infty$-category of solid quasi-coherent complexes on $\LS_{\GL_n(K),\FF}$.
There exists an exact functor
\[\Afrak_{\GL_n(K)}^{\rig} \colon \Rep_{\square}^{\la}(\GL_n(K)) \to \Dcal(\LS_{\GL_n(K),\FF})\]
satisfying many desirable properties.
\end{conjecture}
\begin{remark}
More precisely, it seems necessary to restrict to the subcategories consisting of objects satisfying suitable finiteness conditions, but we will not pursue this in this paper.
\end{remark}

Understanding the geometric properties of $\LS_{\GL_n(K),\FF}$ is an important step towards this conjecture.
In fact, Emerton, Gee, and Hellmann proposed the following conjecture.
Set $d=[K:\Qbb_p]$.

\begin{conjecture}[{\cite[Conjecture 5.1.19]{EGH23}}]\label{conjecture:EGH,intro}
The analytic Emerton--Gee stack $\LS_{\GL_n(K),\FF}$ is a rigid analytic Artin stack.
Moreover, it is normal, a local complete intersection, and equidimensional of dimension $dn^2$.
\end{conjecture}

The main theorem of this paper is the following.

\begin{theorem}\label{thm:EGH conjecture,intro}
	Conjecture \ref{conjecture:EGH,intro} is true.
\end{theorem}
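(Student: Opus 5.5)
We prove the four assertions of Conjecture \ref{conjecture:EGH,intro} in order, each feeding the next.

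\emph{Step 1 (Artin stack).} Following the strategy for the classical Emerton--Gee stack, we cover $\LS_{\GL_n(K),\FF}$ by substacks on which the $(\varphi,\Gamma_K)$-module has a fixed bound on Hodge--Tate--Sen weights and slopes. On such a piece, the module can be presented over a fixed annulus of the Robba ring. We then realize the piece as a quotient of a rigid analytic space by a smooth group. That space parametrizes matrices of $\varphi$ and of a topological generator of $\Gamma_K$ (plus the finite part) on a trivialized lattice over a closed annulus. The group is a $\GL_n$ of an affinoid algebra, or a finite-type truncation of it. Representability of the diagonal comes from representability of $\Hom$'s between $(\varphi,\Gamma)$-modules in families, which follows from finiteness of $(\varphi,\Gamma)$-cohomology (Kedlaya--Pottharst--Xiao) in perfect-complex form. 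Smoothness of the atlas uses the fact that the relevant obstruction lies in $H^2$.

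\emph{Step 2 (lci and dimension bound).} The deformation theory of a point $D$ is governed by the complex $C^\bullet_{\varphi,\Gamma}(\mathrm{ad}\,D)[1]$, which is perfect with amplitude $[-1,1]$. Tate's Euler characteristic formula for $(\varphi,\Gamma)$-modules gives
\[
\chi(\mathrm{ad}\,D)=-dn^2 .
\]
So the cotangent complex of the natural derived enhancement has tor-amplitude $[-1,1]$ and virtual dimension $dn^2$. Locally on the atlas, the stack is cut out in a smooth space by as many equations as $\dim H^2$ predicts. Hence it is lci with every irreducible component of dimension at least $dn^2$. The upper bound $\le dn^2$ is proved by stratifying: by Kedlaya--Liu/Colmez trianguline-type filtrations after finite extension, or more simply by the locus where $\dim H^0(\mathrm{ad}\,D)$, equivalently $\dim H^2(\mathrm{ad}\,D)=\dim H^0(\mathrm{ad}\,D(\chi_{\mathrm{cyc}}))$ by duality, is fixed. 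We show each stratum has dimension at most $dn^2$, strictly less when $H^2\neq0$. Equidimensionality follows, and lci plus the correct dimension makes the derived stack classical.

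\emph{Step 3 (normality).} Since lci implies Cohen--Macaulay, Serre's criterion reduces normality to $R_1$: the singular locus must have codimension $\ge2$. The singular locus is contained in the locus where $H^2(\mathrm{ad}\,D)\ne0$, i.e.\ where $D$ admits a nonzero map $D\to D(\chi_{\mathrm{cyc}})$. We bound this locus by parametrizing pairs: $D$ together with the saturated image of such a map gives a filtration $0\to D_1\to D\to D_2\to0$ with a nonzero morphism relating $D_2$ and $D_1(\chi_{\mathrm{cyc}})$. We then count dimensions using Step 2 inductively for the ranks of $D_1$ and $D_2$, together with the dimensions of $\mathrm{Ext}^1(D_2,D_1)$ computed by the Euler characteristic. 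The target estimate is codimension at least $2$, with equality governed by rank one pieces, where $d\ge1$ gives enough room.

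\textbf{Main obstacle.} I expect the main obstacle to be this codimension estimate. In the analytic setting the non-étale and non-saturated loci (torsion cokernels along $t$) add extra strata absent for étale $\varphi$-modules, and they must be handled by a separate count, controlling modifications at $t=0$ via Sen weights.
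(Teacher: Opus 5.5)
Your closing deduction is the paper's: a chart cut out by $r_2$ equations in a smooth space of dimension $r_1$, the Euler characteristic $r_1-r_2-r_0=dn^2$, Krull's bound, and Serre's criterion once the locus where $\Ext^2_{\FF,G_K}(V,V)\neq 0$ has codimension $\geq 2$. But the two inputs that carry the content are missing, and the mechanism you propose for the first does not work.

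\textbf{Representability (Step 1).} The space of matrices of $\varphi$ and $\gamma$ with entries in a closed-annulus ring over $A$, and $\GL_n$ of that ring, are infinite-dimensional Banach-type objects, not rigid spaces locally of finite type. Bounding slopes and Sen weights does not change this, and you never say what the ``finite-type truncation'' is, so you get no smooth atlas. You also miss a genuinely rigid-analytic difficulty: smooth charts around classical points do not suffice, because an open substack containing every classical point need not be the whole stack.

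The paper handles both issues with an Artin-type criterion (Theorem~\ref{thm:criterion for representablity}: homogeneity, coherent deformations and obstructions, openness of versality). Its last hypothesis asks, through every possibly non-classical point $x$, for a family versal deformation over a smooth locally closed $\Spa A\ni x$. Theorem~\ref{thm:EG stack smooth chart} builds this by running the Rim--Schlessinger construction with the Herr complex over $A$, where $H^1$ and $H^2$ are free, with explicit norm bounds. These bounds make the hull converge on $A\left\langle \frac{X_1,\ldots,X_{r_1}}{p^m}\right\rangle/(F_1,\ldots,F_{r_2})$, and this overconvergent chart is what makes ``cut out by $r_2$ equations'' precise. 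It only gives $\dim\geq dn^2$. So lci comes after the upper bound, not before as in your Step 2. The upper bound itself reduces to bounding the same locus as in Step 3, since off that locus the stack is already smooth of dimension $dn^2$.

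\textbf{Codimension estimate (Step 3).} Counting the dimension of the space of filtrations $0\to D_1\to D\to D_2\to 0$ bounds (a substack of) $\LS_{P_{m,n-m}(K),\FF}$, not the bad locus $Z\subset\LS_{\GL_n(K),\FF}$. The forgetful map is not proper, so knowing that each classical point of $Z$ carries such a filtration does not bound $\dim Z$. The main work of the paper (proof of Theorem~\ref{thm:bad locus dimension induction}) is to produce the filtration in families:
\begin{enumerate}
\item Stratify $Z$ by Fitting ideals into reduced locally Zariski closed pieces on which $\Ext^2_{\FF,G_K}(V,V)$ and its analogues for $\wedge^j V$ are vector bundles.
\item Take the universal $f\colon V\to V(1)$ and pass to $\wedge^j f$, whose image has rank one.
\item Twist by the maximal $\tbf^{\kbf}$ through which $\wedge^j f$ factors, so that this image is saturated at every classical point.
\item Apply Lemma~\ref{lem:quotient vector bundle} (over a reduced base, a cokernel that is a vector bundle at classical points is one globally) and the Pl\"ucker embedding (Lemma~\ref{lem:Plucker relation}) to get a section of $\LS_{P_{n-j,j}(K),\FF}\times_{\LS_{\GL_n(K),\FF}}\Spa A\to\Spa A$, to which Lemma~\ref{lem:section dimension} applies.
\end{enumerate}
Your remark about torsion cokernels along $t$ points at the $\tbf^{\kbf}$-twist, but treats it as extra strata to count rather than as the device that makes the subbundle exist in families.

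Finally, the fibre $[\Ext^1_{\FF,G_K}(D_2,D_1)/\Hom_{\FF,G_K}(D_2,D_1)]$ has dimension $dm_1m_2+\dim\Ext^2_{\FF,G_K}(D_2,D_1)$, which the Euler characteristic alone does not determine. The possible jump of $\Ext^2_{\FF,G_K}(D_2,D_1)$, up to $m_1m_2$, breaks the estimate for small $d$. The paper controls it with a further Fitting stratification, bounds for absolutely irreducible and non-trianguline pieces (via a Schur lemma for $V[1/t]$), and an induction hypothesis that also bounds the locus where $H^2_{\FF,G_K}(V)\neq 0$.
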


Before explaining the idea of the proof, we recall some previous results related to this conjecture.

For the (formal) Emerton--Gee stack, namely, the formal moduli stack of integral $(\varphi,\Gamma_K)$-modules, analogous geometric properties were established by Emerton and Gee \cite{EG23} (representability and dimension) and by B\"{o}ckle, Iyengar, and Pa\v{s}k\={u}nas \cite{BIP23} (the local complete intersection property and normality of the generic fiber).

In \cite{EGH23}, the results of Hellmann, Hernandez, and Schraen are also discussed. 
They announced that the analytic Emerton--Gee stack $\LS_{\GL_n(K),\FF}$ is a rigid analytic Artin stack on an open neighborhood of each classical point. Note that this does not suffice to prove the full representability of the stack; see \cite[Remark 5.1.32]{EGH23}. 
When $n=2$, they established the desired geometric properties around classical points \cite[Theorem 5.1.20]{EGH23}.
In the case $n=2$, the proof given in \cite{EGH23} is based on the slope filtrations by Kedlaya \cite{Ked08}, the deformation theory of integral $p$-adic local Galois representations, and the results of B\"{o}ckle, Iyengar, and Pa\v{s}k\={u}nas (\cite{BIP23}).

\subsection{Statements of the main results}
Let us now describe the main results of this paper.

\subsubsection{Representability}
Let $K$ be a complete non-archimedean field, and let $\Aff_K$ denote the category of affinoid $K$-algebras (i.e., Banach $K$-algebras topologically of finite type).

\begin{definition}[{Definition \ref{def:quasi-analytic spaces and rigid analytic Artin stacks}}]
	\begin{enumerate}
	\item A \textit{quasi-analytic space} over $K$ is an \'{e}tale sheaf $\Fcal$ on $\Aff_K^{\op}$ such that
	\begin{enumerate}
		\item The diagonal morphism $\Delta \colon \Fcal\to \Fcal\times \Fcal$ is representable by rigid spaces.
		\item There exists an \'{e}tale surjection $U\twoheadrightarrow \Fcal$ from a rigid space $U$.
	\end{enumerate}
	\item A \textit{rigid analytic Artin stack} over $K$ is an \'{e}tale stack $\Xcal$ on $\Aff_K^{\op}$ such that
	\begin{enumerate}
		\item The diagonal morphism $\Delta \colon \Xcal\to \Xcal\times \Xcal$ is representable by quasi-analytic spaces.
		\item There exists a smooth surjection $U\twoheadrightarrow \Xcal$ from a rigid space $U$.
	\end{enumerate}
	\end{enumerate}
\end{definition}

The primary result of this paper are the following theorem.

\begin{theorem}[{Theorem \ref{thm:EG stack representable for general G}}]\label{thm:intro EG stack}
	Let $K$ be a finite extension of $\Qbb_p$.
	Let $G$ be a smooth linear algebraic group over $K$.
	Let $\LS_{G(K),\FF}$ denote the \'{e}tale stack 
    \[\Aff_{K}\ni A\mapsto \{\text{$G_K$-equivariant $G$-torsors on $X_{\Cbb_p}\times_{\Spa \Qbb_p} \Spa A$}\}.\]
	Then $\LS_{G(K),\FF}$ is a rigid analytic Artin stack.
\end{theorem}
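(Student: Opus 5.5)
We reduce first to $G=\GL_n$ and then build an explicit smooth chart.

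\emph{Reduction to $\GL_n$.} Choose a closed embedding $G\hookrightarrow \GL_n$. The morphism $\LS_{G(K),\FF}\to\LS_{\GL_n(K),\FF}$ sends $\Pcal$ to $\Pcal\times^G\GL_n$. Its fibre over a $G_K$-equivariant vector bundle $\Ecal$ parametrizes $G_K$-equivariant sections of the bundle $\Ecal/G$ of fibre type $\GL_n/G$, which is quasi-affine. Under the Tannakian description, these sections are $G_K$-invariant reductions of structure group. If $\GL_n/G$ is affine, they are given by $(\varphi,\Gamma)$-invariant elements of a quotient of $\mathrm{Sym}$ of a bundle. The $\varphi$-invariants of a $(\varphi,\Gamma)$-module over $A$ form a finite Banach object computed by the Herr complex. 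Hence the fibre should be representable by a rigid space, possibly after replacing $G\subset\GL_n$ by $G\subset \GL_n\times\Gbb_m$ to get affineness (Chevalley). So it suffices to treat $\GL_n$, together with the relative representability of this map.

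\emph{Diagonal.} For two families $D_1,D_2$ over $A$, the Isom sheaf is an open subsheaf of $H^0$ of $D_1^\vee\otimes D_2$, namely the locus where the determinant is invertible. By finiteness of $(\varphi,\Gamma)$-cohomology in families (Kedlaya--Pottharst--Xiao), the Herr complex is a perfect complex $C$ over $A$ of amplitude $[0,2]$. The functor $B\mapsto H^0(C\otimes^L_A B)$ is then the kernel of a map of vector bundles. This gives a rigid space: the analytic spectrum of $\mathrm{Sym}$ of $H^0$ of the dual complex, which is representable for perfect complexes concentrated in nonpositive degrees after dualizing. Therefore the diagonal is representable by rigid spaces, which is stronger than quasi-analytic.

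\emph{Smooth atlas.} This is the main step. The stack should be covered by open substacks $\LS^{[a,b]}$, on which the HN slopes of the fibres lie in a bounded range and $h$ is fixed. Slope semicontinuity (Kedlaya--Liu) makes these open. On such a piece, every $D$ admits, locally on $A$ (possibly after étale localization), a $\Gamma$-stable lattice or a presentation over a fixed annulus $\Rcal^{[r,s]}_A$ with $\varphi$ given by a matrix. Twisting by $t^k$ or by rank-one objects shifts slopes, so that $H^2(D)=0$ and $H^1$ has constant rank. I would then construct a chart by extensions. The moduli $U$ of pairs $(D,\text{basis of } H^0(D\otimes \mathcal{L}^{-1})$-generated surjection$)$, or more concretely iterated extensions of a fixed family of rank-one objects (trianguline-type charts), is a vector bundle over a product of rank-one moduli spaces. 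Rank-one moduli are $\widehat{K^\times}\times\Gbb_m$-type rigid spaces. The map to the stack is smooth exactly where the relevant $H^2$ vanish, by deformation theory: lifting obstructions lie in $H^2(\mathrm{ad})$, and surjectivity on tangent spaces follows from vanishing of $H^2$ of the Hom to the quotient.

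\emph{Main obstacle.} Not every $D$ is trianguline, so extension charts alone do not cover the stack. Instead I would use the surjection-type chart. Fix a sufficiently negative-slope "ample" object $\mathcal{O}(-m)$ with trivial $\Gamma$-action twisted by characters, so that $H^1(D(m))=H^2(D(m))=0$ uniformly on $\LS^{[a,b]}$. Then parametrize surjections $\mathcal{O}(-m)^N\otimes$ (induced Galois-module) $\twoheadrightarrow D$, where the source is a fixed $G_K$-equivariant bundle with big enough $H^0$ of twists. The kernel is again a family, and the chart records a point of the moduli of equivariant maps between fixed bundles. By the diagonal computation this is a rigid space, namely a vector space of maps, with surjectivity an open condition. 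The map to the stack is smooth because the fibre is an open in $H^0(\Hom(\text{source},D))$, which is a vector bundle when the higher cohomology vanishes. The delicate point is producing enough $G_K$-equivariant sources whose maps surject onto every $D$ with bounded slopes. For this I would use $\Gamma$-induced modules, or étale-locally on $\mathcal{M}(A)$ Kedlaya's slope filtration to reduce to the pure/étale case, where Emerton--Gee representability applies.
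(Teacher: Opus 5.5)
The gap is in the smooth atlas, which is the real content of the theorem. Your diagonal step is fine and is essentially the paper's Lemma \ref{lem:Hom space representable EG}; openness of the isomorphism locus needs the closedness of $X_{K_\infty}\times S\to S$ (Lemma \ref{lem:FF curve closed map}).

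Your surjection-type chart transplants the Quot-scheme proof for $\Bun_n$ of a curve, but $G_K$-equivariant cohomology has none of the positivity that proof needs.

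\emph{No vanishing is possible.} The Euler characteristic is $\dim H^0-\dim H^1+\dim H^2=-d\cdot\rank V$. So $H^1=H^2=0$ is impossible for any nonzero $V$, whether $V=D(m)$ or $V=\intHom(S,D)$: it would force $\dim H^0<0$.

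\emph{There is no ample source.} $\dim_F H^0_{\FF,G_K}(V)\le\rank V$ always (Lemma \ref{lem:H^2 bound}). If $D$ is absolutely irreducible of rank $n$, then $\dim\Hom_{\FF,G_K}(S,D)\le\lfloor\rank S/n\rfloor$ (Lemma \ref{lemma:abs irr Hom space dimension}). Moreover, a surjection $S\twoheadrightarrow D$ exhibits $D[1/t]$ as a quotient of the fixed object $S[1/t]$. Already the twists $D\otimes\Rcal(\delta)$, for $\delta$ in an open set of characters, have generically non-isomorphic $D\otimes\Rcal(\delta)[1/t]$. So no single $S$ surjects onto everything near an irreducible point. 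Inducing from open subgroups does not change these bounds.

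\emph{Twisting does not help the deformation theory.} The obstruction space is $\Ext^2_{\FF,G_K}(D,D)$, not $H^2(D)$. It is unchanged when $D$ is twisted by $t^k$ or by a character, so obstructions cannot be made to vanish uniformly.

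\emph{Your fallbacks do not close the gap.} Extension charts only reach trianguline points. Slope filtrations exist only on strata of constant HN polygon, so reducing to the étale case yields charts of locally closed strata, not smooth charts of open neighbourhoods.

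The missing idea is to keep the obstructions rather than kill them. The paper proves a rigid-analytic Artin criterion (Theorem \ref{thm:criterion for representablity}). Its inputs are $\Fin$-homogeneity, the representable diagonal, and coherence of the deformation and obstruction functors; these functors are $H^1,H^2$ of $\Ccal^{\bullet}_{\FF,\Gamma_K}(\intHom(V,V))\otimes_A I$, coherent in Hartshorne's sense, which gives openness of versality. The last input is versal families through every, possibly non-classical, point.

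Those families come from Theorem \ref{thm:EG stack smooth chart}. One restricts to a smooth locally Zariski closed $\Spa A\ni x$ with $A\hookrightarrow k(x)$, on which $H^1$ and $H^2$ are free. One then runs the SGA7/Rim--Schlessinger hull construction on Herr cochains with norm control: a bounded splitting $C^2\to H^2\oplus B^2$ and an open-mapping constant for $\partial\colon C^1\to B^2$. The coefficients then grow at most geometrically, so the hull converges on $A\langle X_1/p^m,\ldots,X_{r_1}/p^m\rangle/(F_1,\ldots,F_{r_2})$, with the obstruction classes appearing as the equations $F_k$. This handles non-trianguline and non-classical points uniformly.

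Your reduction from $G$ to $\GL_n$ also needs repair. $\GL_n/G$ need not be quasi-affine (e.g.\ $G$ parabolic), and no Chevalley trick makes it affine for non-reductive $G$ (Matsushima). Realizing $G$ as the stabilizer of a vector in $\GL_n\times\Gbb_m$ gives a quasi-affine quotient and would work. You would still need to show that ``the equivariant section lands in a locally closed subbundle'' is a locally closed condition on the base; this is the paper's Proposition \ref{prop:locally closed subspace representable}. The paper itself uses a line stabilizer and an equivariant Quot space of rank-one quotients fibred over the character variety.
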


We write $X_{\Cbb_p,A}=X_{\Cbb_p}\times_{\Spa \Qbb_p} \Spa A$.
When $G=\GL_n$, $\LS_{\GL_n(K),\FF}$ is the moduli stack of $G_K$-equivariant vector bundles on $X_{\Cbb_p}$ of rank $n$.
Thus $\LS_{\GL_n(K),\FF}$ coincides with the analytic Emerton-Gee stack $\Xfrak_n$ in \cite[Definition 5.1.15]{EGH23} after base change to $K$. 
Therefore, Theorem \ref{thm:intro EG stack} provides an affirmative answer to the representability part of Conjecture \ref{conjecture:EGH,intro}.

Let us briefly explain the strategy of the above theorems.
The representability of $\LS_{G(K),\FF}$ reduces to the case of $\LS_{\GL_n(K),\FF}$; therefore, we explain the proof for $G=\GL_n$.
Let $F$ be a finite extension, and let $x\colon \Spa F\to \LS_{\GL_n(K),\FF}$ be a morphism corresponding to a $G_K$-equivariant vector bundle $V_x$ on $X_{\Cbb_p,F}$.
We need to construct a smooth cover of an open neighborhood of $x$.
The key observation is that we can explicitly compute Rim-Schlessinger's construction of ``formal versal deformation'' of $x$ 
\[f \colon \Spf F[[X_1,\ldots,X_{r_1}]]/(F_1,\ldots,F_{r_2})\to \LS_{\GL_n(K),\FF}\]
by using the Herr complex of $\underline{\End}(V_x)$, where $r_i=\dim_F \Ext_{\FF,G_K}^i(V_x,V_x)$ for $i=1,2$, and $F_1,\ldots, F_{r_2}\in (X_1,\ldots,X_{r_1})^2$.
Here, $\Ext_{\FF,G_K}^i(V_x,V_x)$ is the cohomology of $\underline{\End}(V_x)$, and the morphism $f$ corresponds to a compatible system of $G_K$-equivariant vector bundles $(V_l)_l$ on 
\[X_{\Cbb_p}\times \Spa F[X_1,\ldots,X_{r_1}]/(F_1,\ldots,F_{r_2}, (X_1,\ldots,X_{r_1})^l).\]
Moreover, we can control the norms of $F_i$ and transition functions of $V_l$.
This observation provides an extension of the morphism $f$
\[\tilde{f}\colon \Spa \left(F\left\langle \frac{X_1,\ldots,X_{r_1}}{p^m}\right\rangle/(F_1,\ldots,F_{r_2})\right)\to \LS_{\GL_n(K),\FF}\]
for $m$ sufficiently large.
Moreover, this construction works starting with a morphism $\Spa A\to  \LS_{\GL_n(K),\FF}$ corresponding to a $G_K$-equivariant vector bundle $V_A$ on $X_{\Cbb_p,A}$ such that both $\Ext^1_{\FF,G_K}(V_A,V_A)$ and $\Ext^2_{\FF,G_K}(V_A,V_A)$ are finite free $A$-modules, and thus we obtain a morphism
\[\tilde{f}_A\colon \Spa \left(A\left\langle \frac{X_1,\ldots,X_{r_1}}{p^m}\right\rangle/(F_1,\ldots,F_{r_2})\right)\to \LS_{\GL_n(K),\FF}\]
such that for any classical point $t\in \Spa A$, the restriction of $\tilde{f}_A$ \[\tilde{f}_t\colon \Spa \left(k(t)\left\langle \frac{X_1,\ldots,X_{r_1}}{p^m}\right\rangle/(F_1,\ldots,F_{r_2})\right)\to \LS_{\GL_n(K),\FF}\]
recovers the previous construction.
This resolves the problem that constructing smooth covers around only classical points is insufficient.

Finally, we need to establish a rigid analytic analogue of ``openness of versality'', which is described as follows.

\begin{theorem}[{Theorem \ref{thm:criterion for representablity}}]\label{thm:intro criterion for representability}
	Let $\Xcal$ be an \'{e}tale stack on $\Aff_K^{\op}$.
	Assume the following:
	\begin{enumerate}
		\item The stack $\Xcal$ is $\Fin$-homogeneous.
		\item The diagonal morphism $\Delta \colon \Xcal\to \Xcal\times \Xcal$ is representable by quasi-analytic spaces.
		\item The stack $\Xcal$ has coherent deformations and obstructions.
		\item For any morphism $\Spa R\to \Xcal$ from an affinoid rigid space $\Spa R$ and any (not necessarily classical) point $x\in \Spa R$, there exist the following data:
		\begin{itemize}
			\item A smooth affinoid rigid space $\Spa A$ over $K$ with a point $y\in \Spa A$, and a morphism $\Spa A\to \Spa R$ with $y\mapsto x$ and $k(y)\cong k(x)$.
			Moreover, the natural morphism $A\to k(y)$ is injective.
			\item A morphism $f\colon \Spa B \to \Spa A$ with a section $i\colon \Spa A\to \Spa B$.
			\item A morphism $g\colon \Spa B\to \Xcal$ fitting into a commutative diagram
			\[
			\xymatrix{
				\Spa A \ar[r] \ar[d]_{i} &\Spa R \ar[d]\\
				\Spa B\ar[r]^g & \Xcal.
			}
			\]
			Moreover, for any classical point $t\in (\Spa A)^{\cl}$, the induced morphism
			\[g\colon f^{-1}(t)\to \Xcal,\] where $f^{-1}(t)=\Spa (B\otimes_A k(t))$, is formally versal at $i(t)$.
		\end{itemize}
	\end{enumerate}
	Then $\Xcal$ is a rigid analytic Artin stack.
\end{theorem}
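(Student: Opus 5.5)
We have to produce a smooth surjection from a rigid space onto $\Xcal$. Hypothesis (2) already gives the diagonal condition, so what remains is to build, around every point of every affinoid chart, a morphism from a rigid space that is smooth, and to check that these morphisms jointly cover $\Xcal$. I will use the data of (4) for this. Fix $\Spa R\to\Xcal$ and a point $x\in\Spa R$, and take $\Spa A$, $f\colon\Spa B\to\Spa A$, the section $i$ and $g\colon\Spa B\to\Xcal$ as in (4). The candidate chart is $g$ restricted to a suitable open neighbourhood $W\subset\Spa B$ of $i(y)$. Since the point $y$ lifts $x$ and the square commutes, $x$ lies in the image of $g|_W$. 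Taking the disjoint union over all charts and all points then gives a surjection, and the whole problem reduces to showing that $g$ is smooth on some neighbourhood of $i(\Spa A)$, or at least of $i(y)$.

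\textbf{Step 1: smoothness is detected at classical points.} By (2), for every affinoid $T\to\Xcal$ the fibre product $W\times_{\Xcal}T$ is a quasi-analytic space. Smoothness of $g|_W$ can therefore be tested étale locally on this fibre product, which is a rigid space. Using (1) and (3), I would set up a rigid-analytic infinitesimal lifting criterion: a morphism $U\to\Xcal$ from a rigid space is smooth at a classical point $u$ exactly when it is formally smooth at $u$. Formal smoothness means surjectivity on liftings along small extensions of Artinian local $k(u)$-algebras, i.e.\ along $\Fin$-thickenings. Because the relevant deformation and obstruction modules are coherent, the locus where the lifting map on tangent spaces is surjective and the obstruction map is injective is Zariski open; this is the usual Artin argument. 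Moreover, a Zariski open subset of an affinoid that contains all classical points of a neighbourhood contains that whole neighbourhood, and classical points are dense. So it is enough to show formal smoothness at classical points near $i(y)$.

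\textbf{Step 2: versality in fibres gives formal smoothness of the total space near $i(\Spa A)$.} At a classical point $t\in\Spa A$, the fibre $f^{-1}(t)\to\Xcal$ is formally versal at $i(t)$. Since $\Spa A$ is smooth over $K$, the map $f$ is a deformation of the fibre over a smooth base, and a standard argument combines versality of the fibre with smoothness of the base to give formal smoothness of $g$ at $i(t)$. Concretely, given a lifting problem along a small Artinian extension, I would first lift the $\Spa A$-component using smoothness of $A$, then solve the remaining problem in the fibre using versality, with homogeneity (1) used to glue the two. Versality at $i(t)$ only concerns the point $i(t)$ itself. To get formal smoothness at all classical points of a neighbourhood of $i(t)$ in $B$, and not only on the section, I would use the coherent-obstruction openness from Step 1: the map on tangent spaces and obstruction spaces is surjective/injective at $i(t)$, and this persists on a Zariski open neighbourhood.

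\textbf{Step 3: reaching non-classical $y$.} The point $y$ need not be classical, so Step 2 does not apply to it directly. This is where the hypotheses $k(y)\cong k(x)$ and $A\hookrightarrow k(y)$ enter. Let $Z\subset\Spa B$ be the closed analytic locus where $g$ fails to be smooth; it is Zariski closed by Step 1. By Step 2, $Z$ misses $i(t)$ for every classical $t$. I expect this to be the main obstacle. Pulling back along $i$, the set $i^{-1}(Z)$ is a Zariski closed subset of $\Spa A$ with no classical points, hence empty. Consequently $Z$ is disjoint from $i(\Spa A)$, and we may take $W=\Spa B\setminus Z$, which contains $i(y)$.

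The delicate point is making the Zariski-closedness of the non-smooth locus rigorous for a map to a stack. The route I would try first is to pull back along a test affinoid, use the quasi-analytic diagonal to reduce to a map of rigid spaces, and describe that locus by Fitting ideals of the coherent deformation and obstruction modules from (3). If the nonsmooth locus can only be shown to be closed in a weaker (constructible) sense, then the injectivity $A\hookrightarrow k(y)$ is exactly what ensures that $y$, being a generic point of $\Spa A$, avoids any proper Zariski closed subset. That is the fallback argument.
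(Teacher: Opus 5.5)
The overall architecture of your proposal matches the paper: openness of versality, then formal versality $\Leftrightarrow$ smoothness, then putting $i(y)$ in the versal locus $U$ and taking the disjoint union of these loci. However, Step~2 is false as stated, and Step~3 inherits the error.

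\textbf{The counterexample.} Versality of the fibres $f^{-1}(t)\to\Xcal$ at $i(t)$, together with smoothness of $\Spa A$, does not give versality of $g$ at $i(t)$. Take $K$ of characteristic $0$ and $\Xcal=\Spa K$. Let $R=A=K\langle a\rangle$, let $x=y$ be the Gauss point, and let $B=K\langle a,X\rangle/(aX)$. Let $f$ be the projection, $i$ the section $X=0$, and $g$ the structure map.
\begin{itemize}
\item This is legitimate data for (4). Over $a(t)\neq 0$ the fibre is $\Spa k(t)$, which is \'{e}tale over $K$. Over $a=0$ the fibre is the smooth disc $\Spa K\langle X\rangle$.
\item Yet $\Spa B$ is a node at $i(\{a=0\})$, and $g$ is not formally versal there. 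The $K[\epsilon]/(\epsilon^2)$-point $a=X=\epsilon$ does not lift to $K[\epsilon]/(\epsilon^3)$.
\item So your non-smooth locus $Z$ meets $i(\Spa A)$, and the claim $i^{-1}(Z)=\emptyset$ fails. The theorem survives only because $y$ is generic.
\end{itemize}
Your concrete recipe breaks exactly at this point. Once the $\Spa A$-component is lifted to a nontrivial thickening $\Spa D\to\Spa A$ of $t$, the remaining lifting problem lives over $\Spa D$, not inside $f^{-1}(t)$. Fibre versality therefore says nothing about it. Note also that your main line never uses $A\hookrightarrow k(y)$, which is a sign that it proves too much.

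\textbf{What is missing: flatness, and a reversed order of argument.} Since $i^{-1}(U)$ is Zariski open in $\Spa A$ and $A\to k(y)$ is injective, $y$ lies in every nonempty Zariski open subset of $\Spa A$. So it suffices to show $i^{-1}(U)\neq\emptyset$. You may then replace $\Spa A$ by any nonempty affinoid open, which need not contain $y$. The paper proceeds as follows.
\begin{itemize}
\item It uses generic flatness \cite[Theorem 2.21]{BhattHansen22} to make $f$ flat; this applies because $A$ is smooth, hence geometrically reduced.
\item For classical $t$, it checks versality at $i(t)$ on a rigid chart. The classical case of the theorem, where $A=k(x)$, supplies a smooth $\Spa R_1\to\Xcal$ through $g(i(t))$.
\item It takes an \'{e}tale cover $S\to\Spa B\times_{\Xcal}\Spa R_1$. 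Versality is smooth-local on the source (Corollary~\ref{cor:formally versal smooth local on the source}).
\item The fibre $(f\circ h)^{-1}(t)\to\Spa R_1$ is smooth at $s$ by base change of fibre versality, and $f\circ h$ is flat.
\item The fibrewise criterion (Corollary~\ref{cor:smooth fiber and base}) then makes $S\to\Spa R_1$ smooth at $s$.
\end{itemize}
The genericity of $y$ is thus the main argument, not a fallback. One only ever obtains that $i^{-1}(U)$ is nonempty, never that it is all of $\Spa A$.
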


This criterion can be proved in a manner analogous to the arguments in \cite{Hall17Openness,War17}.
In our setting, the explicit analytic extension $\tilde{f}_A$ constructed in the previous paragraph precisely provides the geometric data required in Condition (4). 
By applying this criterion, we obtain the representability of $\LS_{\GL_n(K),\FF}$.

\begin{remark}
	We can regard $\tilde{f}_A\colon \Spa \left(A\left\langle \frac{X_1,\ldots,X_{r_1}}{\varpi^m}\right\rangle/(F_1,\ldots,F_{r_2})\right)\to \LS_{\GL_n(K),\FF}$ as an ``overconvergent versal deformation'' of $\alpha\colon \Spa A\to \LS_{\GL_n(K),\FF}$.
	The notion of overconvergent deformations in the case where $A=K$ has been systematically studied by Li in \cite{RayLi26}.
	Using this framework, he independently constructs an overconvergent versal deformation
	\[\Spa \left(K\left\langle \frac{X_1,\ldots,X_{r_1}}{\varpi^m}\right\rangle/(F_1,\ldots,F_{r_2})\right)\to \LS_{\GL_n(K),\FF}\]
	for the analytic Emerton-Gee stack.
\end{remark}

\subsubsection{Geometric properties of $\LS_{\GL_n(K),\FF}$}

Let us explain how to prove the remaining part of Conjecture \ref{conjecture:EGH,intro}.
Let $F$ be a finite extension of $K$.
Let $x\colon \Spa F\to \LS_{\GL_n(K),\FF}$ be a classical point corresponding to a $G_K$-equivariant vector bundle $V_x$ on $X_{\Cbb_p,F}$.
Set $r_i=\dim_F \Ext^i_{\FF,G_K}(V_x,V_x)$ for $i=0,1,2$.
From the previous construction, we get a smooth morphism
\[g\colon \Spa \left(F\left\langle\frac{X_1,\ldots,X_{r_1}}{p^m}\right\rangle/(F_1,\ldots,F_{r_2})\right)\to \LS_{\GL_n(K),\FF}\]
of pure dimension $r_0$ whose image contains $x$, where 
\[F_1,\ldots,F_{r_2}\in (X_1,\ldots,X_{r_1})^2F\left\langle\frac{X_1,\ldots,X_{r_1}}{p^m}\right\rangle.\]
By the Euler characteristic formula, we have $r_1-r_2-r_0=dn^2$, where $d=[K:\Qbb_p]$. 
Thus, the above theorem shows that $\LS_{\GL_n(K),\FF}$ is smooth of pure dimension $dn^2$ at every classical point $x$ such that $\Ext^2_{\FF,G_K}(V_x,V_x)=0$.
Let $Z$ denote the reduced Zariski closed substack on which this $\Ext^2_{\FF,G_K}$ does not vanish.
Then the proof of Theorem \ref{thm:EGH conjecture,intro} reduces to showing that $Z$ has codimension at least two.
More precisely, we prove the following theorem.

\begin{theorem}\label{thm:bad locus dimension upper bound, intro}
We have $\dim Z\leq dn^2-2$.
\end{theorem}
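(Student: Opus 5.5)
By local Tate duality for $(\varphi,\Gamma_K)$-modules over the Robba ring, $\Ext^2_{\FF,G_K}(V_x,V_x)$ is dual to $\Hom_{\FF,G_K}(V_x,V_x(1))$, where $V_x(1)$ is the twist by the cyclotomic character. So I would describe $Z$ as the locus of points admitting a nonzero $G_K$-equivariant morphism $V_x\to V_x(1)$. The plan is to stratify $Z$ by the shape of such a morphism and to bound the dimension of each stratum by a parameter count. The count uses the dimension formula coming from the smooth charts, namely $r_1-r_2-r_0=dn^2$.

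\textbf{Step 1.} A nonzero map $h\colon V_x\to V_x(1)$ has image a saturated-up-to-modification sub-object. After passing to the saturation, we obtain a $G_K$-equivariant subbundle $W\subset V_x$ of some rank $0<k\le n$ and a subbundle $W'(1)\subset V_x(1)$ together with a nonzero map $V_x/\ker h\to W'(1)$ that is an isomorphism generically, i.e.\ a modification at finitely many de Rham-type points. To simplify, I would first use that the image is a $(\varphi,\Gamma)$-module $M$ of rank $k$. This gives a quotient $V_x\twoheadrightarrow M$ and an embedding $M\hookrightarrow V_x(1)$, with $M\to V_x(1)$ saturated up to a $t$-power modification. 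Hence $V_x$ is an extension of $M$ by $N$, and also contains $M'(-1)$ with $M'$ a modification of $M$.

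\textbf{Step 2.} I would bound the dimension of the locus of such $V_x$ by counting. Using the stack of pairs (a rank $k$ module $M$, a rank $n-k$ module) and extension classes, the stack of extensions $0\to N\to V\to M\to 0$ has dimension $\dim\{N\}+\dim\{M\}+\dim\Ext^1(M,N)-\dim\Hom(M,N)$. By the Euler characteristic this is at most $d(n-k)^2+dk^2+dk(n-k)+\dim\Ext^2(M,N)$. The second condition, namely that $M(-1)$ (up to modification) also occurs as a sub of $V$, imposes roughly $dk(n-k)$ further conditions; for this I would use the symmetric count for the filtration in the opposite direction, by intersecting with the locus of extensions $0\to M'(-1)\to V\to Q\to 0$. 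The strata where $M\cong M'$ exactly, or where there are discrete choices of $t$-modifications (which live in finitely many Hodge–Tate weight parameters), need separate bookkeeping. Heuristically, the codimension is then about $dk(n-k)\cdot 2-d(\ldots)+\dots\ge 2$ except in the smallest cases. The extreme case $n=k=1$ is the sharpest: there the condition is $\delta$ with $\Hom(\Rcal(\delta),\Rcal(\delta\epsilon))\ne0$, which forces $\epsilon=1$ up to $t^i$. This is a condition on a character, so it has codimension $d+1\ge2$ in the $(d+1)$-dimensional stack $\LS_{\GL_1}$ (the dimension being $d+1$ for $\Gbb_m$-gerbe reasons, $\dim=d$). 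I would check this case by hand.

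\textbf{Step 3.} I would make the count rigorous by induction on $n$, using the rank $<n$ cases as input for $\dim\{M\},\dim\{N\}$. The dimensions of extension stacks are bounded via the upper semicontinuity of $\Ext^i$ and the representability of the relevant Hom/Ext loci, which follows from the coherence of deformations and obstructions. The inductive bound needed is that the locus in $\LS_{\GL_k}$ where $\Ext^2$ with a fixed partner jumps by $j$ has codimension at least $j$ (or $j+1$), which itself follows from the same duality argument.

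The main obstacle is Step 2: the two filtrations (via $M$ and via $M'(-1)$) are not independent, and the $t$-adic modifications make $M'$ vary. Handling the stratum where $\dim\Hom(V,V(1))$ is large, including generic twists of the same irreducible pieces (e.g.\ $V=\bigoplus$ of copies of characters differing by $\epsilon$), is where I expect the delicate counting. For that stratum I would first try Kedlaya's slope filtration together with the trianguline locus, reducing to the rank one computation.
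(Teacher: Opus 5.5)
You start exactly where the paper does. Duality turns $Z$ into the locus carrying a nonzero $h\colon V_x\to V_x(1)$, and $\ker h$ gives a filtration $0\to N\to V_x\to M\to 0$. Your Euler-characteristic count is the paper's fibre formula: it gives the parabolic stack local dimension $dn^2-dk(n-k)+\dim\Ext^2(M,N)$.

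\textbf{The decisive estimate is missing, and Step~2 aims at the wrong term.} A second filtration by $M'(-1)$ is not needed. The one exception is $n=2$, $K=\Qbb_p$, and there it suffices that $\Ext^2(L_i,L_i)=0$ forces $\Ext^2(L_1,L_2)\oplus\Ext^2(L_2,L_1)\neq 0$, a codimension-$\geq 2$ condition on the pair of characters. What you actually have to control is the correction term $\dim\Ext^2(M,N)$. It can reach $k(n-k)$, and then the count fails when $d=1$.

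For $n>2$ the paper argues as follows.
\begin{enumerate}
\item Always $\dim\Ext^2(M,N)\le k(n-k)$, from $\dim H^0\le\rank$ and duality.
\item Where $\dim\Ext^2(M,N)\le k(n-k)-2$, the count closes.
\item The locus where $\dim\Ext^2(M,N)\ge k(n-k)-1$ has codimension $\ge 2$ in $\LS_{\GL_k}\times\LS_{\GL_{n-k}}$. An absolutely non-trianguline factor forces $\dim\Ext^2\le k(n-k)-2$, via a Schur lemma for $V[1/t]$. A trianguline factor with parameters $\delta_i$ reduces nonvanishing to $\Ext^2(\,\cdot\,,\Rcal(\delta_i))\neq 0$, which is a dual twist of the locus $\{H^2(V)\neq 0\}$ in lower rank.
\end{enumerate}
So the induction must carry a second statement, $\dim\{H^2(V_n)\neq 0\}\le dn^2-2$. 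This is proved at each level using a surjection $V\to\tbf^{\kbf}\Ocal(1)$ with $\kbf$ maximal.

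Your proposed inductive input (codimension $\ge j$ for a jump by $j$ against a fixed partner) is unproved, and false as stated. Take $K=\Qbb_p$ and partner $\Rcal(\delta_0)^{\oplus j}$ with $j\ge 3$. The locus of $\delta$ where $\Ext^2(\Rcal(\delta),\Rcal(\delta_0)^{\oplus j})$ jumps from $0$ to $j$ is a discrete set of characters, of codimension only $2$ in $\LS_{\GL_1}$.

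Your rank-one discussion is also off. $\Hom(\Rcal(\delta),\Rcal(\delta\varepsilon))=H^0(\Rcal(\varepsilon))=0$ for every $\delta$, so $Z=\emptyset$ when $n=1$; this is not a condition on $\delta$. The rank-one input actually needed is the description of $\{H^2(\Rcal(\delta))\neq 0\}$.

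\textbf{Two further steps are missing.}

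First, you allow $k=n$. Then there is no proper subbundle and nothing to count. You must show $\wedge^n h=0$, which holds because $\Hom(\wedge^n V,(\wedge^n V)(n))=H^0(\Rcal(\varepsilon^n))=0$.

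Second, even with a correct count you only know that each classical point of $Z$ lies in the image of $q\colon \LS_{P}\to\LS_{\GL_n}$. Since $q$ is not proper, this does not bound $\dim Z$ by $\dim q^{-1}(Z)$. The paper stratifies $Z$ into locally Zariski closed pieces:
\begin{enumerate}
\item by the rank of $\Ext^2(V,V)$;
\item then, on the space of surjections $\Ext^2(V,V)\to\Ocal$, which carries a universal $f\colon V\to V(1)$, by the rank $j$ of $f$, detected by $\wedge^j f$;
\item then by the maximal $\kbf$ such that $\wedge^j f$ factors through $\tbf^{\kbf}\wedge^j(V(1))$.
\end{enumerate}
On each piece, over a reduced smooth chart, the cokernel of $\wedge^j f$ is a vector bundle in the family. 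This is checked pointwise via \cite[Corollary 6.2.9]{KPX14} and then spread out by the quotient-vector-bundle lemma. So $\ker(\wedge^j f)$ is a codimension-one subbundle satisfying the Pl\"{u}cker relations, which gives a section of $q$ over the chart. A morphism representable by rigid spaces that admits a section has source dimension at least that of its target, and this bounds $\dim Z$. Your ``bookkeeping of $t$-modifications'' has to become precisely this construction in families.
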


For integers $0<m<n$, let $P_{m,n-m}$ be the parabolic subgroup of $\GL_{n}$ of the form 
\[
\begin{pmatrix}
    \GL_m & * \\
    0 & \GL_{n-m}
\end{pmatrix}.
\]
Let $\LS_{P_{m,n-m}(K),\FF}$ be a rigid analytic moduli stack parametrizing families of $G_K$-equivariant vector bundles on $X_{\Cbb_p}$ of rank $n$ equipped with an equivariant subbundle of rank $m$.
To simplify the notation, assume $(n,m,K)\neq (2,1,\Qbb_p)$.
Roughly speaking, the proof of Theorem \ref{thm:bad locus dimension upper bound, intro} consists of two parts.
\begin{enumerate}
	\item Prove $\dim Z\leq \max_{0<m<n} \dim\LS_{P_{m,n-m}(K),\FF}$.
	\item Prove $\dim\LS_{P_{m,n-m}(K),\FF}\leq dn^2-2$.\footnote{When $(n,m)=(2,1)$, we have $\dim \LS_{P_{1,1}(K),\FF}\leq 3d$. 
	In particular, if $K=\Qbb_p$, then $\dim \LS_{P_{1,1}(\Qbb_p),\FF}=3>4-2$. However, this problem can be easily overcome by replacing $\LS_{P_{1,1}(\Qbb_p),\FF}$ with a suitable closed substack.}
\end{enumerate}

For (2), let us consider the morphism
\[f\colon \LS_{P_{m,n-m}(K),\FF}\to \LS_{\GL_m(K),\FF}\times \LS_{\GL_{n-m}(K),\FF}.\]
By induction, we have an upper bound on the dimension of the target.
For a classical point $x\colon \Spa F\to \LS_{\GL_{m}(K),\FF}\times \LS_{\GL_{n-m}(K),\FF}$ corresponding to a pair of $G_K$-equivariant vector bundles $V_{m,x}$ and $V_{n-m,x}$ on $X_{\Cbb_p}$, the fiber $f^{-1}(x)$ is equivalent to
\[[\Ext^1_{\FF,G_K}(V_{n-m,x},V_{m,x})/\Hom_{\FF,G_K}(V_{n-m,x},V_{m,x})],\]
where the action of $\Hom_{\FF,G_K}(V_{n-m,x},V_{m,x})$ is trivial.
In particular, by the Euler characteristic formula, we can control the dimension of the fiber $f^{-1}(x)$ by computing $\Ext^2_{\FF,G_K}(V_{n-m,x},V_{m,x})$.

For (1), let us consider a classical point $x\colon \Spa F\to Z$ corresponding to a $G_K$-equivariant vector bundle $V_x$ on $X_{\Cbb_p}$.
By definition, we have $\Ext^2_{\FF,G_K}(V_x,V_x)\neq 0$.
By Tate local duality, this implies that there exists a nonzero morphism $g\colon V_x\to V_x(1)$, where $V_x(1)$ is the Tate twist.
Then the kernel of $g$ gives a subbundle of $V_x$.
This argument implies that the image of the morphism
\[\bigsqcup_{0<m<n}\LS_{P_{m,n-m}(K),\FF}\to \LS_{\GL_n(K),\FF}\]
contains all classical points of $Z$.
However, this is insufficient to obtain (1) since the above morphism is not proper (even after taking a Drinfeld compactification, see \cite[Remark 5.3.15]{EGH23}).
To overcome this problem, we construct a locally Zariski closed covering of $Z$ by imposing suitable conditions.
On each part, we use exterior powers and the Pl\"{u}cker relations to construct the required subbundles in families.

\subsection{Global sections of $\LS_{\GL_n(K),\FF}$}
As an application of the normality, and hence reducedness, of $\LS_{\GL_n(K),\FF}$, we determine global sections on $\LS_{\GL_n(K),\FF}$.
Let $E$ be a finite extension of $K$ such that $\#\Hom_{\Alg_{\Qbb_p}}(K,E)=[K:\Qbb_p]$, and set $\Sigma_K=\Hom_{\Alg_{\Qbb_p}}(K,E)$.
In what follows, we regard $\LS_{\GL_n(K),\FF}$ as a rigid analytic Artin stack over $E$ via base change.
First, we have a morphism
\[\omega_n\colon \LS_{\GL_n(K),\FF}\to \prod_{\sigma\in \Sigma_K}\Abb^{n,\an}_E\]
which sends a $G_K$-equivariant vector bundle $V_A$ on $X_{\Cbb_p,A}$ to the coefficients of its Sen polynomials indexed by $\sigma\in\Sigma_K$.
Then, there is a natural commutative diagram
\[
\xymatrix{
	\LS_{\GL_n(K),\FF}\ar[r]^-{\omega_n}\ar[d]_-{\det} & \prod_{\sigma\in \Sigma_K}\Abb^{n,\an}_E\ar[d]\\
	\LS_{\GL_1(K),\FF}\ar[r]^-{\omega_1} & \prod_{\sigma\in \Sigma_K}\Abb^{1,\an}_E,
}
\]
where the right vertical morphism is the product of the projections onto the first coordinates.

For a rigid analytic Artin stack $\Xcal$, we write $\Gamma(\Xcal)=\Gamma(\Xcal,\Ocal_{\Xcal})$ for the ring of global sections on $\Xcal$.
\begin{conjecture}[{\cite[Conjecture 5.1.36]{EGH23}}]\label{conj:EGH global sections, intro}
	The morphism 
	\[\LS_{\GL_n(K),\FF}\xrightarrow{(\omega_n,\det)} \prod_{\sigma\in \Sigma_K}\Abb^{n,\an}_E\times_{\prod_{\sigma\in \Sigma_K}\Abb^{1,\an}_E}\LS_{\GL_1(K),\FF}\]
	induces an isomorphism on the global sections.
\end{conjecture}

Le, Strauch, and Wang proved this conjecture under the assumption that $\LS_{\GL_n(K),\FF}$ is reduced (\cite[Proposition 5.1.2, Theorem 5.1.3]{LSW26}).
Combining their theorem with the reducedness of $\LS_{\GL_n(K),\FF}$ proved in this paper, we obtain an unconditional proof of the conjecture.

\begin{theorem}
	Conjecture \ref{conj:EGH global sections, intro} holds.
\end{theorem}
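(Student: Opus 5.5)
The statement is essentially a corollary: combine the normality (in particular reducedness) of $\LS_{\GL_n(K),\FF}$ with \cite[Proposition 5.1.2, Theorem 5.1.3]{LSW26}. The plan is to first establish the main geometric result, Theorem \ref{thm:EGH conjecture,intro}. We then check that its conclusion gives exactly the hypothesis Le, Strauch and Wang need, and invoke their theorem.

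\textbf{Step 1: normality.} For this we use the local structure supplied by the representability argument. By Theorem \ref{thm:intro EG stack} and the construction of $\tilde f_A$, around each classical point $x$ there is a smooth chart
\[
\Spa\left(F\left\langle \tfrac{X_1,\ldots,X_{r_1}}{p^m}\right\rangle/(F_1,\ldots,F_{r_2})\right)\to \LS_{\GL_n(K),\FF}
\]
of relative dimension $r_0$. The Euler characteristic formula gives $r_1-r_2-r_0=dn^2$.

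From this we argue as follows:
\begin{enumerate}
\item Once Theorem \ref{thm:bad locus dimension upper bound, intro} gives $\dim Z\le dn^2-2$, every irreducible component of the chart has dimension at least $r_1-r_2$.
\item The locus over $Z$ has dimension at most $dn^2-2+r_0$, which is strictly smaller than $r_1-r_2$. So the smooth locus is dense.
\item The chart therefore has dimension exactly $r_1-r_2$. Hence $F_1,\ldots,F_{r_2}$ form a regular sequence, and the chart is a local complete intersection, in particular Cohen--Macaulay.
\item Its singular locus lies over $Z$ and so has codimension at least $2$. Serre's criterion ($R_1$ plus $S_2$) then gives normality.
\end{enumerate}
The charts at classical points cover the stack, since classical points are dense in every affinoid and the chart maps are open. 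So $\LS_{\GL_n(K),\FF}$ is normal, hence reduced, after base change to $E$. Reducedness is preserved here because normality is preserved under the separable base change $K\to E$ for affinoid algebras.

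\textbf{Step 2: apply \cite{LSW26}.} We verify that the notion of ``reduced'' used in \cite[Theorem 5.1.3]{LSW26} agrees with ours. This should amount to reducedness of every smooth affinoid chart, which is exactly what Step 1 gives. Their theorem then yields that the pullback map on global sections along $(\omega_n,\det)$ is an isomorphism, which is Conjecture \ref{conj:EGH global sections, intro}.

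\textbf{Main obstacle.} All the real content lies in Step 1, and specifically in Theorem \ref{thm:bad locus dimension upper bound, intro}. This requires two things. First, a locally Zariski-closed stratification of $Z$, on which Plücker relations produce subbundles in families, giving $\dim Z\le\max_m\dim\LS_{P_{m,n-m}(K),\FF}$. Second, the inductive bound on the parabolic stacks via fibers $[\Ext^1/\Hom]$, with the special closed substack handling $(n,m,K)=(2,1,\Qbb_p)$. The deduction of the conjecture from these inputs is formal.
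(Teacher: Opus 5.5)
Your proposal is correct and follows the paper. The theorem is obtained by combining the reducedness that comes from normality in Theorem \ref{thm:EGH conjecture} with \cite[Proposition 5.1.2, Theorem 5.1.3]{LSW26}, and your Step 1 reproduces the paper's deduction of Theorem \ref{thm:EGH conjecture} from $\dim Z\le dn^2-2$ (Krull's height theorem, the count $r_1-r_2-r_0=dn^2$, Lemma \ref{lem:c.i.}, Lemma \ref{lem:normal outside codimension 2}).

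The one misstep is your aside that charts at classical points cover the stack because classical points are dense and smooth maps are open. In adic geometry an open subset containing all classical points need not be the whole space, which is exactly why the paper needs Theorem \ref{thm:criterion for representablity}. You do not need that aside, though: normality of a rigid analytic Artin stack is defined at classical points, and it then passes to every smooth chart by Lemma \ref{lem:ring theoretic property descent}.
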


\subsection{Outline of the paper}
This paper is organized as follows.

In Section 1, we collect preliminaries on rigid geometry which will be used in the next section. 

In Section 2, we introduce basic notions concerning rigid analytic Artin stacks, including normality, the local complete intersection property, and dimension. We also define Fitting ideals for coherent sheaves on rigid analytic Artin stacks, which play an important role in the proof of the main theorem.

In Section 3, we prove a rigid analytic analogue of Artin's criterion.

In Section 4, we prove the representability of $\LS_{G(K),\FF}$.

In Section 5.1, we state the main theorem and reduce its proof to obtaining an upper bound for the dimension of the ``bad locus'' of the analytic Emerton--Gee stack. We also explain how the relevant dimension estimates can be obtained inductively.
In Section 5.2, we treat the case $n=1$, which forms the first step of the induction.
In Section 5.3, we establish an upper bound for the dimension of the analytic Emerton--Gee stack associated with a parabolic subgroup.
In Section 5.4, using the upper bound established in Section 5.3, we obtain an upper bound for the dimension of the ``bad locus'' of the analytic Emerton--Gee stack.
In Section 5.5, we discuss the global sections on $\LS_{\GL_n(K),\FF}$.

In Appendix A, we prove that a smooth morphism of quasi-analytic spaces is formally smooth.

In Appendix B, we prove that a smooth surjection of locally noetherian analytic adic spaces admits a section \'{e}tale locally.

In Appendix C, we collect basic properties of $G$-torsors with a continuous action of a profinite group.

In Appendix D, we provide an example of an \'{e}tale sheaf $\Fcal$ which is a rigid analytic Artin stack but not a quasi-analytic space.

\subsection{Convention}

\begin{itemize}
\item
All rings are assumed unital and commutative.
\item
We use the terms \textit{$f$-adic ring} and \textit{affinoid pair} rather than \textit{Huber ring} and \textit{Huber pair}.
\item For an $f$-adic ring $A$, we denote the ring of power-bounded elements of $A$ by $A^{\circ}$.
\item An analytic adic space $X$ is called \textit{locally noetherian} if $X$ has an affinoid open cover $\{U_i=\Spa(A_i,A_i^+)\}$ such that each $A_i$ is strongly noetherian Tate.
\item For an adic space $X$ and a point $x\in X$, let $k(x)$ denote the completed residue field at $x$.
\item For a complete non-archimedean field $K$, we use the term \textit{affinoid $K$-algebra} to refer to a \textit{topological $K$-algebra topologically of finite type over $K$}.
Let $\Aff_K$ denote the category of affinoid $K$-algebras.
For an affinoid $K$-algebra $A$, we simply write $\Spa A$ for $\Spa (A,A^{\circ})$.
\item For a complete non-archimedean field $K$, we use the term \textit{rigid space over $K$} to refer to an \textit{adic space locally of finite type over $\Spa K=\Spa(K,\Ocal_K)$}.
When the base field $K$ is clear from the context, we simply call it a \textit{rigid space}.
For a rigid space $X$ over $K$, a point $x\in X$ is \textit{classical} if there is an affinoid open neighborhood $U=\Spa A\subset X$ of $x$ such that $x$ corresponds to a maximal ideal of $A$.
Let $X^{\cl}$ denote the subset of classical points of $X$.
\item For an affinoid $K$-algebra $A$, let $\Coh(A)$ denote the category of finitely generated $A$-modules.
\item Unless otherwise stated, all actions are assumed to be continuous.
\end{itemize}

\subsection*{Acknowledgement}
The author is grateful to Heng Du, Eugen Hellmann, and Yoichi Mieda for their comments on this paper.
The author would also like to thank Ray Li for sharing his paper in preparation, and Brian Conrad for sharing the unpublished manuscript \cite{Conrad-Gabber}.
This work was supported by the RIKEN Special Postdoctoral Researcher Program.

\subsection*{Use of AI}
Before completing the initial draft, the author used Gemini (Google) as a conversational aid for experimenting with statements in Appendix \ref{section:rigid analytic Artin stack and quasi-analytic spaces}, and the resulting discussion led the author to discover the counterexample in Example \ref{ex:rigid analytic Artin stack and quasi-analytic space}. 
After completing the initial draft, the author used ChatGPT (OpenAI) for proofreading and for checking the manuscript for internal inconsistencies and possible gaps. 
All AI-assisted suggestions were independently checked by the author, who takes full responsibility for the entire content of the manuscript.

%%%%%%%%%%%%%%%%%%%%%%%%%%%%%%%%%%%%%%%%%%%%%%%%%%%%%%%%%%%%%%%%%%%%%%%%%%%%%%%%%%%%%%

\section{Preliminaries on rigid geometry}
Let $K$ be a complete non-archimedean field.
``Rigid space'' always means ``rigid space over $K$''.
First, let us recall the notion of flat morphisms in rigid geometry.

\begin{definition}
	Let $f\colon X\to Y$ be a morphism of rigid spaces.
	\begin{enumerate}
		\item For a classical point $x\in X^{\cl}$, $f$ is \textit{flat at $x$} if the morphism $\Ocal_{Y,f(x)}\to \Ocal_{X,x}$ is flat.
		\item The morphism $f\colon X\to Y$ is \textit{flat} if for any classical point $x\in X^{\cl}$, $f$ is flat at $x$.
	\end{enumerate}
\end{definition}

\begin{lemma}[{\cite[Bemerkung 3.8]{Kiehl68}}]
	Let $f \colon \Spa A\to \Spa B$ be a morphism of affinoid rigid spaces over $K$, and let $x\in (\Spa A)^{\cl}$ be a classical point corresponding to a maximal ideal $\mfrak\subset A$.
	Let $\nfrak\subset B$ be the preimage of $\mfrak$ in $B$.
	The following are equivalent.
	\begin{enumerate}
		\item The morphism $f$ is flat at $x$.
		\item The morphism $B_{\nfrak}\to A_{\mfrak}$ is flat.
		\item The morphism $B_{\nfrak}^{\wedge}\to A_{\mfrak}^{\wedge}$ is flat.
	\end{enumerate}
\end{lemma}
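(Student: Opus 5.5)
Recall that $\Ocal_{X,x}$, the stalk of the structure sheaf at the classical point $x$ (taken over rational neighborhoods of $x$), is a noetherian local ring. Its maximal ideal is generated by $\mfrak$, and there are natural maps
\[A_{\mfrak}\to \Ocal_{X,x}\to A_{\mfrak}^{\wedge}\]
inducing $\Ocal_{X,x}^{\wedge}\cong A_{\mfrak}^{\wedge}$; the same holds for $B$, $\nfrak$ and $y=f(x)$. These are standard facts from rigid geometry: every rational subdomain $U\ni x$ has $\Ocal(U)$ flat over $A$ with $\Ocal(U)/\mfrak^k\Ocal(U)=A/\mfrak^k$, and $\nfrak$ is maximal because $B/\nfrak\hookrightarrow A/\mfrak$ is a finite extension of $K$. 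Consequently $A_{\mfrak}\to\Ocal_{X,x}$ and $\Ocal_{X,x}\to A_{\mfrak}^{\wedge}$ are local homomorphisms of noetherian local rings, and both are faithfully flat. The first is flat because it is a filtered colimit of the flat maps $A_{\mfrak}\to \Ocal(U)_{\mfrak}$. The second is the completion of a noetherian local ring.

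With this in hand, the equivalences reduce to commutative algebra. The main tool is that flatness descends and ascends along faithfully flat local maps. The precise lemma I will use is the following. Let $R\to S$ be a local homomorphism of noetherian local rings and $M$ a finitely generated $S$-module. Then $M$ is $R$-flat if and only if $\hat M$ is $\hat R$-flat, and this holds if and only if $\hat M$ is $R$-flat. This is the local flatness criterion, since $\mathrm{Tor}_1^R(R/\mathfrak{m}_R,M)\otimes_S\hat S=\mathrm{Tor}_1^{\hat R}(\hat R/\mathfrak m,\hat M)$ and $S\to\hat S$ is faithfully flat.

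For (1)$\Leftrightarrow$(3), apply the lemma to the local map $\Ocal_{Y,y}\to\Ocal_{X,x}$. Since $\hat{\Ocal}_{Y,y}=B_{\nfrak}^{\wedge}$ and $\hat{\Ocal}_{X,x}=A_{\mfrak}^{\wedge}$, flatness of $\Ocal_{Y,y}\to \Ocal_{X,x}$ is equivalent to flatness of $B_{\nfrak}^{\wedge}\to A_{\mfrak}^{\wedge}$. For (2)$\Leftrightarrow$(3), apply the same lemma to the local map $B_{\nfrak}\to A_{\mfrak}$ of noetherian local rings, whose completion is again $B_{\nfrak}^{\wedge}\to A_{\mfrak}^{\wedge}$.

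The main obstacle is justifying the comparison $\Ocal_{X,x}^{\wedge}\cong A_{\mfrak}^{\wedge}$ together with the noetherianity of $\Ocal_{X,x}$, in the adic-space setting where stalks are indexed by all rational neighborhoods. I would reduce this to the classical rigid (Tate) statements by noting that for a classical point the adic stalk agrees with the Tate stalk, since the rational neighborhoods of $x$ form the same cofinal system. The two facts themselves, the isomorphisms $A/\mfrak^k\cong \Ocal(U)/\mfrak^k$ and flatness of $A\to\Ocal(U)$, are Kiehl's and BGR's results for affinoid subdomains.
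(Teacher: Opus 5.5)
Your proof is correct. The paper gives no argument for this lemma and simply cites Kiehl, Bemerkung 3.8; your proposal supplies the standard proof behind that citation. It uses the natural identifications $\Ocal_{X,x}^{\wedge}\cong A_{\mfrak}^{\wedge}$ and $\Ocal_{Y,y}^{\wedge}\cong B_{\nfrak}^{\wedge}$, which are the BGR comparison the paper itself records in Theorem~\ref{thm:excellence}(3) and are compatible with the maps induced by $f$. It then applies the completed form of the local flatness criterion twice, once to $\Ocal_{Y,y}\to\Ocal_{X,x}$ and once to $B_{\nfrak}\to A_{\mfrak}$. The one thing you establish that is never used is the faithful flatness of $A_{\mfrak}\to\Ocal_{X,x}$: the argument only needs noetherianity of the stalks and the natural identification of completions.
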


\begin{corollary}
	Let $f \colon \Spa A\to \Spa B$ be a morphism of affinoid rigid spaces over $K$.
	Then $f$ is flat if and only if $f\colon B\to A$ is flat.
\end{corollary}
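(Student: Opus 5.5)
The plan is to reduce flatness of the ring map $B\to A$ to flatness of its localizations at maximal ideals of $A$, and then apply the lemma just quoted from \cite[Bemerkung 3.8]{Kiehl68}. Recall that affinoid $K$-algebras are noetherian and Jacobson, and that the classical points of $\Spa A$ correspond bijectively to the maximal ideals of $A$.

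Suppose first that $B\to A$ is flat. Let $x\in(\Spa A)^{\cl}$ correspond to the maximal ideal $\mfrak$, and put $\nfrak=\mfrak\cap B$. Then $\nfrak$ is a maximal ideal of $B$, since $B/\nfrak\hookrightarrow A/\mfrak$ and $A/\mfrak$ is finite over $K$. The map $B_\nfrak\to A_\mfrak$ is flat: $A_\mfrak$ is a localization of the flat $B$-module $A$, hence flat over $B$, and for any $B_\nfrak$-module $M$ we have $M\otimes_{B_\nfrak}A_\mfrak=M\otimes_B A_\mfrak$. By the lemma, $f$ is flat at $x$. Since $x$ was arbitrary, $f$ is flat.

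For the converse, assume $f$ is flat at every classical point. By the lemma, $B_\nfrak\to A_\mfrak$ is flat for every maximal ideal $\mfrak\subset A$, with $\nfrak=\mfrak\cap B$. We want $\mathrm{Tor}_1^B(M,A)=0$ for every $B$-module $M$. This is an $A$-module, so it suffices to show that its localization at each maximal ideal $\mfrak$ of $A$ vanishes. The key computation is
\[
\mathrm{Tor}_1^B(M,A)_\mfrak\cong \mathrm{Tor}_1^B(M,A_\mfrak)\cong\mathrm{Tor}_1^{B_\nfrak}(M_\nfrak,A_\mfrak),
\]
which vanishes by flatness of $B_\nfrak\to A_\mfrak$. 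The first isomorphism holds because localization is exact. The second is flat base change along $B\to B_\nfrak$, using that $A_\mfrak$ is already a $B_\nfrak$-module, since elements of $B\setminus\nfrak$ map to units of $A_\mfrak$.

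The only point requiring care is the dictionary used in both directions. Classical points of $\Spa A$ must correspond exactly to maximal ideals of $A$, and the preimage of a maximal ideal must again be maximal. Both are standard consequences of the Noether normalization and Nullstellensatz for affinoid algebras. Beyond this, the argument is formal commutative algebra, so I expect no serious obstacle.
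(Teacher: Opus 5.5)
Your proof is correct and is essentially the argument the paper intends. The paper states this corollary without proof, directly after Kiehl's lemma; it follows from that lemma together with the standard fact that flatness of $B\to A$ can be checked on the localizations $B_{\mfrak\cap B}\to A_{\mfrak}$ at all maximal ideals $\mfrak\subset A$, which you verify directly through the $\mathrm{Tor}$ computation, and your handling of the correspondence between classical points and maximal ideals is also correct.
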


\begin{lemma}[{\cite[Satz 3.3]{Kiehl68}}]
	Let $f\colon X\to Y$ be a morphism of rigid spaces over $K$, and let $x\in X^{\cl}$ be a classical point.
	If $f$ is flat at $x$, then there is an open neighborhood $U$ of $x$ such that the restriction $f\vert_U \colon U\to Y$ is flat.
\end{lemma}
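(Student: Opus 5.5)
The plan is to reduce to the affinoid case, where the equivalence (1)$\Leftrightarrow$(2) of the preceding lemma applies, and then to use the standard fact that flatness of a finitely generated module over a noetherian ring is an open condition on the spectrum. First, replace $X$ and $Y$ by affinoid opens: choose an affinoid open $V=\Spa B\subset Y$ containing $f(x)$ and an affinoid open $U_0=\Spa A\subset f^{-1}(V)$ containing $x$. Flatness at classical points is defined via the local rings $\Ocal_{Y,f(y)}\to\Ocal_{X,y}$, which are unchanged by shrinking. So it suffices to find an open neighborhood $U\subset U_0$ of $x$ such that $f$ is flat at every classical point of $U$.

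Next, translate to algebra. By the preceding lemma, for a classical point $y\in\Spa A$ with maximal ideal $\mfrak_y$, flatness at $y$ is equivalent to flatness of $B_{\nfrak_y}\to A_{\mfrak_y}$, where $\nfrak_y$ is the contraction of $\mfrak_y$. Consider the set $W\subset\Spec A$ of primes $\pfrak$ such that $A_\pfrak$ is flat over $B$ (equivalently over $B_{\pfrak\cap B}$). The key claim is that $W$ is Zariski open. If so, then since $\mfrak_x\in W$, there is $g\in A$ with $g(x)\neq0$ and $D(g)\subset W$. The rational subset $U=\{|g|\ge |\varpi|^N\}$ (for suitable $N$), or more simply the subset $\{|g|\neq 0\}$ intersected with a rational neighbourhood, is an open neighbourhood of $x$ whose classical points correspond to maximal ideals in $D(g)$. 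For a rational subdomain $U=\Spa A'$, the classical points of $U$ are identified with maximal ideals of $A$ not containing $g$, and $A_{\mfrak}\to A'_{\mfrak'}$ induces an isomorphism on completions. Therefore flatness at points of $U$ may be checked in $A$ via criterion (3) of the lemma, applied both to $A$ and to $A'$.

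The main obstacle is the openness of $W$, because $B\to A$ is not of finite type in the algebraic sense, so the usual generic flatness and openness results (EGA IV 11.3.1) do not apply directly. I would first try to write $A$ as a quotient $B\langle T_1,\dots,T_r\rangle/I$ and use that $B\langle T\rangle$ is flat over $B$ and noetherian. Flatness of $A_\pfrak$ over $B$ is then governed by the vanishing of $\mathrm{Tor}_1^{B}(B/\nfrak, A)_\pfrak$, which can be expressed through finitely generated $B\langle T\rangle$-modules. This is Kiehl's original approach. An alternative is to use the local criterion of flatness on completions, together with the fact that the non-flat locus is the support of a finitely generated module, namely a suitable $\mathrm{Tor}_1$ computed over the noetherian ring $A\hat\otimes_B A$ or via Fitting ideals. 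Supports of finitely generated modules are closed, which gives openness. Since the statement is cited as \cite[Satz 3.3]{Kiehl68}, I expect this step to be taken from Kiehl, and the remaining work is the bookkeeping between classical points, maximal ideals, and rational subdomains.
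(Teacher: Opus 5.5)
The paper gives no argument for this lemma; it is quoted from \cite[Satz 3.3]{Kiehl68}. Your reduction to affinoids $\Spa A\to\Spa B$ is fine. So is your translation of ``flat at $y$'' into flatness of $B_{\nfrak_y}\to A_{\mfrak_y}$ via the preceding lemma, and your passage to a rational neighbourhood $\{|g|\ge|\varpi|^N\}$ of $x$. But all of this is bookkeeping. The entire content of the statement is that the flat locus $W$ contains a neighbourhood of $\mfrak_x$, and for that step you defer to \cite{Kiehl68}, which is the very result being proved. So there is a genuine gap exactly where you flag the ``main obstacle''; as written, the proposal only establishes the reduction.

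Neither of your suggested routes closes the gap.

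The local criterion does give that $A_{\mfrak}$ is $B$-flat iff $\mathrm{Tor}_1^B(B/\nfrak,A)_{\mfrak}=0$, with $\nfrak=\mfrak\cap B$. However, this module depends on $\nfrak$. Its closed support only controls flatness at points of the single fibre over $\nfrak$. It says nothing about nearby points lying over other points of $\Spa B$, and that is exactly what openness requires. Presenting $A=P/I$ with $P=B\langle T_1,\dots,T_r\rangle$ flat over $B$ does not help: it only rewrites the condition as injectivity of $(I/\nfrak I)_{\mfrak}\to(P/\nfrak P)_{\mfrak}$, which is still a fibre-by-fibre condition.

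Your second idea is not made precise, and it has three problems:
\begin{enumerate}
\item $\mathrm{Tor}_1^B(A,A)$ lives over the non-noetherian ring $A\otimes_B A$, not over an affinoid algebra.
\item Relating it to a coherent module on $\Spa(A\hotimes_B A)$ would itself require proof.
\item Its vanishing at a diagonal point does not obviously force $\mathrm{Tor}_1^B(B/\nfrak,A)_{\mfrak}=0$, since $\mathrm{Tor}_1^B(-,A)$ is only half-exact.
\end{enumerate}

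What is really needed is an analytic substitute for the algebraic openness-of-flatness theorem (\cite[\S 11]{EGA4-3}, \cite[\S 24]{Matsumura-Comm-Ring}). The key input there is generic freeness for finitely generated algebras, fed into the local criterion along fibres. That input is unavailable for $B\to B\langle T\rangle$. Supplying a replacement is precisely the content of Kiehl's theorem, and it does not reduce to the closedness of supports of finitely generated modules.
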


For later use, we record the following criteria for flatness and smoothness.

\begin{lemma}\label{lem:fiberwise criterion for flatness and smoothness}
Let $S$ be a rigid space, and let $f\colon X\to Y$ be a morphism of rigid spaces over $S$, and let $p_1\colon X\to S$ and $p_2\colon Y\to S$ be the structure morphisms.
Let $x\in X^{\cl}$ be a classical point and set $s=p_1(x)$.
If $p_1$ is flat at $x$ and the fiber $f\times_S \Spa k(s)\colon p_1^{-1}(s)\to p_2^{-1}(s)$ is flat (resp.\ smooth) at $x$, then $f\colon X\to Y$ is also flat (resp.\ smooth) at $x$.
\end{lemma}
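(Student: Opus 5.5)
Everything reduces to the affinoid, local-ring statement, where the fiberwise (local) criterion of flatness for noetherian local rings applies; smoothness then follows from flatness plus smoothness of the fibers. Since flatness and smoothness at $x$ are local, we may shrink to affinoids $S=\Spa C$, $Y=\Spa B$, $X=\Spa A$ with $C\to B\to A$. Let $x$ correspond to the maximal ideal $\mfrak\subset A$, with images $\nfrak\subset B$ and $\mathfrak{q}\subset C$; these are maximal because affinoid algebras are Jacobson and the residue fields are finite over $K$. By \cite[Bemerkung 3.8]{Kiehl68}, flatness at $x$ is equivalent to flatness of $B_{\nfrak}\to A_{\mfrak}$. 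The fibers are $p_1^{-1}(s)=\Spa(A\otimes_C k(s))$ and $p_2^{-1}(s)=\Spa(B\otimes_C k(s))$, with $k(s)=C/\mathfrak{q}$ finite over $K$, so no completion is needed. The local rings at $x$ are therefore $A_{\mfrak}/\mathfrak{q}A_{\mfrak}$ and $B_{\nfrak}/\mathfrak{q}B_{\nfrak}$.

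Flatness: we are given that $C_{\mathfrak q}\to A_{\mfrak}$ is flat and that $B_{\nfrak}/\mathfrak q B_{\nfrak}\to A_{\mfrak}/\mathfrak q A_{\mfrak}$ is flat. All rings are noetherian local and the maps are local. The ``crit\`ere de platitude par fibres'' (EGA IV 11.3.10, or Matsumura Thm.~22.5/Cor.) applies to $M=A_{\mfrak}$ as a module over $B_{\nfrak}$, which is finite over $A_\mfrak$ and hence ideal-adically separated. It gives that $M$ is flat over $B_{\nfrak}$ and $B_{\nfrak}$ is flat over $C_{\mathfrak q}$ at once. Alternatively, one can pass to completions via (3) of Kiehl's lemma and use the same local criterion. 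This proves the flatness claim, and by \cite[Satz 3.3]{Kiehl68} it holds in a neighborhood.

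Smoothness: I would use the characterization of smoothness at $x$ as flat at $x$ plus smooth fiber at $x$ over $\Spa k(f(x))$, i.e.\ geometric regularity of the fiber. This is the rigid analogue of EGA IV 17.5.1; if the paper defines smoothness via $\Omega$ and flatness, one uses the corresponding criterion. The fiber of $f$ over the point $f(x)\in Y$ coincides with the fiber of $f\times_S\Spa k(s)$ over $f(x)\in p_2^{-1}(s)$, since $k(f(x))$ is a $k(s)$-algebra. Hence smoothness of $f\times_S \Spa k(s)$ at $x$ gives flatness of the fiber map at $x$ and smoothness of the fiber $f^{-1}(f(x))$ at $x$. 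The flatness step then shows $f$ is flat at $x$, and together with the smooth fiber this gives that $f$ is smooth at $x$.

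The main obstacle is justifying that characterization of smoothness in the rigid setting, e.g.\ through the Jacobian description of smooth morphisms of affinoids or via completed local rings $\hat{\Ocal}$ (formal smoothness of $\hat B_\nfrak\to\hat A_\mfrak$, checked by flatness plus regularity of the closed fiber); I would try to cite a reference such as Huber's book for it.
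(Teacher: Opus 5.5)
Your proposal is correct and is essentially the paper's proof. Flatness comes from the crit\`{e}re de platitude par fibres \cite[Th\'{e}or\`{e}me 11.3.10]{EGA4-3} applied to the local rings; note that you need the form of it that uses only $A_{\mfrak}\neq 0$ and does not assume $Y\to S$ flat, since that is not given. Smoothness then comes from this flatness together with smoothness at $x$ of the fiber $f^{-1}(f(x))$, which coincides with the corresponding fiber of $f\times_S\Spa k(s)$. The rigid-analytic criterion ``smooth at $x$ if and only if flat at $x$ with fiber smooth at $x$'', which you flag as the main obstacle, is exactly \cite[Proposition 2.9]{FRG3}, and that is the reference the paper uses for this step.
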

\begin{proof}
The assertion for flatness follows immediately from \cite[Th\'{e}or\`{e}me 11.3.10]{EGA4-3}.
The case for smoothness then follows from this together with \cite[Proposition 2.9]{FRG3}.
\end{proof}

\begin{corollary}\label{cor:smooth fiber and base}
	Let $f\colon X\to Y$ and $g\colon X\to S$ be morphisms of rigid spaces.
	Assume that $S$ is smooth over $K$.
	Let $x\in X^{\cl}$ be a classical point and set $s=g(x)$.
	If $g$ is flat at $x$ and the restriction $f\vert_{g^{-1}(s)}\colon g^{-1}(s)\to Y$ is smooth at $x$, then $f\colon X\to Y$ is also smooth at $x$.
\end{corollary}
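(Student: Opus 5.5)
We reduce Corollary \ref{cor:smooth fiber and base} to Lemma \ref{lem:fiberwise criterion for flatness and smoothness} by working over the base $S$. Replace $Y$ by $Y\times_K S$, and replace $f$ by the graph morphism $(f,g)\colon X\to Y\times S$; all three are rigid spaces over $S$. The structure map $X\to S$ is $g$, which is flat at $x$ by assumption. Over $s$, the fiber of $(f,g)$ is the morphism
\[g^{-1}(s)\to Y\times_K \Spa k(s),\]
whose first component is the restriction $f\vert_{g^{-1}(s)}$.

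The first step is to show that this fiber morphism is smooth at $x$. Here some care is needed: the hypothesis says $f\vert_{g^{-1}(s)}\colon g^{-1}(s)\to Y$ is smooth at $x$. Since $s$ is classical, $k(s)$ is a finite extension of $K$. The morphism $f\vert_{g^{-1}(s)}$ factors through $Y_{k(s)}=Y\times_K\Spa k(s)$, and the projection $Y_{k(s)}\to Y$ is finite étale because $k(s)/K$ is finite separable. We then pass from smoothness of the composite $g^{-1}(s)\to Y_{k(s)}\to Y$ to smoothness of the first factor $g^{-1}(s)\to Y_{k(s)}$. This is the standard fact that if $h\circ u$ is smooth and $h$ is étale (in particular unramified), then $u$ is smooth. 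One proves it by writing $u$ as the graph $g^{-1}(s)\to g^{-1}(s)\times_Y Y_{k(s)}$, which is an open immersion because $h$ is étale, followed by the base change of $h\circ u$. We expect this to be the main point requiring care. If $K$ is not perfect and $k(s)/K$ is inseparable, we would instead argue via formal smoothness on complete local rings, using the smoothness of $S$ to control $k(s)$.

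Having this, Lemma \ref{lem:fiberwise criterion for flatness and smoothness} applies to $(f,g)\colon X\to Y\times S$ over $S$: the map $p_1=g$ is flat at $x$ and the fiber is smooth at $x$, so $(f,g)$ is smooth at $x$. Finally, $f$ is the composite of $(f,g)$ with the projection $Y\times S\to Y$. This projection is the base change of $S\to\Spa K$ and hence smooth because $S$ is smooth over $K$. Since a composite of smooth morphisms is smooth, $f$ is smooth at $x$.
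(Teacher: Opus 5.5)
Your proposal is correct and follows the paper's proof: apply Lemma~\ref{lem:fiberwise criterion for flatness and smoothness} to $(f,g)\colon X\to Y\times S$ over $S$, then compose with the projection $Y\times S\to Y$, which is smooth because $S$ is.

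The one difference is that you check explicitly that the fiber morphism $g^{-1}(s)\to Y\times_K\Spa k(s)$ is smooth at $x$, a step the paper leaves implicit. Your graph argument settles this whenever $k(s)/K$ is separable, which is automatic in characteristic $0$, the case the paper actually uses. Your remark on inseparable $k(s)$, however, is only a sketch. The composite-smooth-implies-factor-smooth step genuinely fails there: for $L/K$ purely inseparable, the maps $\Spa L\to\Spa(L\otimes_K L)\to\Spa L$ give a smooth composite whose first factor is not smooth. The paper's one-line argument does not address that case either.
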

\begin{proof}
	By applying Lemma \ref{lem:fiberwise criterion for flatness and smoothness} to the morphism $(f,g)\colon X\to Y\times S$ over $S$, we find that $(f,g)$ is smooth at $x$.
	Since $S$ is smooth over $K$, the projection $Y\times S\to Y$ is smooth.
	Therefore, the composition $f\colon X\xrightarrow{(f,g)} Y\times S\to Y$ is smooth at $x$.
\end{proof}

Next, let us introduce the notions of formal smoothness and versality for morphisms of rigid spaces.

\begin{definition}\label{def:formally smooth}
	Let $f\colon X\to Y$ be a morphism of rigid spaces, and let $x\in X^{\cl}$ be a classical point.
	The morphism $f\colon X\to Y$ is \textit{formally smooth} at $x$ if for any square-zero closed immersion $i\colon \Spa A\to \Spa B$ with $A,B$ finite local $K$-algebras, and any commutative diagram
	\[
	\xymatrix{
		\Spa A \ar[r]^i\ar[d] & \Spa B\ar[d]\ar@{-->}[ld]\\
		X\ar[r]_f & Y
	}
	\]
	such that the image of the closed point of $\Spa A$ in $X$ is $x$, there exists a dotted arrow making the diagram commute.
\end{definition}

\begin{theorem}\label{thm:formally smooth implies smooth rigid space}
	Let $f\colon X\to Y$ be a morphism of rigid spaces, and let $x\in X^{\cl}$ be a classical point.
	Then $f$ is smooth at $x$ if and only if $f$ is formally smooth at $x$.
\end{theorem}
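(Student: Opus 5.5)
The plan is to reduce both directions to statements about the completed local rings at $x$ and $y=f(x)$, and then use the smoothness criteria for rigid spaces. The question is local, so we may assume $X=\Spa A$ and $Y=\Spa B$ are affinoid. Let $\mfrak\subset A$ be the maximal ideal of $x$, let $\nfrak\subset B$ be its preimage, and set $k=A/\mfrak$, $k'=B/\nfrak$; these are finite extensions of $K$.

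For "smooth implies formally smooth": after shrinking, a smooth morphism at $x$ factors as an étale morphism $U\to \Bbb^{d}_Y$ (relative closed unit ball) followed by the projection. Lifting along a square-zero thickening $\Spa A'\to\Spa B'$ of finite local $K$-algebras is then handled in two steps.
\begin{itemize}
\item Lifting to $\Bbb^d_Y$ is just choosing lifts of the coordinates. These lifts automatically lie in the unit ball, because the images are nilpotent perturbations of elements that are already power-bounded.
\item Lifting through the étale map uses the étale equivalence $\Hom(\Spa B',U)\cong \Hom(\Spa A',U)\times_{\Hom(\Spa A',V)}\Hom(\Spa B',V)$. Since $B'$ is finite over $K$, a morphism $\Spa B'\to U$ is the same as a continuous $K$-algebra map $\Ocal(U)\to B'$, and this reduces to the algebraic infinitesimal lifting property for the étale map of local rings. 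Equivalently, $\hat{\Ocal}_{U,x}$ is finite étale over the corresponding completed local ring.
\end{itemize}

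For "formally smooth implies smooth", I will show that $\hat B_{\nfrak}\to\hat A_{\mfrak}$ is formally smooth for the $\mfrak$-adic topology, which by Grothendieck's criterion (EGA $0_{\rm IV}$ 19.7.1 / 22.5.8) gives flatness plus geometric regularity of the closed fiber. Given Artinian local test objects over $\hat B_\nfrak$, namely quotients $C'\twoheadrightarrow C$ by a square-zero ideal together with a map $\hat A_\mfrak\to C$, these are finite local $K$-algebras, hence affinoid. Maps $\hat A_\mfrak\to C$ that are continuous with $\mfrak$ nilpotent in $C$ correspond exactly to morphisms $\Spa C\to X$ centered at $x$, since $A\to A/\mfrak^N$ factors through $\hat A_\mfrak$. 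So the rigid lifting property of Definition \ref{def:formally smooth} translates into the algebraic one, and we get a lift $\hat A_\mfrak\to C'$.

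Consequently $\hat B_\nfrak\to \hat A_\mfrak$ is flat, so $f$ is flat at $x$ by Kiehl's lemma above. Moreover the fiber ring $\hat A_\mfrak\otimes k'$ is formally smooth over $k'$, hence regular. This also holds after finite base extension, so $\Ocal_{f^{-1}(y),x}$ is geometrically regular over $k(y)$, i.e.\ the fiber is smooth at $x$. Flatness plus smoothness of the fiber gives smoothness of $f$ at $x$ by \cite[Proposition 2.9]{FRG3}, as used in Lemma \ref{lem:fiberwise criterion for flatness and smoothness}.

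The main obstacle is the fiber step: passing from formal smoothness of complete local rings to smoothness of the rigid fiber when $k(x)/k(y)$ is an arbitrary finite extension (in characteristic $p$, for instance, imperfection of the residue field matters). I would first try to reduce to $k(y)=K$ by base change to $k(y)$. Then I would use the Jacobian criterion in rigid geometry, checking that $\Omega_{X/Y}$ is free of the right rank near $x$, by comparing it with $\hat\Omega_{\hat A_\mfrak/\hat B_\nfrak}$. Freeness of that module follows from formal smoothness, since formal smoothness makes the second fundamental exact sequence split.
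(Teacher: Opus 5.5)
Your route is genuinely different from the paper's. You pass to $\mfrak$-adic formal smoothness of $\hat B_\nfrak\to\hat A_\mfrak$, then use EGA $0_{\mathrm{IV}}$ 19.7.1, Kiehl's lemma and \cite[Proposition 2.9]{FRG3}. This can be made to work, but as written it has two gaps.

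The first is the claim that Artinian local test objects over $\hat B_\nfrak$ are finite local $K$-algebras. This is false: the residue field of such a $C'$ can be any extension of $k'$, e.g.\ $k'(t)$. EGA's notion of formal smoothness quantifies over all discrete test rings, so Definition \ref{def:formally smooth} does not apply to them directly. The gap is repairable, but you must say how. The images of $\hat B_\nfrak$ in $C'$ and of $\hat A_\mfrak$ in $C$ are finite-dimensional over $K$, since they factor through some $B/\nfrak^M$ and $A/\mfrak^N$. Lift a $K$-basis of the image in $C$ and adjoin finitely many elements of $\Ker(C'\to C)$. This gives a finite local $K$-subalgebra $C'_0\subset C'$ such that $C'_0\to C'_0/(C'_0\cap\Ker(C'\to C))$ is a square-zero extension of finite local $K$-algebras through which the whole diagram factors.

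The second gap is more serious. The step ``the fiber ring is geometrically regular, i.e.\ the fiber is smooth at $x$'' is exactly the nontrivial comparison between complete-local formal smoothness and rigid smoothness, and you leave it as a plan. The plan is viable, but look at what it amounts to. Take a presentation $A=B\langle T_1,\ldots,T_n\rangle/J$ and suppose you show that $(J/J^2)\otimes_A\hat A_\mfrak\to\bigoplus_i\hat A_\mfrak\,dT_i$ is split injective. By faithful flatness it is then split injective over $A_\mfrak$, hence on a Zariski neighbourhood of $x$. That is already smoothness of $f$ at $x$, so the detour through EGA 19.7.1, flatness and the fiberwise criterion becomes superfluous.

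The real difficulty in extracting that splitting from the lifting property is relating the finite-level conormal modules $J_N=\Ker\bigl(B\langle T\rangle/(J^2+\mfrak^N)\to A/\mfrak^N\bigr)$ to $J/J^2$. This is precisely what the paper does:
\begin{enumerate}
\item It tests the lifting property only against $B\langle T\rangle/(J^2+\mfrak^N)\to A/\mfrak^N$. These are finite over $K$, so no reduction of test objects is needed.
\item It splits each level using Lemma \ref{lem:differential module 2}.
\item It uses Artin--Rees to identify $J_N\otimes A/\mfrak^{N-c}$ with $(J/J^2)\otimes A/\mfrak^{N-c}$, and Lemma \ref{lem:differential module 1} for the differentials.
\item It concludes with \cite[02HO]{stacks-project} and finite presentation.
\end{enumerate}

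So the paper's argument is the elementary core your sketch still needs. Alternatively you could import it through EGA's formal Jacobian criterion, but then you must also identify $\Omega_{X/Y}\otimes\hat A_\mfrak$ with the completed differentials $\hat\Omega_{\hat A_\mfrak/\hat B_\nfrak}$. What your approach buys is flatness and geometric regularity of the fiber as by-products. Your ``only if'' direction is fine; the paper simply calls it clear.
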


This theorem is a special case of \cite[Proposition 2.9.5]{War17}.
Since we only consider formal smoothness at classical points, the proof is much simpler.
For the reader's convenience, we provide the proof below based on the argument in \cite[02HT]{stacks-project}.
Before giving the proof, we state the following lemmas.
For a morphism $A\to B$ of affinoid $K$-algebras, let $\Omega_{B/A}$ denote the module of continuous differentials defined in \cite[1.6.2]{Hub96}.
For a morphism $X\to Y$ of rigid spaces over $K$, let $\Omega_{X/Y}$ denote the coherent sheaf of continuous differentials obtained by gluing these modules.

\begin{lemma}\label{lem:differential module 1}
	Let $A\to B$ be a morphism of affinoid $K$-algebras, and let $I\subset B$ be an ideal.
	We set $B_n=B/I^n$.
	Then the natural morphism
	\[\Omega_{B/A}\otimes_B B_n\to \Omega_{B_{n+1}/A}\otimes_{B_{n+1}}B_n\]
	is an isomorphism.
\end{lemma}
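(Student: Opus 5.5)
The plan is to reduce to the algebraic analogue, the first fundamental exact sequence for Kähler differentials of a quotient. The continuous differential module $\Omega_{B/A}$ of a morphism of affinoid algebras is a finitely generated $B$-module. By \cite[1.6.2]{Hub96}, if $B$ is a quotient of $A\langle T_1,\dots,T_r\rangle$, it agrees with the completion of the algebraic module of differentials. Concretely, $\Omega_{B/A}$ is generated by $dT_1,\dots,dT_r$ subject to $d$ of the ideal of relations.

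Since $B_{n+1}=B/I^{n+1}$ is itself an affinoid $K$-algebra, finite over $B$, the same description applies to it. We therefore first record the conormal sequence
\[
I^{n+1}/I^{2n+2}\xrightarrow{\ d\ }\Omega_{B/A}\otimes_B B_{n+1}\to \Omega_{B_{n+1}/A}\to 0 .
\]
This follows by presenting $B=A\langle T\rangle/J$ and $B_{n+1}=A\langle T\rangle/(J+I'^{\,n+1})$, where $I'$ is the preimage of $I$. Because all modules involved are finitely generated over noetherian affinoid algebras, completion is harmless: finitely generated modules over affinoid algebras are automatically complete. Hence the algebraic exact sequence for a quotient $P\to P/\mathfrak a$, namely $\mathfrak a/\mathfrak a^2\to \Omega_{P}\otimes P/\mathfrak a\to\Omega_{P/\mathfrak a}\to 0$, carries over verbatim to continuous differentials.

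Next we tensor the sequence with $B_n$ over $B_{n+1}$. Right exactness gives
\[
I^{n+1}/I^{2n+2}\otimes_{B_{n+1}}B_n\to \Omega_{B/A}\otimes_B B_n\to \Omega_{B_{n+1}/A}\otimes_{B_{n+1}}B_n\to 0 .
\]
It then remains to show that the first map vanishes. Its image is generated by elements $df\otimes 1$ with $f\in I^{n+1}$. Writing $f$ as a sum of products $g_0g_1\cdots g_n$ with $g_i\in I$, the Leibniz rule expresses $df$ as a sum of terms, each of which is some $dg_i$ multiplied by a product of $n$ elements of $I$. Each such term lies in $I^n\Omega_{B/A}$, so it becomes $0$ in $\Omega_{B/A}\otimes_B B_n$. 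This gives surjectivity together with injectivity, so the map is an isomorphism.

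The main obstacle is justifying the conormal sequence in the continuous (completed) setting rather than for algebraic Kähler differentials. I will handle this by using the explicit presentation $\Omega_{A\langle T\rangle/A}=\bigoplus A\langle T\rangle\,dT_i$ from \cite{Hub96}. For a quotient $P/\mathfrak a$ of $P=A\langle T\rangle$, the module $\Omega_{(P/\mathfrak a)/A}$ is the cokernel of $\mathfrak a\to \bigoplus (P/\mathfrak a)\,dT_i$, $f\mapsto \sum \partial_i f\,dT_i$. Applying this description to both $\mathfrak a=J$ and $\mathfrak a=J+I'^{\,n+1}$ yields the desired sequence directly, without any further completion arguments.
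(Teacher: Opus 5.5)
Your proposal is correct and is essentially the paper's proof: take the conormal sequence $I^{n+1}/(I^{n+1})^2\to\Omega_{B/A}\otimes_B B_{n+1}\to\Omega_{B_{n+1}/A}\to 0$, tensor it with $B_n$, and use the Leibniz rule to see that $d(x_1\cdots x_{n+1})$ becomes zero in $\Omega_{B/A}\otimes_B B_n$. The only difference is that the paper cites this exact sequence directly from \cite[Proposition 1.6.3]{Hub96}, whereas you rederive it from the explicit presentation of $\Omega_{A\langle T\rangle/A}$.
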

\begin{proof}
	By \cite[Proposition 1.6.3]{Hub96}, there is a natural exact sequence
	\begin{align}\label{eq:second exact sequence}
	I^{n+1}/(I^{n+1})^2\xrightarrow{d} \Omega_{B/A}\otimes_B B_{n+1} \to \Omega_{B_{n+1}/A}\to 0.
    \end{align}
	Note that the composition
    \[I^{n+1}/(I^{n+1})^2\xrightarrow{d} \Omega_{B/A}\otimes_B B_{n+1} \to \Omega_{B/A}\otimes_B B_{n};\; x_1\cdots x_{n+1}\mapsto d(x_1\cdots x_{n+1})\]
    is zero. 
    Therefore, by applying $-\otimes_{B_{n+1}}B_n$ to the exact sequence \eqref{eq:second exact sequence}, we obtain the isomorphism
    \[\Omega_{B/A}\otimes_B B_n\xrightarrow{\sim} \Omega_{B_{n+1}/A}\otimes_{B_{n+1}}B_n.\]
\end{proof}

\begin{lemma}\label{lem:differential module 2}
	Let $A\to B$ be a morphism of affinoid $K$-algebras, and let $I\subset B$ be an ideal.
	Let $\alpha \colon B\to B/I$ be the projection.
	Assume that there exists a morphism $\beta \colon B/I\to B$ of affinoid $A$-algebras such that $\alpha\circ\beta=\id$.
	Then the natural sequence
	\[0\to I/I^2\to \Omega_{B/A}\otimes_{B} B/I \to \Omega_{(B/I)/A}\to 0\]
	is a split exact sequence.
\end{lemma}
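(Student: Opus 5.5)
Since $\alpha\circ\beta=\id$, we have $B=\beta(B/I)\oplus I$ as $A$-modules, and I will prove exactness and the splitting directly. This mirrors the algebraic statement \cite[02HP]{stacks-project}, taking care that all differential modules are the continuous ones of \cite[1.6.2]{Hub96}.

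\textbf{Right exactness.} The conormal sequence
\[I/I^2\xrightarrow{d}\Omega_{B/A}\otimes_B B/I\to\Omega_{(B/I)/A}\to 0\]
is exact by \cite[Proposition 1.6.3]{Hub96}, the same input used in the proof of Lemma \ref{lem:differential module 1}. It remains to prove injectivity of $d$ and to produce a splitting.

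\textbf{Retraction.} I will construct $r\colon\Omega_{B/A}\otimes_B B/I\to I/I^2$ with $r\circ d=\id$. Define
\[D\colon B\to I/I^2,\qquad D(b)=\bigl(b-\beta(\alpha(b))\bigr)\bmod I^2 .\]
\begin{enumerate}
\item $D(b)$ lies in $I$ because $\alpha(b-\beta\alpha(b))=0$.
\item $D$ is $A$-linear because $\beta$ is a morphism of $A$-algebras.
\item $D$ is a derivation into the $B$-module $I/I^2$. Write $b=b_0+i$ and $b'=b_0'+i'$ with $b_0=\beta\alpha(b)$, $b_0'=\beta\alpha(b')$ and $i,i'\in I$. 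Then
\[bb'-\beta\alpha(bb')=b_0i'+b_0'i+ii',\]
and $ii'\in I^2$. Moreover $b_0i'\equiv bi' \pmod{I^2}$, and similarly $b_0'i\equiv b'i \pmod{I^2}$. Hence $D(bb')=bD(b')+b'D(b)$.
\item $D$ is continuous, since $I/I^2$ is a finite $B$-module with its canonical topology and $\beta$, $\alpha$ are continuous.
\end{enumerate}
By the universal property of continuous differentials for affinoid algebras (finite modules carry canonical topologies, so continuous derivations into finite $B$-modules factor through $\Omega_{B/A}$), $D$ induces a $B$-linear map $\Omega_{B/A}\to I/I^2$. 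This map factors through $\otimes_B B/I$ because the target is killed by $I$. Call the result $r$.

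\textbf{Checking $r\circ d=\id$.} For $i\in I$ we have $r(di)=D(i)=i-\beta\alpha(i)=i$. So $d$ is injective and $r$ splits it, and the sequence is split exact.

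\textbf{Main obstacle.} The only delicate point is the continuity and universal-property step. The universal property of $\Omega_{B/A}$ in \cite{Hub96} is stated for continuous derivations into complete (or finite) modules, so I would check that $I/I^2$ is a finitely generated $B/I$-module, which holds since $B$ is noetherian. Then every $B$-linear map between finite modules is automatically continuous, which gives the continuity of $D$.
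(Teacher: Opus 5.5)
Your proof is correct and is essentially the paper's argument. The paper also defines the continuous $A$-derivation $D(x)=x-\beta(\alpha(x))$ into $I/I^2$ and uses the induced map $\Omega_{B/A}\otimes_B B/I\to I/I^2$ as a retraction, leaving implicit the right exactness from \cite[Proposition 1.6.3]{Hub96} that you cite.

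One small wording fix: $D$ is not $B$-linear, so its continuity does not follow from automatic continuity of $B$-linear maps alone. Justify it instead by writing $D$ as the continuous map $\id-\beta\circ\alpha\colon B\to I$ followed by the $B$-linear quotient $I\to I/I^2$; this uses that $I$ is closed, so its subspace topology is the canonical one.
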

\begin{proof}
	The morphism
	\[D\colon B\to I/I^2;\; x\mapsto x-\beta(\alpha(x))\]
	is a continuous $A$-derivation.
	Let $\Omega_{B/A}\to I/I^2$ denote the corresponding $B$-linear morphism.
	This induces a morphism $\Omega_{B/A}\otimes_{B} B/I\to I/I^2$, which gives a retraction of the natural morphism $I/I^2\to \Omega_{B/A}\otimes_{B} B/I$.
\end{proof}

\begin{proof}[Proof of Theorem \ref{thm:formally smooth implies smooth rigid space}]
	The ``only if'' direction is clear.
	Let us prove the ``if'' direction.
	We may assume that $X$ and $Y$ are affinoid rigid spaces, and we set $X=\Spa B$ and $Y=\Spa A$.
	Let $\mfrak\subset B$ be the maximal ideal corresponding to $x$.
	We choose a presentation $B=A\langle X_1,\ldots,X_n\rangle/J$, and we also regard $\mfrak$ as a maximal ideal of $A\langle X_1,\ldots,X_n\rangle$.
	It suffices to show that the natural morphism
	\[d\colon J/J^2\to \Omega_{A\langle X_1,\ldots,X_n\rangle/A}\otimes_{A\langle X_1,\ldots,X_n\rangle}B=\bigoplus_{i=1}^n BdX_i\]
	is a split injection over an open neighborhood of $x$.
	We set $B'=A\langle X_1,\ldots,X_n\rangle/J^2$.
	By Lemma \ref{lem:differential module 1}, we have an isomorphism
	\begin{align*}
		\Omega_{B'/A}\otimes_{B'}B\cong \bigoplus_{i=1}^n BdX_i.
	\end{align*}
	Thus, we need to show that the morphism 
	\begin{align}\label{eq:diff}
	d\colon J/J^2\to\Omega_{B'/A}\otimes_{B'}B
    \end{align}
	is a split injection over an open neighborhood of $x$.

	For $N\geq 2$, we set
	\begin{align*}
		B'_N&=B'/\mfrak^N=A\langle X_1,\ldots,X_n\rangle/(J^2+\mfrak^N),\\
		B_N&=B/\mfrak^N=A\langle X_1,\ldots,X_n\rangle/(J+\mfrak^N),\\
		J_N&=\Ker(B'_N\to B_N).
	\end{align*}
	Since $f$ is formally smooth at $x$, there exists a dotted arrow making the following diagram commute:
	\[
	\xymatrix{
		\Spa B_N \ar[r]\ar[d] & \Spa B'_N \ar[d]\ar@{-->}[ld]\\
		\Spa B \ar[r]_-{f} &\Spa A.
	}
	\]
	In other words, we get a decomposition $B\to B'_N\to B_N=B/\mfrak^N$ of the projection $B\to B_N$.
	Since the preimage of $\mfrak \subset B_N$ in $B'_N=A\langle X_1,\ldots,X_n\rangle/(J^2+\mfrak^N)$ is $J+\mfrak=\mfrak$ (note that $J\subset \mfrak$), the image of $\mfrak\subset B$ under the morphism $B\to B'_N$ is contained in $\mfrak\subset B'_N$. 
	Therefore, the decomposition $B\to B'_N\to B_N$ induces a splitting
	\[B_N\to B'_N\to B_N.\]
	By Lemma \ref{lem:differential module 2}, we get a split exact sequence
	\[0\to J_N \xrightarrow{d} \Omega_{B'_N/A}\otimes_{B'_N} B_N \to \Omega_{B_N/A}\to 0,\]
	where we note that $J_N$ is square-zero in $B'_N$.

	By applying the Artin-Rees lemma to the $\mfrak$-adic filtration $\{J\cap \mfrak^N\}_N$ of $J\subset A\langle X_1,\ldots,X_n\rangle$, we get an integer $c>0$ such that for any $N\geq c$, 
	\begin{align}\label{eq:Artin-Rees}
	J\cap \mfrak^N=\mfrak^{N-c}(J\cap \mfrak^c)	
	\end{align}	
	in $A\langle X_1,\ldots,X_n\rangle$.
	We prove that for any $N\geq c$, the morphism
	\[d\colon J_N\otimes_{B_N}B_{N-c}\to \Omega_{B'_N/A}\otimes_{B'_N}B_{N-c}\]
	coincides with the morphism
	\[d\colon (J/J^2)\otimes_B B_{N-c}\to \Omega_{B'/A}\otimes_{B'}B_{N-c}.\]
	There is a natural commutative diagram
	\[
	\xymatrix{
		(J/J^2)\otimes_B B_{N-c}\ar[r]^-{d}\ar[d] & \Omega_{B'/A}\otimes_{B'}B_{N-c}\ar[d]\\
		J_N\otimes_{B_N}B_{N-c}\ar[r]^-{d}& \Omega_{B'_N/A}\otimes_{B'_N}B_{N-c},
	}
	\]
	so it is enough to show that vertical morphisms are isomorphisms.
	The right vertical morphism is an isomorphism by Lemma \ref{lem:differential module 1}.
	Let us prove the left vertical morphism is an isomorphism.
	We set $L=\Ker(J/J^2\to J_N)$.
	Since we have
	\[J_N=(J+\mfrak^N)/(J^2+\mfrak^N)\cong J/(J\cap(J^2+\mfrak^N)),\]
	we get $L=(J\cap(J^2+\mfrak^N))/J^2.$
	By applying $-\otimes_{B}B_{N-c}$ to the exact sequence
	\[0\to L\to J/J^2\to J_N\to 0,\]
	we get a right exact sequence
	\[L\otimes_{B}B_{N-c} \to (J/J^2)\otimes_{B}B_{N-c}\to J_N\otimes_{B}B_{N-c}\to 0.\]
	Therefore, it suffices to show that the morphism $L\otimes_{B}B_{N-c} \to (J/J^2)\otimes_{B}B_{N-c}$ is zero.
	In other words, we need to show that 
	\[L=(J\cap(J^2+\mfrak^N))/J^2\to (J/J^2)\otimes_{B}B/(J+\mfrak^{N-c})\] 
	is zero.
	This is equivalent to showing that 
	\[J\cap(J^2+\mfrak^N)\subset J(J+\mfrak^{N-c})=J^2+\mfrak^{N-c}J.\]
	Since $J\cap(J^2+\mfrak^N)=J^2+(J\cap \mfrak^N)$, this follows from \eqref{eq:Artin-Rees}.

	By combining the above arguments, the morphism \eqref{eq:diff}
	\begin{align*}
		d\colon J/J^2\to\Omega_{B'/A}\otimes_{B'}B
	\end{align*}
	is a split injection after taking $-\otimes_B B_N$ for any $N$.
	By \cite[02HO]{stacks-project}, \eqref{eq:diff} is a split injection after taking $-\otimes_B B_{\mfrak}$.
	Since the source and target are finitely presented $B$-modules, there exists $g\in B\setminus\mfrak$ such that \eqref{eq:diff} is a split injection after taking $-\otimes_B B_g$.
	In particular, \eqref{eq:diff} is a split injection over the Zariski open neighborhood of $x$ defined by $g\neq 0$.
\end{proof}

%%%%%%%%%%%%%%%%%%%%%%%%%%%%%%%%%%%%%%%%%%%%%%%%%%%%%%%%%%%%%%%%%%%%%%%

\section{Preliminaries on rigid analytic Artin stacks}
Let $K$ be a complete non-archimedean field, and let $\pi$ be a pseudo-uniformizer of $K$.
We often regard a rigid space $S$ as the \'{e}tale sheaf on $\Aff_K^{\op}$ defined by $\Aff_K^{\op} \ni \Spa A\mapsto S(\Spa A)$.
For an \'{e}tale sheaf $\Fcal$ on $\Aff_K^{\op}$ and a rigid space $S$, we set $\Fcal(S)=\Hom(S,\Fcal)$.

\subsection{Basic notions}

\begin{definition}\label{def:quasi-analytic spaces and rigid analytic Artin stacks}
	\begin{enumerate}
	\item A \textit{quasi-analytic space} over $K$ is an \'{e}tale sheaf $\Fcal$ on $\Aff_K^{\op}$ such that
	\begin{enumerate}
		\item The diagonal morphism $\Delta \colon \Fcal\to \Fcal\times \Fcal$ is representable by rigid spaces.
		\item There exists an \'{e}tale surjection $U\twoheadrightarrow \Fcal$ from a rigid space $U$.
	\end{enumerate}
	\item A \textit{rigid analytic Artin stack} over $K$ is an \'{e}tale stack $\Xcal$ on $\Aff_K^{\op}$ such that
	\begin{enumerate}
		\item The diagonal morphism $\Delta \colon \Xcal\to \Xcal\times \Xcal$ is representable by quasi-analytic spaces.
		\item There exists a smooth surjection $U\twoheadrightarrow \Xcal$ from a rigid space $U$.
	\end{enumerate}
	\end{enumerate}
\end{definition}

\begin{remark}
	Quasi-analytic spaces are referred to as \textit{\'{e}tale adic spaces} in \cite{War17}, and as \textit{\'{e}tale spaces} in \cite{OhShimizu25}.
	Rigid analytic Artin stacks are referred to as \textit{adic stacks} in \cite{War17,OhShimizu25}.
\end{remark}

\begin{remark}
	As usual, we can show that quasi-analytic spaces and rigid analytic Artin stacks satisfy fppf descent by using Theorem \ref{thm:smooth surjection etale topology}.
	Therefore, our definitions coincide with those in \cite[Definition 5.1.8 and 5.1.10]{EGH23}.
\end{remark}

\begin{remark}
Some references use the category $\Rig_K$ of rigid spaces over $K$ as a test category. 
Since we only deal with \'{e}tale sheaves and stacks on $\Aff_K^{\op}$ in this paper, one can safely use $\Rig_K$ as a test category as well.
\end{remark}

\begin{remark}
	In contrast to algebraic geometry, an \'{e}tale sheaf which is a rigid analytic Artin stack is not necessarily a quasi-analytic space.
	See Appendix \ref{section:rigid analytic Artin stack and quasi-analytic spaces}.
\end{remark}

Since $\Rig_K$ forms an \textit{admissible category} in the sense of \cite[Definition 2.2.6]{War17}, the above remark allows us to freely apply the notions introduced in \cite[Sections~3.2 and 3.3]{War17} (specifically, \cite[Definitions~3.2.7 and 3.3.2]{War17}).
For properties of morphisms of rigid spaces that are smooth or \'{e}tale local on the (source and) target, see also \cite[Proposition 3.12, Proposition 3.14]{OhShimizu25}.
Although the fact that a locally closed immersion is smooth local on the target is not mentioned above, we will need it later and thus provide a proof here.
\begin{lemma}\label{lem:locally closed immersion smooth local on the target}
	In the category of locally noetherian analytic adic spaces, the property of being a locally closed immersion is smooth local on the target.
\end{lemma}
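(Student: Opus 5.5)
The plan is to reduce to the \'etale-local case, which is classical fpqc descent for closed immersions, by using Appendix B: a smooth surjection of locally noetherian analytic adic spaces admits a section \'etale locally (Theorem \ref{thm:smooth surjection etale topology}).

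Let $f\colon X\to Y$ be a morphism of locally noetherian analytic adic spaces and $g\colon Y'\to Y$ a smooth surjection with $f'\colon X'=X\times_Y Y'\to Y'$ a locally closed immersion; we must show $f$ is. One direction is stability under base change, which is standard.

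\textbf{Step 1: replace the smooth cover by an \'etale one.} By Theorem \ref{thm:smooth surjection etale topology}, there is an \'etale surjection $h\colon Y''\to Y$ together with a morphism $s\colon Y''\to Y'$ with $g\circ s=h$. Pulling back $f'$ along $s$ shows that $X\times_Y Y''\to Y''$ is a locally closed immersion. So it suffices to treat the case where $g$ is an \'etale surjection. Refining further, we may take $g$ to be a disjoint union of compositions of rational open embeddings and finite \'etale maps, by the local structure of \'etale morphisms (Huber).

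\textbf{Step 2: the \'etale case.} Open embeddings are handled by gluing: being a locally closed immersion is local on the target for open covers. For this we use the definition that $f$ factors as a closed immersion into an open subspace, so the open subspaces can be glued as unions.

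The substantive case is a finite \'etale surjection $\Spa B\to\Spa A$. For this I would proceed as follows.
\begin{enumerate}
\item First show that $f$ is injective on points and a homeomorphism onto its image with the correct topology. Take $U'\subset Y'$ open with $X'\to U'$ closed.
\item Let $U\subset Y$ be the complement of $g(Y'\setminus U')$. Since $g$ is finite, hence closed, $U$ is open, and $g^{-1}(U)\subset U'$ still contains $X'$, using surjectivity and the fact that $X'$ is the preimage of $X$.
\item Replacing $Y$ by $U$, we may assume $f'$ is a closed immersion.
\item Then $X$ is quasi-compact over $Y$, and over affinoid $\Spa A$ the space $X$ corresponds, by descent for coherent sheaves along faithfully flat finite $A\to B$, to a coherent $\Ocal_Y$-algebra $\Ocal_X$. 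The map $\Ocal_Y\to f_*\Ocal_X$ is surjective after $\otimes_A B$, hence surjective by faithful flatness.
\item Conclude that $X=\Spa(A/I)$ is a closed immersion, using that finite morphisms are determined by their coherent algebras.
\end{enumerate}

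\textbf{Main obstacle.} I expect the hard part to be the descent step (4)--(5): showing that $X$ itself is affinoid and finite over $Y$. Descent of \emph{finiteness} along \'etale covers of adic spaces is not automatic from scheme theory. I would first try to deduce that $X\to Y$ is finite from its being separated and quasi-compact with finite fibers and partially proper. Failing that, I would argue via the \'etale sheaf property: the coherent sheaf descends by Kiehl/\'etale descent, and $X$ is recovered as the relative $\Spa$ of the descended algebra, because morphisms to $X$ are determined \'etale locally.
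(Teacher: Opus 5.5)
Your first reduction, from the smooth cover to an \'etale cover via Theorem~\ref{thm:smooth surjection etale topology} and base change, is fine. The next reduction fails as stated: you cannot pass from there to rational open embeddings plus finite \'etale surjections.

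Huber's local structure result only writes an \'etale map locally as $U\hookrightarrow W\to V$, with $U$ an open piece of a finite \'etale $W\to V$. Knowing that the pullback of $f$ to $U$ is a locally closed immersion says nothing about the pullback to all of $W$. So you never reach a situation where descent along a finite \'etale surjection applies, and locality for open covers of the target does not help either, since $U$ is not an open cover of anything.

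Producing such a refinement would need substantial extra input, none of which is in your plan:
\begin{enumerate}
\item Galois closures extending residue field extensions, as in Lemma~\ref{lem:fet limit}.
\item Local isomorphy of \'etale maps with trivial residue extension.
\item Translating the good open piece by the Galois group.
\end{enumerate}
Separately, your topological step (2) genuinely uses that $g$ is closed, which is special to finite maps.

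The missing idea is that no finiteness is needed. Smooth (in particular \'etale) morphisms of locally noetherian analytic adic spaces are open \cite[Proposition 1.7.8]{Hub96}, and the paper uses exactly this to run the scheme-theoretic argument of \cite[02YM]{stacks-project}.

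Let $g\colon Y'\to Y$ be open and surjective, and set $T=f(X)$. Then $g^{-1}(\overline{T})=\overline{g^{-1}(T)}$, and $g$ is a quotient map. Since $g^{-1}(T)=f'(X')$ is locally closed, $g^{-1}(\overline{T}\setminus T)=\overline{g^{-1}(T)}\setminus g^{-1}(T)$ is closed, hence $\overline{T}\setminus T$ is closed. Replacing $Y$ by $Y\setminus(\overline{T}\setminus T)$ reduces you to the case where $f'$ is a closed immersion. This works for the smooth cover directly, so Appendix~B is not even needed at this stage.

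For the closed immersion case, your fallback is the right argument, and you should lead with it rather than trying to prove that $X\to Y$ is finite.
\begin{enumerate}
\item Descend the ideal sheaf of $X'\subset Y'$. It is compatible with the descent datum because both pullbacks cut out $X\times_Y(Y'\times_Y Y')$.
\item Identify $X$ with the resulting closed subspace, using that representable functors are sheaves for the \'etale topology. Pass to an \'etale cover first via Theorem~\ref{thm:smooth surjection etale topology} if you want to stay with \'etale descent.
\end{enumerate}
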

\begin{proof}
	This can be proved in the same way as in \cite[02YM]{stacks-project} by noting that smooth morphisms between locally noetherian analytic adic spaces are open maps by \cite[Proposition 1.7.8]{Hub96}.
\end{proof}

Next, we define the notion of classical points of rigid analytic Artin stacks.

\begin{definition}
	Let $\Xcal$ be a rigid analytic Artin stack.
	Let $F_1$ and $F_2$ be finite extensions of $K$, and let $x\colon \Spa F_1\to \Xcal$ and $y\colon \Spa F_2\to \Xcal$ be morphisms.
	We say that $x$ and $y$ are equivalent if there exists a finite extension $F_3$ of $K$ and a commutative diagram
	\[
	\xymatrix{
		\Spa F_3 \ar[r]\ar[d] & \Spa F_2 \ar[d]^-{y}\\
		\Spa F_1 \ar[r]_-{x} &\Xcal.
	}
	\]
	This defines an equivalence relation on the set of morphisms $\Spa F\to \Xcal$ with $F/K$ finite, and we write $\Xcal^{\cl}$ for the set of equivalence classes.
	We refer to an element of $\Xcal^{\cl}$ as a classical point of $\Xcal$.
	By abuse of language, we also call a morphism $\Spa F\to \Xcal$ with $F/K$ finite a classical point of $\Xcal$.
\end{definition}

\begin{remark}
	For a rigid space $X$, the set of classical points of $X$ defined above coincides with the usual one.
\end{remark}

\begin{remark}\label{rem:empty classical point}
	Let $\Xcal$ be a rigid analytic Artin stack, and let $U\twoheadrightarrow \Xcal$ be a smooth surjection from a rigid space $U$.
	We set $R=U\times_{\Xcal}U$.
	Then $R^{\cl}$ defines an equivalence relation on $U^{\cl}$, and the natural map
	\[U^{\cl}/R^{\cl}\to \Xcal^{\cl}\]
	is bijective.
	Since any non-empty rigid space has a classical point, we find that any non-empty rigid analytic Artin stack has a classical point.
\end{remark}

\subsection{Ring theoretic properties of rigid analytic Artin stacks}

We would like to define properties of rigid analytic Artin stacks such as being reduced, normal, Cohen-Macaulay, etc.
The definitions of these notions are straightforward since the rings appearing in rigid geometry are excellent.
\begin{theorem}[{\cite[Satz 3.3.3]{BKKN67}, \cite[Theorem 1.1.3]{Conrad99}, \cite[7.3.2. Proposition 3]{BGR}}]\label{thm:excellence}
	The following hold.
	\begin{enumerate}
		\item An affinoid $K$-algebra $A$ is excellent.
		\item Let $X$ be a rigid space, and let $x\in X$ be a classical point.
		Then the local ring $\Ocal_{X,x}$ is excellent.
		\item For an affinoid rigid space $X=\Spa A$ and a classical point $x\in X$ corresponding to a maximal ideal $\mfrak\subset A$, the natural morphism $A_{\mfrak}\to \Ocal_{X,x}$ of local rings becomes an isomorphism after taking their completions.
	\end{enumerate}
\end{theorem}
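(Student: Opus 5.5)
The statement bundles three classical facts, and the plan is to derive each from standard commutative algebra for Tate algebras rather than prove anything new.

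\emph{Part (1).} First treat $T_n=K\langle X_1,\ldots,X_n\rangle$, then pass to quotients, since excellence is preserved under quotients and every affinoid algebra is a quotient of some $T_n$.

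\begin{enumerate}
\item $T_n$ is noetherian and has finite Krull dimension $n$. Every maximal ideal has height $n$ and residue field finite over $K$. It is regular, because its completions at maximal ideals are formal power series rings $F[[Y_1,\ldots,Y_n]]$.
\item For universal catenarity, I would check that $T_n$ is Cohen--Macaulay and that all its maximal ideals have the same height; this gives catenarity of all finite type $T_n$-algebras.
\item The G-ring property is the crux: the formal fibres of $(T_n)_{\mfrak}$ must be geometrically regular. I would follow Kiehl's approach and verify a Jacobian criterion. Over the non-perfect residue field issues, the continuous derivations $\partial/\partial X_i$ supply enough derivations, via Matsumura's criterion with derivations for regular local rings containing a field of characteristic zero.
\item In positive characteristic, or for non-perfect $K$, use the Noether normalization and finiteness results of BGR to reduce to the Japanese and openness-of-regular-locus properties (Nagata's criterion).
\item The J-2 condition comes from the Jacobian criterion applied to quotients $T_n/J$.
\end{enumerate}

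The main obstacle is exactly step 3, geometric regularity of the formal fibres in full generality of $K$. There I would rely on the cited results of Kiehl and Conrad rather than reprove them.

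\emph{Part (3).} Take $X=\Spa A$ and $x$ corresponding to $\mfrak$. The stalk $\Ocal_{X,x}$ is the direct limit of $A_U$ over rational neighbourhoods $U$ of $x$. Each $A\to A_U$ is flat, and by the standard BGR 7.3.2 argument it induces isomorphisms $A/\mfrak^N\cong A_U/\mfrak^N A_U$. Indeed, $A/\mfrak^N$ is a finite $K$-algebra supported at $x$, so it is already an $A_U$-module and the universal property of rational localizations applies. Passing to the limit shows that $\Ocal_{X,x}$ is local noetherian with maximal ideal $\mfrak\Ocal_{X,x}$ and the same quotients modulo $\mfrak^N$. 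Hence the completions agree.

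\emph{Part (2).} This follows from (1) and (3). $A_{\mfrak}$ is excellent, and $\Ocal_{X,x}$ is noetherian local with $\Ocal_{X,x}\to\hat A_{\mfrak}$ faithfully flat. Its formal fibres are those of $A_{\mfrak}$ transported along the isomorphism of completions, so $\Ocal_{X,x}$ is a G-ring. Universal catenarity follows because $\Ocal_{X,x}$ has a universally catenary (complete) completion and is formally equidimensional where $A_{\mfrak}$ is (Ratliff). J-2 is automatic for local G-rings. Noetherianity of $\Ocal_{X,x}$ itself would be taken from BGR 7.3.2, via the faithful-flatness argument against $\hat A_{\mfrak}$.
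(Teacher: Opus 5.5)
For (1) and (3) you are in line with the paper, which gives no argument of its own: it simply cites Kiehl and Conrad for excellence and BGR~7.3.2 for the completions. In (1) you likewise end up relying on Kiehl and Conrad for the geometric regularity of formal fibres, since your steps~3--4 are not a proof of it. Your (3) is the standard BGR argument; note that noetherianity of $\Ocal_{X,x}$ comes from faithful flatness of $\Ocal_{X,x}\to\hat{A}_{\mfrak}$, not from passage to the limit.

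The genuine gap is in (2). The assertion that the formal fibres of $\Ocal_{X,x}$ ``are those of $A_{\mfrak}$ transported along the isomorphism of completions'' is false. Formal fibres are indexed by primes of the base, and $\Spec\Ocal_{X,x}\to\Spec A_{\mfrak}$ is far from bijective. For instance, take $A=K\langle u,v\rangle/(v^2-u^2(1+u))$ and $x$ the node. Then $A_{\mfrak}$ is a domain, but $\Ocal_{X,x}$ has the two branch primes $(v\mp u\sqrt{1+u})$ over $(0)$, and neither is extended from $A_{\mfrak}$. More generally, the residue fields $k(\mathfrak{q})$ of $\Ocal_{X,x}$ are much larger than $k(\mathfrak{q}\cap A_{\mfrak})$.

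Faithful flatness plus $\hat{A}_{\mfrak}\cong\hat{\Ocal}_{X,x}$ lets the G-ring property descend from $\Ocal_{X,x}$ to $A_{\mfrak}$, but not ascend. Here is a counterexample: let $k$ have characteristic $p$ with $[k:k^p]=\infty$. The DVR $S=k^p[[T]][k]$ is flat over the excellent ring $R=k[T]_{(T)}$, with $\hat{S}=\hat{R}=k[[T]]$. Yet $k((T))$ is purely inseparable of infinite degree over $\mathrm{Frac}(S)$, so $S$ is not a G-ring. Your Ratliff step has the same defect: formal equidimensionality must be checked on $\widehat{\Ocal_{X,x}/\mathfrak{q}}$ for every prime $\mathfrak{q}$ of $\Ocal_{X,x}$, not just for those coming from $A_{\mfrak}$.

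The missing idea is to apply (1) to affinoids other than $A$. Given a prime $\mathfrak{q}\subset\Ocal_{X,x}$, spread its generators to an ideal $J$ of some $A_U$ with $x\in U$, and set $B=A_U/J$. Then $\Ocal_{X,x}/\mathfrak{q}=\Ocal_{\Spa B,x}$ is faithfully flat over the localization $B_x$ of $B$ at $x$, with the same completion by (3). Hence $B_x$ is a domain. The formal fibre of $\Ocal_{X,x}$ at $\mathfrak{q}$ is then a localization of the generic formal fibre of the excellent domain $B_x$, and so is regular.

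In characteristic $0$ this already gives geometric regularity. In characteristic $p$ you must also realize each finite extension of $k(\mathfrak{q})$ as the fraction field of a stalk, at a point over $x$, of an affinoid finite over a neighbourhood of $x$ in $\Spa B$, and run the same argument there. Universal catenarity is simplest via the fact that $\Ocal_{X,x}$ is a quotient of the regular, hence Cohen--Macaulay, ring $\Ocal_{\Bbb^n,x}$. Alternatively, do as the paper does and cite Kiehl and Conrad for (2) as well.
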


Let $\Pcal$ be one of the following properties of noetherian rings: reduced, normal, regular, Gorenstein, Cohen-Macaulay, complete intersection, or Serre's condition $R_i$ or $S_i$. 

\begin{definition}
Let $X$ be a rigid space.
For a classical point $x\in X$, $X$ is said to be $\Pcal$ at $x$ if $\Ocal_{X,x}$ satisfies $\Pcal$.
We say that $X$ is $\Pcal$ if $X$ is $\Pcal$ at every classical point of $X$.
As usual, if $\Pcal$ is ``a complete intersection'', then we use the term ``local complete intersection''.
\end{definition}

\begin{remark}
	By Theorem \ref{thm:excellence}, a rigid space $X$ is $\Pcal$ at a classical point $x\in X$ if and only if the completion $\Ocal_{X,x}^{\wedge}$ satisfies $\Pcal$.
	Moreover, an affinoid rigid space $X=\Spa A$ is $\Pcal$ if and only if $A$ satisfies $\Pcal$.
\end{remark}

\begin{lemma}\label{lem:ring theoretic property descent}
	Let $f\colon X\to Y$ be a smooth morphism of rigid spaces.
	For a classical point $x\in X$, $X$ is $\Pcal$ at $x$ if and only if $Y$ is $\Pcal$ at $f(x)$.
	In particular, if $f$ is surjective, then $X$ is $\Pcal$ if and only if $Y$ is $\Pcal$. 
\end{lemma}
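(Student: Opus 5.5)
The plan is to reduce to the analogous statement for completed local rings and then invoke the standard ascent/descent results for flat local homomorphisms of noetherian local rings with well-behaved fibers. Let $y=f(x)$. Both points are classical: a smooth morphism of rigid spaces is locally of finite type, so it sends classical points to classical points.

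By the Remark following Theorem \ref{thm:excellence}, $X$ is $\Pcal$ at $x$ if and only if $\Ocal_{X,x}^{\wedge}$ is $\Pcal$, and likewise for $Y$ at $y$. So it suffices to describe the map $\Ocal_{Y,y}^{\wedge}\to\Ocal_{X,x}^{\wedge}$ and show that the property ascends and descends along it.

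The description comes from the local structure of smooth morphisms. Locally around $x$, $f$ factors as an étale morphism $U\to Y\times\mathbb{B}^r$ followed by the projection to $Y$. An étale morphism of rigid spaces induces a finite étale extension of completed local rings at classical points, which is flat with regular (finite separable) closed fiber. This extension is $\Ocal^{\wedge}_{Y\times\mathbb{B}^r,z}$ over the image point $z$.

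Next I compute $\Ocal^{\wedge}_{Y\times\mathbb{B}^r,z}$. The residue field $k(z)$ is finite over $k(y)$, and by Theorem \ref{thm:excellence}(3) one can compute with affinoid algebras $A$ and $A\langle T_1,\dots,T_r\rangle$. This gives a flat local homomorphism
\[\Ocal_{Y,y}^{\wedge}\to \Ocal_{X,x}^{\wedge}\]
whose closed fiber is a regular local ring, namely a completed local ring of a smooth rigid space over $k(y)$ at a classical point. In the cleanest case $k(z)=k(y)$, this fiber is a power series ring $k(y)[[T_1,\dots,T_r]]$. In general it is the completion of a regular affinoid algebra at a maximal ideal.

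Given this, the conclusion follows from the standard results on a flat local homomorphism $R\to S$ of noetherian local rings.
\begin{itemize}
\item Descent: if $S$ is $\Pcal$, then $R$ is $\Pcal$ for every property on the list, by faithful flatness (Matsumura, Theorem 23.9 and its analogues, and Avramov for complete intersections).
\item Ascent: if $R$ is $\Pcal$ and all fibers $S\otimes_R k(\mathfrak{p})$ are $\Pcal$ (regular, resp.\ geometrically regular for reducedness and normality), then $S$ is $\Pcal$. This uses Matsumura 23.9 for $R_i$, $S_i$, reduced, normal and Cohen–Macaulay; regularity ascent; Gorenstein ascent via closed-fiber Gorensteinness; and the complete intersection property, which depends only on the closed fiber being a complete intersection.
\end{itemize}

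The main obstacle is verifying that all fibers over non-closed primes of $\Ocal_{Y,y}^{\wedge}$ are regular, not merely the closed fiber. I would handle this by passing to the excellent local rings $A_{\mathfrak n}\to B_{\mathfrak m}$ for a smooth affinoid map $A\to B$, using Theorem \ref{thm:excellence}. There the fibers are localizations of $B\otimes_A k(\mathfrak p)$. These are regular because smoothness of $A\to B$ in the rigid sense means $\Omega_{B/A}$ is locally free of the correct rank and $B$ is flat over $A$, which implies $A\to B$ is a regular homomorphism. Then excellence lets us compare with completions, whose formal fibers are geometrically regular.

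The ``in particular'' statement is immediate. If $f$ is surjective, every classical point of $Y$ is the image of a classical point of $X$: the fiber over a classical point is a nonempty rigid space over a finite extension, hence has a classical point. Applying the pointwise equivalence at each such point gives the global equivalence.
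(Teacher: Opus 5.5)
Your proof ends up being the paper's proof. The paper simply observes that, since $f$ is smooth, the local homomorphism $\Ocal_{Y,f(x)}\to\Ocal_{X,x}$ is regular, and then applies \cite[Theorems 23.9 and 32.2]{Matsumura-Comm-Ring} and \cite[1.9.2]{Avramov77}.

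The detour through $\Ocal^{\wedge}_{Y,y}\to\Ocal^{\wedge}_{X,x}$ and the computation of its closed fibre is therefore unnecessary. Once you know $A_{\nfrak}\to B_{\mfrak}$ is regular, apply the ascent/descent results to this map directly. Then use the Remark after Theorem~\ref{thm:excellence} to pass between $B_{\mfrak}$, $\Ocal_{X,x}$ and $\Ocal_{X,x}^{\wedge}$. Your sentence about ``comparing with completions'' via excellence does not, as written, transfer regularity from $A_{\nfrak}\to B_{\mfrak}$ to $\hat{A}_{\nfrak}\to\hat{B}_{\mfrak}$, but you never need that.

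The one substantive step is the regularity of $A_{\nfrak}\to B_{\mfrak}$, and the reason you give for it does not work. Here $\Omega_{B/A}$ is the module of continuous differentials. For a non-maximal prime $\pfrak\subset A$, the fibre $B\otimes_A k(\pfrak)$ is in general neither affinoid nor of finite type over $k(\pfrak)$. Also, $\Omega_{B/A}\otimes_A k(\pfrak)$ is not its module of K\"ahler differentials over $k(\pfrak)$. So ``flat with $\Omega_{B/A}$ locally free of the right rank'' does not feed into any Jacobian-type criterion for those fibres.

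What works is to use only the closed fibre:
\begin{enumerate}
\item The map $A_{\nfrak}\to B_{\mfrak}$ is flat.
\item Its closed fibre is the local ring at a classical point of the smooth $k(\nfrak)$-affinoid $B\otimes_A k(\nfrak)$. This is geometrically regular over $k(\nfrak)$, since every finite base change is again smooth, hence regular.
\item By EGA $0_{\mathrm{IV}}$ (19.7.1), the map is therefore formally smooth for the adic topologies.
\item Since $A_{\nfrak}$ is excellent by Theorem~\ref{thm:excellence}, Andr\'e's theorem on the localization of formal smoothness shows that $A_{\nfrak}\to B_{\mfrak}$ is regular. That is, the fibres over all primes of $A_{\nfrak}$, not only the closed one, are geometrically regular.
\end{enumerate}
The same argument applies verbatim to your completed map $\Ocal^{\wedge}_{Y,y}\to\Ocal^{\wedge}_{X,x}$, whose source is complete and hence excellent. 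With this replacement your argument is complete and coincides with the paper's.
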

\begin{proof}
	Since $f$ is smooth, the morphism $\Ocal_{Y,f(x)}\to \Ocal_{X,x}$ of local rings is regular.
	Therefore, the claim follows from \cite[Theorems 23.9 and 32.2]{Matsumura-Comm-Ring} and \cite[1.9.2]{Avramov77}.
\end{proof}

For later use, we record the following corollary.
\begin{corollary}\label{cor:nilradical}
Let $f\colon X\to Y$ be a smooth morphism of rigid spaces, and let $\sqrt{(0)}_Y$ be the nilradical of $\Ocal_Y$.
Then $f^*\sqrt{(0)}_Y$ is the nilradical of $\Ocal_{X}$.
\end{corollary}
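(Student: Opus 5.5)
The claim is local on $X$, so I would first reduce to the affinoid case. We may take $X=\Spa B$ and $Y=\Spa A$ with $f$ induced by $A\to B$. The nilradical of $\Ocal_Y$ is the coherent ideal sheaf attached to the nilradical $\sqrt{(0)}_A\subset A$. Its pullback $f^*\sqrt{(0)}_Y$ is, by definition, the image of $\sqrt{(0)}_A\otimes_A \Ocal_X\to \Ocal_X$, i.e.\ the ideal sheaf attached to $\sqrt{(0)}_A B$; here I use that $f$ is flat, so the pullback of an ideal sheaf is again an ideal sheaf. The nilradical of $\Ocal_X$ is likewise the coherent ideal attached to $\sqrt{(0)}_B$. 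It therefore suffices to show $\sqrt{(0)}_A B=\sqrt{(0)}_B$, and in fact only after localizing at classical points of $\Spa B$. Both sides are finitely generated ideals of the noetherian ring $B$, and maximal ideals of $B$ are the classical points, so equality at all $B_{\mfrak}$ gives equality globally.

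The inclusion $\sqrt{(0)}_A B\subset \sqrt{(0)}_B$ is clear. For the reverse inclusion, consider the closed immersion $Y_{\red}=\Spa(A/\sqrt{(0)}_A)\to Y$ and its base change $X'=\Spa(B/\sqrt{(0)}_A B)\to X$. The morphism $X'\to Y_{\red}$ is a base change of $f$, hence smooth. Since $Y_{\red}$ is reduced at every classical point, Lemma \ref{lem:ring theoretic property descent} (with $\Pcal$ being ``reduced'') shows that $X'$ is reduced at every classical point. Thus $B/\sqrt{(0)}_A B$ is reduced by the remark following the definition of $\Pcal$, and so $\sqrt{(0)}_B\subset \sqrt{(0)}_A B$.

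The only step needing care is the identification of $f^*\sqrt{(0)}_Y$ with the ideal $\sqrt{(0)}_A B$, together with the claim that this is compatible with the sheaf-level nilradical on rational subdomains. This holds because nilradicals of affinoid algebras are compatible with flat localizations to rational subdomains: apply the same argument to the smooth, hence flat, open immersions.
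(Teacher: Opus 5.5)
Your proposal is correct and follows essentially the same route as the paper. The paper reduces to showing that $Y^{\red}\times_Y X$ is reduced and obtains this from Lemma \ref{lem:ring theoretic property descent}, applied to the smooth base change $Y^{\red}\times_Y X\to Y^{\red}$. Your additional bookkeeping — identifying $f^*\sqrt{(0)}_Y$ with $\sqrt{(0)}_A B$ via flatness, and checking compatibility with rational localization — makes explicit what the paper leaves implicit.
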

\begin{proof}
	Let $Y^{\red}$ be the reduction of $Y$.
	It suffices to show that $Y^{\red}\times_Y X$ is reduced, which follows from Lemma \ref{lem:ring theoretic property descent}.
\end{proof}

\begin{definition}
	Let $\Xcal$ be a rigid analytic Artin stack, and $x\in \Xcal^{\cl}$ be a classical point.
	We say that $\Xcal$ is $\Pcal$ at $x$ if there exist a smooth morphism $f\colon U\to \Xcal$ from a rigid space $U$ and a classical point $y\in U^{\cl}$ such that $f(y)=x$ and $U$ is $\Pcal$ at $y$.
	We say that $\Xcal$ is $\Pcal$ if $\Xcal$ is $\Pcal$ at every classical point of $\Xcal$.
\end{definition}

\begin{remark}
	Let $\Xcal$ be a rigid analytic Artin stack, and let $x\in \Xcal^{\cl}$ be a classical point.
	By Lemma \ref{lem:ring theoretic property descent}, if $\Xcal$ is $\Pcal$ at $x$, then for any smooth morphism $f\colon U\to \Xcal$ from a rigid space $U$ and a classical point $y\in U^{\cl}$ with $f(y)=x$, $U$ is $\Pcal$ at $y$.
	Similarly, if $\Xcal$ is $\Pcal$, then for any smooth morphism $f\colon U\to \Xcal$ from a rigid space $U$, $U$ is $\Pcal$.
\end{remark}

\begin{lemma}\label{lem:lci vs gci}
    Let $X=\Spa A$ be an affinoid rigid space.
    Then $X$ is a local complete intersection if and only if there exists an affinoid open covering $\{U_i=\Spa A_i\}_{i \in I}$ of $X$ such that each $A_i$ is isomorphic to a quotient of the form 
	\[K\langle X_1,\ldots, X_n\rangle/(f_1,\ldots,f_m)\]
	where $f_1,\ldots,f_m$ is a Koszul-regular sequence in $K\langle X_1,\ldots, X_n\rangle$ (i.e., the Koszul complex $K_{\bullet}(f_1,\ldots,f_m)$ is acyclic in positive degrees).
\end{lemma}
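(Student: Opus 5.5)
The plan is to reduce both directions to commutative algebra at classical points. The tools are the excellence results of Theorem \ref{thm:excellence}, the fact that $K\langle X_1,\ldots,X_n\rangle$ is regular, and the standard characterization of complete intersection local rings.

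\emph{``If'' direction.} Suppose $A_i\cong K\langle X_1,\ldots,X_n\rangle/(f_1,\ldots,f_m)$ with $f_1,\ldots,f_m$ Koszul-regular. Let $x\in U_i$ be a classical point, corresponding to a maximal ideal $\mfrak\subset A_i$, and let $\tilde{\mfrak}$ be its preimage in $P=K\langle X_1,\ldots,X_n\rangle$. Localization is exact, so the Koszul complex stays acyclic over $P_{\tilde{\mfrak}}$. Over a noetherian local ring, acyclicity of the Koszul complex in positive degrees implies that those $f_j$ lying in $\tilde{\mfrak}$ form a regular sequence. The $f_j$ not in $\tilde{\mfrak}$ are units, so they can be dropped, since the sequence generates the unit ideal locally.

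Hence $(A_i)_{\mfrak}$ is a regular local ring $P_{\tilde{\mfrak}}$ modulo a regular sequence, so it is a complete intersection. By Theorem \ref{thm:excellence}(3), $\Ocal_{X,x}$ has the same completion, and being a complete intersection is detected on completions. Therefore $X$ is a local complete intersection at $x$.

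\emph{``Only if'' direction.} Suppose $X=\Spa A$ is a local complete intersection, and choose a presentation $A=P/J$ with $P=K\langle X_1,\ldots,X_n\rangle$. Fix a maximal ideal $\mfrak$. Since $(P/J)_{\mfrak}$ is a complete intersection and $P_{\tilde{\mfrak}}$ is regular, Avramov's theorem (or \cite[Theorem 21.2]{Matsumura-Comm-Ring} type statements) shows that $J_{\tilde{\mfrak}}$ is generated by a regular sequence. Any minimal set of generators of an ideal of a regular local ring that can be generated by a regular sequence is again a regular sequence. So we may choose $f_1,\ldots,f_m\in J$ whose images minimally generate $J_{\tilde{\mfrak}}$ and form a regular sequence in $P_{\tilde{\mfrak}}$.

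Now spread this out. Both $J/(f_1,\ldots,f_m)$ and the Koszul homology $H_{>0}(K_\bullet(f_1,\ldots,f_m))$ are finitely generated $P$-modules, and they vanish after localizing at $\tilde{\mfrak}$. Hence there is $g\in P\setminus\tilde{\mfrak}$ killing all of them. To get an affinoid covering, pass to a Laurent neighborhood $\{|g|\geq |\pi|^k\}$, presented as $P\langle Y\rangle/(gY-\pi^k)$. This neighborhood contains $x$ once $k$ is large, because $g(x)\neq 0$.

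Then $A\langle Y\rangle/(gY-\pi^k)$ is the quotient of $P\langle Y\rangle=K\langle X_1,\ldots,X_n,Y\rangle$ by $(f_1,\ldots,f_m,gY-\pi^k)$. Since $g$ is invertible in $P'=P\langle Y\rangle/(gY-\pi^k)$, the $f_j$ generate $JP'$. It remains to check Koszul-regularity of $(gY-\pi^k,f_1,\ldots,f_m)$ in $P\langle Y\rangle$:
\begin{itemize}
\item $gY-\pi^k$ is a nonzerodivisor on the domain $P\langle Y\rangle$;
\item $P\to P'$ is flat (rational localization), so the Koszul complex of the $f_j$ on $P'$ is $K_\bullet(f)\otimes_P P'$, whose positive homology is killed by the unit $g$ and therefore vanishes;
\item combining these via the standard long exact sequence for Koszul complexes gives Koszul-regularity of the full sequence.
\end{itemize}
Finally, quasi-compactness of $X$ lets us cover it by finitely many such neighborhoods; points that are not classical lie in the closure of their classical specializations, and the neighborhoods we constructed are open.

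The main obstacle is this last spreading-out step. One has to make sure that a Zariski-localization $g\neq 0$ can be replaced by an actual affinoid presentation of the required form, and that Koszul-regularity survives the adjoined relation $gY-\pi^k$. That is why the argument relies on flatness of rational localizations together with the fact that Koszul homology is annihilated by the ideal the sequence generates.
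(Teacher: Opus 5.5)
Your route is the paper's. You present $A=P/J$, use that $J_{\tilde{\mfrak}}$ is generated by a regular sequence, spread out to some $g\notin\tilde{\mfrak}$, and pass to the Laurent domain $\{|g|\ge|\pi|^k\}$ presented as $K\langle X_1,\ldots,X_n,Y\rangle/(gY-\pi^k,f_1,\ldots,f_m)$. The algebra you supply in place of the paper's ``standard limit argument'' is correct: annihilators of $J/(f)$ and of the Koszul homology, flatness of $P\to P\langle Y\rangle/(gY-\pi^k)$, and $H_i(h,f;R)\cong H_i(f;R/hR)$ for a nonzerodivisor $h$. The ``if'' direction is also correct; your remark about $f_j\notin\tilde{\mfrak}$ is vacuous, since $\tilde{\mfrak}\supset(f_1,\ldots,f_m)$.

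The gap is the final covering step. You fix one $k=k(x)$ per classical point and argue that non-classical points ``lie in the closure of their classical specializations''. That is false. A classical point is a closed point of $X$ with no proper generization, so no non-classical point is related to a classical point by specialization. Moreover, an open subset containing $X^{\cl}$ need not be all of $X$: remove a rank-two point from $\Spa K\langle T\rangle$.

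With one $k$ per point the covering can genuinely fail. Take $K=\Cbb_p$ and $A=P=K\langle T\rangle$, so $J=0$ and any $g\notin\tilde{\mfrak}$ is allowed. Let $\zeta$ be the point defined by nested discs $D(c_j,r_j)\subset\Ocal_K$ with empty intersection, i.e.\ $|f(\zeta)|=\lim_j\sup_{D(c_j,r_j)}|f|$. For each classical $x$, choose $i$ with $|x-c_i|>r_i$ and set $g_x=a(T-c_i)$ with $|a|\,|x-c_i|=|\pi|^k$. Then $|g_x(\zeta)|\le|a|r_i<|\pi|^k=|g_x(x)|$, so none of your chosen neighbourhoods contains $\zeta$.

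The repair is what the paper means by ``varying $N$ and $\mfrak$'': keep all $k$. If $y\in X$ has $|g(y)|\neq0$, continuity of $y$ gives $|\pi(y)|^k<|g(y)|$ for $k\gg0$. Hence $\bigcup_k\{|g|\ge|\pi|^k\}=\{y:|g(y)|\neq0\}$. The support of any $y\in X$ is a prime ideal contained in some maximal ideal $\mfrak$, so $|g_{\mfrak}(y)|\neq0$. Thus the family $\{|g_{\mfrak}|\ge|\pi|^k\}_{\mfrak,k}$ covers $X$; the statement does not require a finite subcover.
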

\begin{proof}
	The ``if'' direction is clear. Let us prove the ``only if'' direction.
	We choose a surjection $K\langle X_1,\ldots, X_n\rangle \to A$, and let $I$ be its kernel.
	Let $\mfrak \subset A$ be a maximal ideal, and let $\nfrak \subset K\langle X_1,\ldots, X_n\rangle$ be its preimage.
	Then we have 
	\[K\langle X_1,\ldots, X_n\rangle_{\nfrak}/I_{\nfrak}\cong A_{\mfrak}.\]
	Since $A$ is a local complete intersection and $K\langle X_1,\ldots, X_n\rangle_{\nfrak}$ is a regular local ring, $I_{\nfrak}$ is generated by a regular sequence $f_1,\ldots,f_m\in K\langle X_1,\ldots, X_n\rangle_{\nfrak}$.
	By a standard limit argument, we can take an element $g\in K\langle X_1,\ldots, X_n\rangle$ such that $g\notin \nfrak$ and the sequence $f_1,\ldots,f_m\in K\langle X_1,\ldots, X_n\rangle[1/g]$ generates $I[1/g]$ and forms a Koszul-regular sequence.
	Then for any $N>0$, we have
	\[K\langle X_1,\ldots, X_n\rangle\langle \pi^N/g\rangle/(f_1,\ldots,f_m) \cong A\langle \pi^N/g\rangle,\]
	where $f_1,\ldots,f_m$ is a Koszul-regular sequence in $K\langle X_1,\ldots, X_n\rangle\langle \pi^N/g\rangle$.
	Hence, we obtain an isomorphism
	\[K\langle X_1,\ldots, X_n,X_{n+1}\rangle/(gX_{n+1}-\pi^N,f_1,\ldots,f_m) \cong A\langle \pi^N/g\rangle,\]
	and $gX_{n+1}-\pi^N,f_1,\ldots,f_m$ is a Koszul-regular sequence in $K\langle X_1,\ldots, X_n,X_{n+1}\rangle$.
	By varying $N$ and $\mfrak$, we obtain the desired affinoid open covering.
\end{proof}

\subsection{Coherent sheaves and Fitting ideals}

\begin{definition}\label{def:coherent sheaves}
	Let $\Xcal$ be a rigid analytic Artin stack.
	We define the category $\Coh(\Xcal)$ of coherent sheaves on $\Xcal$
	by
	\[\Coh(\Xcal)=\varprojlim_{\Spa A\to \Xcal}\Coh(A),\]
	where the limit is taken over all morphisms from affinoid rigid spaces $\Spa A$ to $\Xcal$.
	For a morphism $f\colon \Ycal\to \Xcal$ of rigid analytic Artin stacks, let $f^*\colon \Coh(\Xcal)\to \Coh(\Ycal)$ denote the pullback functor.
\end{definition}

Since the categories of coherent sheaves on rigid spaces satisfy \'{e}tale descent, this definition is well-behaved.
Moreover, since smooth morphisms are flat, $\Coh(\Xcal)$ is an abelian category (see the following explicit description).

\begin{example}
	\begin{enumerate}
		\item Let $X$ be a quasi-analytic space over $K$, and $U\twoheadrightarrow X$ be an \'{e}tale surjection from a rigid space $U$.
		Then we have
		\[\Coh(X)\cong\varprojlim(\Coh(U)\rightrightarrows \Coh(U\times_{X}U)\triplerightarrow\Coh(U\times_X U\times_X U)).\]
		More explicitly, $\Coh(X)$ is the category of pairs $(F,\tau\colon p_1^*F\simeq p_2^*F)$ consisting of
		\begin{itemize}
			\item A coherent sheaf $F$ on $U$,
			\item An isomorphism $\tau\colon p_1^*F\simeq p_2^*F$ on $U\times_X U$ satisfying the cocycle condition, where $p_i\colon U\times_X U\to U$ is the $i$-th projection.
		\end{itemize}
		\item Let $\Xcal$ be a rigid analytic Artin stack over $K$, and let $U\twoheadrightarrow \Xcal$ be a smooth surjection from a rigid space $U$.
		Then we have
		\[\Coh(\Xcal)\cong\varprojlim(\Coh(U)\rightrightarrows \Coh(U\times_{\Xcal}U)\triplerightarrow\Coh(U\times_{\Xcal} U\times_{\Xcal} U)).\]
	\end{enumerate}
\end{example}

\begin{definition}
	Let $n$ be a non-negative integer.
	A coherent sheaf $M$ on $\Xcal$ is a vector bundle (of rank $n$) if for every morphism $f\colon \Spa A\to \Xcal$, $\Gamma(\Spa A,f^*M)$ is a finite projective $A$-module (of rank $n$).
\end{definition}
\begin{remark}
	Let $\Xcal$ be a rigid analytic Artin stack over $K$, and let $f\colon U\twoheadrightarrow \Xcal$ be a smooth surjection from a rigid space $U$. 
	Then a coherent sheaf $M$ on $\Xcal$ is a vector bundle (of rank $n$) if and only if $f^*M$ on $U$ is a vector bundle (of rank $n$).
\end{remark}

\begin{definition}
	Let $\Xcal$ be a rigid analytic Artin stack.
	An \textit{ideal sheaf} of $\Xcal$ is a coherent $\Ocal_{\Xcal}$-submodule of $\Ocal_{\Xcal}$.
\end{definition}

\begin{example}\label{example:nilradical}
	\begin{enumerate}
	\item Let  $X$ be a quasi-analytic space, and choose an \'{e}tale surjection $U\twoheadrightarrow X$ from a rigid space $U$.
	Let $p_i\colon U\times_{X} U\to U$ be the $i$-th projection.
	Let $\sqrt{(0)}_U$ be the nilradical of $\Ocal_U$.
	Then $p_i^*\sqrt{(0)}_U$ is the nilradical of $\Ocal_{U\times_{X}U}$ by Corollary \ref{cor:nilradical}.
	Therefore, $\sqrt{(0)}_U$ descends to an ideal sheaf of $X$, which we refer to as the \textit{nilradical} of $\Ocal_{X}$.
	The nilradical of $\Ocal_{X}$ is independent of the choice of the \'{e}tale surjection $U\twoheadrightarrow X$.
	\item Let $\Xcal$ be a rigid analytic Artin stack, and choose a smooth surjection $U\twoheadrightarrow \Xcal$ from a rigid space $U$.
    Since $U\times_{\Xcal}U$ is a quasi-analytic space, the preceding construction, together with Corollary \ref{cor:nilradical}, shows that $\sqrt{(0)}_U$ descends to an ideal sheaf $\sqrt{(0)}_{\Xcal}$, independently of the choice of $U$.
	We call $\sqrt{(0)}_{\Xcal}$ the \textit{nilradical} of $\Ocal_{\Xcal}$.
	\end{enumerate}
\end{example}

As usual, we can prove the following.

\begin{lemma}
	Let $\Xcal$ be a rigid analytic Artin stack.
	Then there is a natural bijection between the set of Zariski closed substacks of $\Xcal$ and the set of ideal sheaves of $\Xcal$.
\end{lemma}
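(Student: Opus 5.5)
The plan is to prove the bijection by smooth descent, reducing to the corresponding statement for affinoid rigid spaces. First I will fix the definitions. A Zariski closed substack $\Zcal\subset\Xcal$ is a substack such that for every morphism $\Spa A\to \Xcal$ the base change $\Zcal\times_{\Xcal}\Spa A\to\Spa A$ is a Zariski closed immersion, i.e.\ it is isomorphic to $\Spa A/I_A$ for a (necessarily unique) ideal $I_A\subset A$.

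\textbf{From substacks to ideal sheaves.} Given $\Zcal$, I send it to the compatible family $(I_A)_{\Spa A\to\Xcal}$. The key compatibility is that for $\Spa B\to\Spa A$ over $\Xcal$ the fibre product is $\Spa B/I_AB$. Hence the image of $I_A\otimes_A B\to B$ is $I_B$. This does not yet give an object of $\Coh(\Xcal)$ on the nose, since $\Coh(\Xcal)$ is defined as a limit along pullbacks $-\otimes_A B$, and $I_A\otimes_A B\to B$ need not be injective when $B$ is not flat over $A$.

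To handle this, I will compute $\Coh(\Xcal)$ via a smooth atlas $U\twoheadrightarrow\Xcal$, using the explicit description in the Example: descent data on $U$, on $U\times_{\Xcal}U$, and on the triple product. Since $U\times_{\Xcal}U$ is only a quasi-analytic space, I also choose an étale atlas of it. All maps in this presentation are flat, so restriction of ideals along them is compatible with pullback of coherent sheaves: for $A\to B$ flat, $I_A\otimes_A B\cong I_AB$.

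Thus an ideal sheaf of $\Xcal$ is the same thing as an ideal sheaf $I_U\subset\Ocal_U$ satisfying $p_1^*I_U=p_2^*I_U$ inside $\Ocal_{U\times_{\Xcal}U}$; the cocycle condition is automatic because these are subsheaves of $\Ocal$. The same is true of Zariski closed substacks: since closed immersions satisfy étale and smooth descent (Lemma \ref{lem:locally closed immersion smooth local on the target}, together with fppf descent for the stack), $\Zcal$ is determined by $\Zcal\times_{\Xcal}U\subset U$. That closed subspace is in turn determined by the ideal $I_U$ with $p_1^*I_U=p_2^*I_U$, because closed subspaces of rigid spaces correspond bijectively to coherent ideal sheaves.

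\textbf{Inverse map.} Given an ideal sheaf $\Ical$ of $\Xcal$, I will define $\Zcal(\Spa A)$ as the groupoid of $x\in\Xcal(\Spa A)$ such that the image of $x^*\Ical\to A$ is zero. I then check three things:
\begin{enumerate}
\item this defines an étale substack, which follows because the vanishing condition is étale local;
\item its base change to any $\Spa A$ is $\Spa A/\Im(x^*\Ical\to A)$, so $\Zcal$ is Zariski closed;
\item the two constructions are mutually inverse, which is checked on the atlas, where it reduces to the rigid-space statement.
\end{enumerate}

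\textbf{Main obstacle.} I expect the hardest point to be the subtlety already noted: for a non-flat test map, the pullback $x^*\Ical$ does not embed in $A$. The fix is to always work with the image ideal $\Im(x^*\Ical\to A)$ and to verify the correspondence only on flat, in fact smooth or étale, charts. Independence of the chosen atlas then follows as in Example \ref{example:nilradical}, by comparing two atlases through their fibre product.
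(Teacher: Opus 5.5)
Your proposal is correct and is the standard smooth-descent argument the paper has in mind when it says ``As usual, we can prove the following''; the paper gives no further proof. Your handling of non-flat test maps, by using the image ideal $\Im(x^*\Ical\to A)$ and comparing ideals only on flat (smooth or \'{e}tale) charts, is exactly the point that needs care there.
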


\begin{definition}
	Let $\Xcal$ be a rigid analytic Artin stack. 
	The \textit{reduction} $\Xcal^{\red}$ is the Zariski closed substack defined by the nilradical of $\Ocal_{\Xcal}$.
\end{definition}

\begin{definition}
	Let $\Xcal$ be a rigid analytic Artin stack, and let $I$ be an ideal sheaf of $\Xcal$.
	The \textit{reduced Zariski closed substack} of $\Xcal$ defined by $I$ is the reduction of the Zariski closed substack defined by $I$.
\end{definition}

\begin{lemma}\label{lem:Zariski closed inclusion classical points}
	Let $\Xcal$ be a rigid analytic Artin stack, and let $Z_1,Z_2\subset \Xcal$ be Zariski closed substacks.
	Assume that $Z_1$ is reduced.
	If we have $Z_1^{\cl}\subset Z_2^{\cl}$, then $Z_1\subset Z_2$.
\end{lemma}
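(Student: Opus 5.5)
We reduce to the case of an affinoid rigid space and then apply the rigid analytic Nullstellensatz. Let $I_1,I_2\subset \Ocal_{\Xcal}$ be the ideal sheaves of $Z_1,Z_2$. The bijection between Zariski closed substacks and ideal sheaves is inclusion-reversing, so the claim $Z_1\subset Z_2$ is equivalent to $I_2\subset I_1$. This inclusion of coherent subsheaves of $\Ocal_{\Xcal}$ can be checked after pullback along a smooth surjection $f\colon U\twoheadrightarrow \Xcal$ from a rigid space $U$. Indeed, $\Coh(\Xcal)$ is computed by descent along $U$, and $f^*$ is exact since $f$ is flat. Hence it suffices to show $f^*I_2\subset f^*I_1$, i.e.\ $Z_2\times_{\Xcal}U\supset Z_1\times_{\Xcal}U$ as Zariski closed subspaces of $U$. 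Covering $U$ by affinoids, we may assume $U=\Spa A$.

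Next we check that the hypotheses transfer to $U$. Set $W_i=Z_i\times_{\Xcal}U$. The projection $W_1\to Z_1$ is smooth, being a base change of $f$. Since $Z_1$ is reduced, $W_1$ is reduced: by definition of reducedness for stacks together with Lemma \ref{lem:ring theoretic property descent}, every rigid space smooth over a reduced stack is reduced.

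We also need $W_1^{\cl}\subset W_2^{\cl}$ inside $U^{\cl}$. Let $u\in W_1^{\cl}$, represented by $\Spa F\to W_1$ with $F/K$ finite. Its image in $\Xcal$ is a classical point lying in $Z_1^{\cl}\subset Z_2^{\cl}$. Thus there are a finite extension $F'/F$ and a factorization $\Spa F'\to \Spa F\to U\to \Xcal$ in which $\Spa F'\to \Xcal$ factors through $Z_2$. Since $Z_2\to \Xcal$ is a closed immersion, hence a monomorphism, we get a morphism $\Spa F'\to W_2=Z_2\times_{\Xcal}U$ lifting $\Spa F'\to U$. So the image of $\Spa F'$ in $U$ lies in $W_2$. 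Because $W_2$ is Zariski closed, the morphism $\Spa F\to U$ factors through $W_2$ as well: the ideal of $W_2$ vanishes in $F'$, hence in $F\subset F'$. Therefore $u\in W_2^{\cl}$.

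It remains to treat the affinoid case: $U=\Spa A$, $W_i=\Spa A/J_i$ with $A/J_1$ reduced, and every maximal ideal containing $J_1$ also containing $J_2$. By the Nullstellensatz for affinoid algebras (Jacobson property), the radical of $J_1$ equals the intersection of the maximal ideals containing it. Since $A/J_1$ is reduced, this gives $J_1=\sqrt{J_1}=\bigcap_{\mfrak\supset J_1}\mfrak\supset J_2$. Hence $W_1\subset W_2$.

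The main point to handle carefully is the descent of the inclusion of ideal sheaves. Here we use the explicit description of $\Coh(\Xcal)$ via $U$ together with the flatness of $f$. With that in place, the statement for ideals follows directly from the affinoid case.
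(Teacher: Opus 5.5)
Your proposal is correct and is essentially the paper's argument. The paper compresses it into two steps: $Z_1\cap(\Xcal\setminus Z_2)$ has no classical points, so it is empty by Remark \ref{rem:empty classical point}; reducedness of $Z_1$ then upgrades this set-theoretic containment to $Z_1\subset Z_2$. You carry out exactly these two steps explicitly on a smooth affinoid chart via the identity $J_1=\sqrt{J_1}=\bigcap_{\mfrak\supset J_1}\mfrak\supset J_2$.
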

\begin{proof}
	By Remark \ref{rem:empty classical point}, we obtain $Z_1 \cap (\Xcal\setminus Z_2)=\emptyset$.
	Since $Z_1$ is reduced, the above implies that $Z_1\subset Z_2$.
\end{proof}

We would like to define Fitting ideals of coherent sheaves on a rigid analytic Artin stack.
Let us recall the properties of Fitting ideals.
For details, see \cite[20.2]{Eisenbud95} or \cite[07Z6]{stacks-project}.

\begin{definition}
	Let $A$ be a ring, and let $M$ be a finitely generated $A$-module.
	Let $k$ be a non-negative integer.
	We choose a presentation of $M$
	\[\bigoplus_{j\in J}A\to A^{\oplus n}\to M\to 0.\]
	Let $\alpha=(a_{ij})_{i=1,\ldots,n, j\in J}$ be the matrix corresponding to $\bigoplus_{j\in J}A\to A^{\oplus n}$.
	We define the $k$-th \textit{Fitting ideal} $\Fit_k(M)$ as the ideal generated by the $(n-k)\times (n-k)$-minors of $\alpha$, which is independent of the choice of the presentation (\cite[Corollary-Definition 20.4]{Eisenbud95} or \cite[07Z8]{stacks-project}).
\end{definition}

\begin{example}
	Let $A$ be a ring, and let $M$ be an $A$-module generated by $n$ elements.
	Then for $k\geq n$, the $k$-th Fitting ideal $\Fit_k(M)$ is the unit ideal $A$.
\end{example}

\begin{lemma}[{\cite[Corollary 20.5]{Eisenbud95}, \cite[07ZA]{stacks-project}}]\label{lem:Fitting ideal base change}
	Let $A\to A'$ be a morphism of rings, and let $M$ be a finitely generated $A$-module.
	Let $k$ be a non-negative integer.
	Then the ideal $\Fit_k(M\otimes_A A')\subset A'$ is generated by the image of $\Fit_k(M)$ under $A\to A'$.
\end{lemma}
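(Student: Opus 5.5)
The statement is Lemma \ref{lem:Fitting ideal base change}: Fitting ideals commute with base change. I would argue directly from the definition via presentations, using the independence of the choice of presentation recalled in the definition.

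First, choose a presentation
\[\bigoplus_{j\in J}A\xrightarrow{\alpha} A^{\oplus n}\to M\to 0\]
with matrix $\alpha=(a_{ij})$. Since $-\otimes_A A'$ is right exact, applying it gives an exact sequence
\[\bigoplus_{j\in J}A'\xrightarrow{\alpha'} A'^{\oplus n}\to M\otimes_A A'\to 0.\]
Here $\alpha'$ is the matrix whose entries are the images $\varphi(a_{ij})$ under $\varphi\colon A\to A'$. Note that tensor products commute with arbitrary direct sums, so the first term really is $\bigoplus_{j\in J}A'$ even when $J$ is infinite. Hence this is a legitimate presentation of the finitely generated $A'$-module $M\otimes_A A'$.

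By presentation-independence, $\Fit_k(M\otimes_A A')$ is the ideal generated by the $(n-k)\times(n-k)$ minors of $\alpha'$. Each such minor is a polynomial with integer coefficients in the entries. Since $\varphi$ is a ring homomorphism, it equals $\varphi$ applied to the corresponding minor of $\alpha$. So the generating sets correspond under $\varphi$, and $\Fit_k(M\otimes_A A')$ is generated by $\varphi(\Fit_k(M))$.

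The degenerate cases need a line of their own. If $n-k\le 0$, the convention is that the empty minor is $1$, so both ideals are the unit ideal. If $n-k>|J|$, there are no minors, so both ideals are zero. In each case the statement holds trivially, consistently with the convention.

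There is no real obstacle here. The only point requiring care is that the base-changed sequence is a presentation of $M\otimes_A A'$, which follows from right exactness and the compatibility of $\otimes$ with direct sums. Everything else is the multiplicativity of $\varphi$ applied to determinant formulas.
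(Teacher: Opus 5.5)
Your proposal is correct and is essentially the argument of the sources the paper cites (Eisenbud, Corollary 20.5; Stacks Project, Tag 07ZA). You base-change a presentation using right exactness of $-\otimes_A A'$ and observe that the minors of the base-changed matrix are the images of the original minors. The paper gives no proof of its own beyond these citations, so nothing more is needed.
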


\begin{lemma}[{\cite[Proposition 20.8]{Eisenbud95}, \cite[07ZD]{stacks-project}}]
	Let $A$ be a ring, and let $M$ be a finitely generated $A$-module.
	Let $r$ be a non-negative integer.
	The following are equivalent.
	\begin{enumerate}
		\item $M$ is a finite projective $A$-module of rank $r$.
		\item $\Fit_{r-1}(M)=0$ and $\Fit_{r}(M)=A$, where we set $\Fit_{-1}(M)=0$.
	\end{enumerate}
\end{lemma}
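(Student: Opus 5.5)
The plan is to reduce both directions to a local computation via Lemma \ref{lem:Fitting ideal base change}. Fitting ideals commute with base change, in particular with localization $A\to A_f$. An ideal of $A$ is $0$ (resp.\ the unit ideal) if and only if its image in $A_{f_i}$ is $0$ (resp.\ the unit ideal) for a family $f_i$ generating the unit ideal. Likewise, $M$ is finite projective of rank $r$ if and only if $M_{f_i}\cong A_{f_i}^{\oplus r}$ for such a family.

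For (1)$\Rightarrow$(2):
\begin{enumerate}
\item Choose $f_i$ generating $A$ with $M_{f_i}\cong A_{f_i}^{\oplus r}$.
\item Over $A_{f_i}$, use the presentation $0\to A_{f_i}^{\oplus r}\to M_{f_i}\to 0$, i.e.\ $J=\emptyset$ and $n=r$, so the relation matrix is an $r\times 0$ matrix.
\item For $k=r$, one takes $0\times 0$ minors. The empty determinant is $1$, so $\Fit_r(M_{f_i})=A_{f_i}$.
\item For $k=r-1$, one would need $1\times 1$ minors, and there are none, so $\Fit_{r-1}(M_{f_i})=0$.
\item By Lemma \ref{lem:Fitting ideal base change} these ideals are the localizations of $\Fit_r(M)$ and $\Fit_{r-1}(M)$, so the global statements follow.
\end{enumerate}
When $r=0$, the convention $\Fit_{-1}=0$ makes the second condition automatic.

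For (2)$\Rightarrow$(1), fix a prime $\pfrak\subset A$. The first step is to show that $M_\pfrak$ is generated by $r$ elements.
\begin{enumerate}
\item Let $n=\dim_{k(\pfrak)} M\otimes_A k(\pfrak)$.
\item By Nakayama there is a presentation $A_\pfrak^{\oplus J}\to A_\pfrak^{\oplus n}\to M_\pfrak\to 0$ whose relation matrix has all entries in $\pfrak A_\pfrak$.
\item If $n>r$, every $(n-r)\times(n-r)$ minor lies in $\pfrak A_\pfrak$. Then $\Fit_r(M_\pfrak)\subset \pfrak A_\pfrak$, contradicting $\Fit_r(M)=A$ together with base change.
\item Hence $n\le r$, and padding with zero generators we may take a surjection $A_\pfrak^{\oplus r}\to M_\pfrak$.
\end{enumerate}
Since $M$ is finitely generated, this surjection spreads out to a surjection $A_f^{\oplus r}\to M_f$ for some $f\notin\pfrak$. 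This is the one place where some care is needed: lift the generators, then kill the finitely many cokernel generators by inverting a suitable element.

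Now $\Fit_{r-1}(M_f)$ is generated by the $1\times 1$ minors, i.e.\ the entries, of the relation matrix of the presentation $A_f^{\oplus J}\to A_f^{\oplus r}\to M_f\to 0$. By base change this ideal is $0$, so every relation vanishes and $A_f^{\oplus r}\xrightarrow{\sim}M_f$. As $\pfrak$ varies, these $f$ generate the unit ideal. Thus $M$ is Zariski-locally free of rank $r$, and in particular finitely presented, so $M$ is finite projective of rank $r$.

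I expect no real obstacle; the only point needing attention is the Nakayama-minimal presentation in the step bounding the number of generators of $M_\pfrak$.
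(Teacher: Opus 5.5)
Your argument is correct and is essentially the standard proof behind the references the paper cites; the paper gives no proof of its own, only \cite[Proposition 20.8]{Eisenbud95} and \cite[07ZD]{stacks-project}. As there, you reduce to localizations via the base-change property of Fitting ideals, use a Nakayama-minimal presentation at each prime $\pfrak$ to show that $\Fit_r(M)=A$ forces $M$ to be locally generated by $r$ elements, and then deduce from $\Fit_{r-1}(M)=0$ that every entry of the relation matrix vanishes.
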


Lemma \ref{lem:Fitting ideal base change} allows us to define the Fitting ideals of coherent sheaves on rigid spaces by gluing.

\begin{definition}
	Let $\Xcal$ be a rigid analytic Artin stack, and let $M$ be a coherent sheaf on $\Xcal$.
    Let $k$ be a non-negative integer, and choose a smooth surjection $f\colon U\twoheadrightarrow\Xcal$ from a rigid space $U$.
    By Lemma \ref{lem:Fitting ideal base change}, the $k$-th Fitting ideal $\Fit_k(f^*M)$ descends to an ideal sheaf $\Fit_k(M)$ of $\Xcal$, independently of the choice of $U$, as in Example \ref{example:nilradical}.
    We call $\Fit_k(M)$ the \textit{$k$-th Fitting ideal} of $M$.
    We also set $\Fit_{-1}(M)=0$.
\end{definition}

\begin{example}
	Let $\Xcal$ be a rigid analytic Artin stack, and let $M$ be a coherent sheaf on $\Xcal$.
	Let $Z_k\subset \Xcal$ be the Zariski closed substack defined by the $k$-th Fitting ideal $\Fit_k(M)$, and set $Z_k^{\circ}=Z_{k-1}\setminus Z_{k}$. 
	Then the restriction of $M$ to $Z_k^{\circ}$ is a vector bundle of rank $k$.
\end{example}

\subsection{Dimension of rigid analytic Artin stacks}

First, let us recall the theory of dimension of rigid spaces.
Note that ``rigid space'' means an adic space locally of finite type over $\Spa K=\Spa (K,\Ocal_K)$.

\begin{definition}
	Let $X$ be a rigid space, and let $x\in X^{\cl}$ be a classical point.
	The dimension $\dim_{x}X$ of $X$ at $x$ is defined as the Krull dimension of the local ring $\Ocal_{X,x}$.
	The dimension $\dim X$ of $X$ is defined as $\sup_{x\in X^{\cl}} \dim_x X$. \footnote{In our convention, $\dim \emptyset =-\infty$.}
	For $n\in \Zbb_{\geq 0}\cup \{-\infty\}$, we say that $X$ is \textit{equidimensional} of dimension $n$ if $\dim X=n$ and for every classical point $x\in X^{\cl}$, $\dim_x X=n$.
\end{definition}

\begin{remark}\label{rem:comparison with Huber}
	Let $X=\Spa A$ be an affinoid rigid space.
	For a classical point $x\in X^{\cl}$ corresponding to a maximal ideal $\mfrak\subset A$, the dimension $\dim_x X$ at $x$ coincides with the Krull dimension $\dim A_{\mfrak}$.
	In particular, $\dim X$ is equal to the Krull dimension $\dim A$.
	Therefore, by \cite[Lemma 1.8.6 (ii)]{Hub96}, $\dim X$ coincides with the dimension of $X$ as a spectral space (i.e., the supremum of the lengths $n$ of the chains of specializations $x_0\succ x_1\succ\cdots\succ x_n$).
	Consequently, for a (not necessarily affinoid) rigid space $Y$, $\dim Y$ is equal to its dimension as a locally spectral space (\cite[Definition 1.8.1]{Hub96}).
\end{remark}

\begin{definition}\label{defn:relative dimension rigid space}
	Let $f\colon X\to Y$ be a morphism of rigid spaces, and let $x\in X^{\cl}$ be a classical point.
	The relative dimension $\dim_x f$ (or $\dim_x X/Y$) of $f$ at $x$ is defined as the Krull dimension $\dim \Ocal_{f^{-1}(f(x)),x}$, where $f^{-1}(f(x))=X\times_Y \Spa k(f(x))$.
	The relative dimension $\dim f$ (or $\dim X/Y$) of $f$ is defined as $\sup_{x\in X^{\cl}} \dim_x f$.
	We say that $f$ is of \textit{pure dimension} $n$ if $\dim f=n$ and for every classical point $x\in X^{\cl}$, $\dim_x f=n$.
\end{definition}

\begin{lemma}\label{lem:fiber dimension}
	Let $f\colon X\to Y$ be a morphism of rigid spaces, and let $x\in X^{\cl}$ be a classical point.
	Then we have $\dim_x X\leq \dim_x X/Y +\dim_{f(x)} Y$.
	If $\Ocal_{Y,f(x)}\to \Ocal_{X,x}$ is flat, then we have $\dim_x X= \dim_x X/Y +\dim_{f(x)} Y$.
	In particular, we have $\dim X\leq \dim X/Y +\dim Y$.
\end{lemma}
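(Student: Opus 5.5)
We reduce the statement to the standard dimension inequality for local homomorphisms of noetherian local rings. Shrinking, we may assume $X=\Spa A$ and $Y=\Spa B$ are affinoid. Let $\mfrak\subset A$ be the maximal ideal of $x$ and $\nfrak\subset B$ its preimage, which is maximal because $f(x)$ is classical. By Remark \ref{rem:comparison with Huber}, $\dim_x X=\dim A_{\mfrak}$ and $\dim_{f(x)}Y=\dim B_{\nfrak}$. The fiber is $f^{-1}(f(x))=\Spa(A\otimes_B k(f(x)))=\Spa(A/\nfrak A)$, since $k(f(x))=B/\nfrak$ is a finite extension of $K$ and $A\otimes_B B/\nfrak$ is already an affinoid algebra. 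Hence $\dim_x X/Y=\dim (A/\nfrak A)_{\mfrak}=\dim A_{\mfrak}/\nfrak A_{\mfrak}$.

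The local homomorphism $B_{\nfrak}\to A_{\mfrak}$ is a map of noetherian local rings. The classical inequality (\cite[Theorem 15.1]{Matsumura-Comm-Ring}) therefore gives
\[\dim A_{\mfrak}\leq \dim B_{\nfrak}+\dim A_{\mfrak}/\nfrak A_{\mfrak}.\]
Equality holds when $B_{\nfrak}\to A_{\mfrak}$ is flat. This is exactly the first two assertions, once flatness of $\Ocal_{Y,f(x)}\to\Ocal_{X,x}$ is translated into flatness of $B_{\nfrak}\to A_{\mfrak}$.

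The only delicate point is that the statement is phrased with the local rings $\Ocal_{X,x}$ and $\Ocal_{Y,f(x)}$ rather than algebraic localizations. We handle this with Theorem \ref{thm:excellence}(3): completion preserves Krull dimension, and $A_{\mfrak}\to\Ocal_{X,x}$ induces an isomorphism on completions. So $\dim\Ocal_{X,x}=\dim A_{\mfrak}$, and likewise for $Y$ and for the fiber. For the flatness hypothesis, Kiehl's lemma (the equivalence of flatness at $x$ with flatness of $B_{\nfrak}\to A_{\mfrak}$) converts flatness of $\Ocal_{Y,f(x)}\to\Ocal_{X,x}$ into flatness of $B_{\nfrak}\to A_{\mfrak}$. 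Alternatively, one can apply the dimension formula directly to the completed local rings, using flatness of $B^\wedge_{\nfrak}\to A^\wedge_{\mfrak}$ and $\nfrak A^\wedge_{\mfrak}$-adic considerations.

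Finally, take the supremum over classical points $x$. Using $\dim_x X/Y\le\dim X/Y$ and $\dim_{f(x)}Y\le \dim Y$ gives $\dim X\le \dim X/Y+\dim Y$, with the conventions for $-\infty$ and $\infty$ handled trivially. No step is a genuine obstacle; the main care is the identification of the various local dimensions via completion.
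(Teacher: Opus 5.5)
Your proof is correct and is essentially the paper's argument. The paper simply cites the dimension inequality for local homomorphisms of noetherian local rings (\cite[(13.B) Theorem 19]{Mat80}), with equality in the flat case, applied directly to $\Ocal_{Y,f(x)}\to\Ocal_{X,x}$. Working instead with $B_{\nfrak}\to A_{\mfrak}$, and transferring dimensions and flatness via completions and Kiehl's criterion, is only a cosmetic variation; it has the small merit of making explicit that the fiber's local ring has dimension $\dim A_{\mfrak}/\nfrak A_{\mfrak}$, a step the paper leaves implicit.
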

\begin{proof}
	This follows immediately from \cite[(13.B) Theorem 19]{Mat80}.
\end{proof}

\begin{corollary}\label{cor:fiber dimension transitive}
	Let $f\colon X\to Y$ and $g\colon Y\to Z $ be morphisms of rigid spaces, and let $x\in X^{\cl}$ be a classical point.
	Assume that $f$ is flat at $x$.
	Then we have 
	\[\dim_x (g\circ f)=\dim_x f+\dim_{f(x)}g.\]
\end{corollary}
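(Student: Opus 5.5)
The plan is to reduce the statement to local algebra at $x$, using Definition~\ref{defn:relative dimension rigid space} and Lemma~\ref{lem:fiber dimension}. Set $y=f(x)$ and $z=g(y)$. We may shrink to affinoid neighbourhoods, and all three relative dimensions are then Krull dimensions of local rings of fibres over classical points. Write $X_z=(g\circ f)^{-1}(z)$ and $Y_z=g^{-1}(z)$. Then
\[\dim_x(g\circ f)=\dim \Ocal_{X_z,x},\qquad \dim_y g=\dim\Ocal_{Y_z,y},\qquad \dim_x f=\dim \Ocal_{f^{-1}(y),x}.\]

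The key observation is that the fibre of the induced morphism $f_z\colon X_z=X\times_Z \Spa k(z)\to Y_z$ over $y$ is $X_z\times_{Y_z}\Spa k(y)=X\times_Y\Spa k(y)=f^{-1}(y)$. So the relative dimension of $f_z$ at $x$ equals $\dim_x f$.

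Next, flatness of $f$ at $x$ is stable under the base change $\Spa k(z)\to Z$. Concretely, $\Ocal_{Y,y}\to \Ocal_{X,x}$ flat implies $\Ocal_{Y_z,y}\to\Ocal_{X_z,x}$ flat. I would check this on the affinoid level using the Kiehl criterion recalled above: flatness at $x$ is equivalent to flatness of $B_{\nfrak}\to A_{\mfrak}$, and the fibre rings are obtained by tensoring with $C\to k(z)$, where $Z=\Spa C$. Base change of flat modules then gives the claim, since $(A\otimes_C k(z))_{\mfrak}=A_{\mfrak}\otimes_{B_{\nfrak}}(B\otimes_C k(z))_{\nfrak}$.

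Finally, I apply the flat case of Lemma~\ref{lem:fiber dimension} to $f_z\colon X_z\to Y_z$ at $x$. This gives
\[\dim \Ocal_{X_z,x}=\dim_x f_z+\dim \Ocal_{Y_z,y},\]
which is exactly $\dim_x(g\circ f)=\dim_x f+\dim_y g$.

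The only delicate point is the identification of local rings of fibres in the rigid setting. Here $\Ocal_{X_z,x}$ is the local ring of the rigid fibre, not of $\Spec(A\otimes_C k(z))$. By Theorem~\ref{thm:excellence}(3), these agree after completion, and Krull dimension and flatness (by the Kiehl criterion) are both insensitive to completion. So I would work throughout with the algebraic localizations $A_{\mfrak}$, where the base change argument is purely commutative algebra.
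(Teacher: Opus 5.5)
Your proposal is correct and is the argument the paper has in mind; the paper states this corollary without proof, as an immediate consequence of Lemma~\ref{lem:fiber dimension}. The argument restricts to the fibre over $z=g(f(x))$, notes that flatness at $x$ survives and that the fibre of $f_z$ over $y$ is $f^{-1}(y)$, and then applies the flat case of that lemma to $f_z$. The detour through completions and algebraic localizations is harmless but unnecessary: $X_z\subset X$ and $Y_z\subset Y$ are Zariski closed, so $\Ocal_{X_z,x}=\Ocal_{X,x}\otimes_{\Ocal_{Y,y}}\Ocal_{Y_z,y}$ with $\Ocal_{Y_z,y}=\Ocal_{Y,y}\otimes_{\Ocal_{Z,z}}k(z)$, and flatness base changes directly.
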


\begin{definition}\label{defn:locally Zariski closed partition, rigid space}
	Let $X$ be a rigid space.
	A \textit{locally Zariski closed covering} of $X$ is a family of locally Zariski closed subspaces $\{X_i\}_{i\in I}$ of $X$ such that $X=\bigcup_{i\in I}X_i$ as sets.
	In this case, we write $X\sim \bigcup_{i\in I}X_i$.
	We say that $\{X_i\}_{i\in I}$ is a \textit{locally Zariski closed partition} if it is a locally Zariski closed covering and $X_i\cap X_j=\emptyset$ for any $i\neq j$.
	In this case, we write $X\sim \bigsqcup_{i\in I}X_i$.
\end{definition}

\begin{example}
	Let $X$ be a rigid space, and let $Z\subset X$ be a Zariski closed subspace.
	Then $Z \sqcup (X\setminus Z)$ is a locally Zariski closed partition of $X$.
\end{example}

\begin{example}
	Let $X$ be a rigid space, and let $M$ be a coherent sheaf on $X$.
	Let $Z_k\subset X$ be the Zariski closed subspace defined by the $k$-th Fitting ideal $\Fit_k(M)$, and set $Z_k^{\circ}=Z_{k-1}\setminus Z_{k}$. 
	Then $\bigsqcup_{k\geq 0}Z_k^{\circ}$ is a locally Zariski closed partition of $X$.
\end{example}

\begin{example}\label{ex:locally Zariski closed partition rigid space}
	Let $X$ be a rigid space, and let $n\geq 1$ be an integer.
	Let $\{Z_{\kbf}\}_{\kbf\in \Zbb_{\geq 0}^n}$ be a family of Zariski closed subspaces of $X$ such that $Z_0=X$ and $Z_{\kbf_2}\subseteq Z_{\kbf_1}$ whenever $\kbf_1\leq \kbf_2$, where the order on $\Zbb_{\geq 0}^n$ is the componentwise order.
	Assume that for any sequence $\kbf_1<\kbf_2<\cdots$, $\bigcap_m Z_{\kbf_m}=\emptyset$.
	For $\kbf \in \Zbb_{\geq 0}^n$, set $Z_{\kbf}^{\circ}=Z_{\kbf}\setminus \bigcup_{\kbf'>\kbf}Z_{\kbf'}$.
	Then $\{Z_{\kbf}^{\circ}\}_{\kbf\in \Zbb_{\geq 0}^n}$ is a locally Zariski closed covering of $X$.
	Note that it suffices to check the condition $\bigcap_m Z_{\kbf_m}=\emptyset$ on classical points.
	More precisely, if for any classical point $x\in X^{\cl}$, there exists $m\geq 1$ such that $x\notin Z_{\kbf_m}$, then $\bigcap_{m}Z_{\kbf_m}=\emptyset$.
\end{example}

\begin{lemma}\label{lem:locally Zariski closed partition dimension rigid space}
	Let $X$ be a rigid space, and let $X\sim \bigcup_{i\in I}X_i$ be a locally Zariski closed covering.
	Then we have $\dim X=\sup_{i\in I} \dim X_i$.
\end{lemma}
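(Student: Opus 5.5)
The plan is to prove the two inequalities $\sup_i \dim X_i\le \dim X$ and $\dim X\le \sup_i\dim X_i$ separately, working at classical points and using that dimension is local.

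\emph{The bound $\sup_i \dim X_i\le \dim X$.} Each $X_i$ is locally Zariski closed. So for a classical point $x\in X_i^{\cl}$ there is an open neighborhood $U\subset X$ of $x$ such that $X_i\cap U$ is a Zariski closed subspace of $U$. Shrinking $U$ to an affinoid $\Spa A$, we have $X_i\cap U=\Spa A/J$ for an ideal $J$. By Remark \ref{rem:comparison with Huber}, $\dim_x X_i=\dim (A/J)_{\mfrak}\le \dim A_{\mfrak}=\dim_x X$, where $\mfrak$ is the maximal ideal corresponding to $x$. Taking suprema gives the first inequality.

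\emph{The bound $\dim X\le \sup_i\dim X_i$.} Since dimension is a supremum of local dimensions at classical points, it suffices to treat the case $X=\Spa A$ affinoid. Fix a classical point $x$ and an irreducible component $Y$ of $X$ through $x$ realizing $\dim_x X$; give $Y$ the reduced structure. A rigid space and its reduction have the same dimension, so we may work with the reduced subspace $Y$. The sets $Y\cap X_i$ cover $Y$, and each is locally Zariski closed in $Y$.

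The key step is to find a single $i$ for which $Y\cap X_i$ contains a nonempty open subset of $Y$. I would argue via Remark \ref{rem:comparison with Huber}, using the dimension of $Y$ as a spectral space. Take a maximal-length chain of specializations $y_0\succ y_1\succ\cdots\succ y_n$ in $Y$, with $y_0$ the generic point of $Y$ (a rank-one point). The point $y_0$ lies in some $X_i$. Locally near $y_0$, $X_i$ is Zariski closed in an open $V$ of $Y$, say defined by an ideal $J$. Because $y_0$ is the generic point of the irreducible reduced $Y$ and $V$ is open containing $y_0$, the ideal $J$ vanishes at $y_0$. This forces $J=0$ on every connected component of $V$ meeting $y_0$, by the identity principle on irreducible reduced affinoids. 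Hence $X_i\supset V'$ for some nonempty open $V'\subset Y$.

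Since $Y$ is irreducible and hence equidimensional, $\dim V'=\dim Y$. Therefore $\dim X_i\ge \dim Y\ge \dim_x X$, and taking the supremum over $x$ gives $\dim X\le \sup_i \dim X_i$.

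The main obstacle is this key step: making rigorous that the "generic point" of an irreducible component lies in some $X_i$ and that this forces $X_i$ to contain an open subset. Points of adic spaces are subtle here, since a maximal point need not be Zariski-generic. If this approach proves awkward, I would fall back on a Baire-category or countability argument. Reduce to countably many $X_i$ meeting a quasi-compact affinoid, using local finiteness. Each $X_i\cap Y$ that is not open-dense-containing is locally nowhere dense of strictly smaller dimension. Then argue with classical points: over a complete field with uncountable transcendence, a countable union of proper Zariski closed subsets of an open affinoid in an irreducible $Y$ cannot exhaust its classical points. For a general $K$, I would instead pass to the spectral-space description, since a finite-dimensional spectral space covered by constructible-type subsets has dimension equal to the maximum.
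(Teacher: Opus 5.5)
Your bound $\sup_i\dim X_i\le\dim X$ is fine. The reverse inequality, however, has a genuine gap at exactly the step you flag.

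\textbf{The key step does not work as stated.} An irreducible rigid space has no generic point in its adic space. The maximal points are rank-one points (Gauss points of the various subdiscs, and so on). The closure of a point consists of its vertical specializations, all of which have the same support, so no point is dense. Vanishing of $J$ at one rank-one point $y_0\in V=\Spa C$ only gives $J\subset\supp_C(y_0)$. To conclude that $V(J)$ contains a nonempty open subset of $Y$, you would need $\supp_C(y_0)$ to be a minimal prime of $C$ with $\dim C/\supp_C(y_0)=\dim Y$.

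That is not automatic from $y_0$ being the top of a maximal chain in $Y$. Open sets are stable under generalization, not specialization, so the rest of the chain need not lie in $V$. Proving it needs an Abhyankar-type bound on the residue transcendence degree of $y_0$, which is essentially the dimension theory behind Remark \ref{rem:comparison with Huber}, and you do not supply it.

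Even granting minimality, ``$J=0$ on every connected component of $V$ meeting $y_0$'' is false. A connected neighborhood of the node of an irreducible nodal curve has two irreducible components, and $J$ may cut out one branch.

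\textbf{The fallbacks do not help.}
\begin{itemize}
\item The Baire argument needs $I$ countable or locally finite, which is not assumed.
\item Over $K=\Qbb_p$ the classical points of the unit disc form a countable set. They are exhausted by countably many Zariski closed points of dimension $0$. So no argument that only sees classical points can work; the hypothesis that the $X_i$ cover \emph{all} points of the adic space must be used.
\item A spectral space covered by finitely many locally closed subsets can have larger dimension than every piece, e.g.\ $\Spec$ of a DVR as the union of its generic and closed points.
\end{itemize}

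\textbf{The missing idea.} This is the paper's entire proof: locally Zariski closed subsets are stable under generalization. In an analytic adic space every specialization is vertical, so a point and its generalizations have the same support and hence the same vanishing functions. An open set also contains all generalizations of its points.

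You put the wrong end of the chain into $X_i$. Take a chain $x_0\succ x_1\succ\cdots\succ x_n$ in $X$ and choose $i$ with the \emph{most special} point $x_n\in X_i$. Then the whole chain lies in $X_i$, so Remark \ref{rem:comparison with Huber} applied to the rigid space $X_i$ gives $\dim X_i\ge n$. Taking the supremum over chains yields $\dim X\le\sup_i\dim X_i$. No irreducible components or identity principle are needed.
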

\begin{proof}
	This follows from Remark \ref{rem:comparison with Huber} and the fact that locally Zariski closed subspaces are stable under generalizations.
\end{proof}

\begin{corollary}
	Let $X$ be a rigid space, and let $Z\subset X$ be a Zariski closed subspace.
	Then we have $\dim X=\max \{\dim Z, \dim (X\setminus Z)\}.$
\end{corollary}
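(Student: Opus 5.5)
The statement to prove is the last corollary: for a rigid space $X$ and a Zariski closed subspace $Z\subset X$, we have $\dim X=\max\{\dim Z,\dim(X\setminus Z)\}$. The plan is to apply Lemma \ref{lem:locally Zariski closed partition dimension rigid space} to the two-piece family $\{Z, X\setminus Z\}$.

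First we check that this family is a locally Zariski closed covering in the sense of Definition \ref{defn:locally Zariski closed partition, rigid space}. By the example following that definition, $Z\sqcup(X\setminus Z)$ is a locally Zariski closed partition of $X$: $Z$ is Zariski closed, hence locally Zariski closed, and $X\setminus Z$ is open, hence locally Zariski closed. As sets the two pieces are disjoint and their union is $X$. In particular the family is a locally Zariski closed covering.

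The lemma then gives $\dim X=\sup\{\dim Z,\dim(X\setminus Z)\}$. The supremum of two elements of $\Zbb_{\geq 0}\cup\{-\infty,\infty\}$ is their maximum, so this is the claim. The convention $\dim\emptyset=-\infty$ covers the degenerate cases $Z=\emptyset$ and $Z=X$.

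The only point that needs care is that the lemma is stated for coverings by locally Zariski closed subspaces, so we must confirm that an open subspace qualifies. It does, being the intersection of itself with the Zariski closed subspace $X$ of $X$. Alternatively, one can argue directly via Remark \ref{rem:comparison with Huber}. Any chain of specializations $x_0\succ\cdots\succ x_n$ in $X$ either has $x_n\in X\setminus Z$, and then lies entirely in the open set $X\setminus Z$ because open sets are stable under generization. Or it has $x_n\in Z$; in that case, letting $x_j$ be the first point lying in $Z$, the points from $x_j$ onward lie in the closed set $Z$ (closed sets are stable under specialization), and the points before $x_j$ lie in $X\setminus Z$. Since this splits the chain, the direct argument needs a slightly finer dimension argument, so the lemma route is cleaner and is the one I would use.
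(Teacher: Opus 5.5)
Your proof is correct and matches the paper's approach: the corollary is an immediate consequence of Lemma~\ref{lem:locally Zariski closed partition dimension rigid space} applied to the locally Zariski closed partition $Z\sqcup(X\setminus Z)$ from the example just before it. On your aside, a chain of specializations can never actually split between $Z$ and $X\setminus Z$: specializations in an analytic adic space preserve the support, so Zariski closed subsets are stable under generization as well as specialization---this is exactly the fact the paper uses to prove that lemma.
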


For later use, we record the following lemmas.

\begin{lemma}\label{lem:c.i.}
	Let $A$ be a smooth affinoid $K$-algebra such that $\Spa A$ is equidimensional of dimension $d\geq 0$.
	Let $f_1,\ldots, f_m \in A$, and set $X=\Spa A/(f_1,\ldots, f_m)$.
	If $\dim X=d-m$, then $X$ is a local complete intersection and equidimensional of dimension $d-m$.
\end{lemma}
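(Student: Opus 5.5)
The plan is to work locally at each classical point and reduce to the standard commutative algebra fact that, in a regular (hence Cohen--Macaulay) local ring, $m$ elements cutting out a quotient of codimension exactly $m$ form a regular sequence. Fix a classical point $x\in X^{\cl}$; it is also a classical point of $\Spa A$, corresponding to a maximal ideal $\mfrak\subset A$ containing $f_1,\ldots,f_m$. By Remark \ref{rem:comparison with Huber} we may compute dimensions via the Krull dimensions of the localizations $A_{\mfrak}$ and $(A/(f_1,\ldots,f_m))_{\mfrak}$. Since $A$ is smooth, $A_{\mfrak}$ is regular. Since $\Spa A$ is equidimensional of dimension $d$, we have $\dim A_{\mfrak}=d$.

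\textbf{Step 1 (lower bound at every point).} By Krull's height theorem in the noetherian local ring $A_{\mfrak}$, we get
\[\dim (A/(f_1,\ldots,f_m))_{\mfrak}\geq \dim A_{\mfrak}-m=d-m.\]
The hypothesis $\dim X=d-m$ gives the reverse inequality $\dim_x X\leq d-m$. Hence $\dim_x X=d-m$ for every classical point $x$, so $X$ is equidimensional of dimension $d-m$. (If $X$ is empty, there is nothing to prove.)

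\textbf{Step 2 (regular sequence).} Since $A_{\mfrak}$ is Cohen--Macaulay and the ideal $(f_1,\ldots,f_m)A_{\mfrak}$ has height $\dim A_{\mfrak}-\dim A_{\mfrak}/(f_i)=m$ (using $\operatorname{ht} I+\dim R/I=\dim R$ in Cohen--Macaulay local rings), the images of $f_1,\ldots,f_m$ in $A_{\mfrak}$ form a regular sequence; this is \cite[Theorem 17.4]{Matsumura-Comm-Ring}. Thus $(A/(f_1,\ldots,f_m))_{\mfrak}$ is a regular local ring modulo a regular sequence, i.e.\ a complete intersection.

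\textbf{Step 3 (passing to $\Ocal_{X,x}$).} By Theorem \ref{thm:excellence}(3), the completion of $(A/(f_i))_{\mfrak}$ agrees with $\Ocal_{X,x}^{\wedge}$. Completions preserve both the complete intersection property and dimension, so by the remark following the definition of $\Pcal$, $X$ is a complete intersection at $x$.

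The only real point is making sure the equidimensionality of $\Spa A$ yields $\dim A_{\mfrak}=d$ at every maximal ideal, including those on $X$; this is given directly by the definition of equidimensionality. Everything else is standard.
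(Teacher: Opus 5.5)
Your proof is correct and takes the same approach as the paper. The paper simply cites Krull's height theorem \cite[(12.I) Theorem 18]{Mat80} for the pointwise lower bound $\dim_x X\geq d-m$, and the Cohen--Macaulay regular-sequence criterion \cite[(16.B) Theorem 32]{Mat80} in the regular local rings $A_{\mfrak}$; your citation of \cite[Theorem 17.4]{Matsumura-Comm-Ring} is the same fact.
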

\begin{proof}
	This follows immediately from \cite[(12.I) Theorem 18, (16.B) Theorem 32]{Mat80}.
\end{proof}

\begin{lemma}\label{lem:normal outside codimension 2}
	Let $n\geq 2$ be an integer. 
	Let $X$ be a Cohen-Macaulay rigid space such that for every classical point $x\in X^{\cl}$, $\dim_x X\geq n$, and let $Z\subset X$ be a Zariski closed subspace such that $\dim Z\leq n-2$.
	If $X\setminus Z$ is normal, then $X$ is normal.
\end{lemma}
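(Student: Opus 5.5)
We would use Serre's criterion: a noetherian ring is normal if and only if it satisfies $R_1$ and $S_2$. Normality of $X$ at a classical point $x$ is, by Theorem \ref{thm:excellence}, a property of the local ring $\Ocal_{X,x}$. We may replace $X$ by an affinoid open neighbourhood $\Spa A$ of $x$. Then it suffices to show that $A_{\mfrak}$ is normal for every maximal ideal $\mfrak$, and this follows from $A$ being $R_1$ and $S_2$.

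\emph{$S_2$.} Since $X$ is Cohen--Macaulay, every localization of $A$ at a prime is Cohen--Macaulay. Hence $A$ satisfies $S_k$ for all $k$, and in particular $S_2$.

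\emph{$R_1$.} Let $\pfrak\subset A$ be a prime with $\dim A_{\pfrak}\leq 1$. We must show $A_{\pfrak}$ is regular. Let $I\subset A$ be the ideal defining $Z$. If $I\not\subset\pfrak$, choose $g\in I\setminus\pfrak$. The localization $A_{\pfrak}$ is then a localization of $A[1/g]$. We show $A[1/g]$ is normal at every prime, using normality of $X\setminus Z$ at classical points. Every maximal ideal $\mfrak'$ of $A$ with $g\notin\mfrak'$ gives a classical point of $X\setminus Z$, so $A_{\mfrak'}$ is normal. A prime $\qfrak$ of $A$ with $g\notin \qfrak$ is contained in some maximal ideal $\mfrak'$ of $A$ with $g\notin\mfrak'$: the Jacobson property of affinoid algebras gives $\qfrak=\bigcap\mfrak'$ over maximal ideals $\mfrak'\supset\qfrak$, and not all of them can contain $g$. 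Since $A_{\qfrak}$ is a localization of the normal ring $A_{\mfrak'}$, it is normal, hence $A_{\pfrak}$ is regular in codimension $\leq 1$.

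It remains to exclude the case $I\subset \pfrak$, and this is where the hypotheses on dimension enter. Pick a maximal ideal $\mfrak\supset\pfrak$, corresponding to a classical point $y\in Z$. Since $A_{\mfrak}$ is Cohen--Macaulay, hence catenary and equidimensional, we have
\[\dim A_{\mfrak}=\dim A_{\pfrak}+\dim A_{\mfrak}/\pfrak A_{\mfrak}.\]
Now $\dim A_{\mfrak}/\pfrak A_{\mfrak}\leq \dim_y Z\leq n-2$, because $V(\pfrak)\subset Z$. Also $\dim A_{\mfrak}=\dim_y X\geq n$. Together these give $\dim A_{\pfrak}\geq 2$, contradicting $\dim A_{\pfrak}\le 1$. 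So every prime of height $\leq1$ avoids $I$, and $R_1$ holds.

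The main point to check carefully is this dimension formula. It uses that $A_{\mfrak}$ is Cohen--Macaulay, so every maximal chain through $\pfrak A_{\mfrak}$ has length $\dim A_{\mfrak}$ (\cite[Theorem 17.4]{Matsumura-Comm-Ring}). It also uses the comparison of $\dim A_{\mfrak}/\pfrak$ with the local dimension of $Z$ at $y$, via Remark \ref{rem:comparison with Huber}. Combining $R_1$ and $S_2$ with Serre's criterion then shows that $X$ is normal.
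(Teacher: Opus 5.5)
Your proof is correct and follows essentially the same route as the paper: Serre's criterion, with $(S_2)$ coming from the Cohen--Macaulay hypothesis, and $(R_1)$ obtained by showing that every prime of height $\le 1$ lies in a maximal ideal away from $Z$, using the Cohen--Macaulay dimension formula and the Jacobson property. Your case split ($I\subset\pfrak$ is ruled out by dimension, while $I\not\subset\pfrak$ is handled by Jacobson) simply unpacks the paper's one-line step ``$\dim A/\pfrak\geq n-1>\dim A/I$, so some maximal ideal containing $\pfrak$ avoids $I$'', making explicit the two facts that step uses silently.
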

\begin{proof}
	We may assume that $X=\Spa A$ is an affinoid rigid space, and $Z$ is defined by an ideal $I\subset A$.
	Since $A$ is Cohen-Macaulay, $A$ satisfies Serre's condition $(S_2)$.
	Let $\pfrak \subset A$ be a prime ideal such that $\height \pfrak \leq 1$.
	Since $\dim Z=\dim A/I\leq n-2$ and $\dim A/\pfrak \geq n-1$, there is a maximal ideal $\mfrak\subset A$ such that $I\not\subset \mfrak$ and $\pfrak\subset \mfrak$.
	Since $\mfrak$ corresponds to a classical point of $X\setminus Z$, $A_{\mfrak}$ is normal.
	Therefore, $A_{\pfrak}$ is also normal.
	Thus, $A_{\pfrak}$ is regular as $\dim A_{\pfrak}\leq 1$.
	In other words, $A$ satisfies Serre's condition $(R_1)$.
	Therefore, by \cite[(17.I) Theorem 39]{Mat80}, $A$ is normal.
\end{proof}

Next, we define the dimension of quasi-analytic spaces and rigid analytic Artin stacks.

\begin{definition}
	Let $X$ be a quasi-analytic space.
	For a classical point $x\in X^{\cl}$, we define the dimension $\dim_{x} X$ of $X$ at $x$ as follows:
	Choose an \'{e}tale morphism $f\colon U\to X$ from a rigid space $U$ and a classical point $y\in U^{\cl}$ such that $f(y)=x$.
	Then we define $\dim_x X=\dim_y U$.
	Note that, by Lemma \ref{lem:fiber dimension}, it does not depend on the choice of $f$ and $y$.
	The dimension $\dim X$ of $X$ is defined as $\sup_{x\in X^{\cl}}\dim_x X$.
\end{definition}

As in Definition \ref{defn:relative dimension rigid space}, we define the relative dimension of morphisms representable by quasi-analytic spaces.

\begin{definition}\label{def:relative dimension stack}
	Let $\Xcal$ and $\Ycal$ be rigid analytic Artin stacks, and let $f\colon \Xcal \to \Ycal$ be a morphism representable by quasi-analytic spaces.
	\begin{enumerate}
	\item For a classical point $x\in \Xcal^{\cl}$, the relative dimension $\dim_x f$ (or $\dim_x \Xcal/\Ycal$) of $f$ at $x$ is defined as follows:
	Choose a representative $\Spa F\to \Ycal$ of the classical point $f(x)$.
	We also choose a classical point $x'$ of $\Xcal\times_{\Ycal} \Spa F$ whose image in $\Xcal$ is $x$.
	We define $\dim_x f$ as the dimension of $\Xcal\times_{\Ycal} \Spa F$ at $x'$.
	It does not depend on the choices of $\Spa F\to \Ycal$ and $x'$ by the following lemma.
	\item The relative dimension $\dim f$ (or $\dim \Xcal/\Ycal$) of $f$ is defined as $\sup_{x\in \Xcal^{\cl}} \dim_x f$.
	\item We say that $f$ is of \textit{pure dimension} $n$ if $\dim f=n$ and for every classical point $x\in \Xcal^{\cl}$, $\dim_x f=n$.
	\end{enumerate}
\end{definition}

\begin{lemma}\label{lem:independent of the lifting}
	We use the same notation as in Definition \ref{def:relative dimension stack}.
	The definition of $\dim_x f$ is independent of the choices of $\Spa F\to \Ycal$ and $x'$.
\end{lemma}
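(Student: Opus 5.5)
The plan is to reduce the statement to one invariance result, namely that dimension does not change under finite base field extension, and then to show that any two choices become literally the same after a suitable finite extension. Throughout, for a classical point $y\colon \Spa F\to \Ycal$ write $\Xcal_y=\Xcal\times_{\Ycal}\Spa F$; it is a quasi-analytic space because $f$ is representable by quasi-analytic spaces.

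\emph{Step 1 (invariance under finite extension).} Let $F\subset F'$ be a finite extension and set $y'\colon \Spa F'\to \Spa F\xrightarrow{y}\Ycal$. Then
\[\Xcal_{y'}=\Xcal_y\times_{\Spa F}\Spa F',\]
and the projection $p\colon \Xcal_{y'}\to \Xcal_y$ is a base change of $\Spa F'\to\Spa F$. Hence $p$ is finite, flat and surjective. I will show that $\dim_{z'}\Xcal_{y'}=\dim_{p(z')}\Xcal_y$ for every classical point $z'$.
\begin{enumerate}
\item Choose an étale morphism $U\to \Xcal_y$ from a rigid space $U$ and a classical point $u\in U$ mapping to $p(z')$.
\item Then $U'=U\times_{\Spa F}\Spa F'$ is étale over $\Xcal_{y'}$, and $z'$ lifts to a classical point $u'\in U'$ lying over $u$.
\item The map $U'\to U$ is flat, and its fibres are finite, so the fibre through $u'$ has dimension $0$ at $u'$.
\item The equality case of Lemma \ref{lem:fiber dimension} then gives $\dim_{u'}U'=\dim_u U$.
\end{enumerate}
Since the dimension of a quasi-analytic space at a point is computed on any étale chart, the claim follows.

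\emph{Step 2 (reduction to a common field).} Take two choices $(y_1\colon\Spa F_1\to\Ycal,\ x_1')$ and $(y_2\colon \Spa F_2\to\Ycal,\ x_2')$. Each $x_i'$ is represented by a morphism $\Spa F_i'\to\Xcal_{y_i}$, whose image in $\Xcal$ is the classical point $x$. By the definition of equivalence of classical points in $\Xcal$, I will find a finite extension $L$ and a morphism $\xi\colon\Spa L\to\Xcal$ representing $x$ that factors through both representatives. This produces
\begin{itemize}
\item morphisms $\Spa L\to\Xcal_{y_i}$ lifting $\xi$ with image $x_i'$, which amount to embeddings $\sigma_i\colon F_i\to L$;
\item $2$-isomorphisms $y_i\circ\sigma_i\simeq f\circ\xi$.
\end{itemize}
Applying Step 1 to $\sigma_i$, the dimension of $\Xcal_{y_i}$ at $x_i'$ equals the dimension of $\Xcal_{y_i\circ\sigma_i}$ at the tautological point $(\xi,\id)$. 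The given $2$-isomorphism identifies $\Xcal_{y_i\circ\sigma_i}$ with $\Xcal_{f\circ\xi}=\Xcal\times_{\Ycal}\Spa L$, compatibly with these tautological points. So both choices compute the dimension of the single quasi-analytic space $\Xcal\times_{\Ycal}\Spa L$ at the single point $(\xi,\id)$, and therefore agree.

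I expect the main obstacle to be the bookkeeping in Step 2. One must check that a lift of a classical point to the $2$-fibre product really records an embedding of fields together with a $2$-isomorphism in $\Ycal$. One must also check that the induced identification $\Xcal_{y_i\circ\sigma_i}\cong\Xcal_{f\circ\xi}$ sends the lifted point to $(\xi,\id)$, so that the equivalence relation defining $\Xcal^{\cl}$ is genuinely used. Step 1 is routine once it is phrased on étale charts.
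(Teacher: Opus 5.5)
Your proof is correct and follows essentially the same route as the paper: invariance of the dimension under finite extension of $F$, then passage to a common field on which both choices are identified, via isomorphisms of fibre products, with a single fibre and a single point. The paper packages your $2$-isomorphisms $f\circ\xi\simeq y_i\circ\sigma_i$ through the auxiliary space $\Xcal'$, the base change along $(y_1,y_2)\colon\Spa F\to\Spa F\times_{\Ycal}\Spa F$, whose projections to $\Xcal\times_{\Ycal}\Spa F$ are isomorphisms. It also asserts the finite-extension invariance without the étale-chart and flatness argument you supply.
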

\begin{proof}
	Let $F'$ be a finite extension of $F$, and take a classical point $x''$ of $\Xcal\times_{\Ycal} \Spa F'$ whose image in $\Xcal\times_{\Ycal} \Spa F$ is $x'$.
	Then we have 
	\[\dim_{x'} (\Xcal\times_{\Ycal} \Spa F)=\dim_{x''} (\Xcal\times_{\Ycal} \Spa F').\]
	Therefore, we can freely replace $F$ with its finite extension to compare different choices.

	Let $x_1$ and $x_2$ be classical points of $\Xcal\times_{\Ycal} \Spa F$ whose images in $\Xcal$ are $x$.
	After replacing $F$ with a suitable finite extension, we can choose a representative $x_i\colon \Spa F\to \Xcal\times_{\Ycal}\Spa F$ of $x_i$ for $i=1,2$ such that $\Spa F\xrightarrow{x_1} \Xcal\times_{\Ycal}\Spa F\to\Xcal$ coincides with $\Spa F\xrightarrow{x_2} \Xcal\times_{\Ycal}\Spa F\to \Xcal$.
	For $i=1,2$, let $y_i$ be the composition $\Spa F\xrightarrow{x_i} \Xcal\times_{\Ycal}\Spa F\to\Spa F$.
	Then we obtain morphisms
	\[(x_1,x_2)\colon \Spa F\to (\Xcal\times_{\Ycal}\Spa F)\times_{\Xcal}(\Xcal\times_{\Ycal}\Spa F)=\Xcal\times_{\Ycal}(\Spa F\times_{\Ycal}\Spa F)\]
	and
	\[(y_1,y_2)\colon \Spa F\to \Spa F\times_{\Ycal}\Spa F\]
	with a commutative diagram
	\[\xymatrix{
		\Spa F \ar[rr]^-{(x_1,x_2)}\ar[rrd]_-{(y_1,y_2)}&&\Xcal\times_{\Ycal}(\Spa F\times_{\Ycal}\Spa F)\ar[d]\\
		& & \Spa F\times_{\Ycal}\Spa F.
	}
	\]
	We set $\Xcal'=(\Xcal\times_{\Ycal}(\Spa F\times_{\Ycal}\Spa F)) \times_{\Spa F\times_{\Ycal}\Spa F,(y_1,y_2)}\Spa F$.
	By definition, $\Xcal'$ is equipped with a classical point $\tilde{x}$ induced by
	\[(x_1,x_2)\times \id \colon \Spa F\to (\Xcal\times_{\Ycal}(\Spa F\times_{\Ycal}\Spa F)) \times_{\Spa F\times_{\Ycal}\Spa F,(y_1,y_2)}\Spa F.\]
	For $i=1,2$, let $p_i\colon \Xcal'\to \Xcal\times_{\Ycal}\Spa F$ be the composition of the natural projection $\Xcal'\to \Xcal\times_{\Ycal}(\Spa F\times_{\Ycal}\Spa F)$ and the $i$-th projection 
	\[\Xcal\times_{\Ycal}(\Spa F\times_{\Ycal}\Spa F)= (\Xcal\times_{\Ycal}\Spa F)\times_{\Xcal}(\Xcal\times_{\Ycal}\Spa F)\to \Xcal\times_{\Ycal}\Spa F.\]
	Similarly, for $i=1,2$, let $q_i\colon \Spa F\to \Spa F$ be the composition of $(y_1,y_2)\colon \Spa F\to \Spa F\times_{\Ycal}\Spa F$ and the $i$-th projection $\Spa F\times_{\Ycal}\Spa F\to \Spa F$.
	By construction, for $i=1,2$, we have a Cartesian diagram 
	\[\xymatrix{
		\Xcal\times_{\Ycal}\Spa F\ar[r] &\Spa F\\
		\Xcal'\ar[u]^-{p_i}\ar[r] & \Spa F\ar[u]_-{q_i}.
	}\]
	Since $q_i$ is an isomorphism, $p_i\colon \Xcal'\to \Xcal\times_{\Ycal}\Spa F$ is also an isomorphism.
	By construction, we have, for $i=1,2$, $p_i(\tilde{x})=x_i$.
	Therefore, we obtain
	\[\dim_{x_1}(\Xcal\times_{\Ycal}\Spa F)=\dim_{\tilde{x}} \Xcal'=\dim_{x_2}(\Xcal\times_{\Ycal}\Spa F),\]
	which proves the claim.
\end{proof}

\begin{lemma}\label{lem:fiber dimension,quasi-analytic space}
	Let $f\colon X\to Y$ be a morphism of quasi-analytic spaces, and let $x\in X^{\cl}$ be a classical point.
	Then we have $\dim_x X\leq \dim_x X/Y +\dim_{f(x)} Y$.
	If $f$ is smooth, then we have $\dim_x X= \dim_x X/Y +\dim_{f(x)} Y$.
\end{lemma}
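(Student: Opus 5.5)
The plan is to reduce to the rigid-space statement, Lemma \ref{lem:fiber dimension}, by passing to étale charts. First I will choose an étale morphism $v\colon V\to Y$ from a rigid space $V$ and a classical point $y\in V^{\cl}$ with $v(y)=f(x)$. Then $X\times_Y V$ is a quasi-analytic space, being a fibre product of quasi-analytic spaces, so I can choose an étale morphism $u\colon U\to X\times_Y V$ from a rigid space $U$ together with a classical point $z\in U^{\cl}$ lying over $x$ and $y$. To get such $z$, I will use that $x$ and $y$ have the same image in $Y$ after a finite extension $F$, so they give a point $\Spa F\to X\times_Y V$, which lifts étale locally (after a further finite extension) to $U$. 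Let $g\colon U\to V$ denote the induced morphism of rigid spaces.

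Next I will compare dimensions. The composite $U\to X\times_Y V\to X$ is étale, since it is a base change of the étale map $V\to Y$ followed by an étale map. By definition this gives $\dim_x X=\dim_z U$, and likewise $\dim_{f(x)}Y=\dim_y V$. For the relative dimension, the fibre $g^{-1}(y)=U\times_V\Spa k(y)$ is étale over $(X\times_Y V)\times_V \Spa k(y)=X\times_Y\Spa k(y)$. I will take $\Spa k(y)\to Y$ as the representative of $f(x)$ in Definition \ref{def:relative dimension stack}; here $k(y)$ is finite over $K$ because $y$ is classical. This gives $\dim_z g=\dim_x X/Y$, using the definition of dimension of a quasi-analytic space via étale charts, which is independent of choices by Lemma \ref{lem:independent of the lifting}. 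Applying Lemma \ref{lem:fiber dimension} to $g$ at $z$ then yields the inequality.

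For the equality when $f$ is smooth, I need the base change $X\times_Y V\to V$ to be smooth, and hence $g\colon U\to V$ to be smooth, as the composite of an étale map and a smooth map. Smooth morphisms of rigid spaces are flat, so $\Ocal_{V,y}\to\Ocal_{U,z}$ is flat, and the equality case of Lemma \ref{lem:fiber dimension} applies.

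The main obstacle is bookkeeping rather than substance. One part is confirming that smoothness of a morphism of quasi-analytic spaces is defined so that it is stable under base change and composition with étale charts (via \cite{War17}). The other is ensuring that the point $z$ can be chosen classical with residue field finite over $K$. Both are standard, and I would cite the formalism of \cite[Section 3]{War17} for them.
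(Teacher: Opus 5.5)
Your proposal is correct and is the same argument as the paper, whose entire proof is that the claim follows from Lemma \ref{lem:fiber dimension} by passing to étale charts. You have simply written out that bookkeeping: the charts $V\to Y$ and $U\to X\times_Y V$, the classical point $z$, and the identification of $g^{-1}(y)$ as étale over $X\times_Y\Spa k(y)$. One small citation fix: independence of $\dim_x X$ from the chosen étale chart comes from Lemma \ref{lem:fiber dimension}, as remarked after the definition, whereas Lemma \ref{lem:independent of the lifting} handles the choice of $\Spa k(y)\to Y$ and of the lift $x'$ in the relative dimension.
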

\begin{proof}
	The claim follows from Lemma \ref{lem:fiber dimension} by passing to \'{e}tale charts.
\end{proof}

\begin{corollary}\label{cor:fiber dimension transitive,rigid analytic Artin stack}
	Let $\Xcal$, $\Ycal$, and $\Zcal$ be rigid analytic Artin stacks.
	Let $f\colon \Xcal\to \Ycal$ and $g\colon \Ycal\to \Zcal$ be morphisms representable by quasi-analytic spaces, and let $x\in \Xcal^{\cl}$ be a classical point.
	Assume that $f$ is smooth.
	Then we have 
	\[\dim_x (g\circ f)=\dim_x f+\dim_{f(x)}g.\]
\end{corollary}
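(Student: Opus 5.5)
We must prove that for $f\colon \Xcal\to\Ycal$ smooth and representable by quasi-analytic spaces, $g\colon \Ycal\to\Zcal$ representable by quasi-analytic spaces, and a classical point $x\in\Xcal^{\cl}$, we have $\dim_x(g\circ f)=\dim_x f+\dim_{f(x)}g$. The plan is to reduce to Lemma \ref{lem:fiber dimension,quasi-analytic space} by base change to a classical point of $\Zcal$.

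First, choose a representative $z\colon \Spa F\to \Zcal$ of the classical point $g(f(x))$. Then set
\[Y_F=\Ycal\times_{\Zcal}\Spa F, \qquad X_F=\Xcal\times_{\Zcal}\Spa F=\Xcal\times_{\Ycal}Y_F.\]
Since $g$ and $g\circ f$ are representable by quasi-analytic spaces, $Y_F$ and $X_F$ are quasi-analytic spaces. The base change $f_F\colon X_F\to Y_F$ is a smooth morphism of quasi-analytic spaces. Choose a classical point $x'$ of $X_F$ lying over $x$; this is possible after enlarging $F$, which is harmless by Lemma \ref{lem:independent of the lifting}. Let $y'=f_F(x')$, which is a classical point of $Y_F$ lying over $f(x)$. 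By Definition \ref{def:relative dimension stack} and Lemma \ref{lem:independent of the lifting}, we get
\[\dim_x(g\circ f)=\dim_{x'}X_F, \qquad \dim_{f(x)}g=\dim_{y'}Y_F.\]

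Next, we identify the relative dimension of $f$. Choose a representative $\Spa F'\to Y_F$ of $y'$, after possibly enlarging $F$ again. Its composite to $\Ycal$ represents $f(x)$, and
\[X_F\times_{Y_F}\Spa F'\cong \Xcal\times_{\Ycal}\Spa F'.\]
Hence $\dim_x f=\dim_{x'}f_F$: again by Lemma \ref{lem:independent of the lifting}, we may compute using any lift of $x$ in this fiber, and a lift of $x'$ works.

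Finally, apply the smooth case of Lemma \ref{lem:fiber dimension,quasi-analytic space} to $f_F\colon X_F\to Y_F$ at $x'$. This gives
\[\dim_{x'}X_F=\dim_{x'}f_F+\dim_{y'}Y_F,\]
and substituting the three identifications yields the claim.

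The main point requiring care is bookkeeping: the various finite extensions and choices of lifts must be compatible, and Lemma \ref{lem:independent of the lifting} is exactly what makes all these choices irrelevant. One should also note that $\dim_{x'}f_F$, defined for morphisms of quasi-analytic spaces via fibers over a classical point, agrees with Definition \ref{def:relative dimension stack}. This holds because fibered products of the base changes are the same.
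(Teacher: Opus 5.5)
Your proposal is correct and is exactly the derivation the paper leaves implicit: base change along a representative $\Spa F\to\Zcal$ of $g(f(x))$ to get the smooth morphism of quasi-analytic spaces $\Xcal\times_{\Zcal}\Spa F\to\Ycal\times_{\Zcal}\Spa F$. Then identify the three dimensions via Definition \ref{def:relative dimension stack} and Lemma \ref{lem:independent of the lifting}, and apply the smooth case of Lemma \ref{lem:fiber dimension,quasi-analytic space}; your bookkeeping with the lifts $x'$, $y'$ and the identification $X_F\times_{Y_F}\Spa F'\cong\Xcal\times_{\Ycal}\Spa F'$ is right.
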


\begin{definition}
	Let $\Xcal$ be a rigid analytic Artin stack.
	For a classical point $x\in \Xcal^{\cl}$, we define the dimension $\dim_{x} \Xcal$ of $\Xcal$ at $x$ as follows:
	Choose a smooth morphism $f\colon U\to \Xcal$ from a rigid space $U$ and a classical point $y\in U^{\cl}$ such that $f(y)=x$.
	Then we define $\dim_x \Xcal=\dim_y U-\dim_y f$.
	Note that, by Lemma \ref{lem:fiber dimension,quasi-analytic space} and Corollary \ref{cor:fiber dimension transitive,rigid analytic Artin stack}, it does not depend on the choice of $f$ and $y$.
	The dimension $\dim \Xcal$ of $\Xcal$ is defined as $\sup_{x\in \Xcal^{\cl}}\dim_x \Xcal$.
\end{definition}

\begin{definition}
	Let $f\colon \Xcal \to \Ycal$ be a morphism of rigid analytic Artin stacks.
	\begin{enumerate}
	\item For a classical point $x\in \Xcal^{\cl}$, the relative dimension $\dim_x f$ (or $\dim_x \Xcal/\Ycal$) of $f$ at $x$ is defined as follows:
	Choose a representative $\Spa F\to \Ycal$ of the classical point $f(x)$.
	We also choose a classical point $x'$ of $\Xcal\times_{\Ycal} \Spa F$ whose image in $\Xcal$ is $x$.
	We define $\dim_x f$ as the dimension of $\Xcal\times_{\Ycal} \Spa F$ at $x'$.
	As in Lemma \ref{lem:independent of the lifting} it does not depend on the choices of $\Spa F\to \Ycal$ and $x'$.
	\item The relative dimension $\dim f$ (or $\dim \Xcal/\Ycal$) of $f$ is defined as $\sup_{x\in \Xcal^{\cl}} \dim_x f$.
	\item We say that $f$ is of \textit{pure dimension} $n$ if $\dim f=n$ and for every classical point $x\in \Xcal^{\cl}$, $\dim_x f=n$.
	\end{enumerate}
\end{definition}

\begin{remark}
	The dimension $\dim \Xcal$ could be a negative integer.
	For example, for a smooth rigid analytic group $G$, the dimension $\dim [\ast/G]$ of the classifying stack is $-\dim G$.
\end{remark}

\begin{lemma}\label{lem:stack fiber dimension}
	Let $f\colon \Xcal\to \Ycal$ be a morphism of rigid analytic Artin stacks, and let $x\in \Xcal^{\cl}$ be a classical point.
	Then we have $\dim_x \Xcal \leq \dim_x \Xcal/\Ycal +\dim_{f(x)} \Ycal$.
	In particular, we have $\dim \Xcal \leq \dim \Xcal/\Ycal +\dim \Ycal$.
\end{lemma}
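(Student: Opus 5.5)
Choose a smooth morphism $v\colon V\to \Ycal$ from a rigid space $V$ and a classical point $w\in V^{\cl}$ with $v(w)=f(x)$. Form $\Xcal_V=\Xcal\times_{\Ycal}V$. This is a rigid analytic Artin stack, because $\Ycal$ has diagonal representable by quasi-analytic spaces. The projection $\Xcal_V\to\Xcal$ is smooth. Then choose a smooth morphism $u\colon U\to \Xcal_V$ from a rigid space $U$ and a classical point $y\in U^{\cl}$ lying over both $x$ and $w$. This may require replacing $w$ and $y$ by points over a finite extension, which is harmless for the reason given in Lemma \ref{lem:independent of the lifting}. The composite $U\to\Xcal_V\to\Xcal$ is smooth, so we get
\[\dim_x\Xcal=\dim_y U-\dim_y(U\to\Xcal).\]
In the same way, $\dim_{f(x)}\Ycal=\dim_w V-\dim_w v$.

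Next apply Lemma \ref{lem:fiber dimension} to the morphism of rigid spaces $h\colon U\to V$. It gives $\dim_y U\le \dim_y h+\dim_w V$. It remains to compare $\dim_y h$ with the other relative dimensions. Since $U\to \Xcal_V$ is smooth, Corollary \ref{cor:fiber dimension transitive,rigid analytic Artin stack} (in its stacky form, via fibers over $\Spa k(w)$) suggests
\[\dim_y h=\dim_y(U\to\Xcal_V)+\dim_{u(y)}(\Xcal_V/V).\]
Base change along $v$ should give $\dim_{u(y)}(\Xcal_V/V)=\dim_x(\Xcal/\Ycal)$. This holds because the fiber of $\Xcal_V\to V$ over $\Spa k(w)$ is $\Xcal\times_{\Ycal}\Spa k(w)$, and that is exactly what computes $\dim_x f$.

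Finally, compare smooth relative dimensions. We need
\[\dim_y(U\to\Xcal)=\dim_y(U\to\Xcal_V)+\dim_{u(y)}(\Xcal_V\to\Xcal).\]
By base change we also have $\dim_{u(y)}(\Xcal_V\to\Xcal)=\dim_w v$. Both identities are computed on fibers over a point $\Spa F\to\Xcal$, where every map involved is smooth and representable by quasi-analytic spaces, so additivity follows from Lemma \ref{lem:fiber dimension,quasi-analytic space}. Combining everything,
\[\dim_x\Xcal=\dim_yU-\dim_y(U\to\Xcal_V)-\dim_w v\le \dim_x(\Xcal/\Ycal)+\dim_wV-\dim_w v=\dim_x(\Xcal/\Ycal)+\dim_{f(x)}\Ycal.\]
The global inequality then follows by taking suprema.

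The main obstacle is the additivity statement $\dim_y h=\dim_y(U\to\Xcal_V)+\dim_{u(y)}(\Xcal_V/V)$, because $\Xcal_V\to V$ is a morphism of stacks and not a representable one. To handle it, I would pass to the fiber over $\Spa k(w)$. There, $U_w\to(\Xcal_V)_w$ is a smooth chart of the stack $(\Xcal_V)_w$. The definition of stack dimension gives $\dim_{u(y)}(\Xcal_V)_w=\dim_y U_w-\dim_y(U_w\to(\Xcal_V)_w)$. The relative dimension of $U\to\Xcal_V$ at $y$ is unchanged by this base change, since it too is computed on fibers over classical points of $\Xcal_V$. This yields the needed identity.
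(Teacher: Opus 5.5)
Your proposal is correct and is essentially the paper's proof: base change along a smooth chart $V\to\Ycal$, take a smooth chart $U$ of $\Xcal\times_{\Ycal}V$, apply Lemma \ref{lem:fiber dimension} to $U\to V$, and identify the relative dimensions by additivity for smooth maps and by base change; the paper's six equalities are exactly yours. Your treatment of the non-representable map $\Xcal\times_{\Ycal}V\to V$, by passing to the fiber over $\Spa k(w)$, spells out a step the paper leaves implicit, and the paper avoids your finite-extension adjustment by first choosing the point of $U$ over $x$ and then taking $w$ to be its image in $V$.
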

\begin{proof}
	Let us consider the following diagram:
	\[
	\xymatrix{
		 & \Xcal\ar[r]^-{f} & \Ycal\\
	U\ar[ru]\ar[r]_-{h} & \Zcal \ar[r]_-{p_2}\ar[u]_-{p_1} &V\ar[u]_-{g},	
	}\]
	where $\Zcal=\Xcal\times_{\Ycal}V$, and $V\twoheadrightarrow \Ycal$ and $U\twoheadrightarrow \Zcal$ are smooth surjections from rigid spaces $V$ and $U$, respectively.
	Since the composition $p_1\circ h\colon U\twoheadrightarrow \Zcal \twoheadrightarrow \Xcal$ is a smooth surjection, we can choose a classical point $u\in U^{\cl}$ such that $p_1(h(u))=x$.
	Then by definition and Corollary \ref{cor:fiber dimension transitive,rigid analytic Artin stack}, we have
	\begin{align*}
		\dim_{f(x)} \Ycal&=\dim_{p_2(h(u))} V-\dim_{p_2(h(u))} V/\Ycal,\\
		\dim_{x} \Xcal&=\dim_u U-\dim_u U/\Xcal,\\
		\dim_{h(u)} \Zcal/\Xcal&=\dim_{p_2(h(u))} V/\Ycal,\\
		\dim_{h(u)} \Zcal/V&=\dim_{x} \Xcal/\Ycal,\\
		\dim_u U/\Xcal&= \dim_u U/\Zcal +\dim_{h(u)} \Zcal/\Xcal\\
		\dim_u U/V&= \dim_u U/\Zcal +\dim_{h(u)} \Zcal/V.\\
	\end{align*}
	By Lemma \ref{lem:fiber dimension,quasi-analytic space}, we have $\dim_u U\leq \dim_u U/V+\dim_{p_2(h(u))}V$.
	By combining this with the above equalities, we obtain the claim.
\end{proof}

\begin{lemma}\label{lem:smooth of pure dimension}
	Let $f\colon U\to \Xcal$ be a smooth morphism from a quasi-analytic space $U$ to a rigid analytic Artin stack $\Xcal$.
	Then there exists a unique decomposition $U=\bigsqcup_{n\geq 0} U_n$ into closed and open subspaces $U_n$ such that $f\vert_{U_n}$ is smooth of pure dimension $n$. 
\end{lemma}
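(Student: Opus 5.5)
The plan is to define the decomposition pointwise via the relative dimension function and then show that this function is locally constant. For a classical point $u\in U^{\cl}$ the relative dimension $\dim_u f$ is defined; since $f$ is representable by quasi-analytic spaces (the diagonal of $\Xcal$ is), this is the dimension at a lift of $u$ of the fiber $U\times_{\Xcal}\Spa F$. We set $U_n$ to be the set of points whose classical specializations have $\dim_u f=n$. Uniqueness is then immediate: if $U=\bigsqcup U_n'$ is any such decomposition, then every classical point of $U_n'$ has $\dim_u f=n$, so $U_n'^{\cl}=U_n^{\cl}$. Open subspaces of a rigid space with the same classical points coincide, since a nonempty rigid space has a classical point. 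So the real content is that $u\mapsto \dim_u f$ is locally constant on $U$, in a way that yields an open and closed partition.

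To prove local constancy, choose a smooth surjection $V\twoheadrightarrow \Xcal$ from a rigid space $V$. Set $W=U\times_{\Xcal}V$; this is a quasi-analytic space, and we take an étale surjection $W'\twoheadrightarrow W$ from a rigid space. The morphism $g\colon W'\to V$ is a smooth morphism of rigid spaces. By the transitivity of relative dimension in Corollary \ref{cor:fiber dimension transitive,rigid analytic Artin stack}, together with the fact that étale morphisms have relative dimension $0$, we get $\dim_{w}g=\dim_{u}f$ for $w\in W'^{\cl}$ over $u$. For a smooth morphism of rigid spaces, the relative dimension at $w$ equals the rank of $\Omega_{W'/V}$ at $w$, since a smooth local ring of the fiber is regular of dimension equal to the rank of its differentials. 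Because $\Omega_{W'/V}$ is locally free, this rank is locally constant on $W'$. Hence $W'=\bigsqcup_n W'_n$ with $W'_n$ open and closed, and $g|_{W'_n}$ smooth of pure dimension $n$.

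The main obstacle is descending this decomposition from $W'$ back to $U$. The composite $W'\to W\to U$ is a smooth surjection, being a base change of $V\to\Xcal$ composed with an étale map. Smooth maps of locally noetherian analytic adic spaces are open (\cite[Proposition 1.7.8]{Hub96}). Therefore the images $U_n$ of the $W'_n$ are open and cover $U$. They are pairwise disjoint because, by the pointwise identity $\dim_w g=\dim_u f$ above, every classical point of $W'$ over $u$ lies in the same $W'_n$. Since the $U_n$ are open, pairwise disjoint, and cover $U$, each $U_n$ is also closed. This pointwise identity is exactly the independence statement behind Lemma \ref{lem:independent of the lifting}.

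If $U$ is merely a quasi-analytic space, we first run the whole argument on an étale chart of $U$. Open subsets descend along étale surjections because such maps are open. Finally, $f|_{U_n}$ is smooth of pure dimension $n$ by construction.
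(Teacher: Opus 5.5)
Your argument is correct. Its skeleton matches the paper's: pass to charts until $f$ becomes a smooth morphism of rigid spaces, get the decomposition there from local constancy of $\rank\Omega_{W'/V}$ (the case the paper calls well known), and use that $\dim_u f$ is intrinsic to bring the decomposition back to $U$.

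The difference is in that last step. The paper treats it as a descent problem. The decomposition of $U\times_{\Xcal}V$ (resp.\ of an \'{e}tale chart of $U$) has two pullbacks to the double fibre product, these agree by uniqueness, and the open pieces descend along the smooth (resp.\ \'{e}tale) cover. This tacitly uses that such covers are epimorphisms of \'{e}tale sheaves (Theorem \ref{thm:smooth surjection etale topology}) and that open immersions descend. You instead define $U_n$ as the image of $W'_n$ under the smooth surjection $W'\to U$, which is open by \cite[Proposition 1.7.8]{Hub96}, and you get disjointness from the pointwise identity $\dim_w g=\dim_u f$. For rigid $U$ this bypasses descent entirely and makes the role of Lemma \ref{lem:independent of the lifting} explicit. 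The price is some bookkeeping with classical points.

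That bookkeeping needs tightening in a few places, none of them serious.
\begin{itemize}
\item The claim that open subspaces with the same classical points coincide is false in general. The closed unit disc and the complement of the closure of its Gauss point have the same classical points. It does hold for open and closed subspaces, and that is all your uniqueness argument uses.
\item Your preliminary definition of $U_n$ via ``classical specializations'' says nothing about non-classical points. These have no classical specializations, since specializations in an analytic adic space are vertical and preserve $k(x)$. Simply define $U_n$ as the image of $W'_n$.
\item For disjointness, say explicitly that a nonempty open $U_n\cap U_m$ contains a classical point $u$. The fibre $W'_n\times_U\Spa k(u)$ is then a nonempty rigid space, hence contains a classical point lying over $u$, and likewise for $W'_m$.
\item In the quasi-analytic case, what makes the pieces on an \'{e}tale chart $U_0$ descend is not that \'{e}tale maps are open. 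It is that the pieces are invariant under $U_0\times_U U_0$, which follows from the same pointwise characterization. This is precisely the paper's ``by uniqueness''.
\end{itemize}
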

\begin{proof}
	If both $U$ and $\Xcal$ are rigid spaces, then the claim is well-known.
	Assume that $\Xcal$ is a rigid space.
	Choose an \'{e}tale surjection $g\colon V\twoheadrightarrow U$ from a rigid space $V$.
	Then there exists a unique decomposition $V=\bigsqcup_{n\geq 0} V_n$ into closed and open subspaces $V_n$ such that $(f\circ g)\vert_{V_n}$ is smooth of pure dimension $n$. 
	By the uniqueness of the decomposition $V=\bigsqcup_{n\geq 0} V_n$, it descends to a decomposition $U=\bigsqcup_{n\geq 0} U_n$, and by construction, $f\vert_{U_n}$ is smooth of pure dimension $n$. 

	Finally, we consider the general case.
	Choose a smooth surjection $V\twoheadrightarrow \Xcal$ from a rigid space $V$, and set $f'\colon U'=U\times_{\Xcal} V\to V$.
	Then there exists a unique decomposition $U'=\bigsqcup_{n\geq 0} U'_n$ into closed and open subspaces $U'_n$ such that $f'\vert_{U'_n}$ is smooth of pure dimension $n$. 
	By the uniqueness of the decomposition $U'=\bigsqcup_{n\geq 0} U'_n$, it descends to a decomposition $U=\bigsqcup_{n\geq 0} U_n$, and by construction, $f\vert_{U_n}$ is smooth of pure dimension $n$. 
\end{proof}

\begin{definition}
	Let $\Xcal$ be a rigid analytic Artin stack.
	A \textit{locally Zariski closed covering} (resp.\ \textit{locally Zariski closed partition}) of $\Xcal$ is a family $\{\Xcal_i\}_{i\in I}$ of locally Zariski closed substacks $\Xcal_i\subset \Xcal$ such that for some smooth surjection $f\colon U\twoheadrightarrow \Xcal$ from a rigid space $U$, $\{f^{-1}(\Xcal_i)\}_{i\in I}$ is a locally Zariski closed covering (resp.\ locally Zariski closed partition) of $U$ in the sense of Definition \ref{defn:locally Zariski closed partition, rigid space}.
	In this case, we write $\Xcal\sim \bigcup_{i\in I}\Xcal_i$ (resp.\ $\Xcal\sim \bigsqcup_{i\in I}\Xcal_i$).
\end{definition}

\begin{remark}
	Let $\Xcal$ be a rigid analytic Artin stack, and let $\{\Xcal_i\}_{i\in I}$ be a locally Zariski closed covering (resp.\ locally Zariski closed partition) of $\Xcal$.
	Then for any morphism $f\colon U\to \Xcal$ from a rigid space $U$, $\{f^{-1}(\Xcal_i)\}_{i\in I}$ is a locally Zariski closed covering (resp.\ locally Zariski closed partition) of $U$.
\end{remark}

\begin{example}\label{ex:Fitting ideal locally Zariski closed partition}
	Let $\Xcal$ be a rigid analytic Artin stack, and let $M$ be a coherent sheaf on $\Xcal$.
	Let $Z_k\subset \Xcal$ be the Zariski closed substack defined by the $k$-th Fitting ideal $\Fit_k(M)$, and set $Z_k^{\circ}=Z_{k-1}\setminus Z_{k}$. 
	Then $\bigsqcup_{k\geq 0}Z_k^{\circ}$ is a locally Zariski closed partition of $\Xcal$.
\end{example}

\begin{example}\label{ex:locally Zariski closed partition rigid stack}
	Let $\Xcal$ be a rigid analytic Artin stack, and let $n$ be an integer.
	Let $\{Z_{\kbf}\}_{\kbf\in \Zbb_{\geq 0}^n}$ be a family of Zariski closed substacks of $\Xcal$ such that $Z_0=\Xcal$ and $Z_{\kbf_2}\subseteq Z_{\kbf_1}$ whenever $\kbf_1\leq \kbf_2$, where the order on $\Zbb_{\geq 0}^n$ is the componentwise order.
	Assume that for any sequence $\kbf_1<\kbf_2<\cdots$, $\bigcap_m Z_{\kbf_m}=\emptyset$.
	For $\kbf \in \Zbb_{\geq 0}^n$, set $Z_{\kbf}^{\circ}=Z_{\kbf}\setminus \bigcup_{\kbf'>\kbf}Z_{\kbf'}$.
	Then $\{Z_{\kbf}^{\circ}\}_{\kbf\in \Zbb_{\geq 0}^n}$ is a locally Zariski closed covering of $\Xcal$.
	As in Example \ref{ex:locally Zariski closed partition rigid space}, it suffices to check the condition $\bigcap_{m\geq 1}Z_{\kbf_m}=\emptyset$ on classical points.
\end{example}

\begin{lemma}\label{lem:locally Zariski closed partition dimension}
	Let $\Xcal$ be a rigid analytic Artin stack, and let $\Xcal\sim \bigcup_{i\in I}\Xcal_i$ be a locally Zariski closed covering.
	Then we have $\dim \Xcal=\sup_{i\in I} \dim \Xcal_i$.
\end{lemma}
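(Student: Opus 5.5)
The plan is to reduce to the rigid space case, Lemma \ref{lem:locally Zariski closed partition dimension rigid space}, by pulling everything back along a smooth atlas. I will use the definition of $\dim_x\Xcal$ as $\dim_y U-\dim_y f$.

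First, fix a smooth surjection $f\colon U\twoheadrightarrow \Xcal$ from a rigid space $U$ such that $\{U_i\}_{i\in I}$, with $U_i=f^{-1}(\Xcal_i)$, is a locally Zariski closed covering of $U$. By Lemma \ref{lem:smooth of pure dimension}, I decompose $U=\bigsqcup_n U^{(n)}$ with $f|_{U^{(n)}}$ smooth of pure dimension $n$. Replacing $U$ by each $U^{(n)}$ separately, I may assume $f$ has pure relative dimension $n$. The base change $f_i\colon U_i\to \Xcal_i$ is again smooth and surjective. Its fibres over classical points coincide with those of $f$, since $\Xcal_i\subset \Xcal$ is a monomorphism and so $U_i\times_{\Xcal_i}\Spa F=U\times_{\Xcal}\Spa F$. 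Hence $f_i$ is also of pure relative dimension $n$.

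Next I check the two dimension formulas. For a classical point $x\in\Xcal_i^{\cl}$ lifted to $y\in U_i^{\cl}$, I get $\dim_x\Xcal_i=\dim_y U_i-n$. For every $x\in\Xcal^{\cl}$ and lift $y\in U^{\cl}$, I get $\dim_x \Xcal=\dim_y U-n$. Taking suprema over the component $U^{(n)}$ gives $\sup_{x}\dim_x\Xcal=\dim U^{(n)}-n$, and similarly for each $\Xcal_i$. Lemma \ref{lem:locally Zariski closed partition dimension rigid space} gives $\dim U^{(n)}=\sup_i \dim U_i^{(n)}$. Subtracting $n$ yields the equality for the part of $\Xcal$ and of $\Xcal_i$ seen by $U^{(n)}$.

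Finally I take the supremum over $n$. Every classical point of $\Xcal$ (resp.\ $\Xcal_i$) lifts to some $U^{(n)}$ (resp.\ $U_i^{(n)}$), because the maps are surjective and classical points lift by Remark \ref{rem:empty classical point}. So $\dim\Xcal=\sup_n(\dim U^{(n)}-n)$, and likewise $\dim\Xcal_i=\sup_n(\dim U_i^{(n)}-n)$. Interchanging the two suprema finishes the proof.

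The main subtlety is that the rigid space lemma is needed in a pointwise form: $\dim_x\Xcal$ depends on a local chart and is not a global $\sup$ over a single rigid space. Two points need care. The first is that $\dim_y U$ refers to $U$ while $\dim_y U_i$ refers to $U_i$. The second is that the empty convention $-\infty$ is respected, which it is automatically if some $U_i^{(n)}$ is empty. Since the statement concerns only global suprema, working with the suprema over the open pieces $U^{(n)}$ as above avoids the pointwise issue entirely. I expect no real obstacle beyond bookkeeping.
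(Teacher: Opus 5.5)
Your proof is correct and follows essentially the same route as the paper: both pass to smooth charts of pure relative dimension $n$ (Lemma \ref{lem:smooth of pure dimension}), use $\dim_x\Xcal=\dim_y U-n$ together with the fibre identity $U_i\times_{\Xcal_i}\Spa F=U\times_{\Xcal}\Spa F$, and then apply Lemma \ref{lem:locally Zariski closed partition dimension rigid space}. The only difference is bookkeeping: the paper works locally with charts $U\to\Xcal$ whose open images $f(U)$ cover $\Xcal$, while you split one global atlas into the pieces $U^{(n)}$ and interchange suprema. After restricting to $U^{(n)}$ the map is no longer surjective; your final paragraph correctly accounts for this, although your earlier sentence asserting surjectivity of $f_i$ only makes sense before that restriction.
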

\begin{proof}
	Let $x\in \Xcal^{\cl}$ be a classical point.
	By Lemma \ref{lem:smooth of pure dimension}, we can take a smooth morphism $f\colon U\to \Xcal$ of pure dimension $n$ from a rigid space $U$ such that $x\in f(U)$.
	Since a smooth morphism is an open map (\cite[Proposition 1.7.8]{Hub96}), $f(U)$ is an open substack of $\Xcal$.
	By definition, we have
	\[\dim f(U)=\dim U-n,\quad \dim (f(U)\cap \Xcal_i)=\dim f^{-1}(\Xcal_i)-n.\]
	By Lemma \ref{lem:locally Zariski closed partition dimension rigid space}, we have
	\[\dim U=\sup_{i\in I} \dim f^{-1}(\Xcal_i).\]
	Therefore, we obtain 
	\[\dim f(U) =\sup_{i\in I}\dim (f(U)\cap \Xcal_i).\]
	By varying $x\in \Xcal^{\cl}$, we obtain the desired equality 
	\[\dim \Xcal=\sup_{i\in I} \dim \Xcal_i.\]
\end{proof}

\begin{corollary}\label{cor:dimension stack closed and complement}
	Let $\Xcal$ be a rigid analytic Artin stack, and let $Z\subset \Xcal$ be a Zariski closed substack.
	Then we have $\dim \Xcal=\max\{\dim Z, \dim (\Xcal\setminus Z)\}$.
\end{corollary}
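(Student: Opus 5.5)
The plan is to derive the corollary from Lemma \ref{lem:locally Zariski closed partition dimension}, applied to the two-piece decomposition $\Xcal\sim Z\sqcup(\Xcal\setminus Z)$. The work lies in checking that this is a locally Zariski closed partition in the sense of the definition for stacks.

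First, $Z$ is a Zariski closed substack, hence locally Zariski closed. Its complement $\Xcal\setminus Z$ is an open substack, and an open substack is locally Zariski closed, being Zariski closed in itself. Next, choose a smooth surjection $f\colon U\twoheadrightarrow\Xcal$ from a rigid space $U$. Then $f^{-1}(Z)$ is a Zariski closed subspace of $U$, namely the one cut out by the pulled-back ideal sheaf. Likewise $f^{-1}(\Xcal\setminus Z)=U\setminus f^{-1}(Z)$. By the example stating that $Z\sqcup(X\setminus Z)$ is a locally Zariski closed partition of a rigid space $X$, the family $\{f^{-1}(Z),f^{-1}(\Xcal\setminus Z)\}$ is a locally Zariski closed partition of $U$. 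So $\{Z,\Xcal\setminus Z\}$ is a locally Zariski closed partition of $\Xcal$, and in particular a covering.

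Lemma \ref{lem:locally Zariski closed partition dimension} then gives $\dim\Xcal=\sup\{\dim Z,\dim(\Xcal\setminus Z)\}$. Since the index set is finite, the supremum is a maximum. This equality also holds when one of the pieces is empty, with the convention $\dim\emptyset=-\infty$.

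The only point needing any care is the compatibility of the dimension of a substack computed intrinsically with the computation through the pullback $f^{-1}$. This compatibility is already handled inside the proof of Lemma \ref{lem:locally Zariski closed partition dimension}, so no real obstacle is expected.
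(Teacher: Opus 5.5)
Your proposal is correct and is the intended argument. The paper states this corollary without a separate proof, as an immediate consequence of Lemma~\ref{lem:locally Zariski closed partition dimension} applied to the locally Zariski closed partition $\Xcal\sim Z\sqcup(\Xcal\setminus Z)$, which you verify exactly as one should, by pulling back along a smooth surjection from a rigid space.
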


\begin{corollary}\label{cor:coherent sheaf to vector bundle}
	Let $\Xcal$ be a rigid analytic Artin stack, and let $M_1,\ldots, M_n$ be coherent sheaves on $\Xcal$.
	Then there exists a reduced substack $Z\subset \Xcal$ satisfying the following properties:
	\begin{enumerate}
		\item $Z$ is a rigid analytic Artin stack;
		\item $\dim Z=\dim \Xcal$;
		\item For every $i=1,\ldots, n$, the restriction of $M_i$ to $Z$ is a vector bundle.
	\end{enumerate}
\end{corollary}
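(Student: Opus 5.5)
The plan is to stratify $\Xcal$ by the Fitting ideals of all the $M_i$ simultaneously, and then take $Z$ to be the reduced union of the strata that realize the dimension. For each $i$, let $Z_{i,k}\subset \Xcal$ be the Zariski closed substack cut out by $\Fit_k(M_i)$, and set $Z_{i,k}^{\circ}=Z_{i,k-1}\setminus Z_{i,k}$. By Example \ref{ex:Fitting ideal locally Zariski closed partition}, $\Xcal\sim\bigsqcup_k Z_{i,k}^{\circ}$ is a locally Zariski closed partition, and $M_i|_{Z_{i,k}^{\circ}}$ has $\Fit_{k-1}=0$ and $\Fit_k=\Ocal$. For a multi-index $\kbf=(k_1,\ldots,k_n)$ set $\Xcal_{\kbf}=\bigcap_i Z_{i,k_i}^{\circ}$, a locally Zariski closed substack (a Zariski closed substack of an open substack). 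Pulling back along a smooth surjection $U\to\Xcal$ and intersecting the $n$ partitions shows that $\{\Xcal_{\kbf}\}_{\kbf}$ is again a locally Zariski closed partition of $\Xcal$.

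Each $\Xcal_{\kbf}$ is a rigid analytic Artin stack. Its diagonal is a base change of the diagonal of $\Xcal$ composed with a locally closed immersion. Its smooth atlas is $U\times_{\Xcal}\Xcal_{\kbf}$, which is a locally Zariski closed subspace of $U$.

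Next I pass to reductions $\Xcal_{\kbf}^{\red}$, using the nilradical from Example \ref{example:nilradical}. Reduction does not change classical points or local dimensions, so $\dim \Xcal_{\kbf}^{\red}=\dim\Xcal_{\kbf}$, with dimensions computed on an atlas via Remark \ref{rem:comparison with Huber}. The key claim is that on a reduced rigid space, a coherent sheaf with $\Fit_{k-1}=0$ and $\Fit_k=\Ocal$ is locally free of rank $k$. This is exactly the quoted criterion from \cite[Proposition 20.8]{Eisenbud95} applied on affinoids of the atlas, and Fitting ideals commute with pullback by Lemma \ref{lem:Fitting ideal base change}. The restricted ideal $\Fit_{k-1}$ is already $0$ on $Z_{i,k-1}$, so we do not even need reducedness for this step; reducing only keeps $Z$ reduced as the statement requires. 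Hence every $M_i|_{\Xcal_{\kbf}^{\red}}$ is a vector bundle.

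By Lemma \ref{lem:locally Zariski closed partition dimension}, $\dim\Xcal=\sup_{\kbf}\dim\Xcal_{\kbf}$. This sup is attained whenever it is finite, since dimensions are integers or $-\infty$. If it is infinite, choose strata with dimensions tending to infinity. I then take $Z$ to be the disjoint union $\bigsqcup_{\kbf}\Xcal_{\kbf}^{\red}$, viewed as a stack over $\Xcal$ (a monomorphism), or a single stratum realizing the maximum when $\dim\Xcal<\infty$.

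The main obstacle is the meaning of ``substack'' in the statement: a single stratum is only locally closed, and the disjoint union is not literally a locally closed substack of $\Xcal$. I would read substack as a monomorphism $Z\to\Xcal$ and check the following properties of the disjoint union:
\begin{enumerate}
\item it is an étale stack;
\item its diagonal is representable by quasi-analytic spaces, because the strata are pairwise disjoint and so the diagonal is the disjoint union of the diagonals of the strata;
\item it admits the smooth atlas $\bigsqcup_{\kbf}(U\times_\Xcal\Xcal_{\kbf})^{\red}$;
\item its dimension is $\sup_{\kbf}\dim\Xcal_{\kbf}=\dim\Xcal$.
\end{enumerate}
With these checked, $Z$ satisfies (1)--(3) of the statement.
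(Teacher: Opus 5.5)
Your proposal is correct and is essentially the paper's argument. The paper's proof simply cites Example~\ref{ex:Fitting ideal locally Zariski closed partition} and Lemma~\ref{lem:locally Zariski closed partition dimension}, with the following implicit steps: refine the Fitting-ideal stratifications of the $M_i$, then take the reduction of a stratum (or union of strata) realizing $\dim\Xcal$, on which each $M_i$ is a vector bundle. Your treatment of the case $\dim\Xcal=\infty$, and your reading of ``substack'' as a monomorphism so that the disjoint union qualifies, fill in details the paper leaves unstated.
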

\begin{proof}
	This follows immediately from Example \ref{ex:Fitting ideal locally Zariski closed partition} and Lemma \ref{lem:locally Zariski closed partition dimension}.
\end{proof}

For later use, we record the following easy lemma.
\begin{lemma}\label{lem:section dimension}
Let $\Xcal$ and $\Ycal$ be rigid analytic Artin stacks, and let $f\colon \Xcal \to \Ycal$ be a morphism representable by rigid spaces.
Assume that there exists a section of $f$, that is, there exists a morphism $g\colon \Ycal \to \Xcal$ such that $f\circ g=\id_{\Ycal}$.
Then we have $\dim \Xcal\geq \dim \Ycal$.
\end{lemma}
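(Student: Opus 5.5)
The plan is to compute dimensions at a classical point through compatible smooth charts on $\Ycal$ and on $\Xcal$, and to use the section $g$ to produce the required point of $\Xcal$. By the definition of $\dim \Ycal$ as a supremum, it suffices to show that for every classical point $y\in \Ycal^{\cl}$ we have $\dim_y \Ycal\leq \dim_{g(y)}\Xcal$.

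Fix $y$ and choose a smooth morphism $h\colon V\to \Ycal$ from a rigid space $V$ and a classical point $v\in V^{\cl}$ with $h(v)=y$. Then
\[\dim_y\Ycal=\dim_v V-\dim_v h.\]
Form the base change $X_V=\Xcal\times_{\Ycal}V$. Since $f$ is representable by rigid spaces, $X_V$ is a rigid space, and the projection $h'\colon X_V\to \Xcal$ is smooth because it is a base change of $h$. The section $g$ base changes to a section $s\colon V\to X_V$ of the projection $f_V\colon X_V\to V$. Set $u=s(v)$, a classical point of $X_V$ with $h'(u)=g(y)$. Hence
\[\dim_{g(y)}\Xcal=\dim_u X_V-\dim_u h'.\]

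Two comparisons then finish the argument.

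\textbf{Relative dimensions agree.} We have $\dim_u h'=\dim_v h$. Choose a representative $\Spa F\to\Xcal$ of $g(y)$, and compose with $f$ to get a representative of $y$ (since $f\circ g=\id$). The fibre of $h'$ over $\Spa F$ is
\[X_V\times_{\Xcal}\Spa F \cong V\times_{\Ycal}\Spa F,\]
because $X_V\times_{\Xcal}\Spa F=V\times_{\Ycal}\Xcal\times_{\Xcal}\Spa F$. Under this identification the point $u=s(v)$ corresponds to the point lifting $v$, since $f_V\circ s=\id$. Lemma \ref{lem:independent of the lifting}, in its stack version, then gives $\dim_u h'=\dim_v h$.

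\textbf{Dimension at the section point.} We have $\dim_u X_V\geq \dim_v V$. The map $s$ is a section of $f_V$, so on local rings $\Ocal_{V,v}\to\Ocal_{X_V,u}\to\Ocal_{V,v}$ composes to the identity. Thus $\Ocal_{V,v}$ is a quotient of $\Ocal_{X_V,u}$, i.e.\ $s$ is a closed immersion locally near $v$. Krull dimension can only drop under passing to a quotient, so $\dim \Ocal_{X_V,u}\geq \dim\Ocal_{V,v}$.

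Combining the two comparisons gives $\dim_{g(y)}\Xcal\geq\dim_y\Ycal$, and taking the supremum over $y$ yields $\dim\Xcal\geq\dim\Ycal$.

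The only step needing care is the identification of the fibres in the first comparison, which ensures that the relative dimension terms cancel exactly. I expect this to be the main (mild) obstacle; it is routine 2-categorical bookkeeping with fibre products together with Lemma \ref{lem:independent of the lifting}.
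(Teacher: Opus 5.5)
Your proposal is correct and follows essentially the same route as the paper: base change a smooth chart of $\Ycal$ along $f$, use the induced section to get a surjection $\Ocal_{X_V,u}\to\Ocal_{V,v}$ of local rings, and compare Krull dimensions. The only difference is cosmetic. The paper takes the chart $V\to\Ycal$ of pure dimension $n$ and cancels the relative-dimension terms without comment, whereas you verify $\dim_u h'=\dim_v h$ explicitly via the fibre identification $X_V\times_{\Xcal}\Spa F\cong V\times_{\Ycal}\Spa F$, which is exactly what justifies that cancellation.
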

\begin{proof}
It suffices to show that for a classical point $y\in \Ycal$, $\dim_{g(y)} \Xcal\geq \dim_{y} \Ycal$.
Choose a smooth morphism $Y\to \Ycal$ of pure dimension $n$ whose image contains $y$.
By base changing along this morphism, we obtain 
\[Y\xrightarrow{g} X=\Xcal\times_{\Ycal} Y \xrightarrow{f} Y,\]
where we use the same notation for the base changes of $f$ and $g$.
Then, by assumption, $X$ is a rigid space.
We set $x=g(y)\in X^{\cl}$.
Then $f$ and $g$ induce morphisms
\[\Ocal_{Y,y}\to \Ocal_{X,x}\to \Ocal_{Y,y}\]
whose composition is the identity.
In particular, the local ring $\Ocal_{Y,y}$ is a quotient of $\Ocal_{X,x}$, and thus, we obtain $\dim \Ocal_{X,x}\geq \dim \Ocal_{Y,y}$.
From this, we obtain
\[\dim_{x}\Xcal =\dim_x X-n \geq \dim_y Y-n=\dim_y \Ycal.\]
\end{proof}

\begin{remark}
	This lemma also holds for morphisms representable by quasi-analytic spaces.
	However, morphisms representable by rigid spaces are enough for our purpose, so we omit the details (this is not hard).
	Note that the lemma does not hold for a general morphism of rigid analytic Artin stacks. 
	For example, consider $\ast/\Gbb_m^{\an}\to \ast.$
\end{remark}
%%%%%%%%%%%%%%%%%%%%%%%%%%%%%%%%%%%%%%%%%%%%%%%%%%%%%%%%%%%%%%%%%%%%%

\section{A criterion for representability}
Let $K$ be a complete non-archimedean field.
We regard a rigid space $S$ as the \'{e}tale sheaf on $\Aff_K^{\op}$ defined by $\Aff_K^{\op} \ni \Spa A\mapsto S(\Spa A)$.
For an \'{e}tale sheaf $\Fcal$ on $\Aff_K^{\op}$ and a rigid space $S$, we write $\Fcal(S)=\Hom(S,\Fcal)$.

In this section, we provide a criterion for a given \'{e}tale stack $\Xcal$ to be a rigid analytic Artin stack.
Our arguments are based on \cite{Hall17Openness,War17}.

\subsection{Extensions and obstructions}

\begin{lemma}
	Let $A$, $B$, and $C$ be affinoid $K$-algebras.
	Let $i\colon \Spa A\to \Spa B$ be a square-zero closed immersion, and $f\colon \Spa A\to \Spa C$ be a finite morphism.
	Then the topological $K$-algebra $B\times_A C$ is an affinoid $K$-algebra, and the diagram
	\[
	\xymatrix{
		\Spa A \ar[r]\ar[d]& \Spa C\ar[d]\\
		\Spa B\ar[r] &\Spa (B\times_A C)
	}
	\]
	is coCartesian in the category of rigid spaces.
\end{lemma}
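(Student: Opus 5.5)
The proof has two parts. First, show that $D:=B\times_A C$ is an affinoid $K$-algebra. Second, show that $\Spa D$ is the pushout in the category of rigid spaces, by reducing to the affinoid case and using that the underlying topological spaces and structure sheaves glue.

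\textbf{Step 1: $D$ is affinoid.} Write $I=\Ker(B\to A)$, so $I^2=0$. Since $\Spa A\to\Spa B$ is a closed immersion, $B\to A$ is surjective. Then $D\to C$ is surjective with kernel $I\times 0\cong I$, and this kernel is square-zero.

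Next I would show that $D$ is noetherian and that $D\to B$ is finite. Since $A$ is finite over $C$, the image of $D\to B$ is $\{b : \bar b\in \mathrm{im}(C)\}$. This image contains $I$, and $B/I=A$ is finite over $C$, so $B$ is finite over the image of $D$. This is an Artin--Tate style argument. Concretely, lift generators $a_1,\dots,a_r$ of $A$ over $C$ to elements $b_j\in B$. Then $B$ is generated as a $D$-module by the $b_j$ together with a finite generating set of $I$: the ideal $I$ is a finite $B$-module, it is killed by $I$, so it is an $A$-module, hence finite over $C$, and $C$ acts on it through $D$.

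Now choose a surjection $K\langle X_1,\dots,X_n\rangle\to C$ and lift the $X_i$ to elements of $D$. Also adjoin as new variables the finitely many elements $(g,0)$ that generate $I\subset D$ as a $C$-module. For this to be possible, all these elements must be power-bounded after rescaling. This gives a continuous map $K\langle X,Y\rangle\to D$. Its image maps onto $C$, and it contains generators of the kernel $I$ as a module over the image, so the map is surjective. The target is given the Banach topology of the fiber product, that is, the subspace topology from $B\times C$. It then follows from the open mapping theorem that $D$ carries the quotient topology, so $D$ is topologically of finite type.

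\textbf{Step 2: the universal property.} For affinoid $\Spa R$, a morphism $\Spa D\to\Spa R$ is the same as a continuous $K$-algebra map $R\to D$. Such maps correspond bijectively to compatible pairs of maps $R\to B$ and $R\to C$, since the fiber product is a limit in topological algebras. So the diagram is coCartesian among affinoids.

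For a general rigid target $X$, I would use a topological argument. The space $|\Spa D|$ is the pushout of $|\Spa B|$ and $|\Spa C|$ along $|\Spa A|$. Because $\Spa B\to\Spa D$ is a homeomorphism onto $\Spa(B/I)$, which reduces to $\Spa A$, this pushout is just $|\Spa C|$. The structure sheaf satisfies $\Ocal_D=\Ocal_B\times_{\Ocal_A}\Ocal_C$ on rational subsets. This holds because rational localization is flat and commutes with the fiber product, again using that the square-zero kernel is finite.

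Given compatible maps from $\Spa B$ and $\Spa C$ to $X$, the underlying continuous map is determined by the map from $\Spa C$. Cover $X$ by affinoids $U_k$ and take preimages $V_k$, which are open in $\Spa C$ and hence in $\Spa D$. Refine these to rational subsets of $\Spa D$. Each such rational subset is again a pushout of the same type, so the affinoid case applies on each piece, and the resulting maps glue.

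\textbf{Main obstacle.} The main obstacle is the topology in Step 1 and the compatibility of rational localization with fiber products in Step 2. For the first, I would verify the power-boundedness of the chosen generators and apply Banach's open mapping theorem. For the second, I would use flatness of $D\to D\langle f/g\rangle$ together with the exactness of $0\to D\to B\times C\to A$.
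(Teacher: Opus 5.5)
Your argument is correct, but it takes a different route from the paper, which gives no proof of its own and simply cites Warner's more general pushout result \cite[Proposition 2.10.4]{War17}. You prove this special case directly, in four steps:
\begin{enumerate}
\item You present $D=B\times_A C$ as a quotient of a Tate algebra, using lifts of the generators of $C$ together with $C$-module generators of the square-zero ideal $I\times 0=\Ker(D\to C)$. This ideal is finite over $C$ because $I$ is a finite $A$-module and $A$ is finite over $C$.
\item The open mapping theorem then identifies the subspace topology of $D\subset B\times C$ with the quotient topology.
\item The universal property against affinoid targets follows because $D$ is the fibre product in topological $K$-algebras.
\item For arbitrary rigid targets you use that $|\Spa D|=|\Spa C|$. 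Since $B$, $C$, $A$ are finite $D$-modules and $D\to D\langle f/g\rangle$ is flat, tensoring $0\to D\to B\times C\to A\to 0$ shows that every rational subset of $\Spa D$ is again such a fibre product, so the local maps glue.
\end{enumerate}
Citing Warner is shorter and covers a more general setting. Your proof is self-contained and uses only standard facts about affinoid algebras.

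Two sentences need correcting, though neither affects the argument.
\begin{itemize}
\item The lifts $(b_i,c_i)$ of the images of the $X_i$ cannot be rescaled without destroying surjectivity onto $C$. No rescaling is needed, however. In an affinoid algebra $B^\circ=\{b : |b|_{\sup}\le 1\}$, and $|b|_{\sup}$ depends only on $b$ modulo the nilpotent ideal $I$. Hence any lift $b_i\in B$ of an element of $A^\circ$ already lies in $B^\circ$.
\item The claim that $\Spa B\to\Spa D$ is a homeomorphism onto $\Spa(B/I)$ is not right as stated. What you actually use is that $\Spa C\to\Spa D$ is a homeomorphism, because $D\to C$ is surjective with nilpotent kernel. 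The map $\Spa B\to\Spa D$ is instead the finite map $|\Spa B|=|\Spa A|\to|\Spa C|$.
\end{itemize}
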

\begin{proof}
	This is a special case of \cite[Proposition 2.10.4]{War17}.
\end{proof}

We define the following classes of morphisms of affinoid rigid spaces:
\begin{itemize}
	\item $\Nil$: nilpotent closed immersions,
	\item $\Cl$: closed immersions,
	\item $\rNil$: morphisms $\Spa A\to \Spa B$ such that there exists $(\Spa A_0\to \Spa A)\in \Nil$ with the composition $(\Spa A_0\to \Spa A\to \Spa B)\in \Nil$
	\item $\rCl$: morphisms $\Spa A\to \Spa B$ such that there exists $(\Spa A_0\to \Spa A)\in \Nil$ with the composition $(\Spa A_0\to \Spa A\to \Spa B)\in \Cl$,
	\item $\Fin$: finite morphisms.
\end{itemize}

\begin{remark}
	By definition, we have 
	\[
	\xymatrix{
		\rNil \ar@{^{(}->}[r] & \rCl \\
		\Nil \ar@{^{(}->}[r] \ar@{^{(}->}[u] & \Cl \ar@{^{(}->}[u].
	}
	\]
\end{remark}

\begin{remark}
	Any morphism $\Spa A\to \Spa B$ in $\rCl$ is finite (\cite[Lemma 4.1.1]{War17}).
\end{remark}

\begin{example}
	Let $A$ be an affinoid $K$-algebra, and let $x\in (\Spa A)^{\cl}$ be a classical point.
	For any finite local $K$-algebra $B$, if a morphism $\Spa B\to \Spa A$ with image $x$ induces an isomorphism of residue fields, it belongs to $\rCl$. 
\end{example}

\begin{definition}
	Let $\mathbf{P}$ be one of $\Nil, \Cl, \rNil, \rCl, \Fin$.
	An \'{e}tale stack $\Xcal$ on $\Aff_K^{\op}$ is \textit{$\mathbf{P}$-homogeneous} if for any nilpotent closed immersion $\Spa A\to \Spa B$ and any morphism $\Spa A\to \Spa C$ in $\mathbf{P}$, the natural morphism
	\[\Xcal(B\times_A C)\to \Xcal(B)\times_{\Xcal(A)}\Xcal(C)\]
	is an equivalence of groupoids.
\end{definition}

\begin{definition}\label{def:exan}
Let $\Xcal$ be an \'{e}tale stack on $\Aff_K^{\op}$.
\begin{enumerate}
	\item An $\Xcal$-extension is a square-zero closed immersion $i\colon \Spa A\to \Spa B$ of affinoid rigid spaces over $\Xcal$.
	Let $\Exanbb_{\Xcal}$ denote the category of $\Xcal$-extensions.
	\item There is a functor
	\[\Exanbb_{\Xcal} \to (\Aff_K^{\op})_{/\Xcal} ;\; (i\colon \Spa A\to \Spa B)\mapsto \Spa A.\]
	For $\Spa A\in (\Aff_K^{\op})_{/\Xcal}$, let $\Exanbb_{\Xcal}(A)$ denote the fiber category over $\Spa A$.
	\item For $\Spa A\in (\Aff_K^{\op})_{/\Xcal}$, there is a functor 
	\[\Exanbb_{\Xcal}(A) \to \Coh(A)^{\op} ;\; (i\colon \Spa A\to \Spa B)\mapsto \Ker (B\to A),\]
	where we regard $\Ker (B\to A)$ as a finitely generated $A$-module (note that $B\to A$ is a square-zero extension).
	For $I\in \Coh(A)$, let $\Exanbb_{\Xcal}(A,I)$ denote the fiber category over $I$.
	Note that $\Exanbb_{\Xcal}(A,I)$ is a groupoid.
	As in \cite[Proposition 2.4]{Hall17Openness}, the functor $\Exanbb_{\Xcal}(A) \to \Coh(A)^{\op}$ is a fibration in groupoids.
	Therefore, we obtain a functor 
	\[\Coh(A)\to \Grpd ;\; I\mapsto \Exanbb_{\Xcal}(A,I).\]
\end{enumerate}
\end{definition}

\begin{remark}
	For a morphism $I\to J$ in $\Coh(A)$, the functor $\Exanbb_{\Xcal}(A,I)\to \Exanbb_{\Xcal}(A,J)$ is described as follows: Let $(i\colon \Spa A\to \Spa B) \in \Exanbb_{\Xcal}(A,I)$. 
	By taking the pushout, we obtain a commutative diagram with exact rows:
	\[
	\xymatrix{
		0\ar[r] & I \ar[r]\ar[d] &B \ar[r]\ar[d] & A \ar[r]\ar@{=}[d] &0\\
		0\ar[r] & J\ar[r] &B' \ar[r] &A \ar[r]&0.
	}
	\]
	This diagram yields a square-zero closed immersion $\Spa A\to \Spa B'$ over $\Xcal$ with the identification $\Ker(B'\to A)=J$. 
\end{remark}

\begin{lemma}\label{lem:Exan presheaf}
	Let $\Xcal$ be an \'{e}tale stack on $\Aff_K^{\op}$.
	Let $\Spa A'\to \Spa A$ be an open immersion of affinoid rigid spaces.
	Then, for a morphism $\Spa A\to \Xcal$ and $I\in \Coh(A)$, there is a natural functor of groupoids
	\[\Exanbb_{\Xcal}(A,I) \to \Exanbb_{\Xcal}(A',A'\otimes_A I).\]
\end{lemma}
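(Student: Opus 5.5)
Given an $\Xcal$-extension $(i\colon \Spa A\to \Spa B,\ \Spa B\to \Xcal)$ with $\Ker(B\to A)=I$, I will construct the corresponding object of $\Exanbb_{\Xcal}(A',A'\otimes_A I)$ by base change along the open immersion. Let $I_0\subset B$ denote the kernel; it is square-zero, so $\Spa A\to \Spa B$ is a homeomorphism. Hence the rational (or general affinoid) open subset $\Spa A'\subset \Spa A$ corresponds to a unique open $V\subset \Spa B$ with $V\times_{\Spa B}\Spa A=\Spa A'$. I first reduce to the case of a rational subset. Every affinoid open immersion factors through a finite cover by rational subsets (Gerritzen--Grauert), but to keep things simple I would define the functor directly: let $B'=\Ocal_{\Spa B}(V)$. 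I will check that $V$ is affinoid, with $V=\Spa B'$. When $\Spa A'=\Spa A\langle f_j/g\rangle$ is rational, I lift $f_j,g$ to $B$ and take $V=\Spa B\langle \tilde f_j/\tilde g\rangle$. The lifts still generate the unit ideal, because $B\to A$ has nilpotent kernel. For a general affinoid open, I use that the underlying topological spaces and the \'etale sites agree for nilpotent thickenings, so $V$ is an affinoid open by the topological invariance of affinoidness under nilpotent thickenings.

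Next I compute the kernel. Since $\Spa B'\to \Spa B$ is an open immersion, $B\to B'$ is flat; this holds for affinoid subdomains, by the flatness of restriction maps. Tensoring $0\to I\to B\to A\to 0$ with $B'$ then gives an exact sequence
\[0\to B'\otimes_B I\to B'\to B'\otimes_B A\to 0.\]
Here $B'\otimes_B A\cong A'$, because completed and uncompleted tensor products agree for finite modules over affinoid algebras, and the fiber product of the opens is $\Spa A'$. Moreover $B'\otimes_B I=A'\otimes_A I$, since $I$ is an $A$-module. Thus $\Spa A'\to \Spa B'$ is a square-zero closed immersion with kernel $A'\otimes_A I$. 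It lies over $\Xcal$ via the composite $\Spa B'\to \Spa B\to \Xcal$, and this is compatible with $\Spa A'\to\Spa A\to \Xcal$.

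Finally I address functoriality. A morphism in $\Exanbb_{\Xcal}(A,I)$ is an isomorphism $B_1\cong B_2$ over $A$, compatible with $I$ and with the maps to $\Xcal$. It induces an isomorphism of the corresponding opens, and hence of $B_1'\cong B_2'$, by the universal property of the open subspace $V$. The 2-cells over $\Xcal$ are simply restricted. Compatibility with composition is clear from the uniqueness in that universal property.

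The only mildly delicate step is identifying $V$ as an affinoid open and verifying $B'\otimes_B A\cong A'$. I would handle this using the universal property of rational localizations together with nilpotence of $I$, which allows the defining inequalities to be checked on $\Spa A$.
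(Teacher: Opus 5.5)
Your proposal is correct and follows the paper's argument: you transport $\Spa A'$ along the homeomorphism $\Spa A\to \Spa B$ to an open $U\subset \Spa B$, observe that $U$ is affinoid, and take $B'=\Ocal_{B}(U)$ with $\Ker(B'\to A')\cong A'\otimes_A I$; for the affinoidness the paper cites \cite[Proposition 7.3.14]{GR03}, which is the invariance of affinoidness under nilpotent thickenings that you invoke. Your flatness-of-restriction argument for the kernel, and the explicit lifting of $f_j,g$ in the rational case, supply details the paper leaves implicit.
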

\begin{proof}
	Let $(i\colon \Spa A\to \Spa B)\in \Exanbb_{\Xcal}(A,I)$. 
	Since $i$ is a homeomorphism, the open subspace $\Spa A'\subset \Spa A$ corresponds to an open subspace $U\subset \Spa B$.
	By \cite[Proposition 7.3.14]{GR03}, $U$ is an affinoid open subspace.
	We set $B'=\Ocal_{B}(U)$.
	Then we have
	\[\Ker(B'\to A')\cong A'\otimes_A I.\]
	Therefore, we get $(i'\colon \Spa A'\to \Spa B')\in \Exanbb_{\Xcal}(A',A'\otimes_A I)$.
	This construction yields a functor 
	\[\Exanbb_{\Xcal}(A,I) \to \Exanbb_{\Xcal}(A',A'\otimes_A I).\]
\end{proof}

\begin{definition}
	Let $\Xcal$ be an \'{e}tale stack on $\Aff_K^{\op}$.
\begin{enumerate}
	\item A deformation associated to $\Xcal$ is a pair $(i\colon \Spa A\to \Spa B, r\colon \Spa B\to \Spa A)$, where $i$ is a square-zero closed immersion of affinoid rigid spaces over $\Xcal$ and $r$ is a retract of $i$ over $K$.
	Let $\Defbb_{\Xcal}$ denote the category of deformations associated to $\Xcal$.
	\item There is a forgetful functor
	\begin{align*}
		\Defbb_{\Xcal} \to \Exanbb_{\Xcal} ;\;&(i\colon \Spa A\to \Spa B,r\colon \Spa B\to \Spa A)\mapsto \\ &(i\colon \Spa A\to \Spa B).
	\end{align*}
	For $\Spa A\in (\Aff_K^{\op})_{/\Xcal}$, let $\Defbb_{\Xcal}(A)$ denote the fiber category over $\Exanbb_{\Xcal}(A)$, and for $I\in \Coh(A)$, let $\Defbb_{\Xcal}(A,I)$ denote the fiber category over $\Exanbb_{\Xcal}(A,I)$.
	Note that $\Defbb_{\Xcal}(A,I)$ is a groupoid.
	As in Definition \ref{def:exan}, the functor $\Defbb_{\Xcal}(A)\to \Coh(A)^{\op}$ is a fibration in groupoids.
	Therefore, we get a functor 
	\[\Coh(A)\to \Grpd ;\; I\mapsto \Defbb_{\Xcal}(A,I).\]
\end{enumerate}
\end{definition}

\begin{remark}\label{remark:tangent space}
	Let $\Xcal$ be an \'{e}tale stack on $\Aff_K^{\op}$.
	For $\alpha \colon \Spa A\to \Xcal$ and $I\in \Coh(A)$, the groupoid $\Defbb_{\Xcal}(A,I)$ is equivalent to the fiber of $\Xcal(A\oplus I)\to \Xcal(A)$ at $\alpha$, where $A\oplus I$ is the trivial square-zero extension, and $\Xcal(A\oplus I)\to \Xcal(A)$ is induced by the natural square-zero closed immersion $\Spa A\to \Spa(A\oplus I)$.
\end{remark}

As in Lemma \ref{lem:Exan presheaf}, we get the following lemma.

\begin{lemma}\label{lem:Def presheaf}
	Let $\Xcal$ be an \'{e}tale stack on $\Aff_K^{\op}$.
	Let $\Spa A'\to \Spa A$ be an open immersion of affinoid rigid spaces.
	Then, for a morphism $\Spa A\to \Xcal$ and $I\in \Coh(A)$, there is a natural functor of groupoids
	\[\Defbb_{\Xcal}(A,I) \to \Defbb_{\Xcal}(A',A'\otimes_A I).\]
\end{lemma}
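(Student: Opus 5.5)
The plan is to mimic Lemma \ref{lem:Exan presheaf} and then check that the retraction survives restriction to the open subspace. Let $(i\colon \Spa A\to \Spa B, r\colon \Spa B\to \Spa A)\in \Defbb_{\Xcal}(A,I)$. First apply the construction of Lemma \ref{lem:Exan presheaf} to the underlying extension $i$. Since $i$ is a homeomorphism, $\Spa A'\subset \Spa A$ corresponds to an open $U\subset \Spa B$. By \cite[Proposition 7.3.14]{GR03}, $U=\Spa B'$ is affinoid. This gives $(i'\colon \Spa A'\to \Spa B')\in \Exanbb_{\Xcal}(A',A'\otimes_A I)$, where the structure morphism to $\Xcal$ is the restriction of $\Spa B\to \Xcal$.

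It remains to produce the retraction $r'$. The retraction $r\colon \Spa B\to \Spa A$ is a homeomorphism, because $r\circ i=\id$ and $i$ is a homeomorphism. Its underlying map is therefore the inverse of $|i|$. Hence $r^{-1}(\Spa A')=i(\Spa A')=U$, and $r$ restricts to a morphism $r'\colon \Spa B'=U\to \Spa A'$. Since $r\circ i=\id_{\Spa A}$, restricting to $\Spa A'$ gives $r'\circ i'=\id_{\Spa A'}$. Moreover $r'$ is a morphism over $K$ because $r$ is. Thus $(i',r')\in \Defbb_{\Xcal}(A',A'\otimes_A I)$.

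Functoriality comes next. A morphism in $\Defbb_{\Xcal}(A,I)$ is an isomorphism $\phi\colon \Spa B_1\to \Spa B_2$ compatible with the $i$'s, the $r$'s, the maps to $\Xcal$, and the identification of the kernel with $I$. Because $\phi$ commutes with $i_1,i_2$, it carries $U_1$ onto $U_2$, and its restriction is the required morphism. Compatibility with composition and identities is then immediate. Finally, the identification $\Ker(B'\to A')\cong A'\otimes_A I$ is the one already provided by Lemma \ref{lem:Exan presheaf}. So the resulting functor is compatible with the forgetful functor to $\Exanbb_{\Xcal}$.

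I expect no serious obstacle. The only point needing care is that $r^{-1}(\Spa A')$ equals $U$ exactly, and this follows from the topological argument above. Alternatively, one can use Remark \ref{remark:tangent space}: a deformation is a point of $\Xcal(A\oplus I)$ over $\alpha$, and pulling back along the open immersion $\Spa(A'\oplus (A'\otimes_A I))\to \Spa(A\oplus I)$ gives the functor directly. This works because $A'\otimes_A(A\oplus I)\cong A'\oplus(A'\otimes_A I)$ is the rational localization (the trivial square-zero extension is finite over $A$).
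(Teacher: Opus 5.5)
Your proposal is correct and takes the same approach as the paper, whose proof consists only of the remark that the lemma follows as in Lemma \ref{lem:Exan presheaf}. Your extra observation is exactly the detail that remark leaves implicit: $|r|=|i|^{-1}$, so $r^{-1}(\Spa A')=U$ and the retraction restricts to $U$.
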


\begin{lemma}
	Let $\Xcal$ be a $\Nil$-homogeneous \'{e}tale stack on $\Aff_K^{\op}$.
	Then, for $\Spa A\in (\Aff_K^{\op})_{/\Xcal}$, the functors
	\[\Exanbb_{\Xcal}(A,-) \colon \Coh(A)\to \Grpd, \quad \Defbb_{\Xcal}(A,-) \colon \Coh(A)\to \Grpd\]
	preserve finite products.
	In particular, for each $I\in \Coh(A)$, the groupoids $\Exanbb_{\Xcal}(A,I)$ and $\Defbb_{\Xcal}(A,I)$ admit natural Picard category structures (for the definition of Picard categories, see \cite[Expos\'{e} XVIII, D\'{e}finition 1.4.2]{SGA4-3}).
\end{lemma}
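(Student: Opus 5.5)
The plan is to follow the argument of \cite[Proposition 2.4, Lemma 2.5]{Hall17Openness}, adapted to affinoid $K$-algebras. Fix $\alpha\colon \Spa A\to \Xcal$ and $I,J\in \Coh(A)$.

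\textbf{Comparison map.} The projections $I\oplus J\to I$ and $I\oplus J\to J$, via the functoriality recalled in Definition \ref{def:exan}, give a functor
\[\Exanbb_{\Xcal}(A,I\oplus J)\to \Exanbb_{\Xcal}(A,I)\times \Exanbb_{\Xcal}(A,J).\]
We must show it is an equivalence. We also need the empty product statement: $\Exanbb_{\Xcal}(A,0)$ is contractible. This holds because an extension by $0$ is just $\id_{\Spa A}$ together with its given map to $\Xcal$.

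\textbf{Inverse functor.} Given $\Spa A\to \Spa B_1$ and $\Spa A\to \Spa B_2$ over $\Xcal$, with kernels $I$ and $J$, form $B_1\times_A B_2$. Since $\Spa A\to \Spa B_2$ is a square-zero closed immersion, it lies in $\Cl\subset\Fin$, so the lemma from \cite[Proposition 2.10.4]{War17} recalled above shows that $B_1\times_A B_2$ is an affinoid $K$-algebra and $\Spa(B_1\times_A B_2)$ is the pushout. The kernel of $B_1\times_A B_2\to A$ is $I\oplus J$, and it is square-zero, so this is again a square-zero closed immersion. It lies in $\Nil$, as does $\Spa A\to\Spa B_1$.

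\textbf{Main step.} The map to $\Xcal$ is where $\Nil$-homogeneity is used: it gives
\[\Xcal(B_1\times_A B_2)\simeq \Xcal(B_1)\times_{\Xcal(A)}\Xcal(B_2),\]
so the two given morphisms to $\Xcal$, which agree on $\Spa A$, glue uniquely up to unique isomorphism. This produces an object of $\Exanbb_{\Xcal}(A,I\oplus J)$. Morphisms are handled the same way: an isomorphism of pairs of extensions induces an isomorphism of fiber products, compatible with the glued maps by the full faithfulness part of the equivalence.

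One then checks that the two composites are isomorphic to the identities. Starting from $B$ with kernel $I\oplus J$, the pushouts along the projections are $B_1=B/J$ and $B_2=B/I$. The natural map $B\to B/J\times_A B/I$ is an isomorphism by the five lemma on kernels, and the map to $\Xcal$ is recovered because the homogeneity equivalence is natural. Conversely, pushing $B_1\times_A B_2$ out along $I\oplus J\to I$ recovers $B_1$ canonically.

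\textbf{Deformations and Picard structure.} For $\Defbb_{\Xcal}$ the same argument applies, with retractions added. Retractions $r_1,r_2$ over $K$ induce a retraction $A\to B_1\times_A B_2$, $a\mapsto (r_1(a),r_2(a))$, and conversely. Alternatively, use Remark \ref{remark:tangent space} together with $A\oplus(I\oplus J)\cong (A\oplus I)\times_A(A\oplus J)$ and apply $\Nil$-homogeneity directly to the fibers over $\alpha$. Finally, the Picard category structure is formal: a product-preserving functor from the additive category $\Coh(A)$ sends the abelian group object $I$ (with addition $I\oplus I\to I$ and inverse $-1$) to a group object in groupoids that is commutative up to coherent isomorphism. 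This gives the structure of \cite[Expos\'{e} XVIII, 1.4.2]{SGA4-3}.

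The main obstacle I expect is the bookkeeping showing the inverse functor is well-defined on morphisms, in particular that the fiber product really is an affinoid $K$-algebra with the correct topology. This is exactly what the cited coCartesian lemma supplies, so I would rely on it rather than reprove it.
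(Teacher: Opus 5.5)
Your proposal is correct and is essentially the paper's proof. The paper simply says the lemma is proved as in \cite[Proposition 2.4]{Hall17Openness}, and you have written out that argument: glue two extensions over $B_1\times_A B_2$ using $\Nil$-homogeneity, use the coCartesian lemma to know the fiber product is affinoid, and get the Picard structure formally from product preservation; your handling of $\Defbb_{\Xcal}$ via retractions or via Remark \ref{remark:tangent space} is also fine.
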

\begin{proof}
	It can be proved in the same way as in \cite[Proposition 2.4]{Hall17Openness}.
\end{proof}

\begin{definition}
	Let $\Xcal$ be a $\Nil$-homogeneous \'{e}tale stack on $\Aff_K^{\op}$.
	For $\Spa A\in (\Aff_K^{\op})_{/\Xcal}$ and $I \in \Coh(A)$, define the abelian groups
	\begin{align*}
	\Der_{\Xcal}(A,I)&=\Aut_{\Exanbb_{\Xcal}(A,I)}(\iota_{A,I}),\\
	\Aut_{\Xcal}(A,I)&=\Aut_{\Defbb_{\Xcal}(A,I)}(\iota_{A,I}),
	\end{align*}
	where $\iota_{A,I}$ is the unit object of the respective categories.
	Moreover, define $\Exan_{\Xcal}(A,I)$ (resp.\ $\Def_{\Xcal}(A,I)$) as the group of isomorphism classes of objects in $\Exanbb_{\Xcal}(A,I)$ (resp.\ $\Defbb_{\Xcal}(A,I)$).
	Then we obtain additive functors
	\[\Der_{\Xcal}(A,-), \Exan_{\Xcal}(A,-), \Aut_{\Xcal}(A,-), \Def_{\Xcal}(A,-)\colon \Coh(A)\to \Ab.\]
\end{definition}

\begin{remark}
	For $a\in A$, the morphism $\times a \colon I \to I$ induces functors $\Exanbb_{\Xcal}(A,I)\to \Exanbb_{\Xcal}(A,I)$ and $\Defbb_{\Xcal}(A,I)\to \Defbb_{\Xcal}(A,I)$.
	These define an $A$-module structure on $\Der_{\Xcal}(A,I)$, $\Exan_{\Xcal}(A,I)$, $\Aut_{\Xcal}(A,I)$, and $\Def_{\Xcal}(A,I)$.
	Therefore, we can enhance the above functors to 
	\[\Der_{\Xcal}(A,-), \Exan_{\Xcal}(A,-), \Aut_{\Xcal}(A,-), \Def_{\Xcal}(A,-)\colon \Coh(A)\to \Mod_A.\]
	Moreover, for $I\in \Coh(A)$, Lemma \ref{lem:Exan presheaf} yields a presheaf of $\Ocal_{\Spa A}$-modules \[\Dcal er_{\Xcal}(A,I)\colon (\Spa A'\subset \Spa A)\mapsto \Der_{\Xcal}(A',A'\otimes_A I).\]
	Similarly, by Lemmas \ref{lem:Exan presheaf} and \ref{lem:Def presheaf}, we have presheaves of $\Ocal_{\Spa A}$-modules $\Ecal xan_{\Xcal}(A,I)$, $\Acal ut_{\Xcal}(A,I)$, and $\Dcal ef_{\Xcal}(A,I)$.
	These define additive functors 
	\[\Dcal er_{\Xcal}(A,-), \Ecal xan_{\Xcal}(A,-), \Acal ut_{\Xcal}(A,-), \Dcal ef_{\Xcal}(A,-)\colon \Coh(A)\to \PShv(\Mod_{\Ocal_{\Spa A}}).\]
\end{remark}

\begin{example}\label{ex:cotangent complex deformation}
	When $\Xcal=\Spa K$, we simply write $\Der(A,I)$ and $\Exan(A,I)$.
	Then there are natural isomorphisms (of $A$-modules)
	\begin{align*}
		\Der(A,I)&\cong \Ext^0_A(\Lbb_{A},I)\cong \Hom_A(\Omega_A,I),\\
		\Exan(A,I)&\cong \Ext^1_A(\Lbb_A,I),
	\end{align*}
	where $\Lbb_A$ is the analytic cotangent complex of the morphism $\Spa A\to \Spa K$ defined in \cite[Definition 7.2.32]{GR03}.
\end{example}

\begin{lemma}\label{lem:Exan half-exact}
	Let $\Xcal$ be a $\rNil$-homogeneous \'{e}tale stack on $\Aff_K^{\op}$, and let $\Spa A \to \Xcal$ be a morphism from an affinoid rigid space $\Spa A$.
	Then for an exact sequence of finitely generated $A$-modules 
	\[0\to I_1\to I_2\to I_3\to 0,\]
	there is a natural exact sequence
	\[
	\xymatrix{
    0 \ar[r] & \Der_{\Xcal}(A,I_1) \ar[r] & \Der_{\Xcal}(A,I_2) \ar[r] & \Der_{\Xcal}(A,I_3) \ar@(r,l)[dll]\\
             & \Exan_{\Xcal}(A,I_1) \ar[r] & \Exan_{\Xcal}(A,I_2) \ar[r] & \Exan_{\Xcal}(A,I_3).
}
\]
In particular, additive functors 
\[\Dcal er_{\Xcal}(A,-), \Ecal xan_{\Xcal}(A,-)\colon \Coh(A)\to \PShv(\Mod_{\Ocal_{\Spa A}})\]
are half-exact.
\end{lemma}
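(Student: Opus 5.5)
This is the rigid-analytic version of \cite[Proposition 6.4]{Hall17Openness} (see also \cite[Section 3.3]{War17}), and I will follow that argument. First I construct the connecting map $\partial\colon \Der_{\Xcal}(A,I_3)\to\Exan_{\Xcal}(A,I_1)$. Write $A\oplus I_2\to A\oplus I_3$ for the induced surjection of trivial square-zero extensions; its kernel is $I_1$, and $\Spa(A\oplus I_3)\to\Spa(A\oplus I_2)$ is a square-zero closed immersion. An element $\delta\in\Der_{\Xcal}(A,I_3)$ is an automorphism of the trivial extension $\iota_{A,I_3}$. In particular it gives a new structure map $\Spa(A\oplus I_3)\to\Xcal$, obtained by twisting the trivial one by $\delta$. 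The fibre product
\[
B_\delta=(A\oplus I_2)\times_{A\oplus I_3}(A\oplus I_3),
\]
formed using the identity on rings and the $\delta$-twisted map to $\Xcal$ on the second factor, is a square-zero extension of $A$ by $I_1$. To equip $\Spa B_\delta$ with a map to $\Xcal$, I use homogeneity. The maps $\Spa A\to \Spa(A\oplus I_3)$ and $\Spa(A\oplus I_3)\to\Spa(A\oplus I_2)$ are nilpotent immersions, hence lie in $\rNil$. Therefore $\Xcal(B_\delta)$ is the fibre product of groupoids, and the gluing datum supplied by $\delta$ produces an object of $\Exanbb_{\Xcal}(A,I_1)$. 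I define $\partial(\delta)$ to be its class.

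The next step is exactness at each spot, checked in order. At $\Der(I_1)$ and $\Der(I_2)$: an automorphism of $\iota_{A,I}$ is determined by homogeneity, via the decomposition $\Xcal(A\oplus I_2)\simeq \Xcal(A\oplus I_3)\times_{\Xcal(A)}(\ldots)$ obtained by writing $A\oplus I_2$ as an iterated fibre product over $A\oplus I_3$. Then left exactness of $\Der$ follows formally, exactly as kernels of Picard groupoids are computed in \cite{Hall17Openness}.

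At $\Der(I_3)$, I show that $\partial(\delta)$ is trivial exactly when $\delta$ lifts to $I_2$. A trivialization $B_\delta\simeq A\oplus I_1$ over $\Xcal$ amounts to an automorphism of $\iota_{A,I_2}$ compatible with $\delta$; this is again a consequence of the equivalence $\Xcal(B\times_A C)\simeq\Xcal(B)\times_{\Xcal(A)}\Xcal(C)$.

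At $\Exan(I_1)$, I identify the pushout of an extension $B$ along $I_1\to I_2$ as $B\oplus_{I_1}I_2$. This is trivial iff there is a map $B\to A\oplus I_2$ over $\Xcal$ lifting to $A$, which by homogeneity happens iff $B\simeq B_\delta$ for some $\delta$.

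At $\Exan(I_2)$, suppose an extension $B'$ by $I_2$ has trivial pushout to $I_3$. Choose a trivialization $B'/I_1\simeq A\oplus I_3$ over $\Xcal$. Then $B'\to A\oplus I_3$ is a square-zero extension by $I_1$, and the composite with $A\oplus I_3\to A$ is an $\rNil$-type morphism. Applying homogeneity once more, $B'$ comes from an extension by $I_1$.

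The main obstacle is this last step, together with verifying that every fibre product used is of the form $B\times_A C$ with $\Spa A\to\Spa B$ nilpotent and $\Spa A\to\Spa C$ in $\rNil$. The point that makes this work is that one leg is always a nilpotent closed immersion, while the other, such as $\Spa(A\oplus I_3)\to\Spa A$ composed with the section, lies in $\rNil$. One also has to check that these fibre products are affinoid $K$-algebras, which follows from the preceding lemma on coCartesian squares since $\rNil\subset\Fin$.

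Finally, the half-exactness of $\Dcal er_{\Xcal}(A,-)$ and $\Ecal xan_{\Xcal}(A,-)$ comes from applying the sequence on each affinoid open $\Spa A'\subset\Spa A$. For this I use that $A\to A'$ is flat, so $0\to A'\otimes I_1\to A'\otimes I_2\to A'\otimes I_3\to 0$ stays exact, and that the restriction functors of Lemma \ref{lem:Exan presheaf} are compatible with all of the constructions above.
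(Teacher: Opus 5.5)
Your overall strategy is the one the paper uses; the paper simply invokes \cite[Corollary 2.5]{Hall17Openness}. However, the construction on which everything rests is misstated, and as written it fails.

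The ring $(A\oplus I_2)\times_{A\oplus I_3}(A\oplus I_3)$, with the identity as second map, is just $A\oplus I_2$. Its kernel over $A$ is $I_2$, not $I_1$. Moreover, since you leave the ring maps untouched, the derivation $D\in\Der(A,I_3)$ underlying $\delta$ plays no role. Already for $\Xcal=\Spa K$, where $\delta$ \emph{is} $D$, your recipe is independent of $\delta$. So it cannot give the classical connecting map, which is nonzero whenever $D$ does not lift to $I_2$.

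The correct object is
\[
B_\delta=(A\oplus I_2)\times_{A\oplus I_3,\,s_D}A,\qquad s_D(a)=(a,D(a)),
\]
which has kernel $I_1$. Its legs are the nilpotent closed immersion $\Spa(A\oplus I_3)\to\Spa(A\oplus I_2)$ and $\Spa(s_D)\colon\Spa(A\oplus I_3)\to\Spa A$.
\begin{itemize}
\item The leg $\Spa(s_D)$ is not a closed immersion unless $I_3=0$. It lies in $\rNil$ because its composite with the nilpotent immersion $\Spa A\to\Spa(A\oplus I_3)$ is $\id_{\Spa A}$.
\item This is exactly where $\rNil$- rather than $\Nil$-homogeneity enters. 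The two immersions you list are not the legs of this fibre product.
\item The $\Xcal$-structure on $\Spa B_\delta$ comes from the trivial one on $\Spa(A\oplus I_2)$ and the given one on $\Spa A$. They are glued along $\Spa(A\oplus I_3)$ by the $2$-isomorphism component of $\delta$, which is precisely an isomorphism between the two restrictions.
\end{itemize}
Likewise, the decomposition of $\Xcal(A\oplus I_2)$ over $\Xcal(A)$ with a factor $\Xcal(A\oplus I_3)$ that you invoke would force $I_2\to I_3$ to split. Left exactness of $\Der_{\Xcal}$ instead comes from the case $D=0$, namely $A\oplus I_1\cong(A\oplus I_2)\times_{A\oplus I_3}A$ along the zero section.

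The cleanest repair, which is Hall's argument, is to prove a single statement. Pushing out to $I_2$, together with the canonical trivialization of the further pushout to $I_3$, gives an equivalence
\[
\Exanbb_{\Xcal}(A,I_1)\simeq\Exanbb_{\Xcal}(A,I_2)\times_{\Exanbb_{\Xcal}(A,I_3)}\{\iota_{A,I_3}\}.
\]
Its inverse is $(B',\alpha)\mapsto B'\times_{A\oplus I_3}A$, where $B'\to B'/I_1\xrightarrow{\alpha}A\oplus I_3$ and $A\to A\oplus I_3$ is the zero section. This is your step at $\Exan_{\Xcal}(A,I_2)$ made precise, and it uses $\rNil$-homogeneity in the way just described.

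The six-term sequence is then the homotopy exact sequence of this fibre sequence of groupoids, with $\pi_1=\Der_{\Xcal}$ and $\pi_0=\Exan_{\Xcal}$. Here $\partial(\delta)$ is the class of $(\iota_{A,I_2},\delta\circ\mathrm{can})$, which unwinds to $B_\delta$ above up to the sign of $D$.

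Because the functors are additive functors of Picard categories (the preceding lemma), this also shows that $\partial$ is a homomorphism, compatible with the $A$-module structures by naturality. Your spot-by-spot verification never checks this. The remaining points are fine: the fibre products are affinoid, and the passage to presheaves works via flatness of $A\to A'$.
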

\begin{proof}
	It can be proved in the same way as in \cite[Corollary 2.5]{Hall17Openness}.
\end{proof}

\begin{lemma}\label{lem:Def Exan exact sequence}
	Let $\Xcal$ be a $\Nil$-homogeneous \'{e}tale stack on $\Aff_K^{\op}$.
	Then, for $\Spa A\in (\Aff_K^{\op})_{/\Xcal}$ and $I\in \Coh(A)$, there is a natural exact sequence of $A$-modules
	\[
	\xymatrix{
    0 \ar[r] & \Aut_{\Xcal}(A,I) \ar[r] & \Der_{\Xcal}(A,I) \ar[r] & \Der(A,I) \ar@(r,l)[dll]\\
             & \Def_{\Xcal}(A,I) \ar[r] & \Exan_{\Xcal}(A,I) \ar[r] & \Exan(A,I).
}
\]
\end{lemma}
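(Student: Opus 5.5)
The plan is to compare the Picard groupoids over $\Xcal$ with those over $\Spa K$ via the forgetful functor $\Phi\colon \Exanbb_{\Xcal}(A,I)\to \Exanbb(A,I)$, which sends an $\Xcal$-extension to the underlying square-zero extension of affinoid $K$-algebras, forgetting the morphism $\Spa B\to\Xcal$. Everything then reduces to identifying the fiber of $\Phi$ over the trivial extension $\iota=(\Spa A\to\Spa(A\oplus I))$ with $\Defbb_{\Xcal}(A,I)$. We follow \cite[Proposition 2.4 / Lemma 6.x]{Hall17Openness} closely.

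First I would check that $\Phi$ is an additive functor of Picard categories. The sum in both categories is defined through the equivalences $\Exanbb_{\Xcal}(A,I\oplus I)\simeq \Exanbb_{\Xcal}(A,I)\times\Exanbb_{\Xcal}(A,I)$ (from $\Nil$-homogeneity) followed by pushout along $I\oplus I\to I$, and both constructions commute with forgetting the map to $\Xcal$. Hence $\Phi$ induces $A$-linear maps $\Der_{\Xcal}(A,I)\to\Der(A,I)$ and $\Exan_{\Xcal}(A,I)\to\Exan(A,I)$; $A$-linearity follows because the scalar action is induced by $\times a\colon I\to I$ on both sides.

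Next I would identify the homotopy fiber. An object of the fiber of $\Phi$ over $\iota$ is an $\Xcal$-extension $(\Spa A\to\Spa B)$ together with an isomorphism of $K$-extensions $B\cong A\oplus I$, that is, a retraction $r$. This is exactly an object of $\Defbb_{\Xcal}(A,I)$, and morphisms match as well. So we obtain a sequence of Picard groupoids $\Defbb_{\Xcal}(A,I)\to\Exanbb_{\Xcal}(A,I)\to\Exanbb(A,I)$ in which the first term is the homotopy fiber of the second map.

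Finally I would apply the standard six-term exact sequence of $\pi_1,\pi_0$ attached to a fiber sequence of Picard groupoids: $\pi_1$ of the fiber, $\pi_1(\Exanbb_{\Xcal})$, $\pi_1(\Exanbb)$, then $\pi_0$ of the three terms. The zero on the left appears because the fiber is taken over the unit. Concretely, the connecting map $\Der(A,I)\to\Def_{\Xcal}(A,I)$ sends a $K$-derivation $D$, viewed as an automorphism $a+i\mapsto a+i+D(a)$ of $A\oplus I$, to the deformation obtained by twisting the retraction of the trivial $\Xcal$-extension $\iota_{A,I}$ by $D$.

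The main obstacle is verifying exactness at $\Der(A,I)$ and $\Def_{\Xcal}(A,I)$ by hand, without relying on abstract fiber-sequence machinery. There are two things to check:
\begin{enumerate}
\item The retraction twisted by $D$ is isomorphic to the trivial deformation exactly when $D$ lifts to an automorphism of $\iota_{A,I}$ over $\Xcal$.
\item A deformation becomes trivial in $\Exan_{\Xcal}(A,I)$ exactly when its retraction differs from the trivial one by a derivation.
\end{enumerate}
Both are direct unwinding of definitions. The remaining point needing care is compatibility of $\Phi$ with the Picard structures.
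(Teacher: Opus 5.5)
Your proposal is correct and takes essentially the same route as the paper, whose proof simply defers to \cite[Proposition 6.7]{Hall17Openness}. In both, $\Defbb_{\Xcal}(A,I)$ is identified with the fiber of the additive functor $\Exanbb_{\Xcal}(A,I)\to\Exanbb(A,I)$ over the trivial extension (a retraction being the same as an isomorphism $B\cong A\oplus I$ of extensions), and the six-term sequence is then the standard $\pi_1/\pi_0$ exact sequence of this fiber sequence of Picard groupoids.
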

\begin{proof}
	It can be proved in the same way as in \cite[Proposition 6.7]{Hall17Openness}.
\end{proof}

The following definition is an analogue of \cite[Section 5]{Hall17Openness}.

\begin{definition}
	For $A\in \Aff_K$, an additive functor $F\colon \Coh(A)\to \PShv(\Mod_{\Ocal_{\Spa A}})$ is said to be \textit{coherent} if it satisfies the following conditions:
	\begin{enumerate}
		\item For any $I\in \Coh(A)$, the presheaf $F(I)$ is a coherent sheaf.
		\item There exist a morphism $M\to N$ in $\Coh(A)$ and $\alpha \in F(M)$ such that for any $I$, the sequence
		\[\Hom_A(N,I) \to \Hom_A(M,I)\to F(I)\to 0\]
		is right exact, where $\Hom_A(M,I)\to F(I)$ is induced by $\alpha$.
	\end{enumerate}
\end{definition}

\begin{example}\label{ex:Der Exan coherent}
	By Example \ref{ex:cotangent complex deformation}, for $A\in \Aff_K$, the functors 
	\[\Dcal er(A,-), \Ecal xan(A,-)\colon \Coh(A)\to \PShv(\Mod_{\Ocal_{\Spa A}})\]
	are coherent, where we note that $\Lbb_A$ is pseudo-coherent by \cite[Theorem 7.1.33, Remark 7.2.45]{GR03}.
\end{example}

\begin{lemma}\label{lem:coherent 5-terms}
	Let $A\in \Aff_K$ be an affinoid $K$-algebra.
	For each $i=1,\ldots,5$, let $F_i\colon \Coh(A)\to \PShv(\Mod_{\Ocal_{\Spa A}})$ be an additive functor fitting into an exact sequence
	\[F_1\to F_2\to F_3\to F_4\to F_5.\]
	If $F_1$, $F_2$, $F_4$, $F_5$ are coherent, then $F_3$ is also coherent.
\end{lemma}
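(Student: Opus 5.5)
This is the rigid analogue of \cite[Lemma 5.6 / Section 5]{Hall17Openness}: coherent functors form a class closed under the five-term condition. Condition (1), that each $F_3(I)$ is a coherent sheaf, and condition (2), the existence of a presenting pair $(M\to N,\alpha)$, are best handled together. The standard trick is to pass to the abelian category of coherent functors. First I would check that $F\colon\Coh(A)\to\PShv(\Mod_{\Ocal_{\Spa A}})$ being coherent is equivalent to $F$ being a cokernel
\[\Hom_A(N,-)\otimes\Ocal\to\Hom_A(M,-)\otimes\Ocal\to F\to 0,\]
where $\Hom_A(M,-)\otimes\Ocal$ denotes $I\mapsto(\Spa A'\mapsto \Hom_{A'}(M\otimes_AA',I\otimes_AA'))$. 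This is a coherent sheaf because affinoid subdomains are flat and $M$ is finitely presented. So coherent functors are exactly finitely presented functors in Auslander's sense, evaluated sheafwise.

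Next, I would prove that coherent functors are closed under kernels, cokernels and extensions inside the category of additive functors to presheaves. Closure under cokernels is formal: a map of coherent functors lifts by Yoneda to a map of presentations, and cokernels of representables are presented by direct sums. Closure under kernels is Auslander's result that finitely presented functors on an abelian category with enough structure form an abelian category. Here $\Coh(A)^{\op}$ is abelian, and I would use that kernels of maps $\Hom(M,-)\to\Hom(M',-)$ are again finitely presented: the kernel is $\Hom(\Coker(M'\to M),-)$, and finite presentation persists. The sheaf property survives because kernels and cokernels of coherent sheaves are coherent.

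With these closure properties, the five-term argument runs as in the classical case. Set $K=\Coker(F_1\to F_2)$ and $C=\Ker(F_4\to F_5)$; both are coherent. Exactness gives a short exact sequence $0\to K\to F_3\to C\to 0$. It then remains to show that an extension of coherent functors is coherent. Choose a presentation of $C$ by $\Hom(M_C,-)$ and lift the universal element $\alpha_C\in C(M_C)$ to $F_3(M_C)$. This lifting step is the main obstacle, because surjectivity $F_3(M_C)\to C(M_C)$ holds only at the level of presheaf sections on $\Spa A$, so I would evaluate at global sections, where exactness holds termwise. Combining the lift with a presentation of $K$ gives a surjection $\Hom(M_K\oplus M_C,-)\to F_3$. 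Its kernel is an extension of a coherent functor by the image of $\Ker(\Hom(M_C,-)\to C)$. That image is finitely generated, so the kernel is finitely generated, and that yields condition (2). Condition (1) follows from the exact sequence of presheaves with coherent outer terms, since coherent sheaves on affinoids are closed under extensions in presheaves of $\Ocal$-modules.
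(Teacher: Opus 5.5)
Your proposal is correct and follows the paper's route. You check condition (1) termwise on the exact sequence of presheaves, and you get condition (2) from the closure of coherent (finitely presented) functors under kernels, cokernels and extensions, which the paper simply cites as \cite[Theorem 1.1]{Hartshorne90} while you unpack the extension step by hand. One small imprecision: the kernel of $\Hom_A(M_K\oplus M_C,-)\to F_3$ is an extension of $\Ker(\Hom_A(M_C,-)\to C)$ by $\Ker(\Hom_A(M_K,-)\to K)$, both finitely generated, and that is exactly what your argument needs.
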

\begin{proof}
	For each $I\in \Coh(A)$, there is an exact sequence of presheaves of $\Ocal_{\Spa A}$-modules
	\[F_1(I)\to F_2(I)\to F_3(I)\to F_4(I)\to F_5(I).\]
	Since the presheaves $F_1(I)$, $F_2(I)$, $F_4(I)$, $F_5(I)$ are coherent sheaves, the presheaf $F_3(I)$ is also a coherent sheaf.
	The rest follows from \cite[Theorem 1.1]{Hartshorne90}.
\end{proof}

\begin{definition}
	Let $\Xcal$ be a $\Nil$-homogeneous \'{e}tale stack on $\Aff_K^{\op}$.
	We say that $\Xcal$ has \textit{coherent automorphisms} (resp.\ \textit{coherent deformations}) if for any $\Spa A\in (\Aff_K^{\op})_{/\Xcal}$, the functor \[\Acal ut_{\Xcal}(A,-)\colon \Coh(A)\to \PShv(\Mod_{\Ocal_{\Spa A}})\] (resp.\ $\Dcal ef_{\Xcal}(A,-)\colon \Coh(A)\to \PShv(\Mod_{\Ocal_{\Spa A}})$) is coherent.

	We say that $\Xcal$ has \textit{coherent obstructions} if for any $\Spa A\in (\Aff_K^{\op})_{/\Xcal}$, there exist a coherent functor $\Ocal bs_{\Xcal}(A,-)\colon \Coh(A)\to \PShv(\Mod_{\Ocal_{\Spa A}})$ and a functorial injection 
	\[\Coker(\Ecal xan_{\Xcal}(A,-)\to \Ecal xan(A,-))\hookrightarrow \Ocal bs_{\Xcal}(A,-).\]
\end{definition}

\subsection{Formal smoothness and formal versality}

\begin{definition}\label{def:formally smooth and versal}
	Let $\Xcal$ be an \'{e}tale stack on $\Aff_K^{\op}$, let $f\colon T \to \Xcal$ be a morphism from a rigid space $T$, and let $t\in T^{\cl}$ be a classical point. 
	\begin{enumerate}
		\item The morphism $f$ is \textit{formally smooth at $t$} if for any $\Xcal$-extension $i\colon \Spa A\to \Spa B$ with $A,B$ finite local $K$-algebras, and any morphism $\alpha \colon \Spa A\to T$ over $\Xcal$ satisfying $\alpha(v) = t$ for the closed point $v \in \Spa A$, $\alpha$ lifts to a morphism $\Spa B\to T$ over $\Xcal$.
		\item The morphism $f$ is \textit{formally versal at $t$} if for any $\Xcal$-extension $i\colon \Spa A\to \Spa B$ with $A,B$ finite local $K$-algebras, and any morphism $\alpha \colon \Spa A\to T$ over $\Xcal$ satisfying $\alpha(v) = t$ for the closed point $v \in \Spa A$ and $k(t) \cong k(v)$, $\alpha$ lifts to a morphism $\Spa B\to T$ over $\Xcal$.
		\item The morphism $f$ is \textit{formally smooth} (resp. \textit{formally versal}) if for any $t\in T^{\cl}$, $f$ is formally smooth (resp. formally versal) at $t$.
	\end{enumerate}
\end{definition}

\begin{remark}
	When $\Xcal$ is a rigid space, the above definition of formal smoothness coincides with Definition \ref{def:formally smooth}.
\end{remark}

\begin{remark}
	If $f\colon T \to \Xcal$ is formally smooth, then for any $t\in T^{\cl}$, $f$ is formally versal at $t$.
\end{remark}

\begin{lemma}\label{lem:formally smooth basechange}
	Let $\Xcal$ be an \'{e}tale stack on $\Aff_K^{\op}$.
	Assume that the diagonal morphism $\Delta \colon \Xcal\to \Xcal\times \Xcal$ is representable by quasi-analytic spaces.
	Let $f\colon T\to \Xcal$ and $g\colon Y\to \Xcal$ be morphisms from rigid spaces.
	Let $Z\twoheadrightarrow T\times_{\Xcal} Y$ be an \'{e}tale surjection from a rigid space $Z$, and let $h\colon Z\to T$ be the composition of $Z\to T\times_{\Xcal} Y$ and $T\times_{\Xcal} Y\to T$.
	For a classical point $z\in Z^{\cl}$, if the morphism $f\colon T\to \Xcal$ is formally smooth at $h(z)$, then the composition $Z\to T\times_{\Xcal} Y\to Y$ is formally smooth at $z$.
\end{lemma}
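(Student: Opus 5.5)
The plan is a direct lifting argument that uses only the universal properties of the fibre products involved. Start with an $Y$-extension: a square-zero closed immersion $i\colon \Spa A\to \Spa B$ of finite local $K$-algebras, a morphism $\beta\colon \Spa B\to Y$, and a morphism $\alpha\colon \Spa A\to Z$ sending the closed point $v$ to $z$ and compatible with $\beta\circ i$. We must extend $\alpha$ to $\Spa B\to Z$ over $Y$. The lift is produced in two stages, first into $T\times_{\Xcal}Y$ and then into $Z$.

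\textbf{Step 1: lift through $f$.} Compose $\alpha$ with $h$ to get $h\circ\alpha\colon \Spa A\to T$, sending $v$ to $h(z)$. Via $g\circ\beta$, the immersion $i$ becomes an $\Xcal$-extension, and $h\circ\alpha$ is a morphism over $\Xcal$. Precisely, the 2-isomorphism $f\circ h\circ\alpha\simeq g\circ\beta\circ i$ comes from the 2-commutative square defining $T\times_{\Xcal}Y$. Since $f$ is formally smooth at $h(z)$ (Definition \ref{def:formally smooth and versal}), there is a lift $\gamma\colon \Spa B\to T$ with $\gamma\circ i=h\circ\alpha$ and a 2-isomorphism $f\circ\gamma\simeq g\circ\beta$ restricting to the given one on $\Spa A$. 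The triple $(\gamma,\beta,\text{this }2\text{-isomorphism})$ then defines $\delta\colon \Spa B\to T\times_{\Xcal}Y$. The compatibility of the 2-isomorphisms ensures that $\delta\circ i$ equals the image of $\alpha$ in $T\times_{\Xcal}Y$. Because $\Xcal$ is a stack, equality of morphisms to the fibre product has to be checked with this 2-categorical data; this bookkeeping is the only delicate point, and I expect it to be routine.

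\textbf{Step 2: lift through $Z\to T\times_{\Xcal}Y$.} By the assumption on the diagonal, $T\times_{\Xcal}Y$ is a quasi-analytic space, and $Z\to T\times_{\Xcal}Y$ is étale. Form the pullback $Z\times_{T\times_{\Xcal}Y}\Spa B$. It is étale over $\Spa B$, hence a rigid space étale over $\Spa B$. The morphism $\alpha$ gives a section of it over $\Spa A$.

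Étale morphisms are formally étale. Concretely, an étale rigid space $W\to\Spa B$ with $B$ finite local decomposes, near the image point, as a disjoint union of pieces $\Spa B'$ with $B'$ finite étale over $B$. Topological invariance of the étale site under nilpotent thickenings then shows that the section over $\Spa A$ extends uniquely to a section over $\Spa B$. Alternatively, one can use the formally smooth and unramified characterization of étale morphisms of rigid spaces at classical points, i.e.\ Theorem \ref{thm:formally smooth implies smooth rigid space} together with unramifiedness for uniqueness.

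The resulting section composed to $Z$ gives $\tilde\alpha\colon \Spa B\to Z$ with $\tilde\alpha\circ i=\alpha$. Its composite to $Y$ is the composite of $\delta$ with the projection to $Y$, which is $\beta$. So $\tilde\alpha$ is the required lift, and $Z\to Y$ is formally smooth at $z$.

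\textbf{Main obstacle.} The step I expect to need care is Step 2, namely justifying that an étale morphism of quasi-analytic spaces lifts uniquely along nilpotent thickenings of finite local $K$-algebras. I would handle it by reducing to an étale morphism of rigid spaces over $\Spa B$, as above, and then invoking the local structure of étale maps over an Artinian base, where everything is finite étale.
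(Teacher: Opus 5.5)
Your proposal is correct and follows the paper's proof. You use formal smoothness of $f$ at $h(z)$ to get $\Spa B\to T$, the universal property to get $\Spa B\to T\times_{\Xcal}Y$, and étaleness of $Z\to T\times_{\Xcal}Y$ to lift to $Z$. Your extra care is explicitness the paper omits, not a different argument: the 2-isomorphism bookkeeping, and reducing the étale lift to the rigid space $Z\times_{T\times_{\Xcal}Y}\Spa B$ étale over $\Spa B$.
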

\begin{proof}
	Let $i\colon \Spa A\to \Spa B$ with $A,B$ finite local $K$-algebras be a $Y$-extension, and let $\alpha \colon \Spa A\to Z$ be a morphism over $Y$ satisfying $\alpha(v) = z$ for the closed point $v \in \Spa A$.
	To summarize, we have the following commutative diagram:
	\[
	\xymatrix{
		\Spa A\ar[d]_{\alpha}\ar[rr]& &\Spa B\ar[d]\\
		Z \ar[r]\ar[rd]_{h}& T\times_{\Xcal} Y \ar[r]\ar[d] & Y\ar[d]\\
		 & T \ar[r]_{f} &\Xcal.
	}
	\]
	Since the morphism $f\colon T\to \Xcal$ is formally smooth at $h(z)$, there exists a morphism $\Spa B\to T$ making the diagram above commutative.
	By the universal property of the fiber product, this morphism uniquely lifts to $\Spa B\to T\times_{\Xcal} Y$.
	Since $Z\to T\times_{\Xcal}Y$ is \'{e}tale, it lifts to $\Spa B\to Z$.
\end{proof}

\begin{proposition}\label{prop:formally smooth implies smooth}
	Let $\Xcal$ be an \'{e}tale stack on $\Aff_K^{\op}$, let $T$ be a rigid space, and let $f\colon T \to \Xcal$ be a morphism representable by quasi-analytic spaces.
	Then $f$ is smooth if and only if $f$ is formally smooth.
\end{proposition}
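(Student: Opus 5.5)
Since $f$ is representable by quasi-analytic spaces, smoothness of $f$ means the following. For every morphism $g\colon Y\to\Xcal$ from a rigid space $Y$ (equivalently from affinoid $\Spa A$), choose an étale surjection $Z\twoheadrightarrow T\times_{\Xcal}Y$ from a rigid space $Z$. Then the composite $Z\to Y$ is a smooth morphism of rigid spaces. The plan is to reduce both directions to Theorem \ref{thm:formally smooth implies smooth rigid space}, which handles morphisms of rigid spaces, by passing through such charts.

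For ``if'': assume $f$ is formally smooth. Take $g\colon Y\to\Xcal$ and $Z\twoheadrightarrow T\times_{\Xcal}Y$ as above, and let $z\in Z^{\cl}$. Lemma \ref{lem:formally smooth basechange} applies because the diagonal of $\Xcal$ is representable by quasi-analytic spaces: $T\times_{\Xcal}Y$ is the base change of $\Delta$ along $T\times Y\to\Xcal\times\Xcal$. It shows that $Z\to Y$ is formally smooth at $z$, since $f$ is formally smooth at $h(z)$. Theorem \ref{thm:formally smooth implies smooth rigid space} then gives that $Z\to Y$ is smooth at $z$. Smoothness of rigid spaces is open and is checked at classical points (the smooth locus is open and every nonempty rigid space has classical points), so $Z\to Y$ is smooth. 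Hence $T\times_{\Xcal}Y\to Y$ is smooth, which means $f$ is smooth.

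For ``only if'': assume $f$ is smooth, and let $\Spa A\to\Spa B$ be an $\Xcal$-extension of finite local $K$-algebras with a lift $\alpha\colon\Spa A\to T$ hitting $t$. Take $Y=\Spa B$ with its map to $\Xcal$. Then $\alpha$ and $\Spa A\to\Spa B$ together give a morphism $\Spa A\to T\times_{\Xcal}\Spa B$ over $\Spa B$. Choose an étale surjection $Z\twoheadrightarrow T\times_{\Xcal}\Spa B$; the composite $Z\to\Spa B$ is smooth.

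The main obstacle is lifting $\Spa A\to T\times_{\Xcal}\Spa B$ to $Z$. An étale surjection of sheaves only gives such a lift after an étale cover of $\Spa A$. Since $A$ is finite local, an étale cover of $\Spa A$ is refined by $\Spa A'$ for a finite étale local $A'$, corresponding to a finite separable extension of the residue field. I would handle this in three steps.
\begin{enumerate}
\item Lift $\Spa A'\to Z$. By topological invariance of the étale site under the nilpotent thickening $\Spa A\subset\Spa B$, extend $A'$ to a finite étale $B'$ over $B$ with $B'\otimes_BA=A'$.
\item Apply formal smoothness of $Z\to\Spa B$ at the relevant classical point (Theorem \ref{thm:formally smooth implies smooth rigid space}) to the square-zero extension $\Spa A'\to\Spa B'$. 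This gives $\Spa B'\to Z\to T$ over $\Xcal$, compatible with $\alpha$ on $\Spa A'$.
\item Descend back to $\Spa B$.
\end{enumerate}
Descent is the delicate point, because lifts are not unique. Instead I would pick the étale chart more carefully. Étale morphisms induce finite separable residue extensions, so after enlarging $Z$ (adding a finite étale base change, which keeps it an étale surjection) I can assume $\Spa A\to T\times_{\Xcal}\Spa B$ lifts to $Z$ directly. Concretely, $A$ is henselian, so a point of the étale sheaf over $\Spa A$ lifts after a finite separable residue extension, and that extension can be absorbed into the chart. With a genuine lift $\Spa A\to Z$ in hand, the smooth morphism $Z\to\Spa B$ of rigid spaces is formally smooth, so this lift extends to $\Spa B\to Z$ over $\Spa B$. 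Composing with $Z\to T$ gives the required lift $\Spa B\to T$ over $\Xcal$ extending $\alpha$.
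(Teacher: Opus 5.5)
Your ``if'' direction is the paper's argument: Lemma~\ref{lem:formally smooth basechange} followed by Theorem~\ref{thm:formally smooth implies smooth rigid space}. One small correction there. An open subset of an adic space that contains all classical points need not be the whole space, since rank-two points are closed. However, the proof of Theorem~\ref{thm:formally smooth implies smooth rigid space} produces Zariski open neighbourhoods $\{g\neq 0\}$, and a Zariski closed subset with no classical points is empty, so your conclusion stands.

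The ``only if'' direction has a genuine gap at the step where the residue field extension is ``absorbed into the chart''. Put $W=T\times_{\Xcal}\Spa B$, let $\beta\colon\Spa A\to W$ be the induced map, and let $k$ be the residue field of $A$. A lift of $\beta$ to an \'{e}tale chart $Z\to W$ exists only if $Z\times_W\Spa A\to\Spa A$ has a section. Equivalently, the fibre $Z\times_W\Spa k$, a disjoint union of $\Spa k_i$ with $k_i/k$ finite separable, must contain a copy of $\Spa k$. Replacing or enlarging $Z$ by finite \'{e}tale covers or base changes only enlarges the $k_i$, so it cannot create such a component. You are implicitly using a decency-type property of quasi-analytic spaces: every classical point admits an \'{e}tale chart with trivial residue field extension. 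That would need a separate argument, for instance symmetric-group quotients of fibre powers of $Z$, and nothing in the definitions provides it. So you cannot avoid your step (3): you only get a lift over $\Spa B'$ with $B'/B$ finite \'{e}tale, and such lifts are not unique.

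The missing idea is that the lifts form a torsor under a coherent sheaf, so descent is controlled by an $H^1$ that vanishes. This is how the paper proceeds. It deduces ``only if'' directly from Theorem~\ref{thm:smooth implies formally smooth}, applied to the smooth morphism $W\to\Spa B$ of quasi-analytic spaces. In that proof:
\begin{itemize}
\item lifts over \'{e}tale $U\to\Spa A$ form a sheaf $F$ on $(\Spa A)_{\et}$;
\item by Lemma~\ref{lem:differential module sheaf theoretic}, two lifts differ by a section of $\Hom(\beta^*\Omega_{W/\Spa B},I)$;
\item lifts exist \'{e}tale locally, which is exactly your steps (1)--(2);
\item hence $F$ is a torsor, and $H^1_{\et}(\Spa A,\Hom(\beta^*\Omega_{W/\Spa B},I))=0$ gives a global lift.
\end{itemize}
In your finite local setting you could do the same with Galois descent. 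Take $A'/A$ Galois with group $G$. Then $G$ acts semilinearly on the torsor of lifts over $\Spa B'$, whose structure group is $\Hom_A(\beta^*\Omega_{W/\Spa B},I)\otimes_AA'$. Galois descent gives $H^1(G,\Hom_A(\beta^*\Omega_{W/\Spa B},I)\otimes_AA')=0$, so a $G$-invariant lift exists. That lift descends to $\Spa B$ because $W$ is an \'{e}tale sheaf.
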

\begin{proof}
	The ``only if'' direction easily follows from Theorem \ref{thm:smooth implies formally smooth}.
	Let us prove the ``if'' direction.
	We fix any morphism $g\colon Y\to \Xcal$ from a rigid space $Y$ and an \'{e}tale surjection $Z\twoheadrightarrow T\times_{\Xcal} Y$ from a rigid space $Z$.
	We need to show that the composition $Z\to T\times_{\Xcal} Y\to Y$ is smooth.
	By Lemma \ref{lem:formally smooth basechange}, $Z\to T\times_{\Xcal} Y\to Y$ is formally smooth.
	Therefore, the claim follows from Theorem \ref{thm:formally smooth implies smooth rigid space}.
\end{proof}

For later use, we note the following lemma.

\begin{lemma}\label{lem:Hensel's lemma}
	Let $\Xcal$ be an \'{e}tale stack on $\Aff_K^{\op}$, and let $f\colon T \to \Xcal$ be a morphism from a rigid space $T$.
	Assume that $K$ is a perfect field.
	Then for a classical point $t\in T^{\cl}$, $f$ is formally versal at $t$ if and only if for any $\Xcal$-extension $i\colon \Spa A\to \Spa B$ such that $A$ and $B$ are finite local $k(t)$-algebras with residue field $k(t)$, and any morphism $\alpha \colon \Spa A\to T$ over $\Xcal$ with image $t$ inducing the identity on $k(t)$, $\alpha$ lifts to a morphism $\Spa B\to T$ over $\Xcal$.
\end{lemma}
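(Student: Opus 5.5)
The ``only if'' direction is immediate. Take a $\Xcal$-extension $\Spa A\to\Spa B$ of the restricted kind: $A,B$ finite local $k(t)$-algebras with residue field $k(t)$, and $\alpha$ inducing the identity on $k(t)$. Regarded as finite local $K$-algebras, these satisfy the hypotheses of Definition \ref{def:formally smooth and versal}(2), and the condition $k(t)\cong k(v)$ holds. So formal versality provides the lift. The content of the lemma is the ``if'' direction, which I would prove by using perfectness of $K$ to equip any finite local $K$-algebra with residue field $k(t)$ with a canonical $k(t)$-algebra structure.

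For the ``if'' direction, let $i\colon\Spa A\to\Spa B$ be a $\Xcal$-extension with $A,B$ finite local $K$-algebras, and let $\alpha\colon\Spa A\to T$ over $\Xcal$ send the closed point $v$ to $t$ with $k(v)\cong k(t)$. Write $L=k(t)$; it is a finite extension of $K$ and hence separable, since $K$ is perfect. The key step is Hensel's lemma (equivalently, formal \'etaleness of $L/K$): for a finite local $K$-algebra $C$ with nilpotent maximal ideal and residue field $L$, the projection $C\to L$ admits a unique $K$-algebra section $L\to C$. I would prove this by writing $L=K[x]/(g)$ with $g$ separable and lifting a root of $g$ in $L$ to a root in $C$ by Newton iteration, which terminates because the maximal ideal is nilpotent. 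Uniqueness follows because $g'$ of the lifted root is a unit. Applying this to $B$ gives a section $L\to B$, and composing with $B\to A$ gives a section $L\to A$, which by uniqueness is the section of $A$. Thus $A$, $B$ become $L$-algebras with residue field $L$, and $i$ is $L$-linear.

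Next I would check that $\alpha$ induces the identity on $L$. One might at first be concerned that the identification $k(v)\cong k(t)$ given by $\alpha$ is a nontrivial automorphism. But this does not matter. Replace the canonical $L$-structure by the one transported along the isomorphism $k(t)\xrightarrow{\alpha^*}k(v)$, that is, take the unique section of $A\to k(v)$ with $L$ acting through $\alpha^*$. The same argument applies to $B$, using uniqueness of the lift to $B$ of the section of $A$. With this structure $\alpha$ induces the identity on $k(t)$, and the hypothesis of the lemma yields a lift $\Spa B\to T$ over $\Xcal$. Being over $K$ and over $\Xcal$, this lift is exactly what Definition \ref{def:formally smooth and versal}(2) requires.

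The only real obstacle is the existence and uniqueness of compatible coefficient-field sections in both $A$ and $B$, arranged so that they agree along $B\to A$ and match the identification induced by $\alpha$. Uniqueness of Hensel lifts, which is where separability and hence perfectness of $K$ is used, handles all the compatibilities simultaneously. Finally, ``over $\Xcal$'' is unaffected, because the $L$-structure is additional data that does not change the morphisms to $\Xcal$.
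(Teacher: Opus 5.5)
Your proof is correct and follows the same route as the paper. Perfectness of $K$ makes $k(t)/K$ separable, and Hensel's lemma, via uniqueness of coefficient-field lifts, gives $A$ and $B$ compatible $k(t)$-algebra structures that match the identification $k(t)\cong k(v)$ induced by $\alpha$; the restricted hypothesis then applies, and you simply spell out the Newton iteration and the transport of structure along $\alpha^*$ that the paper leaves implicit.
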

\begin{proof}
	The ``only if'' direction is clear.
	Let us prove the ``if'' direction.
	Let $i\colon \Spa A\to \Spa B$ be an $\Xcal$-extension of finite local $K$-algebras, and let $\alpha \colon \Spa A\to T$ be a morphism over $\Xcal$ with image $t$ such that $k(t) \cong k(v)$ for the closed point $v \in \Spa A$.
	Since $K$ is a perfect field, the finite extension $k(t)/K$ is separable.
    By Hensel's lemma, $A$ and $B$ are uniquely endowed with $k(t)$-algebra structures compatible with their residue fields.
    Then the claim follows from the assumption.
\end{proof}

\begin{proposition}\label{prop:formally versal Exan}
Let $\Xcal$ be an $\rCl$-homogeneous \'{e}tale stack on $\Aff_K^{\op}$, and let $f\colon \Spa A \to \Xcal$ be a morphism from an affinoid rigid space $\Spa A$.
Then, for a classical point $t\in (\Spa A)^{\cl}$, $f$ is formally versal at $t$ if and only if \[\Exan_{\Xcal}(A,k(t))=0.\]
\end{proposition}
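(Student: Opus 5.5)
We follow the analogous argument in \cite[Section 4]{Hall17Openness}, translating formal versality into a statement about $\Xcal$-extensions over the point $t$.

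\emph{Reduction to $\Spa k(t)$.} Let $\mfrak\subset A$ be the maximal ideal of $t$ and $\kappa=k(t)=A/\mfrak$. First we interpret $\Exan_{\Xcal}(A,\kappa)$. An element is a square-zero extension $0\to\kappa\to B\to A\to 0$ together with an extension of $f$ to $\Spa B\to\Xcal$. The trivial element is $A\oplus\kappa$ with the map $\Spa(A\oplus\kappa)\to\Spa A\xrightarrow{f}\Xcal$; more precisely, the trivial class consists of those $\Xcal$-extensions admitting a retraction $\Spa B\to\Spa A$ over $\Xcal$.

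For a general $\Xcal$-extension $i\colon\Spa A_0\to\Spa B_0$ of finite local $K$-algebras with $\alpha\colon\Spa A_0\to\Spa A$ over $\Xcal$ hitting $t$ and $k(v)\cong\kappa$, the morphism $\alpha$ lies in $\rCl$ by the Example preceding the definition of $\mathbf P$-homogeneity. By $\rCl$-homogeneity, the pushout $B:=B_0\times_{A_0}A$ is an affinoid algebra carrying a morphism $\Spa B\to\Xcal$ that extends $f$, and $\Spa A\to \Spa B$ is square-zero with kernel $I=\Ker(B_0\to A_0)$, a finite $A_0$-module. Hence $I$ is an $A$-module supported at $t$, i.e.\ killed by a power of $\mfrak$; since $k(v)\cong\kappa$, it is a finite-length $A$-module all of whose composition factors are $\kappa$. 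A lift $\Spa B_0\to\Spa A$ over $\Xcal$ corresponds, again via the coCartesian square and homogeneity, to a retraction $\Spa B\to\Spa A$ over $\Xcal$, i.e.\ to the class of $B$ in $\Exan_{\Xcal}(A,I)$ being trivial. Thus formal versality at $t$ is equivalent to the vanishing of $\Exan_{\Xcal}(A,I)$ for every such $I$, restricted to those classes arising by pushout; conversely every class in $\Exan_{\Xcal}(A,I)$ pulls back along $\Spa A/\mfrak^N\to\Spa A$ to an extension of finite local algebras, whose pushout recovers it.

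\emph{The two directions.} ($\Rightarrow$) Given a class in $\Exan_{\Xcal}(A,\kappa)$ represented by $\Spa A\to\Spa B$, base change to $A_0=A/\mfrak^N$ with $N$ large (so that $B_0=B\times_A A_0$ is a finite local $K$-algebra with residue field $\kappa$). Versality gives a lift $\Spa B_0\to\Spa A$, and by the correspondence above this yields the triviality of the class. ($\Leftarrow$) Given $\Exan_{\Xcal}(A,\kappa)=0$, filter $I$ by $A$-submodules with successive quotients $\cong\kappa$. Then Lemma \ref{lem:Exan half-exact}, which needs only $\rNil$-homogeneity and hence holds here, gives exactness of $\Exan_{\Xcal}(A,I_1)\to\Exan_{\Xcal}(A,I_2)\to\Exan_{\Xcal}(A,I_3)$, and induction on length gives $\Exan_{\Xcal}(A,I)=0$. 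Hence the class of $B$ vanishes and $\alpha$ lifts.

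\emph{Main obstacle.} The delicate step is the bookkeeping in the reduction: checking that lifts of $\alpha$ along $i$ over $\Xcal$ correspond exactly to trivializations of the pushout $\Xcal$-extension of $A$, and that in ($\Rightarrow$) the pullback $B\times_A A/\mfrak^N$ is indeed an $\Xcal$-extension of finite local algebras whose pushout recovers $B$. Here we must use that $\Spa A/\mfrak^N\to\Spa A$ is in $\rCl$, so that homogeneity applies, and that $\Xcal$-morphisms glue through the coCartesian square $\Xcal(B_0\times_{A_0}A)\simeq\Xcal(B_0)\times_{\Xcal(A_0)}\Xcal(A)$. The functoriality of the retraction under this equivalence is where care is needed.
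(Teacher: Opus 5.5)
Your proof follows the same route as the paper, which simply defers to \cite[Lemma 4.3]{Hall17Openness}. For ``if'', you push a finite local extension out along the $\rCl$-morphism $\alpha$ and kill the resulting class in $\Exan_{\Xcal}(A,I)$ by d\'evissage and half-exactness; this direction is fine as written. For ``only if'', you cut an extension of $A$ by $k(t)$ down to a finite local one, lift by versality, and glue the lift back. This direction has a gap exactly at the step you flag: $B_0=B\times_A A_0$ is not defined. There is no ring map $A_0=A/\mfrak^N\to A$, and there is no given map $A\to B$ (a compatible one exists only if the extension already splits), so neither a fibre product nor a base change is available. Pullback of algebra extensions goes along maps \emph{into} $A$, not along quotients of $A$. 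For the same reason, your stated reason for taking $N$ large (to make $B_0$ finite local) is not the operative one.

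The fix is as follows. Let $\tilde{\mfrak}\subset B$ be the preimage of $\mfrak$ and set $B_0=B/\tilde{\mfrak}^N$. This ring is finite local with residue field $k(t)$ for every $N$. However, the kernel of $B_0\to A_0$ is $I/(I\cap\tilde{\mfrak}^N)$, which equals $I=k(t)$ only when $I\cap\tilde{\mfrak}^N=0$. Since $I$ is square-zero, $\tilde{\mfrak}$ acts on $I$ through $\mfrak$, so $\tilde{\mfrak}I=0$. Artin--Rees in the noetherian ring $B$ then gives $I\cap\tilde{\mfrak}^N\subset\tilde{\mfrak}^{N-c}I=0$ for $N>c$. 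For such $N$, the map $B\to B_0\times_{A_0}A$ is injective (because $I\cap\tilde{\mfrak}^N=0$) and surjective (because $\tilde{\mfrak}^N$ maps onto $\mfrak^N$). So $\Spa B$ really is the pushout of $\Spa A\leftarrow\Spa A_0\to\Spa B_0$. Now apply $\Cl$-homogeneity to the square-zero immersion $\Spa A_0\to\Spa B_0$ and the closed immersion $\Spa A_0\to\Spa A$. This lets you glue the versal lift $\Spa B_0\to\Spa A$ with $\mathrm{id}_{\Spa A}$ into a retraction of $\Spa A\to\Spa B$ over $\Xcal$, which is the required triviality. It is also why only $\Cl$-homogeneity is needed for this direction, as the paper's footnote notes. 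The same argument, with $\tilde{\mfrak}^kI=0$ in place of $\tilde{\mfrak}I=0$, justifies your more general claim that every class in $\Exan_{\Xcal}(A,I)$ with $I$ supported at $t$ arises by pushout from a finite local extension.
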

\begin{proof}
	This can be proved in the same way as in \cite[Lemma 4.3]{Hall17Openness}.\footnote{To prove the ``only if'' direction, it suffices to assume $\Cl$-homogeneity.}
\end{proof}

\begin{proposition}\label{prop:formally smooth Exan}
	Let $\Xcal$ be a $\Fin$-homogeneous \'{e}tale stack on $\Aff_K^{\op}$. 
	Let $f\colon \Spa A \to \Xcal$ be a morphism from an affinoid rigid space $\Spa A$, and let $t\in (\Spa A)^{\cl}$ be a classical point.
	If $\Exan_{\Xcal}(A,I)=0$ for all $I\in \Coh(A)$ supported on $\{t\}$, then $f$ is formally smooth at $t$.
\end{proposition}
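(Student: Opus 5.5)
Given an $\Xcal$-extension $i\colon \Spa C\to \Spa D$ with $C,D$ finite local $K$-algebras and $\alpha\colon \Spa C\to \Spa A$ over $\Xcal$ with $\alpha(v)=t$, we must lift $\alpha$ to $\Spa D\to \Spa A$ over $\Xcal$. The plan follows \cite[Lemma 4.3]{Hall17Openness}, using the extra flexibility of $\Fin$-homogeneity in place of $\rCl$-homogeneity. The morphism $\alpha\colon \Spa C\to \Spa A$ is finite, since $C$ is a finite $K$-algebra, though it need not be in $\rCl$ because the residue field may grow. Set $J=\Ker(D\to C)$, a finite $C$-module with $J^2=0$.

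First, form the pushout $B'=D\times_C A$. This is an affinoid $K$-algebra by the lemma on coCartesian squares, since $i$ is a square-zero closed immersion and $\alpha$ is finite. By $\Fin$-homogeneity, $\Xcal(B')\simeq \Xcal(D)\times_{\Xcal(C)}\Xcal(A)$, so the given data produce a morphism $\Spa B'\to \Xcal$ and an $\Xcal$-extension $\Spa A\to \Spa B'$. Its kernel $\Ker(B'\to A)\cong J$ is a finitely generated $A$-module via $A\to C$, supported on $\{t\}$ because $C$ is local with closed point mapping to $t$. Thus we obtain an object of $\Exanbb_{\Xcal}(A,J)$.

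By hypothesis $\Exan_{\Xcal}(A,J)=0$, so this extension is isomorphic to the trivial extension $A\oplus J$ with its canonical $\Xcal$-structure. In particular there is a retraction $r\colon \Spa B'\to \Spa A$ of $\Spa A\to\Spa B'$ over $\Xcal$, that is, a $K$-algebra section $A\to B'$ compatible with the maps to $\Xcal$. Composing with the projection $B'\to D$ gives $A\to D$, i.e. $\Spa D\to \Spa B'\to \Spa A$. Restricted along $i$, this composite equals $\Spa C\to \Spa A\to\Spa B'\to \Spa A=\alpha$. It is also over $\Xcal$, since $\Spa D\to\Spa B'\to\Xcal$ is the given map $\Spa D\to\Xcal$ by construction of the pushout.

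The main obstacle is bookkeeping in the identification step. We must check that triviality in $\Exanbb_{\Xcal}(A,J)$ really yields a section over $\Xcal$, which requires compatibility of the $2$-morphisms, not merely triviality of the underlying ring extension. We must also check that the isomorphism class of the pushout extension is what we claim. Both follow exactly as in Hall's argument once the pushout is known to exist in affinoids and $\Xcal$ is known to send it to a fiber product. These are precisely the two inputs furnished by the coCartesian lemma and $\Fin$-homogeneity.
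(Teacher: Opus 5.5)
Your proposal is correct and is essentially the paper's argument. The paper's proof only says the result follows as in \cite[Lemma 4.3]{Hall17Openness}, and what you describe is that adaptation: form the pushout $D\times_C A$ using the coCartesian lemma and $\Fin$-homogeneity, then use $\Exan_{\Xcal}(A,J)=0$ to split the resulting $\Xcal$-extension of $A$ by the $t$-supported module $J$. You also correctly explain why $\Fin$- rather than $\rCl$-homogeneity is needed (the residue field of $C$ may be larger than $k(t)$), and why vanishing is required for all $t$-supported $I$ rather than just $k(t)$ (since $J$ need not be killed by $\mfrak_t$).
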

\begin{proof}
	This can be proved in the same way as in \cite[Lemma 4.3]{Hall17Openness}.
\end{proof}

\begin{corollary}\label{cor:formally versal iff formally smooth}
	Let $\Xcal$ be a $\Fin$-homogeneous \'{e}tale stack on $\Aff_K^{\op}$. 
	Let $f\colon \Spa A \to \Xcal$ be a morphism from an affinoid rigid space $\Spa A$, and let $t\in (\Spa A)^{\cl}$ be a classical point.
	Then $f$ is formally versal at $t$ if and only if $f$ is formally smooth at $t$.
\end{corollary}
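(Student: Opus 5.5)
The ``if'' direction is immediate: formal smoothness at $t$ quantifies over all $\Xcal$-extensions $\Spa A'\to\Spa B'$ of finite local $K$-algebras with closed point mapping to $t$. Formal versality at $t$ imposes the extra condition $k(v)\cong k(t)$, so it is a special case (this is the earlier remark after Definition \ref{def:formally smooth and versal}). The content is the ``only if'' direction. I will deduce it from Proposition \ref{prop:formally versal Exan} and Proposition \ref{prop:formally smooth Exan}; both apply because $\Fin$-homogeneity implies $\rCl$-homogeneity, since $\rCl\subset\Fin$ by the remark that morphisms in $\rCl$ are finite.

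Assume $f$ is formally versal at $t$. By Proposition \ref{prop:formally versal Exan} we get $\Exan_{\Xcal}(A,k(t))=0$. By Proposition \ref{prop:formally smooth Exan}, it then suffices to show $\Exan_{\Xcal}(A,I)=0$ for every finitely generated $A$-module $I$ supported on $\{t\}$. Such an $I$ has finite length, so I will argue by induction on its length. If the length is $1$, then $I\cong k(t)$ and we are done.

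In general, let $\mfrak$ be the maximal ideal of $t$. Since $I$ is killed by a power of $\mfrak$, it admits a submodule $I_1\cong k(t)$, giving an exact sequence $0\to I_1\to I\to I_3\to 0$ with $I_3$ supported on $\{t\}$ of smaller length. Lemma \ref{lem:Exan half-exact}, which needs $\rNil$-homogeneity (again implied by $\Fin$-homogeneity), gives exactness of
\[\Exan_{\Xcal}(A,I_1)\to \Exan_{\Xcal}(A,I)\to \Exan_{\Xcal}(A,I_3).\]
The outer terms vanish, by the base case and by induction respectively, so the middle term vanishes.

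The only delicate point is making sure the hypotheses line up, i.e.\ that the homogeneity required by each cited result follows from $\Fin$-homogeneity. This is clear from the inclusions $\Nil\subset\rNil\subset\rCl\subset\Fin$. There is no real analytic obstacle once these two propositions are granted.
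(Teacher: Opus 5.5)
Your proposal is correct and follows the paper's proof: reduce via Propositions \ref{prop:formally versal Exan} and \ref{prop:formally smooth Exan} to showing that $\Exan_{\Xcal}(A,k(t))=0$ implies $\Exan_{\Xcal}(A,I)=0$ for all $I$ supported on $\{t\}$. Then use half-exactness (Lemma \ref{lem:Exan half-exact}) with induction on length. This induction is exactly the ``standard reduction argument'' the paper invokes without spelling out.
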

\begin{proof}
	By Propositions \ref{prop:formally versal Exan} and \ref{prop:formally smooth Exan}, it suffices to show that if $\Exan_{\Xcal}(A,k(t))=0$, then for any $I\in \Coh(A)$ supported on $\{t\}$, $\Exan_{\Xcal}(A,I)=0$.
	Since $\Exan_{\Xcal}(A,-)$ is half-exact by Lemma \ref{lem:Exan half-exact}, this follows from the standard reduction argument.
\end{proof}

\begin{theorem}\label{thm:openness of versality}
	Let $\Xcal$ be an $\rCl$-homogeneous \'{e}tale stack on $\Aff_K^{\op}$.
	Assume that $\Xcal$ has coherent deformations and obstructions.
	Then, for a morphism $f\colon \Spa A \to \Xcal$ from an affinoid rigid space $\Spa A$, the subset 
	\[(\Spa A)^{\mathrm{fv}}=\{t\in (\Spa A)^{\cl}\mid \text{$f$ is formally versal at $t$}\}\]
	is Zariski open in $(\Spa A)^{\cl}$.
	In other words, there exists a unique Zariski open subspace $U\subset \Spa A$ such that $U\cap (\Spa A)^{\cl}=(\Spa A)^{\mathrm{fv}}.$
\end{theorem}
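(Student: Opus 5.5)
By Proposition \ref{prop:formally versal Exan}, a classical point $t\in(\Spa A)^{\cl}$ lies in $(\Spa A)^{\mathrm{fv}}$ if and only if $\Exan_{\Xcal}(A,k(t))=0$. So the plan is to show that the locus of classical points where $\Exan_{\Xcal}(A,-)$ vanishes on residue fields is Zariski open. We will follow the strategy of \cite[Theorem 4.4]{Hall17Openness}: first show that $\Ecal xan_{\Xcal}(A,-)$ is a coherent functor, then use the structure theory of coherent functors (in the sense of Hartshorne/Auslander) to control its vanishing at points.

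\emph{Step 1: coherence of $\Ecal xan_{\Xcal}(A,-)$.} Lemma \ref{lem:Def Exan exact sequence}, sheafified via the presheaf structure of Lemmas \ref{lem:Exan presheaf} and \ref{lem:Def presheaf}, gives an exact sequence
\[\Dcal er(A,-)\to \Dcal ef_{\Xcal}(A,-)\to \Ecal xan_{\Xcal}(A,-)\to \Ecal xan(A,-)\to \Ocal bs_{\Xcal}(A,-).\]
The last map is the composition of $\Ecal xan(A,-)\twoheadrightarrow \Coker$ with the assumed injection $\Coker\hookrightarrow \Ocal bs_{\Xcal}(A,-)$; composing with an injection keeps the sequence exact at $\Ecal xan(A,-)$. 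The functors $\Dcal er(A,-)$ and $\Ecal xan(A,-)$ are coherent by Example \ref{ex:Der Exan coherent}. The functors $\Dcal ef_{\Xcal}(A,-)$ and $\Ocal bs_{\Xcal}(A,-)$ are coherent by hypothesis. Lemma \ref{lem:coherent 5-terms} then shows that $\Ecal xan_{\Xcal}(A,-)$ is coherent.

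\emph{Step 2: coherent, half-exact functors and the nonvanishing locus.} Coherence gives a morphism $M\to N$ in $\Coh(A)$ and an element $\alpha$ such that
\[\Hom_A(N,I)\to\Hom_A(M,I)\to \Exan_{\Xcal}(A,I)\to 0\]
is right exact, naturally in $I$. By Lemma \ref{lem:Exan half-exact}, $\Exan_{\Xcal}(A,-)$ is half-exact, since $\Xcal$ is $\rCl$-homogeneous and hence $\rNil$-homogeneous. We will then follow \cite[Theorem 4.4 and Section 5]{Hall17Openness} (the analogue of Ogus--Bergman). The claim to prove is that the set of maximal ideals $\mfrak$ with $\Exan_{\Xcal}(A,A/\mfrak)=0$ is open in $\operatorname{Max}(A)$ for the Zariski topology of $\Spec A$.

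The idea is as follows. Suppose $\Exan_{\Xcal}(A,k(t))=0$. Half-exactness and a d\'evissage along the $\mfrak_t$-adic filtration give $\Exan_{\Xcal}(A,I)=0$ for all $I$ supported at $t$; this is the argument used in Corollary \ref{cor:formally versal iff formally smooth}. Through the presentation, this says that $\Hom_A(N,I)\to\Hom_A(M,I)$ is surjective for all such $I$. Passing to the completion at $\mfrak_t$, the map $M^\wedge\to N^\wedge$ is then a split injection on the relevant modules. By faithful flatness of completion and finite presentation, it is split after localizing at some $g\notin\mfrak_t$. Consequently $\Exan_{\Xcal}(A_g,I)=0$ for all finitely generated $A_g$-modules $I$, and in particular $\Exan_{\Xcal}(A,k(s))=0$ for every classical $s$ with $g(s)\neq 0$. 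Here we need to compare $\Exan_{\Xcal}(A,I)$ for $I$ supported on $D(g)$ with the localized functor; the coherent-sheaf property of $\Ecal xan_{\Xcal}(A,I)$ allows this.

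\emph{Step 3: from $\Spec A$ to $\Spa A$.} The open subset $D(g)$ is Zariski open, and a union of such basic opens is a Zariski open subspace $U\subset\Spa A$ (the complement of $V(J)$, where $J$ is generated by the chosen $g$'s). By construction, $U\cap(\Spa A)^{\cl}=(\Spa A)^{\mathrm{fv}}$. Uniqueness holds because a Zariski open subspace is determined by its classical points, as Zariski closed subsets correspond to radical ideals of the Jacobson ring $A$.

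The main obstacle is Step 2. There we must check that the proof that the vanishing locus of a coherent, half-exact functor is open transfers to the presheaf-valued setting, including the compatibility of $\Exan_{\Xcal}$ with localization to rational and Zariski subdomains. This compatibility is exactly what condition (1) in the definition of coherent functors is designed to provide.
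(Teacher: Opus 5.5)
Your proposal is correct. Steps 1 and 3 essentially coincide with the paper's argument, but Step 2 takes a genuinely different route.

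\textbf{The paper's route.} It extends $\Exan_{\Xcal}(A,-)$ to all of $\Mod_A$ by filtered colimits and applies Hartshorne's results on half-exact coherent functors (\cite[Proposition 3.4, Proposition 3.9, Corollary 3.10]{Hartshorne90}). From $\Exan_{\Xcal}(A,A/\mfrak)=0$ it gets $\Exan_{\Xcal}(A,A)_{\mfrak}\cong\Exan_{\Xcal}(A,A_{\mfrak})=0$. Finite generation of $\Exan_{\Xcal}(A,A)$ spreads this to some $A_f$. The same results then return the vanishing to $\Exan_{\Xcal}(A,A/\nfrak)$ for every maximal ideal $\nfrak$ of $A_f$.

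\textbf{Your route.} You work directly with the presentation $u\colon M\to N$. After the d\'evissage, take $I=M/\mfrak^n M$ (finite length, supported at $\mfrak$). Surjectivity of $\Hom_A(N,I)\to\Hom_A(M,I)$ then shows that $u\otimes_A A/\mfrak^n$ is split injective for every $n$. Hence $u_{\mfrak}$ is split injective, by a Mittag-Leffler/Artin--Rees argument; this is the same \cite[02HO]{stacks-project} the paper invokes in the proof of Theorem \ref{thm:formally smooth implies smooth rigid space}. Finite presentation then makes $u_g$ split injective for some $g\notin\mfrak$. Consequently $\Hom_A(N,I)\to\Hom_A(M,I)$ is surjective for every $I\in\Coh(A)$ on which $g$ acts invertibly.

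\textbf{Comparison.} Your version is more elementary and self-contained: it never leaves $\Coh(A)$ and needs nothing from Hartshorne's Section 3. The paper's version is shorter because it outsources exactly this Nakayama-type argument.

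\textbf{The flagged obstacle is not real.} The functor $\Exan_{\Xcal}(A_g,-)$ is not defined, since $A_g$ is not affinoid, and it is not needed. For a classical point $s$ with $g(s)\neq 0$, the residue field $k(s)$ is itself an object of $\Coh(A)$ on which $g$ is a unit. The presentation therefore applies to it directly and gives $\Exan_{\Xcal}(A,k(s))=0$. No compatibility of $\Exan_{\Xcal}$ with localization is required, and condition (1) of coherence is used only through Lemma \ref{lem:coherent 5-terms} in Step 1.
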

\begin{proof}
	It suffices to show that for any $t\in (\Spa A)^{\mathrm{fv}}$, there exists a Zariski open neighborhood $U\subset \Spa A$ of $t$ such that $U^{\cl}\subset (\Spa A)^{\mathrm{fv}}$.
	Let $t\in (\Spa A)^{\mathrm{fv}}$, and $\mfrak\subset A$ be the maximal ideal corresponding to $t$.
	By assumption, Example \ref{ex:Der Exan coherent}, Lemma \ref{lem:Def Exan exact sequence}, and Lemma \ref{lem:coherent 5-terms}, the additive functor $\Ecal xan_{\Xcal}(A,-)\colon \Coh(A)\to \PShv(\Mod_{\Ocal_{\Spa A}})$ is coherent.
	Moreover, it is half-exact by Lemma \ref{lem:Exan half-exact}.
	As in \cite[Remark 3.3]{Hartshorne90}, by taking filtered colimits, we extend the functor 
	\[\Exan_{\Xcal}(A,-)\colon \Coh(A)\to \Coh(A)\] 
	to the functor 
	\[\Exan_{\Xcal}(A,-)\colon \Mod_A \to \Mod_A.\]
	By Proposition \ref{prop:formally versal Exan}, we have $\Exan_{\Xcal}(A,A/\mfrak)=0$.
	Therefore, by \cite[Proposition 3.4, Corollary 3.10]{Hartshorne90}, we obtain \[\Exan_{\Xcal}(A,A)_{\mfrak}\cong\Exan_{\Xcal}(A,A_{\mfrak})=0.\]
	Since $\Exan_{\Xcal}(A,A)$ is a finitely generated $A$-module, there exists $f\in A\setminus\mfrak$ such that 
	\[\Exan_{\Xcal}(A,A_f)\cong\Exan_{\Xcal}(A,A)_f=0.\]
	Then for any maximal ideal $\nfrak$ of $A_f$, we have  
	\[\Exan_{\Xcal}(A,A_{\nfrak})\cong\Exan_{\Xcal}(A,A_f)_{\nfrak}=0.\]
	Therefore, by \cite[Proposition 3.9]{Hartshorne90}, we obtain $\Exan_{\Xcal}(A,A/\nfrak)=0.$
	In other words, for any classical point $t'\in (\Spa A\setminus \{f=0\})^{\cl}$, we have $\Exan_{\Xcal}(A,k(t'))=0.$
\end{proof}

\begin{corollary}\label{cor:formally versal smooth local on the source}
	Let $\Xcal$ be an \'{e}tale stack on $\Aff_K^{\op}$.
	Assume the following:
	\begin{itemize}
		\item The stack $\Xcal$ is $\Fin$-homogeneous.
		\item The diagonal morphism $\Delta \colon \Xcal\to \Xcal\times \Xcal$ is representable by quasi-analytic spaces.
		\item The stack $\Xcal$ has coherent deformations and obstructions.
	\end{itemize}
	Let $f\colon T\to \Xcal$ be a morphism from a rigid space $T$, and let $g\colon S\to T$ be a smooth morphism of rigid spaces.
	Then, for a classical point $s\in S^{\cl}$, if $f\circ g$ is formally versal at $s$, then $f$ is formally versal at $g(s)$.
\end{corollary}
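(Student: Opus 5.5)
The plan is to reduce to an affinoid situation and convert formal versality into the vanishing of $\Exan$, using Proposition \ref{prop:formally versal Exan}. Set $t=g(s)$. Formal versality is a condition at a single classical point, so we may shrink $S$ and $T$ to affinoid neighbourhoods $S=\Spa B$ of $s$ and $T=\Spa A$ of $t$. By Corollary \ref{cor:formally versal iff formally smooth} (the stack is $\Fin$-homogeneous), $f\circ g$ is then formally smooth at $s$, not merely formally versal. It is enough to show that $f$ is formally smooth at $t$, since by the remark after Definition \ref{def:formally smooth and versal} this implies formal versality.

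The direct lifting argument goes as follows. Let $i\colon \Spa A_0\to\Spa B_0$ be an $\Xcal$-extension of finite local $K$-algebras, and let $\alpha\colon\Spa A_0\to T$ be a morphism over $\Xcal$ sending the closed point to $t$. We want to factor $\alpha$ through $S$.
\begin{enumerate}
\item Since $g$ is smooth and the closed point maps to the classical point $t$, we find a finite extension $L$ of $k(t)$ and a classical point $s'\in S$ over $t$ with residue field $L$. The difficulty is that $s'$ need not equal $s$.
\item To avoid this, work in the fibre $S_A:=S\times_T\Spa A_0$. This is smooth over $\Spa A_0$ and contains the point $(s,\text{closed point})$ once we base change to a finite extension of $k(s)$. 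Using Appendix B (smooth surjections have sections étale locally), or simply formal smoothness of $g$ (Theorem \ref{thm:formally smooth implies smooth rigid space}), we lift $\alpha$ to a morphism $\alpha'\colon\Spa A_0'\to S$. Here $A_0'=A_0\otimes_{k(t)}k(s)$, which may no longer be local; the point is that its closed points map to $s$.
\end{enumerate}

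This is where the versus-smooth subtlety enters. To avoid these residue-field issues entirely, I would instead use the $\Exan$ description. Since $f\circ g$ is formally smooth at $s$, Proposition \ref{prop:formally smooth Exan} together with the converse direction gives $\Exan_{\Xcal}(B,k(s))=0$. We want $\Exan_{\Xcal}(A,k(t))=0$.

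Consider the functorial map $\Exan_{\Xcal}(A,I)\to\Exan_{\Xcal}(B,g^*I)$ obtained by pulling back extensions along the smooth map $g$. Smoothness of $g$ makes this compatible with Lemma \ref{lem:Def Exan exact sequence}: $\Lbb_{B/A}\simeq\Omega_{B/A}[0]$ is projective, so $\Exan$ for $B$ over $A$ vanishes. Hence $\Exan_{\Xcal}(A,I)\otimes_A B\to\Exan_{\Xcal}(B,I\otimes_A B)$ is injective; in fact it is an isomorphism on the obstruction part. Apply this with $I=k(t)$. Then $k(t)\otimes_AB=B/\mfrak_tB$ is the coordinate ring of the fibre, which surjects onto $k(s)$, with kernel supported on the fibre. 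Half-exactness (Lemma \ref{lem:Exan half-exact}) together with coherence of $\Ecal xan_{\Xcal}(B,-)$ (Theorem \ref{thm:openness of versality}) shows that $\Exan_{\Xcal}(B,-)$ vanishes on all modules supported near $s$, after shrinking $S$ to a Zariski neighbourhood on which $f\circ g$ is versal. By faithful flatness of $B$ over $A$ near $s$, $\Exan_{\Xcal}(A,k(t))$ then injects into a zero module. Finally Proposition \ref{prop:formally versal Exan} gives formal versality at $t$.

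The main obstacle is justifying the injectivity/base-change map $\Exan_{\Xcal}(A,I)\otimes_AB\hookrightarrow\Exan_{\Xcal}(B,I\otimes_AB)$ for smooth $g$. For this, I would compare the two five-term sequences of Lemma \ref{lem:Def Exan exact sequence}, using flat base change for $\Def_{\Xcal}$ and $\Der$ (by coherence) and $\Exan(B,-)=\Exan(A,-)\otimes B$ (by smoothness).
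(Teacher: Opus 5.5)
\paragraph{A gap in the key comparison step.} Your reduction to proving $\Exan_{\Xcal}(A,k(t))=0$ is fine. So is using Theorem~\ref{thm:openness of versality} to make $\Exan_{\Xcal}(B,-)$ vanish near $s$. The step that carries all the weight, however, is the injective map $\Exan_{\Xcal}(A,I)\otimes_A B\to\Exan_{\Xcal}(B,I\otimes_A B)$ for smooth $g$. You assert it rather than prove it, and the justification you sketch does not go through with the available hypotheses.
\begin{enumerate}
\item The pull-back map itself requires lifting the smooth affinoid $\Spa B$ over an arbitrary square-zero thickening of $\Spa A$, functorially and $B$-linearly. Nothing in the setup establishes this.
\item The step ``$\Lbb_{B/A}\simeq\Omega_{B/A}[0]$ is projective, hence injective'' presupposes a transitivity sequence for $\Spa B\to\Spa A\to\Xcal$. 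No such sequence is available: Lemma~\ref{lem:Def Exan exact sequence} only compares $\Xcal$ with $\Spa K$.
\item Your fallback ladder between the two sequences of Lemma~\ref{lem:Def Exan exact sequence} is not a ladder of isomorphisms. Restriction $\Der(B,I\otimes_A B)\to\Der(A,I)\otimes_A B$ has kernel $\Hom_B(\Omega_{B/A},I\otimes_A B)$. So the four lemma would need an extra argument that $A$-linear derivations induce trivial deformations.
\item More seriously, $\Def_{\Xcal}(A,I)\otimes_A B\cong\Def_{\Xcal}(B,I\otimes_A B)$ does not follow from coherence. Coherence only controls base change in the coefficient module for the fixed point $\Spa A\to\Xcal$. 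Comparing deformations of $\Spa A\to\Xcal$ with those of $\Spa B\to\Xcal$ amounts to homogeneity for $A\oplus J=A\times_B(B\oplus J)$ along $\Spa B\to\Spa A$. That morphism is not finite, so $\Fin$-homogeneity does not apply. You also cannot reduce to finitely many points, because $I\otimes_A B=B/\mfrak_t B$ is supported on a positive-dimensional fibre.
\end{enumerate}

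\paragraph{How the paper avoids this.} The missing idea is to pass through honest smoothness rather than compare deformation functors across $g$. This is exactly where the hypothesis that the diagonal is representable by quasi-analytic spaces enters, and your argument never uses it.
\begin{enumerate}
\item After shrinking via Theorem~\ref{thm:openness of versality}, $f\circ g$ is formally versal at every classical point of an affinoid $\Spa B\ni s$.
\item By Corollary~\ref{cor:formally versal iff formally smooth} it is then formally smooth there, and by Proposition~\ref{prop:formally smooth implies smooth} it is smooth.
\item Since $g$ is smooth it is open. So you may shrink $T$ to an affinoid inside $g(S)$, replace $S$ by its preimage, and assume $g$ surjective.
\item Smoothness is smooth-local on the source, so $f$ is smooth. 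Hence $f$ is formally smooth, and in particular formally versal at $g(s)$.
\end{enumerate}
This is the paper's proof. It sidesteps both the residue-field difficulty that stopped your direct lifting attempt and any comparison of $\Exan_{\Xcal}$ along $g$.
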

\begin{proof}
	We may assume that $S$ and $T$ are affinoid rigid spaces.
	We set $T=\Spa A$ and $S=\Spa B$.
	Moreover, by Theorem \ref{thm:openness of versality}, we may assume that $f\circ g$ is formally versal after shrinking $\Spa B$.
	Since the morphism $\Spa B\to \Spa A$ is smooth, it is an open morphism by \cite[Proposition 1.7.8]{Hub96}.
	Therefore, we may assume that $\Spa B\to \Spa A$ is surjective.
	By Proposition \ref{prop:formally smooth implies smooth} and Corollary \ref{cor:formally versal iff formally smooth}, the morphism $\Spa B\to \Xcal$ is smooth.
	Since smoothness is smooth local on the source (\cite[Proposition 3.14]{OhShimizu25}), we find that $\Spa A\to \Xcal$ is also smooth by the standard argument.
	Therefore, it is formally smooth (and hence formally versal) by Proposition \ref{prop:formally smooth implies smooth}.
\end{proof}

\subsection{A criterion for representability}
\begin{theorem}\label{thm:criterion for representablity}
	Let $\Xcal$ be an \'{e}tale stack on $\Aff_K^{\op}$.
	Assume the following:
	\begin{enumerate}
		\item The stack $\Xcal$ is $\Fin$-homogeneous.
		\item The diagonal morphism $\Delta \colon \Xcal\to \Xcal\times \Xcal$ is representable by quasi-analytic spaces.
		\item The stack $\Xcal$ has coherent deformations and obstructions.
		\item For any morphism $\Spa R\to \Xcal$ from an affinoid rigid space $\Spa R$ and any (not necessarily classical) point $x\in \Spa R$, there exist the following data:
		\begin{itemize}
			\item A smooth affinoid rigid space $\Spa A$ over $K$ with a point $y\in \Spa A$, and a morphism $\Spa A\to \Spa R$ with $y\mapsto x$ and $k(y)\cong k(x)$.
			Moreover, the natural morphism $A\to k(y)$ is injective.
			\item A morphism $f\colon \Spa B \to \Spa A$ with a section $i\colon \Spa A\to \Spa B$.
			\item A morphism $g\colon \Spa B\to \Xcal$ fitting into a commutative diagram
			\[
			\xymatrix{
				\Spa A \ar[r] \ar[d]_{i} &\Spa R \ar[d]\\
				\Spa B\ar[r]^g & \Xcal.
			}
			\]
			Moreover, for any classical point $t\in (\Spa A)^{\cl}$, the induced morphism
			\[g\colon f^{-1}(t)\to \Xcal,\] where $f^{-1}(t)=\Spa (B\otimes_A k(t))$, is formally versal at $i(t)$.
		\end{itemize}
	\end{enumerate}
	Then $\Xcal$ is a rigid analytic Artin stack.
\end{theorem}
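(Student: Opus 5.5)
Since the diagonal is already representable by quasi-analytic spaces by (2), it remains to produce a smooth surjection from a rigid space. The plan is to take the disjoint union of suitable open pieces of the spaces $\Spa B$ furnished by (4), as $\Spa R\to\Xcal$ and $x$ range over all data. By Proposition \ref{prop:formally smooth implies smooth}, a morphism from a rigid space representable by quasi-analytic spaces is smooth iff it is formally smooth. By Corollary \ref{cor:formally versal iff formally smooth}, which uses $\Fin$-homogeneity, formal smoothness at a classical point is the same as formal versality there. So it suffices to show two things. First, each $g\colon \Spa B\to\Xcal$ is formally versal on a Zariski open $V\subset \Spa B$. Second, the images $g(V)$ cover every point of every $\Spa R$, i.e.\ the pullback of $\bigsqcup V\to \Xcal$ to any affinoid $\Spa R$ is surjective, using that smooth maps are open (\cite[Proposition 1.7.8]{Hub96}).

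\textbf{Openness.} Theorem \ref{thm:openness of versality} applied to $g$ gives a Zariski open $V\subset\Spa B$ whose classical points are exactly those where $g$ is formally versal. Condition (2) and Proposition \ref{prop:formally smooth implies smooth} then show $g|_V$ is smooth.

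\textbf{Versality along $i$.} This is the main obstacle: showing that $i(y')\in V$ for enough points, so that $V$ contains a neighborhood of $i(y)$. The hypothesis only gives fiberwise versality of $g|_{f^{-1}(t)}$ at $i(t)$ for classical $t\in\Spa A$. I would upgrade this to versality of $g$ on $\Spa B$ at $i(t)$ via the $\Exan$ criterion (Proposition \ref{prop:formally versal Exan}). We need $\Exan_\Xcal(B,k(i(t)))=0$, knowing $\Exan_\Xcal(B_t,k(i(t)))=0$ with $B_t=B\otimes_A k(t)$. Given an $\Xcal$-extension of finite local algebras over $B$ at $i(t)$, first lift the map to $\Spa A$. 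This is possible because $\Spa A$ is smooth over $K$, hence formally smooth (Theorem \ref{thm:formally smooth implies smooth rigid space}). After composing with $i\circ f$, we may then assume the extension lives over $k(t)$-points by Hensel (Lemma \ref{lem:Hensel's lemma}, assuming $K$ perfect, e.g.\ characteristic $0$ as in the application; otherwise the relevant residue fields are handled directly). To reduce to the fiber, I would use the exact sequence of Lemma \ref{lem:Def Exan exact sequence} comparing $\Exan_\Xcal$, $\Exan$, and deformations. The defect between $\Spa B$ and the fiber $f^{-1}(t)$ is controlled by $\Exan(A,-)$ and $\Der(A,-)$, and these vanish, respectively surject, since $A$ is smooth. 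Concretely: lift the $\Spa B$-extension's image in $\Spa A$ using smoothness of $A$. Then use the section $i$ and the fibre product $\Spa B\times_{\Spa A}$ (with $\Fin$-homogeneity to glue) to put the problem inside $f^{-1}(t)$, and solve it there. Hence every classical point of $i(\Spa A)$ lies in $V$, so $i^{-1}(V)$ is a Zariski open of $\Spa A$ containing all classical points, hence $i^{-1}(V)=\Spa A$.

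\textbf{Surjectivity.} For arbitrary (non-classical) $x\in\Spa R$, the point $y$ maps to $x$ and $i(y)\in V$. Hence $x$ lies in the image of $V\times_\Xcal \Spa R\to\Spa R$, via the point $(i(y),y)$ coming from the commutative square. The conditions $k(y)\cong k(x)$ and the injectivity $A\hookrightarrow k(y)$ are used to transport $x$ to a genuine point of this fibre product, which is a quasi-analytic space by (2). The injectivity lets Zariski-open statements on $\Spa A$ (which hold at all classical points) be tested at $y$: a nonzero function on $A$ does not vanish at $y$. Taking $U=\bigsqcup V$ over all such data gives a smooth surjection $U\twoheadrightarrow\Xcal$, completing the proof. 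I expect the versality-along-the-section step to be the delicate part.
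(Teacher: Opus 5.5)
Your overall architecture (openness of versality, formally versal $\Rightarrow$ smooth, a disjoint union of open pieces) matches the paper. The step you flag as delicate, however, is where the argument breaks.

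\textbf{The gap.} You claim that $g$ is formally versal at $i(t)$ for \emph{every} classical $t\in\Spa A$. This is false in general, because nothing forces $f$ to be flat. Take $K=\Qbb_p$, $\Xcal=\Spa K$, $A=K\langle s\rangle$, $B=A\langle X\rangle/(sX)$, $i$ given by $X=0$, and $y$ the Gauss point, with $\Spa R=\Spa A$ and $x=y$.
\begin{itemize}
\item This is a legitimate instance of the data in (4). Every fiber $f^{-1}(t)$ is either $\Spa k(t)$ or $\Spa K\langle X\rangle$, hence smooth.
\item Yet $\Spa B$ has a node at $i(0)$, and $g$ is not formally versal there. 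The map $B\to K[u,w]/(u^2,uw,w^2)$, $s\mapsto u$, $X\mapsto w$, has no lift to $K[u,w]/(u^2,w^2)$, since any lift sends $sX$ to $uw\neq 0$.
\end{itemize}
So $i^{-1}(V)=\{s\neq 0\}$, not all of $\Spa A$.

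The mechanism you sketch cannot work. An $\Xcal$-extension at $i(t)$ in $\Spa B$ lives over an infinitesimal neighbourhood of $t$, not over the reduced point $t$. Hence it does not factor through $f^{-1}(t)=\Spa(B\otimes_A k(t))$. Lifting its image in $\Spa A$ using smoothness of $A$ does not change this. Comparing $\Exan_{\Xcal}(B,-)$ with $\Exan_{\Xcal}(B\otimes_A k(t),-)$ would require base change for the cotangent complex of $B$ over $A$, i.e.\ flatness. The appeal to the section $i$, the fiber product and $\Fin$-homogeneity does not supply this. Separately, your detour through Lemma \ref{lem:Hensel's lemma} imports a perfectness hypothesis that is not in the statement.

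\textbf{The missing ideas.}
\begin{enumerate}
\item You only need $i^{-1}(V)\neq\emptyset$. Since $A$ is reduced and $A\to k(y)$ is injective, $y$ lies in every non-empty Zariski open of $\Spa A$. So you may shrink $\Spa A$ to any non-empty affinoid open, and then use generic flatness \cite[Theorem 2.21]{BhattHansen22} to make $f$ flat.
\item With $f$ flat, prove smoothness at $i(t)$ via the fibral criterion, which requires rigid-space targets. The paper does this as follows.
\begin{itemize}
\item It first settles classical $x$. There $\Spa A=\Spa k(x)$, so $f^{-1}(x)=\Spa B$ and the hypothesis applies directly.
\item This yields a smooth chart $\Spa R_1\to\Xcal$ through $g(i(t))$.
\item On an \'{e}tale cover $S$ of $\Spa B\times_{\Xcal}\Spa R_1$, the fiber $(f\circ h)^{-1}(t)\to\Spa R_1$ is smooth at $s$, by Lemma \ref{lem:formally smooth basechange} and Corollary \ref{cor:formally versal iff formally smooth}.
\item Flatness of $f\circ h$ and smoothness of $\Spa A$ then give smoothness of $S\to\Spa R_1$ at $s$, by Corollary \ref{cor:smooth fiber and base}.
\item Finally, Corollary \ref{cor:formally versal smooth local on the source} transfers versality back to $g$ at $i(t)$.
\end{itemize}
\end{enumerate}
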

\begin{proof}
	By Theorem \ref{thm:openness of versality}, we can take the largest Zariski open subspace $U\subset \Spa B$ such that $g\vert_U \colon U\to \Xcal$ is formally versal.

	\begin{description}
		\item[Claim] $i(y)\in U$.
	\end{description}
	
	Assuming the claim for the moment, we complete the proof.
	By Corollary \ref{cor:formally versal iff formally smooth} and Proposition \ref{prop:formally smooth implies smooth}, $U\to \Xcal$ is smooth.
	Let $X$ be the disjoint union of such spaces $U$ obtained as above by varying the morphisms $\Spa R\to \Xcal$ and the points $x\in \Spa R$ (note that the index set is small up to isomorphism).
	We need to show that $X\to \Xcal$ is smooth and surjective.
	It suffices to show that for any $\Spa R\to \Xcal$ and any $x\in \Spa R$, the image of $U\times_{\Xcal}\Spa R \subset X\times_{\Xcal} \Spa R\to \Spa R$ contains $x$.
	We have the following commutative diagram:
	\[
	\xymatrix{
	i^{-1}(U)\ar[d] & \\
	U\times_{\Xcal} \Spa R \ar[r]\ar[d] &\Spa R\ar[d]\\
	U\ar[r] & \Xcal.}
	\]
	By the claim, the image of $i^{-1}(U)\to U\times_{\Xcal} \Spa R \to \Spa R$ contains $x$.

	We now prove the claim.
	First, we assume that $x$ is a classical point.
	In this case, we have $\Spa A=\Spa k(x)$.
	Therefore, we find that $g\colon \Spa B=f^{-1}(x)\to \Xcal$ is formally versal at $i(x)$.
	% Hence, we get the claim by Proposition \ref{prop:formally smooth implies smooth}.

	Next, we treat the case of a general $x$.
	Since $i^{-1}(U)$ is a Zariski open subspace of $\Spa A$ and $A\to k(x)$ is injective, it suffices to show that $i^{-1}(U)$ is non-empty.
	In particular, we may replace $\Spa A$ with any non-empty affinoid open subspace (which does not necessarily contain $x$).
	By \cite[Theorem 2.21]{BhattHansen22}, we may assume that $f$ is flat after shrinking $\Spa A$ (as $A$ is smooth, hence geometrically reduced).

	We prove that for any classical point $t\in (\Spa A)^{\cl}$, $g$ is smooth at $i(t)$.
	We already know that there exists a smooth morphism $\Spa R_1\to \Xcal$ whose image contains the image of $t$ under $\Spa A\to \Xcal$.
	Let $S\twoheadrightarrow \Spa B\times_{\Xcal} \Spa R_1$ be an \'{e}tale surjection from a rigid space $S$.
	Then we have the following diagram:
	\[
	\xymatrix{
		S\ar[r]\ar[rd]_-{h}&\Spa B\times_{\Xcal} \Spa R_1 \ar[r]\ar[d]& \Spa R_1\ar[d]\\
		&\Spa B\ar[r]^g & \Xcal.
	}
	\]
	Let $h\colon S\to \Spa B$ be the composition of $S\to \Spa B\times_{\Xcal} \Spa R_1$ and $\Spa B\times_{\Xcal} \Spa R_1\to \Spa B$.
	Since $\Spa R_1\to \Xcal$ is smooth, $\Spa B\times_{\Xcal} \Spa R_1\to \Spa B$ is also smooth.
	Therefore, $h\colon S\to \Spa B$ is also smooth as $S\to \Spa B\times_{\Xcal} \Spa R_1$ is \'{e}tale.
	By construction, there is a classical point $s\in S^{\cl}$ such that $h(s)=i(t)$.
	By Corollary \ref{cor:formally versal smooth local on the source}, it suffices to show that the composition $S\xrightarrow{h}\Spa B\to \Xcal$ is formally versal at $s$.
	Since $\Spa R_1\to \Xcal$ is smooth (and hence formally versal by Proposition \ref{prop:formally smooth implies smooth}), it is enough to show that the composition 
	\[S\to \Spa B\times_{\Xcal} \Spa R_1\to \Spa R_1\]
	is formally versal at $s$.
	Since $h\colon S\to \Spa B$ is smooth, it is also flat.
	Hence, $f\circ h\colon S\to \Spa A$ is also flat.
	By applying Lemma \ref{lem:formally smooth basechange} to the following diagram
	\[
	\xymatrix{
		(f\circ h)^{-1}(t)\ar[r]& f^{-1}(t)\times_{\Xcal} \Spa R_1 \ar[r]\ar[d]& \Spa R_1\ar[d]\\
		&f^{-1}(t)\ar[r]^g & \Xcal,
	}
	\]
	the morphism $(f\circ h)^{-1}(t)\to \Spa R_1$ is formally smooth (and hence smooth by Theorem \ref{thm:formally smooth implies smooth rigid space}) at $s$.
	Therefore, by Corollary \ref{cor:smooth fiber and base}, the composition \[S\to \Spa B\times_{\Xcal} \Spa R_1\to \Spa R_1\]
	is smooth (and thus formally versal) at $s$.
\end{proof}

%%%%%%%%%%%%%%%%%%%%%%%%%%%%%%%%%%%%%%%%%%%%%%%%%%%%%%%%%%%%%%%%%%%%%%%%%%%%%

\section{Representability of the analytic Emerton-Gee stack}
Throughout this section, ``$\Gamma$-equivariant vector bundle/torsor'' means ``continuous $\Gamma$-equivariant vector bundle/torsor'' for a profinite group $\Gamma$.
\subsection{Representability for $\GL_n$}

Let $K/\Qbb_p$ be a finite extension, and let $\Cbb_p$ be the completion of an algebraic closure of $K$.
Let $G_K$ be the absolute Galois group of $K$.
We fix a pseudo-uniformizer $p^{\flat}=(p,p^{1/p},\ldots)\in \Ocal_{\Cbb_p}^{\flat}$.
Let $Y_{\Cbb_p}=\Spa W(\Ocal_{\Cbb_p}^{\flat})\setminus \{p[p^{\flat}]=0\}$ and let $X_{\Cbb_p}=Y_{\Cbb_p}/\varphi^{\Zbb}$ be the Fargues-Fontaine curve.
By \cite[Theorem 3.2]{Ked16}, $X_{\Cbb_p}$ is a locally noetherian analytic adic space.
In particular, for any $A\in \Aff_{\Qbb_p}$, the adic space $X_{\Cbb_p}\times \Spa A$ is well-defined.
We set $X_{\Cbb_p,A}=X_{\Cbb_p}\times \Spa A$.

\begin{definition}
	For a positive integer $n$, let $\LS_{\GL_n(K),\FF}$ denote the \'{e}tale stack 
	\[\Aff_{\Qbb_p}\ni A\mapsto \{\text{$G_K$-equivariant vector bundles on $X_{\Cbb_p,A}$ of rank $n$}\},\]
	which we refer to as the \textit{analytic Emerton--Gee stack}. 
\end{definition}

For deformation-theoretic purposes, it is convenient to use the following
variant.
Let $K_{\infty}$ be the cyclotomic $\Zbb_p$-extension of $K$,\footnote{In \cite{EGH23, Mikami24}, the extension $K(\mu_{p^{\infty}})$ and the group $\Gal(K(\mu_{p^{\infty}})/K)$ are used. In this section, we use the cyclotomic $\Zbb_p$-extension because $\Gamma_K$ always admits a topological generator.} and set $\Gamma_K=\Gal(K_{\infty}/K)$.
Let $\gamma\in \Gamma_K$ be a topological generator of $\Gamma_K$.
We fix a pseudo-uniformizer $\pi^{\flat}\in \Ocal_{\hat{K}_{\infty}}^{\flat}$ such that $\pi^{\flat\sharp}/p\in \Ocal_{\hat{K}_{\infty}}^{\times}$.
Let $Y_{K_{\infty}}=\Spa W(\Ocal_{\hat{K}_{\infty}}^{\flat})\setminus \{p[\pi^{\flat}]=0\}$ and let $X_{K_{\infty}}=Y_{K_{\infty}}/\varphi^{\Zbb}$ be the Fargues-Fontaine curve.
By \cite[Theorem 3.2]{Ked16}, $X_{K_{\infty}}$ is also a locally noetherian analytic adic space.
In particular, for any $A\in \Aff_{\Qbb_p}$, the adic space $X_{K_{\infty}}\times \Spa A$ is well-defined.
We set $X_{K_{\infty},A}=X_{K_{\infty}}\times \Spa A$.
By \cite[Theorems 3.12 and 3.35]{Mikami24} or \cite[the discussion after Theorem 5.1.5]{EGH23}, the following categories are canonically equivalent as tensor categories:
\begin{enumerate}
	\item The category of $G_K$-equivariant vector bundles on $X_{\Cbb_p,A}$.
	\item The category of $\Gamma_K$-equivariant vector bundles on $X_{K_{\infty},A}$.
	\item The category of $(\varphi,\Gal(K(\mu_{p^{\infty}})/K))$-modules over the Robba ring $\Rcal_A$ defined in \cite[Definition 2.2.12]{KPX14}.
\end{enumerate}

In this section, we regard $\LS_{\GL_n(K),\FF}$ as the \'{e}tale stack 
\[\Aff_{\Qbb_p}\ni A\mapsto \{\text{$\Gamma_K$-equivariant vector bundles on $X_{K_{\infty},A}$ of rank $n$}\},\]
and in the next section, we regard it as the \'{e}tale stack 
\[\Aff_{\Qbb_p}\ni A\mapsto \{\text{$G_K$-equivariant vector bundles on $X_{\Cbb_p,A}$ of rank $n$}\}.\]

The goal of this subsection is to prove that $\LS_{\GL_n(K),\FF}$ is a rigid analytic Artin stack.
For $[r,s]\subset (0,\infty)$ such that $r,s\in \Qbb$, we set 
\begin{align*}
	\tilde{B}_{K_{\infty}}^{[r,s]}&=W(\Ocal_{\hat{K}_{\infty}}^{\flat})\left\langle \frac{p^s}{[\pi^{\flat}]},\frac{[\pi^{\flat}]}{p^r}\right\rangle[1/p].
\end{align*}  
This ring admits a natural $\Gamma_K$-action, and there is a Frobenius automorphism
\[\varphi \colon \tilde{B}_{K_{\infty}}^{[pr,ps]}\xrightarrow{\sim} \tilde{B}_{K_{\infty}}^{[r,s]}.\]
Note that the Fargues-Fontaine curve $X_{K_{\infty}}$ is obtained from $\Spa \tilde{B}_{K_{\infty}}^{[1,p]}$ by gluing $\Spa \tilde{B}_{K_{\infty}}^{[1,1]}$ and $\Spa \tilde{B}_{K_{\infty}}^{[p,p]}$ along $\varphi$.
Let $I=[r,s]\subset (0,\infty)$ such that $r,s\in \Qbb$.
For an affinoid $\Qbb_p$-algebra $A$, we set 
\[\tilde{B}_{K_{\infty},A}^{I}=\tilde{B}_{K_{\infty}}^{I}\hotimes_{\Qbb_p}A.\]
For a $\Gamma_K$-equivariant vector bundle $V$ on $X_{K_{\infty},A}$, let $V^I$ denote the pullback of $V$ under the morphism $\Spa (\tilde{B}_{K_{\infty},A}^{I}) \to X_{K_{\infty},A}$, and we regard $V^I$ as a finite projective $\tilde{B}_{K_{\infty},A}^{I}$-module with a continuous $\Gamma_K$-action.
By construction, there is a $\Gamma_K$-equivariant $\varphi$-semilinear isomorphism
\[\varphi\colon V^{[p,p]}\xrightarrow{\sim} V^{[1,1]}.\]
Conversely, a pair $(V^{[1,p]},\varphi)$ consisting of
\begin{itemize}
	\item a finite projective $\tilde{B}_{K_{\infty},A}^{[1,p]}$-module $V^{[1,p]}$ with a continuous semilinear $\Gamma_K$-action, and
	\item a $\Gamma_K$-equivariant $\varphi$-semilinear isomorphism
	\[\varphi\colon V^{[1,p]}\otimes_{\tilde{B}_{K_{\infty},A}^{[1,p]}}\tilde{B}_{K_{\infty},A}^{[p,p]}\xrightarrow{\sim}V^{[1,p]}\otimes_{\tilde{B}_{K_{\infty},A}^{[1,p]}}\tilde{B}_{K_{\infty},A}^{[1,1]}\]
\end{itemize}
defines a $\Gamma_K$-equivariant vector bundle $V$ on $X_{K_{\infty},A}$.

\begin{example}
	Let $V$ be a $\Gamma_K$-equivariant vector bundle on $X_{K_{\infty},A}$. Then $\underline{\End}(V)$ also admits a natural $\Gamma_K$-equivariant structure.
	Let us explain the description of the corresponding pair $(\End(V)^{[1,p]},\varphi)$.
	First, we have
	\[\underline{\End}(V)^{I}=\Hom_{\tilde{B}_{K_{\infty},A}^{I}}(V^{I},V^{I}).\]
	The $\Gamma_K$-action is given by
	\begin{align*}
		\gamma'\colon \Hom(V^{[1,p]},V^{[1,p]})\to \Hom(V^{[1,p]},V^{[1,p]}) ;\; f\mapsto \gamma'\circ f\circ\gamma'^{-1}
	\end{align*}
	for each $\gamma'\in \Gamma_K$.
	The Frobenius is given by
	\begin{align*}
		\varphi\colon \Hom(V^{[p,p]},V^{[p,p]})\to \Hom(V^{[1,1]},V^{[1,1]}) ;\; f\mapsto \varphi\circ f\circ\varphi^{-1}.
	\end{align*}
\end{example}

Let us check the four conditions in Theorem \ref{thm:criterion for representablity}.

\begin{lemma}\label{lem:EG stack Fin-homogeneous}
	The \'{e}tale stack $\LS_{\GL_n(K),\FF}$ is $\Fin$-homogeneous.
\end{lemma}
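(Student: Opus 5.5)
Let $\Spa A\to\Spa B$ be a nilpotent closed immersion and $\Spa A\to\Spa C$ a finite morphism, and set $D=B\times_A C$; by the lemma at the start of Section 3 (a special case of \cite[Proposition 2.10.4]{War17}), $D$ is an affinoid $\Qbb_p$-algebra. The plan is to reduce the statement to the classical Milnor patching theorem for finite projective modules over a fibre product of rings, applied to the rings $\tilde{B}_{K_{\infty},(-)}^{I}$ for $I=[1,p],[1,1],[p,p]$. The first step is the identification
\[\tilde{B}^{I}_{K_{\infty},D}\xrightarrow{\sim}\tilde{B}^{I}_{K_{\infty},B}\times_{\tilde{B}^{I}_{K_{\infty},A}}\tilde{B}^{I}_{K_{\infty},C},\]
and the surjectivity of $\tilde{B}^{I}_{K_{\infty},B}\to\tilde{B}^{I}_{K_{\infty},A}$. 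Since $B\to A$ is surjective with nilpotent kernel $J$ and $0\to D\to B\oplus C\to A\to 0$ is a strictly exact sequence of Banach $\Qbb_p$-modules (open mapping theorem, $A$ being finite over $C$ and a quotient of $B$), completed tensor product with the Banach space $\tilde{B}^I_{K_\infty}$ over $\Qbb_p$ preserves strict exactness: over $\Qbb_p$ every Banach space is topologically orthonormalizable up to equivalent norm, so $\tilde{B}^I_{K_\infty}\hotimes_{\Qbb_p}-$ is exact on strict exact sequences. Also the kernel $\tilde{B}^I_{K_\infty}\hotimes J$ of the map on the $B$-side is nilpotent.

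Given this, Milnor's theorem (with the $B$-side surjective) gives an equivalence between finite projective $\tilde{B}^I_{K_\infty,D}$-modules and triples $(P_B,P_C,P_B\otimes\tilde{B}^I_{A}\simeq P_C\otimes\tilde{B}^I_{A})$. I then check that the extra structures glue: the semilinear $\Gamma_K$-action and the $\varphi$-isomorphism $V^{[p,p]}\simeq V^{[1,1]}$ on the glued module are obtained by taking fibre products of the given ones, since the glued module is literally $P_B\times_{P_A}P_C$ and all structure maps are compatible on $P_A$. Continuity of the $\Gamma_K$-action passes to the fibre product because $P_D\hookrightarrow P_B\oplus P_C$ is a closed embedding with the subspace topology (again by strictness). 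Full faithfulness follows from the same description of $\Hom$'s as fibre products (apply the equivalence to $\underline{\End}$ or to $\underline{\Hom}(V,W)$, which are again $\Gamma_K$-equivariant bundles). Compatibility with base change along $D\to B$, $D\to C$ is formal, giving that $\LS_{\GL_n(K),\FF}(D)\to\LS_{\GL_n(K),\FF}(B)\times_{\LS_{\GL_n(K),\FF}(A)}\LS_{\GL_n(K),\FF}(C)$ is an equivalence of groupoids; the rank is clearly preserved.

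The main obstacle I expect is the first step: the topological statement that completed tensor product with $\tilde{B}^{I}_{K_{\infty}}$ commutes with this fibre product and that the resulting ring map is an isomorphism of topological rings (not merely algebraic), including that the completed tensor product over $\Qbb_p$ of the non-noetherian ring $\tilde{B}^I_{K_\infty}$ with $D$ behaves well. I would handle it by choosing an orthonormal basis of $\tilde{B}^I_{K_\infty}$ over $\Qbb_p$ (possible after rescaling the norm, $\Qbb_p$ being discretely valued), so that $\tilde{B}^I_{K_\infty}\hotimes M\cong c_0(S,M)$ functorially in Banach modules $M$, and $c_0(S,-)$ visibly preserves strictly exact sequences.
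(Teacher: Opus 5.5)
Your proposal is correct and is essentially the paper's argument. The paper proves this lemma in one line by appealing to the description of $\Gamma_K$-equivariant bundles as pairs $(V^{[1,p]},\varphi)$. Your Milnor patching over the rings $\tilde{B}^{I}_{K_{\infty},(-)}$ is exactly the verification that one line leaves implicit, together with your orthonormal-basis argument that $\tilde{B}^{I}_{K_{\infty}}\hotimes_{\Qbb_p}-$ preserves the strictly exact sequence $0\to D\to B\oplus C\to A\to 0$. One small point to tidy: the lemma at the start of Section 3 is stated only for square-zero immersions. For a nilpotent immersion, either factor $B\to A$ through the quotients $B/J^k$ and induct, or cite \cite{War17} directly.
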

\begin{proof}
	This easily follows from the description of $\Gamma_K$-equivariant vector bundles in terms of the pairs $(V^{[1,p]},\varphi)$.
\end{proof}

The following lemma states that the Fargues-Fontaine curve $X_{K_{\infty}}$ behaves as a proper rigid space. 

\begin{lemma}\label{lem:FF curve closed map}
	Let $S$ be a rigid space over $\Qbb_p$.
	Then the projection $X_{K_{\infty}}\times S\to S$ is a closed map.
\end{lemma}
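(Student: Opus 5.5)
We want to show that for any rigid space $S$ over $\Qbb_p$, the projection $\pr\colon X_{K_{\infty}}\times S\to S$ is a closed map. The question is local on $S$, so we may assume $S=\Spa A$ is affinoid. The idea is to reduce to the affinoid pieces that cover the curve. By construction, $X_{K_{\infty}}$ is the image of $\Spa \tilde{B}_{K_{\infty}}^{[1,p]}$ under the quotient map $Y_{K_{\infty}}\to X_{K_{\infty}}$, and this map is surjective. So $X_{K_{\infty},A}$ is the continuous image of $Y^{[1,p]}_A:=\Spa \tilde{B}_{K_{\infty},A}^{[1,p]}$. Let $q\colon Y^{[1,p]}_A\to X_{K_{\infty},A}$ be this map.

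Take a closed subset $Z\subset X_{K_{\infty},A}$. Then
\[\pr(Z)=(\pr\circ q)(q^{-1}(Z)).\]
It therefore suffices to show that $Y^{[1,p]}_A\to \Spa A$ is a closed map. The map $q$ is continuous, so $q^{-1}(Z)$ is closed, and its image is closed once we know this.

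To prove that $\Spa(\tilde{B}^{[1,p]}_{K_{\infty}}\hotimes_{\Qbb_p}A)\to \Spa A$ is closed, I would use the standard criterion for spectral maps between spectral spaces. Such a map is closed if it is quasi-compact and specializing, that is, if specializations lift along it.

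\emph{Quasi-compactness and spectrality.} Both spaces are affinoid adic spaces, hence spectral, and the morphism of affinoids is spectral. So the image of a closed (hence pro-constructible) subset is pro-constructible. A pro-constructible subset is closed if and only if it is stable under specialization.

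\emph{Lifting specializations.} The key input is that $\tilde{B}^{[1,p]}_{K_{\infty}}$ is an affinoid over $\Qbb_p$ that behaves like a proper (partially proper) object relative to the base. For every point $y\in Y^{[1,p]}_A$ with image $x$, and every specialization $x\rightsquigarrow x'$ in $\Spa A$, we need a point $y'$ specializing $y$ and mapping to $x'$. In analytic adic spaces, specializations are vertical, coming from valuation rings $k(x)^+\supset k(x')^+$ with the same completed residue field. Lifting $y$ therefore amounts to extending the valuation on $k(y)$ to one compatible with the smaller ring $k(x')^+$. That is possible precisely because the structure map $\Qbb_p\to\tilde{B}^{[1,p]}_{K_{\infty}}$ has $+$-ring equal to the integral closure of $\Zbb_p$ plus power-bounded elements. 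Concretely, the map is ``$+$-proper'': $(\tilde{B}^{[1,p]}_{K_{\infty}})^+$ is the full ring of power-bounded elements, so no additional integrality condition constrains the valuations. I would argue this through the valuative description: a point of the fiber product corresponds to a valuation on $k(y)$ whose valuation ring contains the images of $\tilde B^{[1,p],\circ}$ and $A^+$. The new valuation ring is obtained by composing the rank-one valuation of $k(y)$ with a valuation ring of the residue field extending $k(x')^+/\mathfrak m$. Its required containments hold since $\tilde B^{[1,p],\circ}$ maps into the rank-one part. This is the analogue of the argument that $\Spa(C,\Ocal_C)\times S\to S$ is partially proper.

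I expect the main obstacle to be exactly this specialization-lifting step. The completed tensor product is not topologically of finite type, so Huber's results on partially proper morphisms do not apply directly. I would try first to reduce to a perfectoid setting: after base change to $\hat K_\infty$ and passing to tilts, $\tilde B^{[1,p]}$ is a rational localization of a perfectoid space. There one can invoke the known fact that $Y_{S}\to S$ is partially proper for the Fargues–Fontaine construction (e.g. via diamonds, $\mathrm{Spd}$ of $Y_{(0,\infty)}\times S\to S$ is partially proper and taking $[1,p]$-pieces is a quasi-compact closed condition). Alternatively, I would write $\tilde B^{[1,p]}_{K_\infty}$ as a completed filtered colimit of $\Qbb_p$-affinoid algebras $B_j$, each over which $\Spa(B_j\hotimes A)\to\Spa A$ is closed by properness of the relevant closed discs. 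The spectral space $Y^{[1,p]}_A$ is the inverse limit of the $\Spa(B_j\hotimes A)$, and a limit of closed maps of spectral spaces with spectral transition maps is closed.
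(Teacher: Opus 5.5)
\textbf{There is a genuine gap.} The proof reduces to a false statement. The step ``it therefore suffices to show that $Y^{[1,p]}_A\to \Spa A$ is a closed map'' cannot work, because $\Spa\tilde{B}^{[1,p]}_{K_{\infty},A}\to\Spa A$ is in general \emph{not} closed.

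\emph{Why the heuristic fails.} An affinoid annulus is not partially proper over the base. Your valuative reasoning also runs the wrong way: taking the $+$-ring to be the full ring of power-bounded elements imposes the \emph{most} integrality conditions, not the fewest. A specialization of $y$ is cut out by a valuation ring $\bar W$ of the residue field $\kappa(y)$ of $k(y)^{\circ}$. To stay inside $\Spa\tilde{B}^{[1,p]}_{K_{\infty},A}$, $\bar W$ must contain the residues of \emph{all} power-bounded elements. At boundary points, where $[\pi^{\flat}]/p$ or $p^p/[\pi^{\flat}]$ has absolute value $1$, these residues are nonzero and can contradict the prescribed specialization downstairs. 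Specializations lift automatically only for the smallest $+$-ring, i.e.\ the one integral over the image of $A^{\circ}$ plus topologically nilpotent elements.

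\emph{A concrete counterexample.} Take $A=\Qbb_p\langle S\rangle$ and the Zariski closed graph of $S=[\pi^{\flat}]/p$.
\begin{itemize}
\item Every point of the annulus satisfies $|p^p|\le|[\pi^{\flat}]|$, so the image of the graph lies in $\{|p|^{p-1}\le|S|\}$.
\item The image contains the Gauss point of radius $|p|^{p-1}$. It is the image of the Gauss point of the boundary circle $|[\pi^{\flat}]|=|p|^p$, where the residue of $p^p/[\pi^{\flat}]$ is transcendental over $\mathbb{F}_p$.
\item The specialization of that Gauss point with $|S|$ infinitesimally smaller than $|p|^{p-1}$ is therefore in the closure of the image but not in the image.
\end{itemize}

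\emph{Your alternatives fail for the same reason.} The same phenomenon occurs for discs: $V(pT-S)\subset\Spa\Qbb_p\langle S,T\rangle$ maps isomorphically onto the rational subset $\{|S|\le|p|\}$ of $\Spa\Qbb_p\langle S\rangle$, which is not closed. So the colimit-of-closed-discs route collapses. In the diamond route, $Y^{[1,p]}\subset Y$ is a quasi-compact \emph{open}, not a closed condition.

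\textbf{The missing idea} is that you must be allowed to leave the fixed annulus. This is possible in $X_{K_{\infty},A}$ but not in $\Spa\tilde{B}^{[1,p]}_{K_{\infty},A}$. The paper keeps your spectral-space framework but applies it to $X_{K_{\infty},A}\to\Spa A$ directly. That map is spectral because $X$ is the image of a single affinoid, so it suffices to lift specializations:
\begin{itemize}
\item Start from a point $\Spa(L,L_2^+)\to X_{K_{\infty},A}$ and a specialization $\Spa(L,L_1^+)\to\Spa A$ with $L_1^+\subset L_2^+$.
\item Lift the point to some $\Spa\tilde{B}^{[r,s]}_{K_{\infty},A}$ and choose $0<r'<r\le s<s'$.
\item Since $[r,s]$ lies in the interior of $[r',s']$, the image of $\tilde{B}^{[r',s']+}_{K_{\infty},A}$ in $\tilde{B}^{[r,s]}_{K_{\infty},A}$ lies in the smallest ring of integral elements, and that ring maps into $L_1^+$.
\item Hence the composite $\tilde{B}^{[r',s']}_{K_{\infty},A}\to\tilde{B}^{[r,s]}_{K_{\infty},A}\to L$ defines a point $\Spa(L,L_1^+)\to\Spa\tilde{B}^{[r',s']}_{K_{\infty},A}$. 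Its image in $X_{K_{\infty},A}$ is the required specialization.
\end{itemize}
This step uses that every $Y^{[r',s']}$, not only $Y^{[1,p]}$, maps to $X$; it has no counterpart for the single affinoid.

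If you want to keep your surjection argument, replace $\Spa\tilde{B}^{[1,p]}_{K_{\infty},A}$ by the adic spectrum of $\tilde{B}^{[1,p]}_{K_{\infty},A}$ with its smallest ring of integral elements. By the same observation it maps onto $X_{K_{\infty},A}$ through $\Spa\tilde{B}^{[r',s']}_{K_{\infty},A}$, and it is closed over $\Spa A$.
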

\begin{proof}
	We may assume that $S=\Spa A$ is an affinoid rigid space.
	Since $X_{K_{\infty},A}\to \Spa A$ is a spectral morphism of spectral spaces, it suffices to show that this morphism is specializing.
	Let $K$ be a complete non-archimedean field over $\Qbb_p$, and $K_1^+\subset K_2^+$ be open bounded valuation rings of $K$.
	Consider the following diagram:
	\[
	\xymatrix{
		X_{K_{\infty},A}\ar[r]& \Spa A\\
		\Spa (K,K_2^+) \ar[r] \ar[u]&\Spa (K,K_1^+)\ar[u]\ar@{-->}[lu].
	}
	\]
	It suffices to show that there exists a morphism $\Spa (K,K_1^+)\to X_{K_{\infty},A}$ (represented by the dotted arrow) making the above diagram commute.
	We choose a lift $\Spa (K,K_2^+)\to \Spa (\tilde{B}_{K_{\infty},A}^{[r,s]},\tilde{B}_{K_{\infty},A}^{[r,s]+})$ of $\Spa (K,K_2^+)\to X_{K_{\infty},A}$ for some $[r,s]\subset (0,\infty)$.
	Choose $0<r'<r\leq s <s'$.
	Then the image of $\tilde{B}_{K_{\infty},A}^{[r',s']+}$ under the morphism $\tilde{B}_{K_{\infty},A}^{[r',s']}\to \tilde{B}_{K_{\infty},A}^{[r,s]}$ is contained in the smallest ring of integral elements of $\tilde{B}_{K_{\infty},A}^{[r,s]}$.
	Therefore, the composition $\tilde{B}_{K_{\infty},A}^{[r',s']}\to \tilde{B}_{K_{\infty},A}^{[r,s]}\to K$ induces the morphism $\Spa (K,K_1^+)\to \Spa (\tilde{B}_{K_{\infty},A}^{[r',s']},\tilde{B}_{K_{\infty},A}^{[r',s']+})$.
	Then the composition
	\[\Spa (K,K_1^+)\to \Spa (\tilde{B}_{K_{\infty},A}^{[r',s']},\tilde{B}_{K_{\infty},A}^{[r',s']+})\to X_{K_{\infty},A}\]
	is the desired morphism.
\end{proof}

\begin{definition}
	Let $S$ be a rigid space over $\Qbb_p$, and let $V$ and $W$ be $\Gamma_K$-equivariant vector bundles on $X_{K_{\infty}}\times S$.
	We define an \'{e}tale sheaf $\intHom_{\Gamma_K}(V,W)$ over $S$ by the assignment
	\[(\Spa A\to S) \mapsto \Hom_{X_{K_{\infty},A},\Gamma_K}(V_A,W_A),\]
	where $V_A$ and $W_A$ are the pullbacks of $V$ and $W$ to $X_{K_{\infty},A}$, respectively.
	Let $\underline{\Surj}_{\Gamma_K}(V,W) \subset \intHom_{\Gamma_K}(V,W)$ (resp.\ $\underline{\Isom}_{\Gamma_K}(V,W)\subset \intHom_{\Gamma_K}(V,W)$) denote the \'{e}tale subsheaf consisting of surjective morphisms (resp.\ isomorphisms) $V_A\to W_A$.
\end{definition}

\begin{lemma}[{\cite[Theorems 4.4.3 and 4.4.5]{KPX14}}]\label{lem:finiteness of (phi, Gamma) cohomology}
	Let $A$ be an affinoid $\Qbb_p$-algebra, and let $V$ be a $\Gamma_K$-equivariant vector bundle on $X_{K_{\infty},A}$.
	Then $R\Gamma(\Gamma_K,R\Gamma(X_{K_{\infty},A},V))$ is quasi-isomorphic to a perfect complex of $A$-modules with Tor-amplitude in $[0,2]$.\footnote{For the comparison between $R\Gamma(\Gamma_K,R\Gamma(X_{K_{\infty},A},V))$ and the cohomology of $(\varphi,\Gal(K(\mu_{p^{\infty}})/K))$-modules over the Robba ring $\Rcal_A$, see \cite[Section 4]{Mikami24}.}
	Moreover, for an affinoid $A$-algebra $B$, the natural morphism
	\[R\Gamma(\Gamma_K,R\Gamma(X_{K_{\infty},A},V))\otimes^{\Lbb}_A B\to R\Gamma(\Gamma_K,R\Gamma(X_{K_{\infty},B},V_B))\]
	is an equivalence, where $V_B$ is the pullback of $V$ to $X_{K_{\infty},B}$.
\end{lemma}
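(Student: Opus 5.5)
The plan is to reduce the statement to the finiteness and base change theorems of Kedlaya--Pottharst--Xiao for $(\varphi,\Gamma)$-modules over the relative Robba ring. This uses the comparison, recalled just before the lemma, between $\Gamma_K$-equivariant vector bundles on $X_{K_{\infty},A}$ and $(\varphi,\Gal(K(\mu_{p^\infty})/K))$-modules over $\Rcal_A$. The first step is to make $R\Gamma(X_{K_{\infty},A},V)$ explicit using the cover of $X_{K_{\infty}}$ described above. The space $X_{K_{\infty},A}$ is obtained from $\Spa \tilde{B}^{[1,p]}_{K_{\infty},A}$ by gluing along $\varphi$. Since affinoid pieces have no higher coherent cohomology (Kiehl/Tate acyclicity for these sousperfectoid/relative affinoids), $R\Gamma(X_{K_{\infty},A},V)$ is computed by the two-term complex
\[V^{[1,p]}\xrightarrow{\varphi-1} V^{[1,1]},\]
where $\varphi$ is induced through $V^{[p,p]}\cong V^{[1,1]}$ together with the restriction maps. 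Taking continuous $\Gamma_K$-cohomology, with $\Gamma_K\cong\Zbb_p$ procyclic with generator $\gamma$, gives the Herr-type total complex of
\[V^{[1,p]}\to V^{[1,1]}\oplus V^{[1,p]}\to V^{[1,1]},\]
with differentials built from $\varphi-1$ and $\gamma-1$. This complex is concentrated in degrees $[0,2]$.

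The second step identifies this complex with the $(\varphi,\Gamma)$-cohomology of the corresponding module $D_A$ over $\Rcal_A$. This is the comparison of \cite[Section 4]{Mikami24}. It involves descending from $\tilde{B}$ (perfect period rings) to the imperfect Robba ring by Tate--Sen decompletion, and from the $\Zbb_p$-extension to $K(\mu_{p^\infty})$ by taking invariants under the finite group $\Gal(K(\mu_{p^\infty})/K_\infty)$, which is exact in characteristic $0$. Once the two complexes are identified, \cite[Theorem 4.4.5]{KPX14} says that $C^\bullet_{\varphi,\Gamma}(D_A)$ is a perfect complex of $A$-modules. Its Tor-amplitude lies in $[0,2]$, because it is represented by a complex of (non-finite) flat $A$-modules in degrees $[0,2]$ whose cohomology is finite, so the perfect model has amplitude $[0,2]$. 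This can be checked by Tor against residue fields.

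The third step is base change, via \cite[Theorem 4.4.3]{KPX14}. Since $\Rcal_B=\Rcal_A\hotimes_A B$, the Herr complex for $D_B$ is the completed base change of that for $D_A$. For a perfect complex, $\otimes^{\Lbb}_A B$ agrees with completed tensor on the terms. To see this, use a finite flat-resolution argument: reduce to $B$ of the form $A\langle T\rangle/I$, handle $A\langle T\rangle$ directly since it is flat, then resolve $I$. This is exactly what KPX prove, so I would cite it.

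The main obstacle is the comparison step. One must check that the cohomology on the perfectoid curve $X_{K_{\infty},A}$ with continuous $\Gamma_K$-action matches the imperfect Robba ring cohomology, uniformly in the affinoid $A$ and compatibly with base change. For this I would rely on \cite[Theorems 3.12, 3.35 and Section 4]{Mikami24} rather than reprove decompletion.
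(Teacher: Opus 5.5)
Your proposal is correct and follows essentially the same route as the paper, which establishes the lemma purely by citation: it applies \cite[Theorems 4.4.3 and 4.4.5]{KPX14} after identifying the Herr complex on $X_{K_{\infty},A}$ with the $(\varphi,\Gamma)$-cohomology of the associated module over $\Rcal_A$ via \cite[Section 4]{Mikami24}, and your use of invariants under the finite group $\Gal(K(\mu_{p^{\infty}})/K_{\infty})$ is the right way to bridge the two conventions. One small inaccuracy: in your base-change sketch the issue for $A\langle T\rangle$ is not flatness but comparing the completed tensor product $C\hotimes_A A\langle T\rangle$ with $C\otimes_A A\langle T\rangle$, which needs KPX's finer argument; since you ultimately cite \cite{KPX14} for base change, this does not affect the proof.
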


\begin{lemma}\label{lem:Hom space representable EG}
	Let $S$ be a rigid space over $\Qbb_p$, and let $V$ and $W$ be $\Gamma_K$-equivariant vector bundles on $X_{K_{\infty}}\times S$.
	The \'{e}tale sheaves $\intHom_{\Gamma_K}(V,W)$, $\underline{\Surj}_{\Gamma_K}(V,W)$, and $\underline{\Isom}_{\Gamma_K}(V,W)$ are representable by rigid spaces.
\end{lemma}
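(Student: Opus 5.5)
The plan is to reduce everything to the representability of $\intHom_{\Gamma_K}(V,W)$. The other two sheaves will then be open subspaces of it, via Lemma \ref{lem:FF curve closed map}. The question is local on $S$, so we may assume $S=\Spa A$ is affinoid. Set $E=\underline{\Hom}(V,W)=V^\vee\otimes W$; this is a $\Gamma_K$-equivariant vector bundle on $X_{K_\infty,A}$. For any affinoid $A$-algebra $B$ we have
\[\Hom_{X_{K_\infty,B},\Gamma_K}(V_B,W_B)=H^0(\Gamma_K,H^0(X_{K_\infty,B},E_B))=H^0\bigl(R\Gamma(\Gamma_K,R\Gamma(X_{K_\infty,B},E_B))\bigr),\]
using that $\Gamma_K$-invariants are the degree $0$ part of $R\Gamma(\Gamma_K,-)$ and that $R\Gamma(X,-)$ is concentrated in degrees $\geq 0$.

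By Lemma \ref{lem:finiteness of (phi, Gamma) cohomology}, the complex $C=R\Gamma(\Gamma_K,R\Gamma(X_{K_\infty,A},E))$ is represented by a perfect complex with Tor-amplitude in $[0,2]$, and its formation commutes with derived base change. Locally on $\Spa A$, which is allowed by the sheaf property, we may therefore represent $C$ by a complex of finite free modules $P^0\xrightarrow{d^0}P^1\xrightarrow{d^1}P^2$. Base change then gives
\[\intHom_{\Gamma_K}(V,W)(B)=H^0(C\otimes^{\Lbb}_AB)=\Ker(d^0\otimes B\colon P^0\otimes_AB\to P^1\otimes_AB),\]
and this identification is functorial in $B$. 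The right-hand side is represented by the Zariski closed subspace of the relative affine space $\Abb^{r,\an}_A$, with $r=\rank P^0$, cut out by the linear equations $d^0(x)=0$. The local representing spaces are canonical, being determined by the functor, so they glue over a cover of $S$ to give a rigid space.

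Next, $\underline{\Isom}_{\Gamma_K}(V,W)$ is the fiber product of $\intHom_{\Gamma_K}(V,W)\times\intHom_{\Gamma_K}(W,V)$ with the closed conditions $g\circ f=\id$ and $f\circ g=\id$. These conditions are closed because they equate two sections of a rigid space over the base, $\intHom_{\Gamma_K}(V,V)$ and $\intHom_{\Gamma_K}(W,W)$ respectively, and such a locus is the pullback of a diagonal, which is Zariski closed here since the spaces are built as closed subspaces of affine spaces. Hence $\underline{\Isom}_{\Gamma_K}(V,W)$ is representable.

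For $\underline{\Surj}_{\Gamma_K}(V,W)$, let $T=\intHom_{\Gamma_K}(V,W)$ and let $f^{\mathrm{univ}}\colon V_T\to W_T$ be the universal morphism. Let $Z\subset X_{K_\infty}\times T$ be the support of $\Coker(f^{\mathrm{univ}})$. This is a Zariski closed subset, being cut out by a Fitting ideal, so it is closed. By Lemma \ref{lem:FF curve closed map}, its image $\pr(Z)\subset T$ is closed, and we set $U=T\setminus\pr(Z)$. We then need that for a test affinoid $B\to T$, the pulled-back map $V_B\to W_B$ is surjective if and only if $\Spa B$ lands in $U$. This follows from Nakayama's lemma together with the fact that the formation of the cokernel, and hence of its support, commutes with base change by right exactness. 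Thus $\underline{\Surj}_{\Gamma_K}(V,W)=U$ is an open subspace.

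The main obstacle is the surjectivity locus. There are two points to handle there. The first is to check that surjectivity of a map of vector bundles on $X_{K_\infty,B}$ can be tested pointwise on the adic space, including at non-classical points, so that the cokernel support really detects it. The second is to make sure that $U$ is an open subset that is a rigid space, that is, an admissible open. If an analytic open subset of an adic space is acceptable, this is automatic. The Isom and Hom parts are formal given the finiteness and base change result.
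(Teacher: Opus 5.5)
Your proposal is correct. For $\intHom_{\Gamma_K}(V,W)$ it is the paper's argument. The paper also uses Lemma~\ref{lem:finiteness of (phi, Gamma) cohomology} to write the functor as $B\mapsto\Ker(E_0\otimes_AB\to E_1\otimes_AB)$ with $E_0,E_1$ finite projective, and represents it by $\Vbb(E_0)\times_{\Vbb(E_1),e}\Spa A$.

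For $\underline{\Surj}_{\Gamma_K}(V,W)$ the idea is the same, packaged differently. The paper first reduces to constant ranks, then via $\bigwedge^n$ to $W$ of rank one, twists to $W=\Ocal$, and projects the zero locus of the image ideal using Lemma~\ref{lem:FF curve closed map}. Your $\Fit_0(\Coker f^{\mathrm{univ}})$ is exactly the image of $\bigwedge^nV\otimes(\bigwedge^nW)^{-1}\to\Ocal$. So you project the same closed set, and the Fitting ideal absorbs all of those reductions, including the constant-rank one.

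The genuine difference is $\underline{\Isom}_{\Gamma_K}(V,W)$. The paper notes that it is empty when the ranks differ and equals $\underline{\Surj}_{\Gamma_K}(V,W)$ when they agree, so it is an \emph{open} subspace of $\intHom_{\Gamma_K}(V,W)$. You instead cut it out as a \emph{closed} subspace of $\intHom_{\Gamma_K}(V,W)\times_S\intHom_{\Gamma_K}(W,V)$.
\begin{itemize}
\item Your route is purely formal once $\intHom$ is known to be separated over $S$, and it needs no input from Lemma~\ref{lem:FF curve closed map}.
\item The paper's route gives the sharper statement that $\underline{\Isom}\to\intHom$ is an open immersion. Your own $\underline{\Surj}$ argument would also give this in the equal-rank case, so your separate $\underline{\Isom}$ paragraph is actually redundant.
\end{itemize}

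The two points you flag are harmless. Open subsets of rigid spaces are rigid spaces in the adic framework. A finitely generated module over the complete Tate rings $\tilde{B}^{I}_{K_\infty,B}$ vanishes iff the zero locus of its $\Fit_0$ on the adic spectrum is empty, because maximal ideals are closed and hence are supports of points.

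One step is used silently, both by you and by the paper's ``by construction''. If $\Spa B$ meets $\pr(Z)$, the point must lift to a point of $X_{K_\infty,B}$ lying over $Z$. This is surjectivity of $|X_{K_\infty,B}|\to|X_{K_\infty}\times T|\times_{|T|}|\Spa B|$, which is standard for fiber products of adic spaces.
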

\begin{proof}
	Since the claim is analytic local on $S$, we may assume that $S$ is an affinoid rigid space.
	We set $S=\Spa A$.
	By Lemma \ref{lem:finiteness of (phi, Gamma) cohomology}, there exists a morphism $f\colon E_0\to E_1$ of finite projective $A$-modules equipped with a functorial isomorphism
	\[\Hom_{X_{K_{\infty},B},\Gamma_K}(V_B,W_B)\cong \Ker(E_0\otimes_A B\to E_1\otimes_A B)\]
	for each affinoid $A$-algebra $B$, where $V_B$ and $W_B$ are the pullbacks of $V$ and $W$ to $X_{K_{\infty},B}$, respectively.
	Therefore, $\intHom_{\Gamma_K}(V,W)$ is represented by $\Vbb(E_0)\times_{\Vbb(E_1),e}\Spa A$, where $\Vbb(E_0)$ and $\Vbb(E_1)$ are the geometric vector bundles associated to $E_0$ and $E_1$, respectively, and $e\colon \Spa A\to \Vbb(E_1)$ is the zero section.

	Next, let us prove the representability of $\underline{\Surj}_{\Gamma_K}(V,W)$ and $\underline{\Isom}_{\Gamma_K}(V,W)$.
	We may assume that $V$ and $W$ are of constant rank.
	If the ranks of $V$ and $W$ are different, $\underline{\Isom}_{\Gamma_K}(V,W)=\emptyset$.
	If the ranks of $V$ and $W$ are the same, any surjective morphism $V_B\to W_B$ is an isomorphism, and hence $\underline{\Surj}_{\Gamma_K}(V,W)=\underline{\Isom}_{\Gamma_K}(V,W)$.
	Therefore, it suffices to show that $\underline{\Surj}_{\Gamma_K}(V,W)$ is representable by a rigid space.
	To this end, we prove that the morphism $\underline{\Surj}_{\Gamma_K}(V,W) \to \intHom_{\Gamma_K}(V,W)$ is representable and an open immersion.
	Let $n$ be the rank of $W$.
	Since a morphism $V_B\to W_B$ is surjective if and only if $\wedge^n V_B\to \wedge^n W_B$ is surjective, we have the following Cartesian diagram:
	\[
	\xymatrix{
		\underline{\Surj}_{\Gamma_K}(V,W)\ar[r]\ar[d]& \underline{\Surj}_{\Gamma_K}(\bigwedge^n V, \bigwedge^n W)\ar[d]\\
		\intHom_{\Gamma_K}(V,W)\ar[r]& \intHom_{\Gamma_K}(\bigwedge^n V, \bigwedge^n W).
	}
	\]
	Therefore, we may reduce to the case where $W$ is of rank $1$.
    Furthermore, by twisting by $W^{-1}$, we may assume that $W=\Ocal_{X_{K_{\infty}},A}$.
	Let $\Spa B\to \intHom_{\Gamma_K}(V,\Ocal)$ be a morphism corresponding to a morphism $f \colon V_B\to \Ocal_{X_{K_{\infty},B}}$.
	We need to show that $\Spa B\times_{\intHom_{\Gamma_K}(V,\Ocal)}\underline{\Surj}_{\Gamma_K}(V,\Ocal)$ is an open subspace of $\Spa B$.
	Let $\Ical$ be the image of the morphism $f \colon V_B\to \Ocal_{X_{K_{\infty}},B}$, and let $Z\subset X_{K_{\infty},B}$ be the zero locus of $\Ical$.
	Since $X_{K_{\infty},B}\to \Spa B$ is a closed map by Lemma \ref{lem:FF curve closed map}, the image $Z'$ of $Z$ under this morphism is closed in $\Spa B$.
	Let $U\subset \Spa B$ be the complement of $Z'$.
	Then, by construction, $\Spa B\times_{\intHom_{\Gamma_K}(V,\Ocal)}\underline{\Surj}_{\Gamma_K}(V,\Ocal)$ is represented by $U$.
\end{proof}

\begin{corollary}[{\cite[Theorem 5.1.20]{EGH23}}]\label{cor:EG stack diagonal}
	The diagonal morphism 
	\[\Delta \colon \LS_{\GL_n(K),\FF} \to \LS_{\GL_n(K),\FF} \times \LS_{\GL_n(K),\FF}\]
	is representable by rigid spaces.
\end{corollary}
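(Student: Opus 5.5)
The plan is to reduce the representability of the diagonal to the representability of $\underline{\Isom}_{\Gamma_K}$, which is already established in Lemma \ref{lem:Hom space representable EG}. Take a rigid space $S$ (affinoid suffices, and in fact any rigid space) and a morphism $S\to \LS_{\GL_n(K),\FF}\times \LS_{\GL_n(K),\FF}$. By definition, this is a pair $(V,W)$ of $\Gamma_K$-equivariant vector bundles of rank $n$ on $X_{K_{\infty}}\times S$. More precisely, when $S$ is not affinoid, we obtain such a pair by gluing along an affinoid cover, using that $\LS_{\GL_n(K),\FF}$ is an étale stack.

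The key step is the standard identification of the fiber product
\[S\times_{\LS_{\GL_n(K),\FF}\times\LS_{\GL_n(K),\FF},\Delta}\LS_{\GL_n(K),\FF}\cong \underline{\Isom}_{\Gamma_K}(V,W)\]
as étale sheaves on $\Aff_{\Qbb_p}^{\op}$ over $S$. Concretely, for $\Spa B\to S$, a $B$-point of the left side is a triple consisting of an object $U$ over $B$ together with isomorphisms $U\simeq V_B$ and $U\simeq W_B$. Such triples up to isomorphism are the same as $\Gamma_K$-equivariant isomorphisms $V_B\xrightarrow{\sim}W_B$, functorially in $B$. In particular, the fiber product is a sheaf of sets and not merely a stack.

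Lemma \ref{lem:Hom space representable EG} then says exactly that $\underline{\Isom}_{\Gamma_K}(V,W)$ is representable by a rigid space over $S$. This proves the corollary.

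I expect no real obstacle. The one point requiring care is that Lemma \ref{lem:Hom space representable EG} is stated for an arbitrary rigid space $S$, which covers the non-affinoid test objects. Beyond that, one only needs to check compatibility of pullbacks so that the identification above is an isomorphism of sheaves. Both of these are routine.
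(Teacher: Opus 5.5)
Your proposal is correct and matches the paper, which states this as an immediate corollary of Lemma \ref{lem:Hom space representable EG}: the base change of $\Delta$ along $S\to \LS_{\GL_n(K),\FF}\times\LS_{\GL_n(K),\FF}$ is $\underline{\Isom}_{\Gamma_K}(V,W)$, and that lemma shows it is representable by a rigid space. The paper leaves this identification implicit, and your check that the fiber-product groupoid is discrete (so it is a sheaf of sets) is the right justification.
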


Next, we consider the deformation theory of $\Gamma_K$-equivariant vector bundles on $X_{K_{\infty},A}$.

\begin{definition}
	For a $\Gamma_K$-equivariant vector bundle $V$ on $X_{K_{\infty},A}$, we define the \textit{Herr complex} $\Ccal_{\FF,\Gamma_K}^{\bullet}(V)$ as
	\[\Ccal_{\FF,\Gamma_K}^{\bullet}(V)=(V^{[1,p]}\xrightarrow{(\varphi-1,\gamma-1)}V^{[1,1]}\oplus V^{[1,p]}\xrightarrow{(\gamma-1)\oplus(1-\varphi)}V^{[1,1]}).\]
	We write $H^i_{\FF,\Gamma_K}(V)=H^i(\Ccal_{\FF,\Gamma_K}^{\bullet}(V)).$
	The complex $\Ccal_{\FF,\Gamma_K}^{\bullet}(V)$ is quasi-isomorphic to $R\Gamma(\Gamma_K,R\Gamma(X_{K_{\infty},A},V))$.
	In particular, $\Ccal_{\FF,\Gamma_K}^{\bullet}(V)$ is quasi-isomorphic to a perfect complex of $A$-modules by Lemma \ref{lem:finiteness of (phi, Gamma) cohomology}.
\end{definition}

\begin{remark}\label{rem:flatness}
	Let $I$ be one of $[1,1],[1,p],[p,p]$.
	Then any finite projective $\tilde{B}_{K_{\infty},A}^{I}$-module $V^I$ is a flat $A$-module.
	Indeed, it suffices to show that $\tilde{B}_{K_{\infty},A}^{I}$ is flat over $A$, which can be proved in the same way as in the proof of \cite[Corollary 2.1.5]{KPX14}.
	In particular, for $M\in \Coh(A)$, the complex $\Ccal_{\FF,\Gamma_K}^{\bullet}(V)\otimes_A M$ is quasi-isomorphic to $\Ccal_{\FF,\Gamma_K}^{\bullet}(V)\otimes_A^{\Lbb} M$.
\end{remark}

The following lemma is standard:
\begin{lemma}\label{lem:deformation Herr complex}
	Let $\Spa A\to \LS_{\GL_n(K),\FF}$ be a morphism corresponding to a $\Gamma_K$-equivariant vector bundle $V$ on $X_{K_{\infty},A}$ of rank $n$.
	Then, for $I\in \Coh(A)$, there are natural isomorphisms
	\begin{align*}
		\Aut_{\LS_{\GL_n(K),\FF}}(A,I)\cong H^0(\Ccal_{\FF,\Gamma_K}^{\bullet}(\underline{\End}(V))\otimes_{A} I),\\
		\Def_{\LS_{\GL_n(K),\FF}}(A,I)\cong H^1(\Ccal_{\FF,\Gamma_K}^{\bullet}(\underline{\End}(V))\otimes_{A} I).
	\end{align*}
\end{lemma}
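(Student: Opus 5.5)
We will prove both isomorphisms by describing $\Defbb$ and its automorphism group explicitly through the pair description $(V^{[1,p]},\varphi)$. By Remark \ref{remark:tangent space}, $\Defbb_{\LS_{\GL_n(K),\FF}}(A,I)$ is equivalent to the fiber of $\LS_{\GL_n(K),\FF}(A\oplus I)\to \LS_{\GL_n(K),\FF}(A)$ over $V$. So we must classify $\Gamma_K$-equivariant vector bundles $\tilde V$ on $X_{K_{\infty},A\oplus I}$ together with an identification $\tilde V\otimes_{A\oplus I}A\cong V$, up to isomorphism. Note that $\tilde B^{J}_{K_\infty,A\oplus I}=\tilde B^{J}_{K_\infty,A}\oplus (\tilde B^{J}_{K_\infty,A}\hotimes_A I)$, and since $I$ is finitely generated the completed tensor product is the ordinary one. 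Hence this ring is a square-zero extension of $\tilde B^{J}_{K_\infty,A}$ by $\tilde B^J_{K_\infty,A}\otimes_A I$.

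\textbf{Step 1: the underlying modules are trivial.} Finite projective modules lift uniquely up to isomorphism along square-zero extensions. Since $\tilde B^{[1,p]}_{K_\infty,A\oplus I}$ has the retraction to $\tilde B^{[1,p]}_{K_\infty,A}$, we may identify $\tilde V^{[1,p]}\cong V^{[1,p]}\oplus V^{[1,p]}\otimes_A I$ as modules, compatibly with the reduction to $V$. Automorphisms of this module reducing to the identity are exactly $1+\epsilon$ with $\epsilon\in \End(V)^{[1,p]}\otimes_A I$.

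\textbf{Step 2: the remaining data.} A lift is then determined by:
\begin{itemize}
\item a $\varphi$-structure $\varphi(1+a)$ with $a\in \End(V)^{[1,1]}\otimes_A I$;
\item a $\gamma$-action $(1+b)\gamma$ with $b\in \End(V)^{[1,p]}\otimes_A I$.
\end{itemize}
Since $\Gamma_K\cong\Zbb_p$ is topologically generated by $\gamma$, the whole $\Gamma_K$-action is determined by $\gamma$, subject to continuity. Commutation of $\varphi$ and $\gamma$ modulo $I^2=0$ becomes the linear cocycle condition $(\gamma-1)a=(\varphi-1)b$ in $\End(V)^{[1,1]}\otimes I$, which is exactly the condition that $(a,b)$ be a 1-cocycle in $\Ccal^\bullet_{\FF,\Gamma_K}(\underline{\End}(V))\otimes_A I$. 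Conjugating by $1+\epsilon$ changes $(a,b)$ by $((\varphi-1)\epsilon,(\gamma-1)\epsilon)$ up to sign, i.e.\ by a coboundary. The automorphisms of the trivial deformation are the $1+\epsilon$ commuting with $\varphi$ and $\gamma$, i.e.\ $H^0$. Finally, Remark \ref{rem:flatness} lets us identify the complex tensored with $I$ with the derived tensor product, which gives naturality in $I$. We also need to check that the $A$-module structures and the Picard structures (Baer sum) match addition of cocycles; this is a routine verification.

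\textbf{Main obstacle.} The delicate point is continuity of the $\Gamma_K$-action. Given a cocycle $(a,b)$, we must show that the semilinear operator $(1+b)\gamma$ extends to a continuous action of all of $\Gamma_K$. Conversely, every continuous action must be determined by $\gamma$ in this way. Since $b$ is nilpotent of square zero, $((1+b)\gamma)^{k}=(1+\sum_{j<k}\gamma^j(b))\gamma^k$. So we need $\sum_{j<k}\gamma^j(b)$ to extend continuously to $k\in\Zbb_p$. I would try to deduce this from continuity of the $\Gamma_K$-action on $\End(V)^{[1,p]}\otimes I$: the map $k\mapsto \gamma^k$ is continuous, and the operator $\sum_{j<p^m}\gamma^j$ lies in the ideal generated by $p^m$ and $(\gamma^{p^m}-1)$ in the Iwasawa algebra. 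Alternatively, I would cite the standard treatment in \cite{KPX14} or \cite{Mikami24}, where it is known that cohomology of the full $\Gamma_K$ agrees with the Herr complex for the generator $\gamma$.
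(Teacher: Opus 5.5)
Your proposal follows the same route as the paper: trivialize $\tilde V^{[1,p]}$ using the retraction, encode the lift by $(1+\alpha)\varphi$ and $(1+\beta)\gamma$ so that commutation is exactly the $1$-cocycle condition, and read off coboundaries and $H^0$ from conjugation by $1+\epsilon$. You actually spell out more than the paper does (the $\Aut$ part, injectivity, and the continuity issue), since the paper only asserts that the action extends continuously and calls the rest standard.

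Two small fixes. First, write the Frobenius as $(1+a)\varphi$ rather than $\varphi(1+a)$, because $a$ lives on $V^{[1,1]}$, the target of $\varphi$. Second, $\sum_{j<p^m}\gamma^j$ is \emph{not} in $(p^m,\gamma^{p^m}-1)$: with $\gamma=1+T$ it is $\equiv T^{p^m-1}\bmod p$. It does lie in $(p^{m-m'},\gamma^{p^{m'}}-1)$ for $m'\le m$, and this, together with a $\Gamma_K$-invariant norm and continuity of $g\mapsto g(b)$, gives $\sum_{j<p^m}\gamma^j(b)\to 0$ and hence the continuous extension.
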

\begin{proof}
	Although this result is well-known, we provide the explicit construction of the isomorphism
	\[\Def_{\LS_{\GL_n(K),\FF}}(A,I)\cong H^1(\Ccal_{\FF,\Gamma_K}^{\bullet}(\underline{\End}(V))\otimes_{A} I)\]
	for later use.
	Let $A\oplus I$ be the trivial square-zero extension of $A$ by $I$.
	Let $x\in  H^1(\Ccal_{\FF,\Gamma_K}^{\bullet}(\underline{\End}(V))\otimes_{A} I)$, and choose a representative
	\begin{align*}
		(\alpha,\beta)\in \Ker((\End(V^{[1,1]})\otimes_A I)\oplus (\End(V^{[1,p]})\otimes_A I)\xrightarrow{(\gamma-1)\oplus(1-\varphi)}\End(V^{[1,1]})\otimes_A I).
	\end{align*}
	We regard $\alpha$ as a $\tilde{B}_{K_{\infty},A}^{[1,1]}$-linear morphism $V^{[1,1]}\to V^{[1,1]}\otimes_A I$ and $\beta$ as a $\tilde{B}_{K_{\infty},A}^{[1,p]}$-linear morphism $V^{[1,p]}\to V^{[1,p]}\otimes_A I$.
	We set $V_1^{[1,p]}=V^{[1,p]}\oplus (V^{[1,p]}\otimes_A I)$, which is a finite projective $\tilde{B}_{K_{\infty},A\oplus I}^{[1,p]}$-module.
	For $i=1,p$, we write 
	\[V_1^{[i,i]}=V^{[i,i]}\oplus (V^{[i,i]}\otimes_A I)=V_1^{[1,p]}\otimes_{\tilde{B}_{K_{\infty},A\oplus I}^{[1,p]}}\tilde{B}_{K_{\infty},A\oplus I}^{[i,i]}.\]
	We define $\gamma\colon V_1^{[1,p]}\to V_1^{[1,p]}$ by
	\[(x,y)\mapsto (\gamma(x), \beta(\gamma(x))+\gamma(y)).\]
	It uniquely extends to the continuous semilinear action of $\Gamma_K$ on $V_1^{[1,p]}$.
	Similarly, we define $\varphi\colon V_1^{[p,p]}\to V_1^{[1,1]}$ by
	\[(x,y)\mapsto (\varphi(x), \alpha(\varphi(x))+\varphi(y)).\]
	Since $(\alpha,\beta)$ is a cocycle, the above morphism $\varphi\colon V_1^{[p,p]}\to V_1^{[1,1]}$ commutes with $\gamma$ (and hence is $\Gamma_K$-equivariant). 
	Therefore, we get a $\Gamma_K$-equivariant vector bundle $V_1$ on $X_{K_{\infty},A\oplus I}$, which is a deformation of $V$.
	In other words, it gives an element of $\Def_{\LS_{\GL_n(K),\FF}}(A,I)$.
	Since this element does not depend on the choice of the representative $(\alpha,\beta)$ of $x$, we obtain a well-defined morphism of $A$-modules
	\[H^1(\Ccal_{\FF,\Gamma_K}^{\bullet}(\underline{\End}(V))\otimes_{A} I)\to \Def_{\LS_{\GL_n(K),\FF}}(A,I).\]
    By a standard argument, this can be shown to be an isomorphism.
\end{proof}

Next, we consider the obstruction theory of $\Gamma_K$-equivariant vector bundles on $X_{K_{\infty},A}$.

\begin{lemma}\label{lem:obstruction Herr complex}
	Let $\Spa A\to \LS_{\GL_n(K),\FF}$ be a morphism corresponding to a $\Gamma_K$-equivariant vector bundle $V$ on $X_{K_{\infty},A}$ of rank $n$.
	Then, for each $I\in \Coh(A)$, there is a natural exact sequence
	\[\Exan_{\LS_{\GL_n(K),\FF}}(A,I) \to \Exan(A,I)\to  H^2(\Ccal_{\FF,\Gamma_K}^{\bullet}(\underline{\End}(V))\otimes_{A} I).\]
\end{lemma}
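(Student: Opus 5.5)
We construct the obstruction map explicitly and then check exactness in the middle.

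\emph{Construction of the map.} Let $(i\colon \Spa A\to \Spa B)$ represent a class in $\Exan(A,I)$, so $0\to I\to B\to A\to 0$ is a square-zero extension of affinoid $\Qbb_p$-algebras. Put
\[\tilde{B}_{K_{\infty},B}^{J}=\tilde{B}_{K_{\infty}}^{J}\hotimes_{\Qbb_p}B .\]
By the flatness in Remark \ref{rem:flatness}, the sequence
\[0\to \tilde{B}^{J}_{K_{\infty},A}\otimes_A I\to \tilde{B}^{J}_{K_{\infty},B}\to \tilde{B}^{J}_{K_{\infty},A}\to 0\]
is exact, and $\tilde{B}^{J}_{K_{\infty},B}\to \tilde{B}^{J}_{K_{\infty},A}$ is a square-zero extension of Banach rings. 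Finite projective modules lift along such extensions, uniquely up to non-unique isomorphism. So we choose a finite projective lift $\tilde{V}^{[1,p]}$ of $V^{[1,p]}$. It carries induced lifts $\tilde{V}^{[1,1]}$ and $\tilde{V}^{[p,p]}$.

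We also choose:
\begin{itemize}
\item a $\varphi$-semilinear isomorphism $\tilde\varphi\colon \tilde{V}^{[p,p]}\to\tilde{V}^{[1,1]}$ lifting $\varphi$;
\item a semilinear automorphism $\tilde\gamma$ of $\tilde{V}^{[1,p]}$ lifting $\gamma$.
\end{itemize}
Lifting $\tilde\gamma$ requires no continuity of a full $\Gamma_K$-action, because $\gamma$ is a topological generator of $\Gamma_K\cong\Zbb_p$. The defect
\[c=\tilde\gamma\tilde\varphi\tilde\gamma^{-1}\tilde\varphi^{-1}-1\]
reduces to $0$ modulo $I$. Hence it lies in $\End(V^{[1,1]})\otimes_A I$, which is degree $2$ of $\Ccal^{\bullet}_{\FF,\Gamma_K}(\underline{\End}(V))\otimes_A I$.

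\emph{Well-definedness.} Changing the lifts changes $\tilde\varphi$ by $1+\alpha$ and $\tilde\gamma$ by $1+\beta$, with $\alpha,\beta$ of degree $1$ as in the proof of Lemma \ref{lem:deformation Herr complex}. The class $c$ then changes by $(\gamma-1)\alpha+(1-\varphi)\beta$, a coboundary. Changing $\tilde{V}^{[1,p]}$ by an isomorphism of lifts is absorbed by these choices. Changing the extension $B$ by an isomorphism of extensions gives the same class. This yields a well-defined map $\Exan(A,I)\to H^2(\Ccal^{\bullet}_{\FF,\Gamma_K}(\underline{\End}(V))\otimes_A I)$.

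\emph{Exactness.} The image of $\Exan_{\LS_{\GL_n(K),\FF}}(A,I)$ consists of extensions over which $V$ lifts to a $\Gamma_K$-equivariant bundle. For such extensions we can take $\tilde\gamma,\tilde\varphi$ to commute, so $c=0$.

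Conversely, suppose $c$ is a coboundary. Adjusting $\tilde\varphi$ and $\tilde\gamma$ as above makes them commute. We then extend $\tilde\gamma$ to a continuous semilinear action of $\Gamma_K$ on $\tilde{V}^{[1,p]}$. The extension to $\Zbb_p$-powers uses that $\tilde\gamma^{p^k}\to 1$. Since $\tilde\gamma^{p^k}$ is a lift of $\gamma^{p^k}$, it reduces to the identity modulo $I$ only approximately. We therefore argue, as for $V$ itself, through the continuity estimates on $\tilde{V}^{[1,p]}$ and on $V^{[1,p]}\otimes_A I$. This gives a $\Gamma_K$-equivariant vector bundle on $X_{K_{\infty},B}$ lifting $V$, i.e.\ a preimage in $\Exan_{\LS_{\GL_n(K),\FF}}(A,I)$.

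Naturality in $I$ and compatibility with the module structures are formal.

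\emph{Main obstacle.} We expect the hardest step to be the continuity and extension of the $\Gamma_K$-action in the converse. A workable alternative is to lift the entire $\Gamma_K$-action via the quotient $\Gamma_K/\Gamma_K^{p^k}$ after shrinking to a small open subgroup, or to deduce the result from the corresponding statement over the Robba ring in \cite{KPX14}. The purely algebraic parts — lifting projective modules and the cocycle computation — are routine.
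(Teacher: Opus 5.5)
Your proposal is correct and follows essentially the same route as the paper. Both lift $V^{[1,p]}$ to $\tilde{B}^{[1,p]}_{K_{\infty},B}$, choose semilinear lifts of $\varphi$ and $\gamma$, and take their commutator defect as the obstruction class in degree $2$; your $\tilde\gamma\tilde\varphi\tilde\gamma^{-1}\tilde\varphi^{-1}-1$ is exactly the negative of the paper's $\varphi_B\gamma_B\varphi_B^{-1}\gamma_B^{-1}-1$, since the two commutators are inverse to each other. Your coboundary computation $(\gamma-1)\alpha+(1-\varphi)\beta$ and your sketch of extending $\tilde\gamma$ continuously to $\Gamma_K$ spell out what the paper leaves to ``a standard argument,'' and that extension does work as you indicate. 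It uses the $\Gamma_K$-invariant norms on $V^{[1,p]}$ and $V^{[1,p]}\otimes_A I$ together with a continuous $\Qbb_p$-linear splitting of $\tilde{V}^{[1,p]}\to V^{[1,p]}$.
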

\begin{proof}
	Although this result is also well-known, we provide the explicit construction of the morphism
	\[\Exan(A,I)\to  H^2(\Ccal_{\FF,\Gamma_K}^{\bullet}(\underline{\End}(V))\otimes_{A} I)\]
	for later use.
	Let $B\to A$ be a $\Qbb_p$-extension of $A$ by $I$.
	First, we note that the finite projective $\tilde{B}_{K_{\infty},A}^{[1,p]}$-module $V^{[1,p]}$ lifts to a finite projective $\tilde{B}_{K_{\infty},B}^{[1,p]}$-module $V_B^{[1,p]}$, uniquely up to isomorphism.
	For $i=1,p$, we set
	\[V_B^{[i,i]}=V_B^{[1,p]}\otimes_{\tilde{B}_{K_{\infty},B}^{[1,p]}}\tilde{B}_{K_{\infty},B}^{[i,i]}.\]
	We choose a $\gamma$-semilinear lift $\gamma_B\colon V_B^{[1,p]}\to V_B^{[1,p]}$ of $\gamma\colon  V^{[1,p]}\to V^{[1,p]}$ and a $\varphi$-semilinear lift $\varphi_B\colon V_B^{[p,p]}\to V_B^{[1,1]}$ of $\varphi\colon  V^{[p,p]}\to V^{[1,1]}$.
	Then we obtain a morphism 
	\[\varphi_B\gamma_B\varphi_B^{-1}\gamma_B^{-1}-1\colon V_B^{[1,1]}\to V_B^{[1,1]},\]
	where the morphisms $V_B^{[i,i]}\to V_B^{[i,i]}$ (for $i=1,p$) induced by $\gamma_B\colon V_B^{[1,p]}\to V_B^{[1,p]}$ are denoted by the same symbol $\gamma_B$.
	By construction, the morphism $\varphi_B\gamma_B\varphi_B^{-1}\gamma_B^{-1}-1$ is zero on $V^{[1,1]}\otimes_A I\subset V_B^{[1,1]}$, and the image of this morphism is contained in $V^{[1,1]}\otimes_A I\subset V_B^{[1,1]}$.
	Therefore, we obtain a morphism
	\[\varphi_B\gamma_B\varphi_B^{-1}\gamma_B^{-1}-1\colon V^{[1,1]}\to V^{[1,1]}\otimes_A I.\]
	This defines a cohomology class of $H^2(\Ccal_{\FF,\Gamma_K}^{\bullet}(\underline{\End}(V))\otimes_{A}I)$, and it does not depend on the choice of lifts $\gamma_B$ and $\varphi_B$.
	Therefore, we obtain a well-defined morphism of $A$-modules
	\[\Exan(A,I)\to  H^2(\Ccal_{\FF,\Gamma_K}^{\bullet}(\underline{\End}(V))\otimes_{A} I).\]
	By a standard argument, we can show that 
	\[\Exan_{\LS_{\GL_n(K),\FF}}(A,I) \to \Exan(A,I)\to  H^2(\Ccal_{\FF,\Gamma_K}^{\bullet}(\underline{\End}(V))\otimes_{A} I)\]
	is an exact sequence.
\end{proof}

\begin{lemma}\label{lem:coherent deformation and obstruction Herr complex}
	The \'{e}tale stack $\LS_{\GL_n(K),\FF}$ has coherent deformations and obstructions.
\end{lemma}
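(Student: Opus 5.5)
We verify the two defining conditions directly from Lemmas \ref{lem:deformation Herr complex}, \ref{lem:obstruction Herr complex} and the perfectness statement of Lemma \ref{lem:finiteness of (phi, Gamma) cohomology}. Fix $\Spa A\to \LS_{\GL_n(K),\FF}$ corresponding to $V$, and set $C^\bullet=\Ccal_{\FF,\Gamma_K}^{\bullet}(\underline{\End}(V))$. By Lemma \ref{lem:finiteness of (phi, Gamma) cohomology} and Remark \ref{rem:flatness}, $C^\bullet$ is quasi-isomorphic to a perfect complex $P^\bullet=(P^0\to P^1\to P^2)$ of finite projective $A$-modules. Moreover, for every $I\in\Coh(A)$ we have $C^\bullet\otimes_A I\simeq P^\bullet\otimes_A I$, functorially in $I$. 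The same holds after restriction to any affinoid open $\Spa A'\subset \Spa A$, because of the base change compatibility in Lemma \ref{lem:finiteness of (phi, Gamma) cohomology}.

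\emph{Deformations.} By Lemma \ref{lem:deformation Herr complex}, $\Def(A',A'\otimes_A I)\cong H^1(P^\bullet\otimes_A I\otimes_A A')$. Since $A\to A'$ is flat, this equals $H^1(P^\bullet\otimes_A I)\otimes_A A'$. Hence $\Dcal ef(A,I)$ is the coherent sheaf associated to the finitely generated module $H^1(P^\bullet\otimes_A I)$, which gives condition (1) of coherence.

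For condition (2), we use the standard trick of \cite{Hartshorne90}/\cite{Hall17Openness}: for a perfect complex, $H^1(P^\bullet\otimes I)$ is computed in terms of $\Hom$ from the dual complex. Set $Q^j=(P^{j})^\vee$. Then $P^j\otimes_A I=\Hom_A(Q^j,I)$, and
\[
H^1(P^\bullet\otimes I)=\Ker\bigl(\Hom(Q^1,I)\to\Hom(Q^2,I)\bigr)\big/\Im\bigl(\Hom(Q^0,I)\bigr).
\]
The kernel is $\Hom(M,I)$ with $M=\Coker(Q^2\to Q^1)$. The image is the image of $\Hom(Q^0,I)$, which factors through $\Hom(N,I)$, where $N=\Im(Q^1\to Q^0)$ or, more simply, $N=Q^0$ via the map $M\to Q^0$ induced by $Q^1\to Q^0$; this map is well-defined because $Q^2\to Q^1\to Q^0$ is zero. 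This yields the right exact sequence $\Hom(Q^0,I)\to\Hom(M,I)\to H^1(P^\bullet\otimes I)\to 0$, natural in $I$, with $\alpha$ the universal class. So $\Def$ is coherent.

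\emph{Obstructions.} Define $\Ocal bs(A,I)$ as the sheaf $A'\mapsto H^2(P^\bullet\otimes_A I\otimes_A A')$. This is coherent by the same argument: since $H^2$ is the top degree, $H^2(P^\bullet\otimes I)=\Coker(\Hom(Q^1,I)\to\Hom(Q^2,I))$, so the sequence $\Hom(Q^1,I)\to\Hom(Q^2,I)\to H^2\to0$ is right exact with $M=Q^2$ and $N=Q^1$. Lemma \ref{lem:obstruction Herr complex} gives an injection $\Coker(\Exan_{\Xcal}(A,I)\to\Exan(A,I))\hookrightarrow H^2(C^\bullet\otimes I)$. This injection is compatible with restriction to affinoid opens, because the explicit obstruction class $\varphi_B\gamma_B\varphi_B^{-1}\gamma_B^{-1}-1$ localizes; sheafifying gives the required functorial injection.

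The main obstacle is bookkeeping rather than mathematics. First, the identification $C^\bullet\otimes I\simeq P^\bullet\otimes I$ must be functorial in $I$ and compatible with localization to $A'$. Second, the quotient presheaves must genuinely agree with the sheaves (for instance, the $\Exan$ cokernel is a presheaf, and the injection is checked sectionwise). Both follow from flatness of $A\to A'$ and Remark \ref{rem:flatness}.
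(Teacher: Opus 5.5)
Your proposal is correct and is essentially the paper's proof. You reduce via Lemmas \ref{lem:deformation Herr complex} and \ref{lem:obstruction Herr complex} to the coherence of $I\mapsto H^i(\Ccal_{\FF,\Gamma_K}^{\bullet}(\underline{\End}(V))\otimes_A I)$ for $i=1,2$, deduce it from perfectness of the Herr complex together with Remark \ref{rem:flatness}, and simply write out the dualization argument that the paper cites as \cite[Example 2.3 (a)]{Hartshorne90}. One small slip: drop the alternative $N=\Im(Q^1\to Q^0)$, since it breaks exactness at $\Hom_A(M,I)$ (a map out of the image need not extend to $Q^0$); keep $N=Q^0$, which is what your displayed sequence actually uses.
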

\begin{proof}
	Let $\Spa A\to \LS_{\GL_n(K),\FF}$ be a morphism corresponding to a $\Gamma_K$-equivariant vector bundle $V_A$ on $X_{K_{\infty},A}$ of rank $n$.
	By Lemmas \ref{lem:deformation Herr complex} and \ref{lem:obstruction Herr complex}, we need to show that for $i=1,2$, the functor
	\[\Coh(A)\ni I\mapsto H^i(\Ccal_{\FF,\Gamma_K}^{\bullet}(\underline{\End}(V_A))\otimes_{A} I)\]
	is coherent.
	By \cite[Theorem 4.4.2]{KPX14}, $\Ccal_{\FF,\Gamma_K}^{\bullet}(\underline{\End}(V_A))$ is quasi-isomorphic to a perfect complex of $A$-modules.
	Therefore, the functor
	\[I\mapsto H^i(\Ccal_{\FF,\Gamma_K}^{\bullet}(\underline{\End}(V_A))\otimes_{A} I)\] 
	is coherent by \cite[Example 2.3 (a)]{Hartshorne90} and Remark \ref{rem:flatness}.
\end{proof}

Finally, we check the condition (4) in Theorem \ref{thm:criterion for representablity}.

\begin{theorem}\label{thm:EG stack smooth chart}
	Let $\Spa R\to \LS_{\GL_n(K),\FF}$ be a morphism corresponding to a $\Gamma_K$-equivariant vector bundle $V_R$ on $X_{K_{\infty},R}$ of rank $n$. 
	For a (not necessarily classical) point $x\in \Spa R$, the following hold:
		\begin{itemize}
			\item There exists a locally Zariski closed subspace $\Spa A$ of $\Spa R$ such that
			\begin{itemize}
				\item $x\in \Spa A$,
				\item $\Spa A$ is smooth over $\Qbb_p$,
				\item the natural morphism $A\to k(x)$ is injective.
			\end{itemize}
			\item Let $r_i=\dim_{k(x)}(H^i(\Ccal_{\FF,\Gamma_K}^{\bullet}(\underline{\End}(V_R))\otimes_{R} k(x)))$ for $i=1,2$.
			Let $m$ be a sufficiently large integer.
			Then there exist 
			\[F_1,\ldots, F_{r_2}\in (X_1,\ldots,X_{r_1})^2A\left\langle \frac{X_1,\ldots,X_{r_1}}{p^m}\right\rangle\] 
			and a morphism $g\colon \Spa \left(A\left\langle \frac{X_1,\ldots,X_{r_1}}{p^m}\right\rangle/(F_1,\ldots,F_{r_2})\right)\to \LS_{\GL_n(K),\FF}$ fitting into a commutative diagram
			\[
			\xymatrix{
				\Spa A \ar[r] \ar[d]_-{i} &\Spa R \ar[d]\\
				\Spa \left(A\left\langle \frac{X_1,\ldots,X_{r_1}}{p^m}\right\rangle/(F_1,\ldots,F_{r_2})\right)\ar[r]^-g & \LS_{\GL_n(K),\FF},
			}
			\]
			where $i\colon \Spa A\to \Spa \left(A\left\langle \frac{X_1,\ldots,X_{r_1}}{p^m}\right\rangle/(F_1,\ldots,F_{r_2})\right)$ is a closed immersion defined by $X_1=\cdots=X_{r_1}=0$.
			Moreover, for any classical point $t\in (\Spa A)^{\cl}$, the induced morphism
			\[g\colon \Spa \left(k(t)\left\langle \frac{X_1,\ldots,X_{r_1}}{p^m}\right\rangle/(F_1,\ldots,F_{r_2})\right)\to \LS_{\GL_n(K),\FF}\] 
			is formally versal at $i(t)$.
		\end{itemize}
\end{theorem}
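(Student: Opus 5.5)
We first construct $\Spa A$. Let $\pfrak\subset R$ be the support of $x$, i.e.\ the kernel of $R\to k(x)$. Then $\Spa R/\pfrak$ is an irreducible reduced affinoid containing $x$. Its smooth locus is Zariski open and dense, because $R/\pfrak$ is excellent and reduced (Theorem \ref{thm:excellence}) and $\Qbb_p$ has characteristic zero. Since $x$ is a generalization of no point outside the support, $x$ lies in every nonempty Zariski open subset of $\Spa R/\pfrak$, so it lies in the smooth locus.

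Next, the perfect complex $\Ccal^{\bullet}=\Ccal_{\FF,\Gamma_K}^{\bullet}(\underline{\End}(V_R))$ has cohomology whose ranks are generically constant. By Lemma \ref{lem:finiteness of (phi, Gamma) cohomology} and the stratification by Fitting ideals, we can shrink to a rational/Zariski open affinoid $\Spa A$ in the smooth locus that still contains $x$. On this $\Spa A$, the modules $H^i(\Ccal^{\bullet}\otimes A)$ are finite free for $i=0,1,2$, their formation commutes with base change, and their ranks are $r_0,r_1,r_2$. Since $x$ is the generic point of an integral affinoid, $A\to k(x)$ is injective. After further shrinking, we also fix a representative of $\Ccal^{\bullet}\otimes A$ as $E^0\to E^1\to E^2$ with free modules, together with explicit cocycles $e_1,\dots,e_{r_1}$ in the Herr complex lifting a basis of $H^1$, and $\varphi$-, $\gamma$-splittings of the complex realized by bounded $A$-linear maps on the Banach modules $\End(V^{[1,p]})$ and $\End(V^{[1,1]})$.

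We then run the Rim–Schlessinger/Kuranishi construction explicitly over the formal variables $X_1,\dots,X_{r_1}$. We look for a lift $(\gamma_\infty,\varphi_\infty)$ on $V^{[1,p]}\hotimes A[[X]]$ of the form
\[\gamma_\infty=(1+\beta(X))\gamma,\qquad \varphi_\infty=(1+\alpha(X))\varphi,\]
where $\beta$ and $\alpha$ are power series with coefficients in $\End(V^{[1,p]})$ and $\End(V^{[1,1]})$, and with linear terms $\sum X_je_j$. The single obstruction to compatibility is the commutator $\varphi_\infty\gamma_\infty\varphi_\infty^{-1}\gamma_\infty^{-1}-1$, which is a $2$-cochain as in Lemma \ref{lem:obstruction Herr complex}. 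We solve order by order in $X$, using the chosen homotopy to kill the coboundary part at each degree. The part in $H^2\cong A^{r_2}$ that cannot be killed defines the power series $F_1,\dots,F_{r_2}$, which lie in $(X)^2$ since the linear term is a cocycle. This is the standard hull construction, and it yields a compatible system $V_l$ over $A[[X]]/(F,(X)^l)$. The ranks are constant and $H^i$ commutes with base change, so specializing at a classical point $t$ gives exactly the hull construction for $V_t$. Formal versality at $i(t)$ then follows from Proposition \ref{prop:formally versal Exan}: the tangent map onto $H^1(V_t)$ is surjective, and the obstruction map into $H^2$ is injective modulo $(F)$.

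The main obstacle is convergence. We must show that the coefficients of the degree-$k$ parts of $\alpha$, $\beta$ and the $F_j$ have norms at most $C^k$ for a constant $C$, so that all series converge on the disc $|X_j|\le p^{-m}$ for $m$ large. The plan is a majorant argument. The inverse of the differential on coboundaries, given by the chosen splitting, is a bounded operator with norm some $C_0$, and at degree $k$ the new coefficient is $C_0$ times a sum of products of lower-degree terms. The sizes therefore satisfy a Catalan-type recursion $a_k\le C_0\sum_{i+j=k}a_ia_j$, which gives geometric growth; the same works with $\varphi^{-1}$, whose norm is bounded on the relevant annuli. Once the estimate holds, $\alpha,\beta\in \End\otimes A\langle X/p^m\rangle$ define a $\Gamma_K$-equivariant vector bundle on $X_{K_\infty}\times \Spa A\langle X/p^m\rangle/(F)$. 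Continuity of the $\Gamma_K$-action follows because $\gamma$ generates $\Gamma_K$ topologically and $\beta$ is small for $m$ large. Combined with the section $X=0$ recovering $V_A$, this gives $g$.
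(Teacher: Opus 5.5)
Your proposal is essentially the paper's proof. You cut down to $R/\supp(x)$, shrink to a smooth affinoid neighbourhood of $x$ on which $H^1$ and $H^2$ of the Herr complex are free, and then build $\varphi_\infty=(1+\alpha)\varphi$, $\gamma_\infty=(1+\beta)\gamma$ degree by degree; the $H^2$-component of the commutator defect, taken via a fixed Banach splitting, defines the $F_k$, and the coboundary part is lifted with norm control, so the coefficients grow geometrically and everything converges on $A\langle X/p^m\rangle$.

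The differences are cosmetic. The paper tracks the explicit bound $p^{(2r+s+u)(|I|-1)}$ using the ultrametric inequality and open-mapping lifts, so no bounded linear section of $\partial$ is needed, where you use a Catalan-type recursion. The paper also deduces versality by comparison with the SGA~7 hull construction plus Lemma~\ref{lem:Hensel's lemma}. Your direct check via Proposition~\ref{prop:formally versal Exan} (tangent map onto $H^1$ surjective, $\Exan\to H^2$ injective) works equally well.
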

\begin{proof}
	We set $A=R/\supp(x).$
	Since $\Spa A\to \Spa \Qbb_p$ is smooth over a non-empty Zariski open subset $U\subset \Spa A$ and $x\in U$, we may assume that $\Spa A\to \Spa \Qbb_p$ is smooth by replacing $\Spa A$ with an affinoid open neighborhood of $x$.
	We set $V_A=V_R\hotimes_R A$.
	Since $\Ccal_{\FF,\Gamma_K}^{\bullet}(\underline{\End}(V_A))$ is quasi-isomorphic to a perfect complex of $A$-modules, there exists $f\in A\setminus\{0\}$ such that $H^i(\Ccal_{\FF,\Gamma_K}^{\bullet}(\underline{\End}(V_A))\otimes_{A} A_f)$ is finite free of rank $r_i$ for $i=1,2$.
	By replacing $\Spa A$ with an affinoid open neighborhood of $x$ contained in $\Spa A\setminus \{f=0\}$, we may assume that $H^i(\Ccal_{\FF,\Gamma_K}^{\bullet}(\underline{\End}(V_A)))$ is finite free of rank $r_i$ for $i=1,2$.

	For $I\in\{[1,p],[1,1],[p,p]\}$, let $|\cdot|_{I}$ be a spectral norm on $\tilde{B}_{K_{\infty}}^{I}$ normalized so that $|p|_{I}=p^{-1}$.
	We also fix a norm $|\cdot|$ on $A$.
	Then we get a product norm $|\cdot|_{I}$ on $\tilde{B}_{K_{\infty},A}^{I}$.
	We also choose a $\Gamma_K$-invariant norm $|\cdot|_{[1,p]}$ on $V_A^{[1,p]}$, which induces a $\Gamma_K$-invariant norm $|\cdot|_{[i,i]}$ on $V_A^{[i,i]}$ for $i=1,p$.
	We endow $\End(V_A^I)$ with the operator norm.
	To simplify notation, we write $C^i=\Ccal_{\FF,\Gamma_K}^{i}(\underline{\End}(V_A))$ and use the standard notation $H^i, Z^i, B^i,\partial$ for its cohomology, cocycles, coboundaries, and differentials.
	Let $e_1,\ldots, e_{r_2}$ be a basis of the finite free $A$-module $H^2$.
	We endow $C^1=\End(V_A^{[1,1]})\oplus \End(V_A^{[1,p]})$ with the sup-norm, $H^2=A^{r_2}$ with the sup-norm, and $B^2$ with the induced norm.
	From now on, we drop the subscripts from the norms.
	Let 
	\[q_1\colon C^2\to H^2=A^{r_2}\]
	be the natural projection.
	We fix a splitting $\iota_1\colon H^2=A^{r_2}\to C^2$ of $q_1$ as Banach $A$-modules, which induces a continuous projection
	\[q_2\colon C^2\to B^2.\]
	Let us take integers $r,s,u>0$ such that:
	\begin{enumerate}
		\item For any $f\in V_A^{[p,p]}$, $|\varphi(f)|\leq p^r|f|$, and for any $f'\in V_A^{[1,1]}$, $|\varphi^{-1}(f')|\leq p^r|f'|$ (such $r$ exists since $\varphi$ and $\varphi^{-1}$ are continuous).
		\item For any $f\in C^2$, $|q_1(f)|\leq p^s|f|$ and $|q_2(f)|\leq p^s|f|$ (such $s$ exists since $q_1,q_2$ are continuous).
		\item For any $f\in B^2$, there exists $f'\in C^1$ such that $\partial(f')=f$ and $|f'|\leq p^u |f|$ (such $u$ exists by the open mapping theorem).
	\end{enumerate}
	We fix representatives $(\alpha_1,\beta_1),\ldots,(\alpha_{r_1},\beta_{r_1})\in Z^1$ for a basis of the finite free $A$-module $H^1$.
	We may assume $|(\alpha_i,\beta_i)|\leq 1$ for any $i$. 

	Let us construct 
	\[g\colon \Spa \left(A\left\langle \frac{X_1,\ldots,X_{r_1}}{p^m}\right\rangle/(F_1,\ldots,F_{r_2})\right)\to \LS_{\GL_n(K),\FF}.\]
	We set $V_n^{I}=V_A^{I}\otimes_A A\langle\underline{X}\rangle/(\underline{X})^{n+1}$ for $I=[1,p],[1,1],[p,p]$ and $n\geq 1$, where $\underline{X}=(X_1,\ldots,X_{r_1})$.
	We define 
	\begin{align*}
		\varphi_1&= (1+\sum_{i=1}^{r_1}\alpha_iX_i)\varphi \colon V_1^{[p,p]}\to V_1^{[1,1]}\\
		\gamma_1 &=(1+\sum_{i=1}^{r_1}\beta_iX_i)\gamma \colon V_1^{[1,p]}\to V_1^{[1,p]},
	\end{align*}
	where $\varphi$ in the definition is the induced morphism $\varphi \otimes \id\colon V_1^{[p,p]} \to V_1^{[1,1]}$ and similarly for $\gamma$.
	Since each $(\alpha_i,\beta_i)$ is a cocycle, it defines a $\Gamma_K$-equivariant vector bundle $V_1$ on $X_{K_{\infty},A\langle\underline{X}\rangle/(\underline{X})^2}$.

	Next, we regard $\varphi_1$ (resp.\ $\gamma_1$) as a morphism $V_2^{[p,p]}\to V_2^{[1,1]}$ (resp.\ $V_2^{[1,p]}\to V_2^{[1,p]}$).
	We set
	\begin{align*}
		\varphi_1^{-1}&= \varphi^{-1}(1+\sum_{i}\overline{\alpha_i}X_i+\sum_{i\leq j} \overline{\alpha_{ij}}X_i X_j) \colon V_2^{[1,1]}\to V_2^{[p,p]}\\
		\gamma_1^{-1} &=\gamma^{-1}(1+\sum_{i}\overline{\beta_i}X_i+\sum_{i\leq j}\overline{\beta_{i,j}}X_iX_j) \colon V_2^{[1,p]}\to V_2^{[1,p]},
	\end{align*}
	where we note that modulo $(\underline{X})^3$, $(1+\sum_{i}\overline{\alpha_i}X_i+\sum_{i\leq j} \overline{\alpha_{ij}}X_i X_j)\equiv(1+\sum_{i=1}^{r_1}\alpha_iX_i)^{-1}$ and $(1+\sum_{i}\overline{\beta_i}X_i+\sum_{i\leq j} \overline{\beta_{ij}}X_i X_j)\equiv(1+\sum_{i=1}^{r_1}\beta_iX_i)^{-1}$.
	Hence, we have $|(\overline{\alpha_i},\overline{\beta_i})|\leq 1$ and $|(\overline{\alpha_{ij}},\overline{\beta_{ij}})|\leq 1$.
	There is an obstruction 
	\[q_1(\varphi_1\gamma_1\varphi_1^{-1}\gamma_1^{-1}-1)\in H^2\otimes_A (\underline{X})^2/(\underline{X})^3.\]
	We define $\delta_{ij}^k \in A$ by
	\[q_1(\varphi_1\gamma_1\varphi_1^{-1}\gamma_1^{-1}-1)=\sum_{k=1}^{r_2} (\sum_{i\leq j}\delta_{ij}^kX_iX_j)e_k\in H^2\otimes_A (\underline{X})^2/(\underline{X})^3.\]
	Then we get $|\delta_{ij}^k| \leq p^{2r+s}$ by (1), (2), $|(\alpha_i,\beta_i)|\leq 1$, $|(\overline{\alpha_i},\overline{\beta_i})|\leq 1$, and $|(\overline{\alpha_{ij}},\overline{\beta_{ij}})|\leq 1$.
	We set $F_2^k=\sum_{i\leq j}\delta_{ij}^kX_iX_j.$
	Next, we take $(\alpha_{ij},\beta_{ij})\in C^1$ such that 
	\[\partial(\sum_{i\leq j}(\alpha_{ij},\beta_{ij})X_iX_j)=q_2(\varphi_1\gamma_1\varphi_1^{-1}\gamma_1^{-1}-1)\]
	in $B^2\otimes_A (\underline{X})^2/(\underline{X})^3$, where we note that $\varphi_1\gamma_1\varphi_1^{-1}\gamma_1^{-1}-1 \in C^2\otimes_A (\underline{X})^2/(\underline{X})^3.$
	By (2) and (3), we can choose such $(\alpha_{ij},\beta_{ij})\in C^1$ so that $|(\alpha_{ij},\beta_{ij})| \leq p^{2r+s+u}$.
	We define
	\begin{align*}
		\varphi_2&= (1+\sum_i\alpha_iX_i+\sum_{i\leq j}\alpha_{ij}X_iX_j)\varphi \colon V_2^{[p,p]}\to V_2^{[1,1]},\\
		\gamma_2&=(1+\sum_i\beta_iX_i+\sum_{i\leq j}\beta_{ij}X_iX_j)\gamma \colon V_2^{[1,p]}\to V_2^{[1,p]}.
	\end{align*}
	Then, we have
	\[\varphi_2\gamma_2\varphi_2^{-1}\gamma_2^{-1}-1\equiv \iota_1(\sum_{k=1}^{r_2} F_2^k e_k)\mod (\underline{X})^3.\]

	Next, for the induction step, suppose that for some $l \geq 2$, we have constructed
	\begin{align*}
		F_l^k&=\sum_{2\leq m\leq l, i_1\leq\cdots \leq i_m}\delta_{i_1\ldots i_m}^kX_{i_1}\cdots X_{i_m},\\
		\varphi_l&= (1+\sum_{1\leq m\leq l, i_1\leq\cdots\leq i_m}\alpha_{i_1\ldots i_m}X_{i_1}\cdots X_{i_m})\varphi \colon V_l^{[p,p]}\to V_l^{[1,1]},\\
		\gamma_l&=(1+\sum_{1\leq m\leq l, i_1\leq\cdots\leq i_m}\beta_{i_1\ldots i_m}X_{i_1}\cdots X_{i_m})\gamma \colon V_l^{[1,p]}\to V_l^{[1,p]}
	\end{align*}
	such that
	\begin{itemize}
		\item $|\delta_{i_1\ldots i_m}^k|\leq p^{(2r+s+u)(m-1)-u}$,
		\item $|(\alpha_{i_1\ldots i_m},\beta_{i_1\ldots i_m})|\leq p^{(2r+s+u)(m-1)}$,
		\item $\varphi_l\gamma_l\varphi_l^{-1}\gamma_l^{-1}-1\equiv \iota_1(\sum_{k=1}^{r_2} F_l^k e_k) \mod (\underline{X})^{l+1}$.
	\end{itemize}
	We regard $\varphi_l$ (resp.\ $\gamma_l$) as a morphism $V_{l+1}^{[p,p]}\to V_{l+1}^{[1,1]}$ (resp.\ $V_{l+1}^{[1,p]}\to V_{l+1}^{[1,p]}$).
	We set
	\begin{align*}
		\varphi_l^{-1}&=\varphi^{-1}(1+\sum_{1\leq m\leq l+1, i_1\leq\cdots\leq i_m}\overline{\alpha_{i_1\ldots i_m}}X_{i_1}\cdots X_{i_m}) \colon V_{l+1}^{[1,1]}\to V_{l+1}^{[p,p]}\\
		\gamma_l^{-1}&=\gamma^{-1}(1+\sum_{1\leq m\leq l+1, i_1\leq\cdots \leq i_m}\overline{\beta_{i_1\ldots i_m}}X_{i_1}\cdots X_{i_m}) \colon V_{l+1}^{[1,p]}\to V_{l+1}^{[1,p]}.
	\end{align*}
	Then for $m\leq l$, we have $|(\overline{\alpha_{i_1\ldots i_m}},\overline{\beta_{i_1\ldots i_m}})|\leq p^{(2r+s+u)(m-1)}$, and for $m=l+1$, we have $|(\overline{\alpha_{i_1\ldots i_{l+1}}},\overline{\beta_{i_1\ldots i_{l+1}}})|\leq p^{(2r+s+u)(l-1)}$.
	There is an obstruction
	\[q_1(\varphi_l\gamma_l\varphi_l^{-1}\gamma_l^{-1}-1-\iota_1(\sum_{k=1}^{r_2} F_l^k e_k))\in H^2\otimes_A (\underline{X})^{l+1}/(\underline{X})^{l+2}.\]
	We define $\delta_{i_1\ldots i_{l+1}}^k\in A$ by 
	\begin{align*}
	&q_1(\varphi_l\gamma_l\varphi_l^{-1}\gamma_l^{-1}-1-\iota_1(\sum_{k=1}^{r_2} F_l^k e_k))\\
	=&\sum_{k=1}^{r_2} (\sum_{i_1\leq \cdots \leq i_{l+1}}\delta_{i_1\ldots i_{l+1}}^kX_{i_1}\cdots X_{i_{l+1}})e_k\in H^2\otimes_A (\underline{X})^{l+1}/(\underline{X})^{l+2}.
    \end{align*}
	Then we have $|\delta_{i_1\ldots i_{l+1}}^k|\leq p^{(2r+s+u)l-u}$ by (1), (2), and the induction hypothesis.
	We set
	\[F_{l+1}^k=\sum_{2\leq m\leq l+1, i_1\leq\cdots\leq i_m}\delta_{i_1\ldots i_m}^kX_{i_1}\cdots X_{i_m}.\]
	Next, we take $(\alpha_{i_1\ldots i_{l+1}},\beta_{i_1\ldots i_{l+1}})\in C^1$ such that 
	\[\partial(\sum_{i_1\leq \cdots \leq i_{l+1}}(\alpha_{i_1\ldots i_{l+1}},\beta_{i_1\ldots i_{l+1}})X_{i_1}\cdots X_{i_{l+1}})=q_2(\varphi_l\gamma_l\varphi_l^{-1}\gamma_l^{-1}-1)\]
	in $B^2\otimes_A (\underline{X})^{l+1}/(\underline{X})^{l+2}$.
	By (2) and (3), we can choose such $(\alpha_{i_1\ldots i_{l+1}},\beta_{i_1\ldots i_{l+1}})\in C^1$ so that $|(\alpha_{i_1\ldots i_{l+1}},\beta_{i_1\ldots i_{l+1}})| \leq p^{(2r+s+u)l}$.
	We set 
	\begin{align*}
		\varphi_{l+1}&= (1+\sum_{1\leq m\leq l+1, i_1\leq\cdots\leq i_m}\alpha_{i_1\ldots i_m}X_{i_1}\cdots X_{i_m})\varphi \colon V_{l+1}^{[p,p]}\to V_{l+1}^{[1,1]}\\
		\gamma_{l+1}&=(1+\sum_{1\leq m\leq l+1, i_1\leq\cdots\leq i_m}\beta_{i_1\ldots i_m}X_{i_1}\cdots X_{i_m})\gamma \colon V_{l+1}^{[1,p]}\to V_{l+1}^{[1,p]}
	\end{align*}
	By construction, we have
	\[\varphi_{l+1}\gamma_{l+1}\varphi_{l+1}^{-1}\gamma_{l+1}^{-1}-1\equiv \iota_1(\sum_{k=1}^{r_2} F_{l+1}^k e_k) \mod (\underline{X})^{l+2}.\]

	By the above inductive argument, we formally obtain
	\begin{align*}
		F_{\infty}^k&=\sum_{2\leq m, i_1\leq\cdots\leq i_m}\delta_{i_1\ldots i_m}^kX_{i_1}\cdots X_{i_m},\\
		\varphi_{\infty}&= (1+\sum_{1\leq m, i_1\leq\cdots\leq i_m}\alpha_{i_1\ldots i_m}X_{i_1}\cdots X_{i_m})\varphi \colon V_A^{[p,p]}[[\underline{X}]]\to V_A^{[1,1]}[[\underline{X}]],\\
		\gamma_{\infty}&=(1+\sum_{1\leq m, i_1\leq\cdots\leq i_m}\beta_{i_1\ldots i_m}X_{i_1}\cdots X_{i_m})\gamma \colon V_A^{[1,p]}[[\underline{X}]]\to V_A^{[1,p]}[[\underline{X}]]
	\end{align*}
		such that
	\begin{itemize}
		\item $|\delta_{i_1\ldots i_m}^k|\leq p^{(2r+s+u)(m-1)-u}$,
		\item $|(\alpha_{i_1\ldots i_m},\beta_{i_1\ldots i_m})|\leq p^{(2r+s+u)(m-1)}$,
		\item $\varphi_{\infty}\gamma_{\infty}\varphi_{\infty}^{-1}\gamma_{\infty}^{-1}-1= \iota_1(\sum_{k=1}^{r_2} F_{\infty}^k e_k)$.
	\end{itemize}
	Let $\tilde{B}=A\left\langle \frac{\underline{X}}{p^{2r+s+u+1}}\right\rangle$ with the Gauss norm.
	Then we have $F_{\infty}^1,\ldots,F_{\infty}^{r_2}\in \tilde{B}$.
	Let $B=\tilde{B}/(F_{\infty}^1,\ldots,F_{\infty}^{r_2})$.
	For $I=[1,p],[1,1],[p,p]$, we set 
	\[V_{\tilde{B}}^{I}=V_A^{I}\otimes_{\tilde{B}_{K_{\infty},A}^{I}} \tilde{B}_{K_{\infty},\tilde{B}}^{I},\] 
	equipped with the induced norm.
	Then $\varphi_{\infty}$ (resp.\ $\gamma_{\infty}$) defines a $\varphi$-semilinear isomorphism $\varphi_{\infty}\colon V_{\tilde{B}}^{[p,p]}\xrightarrow{\sim}V_{\tilde{B}}^{[1,1]}$ (resp.\ a $\gamma$-semilinear isomorphism $\gamma_{\infty}\colon V_{\tilde{B}}^{[1,p]}\xrightarrow{\sim}V_{\tilde{B}}^{[1,p]}$). 
	Moreover, $\gamma_{\infty}$ defines a continuous $\Gamma_K$-action on $V_{\tilde{B}}^{[1,p]}$ since $\gamma_{\infty}=(1+u)\gamma$ with $|u|<1$, and the $\gamma$-invariance of the norm implies that $\gamma_{\infty}^{p^a}\to 1$ as $a\to \infty$.
	Since we have 
	\[\varphi_{\infty}\gamma_{\infty}\varphi_{\infty}^{-1}\gamma_{\infty}^{-1}-1= \iota_1(\sum_{k=1}^{r_2} F_{\infty}^k e_k),\]
	these actions define a $\Gamma_K$-equivariant vector bundle $V_B$ on $X_{K_{\infty},B}$. 
	In other words, it defines a morphism 
	\[g\colon \Spa B=\Spa \left(A\left\langle \frac{X_1,\ldots,X_{r_1}}{p^{2r+s+u+1}}\right\rangle/(F_{\infty}^1,\ldots,F_{\infty}^{r_2})\right)\to \LS_{\GL_n(K),\FF}.\]
	By comparing this construction with the proof of \cite[Expos\'{e} VI, Theorem 1.11]{SGA7} and noting Lemma \ref{lem:Hensel's lemma}, we find that for any classical point $t\in (\Spa A)^{\cl}$, the induced morphism
	\[g\colon \Spa \left(k(t)\left\langle \frac{X_1,\ldots,X_{r_1}}{p^{2r+s+u+1}}\right\rangle/(F_{\infty}^1,\ldots,F_{\infty}^{r_2})\right)\to \LS_{\GL_n(K),\FF}\] 
	is formally versal at $i(t)$ (see the following remark).
\end{proof}
\begin{remark}\label{rem:comparison with Rim-Schlessinger}
	We use the same notation as in the above proof.
	Let $t\in (\Spa A)^{\cl}$ be a classical point.
	For $l\geq 2$, we define an ideal $I_{l}=(F^1_l,\ldots,F^{r_2}_l)+(\underline{X})^{l+1}\subset k(t)[[\underline{X}]]$.
	Then we have $\bigcap_{l}I_l=(F_{\infty}^1,\ldots,F_{\infty}^{r_2})$.
	In the above proof, we also constructed a compatible system of vector bundles $V^l$ on $X_{K_{\infty}}\times \Spa (k(t)[[\underline{X}]]/I_l)$.
	This defines a compatible system 
	\[\Spa (k(t)[[\underline{X}]]/I_l)\to \LS_{\GL_n(K),\FF},\]
	which is a formal neighborhood at the origin of the morphism 
	\[g\colon \Spa \left(k(t)\left\langle \frac{X_1,\ldots,X_{r_1}}{p^{2r+s+u+1}}\right\rangle/(F_{\infty}^1,\ldots,F_{\infty}^{r_2})\right)\to \LS_{\GL_n(K),\FF}\] 
	constructed above.
	We set $I_{1}=(\underline{X})^2$.
	By construction, for each $l\geq 2$, $I_l$ is the smallest ideal satisfying the following properties.
	\begin{enumerate}
		\item $I_{l-1}\supset I_l \supset (\underline{X})I_{l-1}$.
		\item The $\Gamma_K$-equivariant vector bundle $V^{l-1}$ on $X_{K_{\infty}}\times \Spa (k(t)[[\underline{X}]]/I_{l-1})$ extends to a $\Gamma_K$-equivariant vector bundle $V^{l}$ on $X_{K_{\infty}}\times \Spa (k(t)[[\underline{X}]]/I_{l})$.
	\end{enumerate}
	This is the characterization appearing in \cite[Expos\'{e} VI, Proposition 1.8 and Theorem 1.11]{SGA7}.
\end{remark}

\begin{remark}
	In this construction, it is important that every $2$-cochain of $\Ccal_{\FF,\Gamma_K}(\underline{\End}(V_A))$ is a $2$-cocycle.
\end{remark}

\begin{theorem}
	The \'{e}tale stack $\LS_{\GL_n(K),\FF}$ on $\Aff_{\Qbb_p}^{\op}$ is a rigid analytic Artin stack.
\end{theorem}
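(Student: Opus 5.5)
The statement follows by verifying the four hypotheses of Theorem \ref{thm:criterion for representablity} for $\Xcal=\LS_{\GL_n(K),\FF}$ over $K=\Qbb_p$; each has essentially been established above. First, $\LS_{\GL_n(K),\FF}$ is an \'{e}tale stack by definition, and it is $\Fin$-homogeneous by Lemma \ref{lem:EG stack Fin-homogeneous}. Second, Corollary \ref{cor:EG stack diagonal} says the diagonal is representable by rigid spaces. Since every rigid space is a quasi-analytic space (its own \'{e}tale chart, with diagonal a locally closed immersion), the diagonal is in particular representable by quasi-analytic spaces. Third, coherent deformations and obstructions is exactly Lemma \ref{lem:coherent deformation and obstruction Herr complex}. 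Here I would make explicit how it is used: the obstruction functor is taken to be $\Ocal bs(A,I)=H^2(\Ccal^{\bullet}_{\FF,\Gamma_K}(\underline{\End}(V))\otimes_A I)$, and the exact sequence of Lemma \ref{lem:obstruction Herr complex} supplies the injection of $\Coker(\Exan_{\Xcal}(A,-)\to\Exan(A,-))$ into it. One should also check that these are compatible with restriction to rational opens, so that they give presheaves.

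For condition (4), start from $\Spa R\to\Xcal$ and $x\in\Spa R$, and take the data produced by Theorem \ref{thm:EG stack smooth chart}. This gives a locally Zariski closed smooth affinoid $\Spa A\subset\Spa R$ containing $x$ with $A\to k(x)$ injective. Since $\Spa A\to\Spa R$ is a locally closed immersion through $x$, we get $k(y)\cong k(x)$ for $y=x$. Next take
\[B=A\left\langle \tfrac{X_1,\ldots,X_{r_1}}{p^m}\right\rangle/(F_1,\ldots,F_{r_2}),\]
let $f\colon\Spa B\to\Spa A$ be the structure map, and let $i$ be the zero section. The section is well defined because each $F_k\in(\underline{X})^2$, so $F_k$ vanishes at $X=0$. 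Let $g$ be the morphism constructed in that theorem. The required square commutes by construction, since modulo $(\underline{X})$ the bundle $V_B$ restricts to $V_A=V_R\hotimes_R A$. Finally, $f^{-1}(t)=\Spa(B\otimes_A k(t))=\Spa\bigl(k(t)\langle \underline{X}/p^m\rangle/(F_1,\ldots,F_{r_2})\bigr)$ for $t\in(\Spa A)^{\cl}$, and the fiberwise formal versality at $i(t)$ is the last assertion of Theorem \ref{thm:EG stack smooth chart}.

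The main point that deserves care is this last versality step, i.e.\ the comparison in Remark \ref{rem:comparison with Rim-Schlessinger} with the Rim--Schlessinger/SGA7 construction. One must check two things. First, the completion of $f^{-1}(t)$ at $i(t)$ is $k(t)[[\underline{X}]]/(F_\infty)$, because the $F_k$ lie in the Tate algebra and their images generate the same ideal in the completion. Second, since $H^i$ is finite free over $A$, base change (Lemma \ref{lem:finiteness of (phi, Gamma) cohomology} and Remark \ref{rem:flatness}) makes the reductions mod $t$ of the chosen bases of $H^1$ and $H^2$ into bases over $k(t)$. Hence the inductive construction specializes to the minimal-ideal construction over $k(t)$. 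Combined with Lemma \ref{lem:Hensel's lemma}, which reduces versality to test algebras over $k(t)$ with residue field $k(t)$, this gives formal versality. With (1)--(4) verified, Theorem \ref{thm:criterion for representablity} implies that $\LS_{\GL_n(K),\FF}$ is a rigid analytic Artin stack.
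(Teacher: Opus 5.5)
Your proposal is correct and follows the paper's proof exactly: the paper deduces the theorem from Theorem \ref{thm:criterion for representablity} by checking its hypotheses via Lemma \ref{lem:EG stack Fin-homogeneous}, Corollary \ref{cor:EG stack diagonal}, Lemma \ref{lem:coherent deformation and obstruction Herr complex}, and Theorem \ref{thm:EG stack smooth chart}. Your additional remarks on fiberwise versality only spell out what the paper records in Remark \ref{rem:comparison with Rim-Schlessinger} together with Lemma \ref{lem:Hensel's lemma}, namely specializing the bases of $H^1, H^2$ to $k(t)$ and comparing with the SGA7 hull.
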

\begin{proof}
	This follows from Theorem \ref{thm:criterion for representablity}, Lemma \ref{lem:EG stack Fin-homogeneous}, Corollary \ref{cor:EG stack diagonal}, Lemma \ref{lem:coherent deformation and obstruction Herr complex}, and Theorem \ref{thm:EG stack smooth chart}.
\end{proof}

For later use, we record the following Corollary of Theorem \ref{thm:EG stack smooth chart}.
\begin{corollary}\label{cor:smooth chart}
	Let $F$ be a finite extension of $\Qbb_p$, and let $x\colon \Spa F\to \LS_{\GL_n(K),\FF}$ be a classical point corresponding to a $\Gamma_K$-equivariant vector bundle $V_x$ on $X_{K_{\infty},F}$.
	Set $r_i=\dim_F H^i(\Ccal_{\FF,\Gamma_K}^{\bullet}(\underline{\End}(V_x)))$ for $i=0,1,2$.
	For a sufficiently large integer $m$, there exists a smooth morphism
	\[g\colon \Spa \left(F\left\langle\frac{X_1,\ldots,X_{r_1}}{p^m}\right\rangle/(F_1,\ldots,F_{r_2})\right)\to \LS_{\GL_n(K),\FF}\]
	of pure dimension $r_0$ whose image contains $x$, where 
	\[F_1,\ldots,F_{r_2}\in (X_1,\ldots,X_{r_1})^2F\left\langle\frac{X_1,\ldots,X_{r_1}}{p^m}\right\rangle.\]
\end{corollary}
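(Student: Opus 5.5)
We apply Theorem \ref{thm:EG stack smooth chart} to the classical point itself, taking $\Spa R=\Spa F$ and $x$ the unique point. Then $A=F$, which is smooth over $\Qbb_p$, and the integers $r_1,r_2$ of the theorem are the $F$-dimensions of $H^1$ and $H^2$ of $\Ccal_{\FF,\Gamma_K}^{\bullet}(\underline{\End}(V_x))$. For $m$ sufficiently large the theorem gives $F_1,\ldots,F_{r_2}\in(\underline{X})^2F\langle \underline{X}/p^m\rangle$ and a morphism
\[g\colon \Spa B\to \LS_{\GL_n(K),\FF},\qquad B=F\left\langle\frac{X_1,\ldots,X_{r_1}}{p^m}\right\rangle/(F_1,\ldots,F_{r_2}),\]
whose composition with the origin $i\colon\Spa F\to\Spa B$ is $x$. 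Thus $x$ lies in the image of $g$. The theorem also says $g$ is formally versal at $i(x)$.

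\textbf{Smoothness near the origin.} By Theorem \ref{thm:openness of versality} (its hypotheses hold by Lemmas \ref{lem:EG stack Fin-homogeneous} and \ref{lem:coherent deformation and obstruction Herr complex}), there is a Zariski open $U\subset\Spa B$ containing $i(x)$ on whose classical points $g$ is formally versal. Corollary \ref{cor:formally versal iff formally smooth} and Proposition \ref{prop:formally smooth implies smooth} then show $g|_U$ is smooth. Here we use that the diagonal is representable by rigid spaces (Corollary \ref{cor:EG stack diagonal}), so $g$ is representable by rigid spaces.

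\textbf{Shrinking to the required shape.} We want the whole source to have the displayed form. Since $U$ is a Zariski open neighbourhood of the origin, it contains a rational neighbourhood $\{|X_j|\le |p|^{m'}\}$ of the origin for some $m'\ge m$. Restricting $g$ to this neighbourhood gives a space of the same form, with $m$ replaced by $m'$ and the same $F_k$, which still lie in $(\underline{X})^2$. So after enlarging $m$, $g$ is smooth on all of its source.

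\textbf{Relative dimension.} It remains to show that $g$ has pure dimension $r_0$. By Lemma \ref{lem:smooth of pure dimension}, the source splits into open and closed pieces of constant relative dimension, so after shrinking further (again to a polydisc around the origin) it suffices to compute $\dim_{i(x)} g$. For a smooth morphism, the relative dimension at $i(x)$ equals the rank of the relative tangent space, namely $\dim$ of the tangent space of the source minus $\dim$ of the tangent space of the stack plus $\dim$ of the infinitesimal automorphisms. The source's tangent space at the origin has dimension $r_1$, since the $F_k$ have no linear terms. By Lemma \ref{lem:deformation Herr complex}, the stack's tangent space has dimension $r_1$ and its infinitesimal automorphisms have dimension $r_0$.

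Equivalently, formal versality together with Remark \ref{rem:comparison with Rim-Schlessinger} shows that the tangent map $T_{i(x)}\Spa B\to \Def(F,F)$ is an isomorphism. The fibre of $g$ over $x$, namely $\Spa B\times_{\LS}\Spa F$, is smooth at the point $i(x)$ with tangent space $\ker\oplus\Aut(F,F)$, which has dimension $r_0$. Hence $\dim_{i(x)}g=r_0$, and $g$ has pure dimension $r_0$ near the origin.

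The main obstacle is this last tangent-space computation for a smooth morphism to a stack. It needs the exact sequence $0\to\Aut\to T(\text{fibre})\to T(\text{source})\to\Def$ for the fibre of $g$. I would derive it by base-changing $g$ along a smooth chart and using that the tangent space of $\Spa F\times_{\LS}\Spa F$ is $\Aut_{\LS}(F,F)$.
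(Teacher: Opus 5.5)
Your proposal is correct and follows the paper's proof. You apply Theorem \ref{thm:EG stack smooth chart} with $R=F$, use Lemma \ref{lem:smooth of pure dimension} to reduce (after enlarging $m$) to the relative dimension at the origin, and compute that relative dimension as $\dim_F T_{V,z}=\dim_F \Aut_{\LS_{\GL_n(K),\FF}}(F,F)=r_0$ for the fibre $V=U\times_{\LS_{\GL_n(K),\FF},x}\Spa F$. You also spell out why $g$ is smooth near the origin, via openness of versality as in the proof of Theorem \ref{thm:criterion for representablity}; the paper leaves this step implicit.
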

\begin{proof}
	Let
	\[g\colon \Spa \left(F\left\langle\frac{X_1,\ldots,X_{r_1}}{p^m}\right\rangle/(F_1,\ldots,F_{r_2})\right)\to \LS_{\GL_n(K),\FF}\]
	be a morphism constructed in Theorem \ref{thm:EG stack smooth chart}.
	We need to show that after enlarging $m$, $g$ becomes a smooth morphism of pure dimension $r_0$.
	We set $U=\Spa \left(F\left\langle\frac{X_1,\ldots,X_{r_1}}{p^m}\right\rangle/(F_1,\ldots,F_{r_2})\right)$.
	Let $y$ be a classical point of $U$ defined by $X_1=\cdots=X_{r_1}=0$.
	By construction, we have $g(y)=x$.
	By Lemma \ref{lem:smooth of pure dimension}, the relative dimension of $g$ is constant on a neighborhood of $y$.
	Thus, after replacing $m$ by a sufficiently large integer, it suffices to show that the relative dimension at $y$ is $r_0$.
	Set $V=U\times_{\LS_{\GL_n(K),\FF},x}\Spa F$, which is a rigid space by Corollary \ref{cor:EG stack diagonal}.
	From the classical point $y\colon \Spa F\to U$, we obtain the classical point $z=(y,\id)\colon \Spa F\to U\times_{\LS_{\GL_n(K),\FF},x}\Spa F=V$.
	Let us prove that $\dim_z V=r_0$.
	The tangent space $T_{V,z}$ of $V$ at $z$ is the preimage of $z$ under the morphism $V(F[\epsilon]/(\epsilon^2))\to V(F)$, which is naturally equipped with an $F$-vector space structure.
	By construction and Lemma \ref{lem:deformation Herr complex}, this is isomorphic to $\Aut_{\LS_{\GL_n(K),\FF}}(F,F)\cong H^0(\Ccal_{\FF,\Gamma_K}^{\bullet}(\underline{\End}(V_x)))$, which proves the claim.
\end{proof}

\subsection{Representability for general $G$}

Let $G$ be a linear algebraic group over $K$.
For an affinoid $K$-algebra $A$, we regard $X_{K_{\infty},A}$ as an adic space over $\Spa K$ via $X_{K_{\infty},A}\to \Spa A\to \Spa K$.

In Appendix \ref{appendix:Gamma-equivariant torsor}, we define the notion of geometric (resp.\ Tannakian) $\Gamma_K$-equivariant $G$-torsors on $X_{K_{\infty},A}$.
Since these two notions are equivalent (Theorem \ref{thm:geometric vs Tannakian}), we simply refer to them as \textit{$\Gamma_K$-equivariant $G$-torsors}.

\begin{definition}\label{defn:EG stack general algebriac group}
	Let $\LS_{G(K),\FF}$ denote the \'{e}tale stack 
    \[\Aff_{K}\ni A\mapsto \{\text{$G_K$-equivariant $G$-torsors on $X_{\Cbb_p,A}$}\}.\]
\end{definition}

\begin{remark}
	By considering the Tannakian formalism (Definition \ref{defn:tannakian torsor}), we find that $\LS_{G(K),\FF}$ is equivalent to the \'{e}tale stack 
	\[\Aff_{K}\ni A\mapsto \{\text{$\Gamma_K$-equivariant $G$-torsors on $X_{K_{\infty},A}$}\}.\]
\end{remark}

We deduce the representability of $\LS_{G(K),\FF}$ from that of $\LS_{\GL_n(K),\FF}$.
We choose a closed embedding $G\hookrightarrow \GL_n$ of algebraic groups.

\begin{theorem}\label{thm:EG stack representable for general G}
	The natural morphism 
	\[\LS_{G(K),\FF} \to \LS_{\GL_n(K),\FF}\]
	induced by the closed embedding $G\hookrightarrow \GL_n$ is representable by separated rigid spaces.
	In particular, $\LS_{G(K),\FF}$ is a rigid analytic Artin stack (over $K$).
\end{theorem}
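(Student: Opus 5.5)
The plan is the standard reduction of $G$-torsors to $\GL_n$-torsors with a reduction of structure group. Fix $\Spa A\to \LS_{\GL_n(K),\FF}$, corresponding to a $\Gamma_K$-equivariant vector bundle $V$ on $X_{K_{\infty},A}$; let $P=\underline{\Isom}(\Ocal^n,V)$ be its frame bundle, a $\Gamma_K$-equivariant $\GL_n$-torsor. By Theorem \ref{thm:geometric vs Tannakian}, the fiber product $\LS_{G(K),\FF}\times_{\LS_{\GL_n(K),\FF}}\Spa A$ is identified with the sheaf on $(\Aff_A)^{\op}$ sending $B$ to the set of $\Gamma_K$-equivariant reductions of $P_B$ to $G$. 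Such a reduction is the same as a $\Gamma_K$-equivariant section of the quotient $P_B/G=P_B\times^{\GL_n}(\GL_n/G)$ over $X_{K_{\infty},B}$. No extra automorphisms appear, since $G\to \GL_n$ is a monomorphism. So the task is to represent the functor of equivariant sections of this homogeneous bundle.

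Next, make $\GL_n/G$ linear, using Chevalley's theorem. There is a finite-dimensional representation $W$ of $\GL_n$ and a line $\ell\subset W$ with stabilizer $G$. Replacing $W$ by $W\otimes \ell^{-1}$, or using $W\oplus \det^{\pm}$ so that a vector is fixed exactly by $G$ (possible because $\GL_n/G$ is quasi-affine), we get a vector $w\in W$ whose orbit map $\GL_n/G\hookrightarrow W$ is a locally closed immersion. This induces a $\Gamma_K$-equivariant vector bundle $W(V)=P\times^{\GL_n}W$ and an immersion $P/G\hookrightarrow \Vbb(W(V))$ over $X_{K_{\infty},A}$. A reduction of structure group then corresponds to a $\Gamma_K$-invariant global section $s\in H^0(X_{K_{\infty},B},W(V)_B)^{\Gamma_K}=\Hom_{\Gamma_K}(\Ocal,W(V)_B)$ that lands fiberwise in the orbit. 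By Lemma \ref{lem:Hom space representable EG}, the sections form a rigid space $\intHom_{\Gamma_K}(\Ocal,W(V))$ over $\Spa A$, so what remains is to cut out the orbit condition.

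I expect the orbit condition to be the main obstacle, since the orbit is only locally closed and the condition must hold on all of $X_{K_{\infty},B}$. The orbit closure $\bar{O}$ is closed, with ideal generated by finitely many $\GL_n$-semi-invariant polynomials. The condition $s\in \bar{O}(V)$ says that certain equivariant maps $\mathrm{Sym}^k W(V)^\vee\to \Ocal$-twists vanish on $s$; this is a closed condition cut out by the vanishing of sections of equivariant vector bundles, hence Zariski closed in $\intHom_{\Gamma_K}(\Ocal,W(V))$ by the same kernel description as in Lemma \ref{lem:Hom space representable EG}. The condition that $s$ avoids the boundary $\bar O\setminus O$ everywhere on $X_{K_{\infty},B}$ should be open by Lemma \ref{lem:FF curve closed map}: the locus in $X_{K_{\infty},B}$ where $s$ meets the boundary is closed, its image in $\Spa B$ is closed, and we take the complement, exactly as in the proof for $\underline{\Surj}$. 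This shows the fiber product is a locally closed subspace of a rigid space, hence a separated rigid space.

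The conclusion then follows formally. Representability by separated rigid spaces means the diagonal of $\LS_{G(K),\FF}$ is representable by quasi-analytic spaces, since the diagonal factors through that of $\LS_{\GL_n(K),\FF}$ together with a separated representable morphism. Pulling back a smooth atlas of $\LS_{\GL_n(K),\FF}$ gives a rigid space $U'$ over $\LS_{G(K),\FF}$, but $U'\to\LS_{G(K),\FF}$ need not be smooth. To get a smooth atlas, apply Theorem \ref{thm:criterion for representablity} to $\LS_{G(K),\FF}$: homogeneity and coherent deformation/obstruction theory come from the Herr complex of the adjoint bundle $\mathrm{Lie}(G)(P)$, as in Lemmas \ref{lem:deformation Herr complex} and \ref{lem:obstruction Herr complex}, using that $G$ is smooth. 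The overconvergent versal family is built as in Theorem \ref{thm:EG stack smooth chart}, with $\varphi_\infty,\gamma_\infty$ taken in $G$ via the exponential near $1$.
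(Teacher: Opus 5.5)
Your reduction to $\Gamma_K$-equivariant sections of $P_B/G$, and your openness argument via Lemma \ref{lem:FF curve closed map}, match the paper. The linearization step in your second paragraph, however, fails for general $G$.

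Chevalley's theorem only gives a line $\ell\subset W$ with stabilizer $G$, i.e.\ a locally closed immersion $\GL_n/G\hookrightarrow\Pbb(W)$ (Construction \ref{construction:Chevalley}). A vector whose stabilizer is exactly $G$ exists only when $G$ is observable, i.e.\ when $\GL_n/G$ is quasi-affine, and this fails in general. For $G=P_{m,n-m}$ with $0<m<n$, the quotient $\GL_n/G$ is a positive-dimensional Grassmannian, so every morphism $\GL_n/G\to W$ is constant and no orbit embedding into a vector space exists. This is not a marginal case: the stacks $\LS_{P_{m,n-m}(K),\FF}$ are exactly what the paper needs in Theorem \ref{thm:parabolic dimension upper bound} and Lemma \ref{lem:Plucker relation}.

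Neither of your fixes helps, because no representation of $\GL_n$ contains a vector with stabilizer $P_{m,n-m}$. In particular, the character $\chi$ of $G$ on $\ell$ cannot extend to $\GL_n$, since otherwise $W\otimes\chi^{-1}$ would contain such a vector. So a reduction of structure group is \emph{not} a $\Gamma_K$-invariant global section of one fixed equivariant bundle $W(V)$, and Lemma \ref{lem:Hom space representable EG} cannot be applied as you do.

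What is missing is a way to represent equivariant sections of the projective bundle $\Pbb(\Ecal)$, where $\Ecal=P\times^{\GL_n}E$. Such a section is an equivariant quotient $\Ecal^{\vee}\twoheadrightarrow\mathcal{L}$ onto a line bundle whose isomorphism class varies with the base. The paper handles this in two steps.
\begin{enumerate}
\item It maps this functor to $X^{\an}(K^{\times})\times\Spa A$ via $\LS_{\GL_1(K),\FF}\cong X^{\an}(K^{\times})/\Gbb_m^{\an}$. Over a character $\delta$ the fibre is $\underline{\Surj}_{\Gamma_K}(\Ecal_B^{\vee},\Ocal(\delta))/\Gbb_{m,B}^{\an}$, which is locally Zariski closed in a projective bundle $\Pbb(M)$ coming from the perfect-complex description of $\intHom_{\Gamma_K}$.
\item The condition that the section lands in the locally closed $P/G\subset\Pbb(\Ecal)$ is no longer the vanishing of an equivariant map. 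The paper treats its closed part with Proposition \ref{prop:locally closed subspace representable}, which produces a smallest ideal of $A$ using $\tilde{B}_{K_{\infty}}^{[1,p]}\cong\hat{\oplus}K$.
\end{enumerate}
Your affine argument would work for observable $G$, e.g.\ reductive $G$. Even then, you should cut out $\bar{O}$ by a finite-dimensional $\GL_n$-stable space of generators of its ideal, since semi-invariants need not generate it.

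Your last paragraph also contains an error. The base change $U'=U\times_{\LS_{\GL_n(K),\FF}}\LS_{G(K),\FF}\to\LS_{G(K),\FF}$ of a smooth surjection \emph{is} a smooth surjection. Indeed, for any rigid space $T\to\LS_{G(K),\FF}$ one has $U'\times_{\LS_{G(K),\FF}}T\cong U\times_{\LS_{\GL_n(K),\FF}}T$. By the first part $U'$ is a rigid space, so the ``in particular'' is formal. Rerunning Theorem \ref{thm:criterion for representablity} with an exponential versal family, which you only sketch, is unnecessary.
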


Before the proof, we establish the following statements.

\begin{construction}\label{construction:Chevalley}
By Chevalley's theorem (\cite[Theorem 11.2]{Humphreys75}), there exist a finite-dimensional algebraic representation $E$ of $\GL_n$ and a one-dimensional subspace $L\subset E$ such that the stabilizer of $L$ in $\GL_n$ is $G$.
Let $\Pbb(E)$ denote the projective bundle over $\Spec K$ associated to $E$, which classifies line subbundles of $E$, and let $\Pbb(E)^{\an}$ denote its analytification.
The one-dimensional subspace $L\subset E$ defines a $\Spa K$-valued point $x$ of $\Pbb(E)^{\an}$.
Then the quotient $\GL_n^{\an}/G^{\an}$ (as \'{e}tale sheaves) is represented by the analytification of the quasi-projective scheme $\GL_n/G$ over $K$, and the morphism $\GL_n^{\an}/G^{\an}\to \Pbb(E)^{\an};\; \alpha\mapsto \alpha x$ is a locally closed immersion by \cite[Proposition 8.3]{Humphreys75}.
\end{construction}

For an affinoid $K$-algebra $A$, the adic space $X_{K_{\infty},A}$ is locally noetherian by \cite[Theorem 3.2]{Ked16}.
Therefore, we can endow the category of adic spaces locally of finite type over $X_{K_{\infty},A}$ with the \'{e}tale topology as in \cite{Hub96}.
An \'{e}tale sheaf over $X_{K_{\infty},A}$ means a sheaf on this site.

\begin{lemma}\label{lem:quotient quasi-projective}
	Let $A$ be an affinoid $K$-algebra, and let $P\to X_{K_{\infty},A}$ be a geometric $\GL_n$-torsor.
	Let $\Ecal$ be a geometric vector bundle on $X_{K_{\infty},A}$ defined by $\Ecal=P\times^{\GL_n^{\an}} E^{\an}$, where $E^{\an}$ is the geometric vector bundle on $\Spa K$ associated to $E$.
	Let $\Pbb(\Ecal)$ be the projective bundle on $X_{K_{\infty},A}$ associated to $\Ecal$.
	Then the quotient $P/(G^{\an}\times X_{K_{\infty},A})$ as an \'{e}tale sheaf over $X_{K_{\infty},A}$ is representable by an adic space locally of finite type over $X_{K_{\infty},A}$, and there is a locally closed immersion
	\[P/(G^{\an}\times X_{K_{\infty},A})\hookrightarrow \Pbb(\Ecal).\]
\end{lemma}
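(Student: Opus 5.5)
The plan is to reduce to the absolute situation of Construction \ref{construction:Chevalley} by descent along the torsor $P$. The key observation is that the quotient $P/(G^{\an}\times X_{K_{\infty},A})$ is the twist of the constant sheaf $\GL_n^{\an}/G^{\an}$ by $P$:
\[P/(G^{\an}\times X_{K_{\infty},A}) \cong P\times^{\GL_n^{\an}}(\GL_n^{\an}/G^{\an}),\]
where $\GL_n^{\an}$ acts on $\GL_n^{\an}/G^{\an}$ by left multiplication. Similarly, $\Pbb(\Ecal)\cong P\times^{\GL_n^{\an}}\Pbb(E)^{\an}$, because $\Ecal=P\times^{\GL_n^{\an}}E^{\an}$ and forming the projective bundle commutes with twisting. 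The $\GL_n$-equivariant locally closed immersion $\GL_n^{\an}/G^{\an}\hookrightarrow \Pbb(E)^{\an}$, $\alpha\mapsto \alpha x$, from Construction \ref{construction:Chevalley} then twists to a monomorphism of \'{e}tale sheaves
\[\iota\colon P/(G^{\an}\times X_{K_{\infty},A})\to \Pbb(\Ecal).\]

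Concretely, I would first verify both identifications on the level of \'{e}tale sheaves over $X_{K_{\infty},A}$. Since $P$ is a geometric $\GL_n$-torsor, there is an \'{e}tale covering $\{U_j\to X_{K_{\infty},A}\}$ with $P|_{U_j}\cong \GL_n^{\an}\times U_j$. Over each $U_j$ both sides become trivial products: $(\GL_n^{\an}/G^{\an})\times U_j$ and $\Pbb(E)^{\an}\times U_j$. Under these trivializations $\iota$ restricts to the base change of the Chevalley immersion. Here I use that the quotient of \'{e}tale sheaves commutes with base change, and that the sheaf quotient $\GL_n^{\an}/G^{\an}$ is the analytification of $\GL_n/G$, as stated in the construction.

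It follows that the morphism $\iota$, whose target $\Pbb(\Ecal)$ is an adic space (a projective bundle over $X_{K_{\infty},A}$, locally $\Pbb^{N,\an}\times U$), becomes a locally closed immersion after the \'{e}tale, hence smooth, surjective base change $\bigsqcup_j \Pbb(\Ecal)|_{U_j}\to \Pbb(\Ecal)$.

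The main obstacle is descending representability. The source is a priori only an \'{e}tale sheaf, so I cannot apply Lemma \ref{lem:locally closed immersion smooth local on the target} directly to a morphism of adic spaces. To handle this, I would do the following:
\begin{enumerate}
\item Descend the locally closed subspaces $Z_j\subset \Pbb(\Ecal)|_{U_j}$ (the images of the Chevalley immersion). On overlaps they agree, since both equal the image of the twisted sheaf, and $\GL_n$-equivariance of the Chevalley map makes the gluing compatible with the transition cocycle of $P$.
\item Use that locally closed immersions satisfy \'{e}tale descent: the underlying locally closed subset descends because \'{e}tale maps are open, and the ideal descends because coherent ideals descend. This yields a locally closed adic subspace $Z\subset\Pbb(\Ecal)$ with $Z\times_{\Pbb(\Ecal)}\Pbb(\Ecal)|_{U_j}=Z_j$.
\item Conclude that $\iota$ factors through $Z$ and identifies the sheaf quotient with $Z$. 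This is an isomorphism of sheaves that can be checked \'{e}tale locally, where it holds by construction.
\end{enumerate}
Finally, $Z$ is locally of finite type over $X_{K_{\infty},A}$, since it is locally closed in a projective bundle. This gives both assertions.
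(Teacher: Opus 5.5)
Your proposal is correct and follows the paper's proof. You identify both the quotient and $\Pbb(\Ecal)$ as $P$-twists of $\GL_n^{\an}/G^{\an}$ and $\Pbb(E)^{\an}$, twist the Chevalley immersion, note that it is \'{e}tale locally a locally closed immersion, and descend. The only difference is that the paper simply cites Lemma \ref{lem:locally closed immersion smooth local on the target} at the last step, whereas you spell out the effective descent of the locally closed subspace (openness of \'{e}tale maps plus descent of coherent ideals), which the paper's one-line appeal leaves implicit since the source is a priori only an \'{e}tale sheaf.
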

\begin{proof}
	Since $P$ is a $\GL_n$-torsor, we have an isomorphism of \'{e}tale sheaves
	\[P\times^{\GL_n^{\an}} (\GL_n^{\an}/G^{\an}) \cong P/(G^{\an}\times X_{K_{\infty},A}).\]
	Moreover, by construction, we have an isomorphism of \'{e}tale sheaves
	\[P\times^{\GL_n^{\an}}\Pbb(E)^{\an}\cong \Pbb(\Ecal).\]
	In particular, $P\times^{\GL_n^{\an}}\Pbb(E)^{\an}$ is representable by an adic space locally of finite type over $X_{K_{\infty},A}$.
	By applying $P\times^{\GL_n^{\an}}(-)$ to the $\GL_n^{\an}$-equivariant morphism $\GL_n^{\an}/G^{\an}\to \Pbb(E)^{\an}$, we obtain a morphism
	\begin{align*}
	P\times^{\GL_n^{\an}} (\GL_n^{\an}/G^{\an})\to P\times^{\GL_n^{\an}}\Pbb(E)^{\an}.
    \end{align*}
	Since $P$ is a $\GL_n$-torsor, \'{e}tale locally on $X_{K_{\infty},A}$, this morphism is a locally closed immersion.
	Therefore, by Lemma \ref{lem:locally closed immersion smooth local on the target}, we obtain the claim.
\end{proof}

\begin{proposition}\label{prop:locally closed subspace representable}
	Let $S$ be a rigid space over $K$, and let $Z\subset X_{K_{\infty}}\times S$ be a locally Zariski closed subspace.
	Then there exists a locally Zariski closed subspace $Z'\subset S$ such that for any morphism $f\colon \Spa A \to S$, $f$ factors through $Z'$ if and only if the base change $X_{K_{\infty}}\times \Spa A\to X_{K_{\infty}}\times S$ of $f$ factors through $Z\subset X_{K_{\infty}}\times S$.
\end{proposition}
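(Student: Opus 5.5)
We first reduce to the case where $Z$ is Zariski closed by handling the open part with Lemma \ref{lem:FF curve closed map}. Then we treat the closed part by a coefficient-ideal argument, using an orthonormal basis of the rings $\tilde{B}^{I}_{K_{\infty}}$ over $\Qbb_p$.

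\emph{Step 1: the open part.} The question is local on $S$, and the characterization of $Z'$ is intrinsic. Any two candidates cut out the same functor, so the local constructions glue. We may therefore assume $S=\Spa R$ is affinoid and $Z=Z_1\cap W$, with $W\subset X_{K_{\infty}}\times S$ open and $Z_1$ Zariski closed, defined by a coherent ideal $\Jcal$. Let $T$ be the image of the closed set $(X_{K_{\infty}}\times S)\setminus W$ in $S$. By Lemma \ref{lem:FF curve closed map}, $T$ is closed, so $S_W=S\setminus T$ is open. A morphism $f\colon\Spa A\to S$ has $X_{K_{\infty},A}\to X_{K_{\infty}}\times S$ landing in $W$ if and only if $f$ lands in $S_W$. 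This is a pointwise check: every point of $X_{K_{\infty},A}$ lies over some point of $\Spa A$, and every point of the fibre of $X_{K_{\infty}}\times S$ over $f(a)$ is the image of a point of $X_{K_{\infty},A}$. We replace $S$ by an affinoid cover of $S_W$ and assume from now on that $Z=Z_1$ is Zariski closed.

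\emph{Step 2: the closed part.} Cover $X_{K_{\infty},R}$ by the finitely many affinoids $\Spa \tilde{B}^{I}_{K_{\infty},R}$ for $I=[1,p],[1,1],[p,p]$; it suffices to use $I=[1,p]$, since $X_{K_{\infty}}$ is a quotient of it. The ideal $\Jcal$ gives a finitely generated ideal $J_I=(g_1,\ldots,g_k)\subset\tilde{B}^{I}_{K_{\infty},R}$. The base change $X_{K_{\infty},A}\to X_{K_{\infty},R}$ factors through $Z$ if and only if each $g_l$ maps to $0$ in $\tilde{B}^{I}_{K_{\infty},A}=\tilde{B}^{I}_{K_{\infty},R}\hotimes_R A$.

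The Banach $\Qbb_p$-space $\tilde{B}^{I}_{K_{\infty}}$ has an orthonormal basis $(e_j)_{j\in\mathcal{J}}$, since $\Qbb_p$ is discretely valued. Hence $\tilde{B}^{I}_{K_{\infty},R}\cong\hat{\bigoplus}_j R e_j$, and this identification is compatible with $-\hotimes_R A$. Write $g_l=\sum_j a_{l,j}e_j$ with $a_{l,j}\to 0$ in $R$. Then $g_l$ maps to $0$ in $\tilde{B}^{I}_{K_{\infty},A}$ if and only if $f^*(a_{l,j})=0$ for all $j$. Let $\mathfrak{a}\subset R$ be the ideal generated by all the $a_{l,j}$; it is finitely generated since $R$ is noetherian. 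Setting $Z'=\Spa R/\mathfrak{a}$ gives the required Zariski closed subspace.

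\emph{Main obstacle.} The delicate point is intrinsicness, which is what makes the local $Z'$ glue. I would show that $\mathfrak{a}$ is the smallest ideal $\mathfrak{b}\subset R$ such that each $g_l$ vanishes in $\tilde{B}^{I}_{K_{\infty},R/\mathfrak{b}}$. This characterization is independent of the basis and of the generators $g_l$. One also has to check that $\hotimes$ of an orthonormalizable module with a finite $R$-module such as $R/\mathfrak{b}$ is just $\hat{\bigoplus}(R/\mathfrak{b})e_j$, which holds because finite $R$-modules carry their canonical Banach topology. Together with the functorial description above, this gives the statement for all $f\colon\Spa A\to S$.
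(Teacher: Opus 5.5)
Your proposal is correct and follows essentially the paper's route: remove the closed image of the complement of the open via Lemma \ref{lem:FF curve closed map}, pass to the chart $\Spa \tilde{B}^{[1,p]}_{K_{\infty},R}$, and use an orthonormal basis to identify $\tilde{B}^{[1,p]}_{K_{\infty},R}\cong\hat{\bigoplus}R$. Your coefficient ideal $\mathfrak{a}$ is exactly the paper's smallest ideal $I_0$ with $J\subset I_0\tilde{B}^{[1,p]}_{K_{\infty},R}$, and your coordinatewise check even avoids the paper's injectivity step.

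Two minor remarks. First, you need not (and in general cannot) write $Z=Z_1\cap W$ with $Z_1$ globally Zariski closed: after shrinking to $S_W$, the subspace $Z$ is already Zariski closed in $X_{K_{\infty}}\times S_W\subset W$. Second, the ``main obstacle'' is not one, since the universal property alone determines $Z'$ and makes the local pieces glue.
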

\begin{proof}
	Let $U\subset X_{K_{\infty}}\times S$ be an open subspace such that $Z\subset U$ is a Zariski closed subspace.
	Let $T\subset S$ be the image of $(X_{K_{\infty}}\times S)\setminus U$ in $S$, which is closed because $X_{K_{\infty}}\times S\to S$ is a closed map by Lemma \ref{lem:FF curve closed map}.
	Then for any morphism $f\colon \Spa A \to S$, $f$ factors through $S\setminus T$ if and only if the base change $X_{K_{\infty}}\times \Spa A\to X_{K_{\infty}}\times S$ of $f$ factors through $U\subset X_{K_{\infty}}\times S$.
	Therefore, by replacing $S$ with $S\setminus T$, we may assume that $Z\subset X_{K_{\infty}}\times S$ is a Zariski closed subspace.

	We may assume that $S=\Spa A$ is an affinoid rigid space.
	Let $\tilde{Z}$ be the preimage of $Z$ under the morphism $\Spa \tilde{B}_{K_{\infty},A}^{[1,p]}\to X_{K_{\infty},A}$.
	Note that $\Spa \tilde{B}_{K_{\infty},A}^{[1,p]}\to X_{K_{\infty},A}$ is surjective and a local isomorphism.
	Let $J\subset \tilde{B}_{K_{\infty},A}^{[1,p]}$ be the ideal defining $\tilde{Z}$.
	Let $\Lambda$ be the set of ideals $I$ of $A$ such that $J\subset I\tilde{B}_{K_{\infty},A}^{[1,p]}$.
	We prove that $\Lambda$ has a smallest element.
	It suffices to show that $\bigcap_{I\in \Lambda} I$ also lies in $\Lambda$.
	Since the Banach $K$-module $\tilde{B}_{K_{\infty}}^{[1,p]}$ is of countable type, there is an isomorphism
	$\tilde{B}_{K_{\infty}}^{[1,p]}\cong \hat{\oplus}K$ of Banach $K$-modules by \cite[2.7.2.Proposition 8]{BGR}, where $\hat{\oplus}K$ is the completion of $\oplus K$ with respect to the sup-norm.
	Then we have $\tilde{B}_{K_{\infty},A}^{[1,p]} \cong \hat{\oplus}A$, and for an ideal $I$ of $A$, we have $I\tilde{B}_{K_{\infty},A}^{[1,p]} \cong \hat{\oplus}I$.
	Since we have 
	\[\bigcap_{I\in \Lambda}\hat{\bigoplus}I=\hat{\bigoplus}\bigcap_{I\in \Lambda} I\] 
	as submodules of $\hat{\oplus}A$, we obtain 
	\[\bigcap_{I\in \Lambda}\left(I\tilde{B}_{K_{\infty},A}^{[1,p]}\right)=\left(\bigcap_{I\in \Lambda}I\right)\tilde{B}_{K_{\infty},A}^{[1,p]}\]
	as ideals of $\tilde{B}_{K_{\infty},A}^{[1,p]}$, which implies that $\bigcap_{I\in \Lambda} I$ also lies in $\Lambda$.

	Let $I_0\in \Lambda$ be the smallest element.
	We prove that $\Spa A/I_0\subset \Spa A$ is the desired Zariski closed subspace.
	Let $A\to B$ be a morphism of affinoid $K$-algebras, and let $I\subset A$ be the kernel of $A\to B$.
	Since the morphism $\Spa \tilde{B}_{K_{\infty},A}^{[1,p]}\to X_{K_{\infty},A}$ is surjective and a local isomorphism, the base change $X_{K_{\infty},B}\to X_{K_{\infty},A}$ of $f$ factors through $Z\subset X_{K_{\infty},A}$ if and only if the image of $J$ in $\tilde{B}_{K_{\infty},B}^{[1,p]}$ is zero.
	Since the morphism $\tilde{B}_{K_{\infty},A/I}^{[1,p]}\to \tilde{B}_{K_{\infty},B}^{[1,p]}$ is injective, the above condition is equivalent to the condition that the image of $J$ in $\tilde{B}_{K_{\infty},A/I}^{[1,p]}$ is zero. 
	By construction, this condition is equivalent to $I_0\subset I$, which proves the claim.
\end{proof}

\begin{proof}[Proof of Theorem \ref{thm:EG stack representable for general G}]
	We use the same notation as in Construction \ref{construction:Chevalley} and Lemma \ref{lem:quotient quasi-projective}.
	Then we have a $\Gamma_K$-equivariant locally closed immersion 
	\[P/(G^{\an}\times X_{K_{\infty},A})\hookrightarrow \Pbb(\Ecal).\] 
	We define the \'{e}tale sheaf over $\Spa A$
	\[\underline{\Map}_{X_{K_{\infty},A}}^{\Gamma_K}(X_{K_{\infty},A},P/(G^{\an}\times X_{K_{\infty},A}))\] 
	by
	\begin{align*}
		&(\Spa B\to \Spa A) \\
		\mapsto &\{\text{$\Gamma_K$-equivariant sections of $P/(G^{\an}\times X_{K_{\infty},A})\times_{\Spa A}\Spa B\to X_{K_{\infty},B}$}\}.
	\end{align*}
	Similarly, we define the \'{e}tale sheaf $\underline{\Map}_{X_{K_{\infty},A}}^{\Gamma_K}(X_{K_{\infty},A},\Pbb(\Ecal)).$
	\begin{description}
	\item[Claim] The morphism
	\[\underline{\Map}_{X_{K_{\infty},A}}^{\Gamma_K}(X_{K_{\infty},A},P/(G^{\an}\times X_{K_{\infty},A}))\to \underline{\Map}_{X_{K_{\infty},A}}^{\Gamma_K}(X_{K_{\infty},A},\Pbb(\Ecal))\]
	is a locally closed immersion.
	\end{description}
	Let $B$ be an affinoid $A$-algebra, and let $\Spa B\to \underline{\Map}_{X_{K_{\infty},A}}^{\Gamma_K}(X_{K_{\infty},A},\Pbb(\Ecal))$ be a morphism corresponding to a section $\alpha\colon X_{K_{\infty},B}\to \Pbb(\Ecal)\times_{\Spa A}\Spa B$.
	Let $Y'\subset X_{K_{\infty},B}$ be the preimage of $P/(G^{\an}\times X_{K_{\infty},A})\times_{\Spa A}\Spa B\subset \Pbb(\Ecal)\times_{\Spa A}\Spa B$ under $\alpha$.
	Then the fiber product of the morphisms $\Spa B\to \underline{\Map}_{X_{K_{\infty},A}}^{\Gamma_K}(X_{K_{\infty},A},\Pbb(\Ecal))$ and $\underline{\Map}_{X_{K_{\infty},A}}^{\Gamma_K}(X_{K_{\infty},A},P/(G^{\an}\times X_{K_{\infty},A}))\to \underline{\Map}_{X_{K_{\infty},A}}^{\Gamma_K}(X_{K_{\infty},A},\Pbb(\Ecal))$ classifies the morphisms $f\colon \Spa R\to \Spa B$ such that the base change $X_{K_{\infty},R}\to X_{K_{\infty},B}$ of $f$ factors through $Y'\subset X_{K_{\infty},B}$.
	This is representable by a locally Zariski closed subspace of $\Spa B$ by Proposition \ref{prop:locally closed subspace representable}.
	
	By the claim, it suffices to show that the \'{e}tale sheaf $\underline{\Map}_{X_{K_{\infty},A}}^{\Gamma_K}(X_{K_{\infty},A},\Pbb(\Ecal))$ is representable by a separated rigid space.
	We regard $\Ecal$ as a $\Gamma_K$-equivariant vector bundle on $X_{K_{\infty},A}$.
	Then, $\underline{\Map}_{X_{K_{\infty},A}}^{\Gamma_K}(X_{K_{\infty},A},\Pbb(\Ecal))$ is isomorphic to the \'{e}tale sheaf $\Quot_{\Ecal^{\vee}/X_{K_{\infty},A}/A}^{1,\Gamma_K}$ classifying $\Gamma_K$-equivariant surjections from $\Ecal^{\vee}$ to a $\Gamma_K$-equivariant line bundle. 
	Let $X^{\an}(K^{\times})$ denote the rigid analytic moduli space over $K$ of continuous characters of $K^{\times}$.
	By \cite[Theorem 6.2.14]{KPX14}, we have
	\[\LS_{\GL_1(K),\FF}\cong X^{\an}(K^{\times})/\Gbb_m^{\an},\]
	where the action of $\Gbb_m^{\an}$ on $X^{\an}(K^{\times})$ is trivial.
	In particular, we have a natural morphism $\LS_{\GL_1(K),\FF}\to X^{\an}(K^{\times})$.
	By composing with the natural morphism $\Quot_{\Ecal^{\vee}/X_{K_{\infty},A}/A}^{1,\Gamma_K}\to \LS_{\GL_1(K),\FF}\times \Spa A$, we obtain a morphism
	\begin{align}\label{eq:projection line bundle}
		\Quot_{\Ecal^{\vee}/X_{K_{\infty},A}/A}^{1,\Gamma_K}\to X^{\an}(K^{\times})\times \Spa A.
	\end{align}
	We now prove that this morphism is representable by separated rigid spaces.
	Let $B$ be an affinoid $A$-algebra, and let $\alpha\colon \Spa B\to X^{\an}(K^{\times})\times \Spa A$ be a morphism corresponding to a continuous character $\delta\colon K^{\times}\to B^{\times}$.
	Let $\Ocal(\delta)$ be the $\Gamma_K$-equivariant line bundle on $X_{K_{\infty},B}$ corresponding to $\delta$.
	Then the fiber product of \eqref{eq:projection line bundle} and $\alpha$ is isomorphic to the \'{e}tale sheaf $\underline{\Surj}_{\Gamma_K}(\Ecal_B^{\vee},\Ocal(\delta))/\Gbb_{m,B}^{\an}$, where $\Gbb_{m,B}^{\an}$ acts on $\underline{\Surj}_{\Gamma_K}(\Ecal_B^{\vee},\Ocal(\delta))$ by scalar multiplication.
	Hence, it suffices to show that the \'{e}tale sheaf $\underline{\Surj}_{\Gamma_K}(\Ecal_B^{\vee},\Ocal(\delta))/\Gbb_{m,B}^{\an}$ is representable by a separated rigid space.
	By Lemma \ref{lem:finiteness of (phi, Gamma) cohomology}, there exists a morphism $f\colon M\to N$ of finite projective $B$-modules such that $\Vbb(M)\times_{\Vbb(N),e}\Spa B$ represents $\underline{\Hom}_{\Gamma_K}(\Ecal_B^{\vee},\Ocal(\delta))$, where $\Vbb(M)$ and $\Vbb(N)$ are the geometric vector bundles associated to $M$ and $N$, respectively, and $e\colon \Spa B\to \Vbb(N)$ is the zero section.
	Let $\Vbb(M)^{\circ}\subset \Vbb(M)$ denote the complement of the zero section.
	Then $\underline{\Surj}_{\Gamma_K}(\Ecal_B^{\vee},\Ocal(\delta))$ is a locally Zariski closed subspace of $\Vbb(M)^{\circ}$.
	Therefore, $\underline{\Surj}_{\Gamma_K}(\Ecal_B^{\vee},\Ocal(\delta))/\Gbb_{m,B}^{\an}$ is also a locally Zariski closed subspace of the projective bundle $\Vbb(M)^{\circ}/\Gbb_{m,B}^{\an}=\Pbb(M)$.
	Consequently, the \'{e}tale sheaf $\underline{\Surj}_{\Gamma_K}(\Ecal_B^{\vee},\Ocal(\delta))/\Gbb_{m,B}^{\an}$ is representable by a separated rigid space.
\end{proof}

%%%%%%%%%%%%%%%%%%%%%%%%%%%%%%%%%%%%%%%%%%%%%%%%%%%%%%%%%%%%%%%%%%%%%%%%%%%%%%%%%%%%%%%%%%
\section{Geometric properties of the analytic Emerton--Gee stacks}
We use the same notation as in the previous section.

\subsection{Main theorem and reduction to dimension bounds}
We recall the conjecture on the geometric properties of $\LS_{\GL_n(K),\FF}$.
We set $d=[K:\Qbb_p]$.
The goal of this section is to prove the following theorem.

\begin{theorem}\label{thm:EGH conjecture}
The analytic Emerton--Gee stack $\LS_{\GL_n(K),\FF}$ is normal, a local complete intersection, and equidimensional of dimension $dn^2$.
\end{theorem}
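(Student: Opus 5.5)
The plan is to work locally on the smooth charts of Corollary \ref{cor:smooth chart}. Every ring-theoretic property in the statement descends along smooth morphisms by Lemma \ref{lem:ring theoretic property descent}, and dimension is computed chart-wise. So fix a classical point $x$ with bundle $V_x$ and the chart
\[g\colon U=\Spa \left(F\left\langle\frac{X_1,\ldots,X_{r_1}}{p^m}\right\rangle/(F_1,\ldots,F_{r_2})\right)\to \LS_{\GL_n(K),\FF},\]
which is smooth of pure dimension $r_0$. By the Euler characteristic formula for the Herr complex (KPX), $r_0-r_1+r_2=-dn^2$. Then $\dim_x\LS_{\GL_n(K),\FF}=\dim_y U-r_0$, and trivially $\dim_y U\geq r_1-r_2$, so $\dim_x\LS_{\GL_n(K),\FF}\geq r_1-r_2-r_0=dn^2$ at every classical point.

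The key input is the bound on the bad locus, Theorem \ref{thm:bad locus dimension upper bound, intro}. Let $Z$ be the reduced Zariski closed substack where $H^2(\Ccal^{\bullet}_{\FF,\Gamma_K}(\underline{\End}(V)))\neq 0$. It can be defined by the Fitting ideal $\Fit_0$ of the coherent sheaf $H^2$, which is right exact in base change because the Herr complex is perfect of amplitude $[0,2]$. Theorem \ref{thm:bad locus dimension upper bound, intro} gives $\dim Z\leq dn^2-2$.

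On the open complement of $Z$, the charts have $r_2=0$, so $U$ is a polydisc. Hence the stack is smooth there, and equidimensional of dimension $dn^2$ since it is $r_1-r_0$. At a classical point of $Z$, the upper bound comes from Corollary \ref{cor:dimension stack closed and complement}: $\dim_x\leq\max(\dim Z,\,dn^2)=dn^2$, applied on a neighborhood. Combined with the lower bound, $\dim_y U=r_1-r_2$ exactly, and the same holds at all classical points of $U$ after shrinking. Lemma \ref{lem:c.i.} then shows that $U$ is a local complete intersection and equidimensional of dimension $r_1-r_2$. It follows that the stack is lci, hence Cohen--Macaulay, and equidimensional of dimension $dn^2$.

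For normality, apply Lemma \ref{lem:normal outside codimension 2} on $U$ with the closed subspace $g^{-1}(Z)$. Since $g$ is smooth of pure dimension $r_0$, $\dim g^{-1}(Z)\leq dn^2-2+r_0=\dim U-2$. The complement $U\setminus g^{-1}(Z)$ is smooth, hence normal, and $U$ is Cohen--Macaulay, so $U$ is normal. As a byproduct this gives reducedness, which is what the global sections application needs.

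The main obstacle is Theorem \ref{thm:bad locus dimension upper bound, intro}. It requires two steps:
\begin{enumerate}
\item a covering of $Z$ by locally Zariski closed pieces, each dominated by some $\LS_{P_{m,n-m}(K),\FF}$, using nonzero maps $V\to V(1)$ from Tate duality, with kernels realized in families through exterior powers and Pl\"ucker relations;
\item an inductive dimension bound for the parabolic stacks via the fibration over $\LS_{\GL_m}\times\LS_{\GL_{n-m}}$, including the special treatment of $(n,m,K)=(2,1,\Qbb_p)$.
\end{enumerate}
I expect the construction of the subbundles in families, needed to make the covering of $Z$ work rather than only pointwise, to be the hardest part.
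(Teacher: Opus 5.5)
Your proposal is correct and follows the paper's route. The deduction of normality, the lci property and equidimensionality from $\dim Z\le dn^2-2$ uses the charts of Corollary \ref{cor:smooth chart}, the Euler characteristic formula, Krull's height theorem, Lemma \ref{lem:c.i.} and Lemma \ref{lem:normal outside codimension 2}, exactly as the paper does, and your outline of the bad-locus bound matches its structure. One point your sketch leaves implicit: the induction only closes if the hypothesis also bounds the locus $Z'$ where $H^2_{\FF,G_K}(V_n)\neq 0$ (the paper's Hypothesis \textbf{H}$(n)$), because the parabolic estimate uses this bound in smaller ranks.
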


For an affinoid $\Qbb_p$-algebra $A$ and a $G_K$-equivariant vector bundle $V$ on $X_{\Cbb_p,A}$, we write $H^i_{\FF,G_K}(V)=H^i(G_K,R\Gamma(X_{\Cbb_p,A},V))$.
By \cite[Theorem 4.4.5]{KPX14}, for $i\neq 0,1,2$, $H^i_{\FF,G_K}(V)$ vanishes, and for $i=0,1,2$, $H^i_{\FF,G_K}(V)$ is a finitely generated $A$-module.
We also write $H^i_{\FF,G_K}(\intHom(V,W))=\Ext^i_{\FF,G_K}(V,W)$ for $G_K$-equivariant vector bundles $V$ and $W$ on $X_{\Cbb_p,A}$.

For a rigid analytic Artin stack $\Xcal$, we define a $G_K$-equivariant vector bundle $V$ on $X_{\Cbb_p}\times \Xcal$ of rank $n$ as a morphism $\Xcal\to \LS_{\GL_n(K),\FF}$.
By \cite[Theorem 4.4.3]{KPX14}, we can define the coherent sheaf $H^2_{\FF,G_K}(V)$ on $\Xcal$ by
\[(\Spa A\to \Xcal)\mapsto H^2_{\FF,G_K}(V_A),\]
where $V_A$ is the $G_K$-equivariant vector bundle $V$ on $X_{\Cbb_p,A}$ of rank $n$ corresponding to the composition $\Spa A\to \Xcal \to \LS_{\GL_n(K),\FF}$.
If the coherent sheaf $H^2_{\FF,G_K}(V)$ is locally free, then, by Tate local duality (\cite[Theorem 4.4.5]{KPX14}), we can define the vector bundle $H^0_{\FF,G_K}(V^{\vee}(1))$ on $\Xcal$ by
\[(\Spa A\to \Xcal)\mapsto H^0_{\FF,G_K}(V^{\vee}_A(1)),\]
which is isomorphic to the dual of $H^2_{\FF,G_K}(V)$.
Here $(1)$ is the Tate twist.

Theorem \ref{thm:EGH conjecture} follows from the following upper bound on the dimension of the locus where $\Ext^2$ does not vanish.

\begin{theorem}\label{thm:bad locus dimension upper bound}
	Let $V_n$ be the universal $G_K$-equivariant vector bundle $V$ on $X_{\Cbb_p}\times \LS_{\GL_n(K),\FF}$, and let $Z\subset \LS_{\GL_n(K),\FF}$ be the reduced Zariski closed substack defined by the $0$-th Fitting ideal $\Fit_0(\Ext^2_{\FF,G_K}(V_n,V_n))$.
	Then we have $\dim Z\leq dn^2-2.$
\end{theorem}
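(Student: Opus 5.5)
I will prove the bound by induction on $n$, following the two-step strategy announced in the introduction: first bound $\dim Z$ by the dimensions of parabolic stacks $\LS_{P_{m,n-m}(K),\FF}$ (or suitable closed substacks of them), and then bound those parabolic stacks by $dn^2-2$ using the inductive hypothesis for $\GL_m$ and $\GL_{n-m}$. The base case $n=1$ is treated directly. Here $\LS_{\GL_1(K),\FF}\cong X^{\an}(K^{\times})/\Gbb_m^{\an}$ has dimension $d+1-1=d$. The locus where $H^2_{\FF,G_K}(\delta\otimes\delta^{-1})=H^2_{\FF,G_K}(\Ocal)\neq 0$ is empty, since $H^0_{\FF,G_K}(\Ocal(1))=0$. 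So $Z=\emptyset$ and the bound $\dim Z\le d-2$ holds vacuously, with the convention $\dim\emptyset=-\infty$.

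For the parabolic step, consider $f\colon\LS_{P_{m,n-m}(K),\FF}\to\LS_{\GL_m(K),\FF}\times\LS_{\GL_{n-m}(K),\FF}$. By induction the target is equidimensional of dimension $d(m^2+(n-m)^2)$; this uses the full Theorem~\ref{thm:EGH conjecture} for smaller ranks, which follows from the smaller-rank bad-locus bound via Corollary~\ref{cor:smooth chart} and Lemma~\ref{lem:c.i.}. The fiber over a classical point $(V_1,V_2)$ is $[\Ext^1_{\FF,G_K}(V_2,V_1)/\Hom_{\FF,G_K}(V_2,V_1)]$ with trivial action. Its dimension is $\dim\Ext^1-\dim\Ext^0=dm(n-m)+\dim\Ext^2(V_2,V_1)$ by the Euler characteristic formula. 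Stratifying the target by the rank of $\Ext^2_{\FF,G_K}(V_2,V_1)\cong\Hom_{\FF,G_K}(V_1,V_2(1))^\vee$, via Fitting ideals as in Example~\ref{ex:Fitting ideal locally Zariski closed partition} together with Lemma~\ref{lem:locally Zariski closed partition dimension} and Lemma~\ref{lem:stack fiber dimension}, I need the stratum where this rank is $k$ to have codimension at least $k$ in the target. For $k\ge1$, a nonzero map $V_1\to V_2(1)$ again forces subbundle/quotient structure on smaller ranks. I would bound the rank-$k$ stratum by an auxiliary induction, considering the moduli of pairs together with a nonzero equivariant map and estimating it through image and kernel. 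This yields $\dim\LS_{P}\le d(m^2+(n-m)^2)+dm(n-m)=dn^2-dm(n-m)$, which is at most $dn^2-2$ except when $(n,m,d)=(2,1,1)$. In that exceptional case I would replace $\LS_{P_{1,1}}$ by the closed substack where $V_1\cong V_2(1)$ up to the relevant twist, which cuts one more dimension.

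For the covering step, every classical point of $Z$ admits a nonzero $g\colon V\to V(1)$. To make this work in families, I restrict to strata of $Z$ on which the vector bundle $H^0_{\FF,G_K}(V^\vee\otimes V(1))$ is locally free (Corollary~\ref{cor:coherent sheaf to vector bundle}). I then pass to the projectivization of this bundle, which is smooth over the stratum of positive relative dimension and hence does not decrease dimension, to obtain a universal nonzero $g$. I further stratify by the generic rank $m$ of $\ker g$ and by the ranks of the cohomology sheaves of $\wedge^{j}g$, as in Example~\ref{ex:locally Zariski closed partition rigid stack}. On each piece, the Plücker relations applied to $\wedge^{n-m}g$ should cut out an equivariant saturated rank-$m$ subbundle in families. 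This gives a morphism from the piece to $\LS_{P_{m,n-m}(K),\FF}$ that is representable with a section-like structure, so Lemma~\ref{lem:section dimension} bounds the piece's dimension by that of the parabolic stack. Combining the covering with Lemma~\ref{lem:locally Zariski closed partition dimension} gives $\dim Z\le dn^2-2$.

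I expect the main obstacle to be the family version of the kernel construction. The kernel of $g$ need not be a subbundle with locally constant rank over a non-reduced or varying base, and saturation on the Fargues–Fontaine curve is not automatic in families. Choosing the stratification by Fitting ideals of the $\wedge^j g$ so that the saturated kernel becomes a vector bundle with $G_K$-action, while retaining a map to $\LS_{P}$ rather than a correspondence, is where I would spend the most effort. My first attempt would work over the reduced strata only and use the Plücker embedding of $\wedge^{m}\ker g$ inside $\wedge^m V$ as a $G_K$-equivariant line subbundle.
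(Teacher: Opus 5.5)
Your two-step architecture matches the paper's, but the parabolic step has a genuine gap. To reach $\dim\LS_{P_{m,n-m}(K),\FF}\le d(n^2-m(n-m))$ you need the stratum of $\LS_{\GL_m(K),\FF}\times\LS_{\GL_{n-m}(K),\FF}$ where $\dim\Ext^2_{\FF,G_K}(V_2,V_1)=k$ to have codimension at least $k$. Nothing in your induction controls this. Your inductive data are the bound on $\{\Ext^2_{\FF,G_K}(V,V)\neq0\}$ and equidimensionality in lower rank.

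Already for $m=1$ and $V_1=\Ocal(\delta)$, one has $\Ext^2_{\FF,G_K}(V_2,\Ocal(\delta))\cong H^2_{\FF,G_K}(V_2^{\vee}\otimes\Ocal(\delta))$. After the involution $(\delta,V_2)\mapsto(\delta,V_2^{\vee}\otimes\Ocal(\delta))$, your strata with $k\ge1$ become $\LS_{\GL_1(K),\FF}$ times the loci in $\LS_{\GL_{n-1}(K),\FF}$ where $\dim H^2_{\FF,G_K}(W)\ge k$. Your hypothesis does not control these loci.

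The proposed ``auxiliary induction through image and kernel'' is not a reduction either. It requires forming images and kernels of equivariant maps in families, which is exactly the non-properness/saturation problem you flag for the covering step. The sharp bound is also more than is needed.

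The paper's fix is to strengthen the induction to \textbf{H}$(n)$ and prove only a crude parabolic bound. \textbf{H}$(n)$ asserts that both $Z$ and the locus $Z'$ cut out by $\Fit_0(H^2_{\FF,G_K}(V_n))$ have dimension at most $dn^2-2$.
\begin{itemize}
\item For $m_1=m>1$, one splits the base only at $\Fit_{m_1m_2-2}(\Ext^2_{\FF,G_K}(V_{m_2},V_{m_1}))$.
\item Off this locus, fibers have dimension at most $dm_1m_2+m_1m_2-2\le 2dm_1m_2-2$, which suffices.
\item On it, one only needs codimension $2$. By the $\Ext^2$ bound for absolutely non-trianguline bundles, $V_{m_1,x}$ must be trianguline there, with parameters $\delta_1,\dots,\delta_{m_1}$.
\item The fiber over $x$ then lies in $\bigcup_i\{\Ext^2_{\FF,G_K}(V_{m_2},\Ocal(\delta_i))\neq0\}$. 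Each piece has dimension at most $dm_2^2-2$ by the $Z'$-part of \textbf{H}$(m_2)$, after dualizing and twisting.
\end{itemize}
The price is a second covering argument in rank $n$, this time for $Z'$.
\begin{itemize}
\item Stratify by the maximal $\kbf$ with $\Hom_{\FF,G_K}(V_n,\tbf^{\kbf}\Ocal(1))\neq0$.
\item By \cite[Corollary 6.2.9]{KPX14}, a basis element is surjective at classical points.
\item Lemma \ref{lem:quotient vector bundle} then makes its kernel a corank-one subbundle over reduced affinoids.
\item Lemma \ref{lem:section dimension} bounds the stratum via $q^{-1}(Z')\subset\LS_{P_{n-1,1}(K),\FF}$.
\end{itemize}

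The same $\tbf^{\kbf}$ device is the missing idea for the family-kernel obstacle you flag in the covering step for $Z$. The cokernel of $\wedge^jf_x\colon\wedge^jV_x\to\wedge^j(V_x(1))$ generally has torsion at $x_\infty$. So stratifying by the ranks of $\wedge^j g$ alone does not make Lemma \ref{lem:quotient vector bundle} applicable.

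The paper additionally stratifies by the maximal $\kbf$ such that $\wedge^jf$ factors through $\tbf^{\kbf}\wedge^j(V(1))$. Lemma \ref{lem:k large cohomology vanishing} makes this a locally Zariski closed covering. On each piece, the image is saturated at every classical point, by \cite[Corollary 6.2.9]{KPX14}. Then $\ker(\wedge^jf_A)\subset\wedge^jV_A$ is a corank-one subbundle, and its Pl\"ucker point gives the section into $\LS_{P_{n-j,j}(K),\FF}$.

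Your fallback, embedding $\wedge^m\ker g$ as a line subbundle of $\wedge^mV$, is circular: it presupposes that $\ker g$ is already a subbundle in families.

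Finally, in the exceptional case $(n,m,d)=(2,1,1)$, your closed substack must allow both $\Ext^2_{\FF,G_K}(L_1,L_2)\neq0$ and $\Ext^2_{\FF,G_K}(L_2,L_1)\neq0$. This is because $\Ext^2_{\FF,G_K}(V,V)$ can be nonzero through either.
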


\begin{proof}[Proof of ``Theorem \ref{thm:bad locus dimension upper bound} $\implies$ Theorem \ref{thm:EGH conjecture}'']
Let $x\colon \Spa F\to \LS_{\GL_n(K),\FF}$ be a classical point.
Set $r_{i,x}=\dim_F \Ext^i_{\FF,G_K}(V_x,V_x)$ for $i=0,1,2$.

First, assume that $x$ is a classical point of $\LS_{\GL_n(K),\FF}\setminus Z$.
Then, by definition, we have $\Ext^2_{\FF,G_K}(V_x,V_x)=0$.
Therefore, by Corollary \ref{cor:smooth chart}, we obtain a smooth morphism
\[g\colon \Spa F\left\langle\frac{X_1,\ldots,X_{r_{1,x}}}{p^m}\right\rangle\to \LS_{\GL_n(K),\FF}\]
of pure dimension $r_{0,x}$ whose image contains $x$.
Thus, we find that $\LS_{\GL_n(K),\FF}$ is smooth (in particular, normal) at $x$ of dimension $r_{1,x}-r_{0,x}=dn^2$ by the Euler characteristic formula \cite[Theorem 1.2]{Liu08}.
In particular, we obtain 
\[\dim (\LS_{\GL_n(K),\FF}\setminus Z)=dn^2.\]
By Theorem \ref{thm:bad locus dimension upper bound}, we have $\dim Z\leq dn^2-2$, so we obtain $\dim \LS_{\GL_n(K),\FF}=dn^2$ by Corollary \ref{cor:dimension stack closed and complement}.

Next, let $x\colon \Spa F\to \LS_{\GL_n(K),\FF}$ be a general classical point.
As in Corollary \ref{cor:smooth chart}, let us take a smooth morphism
\[g\colon U=\Spa \left(F\left\langle\frac{X_1,\ldots,X_{r_{1,x}}}{p^m}\right\rangle/(F_1,\ldots,F_{r_{2,x}})\right)\to \LS_{\GL_n(K),\FF}\]
of pure dimension $r_{0,x}$ whose image contains $x$.
By Lemma \ref{lem:stack fiber dimension} and the Euler characteristic formula, we have 
\[\dim U\leq r_{0,x}+\dim\LS_{\GL_n(K),\FF}=r_{0,x}+dn^2=r_{1,x}-r_{2,x}.\]
On the other hand, by Krull's height theorem, we have $\dim U\geq r_{1,x}-r_{2,x}$.
Therefore, by Lemma \ref{lem:c.i.}, $U$ is a local complete intersection and equidimensional of dimension $r_{1,x}-r_{2,x}$, which implies that $\LS_{\GL_n(K),\FF}$ is a local complete intersection and equidimensional of dimension $dn^2$.
Moreover, by Theorem \ref{thm:bad locus dimension upper bound}, we have $\dim g^{-1}(Z)\leq r_{0,x}+dn^2-2=r_{1,x}-r_{2,x}-2$.
By the above argument, $U\setminus g^{-1}(Z)$ is normal.
Therefore, by Lemma \ref{lem:normal outside codimension 2}, $U$ is also normal, which implies that $\LS_{\GL_n(K),\FF}$ is normal.
\end{proof}

Let $E$ be a finite extension of $K$ such that $[K:\Qbb_p]=\#\Hom_{\Alg_{\Qbb_p}}(K,E)$, and set $\Sigma_K=\Hom_{\Alg_{\Qbb_p}}(K,E)$.
To prove Theorem \ref{thm:bad locus dimension upper bound}, we may replace $\LS_{\GL_n(K),\FF}$ with $\LS_{\GL_n(K),\FF}\times_{\Spa \Qbb_p} \Spa E$.
From now on, we will always work with the test category $\Aff_{E}$.

We outline the proof of Theorem \ref{thm:bad locus dimension upper bound}.
For integers $0<m<n$, let $P_{m,n-m}$ be the parabolic subgroup of $\GL_{n}$ of the form 
\[
\begin{pmatrix}
    \GL_m & * \\
    0 & \GL_{n-m}
\end{pmatrix}.
\]

Then $\LS_{P_{m,n-m}(K),\FF}$ in Definition \ref{defn:EG stack general algebriac group} is equivalent to the \'{e}tale stack over $\Aff_{E}$ that associates to each $A \in \Aff_{E}$ the groupoid of $G_K$-equivariant vector bundles $V$ on $X_{\Cbb_p,A}$ of rank $n$ equipped with an equivariant subbundle $W$ of rank $m$.

We will give an upper bound on the dimension of $\LS_{P_{m,n-m}(K),\FF}$ inductively, and by using this, we will give an upper bound on the dimension of the locus $Z\subset \LS_{\GL_n(K),\FF}$ in Theorem \ref{thm:bad locus dimension upper bound}.
More precisely, for a positive integer $n$, we will consider the following hypothesis:
\begin{description}
    \item[H$(n)$] Let $Z, Z'\subset \LS_{\GL_n(K),\FF}$ be the reduced Zariski closed substacks defined by the $0$-th Fitting ideals $\Fit_0(\Ext^2_{\FF,G_K}(V_n,V_n))$, $\Fit_0(H^2_{\FF,G_K}(V_n))$, respectively, where $V_n$ is the universal $G_K$-equivariant vector bundle on $X_{\Cbb_p}\times \LS_{\GL_n(K),\FF}$.
    Then we have $\dim Z, \dim Z'\leq dn^2-2$.
\end{description}
By the proof of ``Theorem \ref{thm:bad locus dimension upper bound} $\implies$ Theorem \ref{thm:EGH conjecture}'', Hypothesis \textbf{H}$(n)$ implies that the dimension of $\LS_{\GL_n(K),\FF}$ is $dn^2$.

\begin{theorem}\label{thm:rank 1}
	Hypothesis \textbf{H}$(1)$ is true.
\end{theorem}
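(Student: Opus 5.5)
The plan is to make everything explicit using the description of rank one objects, $\LS_{\GL_1(K),\FF}\cong X^{\an}(K^{\times})/\Gbb_m^{\an}$ with trivial $\Gbb_m^{\an}$-action (\cite[Theorem 6.2.14]{KPX14}, already used in the proof of Theorem \ref{thm:EG stack representable for general G}). Here $X^{\an}(K^{\times})$ is smooth and equidimensional of dimension $d+1$. The projection $X^{\an}(K^{\times})\to \LS_{\GL_1(K),\FF}$ is a smooth surjection of pure relative dimension $1$, so $\dim \LS_{\GL_1(K),\FF}=d$. Dimensions of reduced Zariski closed substacks are then computed on $X^{\an}(K^{\times})$ and shifted by $-1$. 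Since $d\geq 1$, it suffices to show that $Z=\emptyset$ and that the preimage of $Z'$ in $X^{\an}(K^{\times})$ has dimension $\leq 0$; this gives $\dim Z'\leq -1\leq d-2$.

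\textbf{The locus $Z$.} For a rank one bundle $V_A$ we have $\underline{\End}(V_A)\cong \Ocal$, the trivial $G_K$-equivariant line bundle, so $\Ext^2_{\FF,G_K}(V_A,V_A)\cong H^2_{\FF,G_K}(\Ocal_{X_{\Cbb_p,A}})$. By base change (Lemma \ref{lem:finiteness of (phi, Gamma) cohomology}) this equals $H^2_{\FF,G_K}(\Ocal_{X_{\Cbb_p}})\otimes_{\Qbb_p}A$. By Tate local duality, $H^2_{\FF,G_K}(\Ocal)$ is dual to $H^0_{\FF,G_K}(\Ocal(1))$, which vanishes because the cyclotomic twist has no nonzero invariants. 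Hence $\Fit_0$ is the unit ideal and $Z=\emptyset$.

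\textbf{The locus $Z'$.} Pull $Z'$ back to a reduced Zariski closed subspace $\tilde Z'\subset X^{\an}(K^{\times})$. By Lemma \ref{lem:Fitting ideal base change}, a classical point $\delta\colon K^{\times}\to F^{\times}$ lies in $\tilde Z'$ if and only if $H^2_{\FF,G_K}(\Rcal_F(\delta))\neq 0$. By \cite[Proposition 6.2.8]{KPX14} (via duality with $H^0$ of $\delta^{-1}\varepsilon$), this happens exactly when $\delta$ has the form $N_{K/\Qbb_p}(x)\,|N_{K/\Qbb_p}(x)|\prod_{\sigma}\sigma(x)^{k_\sigma}$ with all $k_\sigma\geq 0$ (up to the paper's normalization). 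The set of such classical points is countable, and its points are distinct for distinct $(k_\sigma)$.

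\textbf{Main obstacle.} The remaining step is to conclude $\dim\tilde Z'\leq 0$ from this description of classical points. I would argue as follows. Any positive-dimensional reduced affinoid over $E$ contains an irreducible component of dimension $\geq 1$, and by Noether normalization this component maps finitely and surjectively onto a closed disc. Such a component therefore has uncountably many classical points (indeed, $E$-points of $\Spa E\langle T\rangle$ already lie over uncountably many points). This contradicts countability. Alternatively, one can check directly that the weights of these characters, given by the $d$ Sen weights $k_\sigma$, form a discrete subset: on any affinoid only finitely many $(k_\sigma)$ occur, because the Sen weights are bounded there. So $\tilde Z'$ meets each affinoid in finitely many points. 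Either way $\dim\tilde Z'\leq 0$, hence $\dim Z'\leq -1\leq d-2$, which proves \textbf{H}$(1)$.
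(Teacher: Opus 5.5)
Your proposal is correct and follows essentially the same route as the paper: identify $\LS_{\GL_1(K),\FF}$ with $X^{\an}(K^{\times})/\Gbb_m^{\an}$, get $Z=\emptyset$ from $H^2_{\FF,G_K}(\Ocal)=0$, and identify $Z'$ with the irregular locus of \cite[Proposition 6.2.8 (2)]{KPX14}, which is $0$-dimensional, so $\dim Z'=-1$. The only difference is that you justify the $0$-dimensionality explicitly, via countability of classical points or via boundedness of Sen weights on affinoids, whereas the paper simply states it because the locus is locally finite.
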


\begin{theorem}\label{thm:parabolic dimension upper bound}
	Let $0<m<n$ be integers, and assume Hypothesis \textbf{H}$(l)$ for all positive integers $l<n$.
	Let $V_m\subset V_n$ be the universal $G_K$-equivariant vector bundles on $X_{\Cbb_p}\times \LS_{P_{m,n-m}(K),\FF}$.
	Let $Z, Z'\subset \LS_{P_{m,n-m}(K),\FF}$ be the reduced Zariski closed substacks defined by the $0$-th Fitting ideals $\Fit_0(\Ext^2_{\FF,G_K}(V_n,V_n))$, $\Fit_0(H^2_{\FF,G_K}(V_n))$, respectively. 
	Then we have $\dim Z, \dim Z'\leq dn^2-2$.
\end{theorem}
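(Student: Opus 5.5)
The plan is to study the morphism
\[
f\colon \LS_{P_{m,n-m}(K),\FF}\to \LS_{\GL_m(K),\FF}\times \LS_{\GL_{n-m}(K),\FF},\qquad (V_m\subset V_n)\mapsto (V_m,\,V_n/V_m),
\]
and to bound $\dim Z$ and $\dim Z'$ by Lemma \ref{lem:stack fiber dimension}, after stratifying the target. By \textbf{H}$(m)$ and \textbf{H}$(n-m)$, the proof of ``Theorem \ref{thm:bad locus dimension upper bound} $\implies$ Theorem \ref{thm:EGH conjecture}'' applies in ranks $m$ and $n-m$. Hence the target is equidimensional of dimension $dm^2+d(n-m)^2$. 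Moreover, the loci where $\Ext^2(V_m,V_m)$, $\Ext^2(V_{n-m},V_{n-m})$, $H^2(V_m)$ or $H^2(V_{n-m})$ is nonzero have codimension $\geq 2$.

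Next I would stratify by cohomology ranks. Let $\Ycal$ be the base. I would take the locally Zariski closed covering of $\Ycal$ given by the Fitting strata (Example \ref{ex:Fitting ideal locally Zariski closed partition}) of the coherent sheaves $\Ext^i_{\FF,G_K}(V_{n-m},V_m)$ for $i=0,2$, $\Ext^2_{\FF,G_K}(V_m,V_{n-m})$, $H^2(V_m)$ and $H^2(V_{n-m})$. Take reduced strata $\Ycal_{\kbf}$ on which all of these are vector bundles, of ranks $h^0,h^2,\ldots$. On such a stratum, the fibre of $f$ over a classical point is $[\Ext^1(V_{n-m},V_m)/\Hom(V_{n-m},V_m)]$ with trivial action. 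By the Euler characteristic formula its dimension is
\[
h^1-h^0=dm(n-m)+h^2 .
\]
Thus, by Lemma \ref{lem:locally Zariski closed partition dimension} and Lemma \ref{lem:stack fiber dimension}, the dimension of $f^{-1}(\Ycal_{\kbf})$ is at most
\[
\dim\Ycal_{\kbf}+dm(n-m)+h^2 = dn^2-dm(n-m)+h^2-\operatorname{codim}\Ycal_{\kbf}.
\]

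Next I would locate $Z$ and $Z'$ in terms of the graded pieces. From the long exact sequences for $0\to V_m\to V_n\to V_{n-m}\to 0$ and the vanishing of $H^3$, the group $\Ext^2(V_n,V_n)$ is a successive quotient of the four groups $\Ext^2(V_a,V_b)$ with $a,b\in\{m,n-m\}$. Likewise $H^2(V_n)$ is controlled by $H^2(V_m)$ and $H^2(V_{n-m})$. So a classical point of $Z$ lies over a point where one of these four groups is nonzero, and a point of $Z'$ lies over a point where $H^2(V_m)\neq 0$ or $H^2(V_{n-m})\neq0$. By Lemma \ref{lem:Zariski closed inclusion classical points}, $Z$ and $Z'$ are therefore covered by the preimages of the corresponding strata.

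The strata then split into three cases.
\begin{itemize}
\item On strata where a diagonal term ($\Ext^2(V_a,V_a)$ or $H^2(V_a)$) is nonzero, the induction hypotheses give $\operatorname{codim}\geq 2$. It then suffices to have $h^2\leq dm(n-m)$ plus the codimension already obtained.
\item On strata where $h^2=\dim\Ext^2(V_{n-m},V_m)=\dim \Hom(V_m,V_{n-m}(1))$ is large, I would need the key estimate
\[
\operatorname{codim}\{\dim\Hom(V_m,V_{n-m}(1))\geq k\}\geq k .
\]
I would sharpen this to $k+1$ whenever it is needed to reach the bound $dn^2-2$. The same estimate is needed for $\Ext^2(V_m,V_{n-m})$, which also appears in $Z$.
\item The generic strata with $h^2=0$ contribute at most $dn^2-dm(n-m)$. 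This is $\leq dn^2-2$ except when $d=1$ and $m(n-m)=1$. In that exceptional case the stratum does not lie in $Z$ or $Z'$ unless one of the $\Ext^2$ groups is nonzero, and that is handled by the previous two cases.
\end{itemize}

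The main obstacle is the key estimate. My first attempt: a nonzero map $V_m\to V_{n-m}(1)$ has a kernel and an image which, on a suitable locally closed stratum, are equivariant subbundles of ranks $l<n$. So the locus $\{\dim\Hom\geq k\}$ maps, with fibres of dimension $\leq$ an explicit $\Ext^1-\Hom$ count, from a fibre product of smaller parabolic stacks $\LS_{P_{l,m-l}(K),\FF}\times_{\LS_{\GL_l(K),\FF}}\LS_{P_{l,n-m-l}(K),\FF}$, twisted by $(1)$.

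Applying the same fibre-dimension count recursively, together with \textbf{H}$(l)$ for $l<n$ (and Theorem \ref{thm:rank 1} for rank $1$), should give the required codimension. The rank-one quantity $H^2$ in \textbf{H} is designed to handle the degenerate case where the image is a line bundle isomorphic to a twist of a quotient, that is, $\Hom(L,L')\neq 0$ for rank-one $L,L'$, which forces $L'\otimes L^{-1}$ into $Z'$ for $n=1$.

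To make the subbundles exist in families I would use saturation via exterior powers, restricting to strata where the ranks of $\wedge^j$ of the maps are constant. Making this recursive bookkeeping close up without losing the extra $1$ of codimension is where I expect the real difficulty.
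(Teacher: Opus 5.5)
Your overall frame is the same as the paper's, and those pieces work: the map $f\colon \LS_{P_{m,n-m}(K),\FF}\to \LS_{\GL_m(K),\FF}\times\LS_{\GL_{n-m}(K),\FF}$, the fibre dimension $dm(n-m)+\dim_F\Ext^2_{\FF,G_K}(V_{n-m,x},V_{m,x})$, Lemma \ref{lem:stack fiber dimension}, and your treatment of $Z'$ and of the ``diagonal'' part of $Z$ via \textbf{H}$(m)$ and \textbf{H}$(n-m)$.

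The gap is the off-diagonal part of $Z$. There everything rests on your key estimate $\operatorname{codim}\{\dim\Hom(V_m,V_{n-m}(1))\geq k\}\geq k$, sharpened to $k+1$, which you do not prove. The recursive route you sketch (fibre products of smaller parabolic stacks, saturations via exterior powers) is not carried out. You yourself expect its bookkeeping to lose the needed unit of codimension. It would also have to face the same families and non-properness issues that the paper only confronts, via Pl\"ucker relations, in the proof of Theorem \ref{thm:bad locus dimension induction}. So the proposal is incomplete at exactly its main step.

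The missing observation is that you need far less. By Lemma \ref{lem:H^2 bound}, $h^2\leq m(n-m)$. Hence, since $d\geq 1$, your bound $dn^2-dm(n-m)+h^2-\operatorname{codim}$ is already $\leq dn^2-2$ with zero codimension whenever $h^2\leq m(n-m)-2$.

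So for $m(n-m)\geq 2$ it suffices to bound one locus. Let $Z_{m(n-m)-2}$ be the Fitting locus where $\dim\Ext^2_{\FF,G_K}(V_{n-m},V_m)\geq m(n-m)-1$. It is enough to show $\dim Z_{m(n-m)-2}\leq d(m^2+(n-m)^2)-2$. This bounds all of $\LS_{P_{m,n-m}(K),\FF}$ by $dn^2-2$, so neither $\Ext^2(V_m,V_{n-m})$ nor the diagonal terms need to be tracked.

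The paper gets this codimension-$2$ bound cheaply by projecting to $\LS_{\GL_m(K),\FF}$:
\begin{itemize}
\item Over an absolutely non-trianguline $V_{m,x}$, the fibre is empty by Corollary \ref{cor:abs non-trianguline Ext^2 space dimension}.
\item Over a trianguline $V_{m,x}$ with parameters $\delta_1,\ldots,\delta_m$, the condition $\Ext^2(V_{n-m},V_{m,x})\neq 0$ forces $\Ext^2(V_{n-m},\Ocal(\delta_i))\neq 0$ for some $i$. That locus is the image of $Z'\subset\LS_{\GL_{n-m}(K),\FF}$ under the automorphism $V\mapsto V^{\vee}(\delta_i)$, so it has dimension $\leq d(n-m)^2-2$ by \textbf{H}$(n-m)$.
\end{itemize}
The remaining case $n=2$ is covered by your containment argument together with the explicit rank-one description in Proposition \ref{prop:irregular locus n=1}.
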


\begin{remark}
	This upper bound is far from optimal, but is enough for our purpose.
\end{remark}

\begin{remark}
	If $n\neq 2$ or $K\neq \Qbb_p$, then we can prove $\dim \LS_{P_{m,n-m}(K),\FF} \leq dn^2-2.$
\end{remark}

\begin{theorem}\label{thm:bad locus dimension induction}
	Let $n$ be a positive integer, and assume Hypothesis \textbf{H}$(m)$ for all positive integers $m<n$.
	Then Hypothesis \textbf{H}$(n)$ is also true.
\end{theorem}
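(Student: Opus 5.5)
The goal is to bound $\dim Z$ and $\dim Z'$ for $\LS_{\GL_n(K),\FF}$ by covering them with pieces that lift, in families, to the parabolic stacks $\LS_{P_{m,n-m}(K),\FF}$. We then combine Theorem \ref{thm:parabolic dimension upper bound} with Lemmas \ref{lem:stack fiber dimension} and \ref{lem:locally Zariski closed partition dimension}. We treat $Z'$ first. At a classical point of $Z'$, Tate duality gives $H^0_{\FF,G_K}(V_x^{\vee}(1))\neq 0$, that is, a nonzero equivariant map $g\colon V_x\to \Ocal(1)$. The case $Z$ is reduced to the same shape, since $\Ext^2(V_x,V_x)\neq 0$ gives a nonzero $g\colon V_x\to V_x(1)$. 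We then take the kernel, or the saturation of the image, of $g$ to obtain a proper nonzero equivariant subbundle of some rank $0<m<n$. For $n=1$ there is nothing to do, by Theorem \ref{thm:rank 1}.

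\textbf{Step 1 (stratification).} Using Corollary \ref{cor:coherent sheaf to vector bundle} and Example \ref{ex:Fitting ideal locally Zariski closed partition}, we stratify $Z$ (resp.\ $Z'$) into locally Zariski closed reduced substacks $Z_{\kbf}$. On each stratum $H^2_{\FF,G_K}(V)$, or $\Ext^2(V,V)$, is a vector bundle of fixed rank $k\ge1$. Hence the dual $H^0_{\FF,G_K}(V^\vee(1))$ (resp.\ $\Hom(V,V(1))$) is a vector bundle, which we may trivialize smooth-locally. Passing to the projectivization $\Pbb$ of this bundle gives a smooth, hence surjective, morphism of relative dimension $k-1\geq 0$. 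Over it we have a universal nonzero map $g$, and the dimension of the stratum is at most the dimension of $\Pbb$ minus $(k-1)$. We refine further by imposing conditions on $g$. These are the generic rank of $g$ on the curve, recorded through $\wedge^j g$ being zero or not, and the degrees of the elementary divisors of the image, recorded through Fitting-type conditions on the cokernel of $\wedge^j g$ pushed to $H^0$. The refinement is indexed by $\Zbb_{\ge0}^N$ as in Example \ref{ex:locally Zariski closed partition rigid stack}. We check the termination condition on classical points, where the ranks and degrees are bounded.

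\textbf{Step 2 (subbundles in families).} On each refined piece $Y$, the image of $\wedge^m g$ (with $m$ the generic rank) should be a line subbundle $L\subset \wedge^m V(1)$ after saturation. This is because the degrees are locally constant on the piece, so saturation commutes with base change on the piece. The Plücker relations then cut out a rank $m$ equivariant subbundle $W\subset V$ in the family. This yields a morphism $Y\to \LS_{P_{m,n-m}(K),\FF}$ lifting $Y\to \LS_{\GL_n(K),\FF}$, landing inside the locus $Z$ (resp.\ $Z'$) of Theorem \ref{thm:parabolic dimension upper bound}. The lift is a section of the representable forgetful map restricted over $Y$. By Lemma \ref{lem:section dimension} applied to the base change, together with Lemma \ref{lem:stack fiber dimension}, we get $\dim Y\le \dim$ of that parabolic bad locus, and hence $\dim Y\le dn^2-2$. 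Summing over pieces via Lemma \ref{lem:locally Zariski closed partition dimension} gives \textbf{H}$(n)$.

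\textbf{Main obstacle.} The hardest step is Step 2: constructing $W$ in families. The kernel of $g$ is saturated only pointwise, and the image's saturation can jump. The refinement in Step 1 must therefore be chosen so that, on each piece, the torsion of $\Coker(\wedge^m g)$ is flat over the base. This lets us saturate uniformly and makes the Plücker decomposability descend from classical points, which works because the piece is reduced (Lemma \ref{lem:Zariski closed inclusion classical points}). I would first try encoding flatness via constancy of $\dim H^0$ and $H^1$ of twists of the cokernel, using Lemma \ref{lem:finiteness of (phi, Gamma) cohomology} to express these as Fitting ideals. A secondary subtlety is the exceptional case $(n,m,K)=(2,1,\Qbb_p)$. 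There we must check that the lifted points land in the closed substack of $\LS_{P_{1,1}}$ whose dimension is still $\le 2$, and this is what Theorem \ref{thm:parabolic dimension upper bound} is phrased to provide.
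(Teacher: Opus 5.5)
The architecture of your proposal is the paper's. You Fitting-stratify by the rank of $\Ext^2_{\FF,G_K}(V_n,V_n)$ (resp.\ $H^2_{\FF,G_K}(V_n)$) and pass to nonzero dual elements $f\colon V_n\to V_n(1)$ (resp.\ $V_n\to\Ocal(1)$). You then stratify by the rank $j$ of $f$ via vanishing of $\wedge^j f$, and by a maximal index $\kbf\in\Zbb_{\ge 0}^{\Sigma_K}$. You finish with the Pl\"{u}cker embedding, Lemma \ref{lem:section dimension} and Theorem \ref{thm:parabolic dimension upper bound}.

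The gap is Step 2, which you flag as the main obstacle but do not resolve. ``The degrees are locally constant on the piece, so saturation commutes with base change'' is an assertion with no mechanism behind it. The route you propose for supplying one does not work as stated:
\begin{itemize}
\item the torsion of $\Coker(\wedge^j g)$ is fiberwise supported at $x_{\infty}$, and its coherent cohomology on the curve is not finite-dimensional over the residue field;
\item Lemma \ref{lem:finiteness of (phi, Gamma) cohomology} only concerns vector bundles;
\item you give no argument that constancy of any such invariant yields flatness of the torsion, let alone a saturated image in families.
\end{itemize}
Without this you get no subbundle over $\Spa A$, hence no section and no lift to $\LS_{P_{m,n-m}(K),\FF}$.

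The paper never saturates in families. It uses two ingredients missing from your plan.

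First, it shrinks the target instead of saturating the image. Work on the stratum where $\kbf$ is maximal such that $\wedge^j f_x$ factors through $\tbf^{\kbf}\wedge^j(V_{n,x}(1))$ (resp.\ such that $\Hom_{\FF,G_K}(V_{n,x},\tbf^{\kbf}\Ocal(1))\neq 0$). There, \cite[Corollary 6.2.9]{KPX14} forces the image of $\wedge^j f_x$ in $\tbf^{\kbf}\wedge^j(V_{n,x}(1))$ to be saturated at every classical point. That corollary says every rank-one equivariant submodule of a rank-one object $L$ is $\tbf^{\kbf'}L$. So the pointwise cokernel is a vector bundle of rank $\binom{n}{j}-1$ (resp.\ $f_x$ is surjective).

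Second, Lemma \ref{lem:quotient vector bundle} upgrades this pointwise statement to a reduced affinoid base. It does so by descending to $B_{K,m,A}^{[1,p]}$, which is smooth over $A$, hence Jacobson and reduced. Thus $\Ker(\wedge^j f_A)$ is a corank-one subbundle of $\wedge^j V_{n,A}$, and reducedness lets the Pl\"{u}cker condition descend from classical points.

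A secondary point: to make the $\kbf$-strata Zariski locally closed, the paper phrases the factorization condition through the top-degree groups $\Ext^2_{\FF,G_K}(\tbf^{\kbf}(\wedge^j V_n)(j-1),\wedge^j V_n)$ and Tate duality (the cokernel $W_{i,j,\kbf}$). These are compatible with base change. Your ``$H^0$ of the cokernel'' formulation involves Hom-type groups, which need not commute with base change.
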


We will prove these three theorems in the following subsections, which completes the proofs of Theorems \ref{thm:bad locus dimension upper bound} and \ref{thm:EGH conjecture}.

\subsection{The case $n=1$}\label{subsection:n=1}

Let $X^{\an}(K^{\times})$ denote the rigid analytic moduli space over $E$ of continuous characters of $K^{\times}$.
By \cite[Theorem 6.2.14]{KPX14}, we have a canonical equivalence
\[\LS_{\GL_1(K),\FF}\cong X^{\an}(K^{\times})/\Gbb_m^{\an},\]
where the action of $\Gbb_m^{\an}$ on $X^{\an}(K^{\times})$ is trivial.

First, let us consider the coherent sheaf $\Ext^2_{\FF,G_K}(V_1,V_1)$.
This is isomorphic to $H^2_{\FF,G_K}(\Ocal)$, where $\Ocal$ is the trivial $G_K$-equivariant line bundle on $X_{\Cbb_p}\times \LS_{\GL_1(K),\FF}$.
By \cite[Proposition 6.2.8 (2)]{KPX14}, we have $H^2_{\FF,G_K}(\Ocal)=0$, so the Zariski closed substack $Z\subset \LS_{\GL_1(K),\FF}$ defined by $\Fit_0(\Ext^2_{\FF,G_K}(V_1,V_1))$ is empty.

Next, let us consider the reduced Zariski closed substack $Z'\subset \LS_{\GL_1(K),\FF}$ defined by $\Fit_0(H^2_{\FF,G_K}(V_1))$.

\begin{definition}
	Let $\varepsilon\colon K^{\times}\to \Qbb_p^{\times}$ denote the character corresponding to the $p$-adic cyclotomic character under local class field theory.
	For $\sigma\in \Sigma_K$, let $x_{\sigma}\colon K^{\times}\to E^{\times}$ denote the character obtained from $\sigma\colon K\to E$.
	Let $X^{\an}(K^{\times})^{\irreg} \subset X^{\an}(K^{\times})$ denote the subspace parametrizing characters of the form $\varepsilon \prod_{\sigma\in \Sigma_K}x_{\sigma}^{k_{\sigma}}$ for $k_{\sigma}\in \Zbb_{\geq 0}$.
	Since this subspace is locally finite, $X^{\an}(K^{\times})^{\irreg} \subset X^{\an}(K^{\times})$ is a $0$-dimensional reduced Zariski closed subspace.
\end{definition}

\begin{proposition}\label{prop:irregular locus n=1}
	The reduced Zariski closed substack $Z'$ is equal to $X^{\an}(K^{\times})^{\irreg}/\Gbb_m^{\an}$ under the equivalence $\LS_{\GL_1(K),\FF}\cong X^{\an}(K^{\times})/\Gbb_m^{\an}$.
	In particular, we obtain $\dim Z'=-1$, which implies Theorem \ref{thm:rank 1}.
\end{proposition}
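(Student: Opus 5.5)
We will work on the chart $X^{\an}(K^{\times})\to \LS_{\GL_1(K),\FF}\cong X^{\an}(K^{\times})/\Gbb_m^{\an}$, which is a smooth surjection; Fitting ideals are compatible with pullback by Lemma \ref{lem:Fitting ideal base change}. So it suffices to identify the reduced Zariski closed subspace $Z'_0\subset X^{\an}(K^{\times})$ cut out by $\Fit_0(H^2_{\FF,G_K}(\Rcal(\delta^{\mathrm{univ}})))$, where $\delta^{\mathrm{univ}}$ is the universal character. By Lemma \ref{lem:Zariski closed inclusion classical points}, since both $Z'_0$ and $X^{\an}(K^{\times})^{\irreg}$ are reduced Zariski closed, it is enough to show that they have the same classical points. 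Both inclusions will then follow by applying that lemma twice.

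For the classical points, take $\delta\colon K^{\times}\to F^{\times}$ with $F/E$ finite. Since $\Fit_0$ commutes with base change, $\delta\in Z'_0$ if and only if $\Fit_0(H^2_{\FF,G_K}(V_\delta))\neq F$. By the base change of Lemma \ref{lem:finiteness of (phi, Gamma) cohomology}, and because $H^2$ is the top degree of a perfect complex with amplitude $[0,2]$, we have $H^2(V)\otimes_A k(\delta)\cong H^2(V_\delta)$. This holds if and only if $H^2_{\FF,G_K}(\Rcal_F(\delta))\neq 0$. By Tate duality this is equivalent to $H^0(\Rcal_F(\delta^{-1}\varepsilon))\neq 0$, with the convention matching the paper's twist. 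Finally, \cite[Proposition 6.2.8]{KPX14} says that $H^2(\Rcal_F(\delta))\neq 0$ exactly when $\delta=\varepsilon\prod_\sigma x_\sigma^{k_\sigma}$ with $k_\sigma\geq 0$, after checking the normalization of $\varepsilon$ and $x_\sigma$ against KPX. So the classical points of $Z'_0$ are exactly those of $X^{\an}(K^{\times})^{\irreg}$. Hence $Z'_0=X^{\an}(K^{\times})^{\irreg}$, and by descent $Z'=X^{\an}(K^{\times})^{\irreg}/\Gbb_m^{\an}$.

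For the dimension, $X^{\an}(K^{\times})^{\irreg}\to Z'$ is a smooth surjection of pure relative dimension $1$, being a $\Gbb_m^{\an}$-torsor, and its source has dimension $0$. By the definition of stack dimension, $\dim Z'=0-1=-1$. Together with $Z=\emptyset$, shown above, and $d\cdot1-2=d-2\geq -1$, this gives \textbf{H}$(1)$.

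The main obstacle is the dictionary check. One must make sure that the $H^2$ vanishing criterion in \cite{KPX14}, stated for $(\varphi,\Gamma)$-modules $\Rcal(\delta)$ with KPX's conventions for $x_\sigma$ and the cyclotomic character, translates exactly into the set $\varepsilon\prod x_\sigma^{k_\sigma}$ with $k_\sigma\geq0$ under the equivalence of \cite[Theorem 6.2.14]{KPX14}. A sign or inversion convention could flip $k_\sigma\geq0$ to $k_\sigma\leq 0$, or swap $\varepsilon$ with $\varepsilon^{-1}$, so this needs to be verified directly against the source.
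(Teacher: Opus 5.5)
Your proposal is correct and follows the same route as the paper, whose proof simply cites \cite[Proposition 6.2.8 (2)]{KPX14}. Your reduction to classical points via Lemma \ref{lem:Zariski closed inclusion classical points}, the top-degree base change for $H^2$, and the computation $\dim Z'=0-1=-1$ are just the details implicit in ``follows immediately''; the normalization you flag is handled in the paper by defining $X^{\an}(K^{\times})^{\irreg}$ to match the KPX criterion.
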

\begin{proof}
	This follows immediately from \cite[Proposition 6.2.8 (2)]{KPX14}.
\end{proof}

\subsection{Proof of Theorem \ref{thm:parabolic dimension upper bound}}
Let $x_{\infty}\colon \Spa \Cbb_p\to X_{\Cbb_p}$ denote the distinguished point, and let $t\Ocal_{X_{\Cbb_p}}$ denote the defining ideal of $x_{\infty}$.
Then $t\Ocal_{X_{\Cbb_p}}$ becomes a $G_K$-equivariant line bundle on $X_{\Cbb_p}$ corresponding to the character $\Nm_{K/\Qbb_p}\colon K^{\times}\to \Qbb_p^{\times}$.
For $\sigma\in \Sigma_K$, we define the divisor $x_{\infty,\sigma}$ of $X_{\Cbb_p,E}$ by
\[x_{\infty,\sigma} \colon \Spa (\Cbb_p\otimes_{K,\sigma}E)\to \Spa (\Cbb_p\otimes_{\Qbb_p}E)\xrightarrow{x_{\infty}\otimes \id} X_{\Cbb_p,E}.\]
Let $t_{\sigma}\Ocal_{X_{\Cbb_p,E}}$ denote the defining ideal of $x_{\infty,\sigma}$.
Then $t_{\sigma}\Ocal_{X_{\Cbb_p,E}}$ becomes a $G_K$-equivariant line bundle on $X_{\Cbb_p,E}$ corresponding to the character $x_{\sigma} \colon K^{\times}\to E^{\times}.$
By construction, we have $t\Ocal_{X_{\Cbb_p,E}}=\prod_{\sigma\in \Sigma_K}t_{\sigma}\Ocal_{X_{\Cbb_p,E}}.$

\begin{remark}
	The above construction is a translation of \cite[Notation 6.2.7]{KPX14} to our setting.
\end{remark}

Let $F$ be a finite extension of $E$.
By \cite[Theorem 7.11]{Ked16}, the Fargues-Fontaine curve $X_{\Cbb_p,F}$ is, locally, the adic spectrum of a principal ideal domain.
Therefore, we can define the saturation of a submodule of a vector bundle on $X_{\Cbb_p,F}$.

\begin{lemma}\label{lem:H^2 bound}
	Let $V$ be a $G_K$-equivariant vector bundle on $X_{\Cbb_p,F}$ of rank $n$.
	Then we have $\dim_F H^0_{\FF,G_K}(V), \dim_F H^2_{\FF,G_K}(V)\leq n$.
\end{lemma}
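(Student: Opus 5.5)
We first reduce the $H^2$ bound to an $H^0$ bound. By Tate local duality (\cite[Theorem 4.4.5]{KPX14}) applied over the field $F$, we have $H^2_{\FF,G_K}(V)\cong H^0_{\FF,G_K}(V^{\vee}(1))^{\vee}$. Since $V^{\vee}(1)$ is again a $G_K$-equivariant vector bundle of rank $n$ on $X_{\Cbb_p,F}$, it suffices to prove $\dim_F H^0_{\FF,G_K}(W)\leq n$ for every $G_K$-equivariant vector bundle $W$ of rank $n$.

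For the $H^0$ bound, note that $H^0_{\FF,G_K}(W)=H^0(X_{\Cbb_p,F},W)^{G_K}$. Consider the evaluation map
\[\mathrm{ev}\colon H^0_{\FF,G_K}(W)\otimes_F \Ocal_{X_{\Cbb_p,F}}\to W,\]
which is $G_K$-equivariant. The plan is to show that it is injective, which gives $\dim_F H^0_{\FF,G_K}(W)\leq \rank W=n$ after passing to the generic fibre of a locally PID chart (\cite[Theorem 7.11]{Ked16}).

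Injectivity is the main point. It amounts to showing that $F$-linearly independent $G_K$-invariant sections $s_1,\ldots,s_k$ are linearly independent over $\Ocal_{X_{\Cbb_p,F}}$. We argue by a minimal-relation argument, as in the proof that $\Qbb_p$-points of $B$-admissible representations embed.

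1. Assume a relation $\sum_i f_i s_i=0$ exists locally, and choose one with the minimal number of nonzero coefficients.
2. Using the locally PID structure, saturate and normalize so that one coefficient is $1$. The relation module $\Ker(\mathrm{ev})$ is a $G_K$-stable saturated subbundle.
3. Apply $g\in G_K$ and subtract to get a shorter relation. By minimality, every coefficient $f_i$ is then $G_K$-invariant.
4. Conclude that the relation module is generated by $G_K$-invariant global relations. The key input here is that $H^0(X_{\Cbb_p,F},\Ocal)^{G_K}=(F\otimes_{\Qbb_p}\Qbb_p)=F$, i.e.\ $H^0(X_{\Cbb_p},\Ocal)=\Qbb_p$ with trivial action, tensored with $F$.
5. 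An $F$-linear relation among the $s_i$ contradicts their independence.

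The main obstacle is step 3/4, passing from local to global invariant relations. We handle it cleanly by applying the argument to the kernel $\Kcal=\Ker(\mathrm{ev})$. This is a $G_K$-equivariant subbundle of the trivial bundle $H^0\otimes\Ocal$, and its own invariant sections $H^0_{\FF,G_K}(\Kcal)$ inject into $H^0_{\FF,G_K}(H^0\otimes\Ocal)=H^0$ and map to zero under $\mathrm{ev}$. But $\mathrm{ev}$ is the identity on invariant global sections, so $H^0_{\FF,G_K}(\Kcal)=0$.

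It then remains to see that a nonzero equivariant subbundle of a trivial equivariant bundle has nonzero invariant sections, or more precisely, to rule out $\Kcal\neq 0$ via slopes. The subbundle $\Kcal$ has slopes $\leq 0$. The image of $\mathrm{ev}$ is a quotient of a semistable slope-$0$ bundle, so we pass to saturation and use the Harder–Narasimhan filtration to reduce to a slope-$0$ equivariant bundle. There the Fontaine–Kedlaya equivalence with $F$-linear Galois representations turns $\mathrm{ev}$ into the map $H^0(G_K,\rho)\otimes \Ocal\to \rho\otimes\Ocal$, which is evidently injective. If this slope route becomes delicate, the fallback is the minimal-relation argument carried out at the generic point of $X_{\Cbb_p,F}$, using that its function field has $G_K$-invariants equal to $F$.
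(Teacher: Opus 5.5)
Your Tate-duality reduction to $H^0$ is the same as the paper's, but your argument for $\dim_F H^0_{\FF,G_K}(W)\le n$ has a genuine gap. No step controls $G_K$-invariant sections of bundles with positive slopes, and that is the actual content of the lemma. For example, $t^{-1}\Ocal$ has an infinite-dimensional space of global sections, yet only a one-dimensional space of invariants.

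In your minimal-relation argument the coefficients $f_i$ are not global functions. They live in the fraction field of a chart, or are meromorphic functions on the curve. So the input you cite, $H^0(X_{\Cbb_p,F},\Ocal)^{G_K}=F$, is the wrong one. On charts the statement you would actually need is false: $\tilde{B}^{[1,p]}_{\Cbb_p}\otimes_{\Qbb_p}F$ contains $W(\bar{\mathbb{F}}_p)[1/p]\otimes_{\Qbb_p}F$. Hence its $G_K$-invariants contain $K_0\otimes_{\Qbb_p}F$, which is strictly larger than $F$, and not a field, when $K_0\neq\Qbb_p$.

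Your repair via $\mathcal{K}=\Ker(\mathrm{ev})$ correctly gives $H^0_{\FF,G_K}(\mathcal{K})=0$. The principle you then invoke is false as stated: $t\Ocal\subset\Ocal$ has no nonzero sections at all. In its saturated form it is equivalent to the lemma itself. The slope argument does not close the gap either. $H^0_{\FF,G_K}(\mathcal{K})=0$ only rules out a slope-$0$ Harder--Narasimhan piece of $\mathcal{K}$, since such a piece would correspond to a subrepresentation of a trivial representation. If all slopes of $\mathcal{K}$ are negative, the image of $\mathrm{ev}$ has positive degree. Then there is no reduction to slope $0$, and the Fontaine--Kedlaya equivalence does not apply. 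Replacing $W$ by that image just reproduces the original problem for a bundle of smaller rank and positive degree.

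Your fallback is a viable route, but its key input is a genuine theorem, not a formality: that the function field of $X_{\Cbb_p,F}$ has $G_K$-invariants exactly $F$. One proof goes as follows.
\begin{itemize}
\item Embed the function field $\mathrm{Frac}(B_e)$ of $X_{\Cbb_p}$ into $B_{\mathrm{dR}}$, by completing at $\infty$.
\item Use $B_{\mathrm{dR}}^{G_K}=K$ together with Fontaine's injectivity of $K\otimes_{K_0}B_{\mathrm{cris}}\to B_{\mathrm{dR}}$ to get invariants $\Qbb_p$.
\item Base-change to $F$. Via GAGA, and because $\Qbb_p$ is algebraically closed in $\mathrm{Frac}(B_e)$, the function field of $X_{\Cbb_p,F}$ is the field $\mathrm{Frac}(B_e)\otimes_{\Qbb_p}F$.
\end{itemize}
Once this is established, the minimal-relation argument at the generic point is a correct alternative proof. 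It even gives slightly more, namely injectivity of $\mathrm{ev}$.

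The paper avoids all of this. It takes the rank-one bound $\dim_F H^0_{\FF,G_K}\le 1$ from \cite[Proposition 6.2.8 (1)]{KPX14}. For $n>1$ it saturates the image of a nonzero morphism $\Ocal\to V$ to get a rank-one equivariant subbundle $W$. It then applies left exactness of $H^0_{\FF,G_K}$ to $0\to W\to V\to V/W\to 0$ and inducts on the rank.
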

\begin{proof}
	By Tate local duality (\cite[Theorem 1.2]{Liu08}), it is enough to show that $\dim_F H^0_{\FF,G_K}(V)\leq n$.
	We proceed by induction on $n$.
	When $n=1$, the claim follows from \cite[Proposition 6.2.8 (1)]{KPX14}.
	Assume $n>1$ and $H^0_{\FF,G_K}(V)\neq 0$.
	Let $f\colon \Ocal_{X_{\Cbb_p,F}}\to V$ be a non-zero morphism, and let $W$ be the saturation of $f(\Ocal_{X_{\Cbb_p,F}})$ in $V$.
	Then, there is an exact sequence of $G_K$-equivariant vector bundles
	\[0\to W \to V \to V/W \to 0.\]
	By the induction hypothesis, we have $\dim_F H^0_{\FF,G_K}(W)\leq 1$ and $\dim_F H^0_{\FF,G_K}(V/W)\leq n-1$, which implies $\dim_F H^0_{\FF,G_K}(V)\leq n$.
\end{proof}

\begin{definition}
	Let $V$ be a non-zero $G_K$-equivariant vector bundle on $X_{\Cbb_p,F}$.
	We say that $V$ is \textit{absolutely irreducible} (resp.\ \textit{absolutely non-trianguline}) if for any finite extension $F'/F$, the $G_K$-equivariant vector bundle $V\otimes_F F'$ on $X_{\Cbb_p,F'}$ has no non-zero proper subbundle (resp.\ is non-trianguline).
\end{definition}

\begin{lemma}\label{lem:submodule of the same rank t-torsion}
	Let $V$ be a $G_K$-equivariant vector bundle on $X_{\Cbb_p,F}$ and $W\subset V$ be an equivariant submodule of $V$ whose saturation is $V$.
	Then the support of $V/W$ lies in $V(t)=x_{\infty}\times \Spa F\subset X_{\Cbb_p,F}$.
\end{lemma}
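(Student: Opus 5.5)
The plan is to reduce to a local statement at closed points of $X_{\Cbb_p,F}$ and then use the $G_K$-action to move the support to a single orbit which must be finite. The quotient $Q=V/W$ is a coherent sheaf on $X_{\Cbb_p,F}$. Since $X_{\Cbb_p,F}$ is locally the adic spectrum of a principal ideal domain (\cite[Theorem 7.11]{Ked16}) and the saturation of $W$ is all of $V$, $W$ and $V$ have the same rank. Hence $Q$ is torsion, and its support $\supp(Q)$ is a Zariski closed subset consisting of finitely many closed points on each affinoid piece. In particular, it is a discrete closed set of classical points of $X_{\Cbb_p,F}$, and it is quasi-compact since $X_{\Cbb_p,F}$ is quasi-compact. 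So $\supp(Q)$ is a finite set of classical points.

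Because $W\subset V$ is $G_K$-equivariant, $Q$ carries a compatible $G_K$-equivariant structure. Therefore $\supp(Q)$ is stable under the action of $G_K$ on $X_{\Cbb_p,F}$, which is via its action on $X_{\Cbb_p}$; the coefficient field $F$ carries the trivial action. Thus $\supp(Q)$ is a finite $G_K$-stable set of closed points, and it suffices to show that every closed point of $X_{\Cbb_p,F}$ with finite $G_K$-orbit lies over $x_{\infty}$.

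Each such point maps to a closed point $y$ of $X_{\Cbb_p}$ (or rather of $X_{\Cbb_p}\times\Spa F$, whose closed points lie over closed points of $X_{\Cbb_p}$, since $F/\Qbb_p$ is finite). The image point $y$ then has a finite $G_K$-orbit in $X_{\Cbb_p}$, so it is fixed by an open subgroup $G_L$ for some finite $L/K$. Closed points of $X_{\Cbb_p}$ correspond to untilts of $\Cbb_p^{\flat}$ up to Frobenius, and a $G_L$-fixed one has residue field an untilt $C'$ with a $G_L$-action. By the standard result of Fargues--Fontaine (the classification of Galois-stable closed points, via $\Cbb_p^{G_L}=\hat L$ and Ax--Sen--Tate), the only closed point of $X_{\Cbb_p}$ with finite $G_K$-orbit is $x_{\infty}$. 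For example, one can argue that $\Cbb_p^{\flat}$ has no nonzero $G_L$-invariant elements of the relevant kind except via $\varphi^{\Zbb}$-twists of the canonical untilt.

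Hence $\supp(Q)\subset x_{\infty}\times\Spa F$, which is the vanishing locus $V(t)$. The main obstacle is this last step, identifying the unique finite $G_K$-orbit of closed points; I would cite Fargues--Fontaine's result that $x_{\infty}$ is the only $G_K$-fixed closed point (and that orbits of other points are infinite) rather than reprove it.
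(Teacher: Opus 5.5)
Your proposal is correct and follows essentially the paper's argument. The paper also notes that $V/W$ is torsion because the saturation of $W$ is $V$, and then concludes by citing \cite[Proposition 10.1.1]{FF18} and \cite[Proposition II.2.9]{FS24}. These supply the same two inputs you spell out: the torsion quotient has finite, $G_K$-stable support among the closed points of the curve, and $x_{\infty}$ is the only closed point with a finite $G_K$-orbit.
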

\begin{proof}
	Since the saturation of $W$ is $V$, the module $V/W$ is torsion.
	Then the claim follows from \cite[Proposition 10.1.1]{FF18} and \cite[Proposition II.2.9]{FS24}.
\end{proof}

\begin{definition}
	Let $\Ocal_{X_{\Cbb_p,F}}[1/t]$ denote the sheaf of rings $\varinjlim_{n}t^{-n}\Ocal_{X_{\Cbb_p,F}}$ on $X_{\Cbb_p,F}$.
	Similarly, for a $G_K$-equivariant vector bundle $V$ on $X_{\Cbb_p,F}$, we define a $G_K$-equivariant $\Ocal_{X_{\Cbb_p,F}}[1/t]$-module $V[1/t]$ as $\varinjlim_{n}t^{-n} V$.
	For $G_K$-equivariant vector bundles $V$ and $W$ on $X_{\Cbb_p,F}$, let $\Hom_{\FF,G_K}(V[1/t],W[1/t])$ denote the $F$-vector space of $G_K$-equivariant $\Ocal_{X_{\Cbb_p,F}}[1/t]$-linear morphisms from $V[1/t]$ to $W[1/t]$.
\end{definition}

\begin{remark}\label{rem:add 1/t injective}
	By definition, for $G_K$-equivariant vector bundles $V$ and $W$ on $X_{\Cbb_p,F}$, the natural morphism 
	\[\Hom_{\FF,G_K}(V,W)\to \Hom_{\FF,G_K}(V[1/t],W[1/t])\]
	is injective, and the natural morphism
	\[\varinjlim_{n\geq 0}\Hom_{\FF,G_K}(V,t^{-n}W)\to \Hom_{\FF,G_K}(V[1/t],W[1/t])\]
	is an isomorphism.
\end{remark}

\begin{example}\label{ex:killing t-torsion}
	Let $V$ be a $G_K$-equivariant vector bundle on $X_{\Cbb_p,F}$ and $W\subset V$ be a submodule of $V$ whose saturation is $V$.
	By Lemma \ref{lem:submodule of the same rank t-torsion}, we have $V[1/t]\cong W[1/t]$.
\end{example}

\begin{lemma}\label{lem:[1/t] finiteness}
	Let $V$ and $W$ be $G_K$-equivariant vector bundles on $X_{\Cbb_p,F}$.
	Then $\Hom_{\FF,G_K}(V[1/t],W[1/t])$ is a finite-dimensional $F$-vector space.
\end{lemma}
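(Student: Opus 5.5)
We need to show $\Hom_{\FF,G_K}(V[1/t],W[1/t])$ is finite-dimensional. The plan is to reduce to $H^0$ of a single bundle and then to bound the $t^{-n}$ direct system, using the fact that bundles twisted by large powers of $t^{-1}$ have bounded $H^0$.

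\textbf{Reduction.} Replacing $W$ by $\intHom(V,W)=V^\vee\otimes W$, we have
\[\Hom_{\FF,G_K}(V[1/t],W[1/t])=H^0_{\FF,G_K}(U[1/t]),\qquad U=V^\vee\otimes W,\]
since $U[1/t]=\intHom(V[1/t],W[1/t])$. By Remark \ref{rem:add 1/t injective} this equals $\varinjlim_n H^0_{\FF,G_K}(t^{-n}U)$, and all transition maps are injective. Each $t^{-n}U\cong U\otimes(t\Ocal)^{\otimes -n}$ is a $G_K$-equivariant vector bundle of rank $r=\rank U$. Lemma \ref{lem:H^2 bound} gives $\dim_F H^0_{\FF,G_K}(t^{-n}U)\le r$ for every $n$. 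An increasing union of subspaces of dimension at most $r$ has dimension at most $r$. This already proves the claim, with bound $\rank V\cdot\rank W$.

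\textbf{Checks.} Two points need verifying.
\begin{enumerate}
\item The identification $\intHom(V,W)[1/t]=\intHom(V[1/t],W[1/t])$ should hold: $V$ is locally free, so Hom out of it commutes with the filtered colimit $\varinjlim t^{-n}W$ locally, and then $G_K$-invariants are taken. Alternatively, Remark \ref{rem:add 1/t injective} directly gives the colimit $\varinjlim_n\Hom_{\FF,G_K}(V,t^{-n}W)$, and $\Hom_{\FF,G_K}(V,t^{-n}W)=H^0_{\FF,G_K}(V^\vee\otimes t^{-n}W)$, so the reduction can bypass this identification.
\item $t^{-n}W$ is a $G_K$-equivariant vector bundle of the same rank, namely $W\otimes(t\Ocal)^{-n}$ with $t\Ocal$ the line bundle attached to $\Nm_{K/\Qbb_p}$. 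This lets Lemma \ref{lem:H^2 bound} be applied uniformly in $n$.
\end{enumerate}

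There is no real obstacle once Lemma \ref{lem:H^2 bound} is in hand. The only subtle point is that the bound must be uniform in $n$, and that lemma provides exactly this since it depends only on rank.
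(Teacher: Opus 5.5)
Your proposal is correct and follows the paper's proof: write $\Hom_{\FF,G_K}(V[1/t],W[1/t])$ as the injective colimit $\varinjlim_n \Hom_{\FF,G_K}(V,t^{-n}W)$ (Remark \ref{rem:add 1/t injective}), then apply Lemma \ref{lem:H^2 bound} to $\intHom(V,t^{-n}W)$ to get the uniform bound $\rank(V)\cdot\rank(W)$. Your ``bypass'' variant in the first check is exactly how the paper argues, so the identification $\intHom(V,W)[1/t]\cong\intHom(V[1/t],W[1/t])$ is not needed.
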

\begin{proof}
Since the transition morphisms of the filtered colimit $\varinjlim_{n\geq 0}\Hom_{\FF,G_K}(V,t^{-n}W)$ are injective and we have $\varinjlim_{n\geq 0}\Hom_{\FF,G_K}(V,t^{-n}W)\cong \Hom_{\FF,G_K}(V[1/t],W[1/t])$, it suffices to show that the dimensions $\dim_F\Hom_{\FF,G_K}(V,t^{-n}W)$ are uniformly bounded independently of $n$.
By applying Lemma \ref{lem:H^2 bound} to $\intHom(V,t^{-n}W)$, we obtain $\dim_F\Hom_{\FF,G_K}(V,t^{-n}W)\leq \rank (V)\cdot \rank (t^{-n} W)=\rank (V)\cdot \rank (W)$.
This completes the proof.
\end{proof}

\begin{lemma}\label{lem:Schur}
	Let $V$ be an absolutely irreducible $G_K$-equivariant vector bundle on $X_{\Cbb_p,F}$.
	Then $\dim_F\End_{\FF,G_K}(V[1/t])=1$.
\end{lemma}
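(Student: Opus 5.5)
We need to prove that $\End_{\FF,G_K}(V[1/t])$ is one-dimensional, i.e.\ a Schur lemma for $V[1/t]$. By Lemma \ref{lem:[1/t] finiteness}, $D:=\End_{\FF,G_K}(V[1/t])$ is a finite-dimensional $F$-algebra. The plan is to show first that $D$ is a division algebra, and then that it is just $F$, using absolute irreducibility after a finite base change.

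\emph{Step 1: every nonzero $f\in D$ is invertible.} By Remark \ref{rem:add 1/t injective}, $f$ comes from a $G_K$-equivariant morphism $f_n\colon V\to t^{-n}V$ for some $n$. The kernel of $f_n$ is saturated, hence an equivariant subbundle of $V$, since locally $X_{\Cbb_p,F}$ is the spectrum of a PID \cite[Theorem 7.11]{Ked16}. The saturation of the image of $f_n$ is an equivariant subbundle of $t^{-n}V\cong V\otimes t^{-n}\Ocal$. Twisting by the line bundle $t^{n}\Ocal$ preserves subbundles, so $t^{-n}V$ is also irreducible. Since $f\neq 0$, the kernel is not all of $V$, so by irreducibility it is $0$. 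The saturated image is nonzero, so it is all of $t^{-n}V$. Thus $f_n$ is injective with saturated image of full rank. By Lemma \ref{lem:submodule of the same rank t-torsion} and Example \ref{ex:killing t-torsion}, $f_n[1/t]\colon V[1/t]\to V[1/t]$ is then an isomorphism, and its inverse is again $G_K$-equivariant. Hence $D$ is a finite-dimensional division algebra over $F$.

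\emph{Step 2: $D=F$.} The construction of $D$ commutes with finite flat base change $F\to F'$: we have $D\otimes_F F'\cong \End_{\FF,G_K}(V_{F'}[1/t])$. This follows from Remark \ref{rem:add 1/t injective}, the base-change property of $H^0$ in Lemma \ref{lem:finiteness of (phi, Gamma) cohomology} (applied to $\intHom(V,t^{-n}V)$ over the field $F$, where flat base change of $H^0$ holds), and the fact that colimits commute with $\otimes_F F'$. Take an element $a\in D$ and let $F'$ be a finite extension of $F$ containing a root $\lambda$ of the minimal polynomial of $a$ over $F$. Then $a-\lambda$ is a non-invertible nonzero element of $D\otimes_F F'$ unless $a=\lambda$. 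But $V_{F'}$ is still irreducible by absolute irreducibility, so Step 1 applied over $F'$ says $D\otimes_F F'$ is a division algebra. Hence $a-\lambda=0$, so $a=\lambda\in F'$. As $a\in D$, this means $a$ is algebraic of degree one, i.e.\ $a\in F$. Therefore $\dim_F D=1$.

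The main obstacle is the base-change compatibility of $D$ in Step 2. Everything is a field extension, hence flat, so $H^0$ commutes with $-\otimes_F F'$, and the colimit over $n$ commutes with tensor products; this should suffice.
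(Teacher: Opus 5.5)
Your proposal is correct and follows the paper's proof. Step 1 is the paper's argument: the kernel and the saturated image are subbundles, $V$ and $t^{-n}V$ are irreducible, and Lemma \ref{lem:submodule of the same rank t-torsion} gives invertibility on $V[1/t]$, so $\End_{\FF,G_K}(V[1/t])$ is a finite division $F$-algebra. Step 2 is the paper's base change to $F'$; you only make explicit the identification $\End_{\FF,G_K}(V[1/t])\otimes_F F'\cong\End_{\FF,G_K}(V_{F'}[1/t])$ and the minimal-polynomial argument, both of which the paper leaves implicit.
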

\begin{proof}
	Let $f\colon V[1/t]\to V[1/t]$ be a non-zero morphism.
	Then $\Ker f \cap V$ is a proper subbundle of $V$.
	Since $V$ is absolutely irreducible, we obtain $\Ker f \cap V=0$, which implies $\Ker f=0$.
	Let $n\geq 0$ be an integer such that $f(V)\subset t^{-n} V$.
	Since $f(V)\neq 0$ and $t^{-n} V$ is absolutely irreducible, the saturation of $f(V)$ is $t^{-n} V$.
	By Lemma \ref{lem:submodule of the same rank t-torsion}, $t^{-n}V/f(V)$ is $t$-torsion, which implies that $f\colon V[1/t]\to V[1/t]$ is surjective.
	Thus, $f\colon V[1/t]\to V[1/t]$ is an isomorphism.
	By combining the above argument with Lemma \ref{lem:[1/t] finiteness}, $\End_{\FF,G_K}(V[1/t])$ is a finite division $F$-algebra.
	Since $V$ is absolutely irreducible, for any finite extension $F'/F$, $\End_{\FF,G_K}(V[1/t])\otimes_F F'$ is also a finite division $F'$-algebra, which implies $\End_{\FF,G_K}(V[1/t])\cong F$.
\end{proof}

\begin{lemma}\label{lemma:abs irr Hom space dimension}
	Let $V$ and $W$ be $G_K$-equivariant vector bundles on $X_{\Cbb_p,F}$ of rank $m$ and $n$, respectively.
	Assume that $V$ is absolutely irreducible.
	Then we have $\dim_F \Hom_{\FF,G_K}(V,W), \dim_F \Hom_{\FF,G_K}(W,V)\leq \left\lfloor \frac{n}{m}\right\rfloor$.
\end{lemma}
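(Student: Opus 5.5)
The plan is to reduce to maps out of or into $V$ and argue by induction on the rank $n$ of $W$. By duality it suffices to treat $\Hom_{\FF,G_K}(W,V)$: since $\intHom(W,V)\cong\intHom(V^{\vee},W^{\vee})$ and $V^{\vee}$ is again absolutely irreducible, a bound on $\Hom_{\FF,G_K}(W,V)$ applied to $(V^{\vee},W^{\vee})$ gives the bound for $\Hom_{\FF,G_K}(V,W)$. Saturations exist here because $X_{\Cbb_p,F}$ is locally the spectrum of a PID, by \cite[Theorem 7.11]{Ked16}, so we may freely pass to saturated subbundles.

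For the case of $\Hom_{\FF,G_K}(V,W)$ directly, I would argue as follows.

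\begin{itemize}
\item If $\Hom_{\FF,G_K}(V,W)=0$ there is nothing to prove. Otherwise take a nonzero $f\colon V\to W$. Its kernel is a saturated proper subbundle of $V$, so by absolute irreducibility $f$ is injective. In particular $m\le n$.
\item Let $W_1$ be the saturation of $f(V)$ in $W$. Then $W_1$ has rank $m$, and $W_1[1/t]\cong V[1/t]$ by Example \ref{ex:killing t-torsion}.
\item We have the exact sequence $0\to W_1\to W\to W/W_1\to 0$, with $W/W_1$ a vector bundle of rank $n-m$. This gives
\[0\to \Hom_{\FF,G_K}(V,W_1)\to \Hom_{\FF,G_K}(V,W)\to \Hom_{\FF,G_K}(V,W/W_1).\]
\item By Remark \ref{rem:add 1/t injective}, $\Hom_{\FF,G_K}(V,W_1)$ injects into $\Hom_{\FF,G_K}(V[1/t],W_1[1/t])\cong \End_{\FF,G_K}(V[1/t])$. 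By Lemma \ref{lem:Schur} the latter is one-dimensional.
\item By induction, $\dim \Hom_{\FF,G_K}(V,W/W_1)\le \lfloor (n-m)/m\rfloor$.
\end{itemize}

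Summing gives $\dim \Hom_{\FF,G_K}(V,W)\le 1+\lfloor (n-m)/m\rfloor = \lfloor n/m\rfloor$. The induction starts at $n<m$, where the injectivity step forces the Hom space to vanish.

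The argument for $\Hom_{\FF,G_K}(W,V)$ is symmetric.

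\begin{itemize}
\item A nonzero $g\colon W\to V$ has image whose saturation is all of $V$ (by absolute irreducibility, as in the proof of Lemma \ref{lem:Schur}), so $g(W)$ has rank $m$.
\item Let $W_2=\Ker g$. It is a saturated subbundle of rank $n-m$, and $W/W_2\cong g(W)$ is a vector bundle of rank $m$.
\item We get the exact sequence $0\to \Hom_{\FF,G_K}(W/W_2,V)\to \Hom_{\FF,G_K}(W,V)\to \Hom_{\FF,G_K}(W_2,V)$.
\item The first term injects into $\Hom_{\FF,G_K}((W/W_2)[1/t],V[1/t])$. Since $g(W)[1/t]=V[1/t]$, this is $\End_{\FF,G_K}(V[1/t])$, which is one-dimensional.
\item Induction on $n$ applied to $W_2$, of rank $n-m$, bounds the last term by $\lfloor (n-m)/m\rfloor$.
\end{itemize}

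Note that this second argument is itself complete, so the duality reduction is optional.

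The main point to check carefully is the identification $g(W)[1/t]\cong V[1/t]$ and its compatibility with the injections. I need the map $\Hom_{\FF,G_K}(W/W_2,V)\to\Hom_{\FF,G_K}(V[1/t],V[1/t])$, obtained by inverting $t$ and composing with the isomorphism induced by $g$, to be injective. This follows from Remark \ref{rem:add 1/t injective} together with Lemma \ref{lem:submodule of the same rank t-torsion}, since $V/g(W)$ is supported on $V(t)$.
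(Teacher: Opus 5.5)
Your proposal is correct and follows the paper's proof: the same induction on $n$, the same use of absolute irreducibility to force injectivity, the same splitting off of the saturation of $f(V)$, and the same bound by $1$ obtained by inverting $t$ (Remark~\ref{rem:add 1/t injective}) and applying Lemma~\ref{lem:Schur}. The paper writes out only the $\Hom_{\FF,G_K}(V,W)$ case and calls the other ``similar''; your explicit treatment of $\Hom_{\FF,G_K}(W,V)$ through $\Ker g$ is a valid way to fill that in.
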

\begin{proof}
	The two bounds can be proved in a similar way, so we only show that $\dim_F \Hom_{\FF,G_K}(V,W)\leq \left\lfloor \frac{n}{m}\right\rfloor$.
	We proceed by induction on $n$.
	Assume that $\Hom_{\FF,G_K}(V,W)\neq 0$, and let $f\colon V\to W$ be a non-zero morphism.
	Since $V$ is absolutely irreducible and $\Ker f$ is a proper subbundle of $V$, we obtain $\Ker f=0$.
	Let $U\subset W$ be the saturation of $\Im f$. 
	Then there exists an exact sequence of $G_K$-equivariant vector bundles on $X_{\Cbb_p,F}$
	\[0\to U\to W\to W/U\to 0.\]
	By the induction hypothesis, we have $\dim_F \Hom_{\FF,G_K}(V,W/U)\leq \left\lfloor \frac{n-m}{m}\right\rfloor$.
	It remains to prove $\dim_F \Hom_{\FF,G_K}(V,U)\leq 1$.
	By Example \ref{ex:killing t-torsion}, we have $V[1/t]\cong U[1/t]$.
	Therefore, by Lemma \ref{lem:Schur}, we obtain $\dim_F \Hom_{\FF,G_K}(V[1/t],U[1/t])=1$.
	Hence, by Remark \ref{rem:add 1/t injective}, we obtain $\dim_F \Hom_{\FF,G_K}(V,U)\leq 1$.
\end{proof}

\begin{corollary}\label{cor:abs irr Ext^2 space dimension}
	Let $V$ and $W$ be $G_K$-equivariant vector bundles on $X_{\Cbb_p,F}$ of rank $m$ and $n$, respectively.
	Assume that $V$ is absolutely irreducible.
	Then we have $\dim_F \Ext^2_{\FF,G_K}(V,W), \dim_F \Ext^2_{\FF,G_K}(W,V)\leq \left\lfloor \frac{n}{m}\right\rfloor$.
	In particular, if $m\geq 2$, then we have $\dim_F \Ext^2_{\FF,G_K}(V,W), \dim_F \Ext^2_{\FF,G_K}(W,V)\leq mn-2$.
\end{corollary}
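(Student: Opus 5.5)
The plan is to reduce both bounds to Lemma \ref{lemma:abs irr Hom space dimension} via Tate local duality. By \cite[Theorem 1.2]{Liu08}, applied to the $G_K$-equivariant bundles $\intHom(V,W)$ and $\intHom(W,V)$, there are identifications
\[\Ext^2_{\FF,G_K}(V,W)\cong \Ext^0_{\FF,G_K}(W,V(1))^{\vee}=\Hom_{\FF,G_K}(W,V(1))^{\vee},\]
\[\Ext^2_{\FF,G_K}(W,V)\cong \Hom_{\FF,G_K}(V,W(1))^{\vee}.\]
Here we use that $\intHom(V,W)^{\vee}(1)\cong \intHom(W,V(1))$, and similarly with the roles of $V$ and $W$ exchanged.

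Next, observe that $V(1)$ is again absolutely irreducible of rank $m$. Twisting by the rank one equivariant bundle $\Ocal(1)$ is an autoequivalence compatible with extension of scalars $F'/F$, and it sends subbundles to subbundles bijectively. Similarly, $W(1)$ has rank $n$. Applying Lemma \ref{lemma:abs irr Hom space dimension} to the pair $(V(1),W)$, using the bound for $\Hom(W,V(1))$, and to the pair $(V,W(1))$, using the bound for $\Hom(V,W(1))$, gives both inequalities $\le \lfloor n/m\rfloor$.

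For the second assertion, assume $m\ge 2$. We need to check that $\lfloor n/m\rfloor\le mn-2$. Since $n\ge 1$, we have $\lfloor n/m\rfloor\le n/2$, so it suffices to show $n/2\le mn-2$. This follows from $mn-2\ge 2n-2\ge n/2$, which holds whenever $n\ge 4/3$. The only remaining case is $n=1$, where $\lfloor 1/m\rfloor=0\le m-2$, which holds since $m\ge 2$.

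The only point requiring care is the duality identification in this generality. Everything is over a finite extension $F$, where Liu's duality theorem gives a perfect pairing $H^2(\intHom(V,W))\times H^0(\intHom(V,W)^{\vee}(1))\to F$. The comparison between Galois-equivariant bundles on $X_{\Cbb_p,F}$ and $(\varphi,\Gamma)$-modules is already recalled in the paper. So no real obstacle is expected.
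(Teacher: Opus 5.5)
Your proposal is correct and matches the paper's argument. The paper obtains the bound by combining Tate local duality ($\Ext^2_{\FF,G_K}(V,W)\cong \Hom_{\FF,G_K}(W,V(1))^{\vee}$, and symmetrically for $\Ext^2_{\FF,G_K}(W,V)$) with Lemma \ref{lemma:abs irr Hom space dimension} applied to the absolutely irreducible twist. One minor slip: the inequality $\lfloor n/m\rfloor\le n/2$ holds because $m\ge 2$, not because $n\ge 1$.
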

\begin{proof}
	This follows immediately from Lemma \ref{lemma:abs irr Hom space dimension} and Tate local duality.
\end{proof}

\begin{corollary}\label{cor:abs non-trianguline Ext^2 space dimension}
	Let $V$ and $W$ be $G_K$-equivariant vector bundles on $X_{\Cbb_p,F}$ of rank $m$ and $n$, respectively.
	Assume that $V$ is absolutely non-trianguline.
	Then we have $\dim_F \Ext^2_{\FF,G_K}(V,W), \dim_F \Ext^2_{\FF,G_K}(W,V)\leq mn-2$.
\end{corollary}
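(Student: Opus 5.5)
The plan is to reduce both bounds to estimates on $\Hom$-spaces via Tate local duality, and then to filter $V$ by absolutely irreducible pieces so that Lemma \ref{lemma:abs irr Hom space dimension} and Lemma \ref{lem:H^2 bound} can be applied piece by piece.

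\emph{Step 1: reduction to $\Hom$.} By Tate local duality (\cite[Theorem 1.2]{Liu08}), $\Ext^2_{\FF,G_K}(V,W)$ is dual to $\Hom_{\FF,G_K}(W,V(1))$, and $\Ext^2_{\FF,G_K}(W,V)$ is dual to $\Hom_{\FF,G_K}(V,W(1))$. Twisting by $(1)$ preserves ranks and preserves absolute non-triangulinity. Hence it suffices to prove
\[\dim_F\Hom_{\FF,G_K}(V,W)\leq mn-2 \quad\text{and}\quad \dim_F\Hom_{\FF,G_K}(W,V)\leq mn-2\]
for every absolutely non-trianguline $V$ of rank $m$ and every $W$ of rank $n$. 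These dimensions are unchanged under finite extension of $F$, by flat base change of the cohomology in Lemma \ref{lem:finiteness of (phi, Gamma) cohomology}. So we may replace $F$ by any finite extension.

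\emph{Step 2: an absolutely irreducible filtration.} Consider chains of saturated equivariant subbundles
\[0=V_0\subsetneq V_1\subsetneq\cdots\subsetneq V_k=V\]
over finite extensions of $F$. The length $k$ is bounded by $m$. Choose a finite extension $F'$ and a chain of maximal possible length over all finite extensions. Each graded piece $V_i/V_{i-1}$, of rank $m_i$, is then absolutely irreducible: a proper subbundle after a further extension would refine the chain, and its preimage in $V_i$ is saturated because the curve is locally a PID. Since $V$ is absolutely non-trianguline, not all $m_i$ equal $1$, so some $m_i\geq 2$.

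\emph{Step 3: counting.} By left exactness of $\Hom_{\FF,G_K}(-,W)$ and of $\Hom_{\FF,G_K}(W,-)$ along the short exact sequences $0\to V_{i-1}\to V_i\to V_i/V_{i-1}\to 0$, we get
\[\dim\Hom_{\FF,G_K}(V,W)\leq \sum_i\dim\Hom_{\FF,G_K}(V_i/V_{i-1},W),\]
and similarly for $\Hom_{\FF,G_K}(W,V)$. We then bound each term.
\begin{itemize}
\item If $m_i=1$, Lemma \ref{lem:H^2 bound} applied to $\intHom(V_i/V_{i-1},W)$ or its dual gives a bound of $n=m_in$.
\item If $m_i\geq 2$, Lemma \ref{lemma:abs irr Hom space dimension} gives a bound of $\lfloor n/m_i\rfloor$.
\end{itemize}
In the second case we check $\lfloor n/m_i\rfloor\leq m_in-2$. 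For $n=1$ the left side is $0$ and the right side is $m_i-2\geq 0$. For $n\geq 2$ we have $n/2\leq 2n-2\leq m_in-2$. Summing, and using that at least one index has $m_i\geq 2$, gives the total bound $\sum_i m_in-2=mn-2$.

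\emph{Main obstacle.} The delicate point is Step 2: showing that, after a finite extension, a maximal-length filtration has graded pieces that are absolutely irreducible rather than merely irreducible, and that the refinement produces saturated subbundles. The bound on the length and the saturation argument above are what I would use there. The remaining steps are direct applications of the earlier lemmas.
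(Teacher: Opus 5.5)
Your proof is correct and is essentially the paper's argument. The paper inducts on $m$ instead of taking a maximal-length filtration: it splits off a single subbundle $V_1$, uses that one of $V_1$, $V/V_1$ stays absolutely non-trianguline, and bounds the two pieces by Corollary \ref{cor:abs irr Ext^2 space dimension} (or induction) and Lemma \ref{lem:H^2 bound} via subadditivity of $\Ext^2$ — your dualized count over a full absolutely irreducible filtration is the same estimate unrolled.
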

\begin{proof}
	The two bounds can be proved in a similar manner, so we only show that $\dim_F \Ext^2_{\FF,G_K}(V,W)\leq mn-2$.
	We proceed by induction on $m$.
	Since $V$ is absolutely non-trianguline, we have $m\geq 2$.
	If $V$ is absolutely irreducible, then the claim follows from Corollary \ref{cor:abs irr Ext^2 space dimension}.
	Assume that $V$ is not absolutely irreducible.
	By replacing $F$ with a sufficiently large finite extension of $F$, we can take a non-zero proper subbundle $V_1\subset V$.
	Since $V$ is absolutely non-trianguline, so is either $V_1$ or $V/V_1$.
	Since we have $\dim_F \Ext^2_{\FF,G_K}(V,W) \leq \dim_F \Ext^2_{\FF,G_K}(V_1,W)+\dim_F \Ext^2_{\FF,G_K}(V/V_1,W)$, the claim follows from the induction hypothesis and Lemma \ref{lem:H^2 bound}.
\end{proof}

\begin{proof}[Proof of Theorem \ref{thm:parabolic dimension upper bound}]
	We set $m_1=m$ and $m_2=n-m$.
	Let 
	\[f\colon \LS_{P_{m_1,m_2}(K),\FF}\to \LS_{\GL_{m_1}(K),\FF}\times \LS_{\GL_{m_2}(K),\FF};\; (V_{m_1}\subset V_n)\mapsto (V_{m_1}, V_n/V_{m_1}).\]
	For a classical point $x\colon \Spa F\to \LS_{\GL_{m_1}(K),\FF}\times \LS_{\GL_{m_2}(K),\FF}$, the fiber $f^{-1}(x)$ is equivalent to
	\[[\Ext^1_{\FF,G_K}(V_{m_2,x},V_{m_1,x})/\Hom_{\FF,G_K}(V_{m_2,x},V_{m_1,x})],\]
	where the action of $\Hom_{\FF,G_K}(V_{m_2,x},V_{m_1,x})$ is trivial.
	Therefore, by the Euler characteristic formula (\cite[Theorem 1.2]{Liu08}), we have 
	\begin{align}\label{eq:fiber dimension}
	\dim f^{-1}(x)=m_1m_2d+\dim_F \Ext^2_{\FF,G_K}(V_{m_2,x},V_{m_1,x}).
    \end{align}

	First, we assume that $n>2$ and prove that $\dim \LS_{P_{m_1,m_2}(K),\FF}\leq dn^2-2$.
	In this case, we have $m_1>1$ or $m_2>1$.
	Both cases can be proved in a similar way, so we only consider the case of $m_1>1$.
	Let $Z_{m_1m_2-2}\subset \LS_{\GL_{m_1}(K),\FF}\times \LS_{\GL_{m_2}(K),\FF}$ denote the Zariski closed substack defined by the $(m_1m_2-2)$-th Fitting ideal $\Fit_{m_1m_2-2}(\Ext^2_{\FF,G_K}(V_{m_2},V_{m_1}))$, where $(V_{m_1},V_{m_2})$ is the universal pair of $G_K$-equivariant vector bundles on $\LS_{\GL_{m_1}(K),\FF}\times \LS_{\GL_{m_2}(K),\FF}$.
	Let $Z_{m_1m_2-2}^{c}$ denote the open complement of $Z_{m_1m_2-2}$.
	It suffices to show that 
	\[\dim f^{-1}(Z_{m_1m_2-2}), \dim f^{-1}(Z_{m_1m_2-2}^c)\leq dn^2-2.\]
	For any classical point $x\in Z_{m_1m_2-2}^c$, by definition, we have $\dim_F\Ext^2_{\FF,G_K}(V_{m_2,x},V_{m_1,x})\leq m_1m_2-2$.
	Therefore, we have 
	\begin{align*}
	\dim f^{-1}(x)&=m_1m_2d+\dim_F \Ext^2_{\FF,G_K}(V_{m_2,x},V_{m_1,x})\\
	&\leq m_1m_2d+m_1m_2-2\\
	&\leq 2m_1m_2d-2.	
	\end{align*}
	By Hypotheses \textbf{H}$(m_1)$ and \textbf{H}$(m_2)$, we have
	\begin{align*}
		\dim Z_{m_1m_2-2}^c \leq \dim (\LS_{\GL_{m_1}(K),\FF}\times \LS_{\GL_{m_2}(K),\FF})=d(m_1^2+m_2^2).
	\end{align*}
	Therefore, by Lemma \ref{lem:stack fiber dimension}, we obtain
	\begin{align*}
	\dim f^{-1}(Z_{m_1m_2-2}^c) &\leq \dim Z_{m_1m_2-2}^c+\sup_{x\in (Z_{m_1m_2-2}^c)^{\cl}}\dim f^{-1}(x)\\
	&\leq d(m_1^2+m_2^2)+2m_1m_2d-2\\
	&\leq dn^2-2.
	\end{align*}
	Next, we prove $\dim f^{-1}(Z_{m_1m_2-2})\leq dn^2-2$.
	By \eqref{eq:fiber dimension} and Lemma \ref{lem:H^2 bound}, we have 
	\[\sup_{x\in (Z_{m_1m_2-2})^{\cl}}\dim f^{-1}(x) \leq m_1m_2d+m_1m_2\leq 2m_1m_2d.\]
	Therefore, it suffices to show the following claim. 
	For later use, we also consider the case $n=2$.
	\begin{description}
		\item[Claim] We have $\dim Z_{m_1m_2-2}\leq d(m_1^2+m_2^2)-2$.
		When $n=2$ and $m_1=m_2=1$, let $\tilde{Z}\subset \LS_{\GL_1(K),\FF}\times\LS_{\GL_1(K),\FF}$ denote the Zariski closed substack defined by the $0$-th Fitting ideal $\Fit_{0}(\Ext^2_{\FF,G_K}(L_1,L_2))$, where $(L_1,L_2)$ is the universal pair of $G_K$-equivariant line bundles on $\LS_{\GL_1(K),\FF}\times \LS_{\GL_1(K),\FF}$.
		Then we have $\dim\tilde{Z}\leq 2d-2$.
	\end{description}
	Both claims can be proved in the same way, so we only prove the former one.
	Let 
	\[g\colon Z_{m_1m_2-2}\subset \LS_{\GL_{m_1}(K),\FF}\times \LS_{\GL_{m_2}(K),\FF}\to \LS_{\GL_{m_1}(K),\FF}\] 
	denote the restriction of the first projection. 
	Let $x\colon \Spa F\to \LS_{\GL_{m_1}(K),\FF}$ be a classical point.
	If $V_{m_1,x}$ is absolutely non-trianguline, then the fiber $g^{-1}(x)$ is empty by Corollary \ref{cor:abs non-trianguline Ext^2 space dimension}.
	Assume that $V_{m_1,x}$ is not absolutely non-trianguline.
	By replacing $F$ with a sufficiently large finite extension of $F$, we may assume that $V_{m_1,x}$ is trianguline.
	Fix a triangulation of $V_{m_1,x}$, and let $\delta_1,\ldots, \delta_{m_1}$ be the characters appearing in the graded pieces.
	For $i=1,\ldots, m_1$, let $Z'_i\subset \Spa F\times \LS_{\GL_{m_2}(K),\FF}$ be the reduced Zariski closed substack defined by the $0$-th Fitting ideal $\Fit_0(\Ext^2_{\FF,G_K}(V_{m_2},\Ocal(\delta_i)))$.
	For a classical point $y\in g^{-1}(x)^{\cl}$, if $y\notin Z'_i$ for any $i$, then we have $\Ext^2_{\FF,G_K}(V_{m_2,y},\Ocal(\delta_i))=0$, which implies $\Ext^2_{\FF,G_K}(V_{m_2,y},V_{m_1,x})=0$.
	Therefore, we obtain
	\[g^{-1}(x)^{\red}\subset Z'_1\cup \cdots \cup Z'_{m_1}.\]
	By Hypothesis \textbf{H}$(m_2)$ (and considering the isomorphism on $\LS_{\GL_{m_2}(K),\FF}$ defined by taking the dual and twisting by a character), we obtain $\dim Z'_i \leq dm_2^2-2$.
	Therefore, we obtain $\dim g^{-1}(x)\leq dm_2^2-2$.
	Consequently, by Hypothesis \textbf{H}$(m_1)$, we obtain
	\begin{align*}
		\dim Z_{m_1m_2-2}&\leq \dim \LS_{\GL_{m_1}(K),\FF} +\sup_{x\in (\LS_{\GL_{m_1}(K),\FF})^{\cl}} \dim g^{-1}(x)\\
		&\leq dm_1^2+dm_2^2-2,
	\end{align*}
	which completes the proof in the case of $n>2$.

	Finally, we consider the case of $n=2$.
	Let $Z, Z'\subset \LS_{P_{1,1}(K),\FF}$ be the reduced Zariski closed substacks defined by the $0$-th Fitting ideals $\Fit_0(\Ext^2_{\FF,G_K}(V_2,V_2))$, $\Fit_0(H^2_{\FF,G_K}(V_2))$, respectively. 
	Let $(L_1,L_2)$ be the universal pair of $G_K$-equivariant line bundles on $\LS_{\GL_{1}(K),\FF}\times \LS_{\GL_{1}(K),\FF}$, and let $\tilde{Z}, \tilde{Z}'\subset \LS_{\GL_1(K),\FF}\times \LS_{\GL_1(K),\FF}$ be the reduced Zariski closed substacks defined by ideals $\Fit_0(\Ext^2_{\FF,G_K}(L_1,L_2)) \cap \Fit_0(\Ext^2_{\FF,G_K}(L_2,L_1))$, $\Fit_0(H^2_{\FF,G_K}(L_1))\cap \Fit_0(H^2_{\FF,G_K}(L_2))$, respectively.
	Let us prove that $Z\subset f^{-1}(\tilde{Z})$ and $Z'\subset f^{-1}(\tilde{Z}')$. 
	Both claims can be proved in a similar way, so we only prove the former one.
	Let $x\colon \Spa F\to Z$ be a classical point.
	Let $(L_{1,x},L_{2,x})$ be the pair of $G_K$-equivariant line bundles corresponding to $f(x)$.
	Then, by definition, we have an exact sequence
	\[0\to L_{1,x}\to V_{2,x} \to L_{2,x}\to 0.\]
	Since $x$ is a classical point of $Z$, $\Ext^2_{\FF,G_K}(V_{2,x},V_{2,x})\neq 0$.
	By \cite[Proposition 6.2.8 (2)]{KPX14}, we have $\Ext^2_{\FF,G_K}(L_{i,x},L_{i,x})= 0$ for $i=1,2$.
	Therefore, $\Ext^2_{\FF,G_K}(V_{2,x},V_{2,x})\neq 0$ implies that either $\Ext^2_{\FF,G_K}(L_{1,x},L_{2,x})$ or $\Ext^2_{\FF,G_K}(L_{2,x},L_{1,x})$ is non-zero.
	In other words, we have $f(x)\in \tilde{Z}$.
	Thus, we obtain $Z\subset f^{-1}(\tilde{Z})$.

	We prove that $\dim \tilde{Z}, \dim \tilde{Z}'\leq 2d-2$.
	Note that by \eqref{eq:fiber dimension} and Lemma \ref{lem:H^2 bound}, these give the desired upper bound $\dim Z, \dim Z' \leq (2d-2)+(d+1)\leq 4d-2$.
	The inequality $\dim \tilde{Z}\leq 2d-2$ was already proved in Claim in the proof of the case $n>2$.
	By definition, we have 
	\[\tilde{Z}'= ((X^{\an}(K^{\times})^{\irreg}/\Gbb_m^{\an}) \times \LS_{\GL_1(K),\FF})\cup (\LS_{\GL_1(K),\FF}\times (X^{\an}(K^{\times})^{\irreg}/\Gbb_m^{\an})),\]
	where we use the same notation as in Subsection \ref{subsection:n=1}.
	Therefore, by Proposition \ref{prop:irregular locus n=1}, we obtain $\dim \tilde{Z}'=d-1\leq 2d-2$.
\end{proof}

\subsection{Proof of Theorem \ref{thm:bad locus dimension induction}}
Let $n\geq 2$ be an integer.
In this subsection, we always assume the hypothesis \textbf{H}$(m)$ for all integers $m<n$.
We use the same notation as in the previous subsection.

Before the proof, we prove the following lemmas.
For $\kbf=(k_{\sigma})_{\sigma\in \Sigma_K} \in \Zbb_{\geq 0}^{\Sigma_K}$, we write $\tbf^{-\kbf}=\prod_{\sigma\in \Sigma_K}t_{\sigma}^{-k_{\sigma}}$.

\begin{lemma}\label{lem:k large cohomology vanishing}
	Let $F$ be a finite extension of $E$, and let $V$ be a $G_K$-equivariant vector bundle on $X_{\Cbb_p,F}$.
	Then for a sequence $\kbf_1<\kbf_2<\cdots$ in $\Zbb_{\geq 0}^{\Sigma_K}$, we have $H^2_{\FF,G_K}(\tbf^{-\kbf_i}V)=0$ for $i$ sufficiently large.
\end{lemma}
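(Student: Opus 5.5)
By Tate local duality (\cite[Theorem 1.2]{Liu08}), $H^2_{\FF,G_K}(\tbf^{-\kbf_i}V)$ is dual to $H^0_{\FF,G_K}((\tbf^{-\kbf_i}V)^{\vee}(1))=H^0_{\FF,G_K}(\tbf^{\kbf_i}V^{\vee}(1))$. So it suffices to show that, setting $W=V^{\vee}(1)$, we have $H^0_{\FF,G_K}(\tbf^{\kbf_i}W)=0$ for $i$ large.

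The spaces $H^0_{\FF,G_K}(\tbf^{\kbf}W)=\Hom_{\FF,G_K}(\Ocal,\tbf^{\kbf}W)$ all embed into the single space $\Hom_{\FF,G_K}(\Ocal[1/t],W[1/t])$, since $\tbf^{\kbf}W\subset W[1/t]$ (Remark \ref{rem:add 1/t injective}). Moreover, for $\kbf\leq\kbf'$ we have $\tbf^{\kbf'}W\subset\tbf^{\kbf}W$. Hence the subspaces $H^0_{\FF,G_K}(\tbf^{\kbf_i}W)$ form a decreasing chain of subspaces of the finite-dimensional $F$-vector space $\Hom_{\FF,G_K}(\Ocal[1/t],W[1/t])$ (finite-dimensional by Lemma \ref{lem:[1/t] finiteness}). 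Such a chain stabilizes, and its stable value is
\[
\bigcap_i H^0_{\FF,G_K}(\tbf^{\kbf_i}W)=H^0\Bigl(G_K,\Gamma\bigl(X_{\Cbb_p,F},\textstyle\bigcap_i \tbf^{\kbf_i}W\bigr)\Bigr).
\]
It therefore suffices to show that this intersection vanishes.

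Since $\kbf_1<\kbf_2<\cdots$ is strictly increasing in $\Zbb_{\geq0}^{\Sigma_K}$, for some $\sigma\in\Sigma_K$ the exponent $k_{i,\sigma}\to\infty$ (the sum of coordinates increases by at least one at each step, so some coordinate is unbounded, and coordinates are monotone). Any global section $s$ of $W$ lying in $t_\sigma^{k}W$ for all $k$ vanishes to infinite order along the divisor $x_{\infty,\sigma}$. Because $W$ is a vector bundle on $X_{\Cbb_p,F}$, which is locally the adic spectrum of a PID (\cite[Theorem 7.11]{Ked16}), and $x_{\infty,\sigma}$ meets every connected component of $X_{\Cbb_p,F}$, Krull's intersection theorem in the local rings at points of $x_{\infty,\sigma}$ shows that $s$ vanishes near $x_{\infty,\sigma}$. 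Since the curve is connected (or, after base change to $F$, each component meets the divisor) and $W$ is locally free, hence torsion-free, $s=0$. Thus the intersection is zero, and $H^0_{\FF,G_K}(\tbf^{\kbf_i}W)=0$ for $i\gg0$.

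The main subtlety is this last step: infinite-order vanishing along $x_{\infty,\sigma}$ forcing $s=0$. I would handle it via the identity principle on the one-dimensional curve, using that $\Gamma(X_{\Cbb_p,F},\Ocal)$ is a field-like object. Alternatively, one can avoid it by arguing directly on $\Hom(\Ocal[1/t],W[1/t])$: a nonzero $G_K$-equivariant map $\Ocal\to \tbf^{\kbf}W$ has saturated image a line subbundle $L$. The degree of $L\cap\tbf^{\kbf}W$, or equivalently the character's weight at $\sigma$, is bounded above independently of $\kbf$, as in \cite[Proposition 6.2.8]{KPX14}. This gives a contradiction once $k_\sigma$ is large.
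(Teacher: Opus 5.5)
Your argument is correct and is essentially the paper's proof. Tate duality reduces the claim to $H^0_{\FF,G_K}(\tbf^{\kbf_i}V^{\vee}(1))$; these spaces form a decreasing chain in a finite-dimensional space; and the intersection vanishes because some $k_{i,\sigma}\to\infty$. The one difference is the ambient space: the paper simply uses $H^0_{\FF,G_K}(V^{\vee}(1))$, since $\kbf_i\geq 0$, rather than $\Hom_{\FF,G_K}(\Ocal[1/t],W[1/t])$. Your Krull-intersection and identity-principle justification of $\bigcap_i H^0(X_{\Cbb_p,F},\tbf^{\kbf_i}V^{\vee}(1))=0$ supplies a detail the paper asserts without comment, while the closing alternative via line subbundles is unnecessary and too vague to stand on its own.
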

\begin{proof}
	By Tate local duality, it suffices to show that $H^0_{\FF,G_K}(\tbf^{\kbf_i}V^{\vee}(1))=0$ for $i$ sufficiently large.
	Since for some $\sigma\in \Sigma_K$, the sequence $k_{1,\sigma}\leq k_{2,\sigma}\leq \cdots$ tends to $\infty$, we obtain $0=\bigcap_{i} H^0(X_{\Cbb_p,F},\tbf^{\kbf_i}V^{\vee}(1))$, and thus $0=\bigcap_{i} H^0_{\FF,G_K}(\tbf^{\kbf_i}V^{\vee}(1))$.
	Since $H^0_{\FF,G_K}(V^{\vee}(1))$ is finite-dimensional, we get the claim.
\end{proof}

\begin{lemma}\label{lem:quotient vector bundle}
	Let $A$ be a reduced affinoid $\Qbb_p$-algebra, and let $f\colon V\to W$ be a morphism of $G_K$-equivariant vector bundles on $X_{\Cbb_p,A}$. 
	Let $n$ be a non-negative integer.
	Assume that for any classical point $x\in \Spa A$, $\Coker(f_x\colon V_x\to W_x)$ is a ($G_K$-equivariant) vector bundle of rank $n$ on $X_{\Cbb_p,k(x)}$, where $f_x\colon V_x\to W_x$ is the restriction of $f\colon V\to W$ to $X_{\Cbb_p,k(x)}$.
	Then $\Coker(f\colon V\to W)$ is also a vector bundle of rank $n$ on $X_{\Cbb_p,A}$.
\end{lemma}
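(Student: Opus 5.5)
The plan is to reduce to a statement about finite projective modules over $\tilde{B}^{[1,p]}_{\Cbb_p,A}$-type rings and use a fiberwise-rank criterion over a reduced base. The question is local on $X_{\Cbb_p,A}$, so we may replace the curve by an affinoid piece $\Spa \tilde{B}^{I}_{A}$ (with $\tilde{B}^I_A=\tilde{B}^I\hotimes_{\Qbb_p}A$), on which $V$ and $W$ are finite projective. The cokernel $Q=\Coker(f)$ is then finitely presented, and we must show $Q$ is projective of constant rank $n$. By the Fitting ideal criterion (the lemma citing \cite[Proposition 20.8]{Eisenbud95}), it suffices to show $\Fit_{n-1}(Q)=0$ and $\Fit_n(Q)=\tilde{B}^I_A$. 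Since Fitting ideals commute with base change (Lemma \ref{lem:Fitting ideal base change}), the hypothesis says that for each classical point $x$ of $\Spa A$ the images of $\Fit_{n-1}(Q)$ and $\Fit_n(Q)$ in $\tilde{B}^I_{k(x)}$ are $0$ and the unit ideal respectively. Note that $G_K$-equivariance plays no role beyond knowing that the fiberwise cokernels are vector bundles.

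For $\Fit_{n-1}(Q)=0$, I would argue that an element $b\in\tilde{B}^I_A$ vanishing in $\tilde{B}^I_{k(x)}$ for all classical $x$ is zero. Writing $\tilde{B}^I\cong\hat\oplus\Qbb_p$ as a Banach space of countable type (as in Proposition \ref{prop:locally closed subspace representable}), we get $\tilde{B}^I_A\cong\hat\oplus A$ and $\tilde{B}^I_{k(x)}\cong\hat\oplus k(x)$ compatibly. So $b$ is a null sequence $(a_j)$ of elements of $A$ with every $a_j$ vanishing at every classical point. Since $A$ is reduced (hence Jacobson with zero nilradical), each $a_j=0$.

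For $\Fit_n(Q)$ being the unit ideal, I would show that its zero locus $Z\subset X_{\Cbb_p,A}$ is empty. Its image in $\Spa A$ is closed by Lemma \ref{lem:FF curve closed map}, and is Zariski closed via the construction in Proposition \ref{prop:locally closed subspace representable}. If $Z$ were nonempty, its image would be a nonempty closed subset containing some classical point $x$, and $Z$ would meet the fiber over $x$. This contradicts $\Fit_n(Q)\tilde{B}^I_{k(x)}=(1)$, provided the fiber of $Z$ over $x$ is exactly the zero locus of the base-changed ideal, which holds because forming the vanishing locus commutes with base change. An alternative I would keep in reserve is a maximum-modulus argument on the minors over the affinoid $\Spa A$.

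The main obstacle is ensuring that a nonempty closed image in $\Spa A$ contains a classical point. A closed subset of an affinoid rigid space need not contain classical points unless it is, say, Zariski closed or at least closed under specialization to a nonempty subspace. I would handle this by noting that a nonempty closed subset of the spectral space $\Spa A$ contains a closed point (a rank-one point specializing to nothing further), and then passing to a classical point in the Zariski closure of the support ideal, using that the image is specializing (Lemma \ref{lem:FF curve closed map}). If that fails, I would instead invoke \cite[Proposition 2.9]{FRG3}-style fiberwise flatness, applied to the finitely presented module $Q$ over the flat $A$-algebra $\tilde{B}^I_A$ (Remark \ref{rem:flatness}).
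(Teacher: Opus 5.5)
Your proof that $\Fit_{n-1}(Q)=0$ is fine: it is a Jacobson-plus-reduced argument on $A$, transported through $\tilde{B}^I\cong\hat{\oplus}\Qbb_p$.

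The second half, however, has a genuine gap, and it sits exactly where you assert that $G_K$-equivariance plays no role. You need $V(\Fit_n(Q))\neq\emptyset$ to produce a point lying over a \emph{classical} point of $\Spa A$, and none of your justifications gives this.

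\begin{itemize}
\item Proposition~\ref{prop:locally closed subspace representable} describes the locus of $s$ whose \emph{entire} fiber lies in $Z$, not the image of $Z$. So it says nothing about that image being Zariski closed.
\item A nonempty closed subset of $\Spa A$ need not contain a classical point. Its closed points may be rank-two points, or rank-one points such as $T=c$ in $\Spa\Qbb_p\langle T\rangle$ with $c$ transcendental over $\Qbb_p$.
\item For such a point the support ideal is $0$, so its Zariski closure is all of $\Spa A$. Yet $Z$ need not meet any fiber over a classical point.
\item The flatness fallback only controls points of $X_{\Cbb_p,A}$ lying over classical points, so it cannot close the gap.
\end{itemize}

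In fact the lemma is false without equivariance. Let $A=\Qbb_p\langle T\rangle$, let $y\in X_{\Cbb_p}$ be a classical point with residue field $C_y$, and let $c\in\Ocal_{C_y}$ be transcendental over $\Qbb_p$. Let $\mathcal{L}$ be the degree-one line bundle and $t_y\in H^0(\mathcal{L})$ a section with divisor $y$. By the fundamental exact sequence, choose $u,v\in H^0(\mathcal{L})$ taking the values $1$ and $c$ at $y$ (after trivializing $\mathcal{L}$ at $y$).

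Consider the map $(v-Tu,\,t_y)\colon\mathcal{L}^{-1}\to\Ocal^{2}$.
\begin{itemize}
\item Over each classical point $T=a$, its cokernel is a line bundle. Indeed, the two sections have no common zero: above $y$ the first one becomes $c-\sigma(a)\neq 0$ in each factor of $C_y\otimes_{\Qbb_p}\Qbb_p(a)$.
\item Globally, however, both sections vanish at the point $(y,T=c)$, which lies over a non-classical point of $\Spa A$. So the cokernel is not locally free there.
\end{itemize}

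The missing idea is to use equivariance to leave $\tilde{B}_{\Cbb_p,A}^{[1,p]}$, whose maximal ideals need not contract to maximal ideals of $A$. The paper's argument runs as follows.
\begin{itemize}
\item It invokes \cite[Theorems 3.12 and 3.37]{Mikami24} to descend $f$ to a morphism $f_m$ of finite projective modules over $B_{K,m,A}^{[1,p]}$, an affinoid algebra smooth over $A$.
\item Faithfully flat descent along $B_{K,m,A/\mfrak}^{[1,p]}\to\tilde{B}_{\Cbb_p,A/\mfrak}^{[1,p]}$ shows that $\Coker(f_m)/\mfrak$ is projective of rank $n$ for every maximal ideal $\mfrak\subset A$.
\item $B_{K,m,A}^{[1,p]}$ is reduced and Jacobson, and each of its maximal ideals lies over a maximal ideal of $A$. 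So your Fitting-ideal argument works at \emph{all} maximal ideals there, and $\Coker(f_m)$ is projective of rank $n$.
\item This base-changes back to $\tilde{B}_{\Cbb_p,A}^{[1,p]}$.
\end{itemize}
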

\begin{proof}
	Let $\tilde{B}_{\Cbb_p}^{[1,p]}$ and $B_{K,m}^{[1,p]}$ be as in \cite[Sections 3.1, 3.2]{Mikami24}.\footnote{In \cite{Mikami24}, $\tilde{B}_{\Cbb_p}^{[1,p]}$ is written as $\tilde{B}_{\bar{K}}^{[1,p]}$.}
	Note that the natural morphism $\Spa(\tilde{B}_{\Cbb_p}^{[1,p]})\to X_{\Cbb_p}$ is surjective and a local isomorphism.
	Let $V^{[1,p]}$ and $W^{[1,p]}$ be the pullbacks of $V$ and $W$ to $\Spa \tilde{B}_{\Cbb_p,A}^{[1,p]}=\Spa(\tilde{B}_{\Cbb_p}^{[1,p]}\hotimes A)$, respectively.
	Set $M^{[1,p]}=\Coker(f\colon V^{[1,p]}\to W^{[1,p]})$.
	It suffices to show that $M^{[1,p]}$ is a finite projective $\tilde{B}_{\Cbb_p,A}^{[1,p]}$-module of rank $n$.
	By \cite[Theorems 3.12 and 3.37]{Mikami24}, for a sufficiently large integer $m$, there exists a morphism $f_m\colon V_m^{[1,p]}\to W_m^{[1,p]}$ of finite projective $B_{K,m,A}^{[1,p]}$-modules equipped with an isomorphism of morphisms
	\[(f_m\colon V_m^{[1,p]}\to W_m^{[1,p]})\otimes_{B_{K,m,A}^{[1,p]}}\tilde{B}_{\Cbb_p,A}^{[1,p]} \cong (f\colon V^{[1,p]}\to W^{[1,p]}).\]
	Set $M_{m}^{[1,p]}=\Coker(f_{m})$.
	We need to show that $M_{m}^{[1,p]}$ is a finite projective $B_{K,m,A}^{[1,p]}$-module of rank $n$.
	By assumption, for any maximal ideal $\mfrak\subset A$, 
	\[M_{m}^{[1,p]}/\mfrak\otimes_{B_{K,m,A/\mfrak}^{[1,p]}}\tilde{B}_{\Cbb_p,A/\mfrak}^{[1,p]}\cong M^{[1,p]}/\mfrak\] 
	is a finite projective $\tilde{B}_{\Cbb_p,A/\mfrak}^{[1,p]}$-module of rank $n$.
	Therefore, by faithfully flat descent (\cite[Propositions 1.21, 3.13, and Lemma 3.39]{Mikami24}), $M_{m}^{[1,p]}/\mfrak$ is a finite projective $B_{K,m,A/\mfrak}^{[1,p]}$-module of rank $n$.
	Note that $A\to B_{K,m,A}^{[1,p]}$ is a smooth morphism of affinoid $\Qbb_p$-algebras.
	In particular, $B_{K,m,A}^{[1,p]}$ is Jacobson and reduced as $A$ is also reduced.
	Moreover, for any maximal ideal $\nfrak\subset B_{K,m,A}^{[1,p]}$, $\nfrak\cap A$ is a maximal ideal of $A$.
	Therefore, for every maximal ideal $\nfrak\subset B_{K,m,A}^{[1,p]}$, $M_{m}^{[1,p]}/\nfrak$ is a finite projective $B_{K,m,A}^{[1,p]}/\nfrak$-module of rank $n$, which implies that $M_{m}^{[1,p]}$ is a finite projective $B_{K,m,A}^{[1,p]}$-module of rank $n$.
\end{proof} 

Let $Z, Z'\subset \LS_{\GL_n(K),\FF}$ be the reduced Zariski closed substacks defined by the $0$-th Fitting ideals $\Fit_0(\Ext^2_{\FF,G_K}(V_n,V_n))$, $\Fit_0(H^2_{\FF,G_K}(V_n))$, respectively, where $V_n$ is the universal $G_K$-equivariant vector bundle on $X_{\Cbb_p}\times \LS_{\GL_n(K),\FF}$.

\begin{proof}[Proof of $\dim Z'\leq dn^2-2$.]
	For $\kbf\in \Zbb_{\geq 0}^{\Sigma_K}$, let $Z'_{\kbf}\subset Z'$ be the reduced Zariski closed substack defined by the $0$-th Fitting ideal $\Fit_0(H^2_{\FF,G_K}(\tbf^{-\kbf}V_n))$.
	Set $Z'^{\circ}_{\kbf}=Z'_{\kbf}\setminus \bigcup_{\kbf'>\kbf}Z'_{\kbf'}$.
	By Lemma \ref{lem:k large cohomology vanishing} and Example \ref{ex:locally Zariski closed partition rigid stack}, $\{Z'^{\circ}_{\kbf}\}_{\kbf\in \Zbb_{\geq 0}^{\Sigma_K}}$ is a locally Zariski closed covering of $Z'$.
	Therefore, by Lemma \ref{lem:locally Zariski closed partition dimension}, it suffices to show that $\dim Z'^{\circ}_{\kbf}\leq dn^2-2$.
	For $i\geq 0$, let $Z'^{\circ}_{\kbf,i}\subset Z'^{\circ}_{\kbf}$ be the reduced Zariski closed substack defined by the $i$-th Fitting ideal $\Fit_i(H^2_{\FF,G_K}(\tbf^{-\kbf}V_n))$, and set $Z'^{\circ\circ}_{\kbf,i}=Z'^{\circ}_{\kbf,i-1}\setminus Z'^{\circ}_{\kbf,i}$ for $i\geq 1$.
	Then, by Example \ref{ex:Fitting ideal locally Zariski closed partition}, $\bigsqcup_{i\geq 1}Z'^{\circ\circ}_{\kbf,i}$ is a locally Zariski closed partition of $Z'^{\circ}_{\kbf}$.
	Therefore, it suffices to show that $\dim Z'^{\circ\circ}_{\kbf,i}\leq dn^2-2$.
	To lighten the notation, we write $Y_{\kbf,i}=Z'^{\circ\circ}_{\kbf,i}$.
	Summarizing the above construction, the universal $G_K$-equivariant vector bundle $V_n$ on $X_{\Cbb_p}\times Y_{\kbf,i}$ satisfies the following properties:
	\begin{enumerate}
		\item $H^2_{\FF,G_K}(\tbf^{-\kbf}V_n)$ is a vector bundle of rank $i$ on $Y_{\kbf,i}$, which implies that $\Hom_{\FF,G_K}(V_n,\tbf^{\kbf}\Ocal(1))$ is also a vector bundle of rank $i$ on $Y_{\kbf,i}$ by Tate local duality;
		\item\label{maximal k} For any classical point $x\colon \Spa F\to Y_{\kbf,i}$ and any $\kbf'>\kbf$, $H^2_{\FF,G_K}(\tbf^{-\kbf'}V_{n,x})=0$, which implies that $\Hom_{\FF,G_K}(V_{n,x},\tbf^{\kbf'}\Ocal(1))=0$ by Tate local duality. 
	\end{enumerate}
	Let $\Spa A\to Y_{\kbf,i}$ be a smooth morphism of pure dimension $m$.
	It suffices to show that $\dim A\leq dn^2-2+m$.
	Since $Y_{\kbf,i}$ is reduced by definition, $A$ is also reduced.
	By taking an affinoid open cover of $\Spa A$, we may assume that $\Hom_{\FF,G_K}(V_{n,A},\tbf^{\kbf}\Ocal(1))$ is a finite free $A$-module of rank $i$.
	Let $\bar{Y}_{\kbf,i,A}$ be the fiber product of the natural morphism $q\colon \LS_{P_{n-1,1}(K),\FF}\to \LS_{\GL_n(K),\FF}$ and $\Spa A\to Y_{\kbf,i}\subset \LS_{\GL_n(K),\FF}$.
	Then, by Theorem \ref{thm:EG stack representable for general G}, $\bar{Y}_{\kbf,i,A}$ is a rigid space.
	Moreover, we have 
	\[\dim \bar{Y}_{\kbf,i,A} \leq m+\dim q^{-1}(Z') \leq dn^2-2+m\]
	by Theorem \ref{thm:parabolic dimension upper bound}.
	Therefore, by Lemma \ref{lem:section dimension}, it is enough to show that the projection $\bar{Y}_{\kbf,i,A} \to\Spa A$ has a section.
	Equivalently, it suffices to construct a subbundle $W$ of $V_{n,A}$ on $X_{\Cbb_p}\times \Spa A$ of codimension $1$.
	Let $f\colon V_{n,A}\to \tbf^{\kbf}\Ocal(1)$ be an element of a basis of the finite free $A$-module $\Hom_{\FF,G_K}(V_{n,A},\tbf^{\kbf}\Ocal(1))$.
	For any classical point $x\in \Spa A$, the morphism $f_x\colon V_{n,x}\to \tbf^{\kbf}\Ocal(1)$ is non-zero and does not factor through $\tbf^{\kbf'}\Ocal(1)$ for any $\kbf'>\kbf$ by Property \eqref{maximal k}.
	By \cite[Corollary 6.2.9]{KPX14}, this implies that $f_x\colon V_{n,x}\to \tbf^{\kbf}\Ocal(1)$ is surjective.
	Therefore, by Lemma \ref{lem:quotient vector bundle}, $f\colon V_{n,A}\to \tbf^{\kbf}\Ocal(1)$ is also surjective, and thus, $W=\Ker(f)$ is the desired subbundle.
\end{proof}

\begin{lemma}\label{lem:Plucker relation}
	Let $m$ be an integer with $0<m<n$, and let $\Spa A\to \LS_{\GL_n(K),\FF}$ be a morphism corresponding to a $G_K$-equivariant vector bundle $V_{n,A}$ on $X_{\Cbb_p}\times \Spa A$.
	Let $\Pbb(\wedge^m V_{n,A})$ be a rigid space over $\Spa A$ classifying $G_K$-equivariant subbundles of $\wedge^m V_{n,A}$ of codimension $1$.
	Then the natural morphism 
	\[\LS_{P_{n-m,m}(K),\FF}\times_{\LS_{\GL_n(K),\FF}} \Spa A\to \Pbb(\wedge^m V_{n,A})\]
	defined by taking $m$-th wedge products is a closed immersion.
\end{lemma}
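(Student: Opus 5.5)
We first describe both sides concretely and then check closedness with Pl\"{u}cker relations. A $B$-point of the source (for an affinoid $A$-algebra $B$) is a $G_K$-equivariant subbundle $W\subset V_{n,B}$ of rank $n-m$, i.e.\ an equivariant surjection $V_{n,B}\to Q$ onto a rank $m$ bundle. Taking $\wedge^m$ gives an equivariant surjection $\wedge^m V_{n,B}\to \wedge^m Q$ onto a line bundle, whose kernel is a codimension $1$ subbundle; this defines the morphism. (Here $\Pbb(\wedge^m V_{n,A})$ is the separated rigid space from the proof of Theorem \ref{thm:EG stack representable for general G}, locally a Zariski-locally-closed subspace of a projective bundle $\Pbb(M)$ over the character space.) The plan is to show separately: (a) the morphism is a monomorphism, and (b) its image is cut out by Zariski-closed conditions, with the source identified with that closed subspace.

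For (a): given a quotient line $\wedge^m V_{n,B}\to L$ coming from $Q$, we recover $W$ as the kernel of the map $V_{n,B}\to \intHom(\wedge^{m-1}V_{n,B},L)$, $v\mapsto (\omega\mapsto \overline{v\wedge\omega})$. This holds locally on $X_{\Cbb_p,B}$ where $V$ and $Q$ are free with a splitting, by the usual linear algebra identity for decomposable vectors; it is compatible with the $G_K$-action, so $W$ (and the isomorphism $L\cong\wedge^m Q$) is determined by the point of $\Pbb$, and automorphisms match.

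For (b): for a $B$-point $\lambda\colon \wedge^m V_{n,B}\to L$ of $\Pbb(\wedge^m V_{n,A})$, consider the equivariant map $\psi_\lambda\colon V_{n,B}\to \intHom(\wedge^{m-1}V_{n,B},L)$ above. The point lies in the image iff $\lambda$ is locally decomposable, i.e.\ the Pl\"{u}cker quadrics vanish: the composite $\wedge^{m+1}V\otimes\wedge^{m-1}V\to \wedge^mV\otimes\wedge^mV\to L^{\otimes 2}$ (the usual Pl\"{u}cker map, built from $\lambda$) is zero. This is an equivariant global section of a vector bundle on $X_{\Cbb_p,B}$ whose vanishing locus is Zariski closed in $X_{\Cbb_p}\times \Pbb$; by Proposition \ref{prop:locally closed subspace representable} (the Zariski-closed case, where no open part is removed) the locus of $B$ for which it vanishes identically is a Zariski closed subspace $Z\subset \Pbb(\wedge^m V_{n,A})$. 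On $Z$, pointwise on the curve the classical Pl\"{u}cker theorem says $\ker\lambda$ is the kernel of $\wedge^m$ of a rank $m$ quotient, so $\psi_\lambda$ has image locally a direct summand of rank $m$ and $\ker\psi_\lambda$ is a rank $n-m$ subbundle, giving the inverse map $Z\to$ source.

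The main obstacle is making the last step valid in families over non-reduced $B$ and over the whole curve rather than at points: one needs that on $Z$ the cokernel of $\ker\psi_\lambda\to V_{n,B}$ is locally free of rank $m$. I would argue locally on $\Spa \tilde{B}^{[1,p]}_{\Cbb_p,B}$, where $L$ is trivialized, $\lambda$ is a surjective linear form satisfying the Pl\"{u}cker relations; the scheme-theoretic statement that the Pl\"{u}cker embedding $\mathrm{Gr}(m,n)\hookrightarrow\Pbb(\wedge^m)$ is a closed immersion over any base ring (so decomposable surjections over an arbitrary ring come uniquely from rank $m$ quotients) then gives the quotient $Q$ locally, uniqueness glues it, and $G_K$-equivariance follows from uniqueness.
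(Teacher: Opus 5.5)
Your proposal is correct and follows the same route as the paper, whose proof is simply the observation that this morphism is the closed immersion cut out by the Pl\"{u}cker relations; you fill in the details: the Pl\"{u}cker quadric as an equivariant morphism, Zariski-closedness of its vanishing locus over the base, and the Pl\"{u}cker embedding over an arbitrary ring, which handles non-reduced $B$ and glues $Q$ equivariantly by uniqueness. One small adjustment: Proposition \ref{prop:locally closed subspace representable} is stated for $X_{K_{\infty}}$, so you should either transport the Pl\"{u}cker morphism to $\Gamma_K$-equivariant bundles on $X_{K_{\infty}}$, or use directly the $\Ker(E_0\to E_1)$ description of equivariant $\Hom$ from Lemma \ref{lem:Hom space representable EG}.
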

\begin{proof}
	The above morphism is isomorphic to the closed immersion defined by the Pl\"{u}cker relations.
\end{proof}

\begin{proof}[Proof of $\dim Z\leq dn^2-2$.]
	For $i\geq 0$, let $Z_i\subset Z$ be a reduced Zariski closed substack defined by the $i$-th Fitting ideal $\Fit_i(\Ext^2_{\FF,G_K}(V_n,V_n))$, and set $Z_i^{\circ}=Z_{i-1}\setminus Z_{i}$ for $i\geq 1$.
	Then, by Example \ref{ex:Fitting ideal locally Zariski closed partition}, $\bigsqcup_{i\geq 1}Z_i^{\circ}$ is a locally Zariski closed partition of $Z$.
	Therefore, by Lemma \ref{lem:locally Zariski closed partition dimension}, it suffices to show that $\dim Z_i^{\circ}\leq dn^2-2$.
	Then $\Ext^2_{\FF,G_K}(V_n,V_n)$ over $Z_i^{\circ}$ is a vector bundle of rank $i$.
	By replacing $Z_i^{\circ}$ with a reduced substack of the same dimension as in Corollary \ref{cor:coherent sheaf to vector bundle}, we may assume that $\Ext^2_{\FF,G_K}((\wedge^j V_n)(j-1),\wedge^j V_n)$ is also a vector bundle on $Z_i^{\circ}$ for $j=1,\ldots,n$.
	By Tate local duality, $\Hom_{\FF,G_K}(\wedge^j V_n,(\wedge^j V_n)(j))$ is  also a vector bundle on $Z_i^{\circ}$ for $j=1,\ldots,n$.
	Let $\tilde{Z}_i^{\circ}$ be a rigid analytic Artin stack over $Z_i^{\circ}$ classifying surjections $\Ext^2_{\FF,G_K}(V_n,V_n)\to \Ocal$.
	Then $\tilde{Z}_i^{\circ}\to Z_i^{\circ}$ is smooth of pure dimension $i$.
	Therefore, it suffices to show that $\dim \tilde{Z}_i^{\circ}\leq dn^2-2+i$.
	Let $f^{\vee}\colon \Ext^2_{\FF,G_K}(V_n,V_n)\to \Ocal_{\tilde{Z}_i^{\circ}}$ be the universal morphism on $\tilde{Z}_i^{\circ}$.
	By Tate local duality, the dual of the universal morphism $f^{\vee}$ is isomorphic to $f\colon \Ocal_{\tilde{Z}_i^{\circ}}\to \Hom_{\FF,G_K}(V_n,V_n(1))$.
	Equivalently, we have a universal morphism $f\colon V_n\to V_n(1)$ on $X_{\Cbb_p}\times \tilde{Z}_i^{\circ}$.
	For each $j=1,\ldots,n$, we obtain a morphism $\wedge^j f\colon \wedge^j V_n\to \wedge^j(V_n(1))=(\wedge^j V_n)(j)$.
	This defines a morphism $\wedge^j f\colon \Ocal_{\tilde{Z}_i^{\circ}}\to \Hom_{\FF,G_K}(\wedge^j V_n,(\wedge^j V_n)(j))$ on $\tilde{Z}_i^{\circ}$.
	By taking the dual, we obtain a morphism
	\[(\wedge^j f)^{\vee}\colon \Ext^2_{\FF,G_K}((\wedge^j V_n)(j-1),\wedge^j V_n)\to \Ocal_{\tilde{Z}_i^{\circ}}.\]
	Let $\tilde{Z}_{i,j}^{\circ}\subset \tilde{Z}_i^{\circ}$ be the reduced Zariski closed substack defined by the ideal sheaf $\Im((\wedge^j f)^{\vee})$.
	\begin{description}
		\item[Claim] $\tilde{Z}_{i}^{\circ}=\tilde{Z}_{i,n}^{\circ}\supset \tilde{Z}_{i,n-1}^{\circ}\supset\cdots \supset \tilde{Z}_{i,1}^{\circ}=\emptyset$.
	\end{description}
	By construction, $f^{\vee}$ is a surjection. Therefore, we have $\tilde{Z}_{i,1}^{\circ}=\emptyset$.
	Let $x\colon \Spa F\to \tilde{Z}_{i}^{\circ}$ be a classical point.
	Then $x\in \tilde{Z}_{i,j}^{\circ}$ if and only if 
	\[(\wedge^j f_x)^{\vee}\colon \Ext^2_{\FF,G_K}((\wedge^j V_{n,x})(j-1),\wedge^j V_{n,x})\to F\]
	is zero.
	In other words, $x\in \tilde{Z}_{i,j}^{\circ}$ if and only if $\wedge^j f_x\colon \wedge^j V_{n,x}\to \wedge^j(V_{n,x}(1))$ is zero.
	Note that $\wedge^n f_x$ is always zero by \cite[Proposition 6.2.8 (1)]{KPX14} as $\wedge^n V_{n,x}$ is of rank $1$.
	Therefore, the claim holds for classical points, which implies the claim itself by Lemma \ref{lem:Zariski closed inclusion classical points}.

	Set $\tilde{Z}_{i,j}^{\circ\circ}=\tilde{Z}_{i,j+1}^{\circ}\setminus \tilde{Z}_{i,j}^{\circ}$ for $j=1,\ldots,n-1$.
	Then $\bigsqcup_{j=1}^{n-1} \tilde{Z}_{i,j}^{\circ\circ}$ is a locally Zariski closed partition of $\tilde{Z}_i^{\circ}$.
	Therefore, it suffices to show that $\dim \tilde{Z}_{i,j}^{\circ\circ}\leq dn^2-2+i$.
	By construction, for every classical point $x\colon \Spa F\to \tilde{Z}_{i,j}^{\circ\circ}$, $\wedge^{j+1} f_x\colon \wedge^{j+1}V_{n,x}\to \wedge^{j+1}(V_{n,x}(1))$ is zero, but $\wedge^{j} f_x\colon \wedge^{j}V_{n,x}\to \wedge^{j}(V_{n,x}(1))$ is non-zero.
	Therefore, the codimension of $\Ker(f_x\colon V_{n,x}\to V_{n,x}(1))$ is $j$, and the codimension of $\Ker(\wedge^{j} f_x\colon \wedge^{j}V_{n,x}\to \wedge^{j}(V_{n,x}(1)))$ is $1$.

	Since \[(\wedge^j f)^{\vee}\colon \Ext^2_{\FF,G_K}((\wedge^j V_n)(j-1),\wedge^j V_n)\to \Ocal_{\tilde{Z}_i^{\circ}}\]
	is surjective over $\tilde{Z}_{i,j}^{\circ\circ}$, $\Ker((\wedge^j f)^{\vee})\subset \Ext^2_{\FF,G_K}((\wedge^j V_n)(j-1),\wedge^j V_n)$ is a vector bundle on $\tilde{Z}_{i,j}^{\circ\circ}$.
	For $\kbf\in \Zbb_{\geq 0}^{\Sigma_K}$, we have a universal surjection
	\[\Ext^2_{\FF,G_K}((\wedge^j V_n)(j-1),\wedge^j V_n)\twoheadrightarrow \Ext^2_{\FF,G_K}(\tbf^{\kbf}(\wedge^j V_n)(j-1),\wedge^j V_n).\]
	Let $W_{i,j,\kbf}$ be the cokernel of the composition
	\[\Ker((\wedge^j f)^{\vee})\hookrightarrow \Ext^2_{\FF,G_K}((\wedge^j V_n)(j-1),\wedge^j V_n)\twoheadrightarrow \Ext^2_{\FF,G_K}(\tbf^{\kbf}(\wedge^j V_n)(j-1),\wedge^j V_n).\]
	Let $\tilde{Z}_{i,j,\kbf}^{\circ\circ}\subset \tilde{Z}_{i,j}^{\circ\circ}$ be the reduced Zariski closed substack defined by the $0$-th Fitting ideal $\Fit_0(W_{i,j,\kbf})$, and set $Y_{i,j,\kbf}=\tilde{Z}_{i,j,\kbf}^{\circ\circ} \setminus \bigcup_{\kbf'>\kbf}\tilde{Z}_{i,j,\kbf'}^{\circ\circ}$.
	By Lemma \ref{lem:k large cohomology vanishing} and Example \ref{ex:locally Zariski closed partition rigid stack}, $\{Y_{i,j,\kbf}\}_{\kbf\in \Zbb_{\geq 0}^{\Sigma_K}}$ is a locally Zariski closed covering of $\tilde{Z}_{i,j}^{\circ\circ}$.
	Therefore, by Lemma \ref{lem:locally Zariski closed partition dimension}, it suffices to show that $\dim Y_{i,j,\kbf} \leq dn^2-2+i$.
	Summarizing the above constructions, for every classical point $x\colon \Spa F\to Y_{i,j,\kbf}$, the following properties hold:
	\begin{enumerate}
		\item The kernel $\Ker(f_x\colon V_{n,x}\to V_{n,x}(1))$ has codimension $j$;
		\item The image of the morphism $\wedge^{j} f_x\colon \wedge^{j}V_{n,x}\to \wedge^{j}(V_{n,x}(1))$ is a vector bundle of rank $1$;
		\item\label{largest k} The morphism $\wedge^{j} f_x\colon \wedge^{j}V_{n,x}\to \wedge^{j}(V_{n,x}(1))$ factors through $\tbf^{\kbf}\wedge^{j}(V_{n,x}(1))$, but does not factor through $\tbf^{\kbf'}\wedge^{j}(V_{n,x}(1))$ for any $\kbf'>\kbf$.
	\end{enumerate}
	In fact, Property \eqref{largest k} can be checked as follows: For a classical point $x\colon \Spa F\to \tilde{Z}_{i,j,\kbf}^{\circ\circ}$, by definition, $x$ is a classical point of $\tilde{Z}_{i,j,\kbf}^{\circ\circ}$ if and only if the composition
	\[\Ker((\wedge^j f_x)^{\vee})\hookrightarrow \Ext^2_{\FF,G_K}((\wedge^j V_{n,x})(j-1),\wedge^j V_{n,x})\twoheadrightarrow \Ext^2_{\FF,G_K}(\tbf^{\kbf}(\wedge^j V_{n,x})(j-1),\wedge^j V_{n,x})\]
	is not surjective.
	By taking the duals and applying Tate local duality, this is equivalent to the assertion
	\[\Hom_{\FF,G_K}(\wedge^j V_{n,x}, \tbf^{\kbf}(\wedge^j V_{n,x})(j))\to \Hom_{\FF,G_K}(\wedge^j V_{n,x}, (\wedge^j V_{n,x})(j))/\Im(\wedge^j f_x)\]
	is not injective.
	Therefore, $x$ is a classical point of $\tilde{Z}_{i,j,\kbf}^{\circ\circ}$ if and only if $\wedge^{j} f_x\colon \wedge^{j}V_{n,x}\to \wedge^{j}(V_{n,x}(1))$ factors through $\tbf^{\kbf}\wedge^{j}(V_{n,x}(1))$.

	Let $\Spa A\to Y_{i,j,\kbf}$ be a smooth morphism of pure dimension $m$.
	It suffices to show that $\dim A\leq dn^2-2+i+m$.
	Note that $A$ is reduced as $Y_{i,j,\kbf}$ is reduced by definition.
	Let $x\in (\Spa A)^{\cl}$ be a classical point.
	By applying \cite[Corollary 6.2.9]{KPX14} to the image of
	\[\wedge^j f_x:\wedge^jV_{n,x}\to \tbf^{\kbf}\wedge^j(V_{n,x}(1))\]
	inside its saturation, it follows from Property \eqref{largest k} that the image is saturated.
	Moreover, the cokernel $\Coker(\wedge^{j} f_x\colon \wedge^{j}V_{n,x}\to \tbf^{\kbf}\wedge^{j}(V_{n,x}(1)))$ is a vector bundle of rank $\binom{n}{j}-1$.
	Therefore, by Lemma \ref{lem:quotient vector bundle}, 
	\[\Coker(\wedge^{j} f_A\colon \wedge^{j}V_{n,A}\to \tbf^{\kbf}\wedge^{j}(V_{n,A}(1)))\]
	is a vector bundle on $X_{\Cbb_p}\times \Spa A$ of rank $\binom{n}{j}-1$.
	Hence, $\Ker (\wedge^{j} f_A)\subset \wedge^{j}V_{n,A}$ is a subbundle of codimension $1$.
	Set $\tilde{Y}_{i,j,\kbf}=\LS_{P_{n-j,j}(K),\FF}\times_{\LS_{\GL_n(K),\FF}} \Spa A$, which is a Zariski closed subspace of the rigid space $\Pbb(\wedge^{j}V_{n,A})$ as in Lemma \ref{lem:Plucker relation}. 
	The subbundle $\Ker (\wedge^{j} f_A)\subset \wedge^{j}V_{n,A}$ of codimension $1$ defines a section $\Spa A\to \Pbb(\wedge^{j}V_{n,A})$.
	By construction, for any classical point $x\in \Spa A$, the subbundle $\Ker (\wedge^{j} f_x)\subset \wedge^{j}V_{n,x}$ satisfies the Pl\"{u}cker relation.
	In other words, for any classical point $x\in \Spa A$, $\Spa k(x)\to \Spa A\to \Pbb(\wedge^{j}V_{n,A})$ factors through $\tilde{Y}_{i,j,\kbf}\subset \Pbb(\wedge^{j}V_{n,A})$.
	Since $A$ is reduced, we find that $\Spa A\to \Pbb(\wedge^{j}V_{n,A})$ also factors through $\tilde{Y}_{i,j,\kbf}\subset \Pbb(\wedge^{j}V_{n,A})$.
	Consequently, we obtain the section $\Spa A\to \tilde{Y}_{i,j,\kbf}$.
	By construction, we have 
	\[\dim\tilde{Y}_{i,j,\kbf}\leq i+m+\dim q^{-1}(Z) \leq dn^2-2+i+m\]
	by Theorem \ref{thm:parabolic dimension upper bound}, where $q\colon \LS_{P_{n-j,j}(K),\FF}\to \LS_{\GL_n(K),\FF}$.
	Therefore, by Lemma \ref{lem:section dimension}, we obtain $\dim \Spa A\leq \dim\tilde{Y}_{i,j,\kbf}\leq dn^2-2+i+m$, which completes the proof.
\end{proof}

%%%%%%%%%%%%%%%%%%%%%%%%%%%%%%%%%%%%%%%%%%%%%%%%%%%%%%%%%%%%%%%%%%%%%%%%%%%%%%%%%%
\subsection{Global sections on $\LS_{\GL_n(K),\FF}$}
In this subsection, we regard $\LS_{\GL_n(K),\FF}$ as a rigid analytic Artin stack over $E$ via base change.
Let $\mathfrak{gl}_n/\!/\GL_n$ be the GIT quotient of the Lie algebra $\mathfrak{gl}_n$ (over $K$) by the adjoint action of $\GL_n$.
We regard $\mathfrak{gl}_n/\!/\GL_n$ as a rigid space over $K$ by analytification.
For $M\in\mathfrak{gl}_n$, we write
\[\det(T-M)=T^n+a_1(M)T^{n-1}+\cdots+a_n(M).\]
The invariant functions $a_1,\ldots,a_n$ induce an isomorphism
\[
\mathfrak{gl}_n/\!/\GL_n\xrightarrow{\sim}\Abb_K^{n,\an}.
\]
For each $\sigma \in \Sigma_K$, set $\WT_{n,E,\sigma}=(\mathfrak{gl}_n/\!/\GL_n)\times_{\Spa K,\sigma}\Spa E$.
We define a rigid space $\WT_{n,E}$ over $E$ by 
\[\WT_{n,E}=\prod_{\sigma\in \Sigma_K}\WT_{n,E,\sigma}.\]
For an affinoid $E$-algebra $A$, a $G_K$-equivariant vector bundle $V_A$ on $X_{\Cbb_p,A}$, and $\sigma\in \Sigma_K$, we can define the Sen polynomial $P_{V_A,\sigma}(T)\in A[T]$ (\cite[Subsection 5.1.33]{EGH23} or \cite[Lemma and Definition 1.33]{Mikami25m}).
Since $P_{V_A,\sigma}(T)$ is a monic polynomial of degree $n$, its coefficients naturally determine an $A$-valued point of $\WT_{n,E,\sigma}$.
Consequently, we obtain a morphism 
\[\omega_n \colon \LS_{\GL_n(K),\FF}\to \WT_{n,E};\; V_A\mapsto (P_{V_A,\sigma}(T))_{\sigma\in \Sigma_K}.\]
On the other hand, by taking determinants, we obtain a morphism
\[\det \colon \LS_{\GL_n(K),\FF}\to \LS_{\GL_1(K),\FF}.\]
There is a natural commutative diagram
\[
\xymatrix{
	\LS_{\GL_n(K),\FF}\ar[r]^-{\omega_n}\ar[d]_-{\det} & \WT_{n,E}\ar[d]^-{\Tr}\\
	\LS_{\GL_1(K),\FF}\ar[r]^-{\omega_1} & \WT_{1,E},
}
\]
where the right vertical morphism $\Tr\colon \WT_{n,E}\to \WT_{1,E}$ is induced from the trace morphism $\Tr\colon \mathfrak{gl}_n\to \mathfrak{gl}_1$.

For a rigid analytic Artin stack $\Xcal$, we write $\Gamma(\Xcal)=\Gamma(\Xcal,\Ocal_{\Xcal})$ for the ring of global sections on $\Xcal$.
\begin{conjecture}[{\cite[Conjecture 5.1.36]{EGH23}}]\label{conj:EGH global sections}
	The morphism 
	\[\LS_{\GL_n(K),\FF}\xrightarrow{(\omega_n,\det)} \WT_{n,E}\times_{\WT_{1,E}}\LS_{\GL_1(K),\FF}\]
	induces an isomorphism on the global sections.
\end{conjecture}

\begin{remark}
	Under the identifications $\WT_{n,E}\cong \prod_{\sigma\in\Sigma_K}\Abb_E^{n,\an}$ and $\WT_{1,E}\cong \prod_{\sigma\in\Sigma_K}\Abb_E^{1,\an}$,
	the morphism $\Tr\colon\WT_{n,E}\to\WT_{1,E}$
	identifies with the product of the projections onto the first coordinates.
	On the other hand, since $\LS_{\GL_1(K),\FF}\cong X^{\an}(K^{\times})/\Gbb_m^{\an}$, where the action of $\Gbb_m^{\an}$ is trivial, we have
	\[\Gamma(\LS_{\GL_1(K),\FF})\cong \Gamma(X^{\an}(K^{\times})).\]
	Therefore, the above conjecture implies that
	\[\Gamma(\LS_{\GL_n(K),\FF})\cong \Gamma(\WT_{n,E})\hotimes_{\Gamma(\WT_{1,E})}\Gamma(X^{\an}(K^{\times})),\]
	which is non-canonically isomorphic to
	\[\Gamma(\prod_{\sigma\in \Sigma_K}\Abb^{n-1,\an}_E)\hotimes_{E}\Gamma(X^{\an}(K^{\times})).\]
\end{remark}

Le, Strauch, and Wang proved this conjecture under the assumption that $\LS_{\GL_n(K),\FF}$ is reduced (\cite[Proposition 5.1.2, Theorem 5.1.3]{LSW26}).
By Theorem \ref{thm:EGH conjecture}, $\LS_{\GL_n(K),\FF}$ is normal, and thus reduced.
Therefore, we obtain the following.
\begin{theorem}
	Conjecture \ref{conj:EGH global sections} holds.
\end{theorem}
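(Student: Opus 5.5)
The statement is a direct corollary of Theorem \ref{thm:EGH conjecture} together with the conditional result of Le, Strauch, and Wang \cite[Proposition 5.1.2, Theorem 5.1.3]{LSW26}. These establish Conjecture \ref{conj:EGH global sections} under the sole additional hypothesis that $\LS_{\GL_n(K),\FF}$ (base changed to $E$) is reduced. The plan is therefore to verify that hypothesis and then invoke their theorem.

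First I would check that the normality in Theorem \ref{thm:EGH conjecture} survives the base change from $\Qbb_p$ (or $K$) to $E$. The proof of Theorem \ref{thm:EGH conjecture} already works over $\Aff_E$ after the reduction made just after Theorem \ref{thm:bad locus dimension upper bound}. Alternatively, the charts of Corollary \ref{cor:smooth chart} are produced for any finite coefficient field $F\supset E$, and the normality argument (Lemma \ref{lem:c.i.} combined with Lemma \ref{lem:normal outside codimension 2}) is insensitive to the base field. Either way, $\LS_{\GL_n(K),\FF}\times_{\Spa\Qbb_p}\Spa E$ is normal at every classical point.

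Next I would deduce reducedness. By definition, the stack is normal at $x$ if some smooth chart $U\to\Xcal$ has normal local ring at a classical point over $x$. Normal noetherian local rings are domains, hence reduced. Lemma \ref{lem:ring theoretic property descent} then shows that every smooth chart $U$ is reduced at all classical points. Since affinoid algebras are Jacobson, reducedness at all maximal ideals gives reducedness of $U$. Consequently the nilradical $\sqrt{(0)}_{\Xcal}$ of Example \ref{example:nilradical} vanishes, so $\Xcal=\Xcal^{\red}$.

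Finally I would apply \cite[Theorem 5.1.3]{LSW26}. The one point needing care, and the main potential obstacle, is matching conventions: their notion of reducedness and their stack $\Xfrak_n$ over $E$ must agree with ours. The stacks agree by the identification in the introduction; this is a bookkeeping check rather than a mathematical difficulty. Once it is confirmed, $(\omega_n,\det)$ induces an isomorphism on global sections.
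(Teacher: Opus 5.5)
Your proposal is correct and follows the paper's argument exactly: Theorem \ref{thm:EGH conjecture} gives normality, hence reducedness, and \cite[Proposition 5.1.2, Theorem 5.1.3]{LSW26} then yields the conjecture. Your extra bookkeeping (the base change to $E$, and passing from normal local rings to the vanishing of the nilradical) is fine. The base change is handled most cleanly by noting that $\LS_{\GL_n(K),\FF}\times_{\Spa\Qbb_p}\Spa E\to\LS_{\GL_n(K),\FF}$ is \'{e}tale, so Lemma \ref{lem:ring theoretic property descent} applies directly.
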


%%%%%%%%%%%%%%%%%%%%%%%%%%%%%%%%%%%%%%%%%%%%%%%%%%%%%%%%%%%%%%%%%%%%%%%%%%%%%%%%%
\appendix

\section{Smoothness implies formal smoothness}

Let $K$ be a complete non-archimedean field.
The goal of this appendix is to prove the following:
\begin{theorem}\label{thm:smooth implies formally smooth}
	Let $f\colon X\to Y$ be a smooth morphism of quasi-analytic spaces over $K$.
	Then $f$ is formally smooth.
	In other words, for any square-zero closed immersion $i\colon \Spa A\to \Spa B$ of affinoid rigid spaces, and any commutative diagram
	\[
	\xymatrix{
		\Spa A \ar[r]^i\ar[d] & \Spa B\ar[d]\ar@{-->}[ld]\\
		X\ar[r]_f & Y,
	}
	\]
	there exists a dotted arrow making the diagram commute.
\end{theorem}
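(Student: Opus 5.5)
The proof proceeds in three steps: rigid spaces, then an étale target, then the general case. The obstacle throughout is that lifts exist only locally and must be glued.

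\emph{Step 1: $X$ and $Y$ rigid spaces.} Since $i$ is a homeomorphism, the pullback of an open cover of $X$ gives open subsets of $\Spa B$, and these are affinoid by \cite[Proposition 7.3.14]{GR03}. Working locally on $\Spa A$ and $X$, we may assume $X=\Spa C$ and $Y=\Spa D$ are affinoid. We may further assume $f$ is a composition of an étale morphism $\Spa C\to\Spa D\langle T_1,\ldots,T_r\rangle$ with the projection. Lifting through the projection is trivial: choose arbitrary lifts of the images of the $T_j$ in $B$. Lifting through an étale morphism is unique, by the standard argument that $\Omega_{C/D}=0$ and $\Lbb_{C/D}\simeq 0$, so that $\Exan$ and $\Der$ relative to $D$ vanish (cf.\ Example \ref{ex:cotangent complex deformation}).

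The local lifts now have to be glued. Lifts of $\Spa A\to X$ over a given $\Spa B\to Y$ form a torsor under $\Hom_A(\alpha^*\Omega_{X/Y},I)$, where $I=\Ker(B\to A)$. Because $f$ is smooth, $\Omega_{X/Y}$ is locally free, so this is a sheaf of $\Ocal_{\Spa A}$-modules given by a coherent sheaf. The obstruction to gluing the local lifts is a class in $H^1$ of this coherent sheaf on the affinoid $\Spa A$, and $H^1$ vanishes by Kiehl's theorem / Tate acyclicity. This gives the lift globally.

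\emph{Step 2: $Y$ a rigid space and $X$ a quasi-analytic space.} Choose an étale surjection $U\to X$ from a rigid space $U$. By Theorem \ref{thm:smooth surjection etale topology} (Appendix B), $\Spa A\to X$ lifts to $U$ after passing to an étale cover of $\Spa A$. Étale covers of $\Spa A$ deform uniquely to étale covers of $\Spa B$ (topological invariance of the étale site), so we may work étale locally on $\Spa B$. On each piece, Step 1 applied to $U\to Y$ produces a lift. The composites to $X$ on overlaps again differ by sections of the sheaf $\intHom(\alpha^*\Omega_{X/Y},I)$; here $\Omega_{X/Y}$ makes sense by étale descent, since $U\to X$ is étale. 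The gluing obstruction therefore lies in étale $H^1$ of a coherent sheaf on an affinoid, which vanishes.

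\emph{Step 3: general $Y$.} Replace $f$ by its base change $X\times_Y\Spa B\to\Spa B$. This is a smooth morphism of quasi-analytic spaces to an affinoid: the fiber product is a quasi-analytic space because quasi-analytic spaces are stable under fiber products. Step 2 then applies. I expect the main difficulty to be justifying the torsor and gluing description in Step 2 for quasi-analytic $X$: one must check that the set of lifts is a sheaf on $\Spa A_{\et}$ and is a pseudo-torsor under the coherent sheaf, and then invoke the vanishing of étale coherent cohomology on affinoids.
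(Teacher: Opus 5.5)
Your proposal is correct and follows essentially the same route as the paper: the lifts form a sheaf on $(\Spa A)_{\et}$ that is a torsor under $\intHom(g^*\Omega_{X/Y},I)$, this sheaf is locally non-empty by passing to an \'{e}tale chart $Z\to X$ and using the rigid-space case, and it has a global section because \'{e}tale $H^1$ of a coherent sheaf on an affinoid vanishes. The step you flag as the main difficulty, namely making sense of the difference of two lifts into a quasi-analytic $X$, is handled in the paper without charts by Lemma~\ref{lem:differential module sheaf theoretic}, which identifies $\Omega_{X/Y}$ with $\Delta^*J$ for the diagonal ideal $J$; also note a small slip in Step~1: $\Omega$ and $\Lbb$ vanish relative to $D\langle T_1,\ldots,T_r\rangle$, not relative to $D$.
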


For a quasi-analytic space $X$ over $K$, let $X_{\et}$ denote the small \'{e}tale site of rigid spaces over $X$.	
We define the sheaf of rings $\Ocal_{X_{et}}$ on $X_{\et}$ by $\Gamma(U,\Ocal_{X_{et}})=\Ocal_U(U)$ for $U\in X_{\et}$.
For $F\in \Coh(X)$, we define the associated $\Ocal_{X_{et}}$-module, also denoted by $F$, by $\Gamma(U,F)=\Gamma(U,h^*F)$ for $(h\colon U\to X)\in X_{\et}$.
Then the functor
\[\Coh(X)\to \Mod_{\Ocal_{X_{\et}}};\; F\mapsto F\] 
is fully faithful. 

\begin{definition}
	Let $f\colon X\to Y$ be a morphism of quasi-analytic spaces over $K$.
	We define the coherent sheaf $\Omega_{X/Y}$ of continuous differentials as follows:
	\begin{itemize}
		\item If $X$ and $Y$ are rigid spaces, $\Omega_{X/Y}$ is already defined.
		\item When $X$ is general and $Y$ is a rigid space, we choose an \'{e}tale surjection $U\twoheadrightarrow X$.
		Then we have a coherent sheaf $\Omega_{U/Y}$ and natural isomorphisms
		\[p_1^*\Omega_{U/Y}\cong \Omega_{(U\times_X U)/Y} \cong p_2^*\Omega_{U/Y}\]
		satisfying the cocycle condition.
		We define $\Omega_{X/Y}$ as the coherent sheaf corresponding to the above descent datum.
		It does not depend on the choice of $U\twoheadrightarrow X$.
		\item When $X$ and $Y$ are general, we choose an \'{e}tale surjection $V\twoheadrightarrow Y$.
		Then we have a coherent sheaf $\Omega_{(X\times_Y V)/V}$ and natural isomorphisms
		\[p_1^*\Omega_{(X\times_Y V)/V}\cong \Omega_{((X\times_Y V)\times_X(X\times_Y V))/(V\times_Y V)} \cong p_2^*\Omega_{(X\times_Y V)/V}\]
		satisfying the cocycle condition.
		We define $\Omega_{X/Y}$ as the coherent sheaf corresponding to the above descent datum.
		It does not depend on the choice of $V\twoheadrightarrow Y$.
	\end{itemize}
\end{definition}

\begin{lemma}\label{lem:differential module sheaf theoretic}
	Let $f\colon X\to Y$ be a morphism of quasi-analytic spaces over $K$, and let $\Delta\colon X\to X\times_Y X$ be the diagonal morphism of $f$.
	Let $J$ be the $\Ocal_{(X\times_Y X)_{\et}}$-module defined by $J=\Ker(\Ocal_{(X\times_Y X)_{\et}}\to \Delta_*\Ocal_{X_{\et}})$.
	Then the $\Ocal_{X_{\et}}$-module $\Delta^*J$ is naturally isomorphic to $\Omega_{X/Y}$.
\end{lemma}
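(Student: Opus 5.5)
The plan is to reduce to the case where $X$ and $Y$ are rigid spaces, where the statement is the standard description of the conormal sheaf of the diagonal, and then descend along étale covers.

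\emph{Rigid case.} First assume $X=\Spa B$ and $Y=\Spa A$ are affinoid. Then $X\times_Y X=\Spa(B\hotimes_A B)$ and $\Delta$ is a closed immersion whose ideal is $I=\Ker(B\hotimes_A B\to B)$. For a closed immersion of affinoids, the sheaf $J$ on the small étale site is the coherent sheaf attached to $I$. Indeed, for an étale $U\to X\times_Y X$ one has $\Ocal(U)\otimes I\cong\Ker(\Ocal(U)\to\Ocal(U\times_{X\times_Y X}X))$ by flatness of étale maps. Hence $\Delta^*J$ is the coherent sheaf attached to $I/I^2$. It then suffices to recall that $I/I^2\cong\Omega_{B/A}$, which is Huber's construction of continuous differentials \cite[1.6.2]{Hub96}: the map $b\mapsto 1\otimes b-b\otimes 1$ gives the universal continuous derivation.

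For general rigid $X$ and $Y$, I would glue over affinoid covers. Both sides are compatible with restriction to affinoid opens: $\Omega$ localizes, and $\Delta$ restricted to $U\times_{Y'} U$ recovers $J$ for $U\subset X$ mapping into $Y'\subset Y$. The identifications are natural, so they glue. Here $\Delta$ is only locally closed, which is handled by working locally on $X\times_Y X$ near the diagonal.

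\emph{General case.} Choose an étale surjection $V\twoheadrightarrow Y$ and an étale surjection $U\twoheadrightarrow X\times_Y V$ from rigid spaces. The key observation is that the formation of $J$ and of $\Delta^*$ is étale local. Consider the étale map $U\times_V U\to X\times_Y X$; the diagonal $\Delta_{U/V}$ of $U\to V$ sits over $\Delta_{X/Y}$. I will show that the pullback of $\Delta_{X/Y}^*J_{X/Y}$ to $U_{\et}$ is $\Delta_{U/V}^*J_{U/V}$. To do this, note that $U\to U\times_X U\times_Y\ldots$ gives a decomposition in which $U$ is open in $(U\times_V U)\times_{X\times_Y X}X$. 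This is because $U\to X$ is étale, so its diagonal is an open immersion. Ideal sheaves of a locally closed immersion restrict along étale maps, and restricting along an open subspace does not change the conormal sheaf. Hence $J_{X/Y}$ pulled back to a neighborhood of $\Delta_{U/V}(U)$ agrees with $J_{U/V}$.

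By the rigid case, $\Delta_{U/V}^*J_{U/V}\cong\Omega_{U/V}$. This isomorphism is compatible with the two pullbacks to $U\times_X U$ (and over $V\times_Y V$), by naturality of the rigid-case isomorphism. Since $\Omega_{X/Y}$ was defined precisely by descending $\Omega_{U/V}$ with these descent data, the isomorphisms descend and give $\Delta^*J\cong\Omega_{X/Y}$ on $X_{\et}$.

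The main obstacle is the étale-locality step. It requires verifying that $J$, defined as a kernel of sheaves on $(X\times_Y X)_{\et}$, restricts correctly. This comes down to the diagonal of the étale morphism $U\to X$ being an open immersion (unramifiedness) and to flatness of étale maps. Both hold for quasi-analytic spaces because the diagonal is representable by rigid spaces.
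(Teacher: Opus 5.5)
The overall architecture matches the paper's: localize \'{e}tale on $X$ and $Y$, use that the diagonal of an \'{e}tale map is an open immersion to compare $J$ near the diagonal with the ideal of the diagonal of a morphism of rigid spaces, and conclude from Huber's description of $\Omega$. Your affinoid computation and the descent bookkeeping are fine.

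The gap is the step ``$U$ is open in $(U\times_VU)\times_{X\times_YX}X$, hence $J_{X/Y}$ pulled back to a neighborhood of $\Delta_{U/V}(U)$ agrees with $J_{U/V}$''. Write $W=U\times_VU$, $W'=W\times_{X\times_YX}X$ and $i'\colon W'\to W$. The pullback of $J_{X/Y}$ to $W$ is $\Ker(\Ocal_W\to i'_*\Ocal_{W'})$. To get an open $O\subset W$ with $O\cap i'(W')=\Delta_{U/V}(U)$ from openness of $\Delta_{U/V}(U)$ in $W'$, you need $i'$ to be a topological embedding near $\Delta_{U/V}(U)$. That holds when $\Delta_{X/Y}$ is a locally closed immersion, but not in general. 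Your justification (``because the diagonal is representable by rigid spaces'') only makes $i'$ a monomorphism of rigid spaces. The other sheets of $W'$ can then accumulate on $\Delta_{U/V}(U)$, and in that case the pullback of $J_{X/Y}$ is strictly smaller than $J_{U/V}$ on every neighborhood of the diagonal.

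This is not a removable technicality. Let $\Zbb$ act on $\Bbb^1=\Spa\Qbb_p\langle T\rangle$ by translations and set $X=\Bbb^1/\Zbb$. One checks that this is a quasi-analytic space, with $\Bbb^1\times_X\Bbb^1=\bigsqcup_{n\in\Zbb}\Gamma_n$, where $\Gamma_n=\{y=x+n\}\subset\Bbb^1\times\Bbb^1$. Since $\Gamma_{p^k}$ approaches $\Gamma_0$ as $k\to\infty$, Strassmann's theorem shows that a function on a connected neighborhood of a classical point of $\Gamma_0$ that vanishes on all $\Gamma_n$ is zero. So $J$ has zero stalks at the classical points of the diagonal, and $\Delta^*J$ vanishes at classical points, although $\Omega_{X/\Qbb_p}$ is a line bundle.

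The paper's proof makes the same implicit move. Its assertion $\Delta_{fh}^{-1}C=0$, deduced from $U$ being open and closed in $U\times_XU$, fails in this example for the affinoid $U=\Bbb^1$. So the statement itself needs an extra hypothesis, such as $\Delta_{X/Y}$ being a locally closed immersion. Under that hypothesis your argument is complete as written, and openness of the diagonal of $U\to X\times_YV$ already suffices; the closedness for which the paper takes $U$ affinoid is not needed. For the application in Theorem A, the lemma can be bypassed by defining $b-a$ directly on \'{e}tale charts $Z\to X$, where it is the usual difference of two lifts into a rigid space.
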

\begin{proof}
	We may assume that $Y$ is a rigid space.
	Let us take an \'{e}tale morphism $h\colon U\to X$ from an affinoid rigid space $U$.
	Let $\Delta_{h}\colon U\to U\times_X U$ be the diagonal morphism of $h$.
	Since $h$ is \'{e}tale, $\Delta_{h}$ is an open immersion.
	Moreover, as $U$ is an affinoid rigid space, the diagonal morphism $U\to U\times U$ is a closed immersion.
	Since this closed immersion factors through $U\times_X U\subset U\times U$, $\Delta_{h}\colon U\to U\times_X U$ is also a closed immersion.
	Therefore, $U$ is an open and closed subspace of $U\times_X U$.
	We need to show that $h^*\Delta^*J$ is naturally isomorphic to $\Omega_{U/Y}$.
	Let us consider the diagram
	\[
	\xymatrix{
	 U\ar[rr]^-{\Delta_h}\ar[rrd]_-{h}\ar@/^25pt/[rrrr]^-{\Delta_{fh}} & &U\times_X U\ar[rr]^-{i}\ar[d]& & U\times_Y U\ar[d]^-{h\times h}\\
	& &X \ar[rr]_-{\Delta}& & X\times_Y X.
	}
	\]
	We write $\Delta_{fh}\colon U\to U\times_Y U$ and $i\colon U\times_XU \to U\times_Y U$ as above.
	Then we have $h^*\Delta^*J \cong \Delta_{fh}^*(h\times h)^*J$.
	We set $J_U=(h\times h)^*J$.
	Since $h\times h\colon U\times_Y U\to X\times_Y X$ is \'{e}tale, we have
	\[J_U\cong \Ker(\Ocal_{(U\times_Y U)_{\et}}\to i_*\Ocal_{(U\times_X U)_{\et}}).\]
	We set $I_U=\Ker(\Ocal_{(U\times_Y U)_{\et}}\to (\Delta_{fh})_*\Ocal_{U_{\et}})$ and $C=\Ker(i_*\Ocal_{(U\times_X U)_{\et}}\to  (\Delta_{fh})_*\Ocal_{U_{\et}})$.
	Then, there exists a natural left exact sequence
	\[0\to J_U \to I_U \to C.\]
	Since $U$ is an open and closed subspace of $U\times_X U$, the morphism 
	\[i_*\Ocal_{(U\times_X U)_{\et}}\to  (\Delta_{fh})_*\Ocal_{U_{\et}}\] 
	becomes an isomorphism after applying the exact functor $\Delta_{fh}^{-1}$.
	Hence $\Delta_{fh}^{-1}C=0$, and consequently $\Delta_{fh}^{-1}J_U\cong \Delta_{fh}^{-1}I_U$.
	It follows that $\Delta_{fh}^*J_U\cong \Delta_{fh}^*I_U$.
	It remains to show that $\Delta_{fh}^*I_U$ is naturally isomorphic to $\Omega_{U/Y}$.
	This immediately follows from the definition before Lemma 1.6.4 in \cite{Hub96}.
\end{proof}

\begin{proof}[Proof of Theorem \ref{thm:smooth implies formally smooth}]
Consider a commutative diagram 
\[
\xymatrix{
        \Spa A \ar[r]^-i\ar[d]_-{g} & \Spa B\ar[d]\ar@{-->}[ld]\\
        X\ar[r]_f & Y
}
\]
as in Theorem \ref{thm:smooth implies formally smooth}, and let $g\colon \Spa A\to X$ be the left vertical morphism.
We need to show that there exists a morphism $\Spa B\to X$ (represented by the dotted arrow) making the above diagram commute.
Let $I=\Ker(B\to A)$, viewed as an $A$-module.
We define a sheaf $F$ of sets on $(\Spa A)_{\et}$ as follows:
Let $h\colon U\to \Spa A$ be an \'{e}tale morphism of rigid spaces.
Since $\Spa A\to \Spa B$ is a square-zero closed immersion, $h$ uniquely lifts to an \'{e}tale morphism $V\to \Spa B$ by \cite[Proposition 2.3.7]{Hub96}.
Let $F(U)$ denote the set of morphisms $V\to X$ (represented by the dotted arrow) making the following diagram commute:
\begin{align}\label{eq:infinitesimal lifting}
	\xymatrix{
	    U\ar[r]\ar[d]_-{h} & V\ar[d]\ar@{-->}[ldd]\\
		\Spa A \ar[d]_-{g} & \Spa B\ar[d]\\
		X\ar[r]_-f & Y.
}
\end{align}
We need to show that $F(\Spa A)$ is non-empty.
It suffices to show that $F$ is a $\Hom(g^*\Omega_{X/Y},I)$-torsor.
In fact, since we have $H^1_{\et}(\Spa A, \Hom(g^*\Omega_{X/Y},I))=0$, every $\Hom(g^*\Omega_{X/Y},I)$-torsor is trivial and thus admits a global section.
First, we define a morphism 
\[F\times F\to F\times \Hom(g^*\Omega_{X/Y},I).\]
Let $h\colon U\to \Spa A$ be an \'{e}tale morphism, and let $a,b\colon V\to X$ be morphisms making the diagram \eqref{eq:infinitesimal lifting} commute.
Since $U\to V$ is a square-zero closed immersion, the morphism $U_{\et}\to V_{\et}$ of sites is an equivalence.
Using this equivalence we regard $\Ocal_{V_{\et}}$ as a sheaf on $U_{\et}$.
The morphisms $a$ and $b$ define the morphism $(a,b)\colon V\to X\times_Y X$.
From this, we obtain a morphism $\Ocal_{(X\times_Y X)_{\et}}\to (\Delta gh)_*\Ocal_{V_{\et}}$ of sheaves of rings on $(X\times_Y X)_{\et}$, where $\Delta\colon X\to X\times_Y X$ is the diagonal morphism.
We set $J=\Ker(\Ocal_{(X\times_Y X)_{\et}}\to \Delta_*\Ocal_{X_{\et}})$.
Since the restrictions of $a$ and $b$ to $U$ coincide, the morphism 
\[J\hookrightarrow \Ocal_{(X\times_Y X)_{\et}}\to (\Delta gh)_*\Ocal_{V_{\et}}\]
factors through $(\Delta gh)_*h^*I\subset (\Delta gh)_*\Ocal_{V_{\et}}$.
Therefore, by Lemma \ref{lem:differential module sheaf theoretic}, we get a morphism
\[\Omega_{X/Y}=\Delta^*J\to \Delta^*(\Delta gh)_*h^*I\to (gh)_*h^*I.\]
By adjunction, we obtain a morphism $h^*g^*\Omega_{X/Y}\to h^*I$, which defines an element $b-a \in \Gamma(U,\Hom(g^*\Omega_{X/Y},I))$.
Thus, we obtain a morphism \[F\times F\to F\times \Hom(g^*\Omega_{X/Y},I);\; (a,b)\mapsto (a,b-a).\]

Finally, we prove that the above morphism is an isomorphism locally on the site $(\Spa A)_{\et}$.
Let us take an \'{e}tale surjection $Z\twoheadrightarrow X$ from a rigid space $Z$.
We may assume that $gh\colon U\to X$ lifts to a morphism $j\colon U\to Z$.
Then the set $F(U)$ is naturally in bijection with the set of morphisms $V\to Z$ (represented by the dotted arrow) making the following diagram commute:
\[
\xymatrix{
U \ar[rr]\ar[d]_-{j} & & V\ar[d]\ar@{-->}[lld]\\
Z\ar[r] & X\ar[r]_-{f} &Y.
}
\]
Since both $Z$ and $Y$ are rigid spaces, the claim follows from standard arguments.
\end{proof}

%%%%%%%%%%%%%%%%%%%%%%%%%%%%%%%%%%%%%%%%%%%%%%%%%%%%%%%%%%%%%%%%%%%%%%%%%%%%%%%%%%%%%
\section{A smooth surjection is a surjection with respect to the \'{e}tale topology}

In this appendix, we prove the following theorem.

\begin{theorem}\label{thm:smooth surjection etale topology}
	Let $f\colon X\to Y$ be a smooth surjection of locally noetherian analytic adic spaces.
	Then there exist an \'{e}tale surjection $g\colon Z\twoheadrightarrow Y$ and a morphism $h\colon Z\to X$ such that $f\circ h=g$.
\end{theorem}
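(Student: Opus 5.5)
We construct \'{e}tale-local sections of $f$ point by point, using the local structure of smooth morphisms together with the openness of smooth maps. Since the statement is local on $Y$ and on $X$, and \'{e}tale surjections are stable under composition and disjoint unions, it suffices to show the following. For every point $y\in Y$ there exist an \'{e}tale morphism $g_y\colon Z_y\to Y$ whose image contains $y$, and a morphism $h_y\colon Z_y\to X$ with $f\circ h_y=g_y$. We then take $Z=\bigsqcup_y Z_y$.

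First pick $x\in X$ with $f(x)=y$. By Huber's local structure theorem for smooth morphisms (\cite[Corollary 1.6.10]{Hub96}), after shrinking $X$ around $x$ and $Y$ around $y$ to affinoids, $f$ factors as
\[X\xrightarrow{e}\Bbb^n_Y\to Y,\]
where $e$ is \'{e}tale and $\Bbb^n_Y$ is the closed unit polydisc over $Y$. It then suffices to find a section of $X\to Y$ over an \'{e}tale neighbourhood of $y$.

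Next, choose a section $s\colon Y'\to \Bbb^n_Y$ of the projection through a point of $e(X)$ lying over $y$. Since $e$ is \'{e}tale, hence open, the set $e(X)$ is an open neighbourhood of $e(x)$ in the polydisc. I would approximate $e(x)$ by a ``constant'' section $s_c\colon Y\to \Bbb^n_Y$, given by coordinates $c\in \Ocal(Y)^{\circ n}$, chosen so that $s_c(y)\in e(X)$. The natural first attempt is to take constants $c\in \Ocal_K^n$, or a lift of a residue class. Because $e(X)$ is open, we can shrink $Y$ to an open neighbourhood $Y_0$ of $y$ such that $s_c(Y_0)\subset e(X)$. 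Then $Z_y:=X\times_{\Bbb^n_Y,s_c}Y_0\to Y_0$ is \'{e}tale, being a base change of $e$. Its image contains $y$, since $s_c(y)\in e(X)$. The projection $Z_y\to X$ supplies $h_y$ with $f\circ h_y=g_y$.

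The main obstacle is this approximation step. A point $e(x)$ of the fiber over $y$ need not be reached by a section defined over $Y$: its coordinates live in the completed residue field $k(y)$, not in $\Ocal(Y)$. Moreover, $y$ may be a non-classical (higher rank) point, so the fiber is a polydisc over $k(y)$ whose points are not all of the form $s_c(y)$. To handle this, I would use that the fiber $f^{-1}(y)\cap X$ is a non-empty open subset of $\Bbb^n_{k(y)}$ (again by openness of \'{e}tale maps). Such a subset contains a rational-type point, namely one with coordinates in the dense image of $\Ocal(Y)$ (or of $K$) in $k(y)$, as one sees by shrinking to a small rational subdisc. For non-analytic subtleties of higher rank points, I would first reduce to the maximal generalization of $y$: since the image of $Z_y\to Y$ is open, and an open set containing the rank one generalization of $y$ also contains $y$, this suffices.

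If that density argument fails, the fallback is to allow $Y'\to Y$ to be finite \'{e}tale locally. Concretely, lift a point of the open fiber with coordinates in a finite separable extension of the residue field via the henselian property of $(\Ocal_{Y,y},\mathfrak{m}_y)$ (\cite[Lemma 2.4.17]{Hub96}). This gives an \'{e}tale neighbourhood carrying the needed section.
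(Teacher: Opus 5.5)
Your overall architecture matches the paper's: Huber's local structure theorem $X\xrightarrow{e}\Bbb^n_Y\to Y$, openness of $U=e(X)$, a section of $\Bbb^n_Y$ through $U$ over an \'etale neighbourhood of $y$ obtained by spreading out a finite separable extension of $k(y)$, and base change of $e$. The paper's Lemma~\ref{lem:fet limit} plays the role of your henselian argument. However, the step you flag as the main obstacle is not resolved, and both arguments you give for it fail.

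\emph{The reduction to the maximal generalization is backwards.} Open subsets of an adic space are stable under generalization, not under specialization. So an open set containing the rank-one generalization $y'$ need not contain $y$. For instance, in $\Bbb^1_C$ the rational open $\{|T-a|\geq 1\}$ (with $a\in\Ocal_C$) contains the Gauss point but not its rank-two specialization in the direction of $a$. Hence \'etale neighbourhoods built around rank-one points need not cover $Y$, and higher-rank points must be handled directly.

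\emph{The density claim fails even in rank one.} A non-empty open subset of $\Bbb^n_{k(y)}$ need not contain any $k(y)$-point. For example, $\{|T^2-p|\leq|p|^2\}\subset\Bbb^1_{\Qbb_p}$ has none, since $v_p(b^2-p)\leq 1$ for every $b\in\Zbb_p$.

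What is actually needed, for arbitrary $y$, is stated on the correct fibre, which is $\Bbb^n_{\Spa(k(y),k(y)^+)}$ rather than $\Bbb^n_{k(y)}$. You need a finite separable $L/k(y)$ and a non-empty open set $\Sigma\subset(L^+)^n$ of coordinates such that each corresponding morphism $\Spa(L,L^+)\to\Bbb^n$ sends all of $\Spa(L,L^+)$, including its closed point, into $U$. The coordinates must be $+$-integral: elements of $\Ocal(Y)^{\circ}$ or $k(y)^{\circ}$ do not in general define sections of $\Bbb^n_Y$ through the closed point. Your henselian fallback only spreads such a point out once it exists; it does not produce one.

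The paper supplies exactly this input. The set $U_y=U\cap\Bbb^n_{\Spa(k(y),k(y)^+)}$ is open and contains $e(x)$, which lies over the closed point. By openness of the projection, $U_y\to\Spa(k(y),k(y)^+)$ is therefore surjective. After base change to the completed algebraic closure $(C,C^+)$, quantifier elimination for algebraically closed valued fields turns this surjectivity into an actual $C^+$-valued section through $U$ (Lemma~\ref{lem:lifting of points}). Density of the separable closure then yields $L$ and $\Sigma$ (Corollary~\ref{cor:etale local section point}). Without an argument of this kind, your construction does not show that the images of the $Z_y$ cover the higher-rank points of $Y$.
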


This theorem is stated in \cite[Proposition 3.1.11]{War17}.
However, since the author is unable to follow a lifting argument therein (see the lemma below), we present a modified proof here.

\begin{lemma}\label{lem:lifting of points}
	Let $K$ be an algebraically closed complete non-archimedean field, and let $K^+\subset K$ be an open bounded valuation ring.
	Let $n$ be a positive integer, and let $U\subset \Bbb^n_{\Spa(K,K^+)}$ be an open subspace.
	If the projection $U\to \Spa(K,K^+)$ is surjective, then it admits a section.
\end{lemma}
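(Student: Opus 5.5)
The plan is to use the structure of points of $\Spa(K,K^+)$ together with the quasi-compactness and constructibility properties of open subsets of the closed unit polydisc. Recall that $\Spa(K,K^+)$ is a totally ordered chain of specializations: it consists of the generic point $\eta$ (corresponding to $\Ocal_K$) and the points corresponding to valuation rings $K^+\subset V\subset \Ocal_K$. The closed point $s$ corresponds to $K^+$ itself. Surjectivity of $U\to \Spa(K,K^+)$ means in particular that the closed point $s$ has a preimage $u\in U$. Since $U$ is open and every point specializes to a point over which it lies, the key step is to find a \emph{rigid} ($K$-rational) point $z\in \Bbb^n(K)$, i.e.\ a section $\Spa(K,K^+)\to \Bbb^n_{\Spa(K,K^+)}$, whose image lies in $U$. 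A section is determined by a point $z\in \Ocal_K^n$ (with the image of the closed point being the unique $(K,K^+)$-point extending it), so we need $z$ such that the whole chain $\{z_V\}_V$ of specializations of the classical point $z$ lies in $U$. Since $U$ is open, hence stable under generalization, it suffices that the image of the closed point, $z_s$, lies in $U$.

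First I reduce to a rational open neighborhood: choose a rational subset $W=\{|f_i|\le |g|\neq 0\}\subset U$ of $\Bbb^n_{\Spa(K,K^+)}$ containing $u$, with $f_i,g\in K\langle T_1,\dots,T_n\rangle$ (after scaling, in $\Ocal_K\langle T\rangle$). Then the question becomes: given that $W$ contains a point over $s$, find $z\in\Ocal_K^n$ with $|f_i(z)|\le|g(z)|\neq0$ for the valuation attached to $K^+$. The point $u$ gives a valuation $v_u$ on $K\langle T\rangle$ extending the valuation $v_s$ of $K$ attached to $K^+$, and I would approximate $u$ by classical points: $u$ is a specialization of its maximal generization $\tilde u$, a point of the Berkovich-type rank-one disc over $\eta$. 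Using that $K$ is algebraically closed, points of the rank-one polydisc of type $\ge 2$ can be approached by $K$-points, and the secondary (rank-$>1$) data of $u$ are encoded by residues in $\tilde K$ or in finer value groups; algebraic closedness of $K$ (hence of all relevant residue fields and divisibility of value groups) lets one choose $z$ matching those data.

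Concretely, the step I expect to be the main obstacle is this approximation: showing that the set of $z\in \Ocal_K^n$ with $z_s\in W$ is non-empty. I would attack it by induction on $n$, fibering $\Bbb^n\to\Bbb^{n-1}$, so that the core case is $n=1$. For $n=1$, use the classification of points of the adic unit disc over $(K,K^+)$ (points of types 1–5): with $K$ algebraically closed, $f_i,g$ factor into linear factors times units (Weierstrass preparation), so the conditions $|f_i(z)|\le|g(z)|$ become conditions on valuations $v_s(z-a_j)$ for finitely many roots $a_j$. The point $u$ determines the values $v_u(T-a_j)$ in an extended value group, and one chooses $z=a+c$ with $a\in K$ and $c\in K$ of appropriately chosen valuation (possible since the value group of $v_s$ on $K$ is divisible and the residue field of $K^+$ is infinite), reproducing the same order relations. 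The inductive step uses that the image of an open under the smooth projection $\Bbb^n\to\Bbb^{n-1}$ is open (Huber's openness, \cite[Proposition 1.7.8]{Hub96}), so it contains a section over $\Bbb^{n-1}$ by induction, and then the fiber over that $K$-point is an open of $\Bbb^1_{\Spa(K,K^+)}$ surjecting onto $\Spa(K,K^+)$ after shrinking, to which the $n=1$ case applies.
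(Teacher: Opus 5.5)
Your route is correct but genuinely different from the paper's. The paper does not induct on $n$. It reduces to a rational subset $\{|f_i|\le|g|\neq0\}$ with $f_i,g$ \emph{polynomials} (via \cite[Lemma 3.10]{Huber93}). It then notes that a point of $U$ over the closed point yields an algebraically closed valued field extension $(L,L^+)/(K,K^+)$ and some $a\in(L^+)^n$ satisfying these finitely many inequalities. Finally it transfers the solution down to $(K^+)^n$ by quantifier elimination for algebraically closed valued fields.

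You instead reduce to $n=1$ using openness of the smooth projection $p\colon\Bbb^n\to\Bbb^{n-1}$. This step is sound: the fibre of $U$ over a $(K,K^+)$-point $z'$ of $p(U)$ is an open of $\Bbb^1_{\Spa(K,K^+)}$ that still meets the fibre over the closed point $s$, because $\sigma_{z'}(s)$ has residue field $(K,K^+)$. You then prove the one-variable case by hand with Weierstrass preparation and valuation theory. In effect you reprove the special case of model completeness that is needed. This makes your argument elementary and self-contained but longer; the paper's proof is a few lines resting on a deep external theorem.

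Two points need tightening before the argument is complete.

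First, a section corresponds to $z\in(K^+)^n$, not $z\in\Ocal_K^n$. If some $z_i\notin K^+$, there is no point $z_s$ over the closed point. Keep $|T|\le1$ among your conditions, for example by adding $0$ to the list of roots, so that the $z$ you produce lies in $K^+$.

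Second, ``choose $z=a+c$'' only works for a specific $a$. After Weierstrass preparation, the unit factors have the form $e_0(1+h)$ with $h$ topologically nilpotent, so they have constant absolute value $|e_0|$ at every point. Take $a$ to be the root $a_j$ that minimizes $\gamma=|T-a_j|_u$. Then $|T-a_j|_u=\max(\gamma,|a-a_j|)$ for every $j$. Hence each condition $|f_i(u)|\le|g(u)|$ becomes a comparison of piecewise monomials in the single quantity $\gamma$, with finitely many thresholds in the divisible group $|K^\times|_{K^+}$. Now choose $c$ as follows.
\begin{itemize}
\item If $\gamma\in|K^\times|_{K^+}\cup\{0\}$, take $|c|=\gamma$ (with $c=0$ if $\gamma=0$). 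When $\gamma\neq0$, choose the residue of $c$ to avoid the residues of those $a_j-a$ with $|a_j-a|=\gamma$; this is possible because the residue field is infinite. Then $|z-a_j|=|T-a_j|_u$ exactly for all $j$.
\item Otherwise, take $|c|$ in the same open cut that $\gamma$ determines among the finitely many thresholds. This cut contains an element of $|K^\times|_{K^+}$ because divisible ordered groups are dense.
\end{itemize}
In both cases every comparison is preserved, including $|g(z)|\neq0$.
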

\begin{proof}
	We may assume that $U$ is a rational open subspace of the form
    \[U=\Bbb^n_{\Spa(K,K^+)}\left(\frac{f_1,\ldots,f_m}{g}\right),\] 
    where $f_1,\ldots,f_m,g\in K\langle T_1,\ldots,T_n\rangle$ generate the unit ideal.
	By \cite[Lemma 3.10]{Huber93}, we may assume that $f_1,\ldots,f_m,g$ are polynomials in $T_1,\ldots,T_n$.
	Since the projection $U\to \Spa(K,K^+)$ is surjective, there exist an extension of algebraically closed valued fields $(L,L^+)/(K,K^+)$ and elements $a_1,\ldots, a_n\in L^+$ such that 
	\[|f_i(a_1,\ldots,a_n)|_{L^+}\leq |g(a_1,\ldots,a_n)|_{L^+}\neq 0\] 
	for each $i=1,\ldots,m$, where $|\cdot|_{L^+}$ is the (multiplicative) valuation corresponding to $(L,L^+)$.
	Since the theory of algebraically closed valued fields has elimination of quantifiers, there exist elements $b_1,\ldots, b_n\in K^+$ such that $|f_i(b_1,\ldots,b_n)|_{K^+}\leq |g(b_1,\ldots,b_n)|_{K^+}\neq 0$ for each $i=1,\ldots,m$, where $|\cdot|_{K^+}$ is the valuation corresponding to $(K,K^+)$.
	These elements $b_1,\ldots, b_n\in K^+$ define the desired section.
\end{proof}

\begin{corollary}\label{cor:etale local section point}
	Let $K$ be a complete non-archimedean field, and let $K^+\subset K$ be an open bounded valuation ring.
	Let $n$ be a positive integer, and let $U\subset \Bbb^n_{\Spa(K,K^+)}$ be an open subspace.
	Assume that the projection $U\to \Spa(K,K^+)$ is surjective. 
	Then there exist a finite separable extension $L/K$ and a non-empty open subspace $\Sigma\subset (L^+)^n$, where $L^+$ is the integral closure of $K^+$ in $L$, such that for any $(a_1,\ldots,a_n)\in \Sigma$, the morphism $\Spa(L,L^+)\to \Bbb^n_{\Spa(K,K^+)}$ defined by $(a_1,\ldots,a_n)$ factors through $U\subset \Bbb^n_{\Spa(K,K^+)}$.
\end{corollary}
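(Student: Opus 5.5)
The plan is to descend from Lemma \ref{lem:lifting of points} by a Galois/trace argument. First, reduce to a rational open $U=\Bbb^n_{\Spa(K,K^+)}\bigl(\frac{f_1,\ldots,f_m}{g}\bigr)$ with $f_i,g$ polynomials, as in the proof of Lemma \ref{lem:lifting of points}. Let $C$ be the completion of an algebraic closure of $K$, and choose an open bounded valuation ring $C^+$ extending $K^+$. Then $\Spa(C,C^+)\to\Spa(K,K^+)$ is surjective onto the closed point, so the base change $U_C\to\Spa(C,C^+)$ is still surjective. Lemma \ref{lem:lifting of points} therefore gives $b=(b_1,\ldots,b_n)\in (C^+)^n$ with
\[|f_i(b)|\le |g(b)|\neq 0\qquad\text{for all } i.\]

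Next, perturb $b$ into a finite separable extension. The conditions $|f_i(c)|\le|g(c)|\neq0$ are open in $c\in (C^+)^n$. Indeed, since $g(b)\neq 0$, for $c$ close to $b$ we have $|g(c)|=|g(b)|$ and $|f_i(c)-f_i(b)|<|g(b)|$. So there is a neighbourhood $N$ of $b$ in $(C^+)^n$ on which all the inequalities hold. The separable closure $K^{\mathrm{sep}}$ is dense in $C$ by Krasner's lemma/Ax--Sen--Tate type density, which is standard even in positive characteristic. We may therefore pick $a\in N\cap (K^{\mathrm{sep}})^n$ and let $L$ be the finite separable extension generated by its coordinates. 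The valuation ring $L^+=C^+\cap L$ is the relevant one; since $K^+$ may not be henselian, this is a choice of one extension, and it is enough to note that it is an integral-closure localization, or to replace $L^+$ accordingly. Then take
\[\Sigma=N\cap (L^+)^n,\]
which is open and non-empty because it contains $a$. Every point of $\Sigma$ satisfies the inequalities, so the corresponding morphism $\Spa(L,L^+)\to \Bbb^n$ lands in $U$.

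The main obstacle is making the valuation ring on $L$ precise. The statement asks for the integral closure of $K^+$ in $L$, which is semilocal if $K^+$ is not henselian. Since $K$ is complete and $K^+$ is open and bounded, I expect this to be harmless: $K^+$ is the preimage of a valuation ring of the residue field of $K^\circ$, and $K^\circ$ is henselian because $K$ is complete. I would verify that this makes $K^+$ henselian, so its integral closure in $L$ is a valuation ring and coincides with $C^+\cap L$. The remaining point, the density of separable elements and the openness of the inequalities, is routine.
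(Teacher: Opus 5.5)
Your route is the paper's: lift to $(C,C^+)$ with Lemma~\ref{lem:lifting of points}, find a neighbourhood of the lifted point $b$ in $(C^+)^n$ whose points all land in $U_C$, and approximate $b$ by a point with coordinates in a finite separable extension using density of $K^{\mathrm{sep}}$ in $C$. The one difference is how the neighbourhood is produced.

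The paper works with an arbitrary open $U$. It takes polydiscs $V_m$ of radius $|\varpi|^m$ around $b$, notes that $\bigcap_m V_m=i(\Spa(C,C^+))\subset U_C$, and uses quasi-compactness to get some $V_m\subset U_C$.

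You instead reduce to a rational $U$ with polynomial $f_i,g$ and perturb the explicit inequalities. The reduction is legitimate because $\Bbb^n_{\Spa(K,K^+)}\to\Spa(K,K^+)$ is open and every point of $\Spa(K,K^+)$ generalizes the closed point. This is more hands-on and works, provided you read $|\cdot|$ as the possibly higher-rank valuation attached to $C^+$, which is continuous (equivalently, use $C^{\circ\circ}\subset C^+$).

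The step that fails is your planned verification that $K^+$ is henselian. Completeness of $K$ only makes $K^\circ$ henselian. The composite ring $K^+$ is henselian if and only if, in addition, the valuation ring $K^+/\mathfrak{m}_{K^\circ}$ of the residue field is henselian, and that ring is arbitrary.

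For example, take $p$ odd, $K$ the completion of $\Qbb_p(t)$ for the Gauss norm, and $K^+$ the preimage of $\mathbb{F}_p[t]_{(t)}\subset\mathbb{F}_p(t)$. With $u=\sqrt{1+t}$, $K^+$ has two extensions $w_\pm$ to $K(u)$, distinguished by $u\equiv\pm 1$. So the integral closure of $K^+$ in $L$ can be semilocal and strictly smaller than $L\cap C^+$.

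The statement as literally written then fails. Let $U=\{|T^2-(1+t)|\le|p|,\ |T-1|\le|t|\}\subset\Bbb^1_{\Spa(K,K^+)}$.
\begin{enumerate}
\item The point $T=u$ with valuation $w_+$ lies in $U$, since $t=(u-1)(u+1)$ with $u+1$ a $w_+$-unit. Hence $U\to\Spa(K,K^+)$ is surjective.
\item Now let $a\in L$ satisfy $|a^2-(1+t)|\le|p|$. Then $a-\epsilon u\in L^{\circ\circ}$ for some sign $\epsilon$, and Krasner's lemma gives $u\in L$.
\item At an extension $w$ of $K^+$ to $L$ lying over $w_{-\epsilon}$, we get $|a-1|_w=1>|t|_w$.
\end{enumerate}
So $\Spa(L,L^+)\to\Bbb^1$ never factors through $U$ when $L^+$ is the full integral closure.

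The paper's proof makes the same tacit identification, namely $L^n\cap(C^+)^n=(L^+)^n$. The correct repair is the first option you mention, not henselianity: set $L^+:=L\cap C^+$, a valuation ring of $L$ over $K^+$ (the localization of the integral closure at one maximal ideal). With this choice your argument is complete. It agrees with the integral closure whenever $K^+$ is henselian (e.g.\ $K^+=K^\circ$). It is also all that the proof of Theorem~\ref{thm:smooth surjection etale topology} uses, since only the single closed point $z$ of $\Spa(L,L^+)$ matters there.
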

\begin{proof}
	Let $(C,C^+)$ be the completion of the algebraic closure of $(K,K^+)$, and let $U_C$ be the preimage of $U$ under the morphism $\Bbb^n_{\Spa(C,C^+)}\to\Bbb^n_{\Spa(K,K^+)}$.
	By Lemma \ref{lem:lifting of points}, there exist elements $a_1,\ldots,a_n\in C^+$ defining a section $i\colon \Spa(C,C^+)\to U_C\subset \Bbb^n_{\Spa(C,C^+)}$.
	Let $\varpi$ be a pseudo-uniformizer of $K$, and let 
	\[V_m=\Bbb^n_{\Spa(C,C^+)}\left(\frac{T_1-a_1,\ldots,T_n-a_n}{\varpi^m}\right)\] 
	for each positive integer $m$.
	Since $i(\Spa(C,C^+))=\bigcap_m V_m$ is contained in $U_C$, there exists a positive integer $m$ such that $V_m\subset U_C$.
	Since the separable closure of $K$ is dense in $C$ by \cite[3.4.1 Proposition 6]{BGR}, there exists a finite separable extension $L/K$ in $C$ such that $\Sigma=L^n\cap \{(b_1,\ldots,b_n)\in (C^+)^n\mid b_i-a_i\in \varpi^m C^+\}$ is non-empty.
	This extension $L$ and the open subspace $\Sigma\subset L^n\cap(C^+)^n=(L^+)^n$ satisfy the desired property.
\end{proof}

For a locally noetherian analytic adic space $X$, let $X_{\fet}$ denote the category of finite \'{e}tale adic spaces over $X$.
For a ring $A$, let $A_{\fet}$ denote the category of finite \'{e}tale $A$-algebras.
Note that for a strongly noetherian Tate affinoid pair $(A,A^+)$, we have $\Spa(A,A^+)_{\fet}\simeq (A_{\fet})^{\op}$.

\begin{lemma}\label{lem:fet limit}
	Let $X$ be a locally noetherian analytic adic space, and let $x\in X$ be a point.
	Then the natural functor 
	\[\varinjlim_{x\in U}U_{\fet}\to \Spa(k(x),k(x)^+)_{\fet}\]
	is an equivalence, where the colimit is a $2$-colimit taken over all open neighborhoods of $x$.
\end{lemma}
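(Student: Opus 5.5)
The plan is to reduce to the affinoid case and then compare both sides with the finite étale site of a henselian local ring. Everything is local around $x$, so we may assume $X=\Spa(A,A^+)$ with $A$ strongly noetherian Tate. Rational open neighborhoods $U=\Spa(B,B^+)$ of $x$ are cofinal among all open neighborhoods. For each such $U$ we have $U_{\fet}\simeq (B_{\fet})^{\op}$. The left-hand side therefore becomes $(\varinjlim_{x\in U} B_{\fet})^{\op}$.

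The colimit of the categories $B_{\fet}$ over the filtered system of rational neighborhoods is equivalent to $(\varinjlim B)_{\fet}$, by the standard limit formalism for finitely presented algebras. Here $\varinjlim B=\Ocal_{X,x}$, the (uncompleted) local ring at $x$. So it will suffice to show that $\Ocal_{X,x}\to k(x)$ induces an equivalence $(\Ocal_{X,x})_{\fet}\simeq k(x)_{\fet}$.

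This comparison will be done in two steps.

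First, I will show that $\Ocal_{X,x}$ is henselian, so that $(\Ocal_{X,x})_{\fet}\simeq \kappa(x)_{\fet}$, where $\kappa(x)$ is the uncompleted residue field. I would cite Huber's result that the local rings of an analytic adic space are henselian (\cite{Hub96}, Lemma 1.1.10 / Proposition 2.4.x). Alternatively, one can argue directly: a monic polynomial with a simple root modulo the maximal ideal can be solved on a rational neighborhood by a Newton/implicit-function argument, using that $A$ is Tate and the relevant coefficients are small near $x$.

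Second, I will show that $\kappa(x)$ is a henselian valued field whose completion is $k(x)$, so that $\kappa(x)_{\fet}\simeq k(x)_{\fet}$. Henselianity of $\kappa(x)$ with respect to the valuation $v_x$ should come from that of $\Ocal_{X,x}^+$. Once that is known, Krasner's lemma gives the equivalence between finite separable extensions of a henselian valued field and those of its completion.

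The main obstacle I expect is this second step, namely the henselian property of the pair $(\kappa(x),\kappa(x)^+)$ rather than just of $\Ocal_{X,x}$. For it I would try to invoke Huber's statement (\cite[Lemma 1.1.10, Proposition 1.4.x]{Hub96}), which says that $\Ocal_{X,x}^+$ is henselian and that $\kappa(x)$ is henselian for the valuation. If no direct reference fits, I would argue by approximation. Given a monic separable $P\in k(x)[T]$, approximate its coefficients by elements of $\kappa(x)$ that come from $\Ocal(U)$ for some rational $U$. Then use continuity of roots, via Krasner, to see that the finite étale algebra $k(x)[T]/P$ already descends to such a $U$ after shrinking.

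Essential surjectivity of the full comparison follows from this descent. Full faithfulness follows from the same descent applied to idempotents in tensor products, since morphisms of finite étale algebras are encoded by such idempotents.
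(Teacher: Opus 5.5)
Your reduction to proving $(\Ocal_{X,x})_{\fet}\simeq k(x)_{\fet}$ matches the paper. Your Step~1 is also correct: $\Ocal_{X,x}$ is a henselian local ring, which one can show by Newton's method on shrinking rational neighbourhoods, using completeness of $\Ocal(U)$.

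The gap is in your justification of Step~2, which you rightly flag as the crux. It is \emph{false} in general that $\Ocal^+_{X,x}$ is a henselian local ring, or that $\kappa(x)$ is henselian with respect to $v_x$. Take $x$ of rank $>1$, say the type~5 point of the closed unit disc over $\Cbb_p$ specializing the Gauss point towards $0$. Then $\kappa(x)^+/\sqrt{\varpi\kappa(x)^+}$ is the non-henselian ring $\overline{\mathbb{F}}_p[T]_{(T)}\subset\overline{\mathbb{F}}_p(T)$. Consequently $Y^2-Y-T$ has the simple root $0$ modulo the maximal ideal of $\Ocal^+_{X,x}$ but no root in $\Ocal^+_{X,x}$, since its image in $\overline{\mathbb{F}}_p(T)[Y]$ is irreducible.

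What Krasner's lemma actually needs is henselianity of the rank-one coarsening of $v_x$, i.e.\ the valuation defining the topology of $\kappa(x)$. The input that gives this is that $(\Ocal^+_{X,x},\varpi\Ocal^+_{X,x})$ is a henselian \emph{pair}. This holds because it is a filtered colimit of the $\varpi$-adically complete pairs $(A^+,\varpi A^+)$ (\cite[Lemma 3.1.1]{Hub96}, \cite[0FWT]{stacks-project}).

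Your fallback does not repair this. Approximating coefficients and applying Krasner in the complete field $k(x)$ does give essential surjectivity. But full faithfulness is equivalent to $\kappa(x)$ being separably closed in $k(x)$. Equivalently, for finite étale $C$ over $\Ocal_{X,x}$, idempotents of $C\otimes k(x)$ must lift to $C$. That needs a Hensel-type lifting over some $\Ocal(U)$, not approximation over the non-complete field $\kappa(x)$.

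With the henselian pair in hand, the paper skips your two-step passage through $\kappa(x)$ entirely. Since $\mfrak_x$ is $\varpi$-divisible, the $\varpi$-adic completion of $\Ocal^+_{X,x}$ is $k(x)^+$. Then \cite[Proposition 5.4.53]{GR03} gives $(\Ocal^+_{X,x}[1/\varpi])_{\fet}\simeq(k(x)^+[1/\varpi])_{\fet}$, i.e.\ $(\Ocal_{X,x})_{\fet}\simeq k(x)_{\fet}$, in one step.

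Your route (henselian local ring, then henselian rank-one field plus Krasner) is more classical. It does work once Step~2 is rephrased in terms of the rank-one coarsening and proved from the henselian pair. The paper's $\varpi$-adic formulation, however, handles higher-rank points automatically, without separating out the rank-one part of $v_x$ by hand.
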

\begin{proof}
	After replacing $X$ with an affinoid open neighborhood of $x$, we can choose a pseudo-uniformizer $\varpi\in \Ocal(X)$.
	For an affinoid open neighborhood $\Spa(A,A^+)\subset X$ of $x$, the pair $(A^+,\varpi A^+)$ is henselian by \cite[Lemma 3.1.1]{Hub96}.
	Let $\tilde{k(x)}^+=\varinjlim_{x\in \Spa (A,A^+)}A^+$ (resp.\ $\tilde{k(x)}=\varinjlim_{x\in \Spa (A,A^+)}A$), where the colimit is taken over all affinoid open neighborhoods of $x$.
	Then the $\varpi$-adic completion of $\tilde{k(x)}^+$ is isomorphic to $k(x)^+$.
	By \cite[0FWT]{stacks-project}, the pair $(\tilde{k(x)}^+,\varpi \tilde{k(x)}^+)$ is also henselian.
	By \cite[Proposition 5.4.53]{GR03}, we have $k(x)_{\fet}\simeq \tilde{k(x)}_{\fet}$.
	Therefore, we obtain 
	\[k(x)_{\fet}\simeq \tilde{k(x)}_{\fet}\simeq \varinjlim_{x\in \Spa (A,A^+)}A_{\fet},\]
	which proves the claim.
\end{proof}

\begin{proof}[Proof of Theorem \ref{thm:smooth surjection etale topology}]
	Let $x\in X$ be a point, and let $y=f(x)$ be the image.
	It suffices to show	that there exist an \'{e}tale morphism $g\colon Z\to Y$ whose image contains $y$ and a morphism $h\colon Z\to X$ such that $f\circ h=g$.
	By \cite[Corollary 1.6.10]{Hub96}, after replacing $X$ and $Y$ with open neighborhoods of $x$ and $y$, respectively, we may assume that there is a factorization
	\[
	\xymatrix{
		X \ar[r]\ar[rd]_-{f} & \Bbb^n_Y\ar[d]\\
		 & Y,
	}
	\]
	where the morphism $X\to \Bbb^n_Y$ is \'{e}tale.
	By \cite[Proposition 1.7.8]{Hub96}, the morphism $X\to \Bbb^n_Y$ is an open map. 
	Let $U\subset \Bbb^n_Y$ be the image of this morphism, and let $U_y =U\cap \Bbb^n_{\Spa(k(y),k(y)^+)}$.
	By applying Corollary \ref{cor:etale local section point} to $U_y\subset \Bbb^n_{\Spa (k(y),k(y)^+)}\to \Spa(k(y),k(y)^+)$, we get a finite separable extension $L/k(y)$ and an open subspace $\Sigma\subset (L^+)^n$ satisfying the condition in Corollary \ref{cor:etale local section point}.
	By Lemma \ref{lem:fet limit}, after replacing $Y$ with an open neighborhood of $y$, we can take a finite \'{e}tale morphism $Z\to Y$ which is an extension of $\Spa (L,L^+)\to \Spa(k(y),k(y)^+)$.
	Let $z\in \Spa(L,L^+)$ be the closed point, and we regard it as a point of $Z$.
	By replacing $Z$ with an open neighborhood of $z$, we may assume that $\Im((\Ocal(Z)^+)^n\to (L^+)^n)\cap \Sigma\neq\emptyset$. 
	Let us take $(a_1,\ldots,a_n)\in (\Ocal(Z)^+)^n$ whose image in $(L^+)^n$ lies in $\Sigma$.
	These elements define a morphism $Z\to \Bbb^n_Y$.
	By replacing $Z$ with a preimage of $U\subset \Bbb^n_Y$, which contains $z$ by construction, we may assume that the morphism $Z\to \Bbb^n_Y$ factors through $U\subset \Bbb^n_Y$.
	By replacing $Z$ with $X\times_{U}Z$ (which is \'{e}tale over $Y$ and whose image contains $y$), we obtain the desired lift $Z\to X$ of $Z\to Y$ along $f\colon X\to Y$.
\end{proof}

%%%%%%%%%%%%%%%%%%%%%%%%%%%%%%%%%%%%%%%%%%%%%%%%%%%%%%%%%%%%%%%%%%%%%%%%%%%%%%%%%%%%%

\section{$\Gamma$-equivariant $G$-torsors}\label{appendix:Gamma-equivariant torsor}

Let $\Gamma$ be a profinite group.
First, we recall the notion of continuous actions of $\Gamma$ on (analytic) adic spaces.

\begin{definition}
	Let $X$ be an analytic adic space equipped with a right action of $\Gamma$.
	The action of $\Gamma$ on $X$ is \textit{continuous} if the following conditions are satisfied:
	\begin{enumerate}
		\item The action of $\Gamma$ on the underlying topological space of $X$ is continuous, that is, $X\times \Gamma \to X$ is continuous.
		\item For any affinoid open subspace $U=\Spa A$ of $X$, let $\Gamma_U\subset \Gamma$ be the stabilizer of $U$.
		Then the induced action of $\Gamma_U$ on $A$ is continuous, that is, $\Gamma_U\times A\to A$ is continuous.
		(Note that $\Gamma_U$ is an open subgroup of $\Gamma$, as explained in the remark below).
	\end{enumerate}
\end{definition}

\begin{remark}
	Let us prove that $\Gamma_U$ is open in $\Gamma$.
	Since $X\times \Gamma \to X$ is continuous, for each $x\in U$, there exist an open subgroup $\Gamma_x\subset \Gamma$ and an open neighborhood $U_x$ of $x$ in $U$ such that $U_x \Gamma_x\subset U$.
	Since $U$ is quasi-compact, there exist finitely many points $x_1,\ldots,x_n\in U$ such that $\bigcup_{i=1}^n U_{x_i}=U$.
	Then the open subgroup $\bigcap_{i=1}^n \Gamma_{x_i}\subset \Gamma$ stabilizes $U$. 
\end{remark}

For a complete Tate affinoid pair $(A,A^+)$ and a profinite set $S$, let $C(S,A)$ (resp.\ $C(S,A^+)$) denote the ring of continuous functions on $S$ with values in $A$ (resp.\ $A^+$).
Endowed with the compact-open topology, $C(S,A)$ also becomes a complete Tate $f$-adic ring.
Moreover, $C(S,A^+)$ is a ring of integral elements in $C(S,A)$.

\begin{lemma}\label{lem:profinite set as an adic space}
	Let $(A,A^+)$ be a complete Tate affinoid pair.
	Then the topological space $\Spa(C(S,A),C(S,A^+))$ is homeomorphic to $S\times \Spa (A,A^+)$.
\end{lemma}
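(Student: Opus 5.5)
The plan is to construct a continuous map $\Spa(C(S,A),C(S,A^+))\to S\times\Spa(A,A^+)$ and check that it is a homeomorphism by reducing to finite quotients of $S$. Write $S=\varprojlim_i S_i$ with the $S_i$ finite discrete. Since continuous functions on a profinite set are locally constant on some finite quotient, we get
\[C(S,A)=\varinjlim_i C(S_i,A)=\varinjlim_i A^{S_i}\]
as an algebraic colimit. Similarly $C(S,A^+)=\varinjlim_i (A^+)^{S_i}$. The topology on $C(S,A)$ is the sup-norm topology, with ring of definition $C(S,A_0)$ for a ring of definition $A_0\subset A$ and pseudo-uniformizer $\varpi$. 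So $C(S,A)$ is the $\varpi$-adic completion of $\varinjlim_i A^{S_i}$, in the sense that its ring of definition is the $\varpi$-adic completion of $\varinjlim_i A_0^{S_i}$; the uncompleted colimit is already dense.

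The first step treats finite $S_i$. Here $\Spa(A^{S_i},(A^+)^{S_i})=\bigsqcup_{s\in S_i}\Spa(A,A^+)=S_i\times\Spa(A,A^+)$, because the idempotents split the spectrum into a disjoint union.

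The second step passes to the limit. We use two standard facts: Huber's $\Spa$ is invariant under completion, and $\Spa$ of a colimit of affinoid pairs with compatible rings of definition (with the colimit topology) is the inverse limit of the $\Spa$'s. Together these give
\[\Spa(C(S,A),C(S,A^+))\cong\varprojlim_i S_i\times\Spa(A,A^+)=S\times\Spa(A,A^+).\]
If we prefer not to cite the colimit statement, we argue directly. A continuous valuation $v$ on $C(S,A)$ restricts compatibly to each $A^{S_i}$, so it picks out a point $s_i\in S_i$ with $v(e_{s_i})=1$, where $e_{s_i}$ is the idempotent at $s_i$. This gives a compatible system, hence a point $s\in S$, together with the valuation $v|_A$. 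Conversely, a pair $(s,x)$ gives the valuation $f\mapsto |f(s)|_x$, which is continuous because evaluation at $s$ is continuous. Continuity of $v$ and density of the colimit show that $v$ is determined by its restrictions, so the two constructions are mutually inverse bijections. The map is continuous in both directions: rational opens defined by functions from some finite level $A^{S_i}$ correspond to (preimage of a point of $S_i$)$\times$(rational open of $\Spa(A,A^+)$), and these form a basis on both sides. A general rational subset is equal to one defined at a finite level, because rational subsets are stable under small perturbation of their defining functions.

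The main obstacle is this last approximation step: showing that every rational subset of $\Spa(C(S,A),\dots)$ comes from a finite level, and checking that the valuations obtained are genuinely continuous. We would handle it with the standard perturbation lemma for rational subsets, \cite[Lemma 3.10]{Huber93}, already used in the proof of Lemma \ref{lem:lifting of points}, applied to the dense subring $\varinjlim_i A^{S_i}$.
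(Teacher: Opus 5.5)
Your argument is essentially the paper's. You pass to the dense subring of locally constant functions $C^{\lc}(S,A)=\varinjlim_i A^{S_i}$, whose ring of definition consists of the locally constant $A_0$-valued functions. You then use that $\Spa$ is unchanged by completion \cite[Proposition 3.9]{Huber93}, identify $\Spa$ of the colimit with $\varprojlim_i S_i\times\Spa(A,A^+)$, and treat finite $S_i$ via idempotents. Your direct valuation-theoretic argument just unpacks these two cited facts.

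One correction is needed. Your opening claim that $C(S,A)=\varinjlim_i A^{S_i}$ and $C(S,A^+)=\varinjlim_i (A^+)^{S_i}$ hold as algebraic colimits is false for non-discrete $A$. For example, the inclusion $\Zbb_p\hookrightarrow\Qbb_p$ is continuous but not locally constant. Only the density/completion statement you make in the next sentence is correct, and that is what the argument actually uses. For the $+$-part, $C^{\lc}(S,A^+)$ is dense in $C(S,A^+)$ because $A^+$ is open in $A$. This is what makes the completion of $(C^{\lc}(S,A),C^{\lc}(S,A^+))$ equal to $(C(S,A),C(S,A^+))$.
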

\begin{proof}
	Let $A_0$ be a ring of definition of $A$, and let $\varpi\in A_0$ be a pseudo-uniformizer.
	Let $C^{\lc}(S,A)$ (resp.\ $C^{\lc}(S,A^+)$, $C^{\lc}(S,A_0)$) denote the ring of locally constant functions on $S$ with values in $A$ (resp.\ $A^+$, $A_0$).
	We endow $C^{\lc}(S,A)$ with a Tate $f$-adic ring structure so that $C^{\lc}(S,A_0)$ is a ring of definition and $\varpi$ is a pseudo-uniformizer.
	Then $(C^{\lc}(S,A), C^{\lc}(S,A^+))$ is a (non-complete) Tate affinoid pair, and its completion is isomorphic to $(C(S,A),C(S,A^+))$.
	Therefore, by \cite[Proposition 3.9]{Huber93}, $\Spa(C(S,A),C(S,A^+))$ is homeomorphic to $\Spa (C^{\lc}(S,A), C^{\lc}(S,A^+))$.
	We write $S$ as a cofiltered limit $S=\varprojlim_i S_i$ of finite sets $S_i$.
	Then, by construction, we have a homeomorphism
	\[\Spa (C^{\lc}(S,A), C^{\lc}(S,A^+)) \cong \varprojlim_i \Spa (C(S_i,A),C(S_i,A^+)).\]
	Since $S_i$ is a finite set, $\Spa (C(S_i,A),C(S_i,A^+))$ is homeomorphic to $S_i\times \Spa (A,A^+)$.
	Thus, we obtain
	\begin{align*}
		\varprojlim_i \Spa (C(S_i,A),C(S_i,A^+)) \cong \varprojlim_i(S_i\times \Spa (A,A^+))\cong S\times \Spa (A,A^+),
	\end{align*}
	which proves the claim.
\end{proof}

\begin{lemma}\label{lem:action on affinoid pair and adic space}
	Let $(A,A^+)$ be a sheafy complete Tate affinoid pair equipped with a continuous left action of $\Gamma$, where ``continuous'' means that the map $\Gamma\times A\to A$ is continuous.
	Then the induced right action of $\Gamma$ on the adic space $\Spa (A,A^+)$ is also continuous.
\end{lemma}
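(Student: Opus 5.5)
We have to verify the two conditions in the definition of a continuous action: continuity of $\Spa(A,A^+)\times\Gamma\to\Spa(A,A^+)$ on underlying spaces, and continuity of the stabilizer actions on rational (affinoid) subsets. The plan is to treat these in that order, using that the topology of $\Spa(A,A^+)$ is generated by rational subsets $U=U(\tfrac{f_1,\ldots,f_m}{g})$ and that rational subsets are stable under small perturbation of their defining data.

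First, the action is $x\mapsto x\gamma$ with $|h(x\gamma)|=|\gamma(h)(x)|$. It maps $\Spa(A,A^+)$ to itself because each $\gamma$ is a continuous ring automorphism preserving $A^+$; preservation of $A^+$ should be part of the hypothesis, or follow because $\gamma$ acts by topological automorphisms and we may assume $A^+$ is $\Gamma$-stable. For continuity at $(x_0,\gamma_0)$, fix a rational neighbourhood $U(\tfrac{f_1,\ldots,f_m}{g})$ of $x_0\gamma_0$. Its preimage consists of those $(x,\gamma)$ with $|\gamma(f_i)(x)|\le|\gamma(g)(x)|\ne0$.

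The key input is the standard perturbation lemma (Huber, Lemma 3.10 in \cite{Huber93}). If $f_1,\ldots,f_m,g$ generate an open ideal, then there is an open neighbourhood $W$ of $0$ in $A$ such that for all $f_i'\in f_i+W$ and $g'\in g+W$, the rational subset $U(\tfrac{f'}{g'})$ equals $U(\tfrac{f}{g})$. Apply this to the data $\gamma_0(f_i),\gamma_0(g)$. By continuity of $\Gamma\times A\to A$ at the finitely many points $(\gamma_0,f_i)$ and $(\gamma_0,g)$, there is an open subgroup $H$ with $\gamma(f_i)-\gamma_0(f_i)\in W$ and $\gamma(g)-\gamma_0(g)\in W$ for $\gamma\in\gamma_0H$. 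Hence for $\gamma\in\gamma_0H$ the preimage slice is exactly $U(\tfrac{\gamma_0 f}{\gamma_0 g})$, which is open in $x$. The preimage therefore contains the open set $U(\tfrac{\gamma_0f}{\gamma_0g})\times\gamma_0H$ around $(x_0,\gamma_0)$, and this gives continuity.

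Second, take an affinoid open $U=\Spa(B,B^+)$. Reducing to rational $U$ (a general affinoid open is covered by finitely many rational ones, and continuity can be checked on a cover using sheafiness), the previous paragraph shows that $\Gamma_U$ contains an open subgroup $H$ fixing the defining data up to $W$. Write $B=A\langle \tfrac{f}{g}\rangle$. An element $\gamma\in\Gamma_U$ acts on $B$ by the unique continuous extension of $\gamma$ on $A$; this uses the universal property of rational localization, since $\gamma(f_i)/\gamma(g)$ is bounded on $U$. Continuity of $\Gamma_U\times B\to B$ then reduces to joint continuity on the dense image of $A[1/g]$, together with equicontinuity.

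\textbf{Main obstacle.} The main obstacle is the equicontinuity: one needs a uniform bound on the norms of $\gamma(f_i/g)$ in $B$ as $\gamma$ ranges over a neighbourhood of the identity. I would get this by choosing $H$ so small that $\gamma(g)=g(1+\epsilon)$ with $\epsilon$ topologically nilpotent in $B$, which holds once $\gamma(g)-g$ is small, since $g$ is a unit in $B$. Then $\gamma(f_i)/\gamma(g)$ lies in $B^{\circ}$ plus a small error uniformly, and the extension of the action to completions is continuous.
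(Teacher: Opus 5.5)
Your proof is correct, but it follows a different route from the paper's.

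\textbf{The paper's route.} The paper never uses perturbation estimates. It encodes the action as the coaction $A\to C(\Gamma,A)$, $a\mapsto(\gamma\mapsto\gamma a)$. This is a continuous map of complete Tate rings because $\Gamma\times A\to A$ is continuous and $\Gamma$ is compact. It then invokes Lemma~\ref{lem:profinite set as an adic space}, $\Spa(C(\Gamma,A),C(\Gamma,A^+))\cong\Gamma\times\Spa(A,A^+)$, so continuity on underlying spaces is just continuity of the induced map of adic spectra. For a rational $U=\Spa(B,B^+)$, the composite $\Gamma_U\times U\to\Spa(A,A^+)$ lands in $U$. The universal property of rational localization \cite[Proposition 1.3]{Huber94} therefore gives a morphism $(B,B^+)\to(C(\Gamma_U,B),C(\Gamma_U,B^+))$. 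Continuity of this map, for the uniform topology on $C(\Gamma_U,B)$, is exactly the joint continuity you need. In particular it contains the equicontinuity you single out as the main obstacle, so in the paper that obstacle never arises. The passage from rational to general affinoid opens via sheafiness is the same as yours.

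\textbf{What each route buys.} Your route is more elementary: it needs only Huber's perturbation lemma \cite[Lemma 3.10]{Huber93} and explicit estimates. It avoids the identification of $\Spa C(S,A)$, which the paper proves via completions and an inverse-limit argument. The price is that the estimates must be done carefully.

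\textbf{One point to tighten.} The phrase ``$B^{\circ}$ plus a small error'' is not quite what you need, because $B^{\circ}$ need not be bounded. What your computation actually gives, for a fixed ring of definition $B_0\ni f_i/g$ and all $\gamma$ in a small open subgroup, is
\[
\gamma(f_i/g)=(f_i/g+\delta_i/g)(1+\epsilon)^{-1}\in f_i/g+\varpi B_0\subseteq B_0.
\]
Combine this with the equicontinuity of the compact group $\Gamma$ on $A$ and with the continuity of each individual $\gamma$ on $B$. You then get $\gamma(\varpi^kB_0)\subseteq\varpi^{k-c}B_0$ with $c$ independent of $\gamma$. This uniform bound is what makes your density argument go through.
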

\begin{proof}
	Since $\Gamma\times A\to A$ is continuous, the morphism $A\to C(\Gamma,A);\; a\mapsto (\gamma\mapsto \gamma a)$ is well-defined and continuous.
	Therefore, we obtain a morphism $(A,A^+)\to (C(\Gamma,A),C(\Gamma,A^+))$ of complete Tate affinoid pairs.
	Thus, by Lemma \ref{lem:profinite set as an adic space}, we get a continuous morphism
	\[\Gamma \times \Spa (A,A^+)\cong \Spa (C(\Gamma,A),C(\Gamma,A^+))\to \Spa (A,A^+),\]
	which implies that the action of $\Gamma$ on the underlying topological space of $\Spa (A,A^+)$ is continuous.

	Next, let $U=\Spa (B,B^+)$ be a rational open subspace of $\Spa (A,A^+)$, and let $\Gamma_U\subset \Gamma$ be the stabilizer of $U$.
	Then, the morphism
	\[\Gamma_U \times \Spa (B,B^+)\cong \Spa (C(\Gamma_U,B),C(\Gamma_U,B^+))\to \Spa (A,A^+)\]
	factors through $U=\Spa(B,B^+)\subset \Spa (A,A^+)$.
	Therefore, by \cite[Proposition 1.3]{Huber94}, we obtain a morphism
	\[(B,B^+)\to (C(\Gamma_U,B),C(\Gamma_U,B^+))\]
	of complete Tate affinoid pairs.
	In particular, we get a continuous morphism $B\to C(\Gamma_U,B)$, which implies that the action of $\Gamma_U$ on $B$ is continuous.

	Finally, let $V=\Spa (C,C^+)$ be an affinoid open subspace of $\Spa (A,A^+)$, and let $\Gamma_V\subset \Gamma$ be the stabilizer of $V$.
	We prove that the action of $\Gamma_V$ on $C$ is continuous.
	We write $V=\bigcup_{i=1}^n U_i$ as a union of rational open subspaces $U_i=\Spa(B_i,B_i^+)$ of $\Spa (A,A^+)$.
	Let $\Gamma'\subset \Gamma_V$ be the intersection of the stabilizers of $U_i$.
	We need to show that the action of $\Gamma'$ on $C$ is continuous.
	Since $(A,A^+)$ is sheafy, the sequence
	\[0\to C\to \prod_{i=1}^n B_i \to \prod_{i<j}B_i\hotimes_A B_j\]
	is strictly exact.
	In particular, $C\to \prod_{i=1}^n B_i$ is a closed embedding.
	Since the action of $\Gamma'$ on each $B_i$ is continuous, the action of $\Gamma'$ on $C$ is also continuous.
\end{proof}

\begin{lemma}\label{lem:continuity affinoid}
	Let $X$ be an analytic adic space equipped with a right action of $\Gamma$.
	The action of $\Gamma$ on $X$ is continuous if and only if there exists an affinoid open cover $\{U_i\}$ of $X$ such that the stabilizer $\Gamma_{U_i}\subset \Gamma$ of $U_i$ is an open subgroup and the induced action of $\Gamma_{U_i}$ on $\Ocal_X(U_i)$ is continuous.
\end{lemma}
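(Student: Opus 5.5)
The ``only if'' direction is immediate: take any affinoid open cover $\{U_i\}$ of $X$. By the Remark following the definition of continuity, each stabilizer $\Gamma_{U_i}$ is open, and condition (2) of the definition says the action of $\Gamma_{U_i}$ on $\Ocal_X(U_i)$ is continuous. The work is in the ``if'' direction. There we have to verify the two conditions of the definition for an arbitrary affinoid open $U$, knowing them only on the given cover $\{U_i\}$.

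For condition (1), continuity of $X\times\Gamma\to X$, I will argue locally. Fix $x\in X$ and $\gamma\in\Gamma$. The translate $U_i\gamma$ of an affinoid in the cover is an affinoid open with stabilizer $\gamma^{-1}\Gamma_{U_i}\gamma$, and that subgroup acts continuously on $\Ocal_X(U_i\gamma)\cong\Ocal_X(U_i)$ (transport of structure). So after translating by $\gamma$ it suffices to prove continuity on $U_i\times\Gamma_{U_i}$, since $U_i\times\gamma\Gamma_{U_i}$ is an open neighbourhood of $(x,\gamma)$ whenever $x\in U_i\gamma^{-1}$. On $U_i=\Spa(A_i,A_i^+)$ the action of $\Gamma_{U_i}$ comes from a continuous action on $A_i$. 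Here I expect to need that $\Gamma_{U_i}$ preserves $A_i^+$; this holds because the action is by automorphisms of the adic space $U_i$. Lemma \ref{lem:action on affinoid pair and adic space}, applied with $\Gamma_{U_i}$ in place of $\Gamma$ (the affinoid pair is sheafy since it comes from an adic space), then gives continuity of $U_i\times\Gamma_{U_i}\to U_i$.

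For condition (2), let $U=\Spa C$ be an arbitrary affinoid open. Openness of $\Gamma_U$ follows from (1), as in the Remark. Cover $U$ by finitely many rational opens $W_k$, each contained in some $U_{i(k)}$ and rational in it. Applying the second paragraph of the proof of Lemma \ref{lem:action on affinoid pair and adic space} inside $U_{i(k)}$ shows that an open subgroup of $\Gamma_{U_{i(k)}}$ (the stabilizer of $W_k$) acts continuously on $\Ocal(W_k)$. Let $\Gamma'$ be the intersection of these stabilizers with $\Gamma_U$; it is open. Sheafiness makes $C\to\prod_k\Ocal(W_k)$ a closed embedding, so $\Gamma'$ acts continuously on $C$, exactly as in the last paragraph of that lemma's proof. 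Finally, to pass from the open subgroup $\Gamma'$ to all of $\Gamma_U$, I will note that $\Gamma_U$ is a finite union of cosets $\gamma\Gamma'$ and that each $\gamma$ acts by a continuous automorphism of $C$, which gives joint continuity on each coset.

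The main obstacle I expect is in step (1): making sure the homeomorphism of Lemma \ref{lem:profinite set as an adic space} really yields joint continuity in $(x,\gamma)$, and keeping the bookkeeping of stabilizers of translates straight. The rest is routine gluing.
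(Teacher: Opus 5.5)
Your proposal is correct and takes essentially the paper's approach: the paper says only that the lemma follows from Lemma~\ref{lem:action on affinoid pair and adic space}, and you unpack this by applying that lemma to each $(\Ocal_X(U_i),\Ocal_X^+(U_i))$ with group $\Gamma_{U_i}$, then translating and gluing (in step (2) you could cite that lemma's conclusion for arbitrary affinoid opens of $U_{i(k)}$ instead of re-running its proof). One bookkeeping slip: if $x\gamma\in U_i$, the neighbourhood of $(x,\gamma)$ should be $U_i\gamma^{-1}\times\gamma\Gamma_{U_i}$, not $U_i\times\gamma\Gamma_{U_i}$, and on it the action is $(y,\delta)\mapsto(y\gamma,\gamma^{-1}\delta)$ followed by the action map $U_i\times\Gamma_{U_i}\to U_i$.
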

\begin{proof}
	This immediately follows from Lemma \ref{lem:action on affinoid pair and adic space}.
\end{proof}

\begin{corollary}\label{cor:continuous fiber product}
	Let $S$ be a locally noetherian analytic adic space, let $X$ be a locally noetherian analytic adic space over $S$ endowed with a continuous right action of $\Gamma$ over $S$, and let $Y$ be an adic space locally of weakly finite type over $S$.
	Then the induced action of $\Gamma$ on $X\times_S Y$ (where $\Gamma$ acts trivially on $Y$) is continuous.
\end{corollary}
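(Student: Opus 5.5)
The plan is to reduce to the affinoid situation via Lemma \ref{lem:continuity affinoid}, and then check continuity of the action on explicit completed tensor products.

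First, choose affinoid open covers. Since $X$ carries a continuous $\Gamma$-action, for each point we can take an affinoid open $U=\Spa(A,A^+)\subset X$ lying over an affinoid open $W=\Spa(R,R^+)\subset S$. Its stabilizer $\Gamma_U$ is open (by the remark after the definition) and acts continuously on $A$, compatibly with the trivial action on $R$, because the action is over $S$. We also take affinoid opens $V=\Spa(B,B^+)\subset Y$ lying over $W$, with $B$ topologically of weakly finite type over $R$. Since $X$ is locally noetherian, the fiber product $X\times_S Y$ is covered by the affinoids $U\times_W V=\Spa(C,C^+)$, where $C=A\hotimes_R B$. This fiber product exists because $Y\to S$ is locally of weakly finite type, so $C$ is a quotient of some $A\langle T_1,\ldots,T_n\rangle$. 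The stabilizer of $U\times_W V$ contains $\Gamma_U$, since $\Gamma$ acts trivially on $Y$ and $V$ is fixed. By Lemma \ref{lem:continuity affinoid}, it therefore suffices to show that $\Gamma_U$ acts continuously on $C$.

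Next, write $B$ as a quotient $R\langle T_1,\ldots,T_n\rangle/I$; weakly finite type allows some variables to be nilpotent-type power-bounded ones, but a presentation by a quotient of a restricted power series ring over $R$ in the relevant sense still works. This gives a $\Gamma_U$-equivariant surjection $A\langle T_1,\ldots,T_n\rangle\twoheadrightarrow C$, with $\Gamma$ acting on coefficients. The quotient map is open by the Banach open mapping theorem, which applies because these are noetherian Tate rings. Hence continuity on $C$ follows from continuity on $A\langle T\rangle$.

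For $A\langle T\rangle$, I would check joint continuity of $\Gamma_U\times A\langle T\rangle\to A\langle T\rangle$ at a point $(\gamma_0,f)$. Choose a ring of definition $A_0$ and pseudo-uniformizer $\varpi$; after shrinking $\Gamma_U$ to an open subgroup we may assume $A_0$ is stable under $\Gamma_U$, by continuity and compactness. Then write $f$ as a polynomial $f_N$ plus a tail in $\varpi^k A_0\langle T\rangle$. The action preserves $\varpi^kA_0\langle T\rangle$, and on the finitely many coefficients of $f_N$ the action is continuous. This gives $\gamma f-\gamma_0 f\in\varpi^kA_0\langle T\rangle$ for $\gamma$ near $\gamma_0$, and together with linearity this yields joint continuity.

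The main obstacle I expect is exactly this step: producing a $\Gamma_U$-stable ring of definition in which the action is equicontinuous. I would obtain it by taking $A_0'=\bigcap_{\gamma\in\Gamma_U}\gamma A_0$. Continuity and compactness of $\Gamma_U$ give $\varpi^cA_0\subset A_0'$ for some $c$, so $A_0'$ is open and bounded. A secondary check is that the presentation of $B$ is compatible with the weakly finite type hypothesis.
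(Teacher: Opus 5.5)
Your proof is correct and follows the same route as the paper: reduce to the affinoid pieces $U\times_W V$ via Lemma \ref{lem:continuity affinoid}, then check that the stabilizer acts continuously on $\Ocal(U\times_W V)=A\hotimes_R B$. The paper dismisses that last step as following easily from Lemma \ref{lem:action on affinoid pair and adic space}, and your surjection $A\langle T_1,\ldots,T_n\rangle\twoheadrightarrow A\hotimes_R B$ together with the $\Gamma_U$-stable ring of definition $A_0'=\bigcap_{\gamma}\gamma A_0$ makes it explicit. The one detail to add is that $\varpi$ should be taken from the image of $R$, which $\Gamma_U$ fixes because the action is over $S$; only then is $\varpi^k A_0'\langle T_1,\ldots,T_n\rangle$ genuinely $\Gamma_U$-stable.
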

\begin{proof}
	By Lemma \ref{lem:continuity affinoid}, we may assume that $X,Y,S$ are affinoid adic spaces.
	Then the claim follows easily from Lemma \ref{lem:action on affinoid pair and adic space}.
\end{proof}

\begin{lemma}\label{lem:continuous smooth surjection}
	Let $X$ and $Y$ be locally noetherian analytic adic spaces with a right action of $\Gamma$.
	Let $f\colon X\to Y$ be a $\Gamma$-equivariant smooth surjective morphism.
	Assume that the action of $\Gamma$ on $X$ is continuous.
	Then the action of $\Gamma$ on $Y$ is also continuous.
\end{lemma}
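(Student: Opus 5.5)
We need to prove: if $f\colon X\to Y$ is a $\Gamma$-equivariant smooth surjection of locally noetherian analytic adic spaces and $\Gamma$ acts continuously on $X$, then $\Gamma$ acts continuously on $Y$. The plan is to check the two conditions in the definition of a continuous action directly, descending from $X$ to $Y$ in two ways: through openness of $f$ for the topological condition, and through local sections for the algebraic one.

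\emph{Topological continuity.} We must show that $Y\times\Gamma\to Y$ is continuous. The map $f\times\id\colon X\times\Gamma\to Y\times\Gamma$ is surjective and open, because $f$ is open by \cite[Proposition 1.7.8]{Hub96} and surjective. Hence it is a quotient map. The composite $X\times\Gamma\to X\to Y$ is continuous and equals $(Y\times\Gamma\to Y)\circ(f\times\id)$ by equivariance, so the action map on $Y$ is continuous. In particular, the remark on stabilizers applies: for every affinoid open $V\subset Y$, the stabilizer $\Gamma_V$ is open.

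\emph{Continuity on functions.} By Lemma \ref{lem:continuity affinoid}, it suffices to find an affinoid open cover $\{V\}$ of $Y$ such that each $\Gamma_V$ acts continuously on $\Ocal_Y(V)$. By Theorem \ref{thm:smooth surjection etale topology}, $f$ has sections \'etale locally on $Y$. The issue is that these sections are not $\Gamma$-equivariant, so I would not use them directly. I would argue instead as follows. Take an affinoid $V=\Spa(B,B^+)\subset Y$ and an affinoid $U=\Spa(A,A^+)\subset f^{-1}(V)$. After shrinking $\Gamma$ to an open subgroup $\Gamma'$ that stabilizes both $U$ and $V$ (possible by the first step together with the same compactness argument), $\Gamma'$ acts continuously on $A$ and compatibly on $B$ via $B\to A$. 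Since $U\to V$ is smooth, its image $f(U)$ is open. Cover $V$ by finitely many such $U_i$, using quasi-compactness of $V$ and openness of $f$, and let $\Gamma''$ be the intersection of their stabilizers. Then $B\to\prod_i A_i$ is injective. Indeed, a smooth morphism of noetherian analytic spaces is flat, and a flat cover which is jointly surjective on points is faithfully flat.

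The key point, and the main obstacle, is that $B\to\prod_i A_i$ is not only injective but a \emph{topological} embedding (a closed embedding). Only then does continuity of $\Gamma''\times A_i\to A_i$ imply continuity of $\Gamma''\times B\to B$, as in the last paragraph of the proof of Lemma \ref{lem:action on affinoid pair and adic space}. My first attempt would be to exhibit the image as a closed subspace through the descent exact sequence
\[0\to B\to \prod_i A_i\to \prod_{i,j} A_i\hotimes_B A_j.\]
Exactness should follow from faithfully flat descent for the (noetherian, Tate) smooth cover $\bigsqcup U_i\to V$, via the \'etale-locally-split structure given by Theorem \ref{thm:smooth surjection etale topology} together with the sheaf property for the \'etale topology. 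Once exactness holds, Banach's open mapping theorem applies, since all rings involved are complete Tate rings with countable bases in the noetherian setting. It shows that $B$ is closed in $\prod_i A_i$ with the subspace topology, and this finishes the proof.
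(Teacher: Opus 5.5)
Your proposal is correct and follows essentially the same route as the paper. Openness of $f$ gives topological continuity, and one then reduces to affinoids; your finite family $\{U_i\}$ plays the role of the paper's single affinoid $\Spa A=\bigsqcup_i U_i$. The descent sequence $0\to B\to A\to A\hotimes_B A$, exact by \'etale descent together with Theorem~\ref{thm:smooth surjection etale topology}, makes $B\to A$ a closed embedding, so continuity passes from $A$ to $B$. Your open-mapping argument only makes explicit the strictness that the paper asserts directly, and your separate faithful-flatness argument for injectivity is not needed.
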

\begin{proof}
	Since the morphism $f\colon X\to Y$ is an open map, the action of $\Gamma$ on the underlying topological space of $Y$ is continuous.
	Therefore, by Lemma \ref{lem:continuity affinoid}, after replacing $\Gamma$ with an open subgroup (the stabilizer), we may assume that $X=\Spa (A,A^+)$ and $Y=\Spa(B,B^+)$ are affinoid adic spaces.
	By \'{e}tale descent and Theorem \ref{thm:smooth surjection etale topology}, the sequence
	\[0\to B\to A \to A\hotimes_B A\]
	is strictly exact.
	In particular, the morphism $B\to A$ is a closed embedding.
	Since the action of $\Gamma$ on $A$ is continuous, the action of $\Gamma$ on $B$ is also continuous.
\end{proof}

\begin{lemma}\label{lem:continuous immersion}
	Let $X$ and $Y$ be locally noetherian analytic adic spaces with a right action of $\Gamma$.
	Let $i\colon X\to Y$ be a $\Gamma$-equivariant locally closed immersion.
	Assume that the action of $\Gamma$ on $Y$ is continuous.
	Then the action of $\Gamma$ on $X$ is also continuous.
\end{lemma}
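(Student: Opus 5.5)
We must prove Lemma \ref{lem:continuous immersion}: if $i\colon X\to Y$ is a $\Gamma$-equivariant locally closed immersion and $\Gamma$ acts continuously on $Y$, then $\Gamma$ acts continuously on $X$. The plan is to reduce, via Lemma \ref{lem:continuity affinoid}, to affinoid pieces where $X$ is a Zariski closed subspace of a rational open of $Y$. Continuity then transfers along the surjection $B\to B/I$.

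\emph{Topological continuity.} Since $i$ is a homeomorphism onto its image, the continuity of $X\times\Gamma\to X$ follows by restricting $Y\times\Gamma\to Y$ to $i(X)\times\Gamma$, which lands in $i(X)$ by equivariance. So the action on $|X|$ is continuous.

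\emph{Reduction to affinoids.} Fix $x\in X$. Choose an open $W\subset Y$ such that $i(X)\cap W$ is Zariski closed in $W$ and contains $i(x)$. Then choose an affinoid (rational) open $V=\Spa(B,B^+)\subset W$ containing $i(x)$. The stabilizer $\Gamma_V$ is open in $\Gamma$ by the remark following the definition of continuity, and it acts continuously on $B$ by the continuity hypothesis on $Y$.

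The subset $i(X)\cap V$ is stable under $\Gamma_V$, so $U:=i^{-1}(V)$ is a $\Gamma_V$-stable affinoid open of $X$. It is the Zariski closed subspace $\Spa(B/I,(B/I)^+)$ for an ideal $I\subset B$. Equivariance of $i$ shows that $I$ is $\Gamma_V$-stable, so $\Gamma_V$ acts on $B/I$ through the quotient. Since $B$ is strongly noetherian, $I$ is closed and $B\to B/I$ is a continuous open surjection by Banach's open mapping theorem in the Tate setting. The map $\Gamma_V\times B/I\to B/I$ is the composite
\[\Gamma_V\times B \to B \to B/I,\]
factored through the quotient map $\Gamma_V\times B\to\Gamma_V\times B/I$, which is open and surjective. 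Hence the action of $\Gamma_V$ on $\Ocal_X(U)=B/I$ is continuous.

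Moreover the stabilizer $\Gamma_U$ contains $\Gamma_V$, so $\Gamma_U$ is open. Continuity on $\Ocal_X(U)$ for $\Gamma_U$ only needs to be checked on the open subgroup $\Gamma_V$ together with translates. Alternatively, we may simply use the cover by such $U$ with group $\Gamma_V$, which is how I would phrase the application of Lemma \ref{lem:continuity affinoid} (shrinking the group as is done in Lemma \ref{lem:continuous smooth surjection}). Letting $x$ vary gives an affinoid cover of $X$, and Lemma \ref{lem:continuity affinoid} yields continuity.

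The main subtlety is the mismatch between the stabilizer of $U$ and that of $V$: Lemma \ref{lem:continuity affinoid} asks for continuity of the full stabilizer $\Gamma_U$. I would handle this by noting that $\Gamma_U$ is a finite union of cosets $g\Gamma_V$, and on each coset the action map is continuous because it is a composite of the single element $g$ with the continuous $\Gamma_V$-action. Everything else is routine.
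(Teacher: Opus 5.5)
Your proposal is correct and is essentially the paper's argument spelled out. The paper's one-line proof (via Lemma \ref{lem:action on affinoid pair and adic space}, and through it Lemma \ref{lem:continuity affinoid}) rests on the same observation you make: the continuous action of an open stabilizer on $B$ descends to a continuous action on the quotient $B/I$ with its quotient topology. Your coset argument for passing from $\Gamma_V$ to the full stabilizer $\Gamma_U$ is a valid way to meet the exact hypothesis of Lemma \ref{lem:continuity affinoid}.
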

\begin{proof}
	This immediately follows from Lemma \ref{lem:action on affinoid pair and adic space}.
\end{proof}

Let $K$ be a complete non-archimedean field, and $G$ be a smooth linear algebraic group over $K$.
Before introducing the notion of equivariant $G$-torsors, let us recall the non-equivariant version.
Let $X$ be a locally noetherian analytic adic space over $K$.
We endow the category of adic spaces locally of finite type over $X$ with the \'{e}tale topology as in \cite{Hub96}.
An \'{e}tale sheaf over $X$ means a sheaf on this site.

\begin{definition}\label{def:torsors}
Let $G^{\an}$ be the analytification of $G$, which is a rigid analytic group over $K$, and let $\Rep(G)$ denote the category of finite-dimensional algebraic representations of $G$.
\begin{enumerate}
	\item A \textit{geometric $G$-torsor} on $X$ is an adic space $P$ locally of finite type over $X$ with a right action of $G^{\an}$ over $X$ such that, \'{e}tale locally on $X$, there is a $G^{\an}$-equivariant isomorphism $P\cong X\times G^{\an}$.
	\item A \textit{cohomological $G$-torsor} on $X$ is an \'{e}tale sheaf $\Pcal$ over $X$ with a right action of $G^{\an}$ over $X$ such that, \'{e}tale locally on $X$, there is a $G^{\an}$-equivariant isomorphism $\Pcal \cong X\times G^{\an}$.
	\item A \textit{Tannakian $G$-torsor} on $X$ is an exact $K$-linear $\otimes$-functor $\Rep(G)\to \Vect(X)$, where $\Vect(X)$ is the category of vector bundles on $X$.
\end{enumerate}
The following categories are naturally equivalent:
\begin{enumerate}
	\item The category of geometric $G$-torsors on $X$.
	\item The category of cohomological $G$-torsors on $X$.
	\item The category of Tannakian $G$-torsors on $X$.
\end{enumerate}
These equivalences can be proved in the same way as in \cite[Theorem 19.5.2]{SW20}.
Using these equivalences, we identify these three notions and simply refer to them as \textit{$G$-torsors} on $X$.
\end{definition}

\begin{remark}
	When $G=\GL_n$, the notion of $G$-torsors is equivalent to that of vector bundles of rank $n$.
	In fact, for a vector bundle $E$ of rank $n$ on $X$, we get a (cohomological) $G$-torsor $\underline{\Isom}_X(\Ocal_X^n,E)$.
\end{remark}

\begin{definition}
	Let $\Gcal$ be an \'{e}tale sheaf of groups over $X$.
	Let $T_1\to X$ (resp.\ $T_2\to X$) be an \'{e}tale sheaf over $X$ with a right (resp.\ left) action of $\Gcal$.
	Then we define the \'{e}tale sheaf $T_1\times^{\Gcal} T_2$ over $X$ as the \'{e}tale sheafification of the presheaf given by
	\[(\Spa A\to X)\mapsto T_1(A)\times T_2(A)/\sim,\]
	where $\sim$ is defined by $(xg,y)\sim (x,gy)$ for $x\in T_1(A)$, $y\in T_2(A)$, and $g\in \Gcal(A).$
\end{definition}

\begin{example}\label{ex:torsor functorial}
	Let $G\to H$ be a morphism of linear algebraic groups over $K$.
	Let $\Pcal$ be a cohomological $G$-torsor on $X$.
	Then $\Pcal\times^{G^{\an}} H^{\an}=\Pcal\times^{G^{\an}\times X} (H^{\an}\times X)$ is a cohomological $H$-torsor.
	In particular, we obtain a functor from the category of $G$-torsors on $X$ to that of $H$-torsors on $X$.
\end{example}

Let us introduce the notion of $\Gamma$-equivariant $G$-torsors.
Let $X$ be a locally noetherian analytic adic space over $K$ equipped with a continuous right action of $\Gamma$.

\begin{definition}
	A \textit{geometric $\Gamma$-equivariant $G$-torsor} on $X$ is a geometric $G$-torsor $P$ over $X$ with a continuous right action of $\Gamma$ over $X$ such that the actions of $G^{\an}$ and $\Gamma$ commute.
\end{definition}

Let $G\to H$ be a morphism of smooth linear algebraic groups over $K$.
For a geometric $\Gamma$-equivariant $G$-torsor $P$ over $X$, we obtain a geometric $H$-torsor $P_H=P\times^{G^{\an}} H^{\an}$ as in Example \ref{ex:torsor functorial}.
By the functoriality of this construction, we obtain a natural action of $\Gamma$ on $P_H$ over $X$.

\begin{lemma}\label{lem:P_H continuous}
	The action of $\Gamma$ on $P_H$ is continuous.
\end{lemma}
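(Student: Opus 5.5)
The plan is to realize $P_H$ as a quotient of a space on which the continuity of the $\Gamma$-action is already known, and then transfer continuity along the quotient map using Lemma \ref{lem:continuous smooth surjection}.

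First, consider the fiber product $P\times_X (H^{\an}\times X)=P\times H^{\an}$, with $\Gamma$ acting diagonally: via the given action on $P$ and trivially on $H^{\an}$. Since the action on $P$ is continuous and $H^{\an}$ is of finite type over $K$, Corollary \ref{cor:continuous fiber product} (applied with $S=X$, or simply $S=\Spa K$) shows that this $\Gamma$-action on $P\times H^{\an}$ is continuous. Note that $P$ is locally of finite type over the locally noetherian space $X$, so it is itself locally noetherian.

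Next, use the natural quotient morphism
\[q\colon P\times H^{\an}\to P\times^{G^{\an}}H^{\an}=P_H.\]
By construction of the $\Gamma$-action on $P_H$ through functoriality, $q$ is $\Gamma$-equivariant. We claim that $q$ is a smooth surjection. This can be checked étale locally on $X$, where $P\cong X\times G^{\an}$. In that chart $P_H\cong X\times H^{\an}$, and $q$ becomes
\[X\times G^{\an}\times H^{\an}\to X\times H^{\an},\quad (x,g,h)\mapsto (x,gh).\]
Up to the automorphism $(x,g,h)\mapsto (x,g,gh)$, this is the projection along $G^{\an}$. It is therefore smooth, because $G$ is smooth, and it is surjective. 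Both properties are étale local on the target, so they descend.

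Then $q$ is a $\Gamma$-equivariant smooth surjection between locally noetherian analytic adic spaces with continuous source action, and Lemma \ref{lem:continuous smooth surjection} gives the continuity of the $\Gamma$-action on $P_H$.

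The main obstacle I expect is the equivariance-plus-geometry bookkeeping. Specifically, one must verify that $P_H$ is representable by a locally noetherian adic space, which holds because it is a geometric $H$-torsor, étale locally $X\times H^{\an}$. One must also verify that the functorially defined $\Gamma$-action on $P_H$ is exactly the one induced through $q$. Both follow by checking on étale charts, where $\Gamma$ commutes with the $G^{\an}$-action on $P$ by assumption.
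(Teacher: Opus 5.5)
Your proposal is correct and is essentially the paper's proof. The paper also gets continuity on $P\times_{\Spa K} H^{\an}$ from Corollary \ref{cor:continuous fiber product}, observes that $P\times H^{\an}\to P_H$ is a geometric $G$-torsor and hence a $\Gamma$-equivariant smooth surjection, and concludes by Lemma \ref{lem:continuous smooth surjection}. One small point: only your choice $S=\Spa K$ fits the corollary. With $S=X$, the factor $H^{\an}\times X$ does not carry a trivial $\Gamma$-action.
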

\begin{proof}
	By construction, we have a $\Gamma$-equivariant morphism
	\[P\times_{\Spa K} H^{\an}\to P_H,\]
	where the action of $\Gamma$ on $H^{\an}$ is trivial.
	By Corollary \ref{cor:continuous fiber product}, the action of $\Gamma$ on $P\times_{\Spa K} H^{\an}$ is continuous.
	Since the morphism $P\times_{\Spa K} H^{\an}\to P_H$ is a geometric $G$-torsor (where the action of $G^{\an}$ is given by $(x,y)\cdot g=(xg,g^{-1}y)$), it is a smooth surjection.
	Therefore, by Lemma \ref{lem:continuous smooth surjection}, the action of $\Gamma$ on $P_H$ is also continuous.
\end{proof}

From the above lemma, we obtain a functor from the category of geometric $\Gamma$-equivariant $G$-torsors on $X$ to that of such $H$-torsors.

Next, we define the notion of $\Gamma$-equivariant $G$-torsors using the Tannakian formalism and compare it with the geometric one.

\begin{definition}
	Let $\Ecal$ be a $\Gamma$-equivariant vector bundle on $X$.
	We say that the action of $\Gamma$ on $\Ecal$ is continuous if for any affinoid open subspace $U=\Spa (A,A^+)$ of $X$, the action of the stabilizer $\Gamma_U$ of $U$ on the finite projective $A$-module $H^0(U,\Ecal)$ is continuous.
	Let $\Vect^{\Gamma}(X)$ denote the category of continuous $\Gamma$-equivariant vector bundles on $X$.
\end{definition}

\begin{remark}
    Let $\Ecal$ be a $\Gamma$-equivariant vector bundle on $X$.
    Similarly to Lemma \ref{lem:action on affinoid pair and adic space}, we can show that the action of $\Gamma$ on $\Ecal$ is continuous if and only if there exists an affinoid open covering $\{U_i\}$ of $X$ such that the action of $\Gamma_{U_i}$ on $H^0(U_i,\Ecal)$ is continuous.
\end{remark}

\begin{lemma}\label{lem:geometric vector bundle vs vector bundle}
	Let $n$ be a positive integer.
	Then the following categories are naturally equivalent:
	\begin{enumerate}
		\item The category of geometric vector bundles on $X$ of rank $n$ with a continuous $\Gamma$-action over $X$.
		\item The category of continuous $\Gamma$-equivariant vector bundles on $X$ of rank $n$.
	\end{enumerate}
\end{lemma}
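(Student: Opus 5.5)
The plan is to build both functors directly and check that they are mutually quasi-inverse. Throughout, $X$ is a locally noetherian analytic adic space carrying a continuous right $\Gamma$-action. The non-equivariant dictionary is standard. A vector bundle $\Ecal$ of rank $n$ gives the geometric vector bundle $\Vbb(\Ecal)=\underline{\Spa}_X(\mathrm{Sym}\,\Ecal^{\vee})$; over an affinoid $U=\Spa(A,A^+)$ on which $\Ecal$ is free, this is just $U\times\Abb^{n,\an}$. In the other direction, a geometric vector bundle is sent to its sheaf of sections. This correspondence is an equivalence, and all constructions are compatible with étale (indeed analytic) base change. So the only task is to match $\Gamma$-actions and their continuity.

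To go from (2) to (1), we let $\gamma\in\Gamma$ act on $\Vbb(\Ecal)$ through the given isomorphisms $\gamma^*\Ecal\cong\Ecal$, which are compatible with $\Vbb$ by functoriality. It remains to show this action is continuous. By Lemma \ref{lem:continuity affinoid}, it suffices to treat an affinoid $U=\Spa(A,A^+)$ with open stabilizer $\Gamma_U$ on which $M=H^0(U,\Ecal)$ is free. Then $\Vbb(\Ecal)|_U$ is covered by the $\Gamma_U$-stable affinoids
\[
U\times\Bbb^n(p^{-k})=\Spa\bigl(A\langle \pi^{k}M^{\vee}\rangle\bigr).
\]
To see that these are stable, and that the action on their rings is continuous, we pick a $\Gamma_U$-stable bounded lattice in $M^{\vee}$. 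This is possible because $\Gamma_U$ is compact and acts continuously on the finite projective module $M$, so the orbit of a bounded lattice is bounded. Shrinking $\Gamma_U$ to an open subgroup if needed, the lattice is preserved, hence the Tate algebra is preserved, and joint continuity on $A\langle\pi^kM^{\vee}\rangle$ follows from continuity on $A$ and on $M^{\vee}$ by writing elements as convergent sums. These balls exhaust the fibre, so Lemma \ref{lem:continuity affinoid} gives continuity of the action on $\Vbb(\Ecal)$.

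To go from (1) to (2), take a geometric vector bundle $P\to X$ with a continuous $\Gamma$-action commuting with the linear structure. Its sheaf of sections $\Ecal$ inherits a $\Gamma$-equivariant structure. For continuity, we note that on $U$ as above, $H^0(U,\Ecal^{\vee})$ embeds $\Gamma_U$-equivariantly as the degree-one part of $\Ocal(P|_U\cap(U\times\Bbb^n))$ after shrinking $\Gamma_U$ so that this affinoid is stable. It is a closed direct summand, because the homogeneous decomposition of the Tate algebra is topological and $\Gamma$-stable, the action being linear. Hence the action on $H^0(U,\Ecal^{\vee})$ is continuous, and dualizing a finite projective module preserves continuity. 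Since the constructions are inverse non-equivariantly and are compatible with the $\Gamma$-structures, they give the equivalence.

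The main obstacle is the continuity in the direction from (2) to (1): producing $\Gamma_U$-stable bounded lattices, and checking that the stabilizer of each ball is open uniformly enough to cover the whole fibre. The plan is to rely on compactness of $\Gamma_U$ together with the equicontinuity that follows from joint continuity of $\Gamma_U\times M\to M$.
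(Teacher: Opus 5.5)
Your proposal is correct and follows essentially the same route as the paper: reduce to an affinoid $U$ on which the bundle is free, pass to the open stabilizer of the standard lattice, cover by the scaled unit-ball affinoids, and get continuity on the Tate algebra from continuity of the matrix coefficients. The paper packages that last step as a continuous map $A\langle T_1,\ldots,T_n\rangle\to C(\Gamma,A\langle T_1,\ldots,T_n\rangle)$, obtains the converse ``by reversing the argument'' (which you make explicit via the degree-one part), and, like you, needs no uniformity of stabilizers, since Lemma \ref{lem:continuity affinoid} only asks that each affinoid of the cover have its own open stabilizer.
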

\begin{proof}
	First, we have a natural categorical equivalence between
	\begin{enumerate}
		\item[(i)] the category of geometric vector bundles on $X$ of rank $n$ with a (not necessarily continuous) $\Gamma$-action over $X$, and
		\item[(ii)] the category of (not necessarily continuous) $\Gamma$-equivariant vector bundles on $X$ of rank $n$.
	\end{enumerate}
	Therefore, it suffices to show that the above equivalence preserves the continuity.

	Let $\Ecal$ be a (not necessarily continuous) $\Gamma$-equivariant vector bundle on $X$ of rank $n$, and let $\Vbb(\Ecal)$ be the geometric vector bundle on $X$ of rank $n$ with a (not necessarily continuous) $\Gamma$-action over $X$ corresponding to $\Ecal$.
	We prove that the action of $\Gamma$ on $\Ecal$ is continuous if and only if the action of $\Gamma$ on $\Vbb(\Ecal)$ is continuous.
	To prove this, we may assume that $X=\Spa (A,A^+)$ is an affinoid adic space and $\Ecal\cong A^n$.
	First, we assume that the action of $\Gamma$ on $A^n$ is continuous.
	We may assume that $(A^+)^n$ is stable under the action of $\Gamma$ by replacing $\Gamma$ with the stabilizer of $(A^+)^n$.
	Note that $(A^+)^n$ defines the affinoid open subspace $\Vbb(\Ecal^+)$ of $\Vbb(\Ecal)$.
	By the scaling argument, it suffices to show that the action of $\Gamma$ on $\Vbb(\Ecal^+)$ is continuous.
	Let $e_1,\ldots,e_n$ be the standard basis of $A^n$.
	For $i=1,\ldots,n$ and $\gamma\in \Gamma$, we set $\gamma e_i= a_{1i}(\gamma)e_1+\cdots+a_{ni}(\gamma)e_n.$
	Then for $i=1,\ldots,n$, the morphism
	\[\mu_i\colon \Gamma\to A^+\langle T_1,\ldots,T_n\rangle;\; \gamma \mapsto a_{1i}(\gamma)T_1+\cdots+a_{ni}(\gamma) T_n\]
	is continuous.
	Therefore, we get the element $\mu_i\in C(\Gamma,A^+\langle T_1,\ldots,T_n\rangle).$
	We regard $C(\Gamma,A\langle T_1,\ldots,T_n\rangle)$ as an $A$-algebra via
	\[A\to C(\Gamma,A\langle T_1,\ldots,T_n\rangle) ;\; a\mapsto (\gamma\mapsto \gamma a).\]
	Then we get a continuous morphism 
	\[A\langle T_1,\ldots,T_n\rangle\to C(\Gamma,A\langle T_1,\ldots,T_n\rangle) ;\;T_i\mapsto \mu_i,\]
	which proves that the action of $\Gamma$ on $\Vbb(\Ecal^+)$ is continuous.

	Next, we assume that the action of $\Gamma$ on $\Vbb(\Ecal)$ is continuous.
	In this case, by reversing the above argument, we can prove that the action of $\Gamma$ on $\Ecal$ is continuous.
\end{proof}

\begin{lemma}\label{lem:vector bundle vs GL_n-torsor}
	Let $n$ be a positive integer.
	Then the following categories are naturally equivalent:
	\begin{enumerate}
		\item The category of geometric $\Gamma$-equivariant $\GL_n$-torsors on $X$.
		\item The category of continuous $\Gamma$-equivariant vector bundles on $X$ of rank $n$.
	\end{enumerate}
\end{lemma}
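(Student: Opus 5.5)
The plan is to pass through Lemma \ref{lem:geometric vector bundle vs vector bundle}, which already identifies continuous $\Gamma$-equivariant vector bundles of rank $n$ with geometric rank-$n$ vector bundles carrying a continuous $\Gamma$-action over $X$. It then suffices to compare the latter with geometric $\Gamma$-equivariant $\GL_n$-torsors, using the standard non-equivariant equivalence of Definition \ref{def:torsors} and the following remark. Concretely, the functor from (2) to (1) sends $\Ecal$ to $P_{\Ecal}=\underline{\Isom}_X(\Ocal_X^n,\Ecal)$. This is representable: it is the open subspace of the geometric vector bundle $\Vbb(\Ecal^{\oplus n})=\Vbb(\intHom(\Ocal_X^n,\Ecal))$ where the determinant does not vanish. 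The $\Gamma$-action on $\Ecal$ induces an action on $P_{\Ecal}$ over $X$, by postcomposition, and it commutes with the right $\GL_n^{\an}$-action, which is precomposition. The inverse functor sends $P$ to $P\times^{\GL_n^{\an}}\Vbb(\Ocal^n)$, a geometric vector bundle, with $\Gamma$ acting through the first factor.

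Forgetting continuity, these functors give an equivalence between $\Gamma$-equivariant geometric $\GL_n$-torsors and $\Gamma$-equivariant vector bundles. This holds because the non-equivariant equivalence is functorial, so it carries the $\Gamma$-actions (given as compatible isomorphisms $\gamma^*(-)\cong(-)$, or as actions over $X$ covering the action on $X$) back and forth. What remains is to check that continuity is preserved in both directions.

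For (2)$\Rightarrow$(1), assume $\Ecal$ is continuous. By Lemma \ref{lem:geometric vector bundle vs vector bundle}, the action on $\Vbb(\Ecal^{\oplus n})$ is continuous, and $P_{\Ecal}\subset\Vbb(\Ecal^{\oplus n})$ is a $\Gamma$-stable open subspace, hence a locally closed immersion. Lemma \ref{lem:continuous immersion} then gives continuity on $P_{\Ecal}$.

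For (1)$\Rightarrow$(2), assume the action on $P$ is continuous. The action on $P\times_{\Spa K}\Vbb(\Ocal^n)$, with trivial action on the second factor, is continuous by Corollary \ref{cor:continuous fiber product}. The quotient map $P\times\Vbb(\Ocal^n)\to P\times^{\GL_n^{\an}}\Vbb(\Ocal^n)$ is a $\GL_n$-torsor, hence a $\Gamma$-equivariant smooth surjection, so Lemma \ref{lem:continuous smooth surjection} gives continuity on the geometric vector bundle. Lemma \ref{lem:geometric vector bundle vs vector bundle} then transfers continuity to $\Ecal$. This is the same pattern as the proof of Lemma \ref{lem:P_H continuous}.

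The main obstacle is mild. One must check that the $\Gamma$-actions defined on the associated bundle and on $\underline{\Isom}$ really agree with those coming from functoriality of the non-equivariant equivalence, so that the two composites are naturally isomorphic to the identity compatibly with $\Gamma$. I would verify this étale locally on $X$, where $P\cong X\times\GL_n^{\an}$ and everything reduces to matrix formulas. One also needs that all spaces involved are locally noetherian, so that the cited lemmas apply; they are locally of finite type over the locally noetherian $X$.
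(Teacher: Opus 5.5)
Your proposal is correct and follows essentially the same route as the paper. The paper likewise first identifies the non-continuous versions. It then gets continuity on $P_{\Ecal}$ from its $\Gamma$-equivariant open embedding into the geometric vector bundle $P_{\Ecal}\times^{\GL_n^{\an}}M_n^{\an}$ attached to $\Ecal^{\oplus n}$, and gets continuity on $\Ecal_P$ by running the argument of Lemma~\ref{lem:P_H continuous} on $P\times^{\GL_n^{\an}}\Vbb(K^n)^{\an}$ and then applying Lemma~\ref{lem:geometric vector bundle vs vector bundle}.
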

\begin{proof}
	First, we have a natural categorical equivalence between
	\begin{enumerate}
		\item[{(i)}] the category of (not necessarily continuous) geometric $\Gamma$-equivariant $\GL_n$-torsors on $X$, and
		\item[{(ii)}] the category of (not necessarily continuous) $\Gamma$-equivariant vector bundles on $X$ of rank $n$.
	\end{enumerate}
	Therefore, it suffices to show that the above equivalence preserves the continuity.
	To prove this, we may assume that $X=\Spa (A,A^+)$ is an affinoid adic space.
	
	Let $P$ be a geometric $\Gamma$-equivariant $\GL_n$-torsor on $X$, and let $\Ecal_P$ be the (not necessarily continuous) $\Gamma$-equivariant vector bundle corresponding to $P$.
	We prove that the action of $\Gamma$ on $\Ecal_P$ is continuous.
	As in Lemma \ref{lem:P_H continuous}, we can show that the action of $\Gamma$ on the geometric vector bundle $\Vbb(\Ecal_P)=P\times^{\GL_n^{\an}} \Vbb(K^n)^{\an}$ is continuous, where $K^n$ is the standard representation of $\GL_n$.
	Therefore, by Lemma \ref{lem:geometric vector bundle vs vector bundle}, the action of $\Gamma$ on $\Ecal_P$ is also continuous.

	Next, let $\Ecal$ be a continuous $\Gamma$-equivariant vector bundle on $X$ of rank $n$, and let $P_{\Ecal}$ be the (not necessarily continuous) $\Gamma$-equivariant geometric $\GL_n$-torsor corresponding to $\Ecal$.
	We prove that the action of $\Gamma$ on $P_{\Ecal}$ is continuous.
	We have a $\Gamma$-equivariant open immersion 
	\[P_{\Ecal}\to P_{\Ecal} \times^{\GL_n^{\an}} M_n^{\an},\]
	where $\GL_n^{\an}$ acts on $M_n^{\an}$ by left multiplication.
	Since $P_{\Ecal} \times^{\GL_n^{\an}} M_n^{\an}$ is a geometric vector bundle associated with $\Ecal^{\oplus n}$, the action of $\Gamma$ on it is continuous by Lemma \ref{lem:geometric vector bundle vs vector bundle}.
	Therefore, the action of $\Gamma$ on $P_{\Ecal}$ is also continuous.
\end{proof}

\begin{definition}\label{defn:tannakian torsor}
	Let $G$ be a smooth linear algebraic group over $K$.
	A \textit{Tannakian $\Gamma$-equivariant $G$-torsor} is an exact $K$-linear $\otimes$-functor $\Rep(G)\to \Vect^{\Gamma}(X)$.
\end{definition}

\begin{theorem}\label{thm:geometric vs Tannakian}
	Let $G$ be a smooth linear algebraic group over $K$.
	Then the following categories are naturally equivalent:
	\begin{enumerate}
		\item The category of geometric $\Gamma$-equivariant $G$-torsors on $X$.
		\item The category of Tannakian $\Gamma$-equivariant $G$-torsors on $X$.
	\end{enumerate}
\end{theorem}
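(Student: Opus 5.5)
The plan is to model the proof on the classical Tannakian reconstruction for torsors (as in \cite[Theorem 19.5.2]{SW20}, which gives the non-equivariant equivalence of Definition \ref{def:torsors}), and then check that the $\Gamma$-actions match up, with continuity being the only new input. I will construct functors in both directions and show that they are mutually quasi-inverse by reducing to the non-equivariant statement.

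\emph{From geometric to Tannakian.} Let $P$ be a geometric $\Gamma$-equivariant $G$-torsor. For $\rho\colon G\to \GL(W)$ in $\Rep(G)$, Example \ref{ex:torsor functorial} gives the $\GL(W)$-torsor $P\times^{G^{\an}}\GL(W)^{\an}$. By Lemma \ref{lem:P_H continuous} its induced $\Gamma$-action is continuous. Lemma \ref{lem:vector bundle vs GL_n-torsor} then turns it into an object of $\Vect^{\Gamma}(X)$; concretely, this is the vector bundle associated to $P\times^{G^{\an}}W^{\an}$. Forgetting $\Gamma$, this is the usual Tannakian functor, so it is exact and $\otimes$-compatible. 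The $\Gamma$-structure is functorial in $\rho$ and compatible with tensor products, because everything is built from the same action on $P$.

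\emph{From Tannakian to geometric.} Let $\omega\colon \Rep(G)\to\Vect^{\Gamma}(X)$ be a Tannakian $\Gamma$-equivariant $G$-torsor. Composing with the forgetful functor and applying the non-equivariant equivalence produces a geometric $G$-torsor $P_\omega$. To make the construction concrete, choose a faithful representation $G\hookrightarrow\GL(W)$ and, as in Construction \ref{construction:Chevalley}, a representation $E$ together with a line $L$ whose stabiliser is $G$. Then $P_\omega$ is the locally closed subspace of the $\GL(W)$-torsor $\underline{\Isom}(W\otimes\Ocal,\omega(W))$ consisting of those isomorphisms that carry $L\otimes\Ocal$ onto the line subbundle $\omega(L)\subset\omega(E)$. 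Equivalently, $P_\omega$ is the sheaf of $\otimes$-isomorphisms from the fibre functor to $\omega$.

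Since each $\gamma\in\Gamma$ acts on every $\omega(V)$ compatibly with tensor products and with morphisms in $\Rep(G)$, it acts on $P_\omega$ by conjugating isomorphisms. This action commutes with $G^{\an}$ and lies over the action on $X$. For continuity, note first that the $\Gamma$-action on the $\GL(W)$-torsor $\underline{\Isom}(W\otimes\Ocal,\omega(W))$ is continuous by Lemma \ref{lem:vector bundle vs GL_n-torsor}. Next, $P_\omega$ is a $\Gamma$-stable locally closed subspace of it, since $\omega(L)$ is $\Gamma$-stable. Lemma \ref{lem:continuous immersion} then gives continuity on $P_\omega$.

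\emph{Quasi-inverse.} The natural isomorphisms provided by the non-equivariant equivalence are visibly $\Gamma$-equivariant, since they are built functorially from the same data. Full faithfulness therefore follows: a morphism on either side is a non-equivariant morphism commuting with the $\Gamma$-actions, and the non-equivariant equivalence matches these up.

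The main obstacle is the Tannakian-to-geometric step, and in particular making precise that the torsor produced by \cite{SW20} inherits a well-defined $\Gamma$-action, which also requires that this action be independent of the choices of $W$ and $E$. My first approach is to use the intrinsic description of $P_\omega$ as $\underline{\Isom}^{\otimes}(\omega_0,\omega)$, where $\omega_0$ is the trivial fibre functor, so that the action is canonical. I would use the Chevalley embedding only to prove representability and continuity, checking that the two descriptions agree after \'etale localisation on $X$.
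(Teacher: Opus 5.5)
Your proposal is correct and is essentially the paper's proof: continuity is carried through associated bundles via Lemmas \ref{lem:P_H continuous}, \ref{lem:geometric vector bundle vs vector bundle} and \ref{lem:vector bundle vs GL_n-torsor}, and in the Tannakian-to-geometric direction the paper likewise takes the $\Gamma$-action on $P_T$ from the non-equivariant equivalence and concludes with Lemma \ref{lem:continuous immersion}. The only superfluous ingredient is the Chevalley pair $(E,L)$, since the closed embedding $G\hookrightarrow\GL(W)$ alone already gives the $\Gamma$-equivariant closed immersion $P_\omega\hookrightarrow P_\omega\times^{G^{\an}}\GL(W)^{\an}\cong\underline{\Isom}(W\otimes\Ocal,\omega(W))$, which is exactly what the paper uses.
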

\begin{proof}
	Let $P$ be a (continuous) geometric $\Gamma$-equivariant $G$-torsor on $X$.
	Then, for an algebraic representation $E$ of $G$, as in Lemma \ref{lem:P_H continuous}, we can show that $P\times^{G^{\an}} \Vbb(E)^{\an}$ is a geometric vector bundle with a continuous $\Gamma$-action over $X$.
	Therefore, we obtain a continuous $\Gamma$-equivariant vector bundle on $X$ via Lemma \ref{lem:geometric vector bundle vs vector bundle}.
	This construction defines an exact $\otimes$-functor $T_P\colon \Rep(G)\to \Vect^{\Gamma}(X)$.

	On the other hand, let $T \colon \Rep(G)\to \Vect^{\Gamma}(X)$ be an exact $\otimes$-functor.
	Then, we obtain a (not necessarily continuous) geometric $\Gamma$-equivariant $G$-torsor $P_T$ on $X$ (see Definition \ref{def:torsors}).
	We need to show that the action of $\Gamma$ on $P_T$ is continuous.
	We choose a closed embedding $G\hookrightarrow \GL_n$ of algebraic groups.
	By Lemma \ref{lem:continuous immersion}, it suffices to show that the action of $\Gamma$ on $P_T\times^{G^{\an}}\GL_n^{\an}$ is continuous.
	This follows from Lemma \ref{lem:vector bundle vs GL_n-torsor}.
\end{proof}
 
%%%%%%%%%%%%%%%%%%%%%%%%%%%%%%%%%%%%%%%%%%%%%%%%%%%%%%%%%%%%%%%%%%%%%%%%%%%%%%%%
\section{Rigid analytic Artin stacks and quasi-analytic spaces}\label{section:rigid analytic Artin stack and quasi-analytic spaces}
In algebraic geometry, it is well-known that if an \'{e}tale sheaf is an Artin stack, then it is an algebraic space.
However, the naive rigid analytic analogue of the above statement is false.
In other words, there is an \'{e}tale sheaf on $\Aff_K^{\op}$ which is a rigid analytic Artin stack but not a quasi-analytic space.
\begin{example}\label{ex:rigid analytic Artin stack and quasi-analytic space}
	Let $C$ be an algebraically closed complete non-archimedean field.
	Let $\varpi\in C$ be a pseudo-uniformizer, and set $\Bbb^1=\Spa C\langle T\rangle$ and $\Bbb^1(\varpi)=\Spa C\langle T/\varpi\rangle$.
	Then the quotient \'{e}tale sheaf $\Bbb^1/\Bbb^1(\varpi)$ is a rigid analytic Artin stack.
	In fact, $\Bbb^1\to \Bbb^1/\Bbb^1(\varpi)$ is a smooth surjection, and the diagonal morphism 
	\[\Bbb^1/\Bbb^1(\varpi) \to \Bbb^1/\Bbb^1(\varpi)\times \Bbb^1/\Bbb^1(\varpi)\] 
	is an open immersion since its pullback along $\Bbb^1\times\Bbb^1\to \Bbb^1/\Bbb^1(\varpi)\times \Bbb^1/\Bbb^1(\varpi)$ is isomorphic to the rational open immersion $(\Bbb^1\times\Bbb^1)(\frac{T_1-T_2}{\varpi})\to \Bbb^1\times\Bbb^1$.
	We prove that $\Bbb^1/\Bbb^1(\varpi)$ is \textbf{not} a quasi-analytic space.
	Assume that $\Bbb^1/\Bbb^1(\varpi)$ is a quasi-analytic space.
	Then we can take a rigid space $X$ and an \'{e}tale surjection $X\twoheadrightarrow \Bbb^1/\Bbb^1(\varpi)$.
	Since $\Bbb^1/\Bbb^1(\varpi)$ is quasi-compact, we may assume that $X$ is quasi-compact.
	Let us consider the following Cartesian diagram
	\[
	\xymatrix{
		X \ar[r]& \Bbb^1/\Bbb^1(\varpi)\\
		X'\ar[r]\ar[u] & \Bbb^1.\ar[u]
	}
	\]
	Since the projection $\Bbb^1\to \Bbb^1/\Bbb^1(\varpi)$ is smooth of relative dimension $1$, the base change $X'\to X$ is as well.
	Since the morphism $X'\to \Bbb^1$ is \'{e}tale, the dimension of $X'$ is $1$.
	Therefore, the dimension of $X$ is $0$.
	Moreover, by \cite[Proposition 3.14]{OhShimizu25}, $X$ is smooth over $\Spa C$.
	Since $X$ is quasi-compact and $C$ is algebraically closed, we find that $X=\bigsqcup_{i=1}^n \Spa C$.
	The fiber product $R=X\times_{\Bbb^1/\Bbb^1(\varpi)} X$ defines an equivalence relation on $X=\bigsqcup_{i=1}^n \Spa C$.
	Since $R\subset X\times X=\bigsqcup_{1\leq i,j\leq n} \Spa C$, $R$ is also a disjoint union of finitely many copies of $\Spa C$.
	Therefore, the quotient $X/R\cong \Bbb^1/\Bbb^1(\varpi)$ is isomorphic to $\bigsqcup_{i=1}^m \Spa C$ for some $m$.
	Since $\Bbb^1\to \Bbb^1/\Bbb^1(\varpi)\cong \bigsqcup_{i=1}^m \Spa C$ is surjective and $\Bbb^1$ is connected, we find that $m=1$.
	Consequently, $\Bbb^1/\Bbb^1(\varpi)$ is isomorphic to $\Spa C$, which is obviously a contradiction.
\end{example}

\begin{remark}
	In \cite[Proposition 3.3.17]{War17}, it is claimed that every \'{e}tale sheaf on $\Aff_K^{\op}$ which is a rigid analytic Artin stack is a quasi-analytic space.
	However, there is a gap in the last part of the proof.
	The proof is based on the argument in \cite[Th\'{e}or\`{e}me 8.1]{Laumon-Bailly00}, and its final step relies on the fact that $\Abb^1_L\to \Abb^1_K$ is an open map of schemes for an extension $L/K$ of separably closed fields.
	However, the rigid analytic analogue of this fact is false.
	In other words, for a (non-trivial) extension $L/K$ of algebraically closed complete non-archimedean fields, the morphism $\Bbb^1_L\to \Bbb^1_K$ is not open.
	Consequently, the proof of \cite[Proposition 3.3.17]{War17} is incomplete.
\end{remark}

A key point of this counterexample is that the diagonal is an open immersion, and the author expects that if the diagonal is a closed immersion, such pathological phenomena do not occur.
Combining this observation with \cite{ConradTemkin09} (and \cite[Theorem 3.1.5]{War17}), the author proposes the following conjecture.
\begin{conjecture}\label{conjecture:separated rigid analytic Artin stack is representable}
Let $K$ be a complete non-archimedean field, and let $\Fcal$ be an \'{e}tale sheaf on $\Aff_K^{\op}$.
If the \'{e}tale sheaf $\Fcal$ is a separated rigid analytic Artin stack, then $\Fcal$ is representable by a separated rigid space.
\end{conjecture}

%%%%%%%%%%%%%%%%%%%%%%%%%%%%%%%%%%%%%%%%%%%%%%%%%%%%%%%%%%%%%%%%%%%%%%%%%%%%%%%%%%%%%%%%%%%%%%%%%%%%%


\begin{thebibliography}{BKKN67}

\bibitem[Avr77]{Avramov77}
Luchezar~L. Avramov, \emph{Homology of local flat extensions and complete
  intersection defects}, Math. Ann. \textbf{228} (1977), no.~1, 27--37.

\bibitem[BGR84]{BGR}
S.~Bosch, U.~G\"untzer, and R.~Remmert, \emph{Non-{A}rchimedean analysis},
  Grundlehren der mathematischen Wissenschaften [Fundamental Principles of
  Mathematical Sciences], vol. 261, Springer-Verlag, Berlin, 1984, A systematic
  approach to rigid analytic geometry.

\bibitem[BH22]{BhattHansen22}
Bhargav Bhatt and David Hansen, \emph{The six functors for
  {Z}ariski-constructible sheaves in rigid geometry}, Compos. Math.
  \textbf{158} (2022), no.~2, 437--482.

\bibitem[BIP23]{BIP23}
Gebhard B\"ockle, Ashwin Iyengar, and Vytautas Pa{\v{s}}k{\={u}}nas, \emph{On
  local {G}alois deformation rings}, Forum Math. Pi \textbf{11} (2023), Paper
  No. e30, 54.

\bibitem[BKKN67]{BKKN67}
R.~Berger, R.~Kiehl, E.~Kunz, and Hans-Joachim Nastold,
  \emph{Differentialrechnung in der analytischen {G}eometrie}, Lecture Notes in
  Mathematics, vol. No. 38, Springer-Verlag, Berlin-New York, 1967.

\bibitem[BLR95]{FRG3}
Siegfried Bosch, Werner L{\"u}tkebohmert, and Michel Raynaud, \emph{Formal and
  rigid geometry. {III}. {T}he relative maximum principle}, Math. Ann.
  \textbf{302} (1995), no.~1, 1--29.

\bibitem[CG]{Conrad-Gabber}
Brian Conrad and Ofer Gabber, \emph{Spreading-out for rigid-analytic families},
  in preparation.

\bibitem[Con99]{Conrad99}
Brian Conrad, \emph{Irreducible components of rigid spaces}, Ann. Inst. Fourier
  (Grenoble) \textbf{49} (1999), no.~2, 473--541.

\bibitem[CT09]{ConradTemkin09}
Brian Conrad and Michael Temkin, \emph{Non-{A}rchimedean analytification of
  algebraic spaces}, J. Algebraic Geom. \textbf{18} (2009), no.~4, 731--788.

\bibitem[EG23]{EG23}
Matthew Emerton and Toby Gee, \emph{Moduli stacks of \'etale ({$\varphi,
  \Gamma$})-modules and the existence of crystalline lifts}, Annals of
  Mathematics Studies, vol. 215, Princeton University Press, Princeton, NJ,
  [2023] \copyright 2023.

\bibitem[EGH25]{EGH23}
Matthew Emerton, Toby Gee, and Eugen Hellmann, \emph{An introduction to the
  categorical p-adic {L}anglands program}, The {L}anglands program.
  {II}---geometrization of the {L}anglands correspondence, Proc. Sympos. Pure
  Math., vol. 112.2, Amer. Math. Soc., Providence, RI, [2025] \copyright 2025,
  pp.~167--419.

\bibitem[Eis95]{Eisenbud95}
David Eisenbud, \emph{Commutative algebra}, Graduate Texts in Mathematics, vol.
  150, Springer-Verlag, New York, 1995, With a view toward algebraic geometry.

\bibitem[FF18]{FF18}
Laurent Fargues and Jean-Marc Fontaine, \emph{Courbes et fibr\'{e}s vectoriels
  en th\'{e}orie de {H}odge {$p$}-adique}, Ast\'{e}risque (2018), no.~406,
  xiii+382, With a preface by Pierre Colmez.

\bibitem[FS24]{FS24}
Laurent Fargues and Peter Scholze, \emph{Geometrization of the local
  {L}anglands correspondence}, arXiv preprint arXiv:
  \href{https://arxiv.org/abs/2102.13459}{2102.13459} (2024).

\bibitem[GR03]{GR03}
Ofer Gabber and Lorenzo Ramero, \emph{Almost ring theory}, Lecture Notes in
  Mathematics, vol. 1800, Springer-Verlag, Berlin, 2003.

\bibitem[Gro66]{EGA4-3}
A.~Grothendieck, \emph{\'el\'ements de g\'eom\'etrie alg\'ebrique. {IV}.
  \'etude locale des sch\'emas et des morphismes de sch\'emas. {III}}, Inst.
  Hautes \'Etudes Sci. Publ. Math. (1966), no.~28, 255.

\bibitem[Hal17]{Hall17Openness}
Jack Hall, \emph{Openness of versality via coherent functors}, J. Reine Angew.
  Math. \textbf{722} (2017), 137--182.

\bibitem[Har98]{Hartshorne90}
Robin Hartshorne, \emph{Coherent functors}, Adv. Math. \textbf{140} (1998),
  no.~1, 44--94.

\bibitem[Hub93]{Huber93}
R.~Huber, \emph{Continuous valuations}, Math. Z. \textbf{212} (1993), no.~3,
  455--477.

\bibitem[Hub94]{Huber94}
\bysame, \emph{A generalization of formal schemes and rigid analytic
  varieties}, Math. Z. \textbf{217} (1994), no.~4, 513--551.

\bibitem[Hub96]{Hub96}
Roland Huber, \emph{{\'E}tale cohomology of rigid analytic varieties and adic
  spaces}, Aspects of Mathematics, vol. E30, Friedr. Vieweg \& Sohn,
  Braunschweig, 1996.

\bibitem[Hum75]{Humphreys75}
James~E. Humphreys, \emph{Linear algebraic groups}, Graduate Texts in
  Mathematics, vol. No. 21, Springer-Verlag, New York-Heidelberg, 1975.

\bibitem[Ked08]{Ked08}
Kiran~S. Kedlaya, \emph{Slope filtrations for relative {F}robenius},
  Ast\'erisque (2008), no.~319, 259--301, Repr\'esentations $p$-adiques de
  groupes $p$-adiques. I. Repr\'esentations galoisiennes et
  $(\phi,\Gamma)$-modules.

\bibitem[Ked16]{Ked16}
\bysame, \emph{Noetherian properties of {F}argues-{F}ontaine curves}, Int.
  Math. Res. Not. IMRN (2016), no.~8, 2544--2567.

\bibitem[Kie68]{Kiehl68}
Reinhardt Kiehl, \emph{Analytische {F}amilien affinoider {A}lgebren}, S.-B.
  Heidelberger Akad. Wiss. Math.-Natur. Kl. \textbf{1968} (1968), 23--49.

\bibitem[KPX14]{KPX14}
Kiran~S. Kedlaya, Jonathan Pottharst, and Liang Xiao, \emph{Cohomology of
  arithmetic families of {$(\varphi,\Gamma)$}-modules}, J. Amer. Math. Soc.
  \textbf{27} (2014), no.~4, 1043--1115.

\bibitem[Li]{RayLi26}
Ray Li, \emph{Overconvergent deformation theory and an application to
  {$(\varphi,\Gamma)$}-modules}, In preparation.

\bibitem[Liu08]{Liu08}
Ruochuan Liu, \emph{Cohomology and duality for {$(\phi,\Gamma)$}-modules over
  the {R}obba ring}, Int. Math. Res. Not. IMRN (2008), no.~3, Art. ID rnm150,
  32.

\bibitem[LMB00]{Laumon-Bailly00}
G\'erard Laumon and Laurent Moret-Bailly, \emph{Champs alg\'ebriques},
  Ergebnisse der Mathematik und ihrer Grenzgebiete. 3. Folge. A Series of
  Modern Surveys in Mathematics [Results in Mathematics and Related Areas. 3rd
  Series. A Series of Modern Surveys in Mathematics], vol.~39, Springer-Verlag,
  Berlin, 2000.

\bibitem[LSW26]{LSW26}
Daniel Le, Matthias Strauch, and Zichuan Wang, \emph{Block decompositions in
  the p-adic {L}anglands correspondence}, arXiv preprint arXiv:
  \href{https://arxiv.org/abs/2610.07648}{2610.07648} (2026).

\bibitem[Mat80]{Mat80}
Hideyuki Matsumura, \emph{Commutative algebra}, second ed., Mathematics Lecture
  Note Series, vol.~56, Benjamin/Cummings Publishing Co., Inc., Reading, MA,
  1980.

\bibitem[Mat86]{Matsumura-Comm-Ring}
\bysame, \emph{Commutative ring theory}, Cambridge Studies in Advanced
  Mathematics, vol.~8, Cambridge University Press, Cambridge, 1986, Translated
  from the Japanese by M. Reid.

\bibitem[Mik26a]{Mikami25m}
Yutaro Mikami, \emph{The $p$-adic monodromy theorem over algebraic-affinoid
  algebras}, arXiv preprint arXiv:
  \href{https://arxiv.org/abs/2604.12280}{2604.12280} (2026).

\bibitem[Mik26b]{Mikami24}
Yutaro Mikami, \emph{{$(\varphi,\Gamma)$}-modules over relatively discrete
  algebras}, arXiv preprint arXiv:
  \href{https://arxiv.org/abs/2409.14145}{2409.14145} (2026).

\bibitem[OS25]{OhShimizu25}
Gyujin Oh and Koji Shimizu, \emph{Moduli stacks of crystals and isocrystals},
  arXiv preprint arXiv: \href{https://arxiv.org/abs/2504.14801}{2504.14801}
  (2025).

\bibitem[RJRC26]{RJRC23}
Joaqu\'{\i}n Rodrigues~Jacinto and Juan~Esteban Rodr\'{\i}guez~Camargo,
  \emph{Solid locally analytic representations}, arXiv preprint arXiv:
  \href{https://arxiv.org/abs/2305.03162}{2305.03162} (2026).

\bibitem[SGA73]{SGA4-3}
\emph{Th\'eorie des topos et cohomologie \'etale des sch\'emas. {T}ome 3},
  Lecture Notes in Mathematics, vol. Vol. 305, Springer-Verlag, Berlin-New
  York, 1973, S\'eminaire de G\'eom\'etrie Alg\'ebrique du Bois-Marie
  1963--1964 (SGA 4), Dirig\'e{} par M. Artin, A. Grothendieck et J. L.
  Verdier. Avec la collaboration de P. Deligne et B. Saint-Donat.

\bibitem[SW20]{SW20}
Peter Scholze and Jared Weinstein, \emph{Berkeley lectures on {$p$}-adic
  geometry}, Annals of Mathematics Studies, vol. 207, Princeton University
  Press, Princeton, NJ, 2020.

\bibitem[War17]{War17}
Evan~B Warner, \emph{Adic moduli spaces}, Ph.D. thesis, Stanford University,
  Department of Mathematics, August 2017,
  \url{https://purl.stanford.edu/vp223xm9859}.

\bibitem[SGA7]{SGA7}
\emph{Groupes de monodromie en g\'eom\'etrie alg\'ebrique. {I}}, Lecture Notes
  in Mathematics, vol. 288, Springer-Verlag, Berlin-New York, 1972, S\'eminaire
  de G\'eom\'etrie Alg\'ebrique du Bois-Marie 1967--1969 (SGA 7 I), Dirig\'e{}
  par A. Grothendieck. Avec la collaboration de M. Raynaud et D. S. Rim.

\bibitem[SP]{stacks-project}
The Stacks~Project Authors, \emph{Stacks project},
  \url{https://stacks.math.columbia.edu}.

\end{thebibliography}
\end{document}